\documentclass{amsart}
\usepackage[utf8]{inputenc}
\usepackage[T1]{fontenc}
\usepackage{amsmath,amssymb}
\usepackage{graphicx}
\usepackage{amsthm, enumerate}
\theoremstyle{remark}
\usepackage[colorlinks=true]{hyperref}
\usepackage{latexsym}
\usepackage{xypic}
\usepackage{pxfonts,kpfonts}
\usepackage{mathrsfs}
\usepackage{fullpage}
\usepackage{wasysym}
\usepackage{enumitem}
\usepackage[affil-it]{authblk}
\usepackage{bbold}
\usepackage[all]{xy}
\usepackage{amssymb}
\usepackage{microtype}
\usepackage[colorinlistoftodos]{todonotes}

\usepackage{color}

%%%%%%%%%%%%%%%%%%%%%%%%%%%%%%%%%%%%%%%%%%%%%%%%%%%%%%%%%%%%%%%%%%%%%%%%%%%%%%%%%%%%%%

%%%%%%%%%%%%%%%%%%%%%%%%%%%%%%%%%%%%%%%%%%%%%%%%%%%%%%%%%%%%%%%%%%%%%%%%%%%%%%%%%%%%%%%
\theoremstyle{definition}
\newtheorem{defi}{Definition}[section]
\newtheorem{const}[defi]{Construction}
\newtheorem{expl}[defi]{Example}

\newtheorem{rmk}[defi]{Remark}

\theoremstyle{plain}

\newtheorem*{main theorem}{Main Results}
\newtheorem{pro}[defi]{Proposition}
\newtheorem{thm}[defi]{Theorem}

\newtheorem{cor}[defi]{Corollary}

\newtheorem{lmm}[defi]{Lemma}

\theoremstyle{remark}

%\renewcommand{\thefootnote}{\*}

% Victor\s commands:

\newcommand{\NN}{{\mathbb N}}

\newcommand{\Operad}{{\mathrm{Operad}}}
\newcommand{\Bimod}{{\mathrm{Bimod}}}
\newcommand{\Ibimod}{{\mathrm{Ibimod}}}

\newcommand{\TT}{T}

\newcommand{\calB}{{\mathcal B}}

%%%%%%%%%%%%%%%%%%%%%%%%%%%%%%%%%%%%%%%%%%%%%%%%%%%%%%%%%%%%%%%%%%%%%%%%%%%%%%%%%%%%%%%
%%%%%%%%%%%%%%%%%%%%%%%%%%%%%%%%%%%%%%%%%%%%%%%%%%%%%%%%%%%%%%%%%%%%%%%%%%%%%%%%%%%%%%%
%%%%%%%%%%%%%%%%%%%%%%%%%%%%%%%%%%%%%%%%%%%%%%%%%%%%%%%%%%%%%%%%%%%%%%%%%%%%%%%%%%%%%%%
%%%%%%%%%%%%%%%%%%%%%%%%%%%%%%%%%%%%%%%%%%%%%%%%%%%%%%%%%%%%%%%%%%%%%%%%%%%%%%%%%%%%%%%
%%%%%%%%%%%%%%%%%%%%%%%%%%%%%%%%%%%%%%%%%%%%%%%%%%%%%%%%%%%%%%%%%%%%%%%%%%%%%%%%%%%%%%%
%
\usepackage{titlesec}

\titleformat{\section}
{\Large\bfseries\center}
{\thesection}{1em}{}

\titleformat{\subsection}
{\large\bfseries}
{\thesubsection}{1em}{}

\titleformat{\subsubsection}
{\large\bfseries}
{\thesubsubsection}{1em}{}

\usepackage{titletoc}

\titlecontents{section}
[2.8em] % ie, 1.5em (chapter) + 2.3em
{}
{\contentslabel{2.3em}}
{\hspace*{-2.3em}}
{\titlerule*[1pc]{.}\contentspage}

\titlecontents{subsection}
[5.8em] % ie, 1.5em (chapter) + 2.3em
{\contentsmargin{1.5em}}
{\contentslabel{2.3em}}
{\hspace*{-1.3em}}
{\titlerule*[1pc]{.}\contentspage}

\setcounter{tocdepth}{2}

\begin{document}

\title{Projective and Reedy model category structures for (infinitesimal) bimodules over an operad}
%\author{J.Ducoulombier, B.Fresse and V.Turchin}

\author{Julien Ducoulombier}
\address{Max Planck Institute for Mathematics  \\
Vivatsgasse 7 \\
53111 Bonn, Germany}
\email{julien@mpim-bonn.mpg.de}

\author{Benoit Fresse}
\address{Univ. Lille, CNRS, UMR 8524\\
Laboratoire Painlev\' e\\
Cit\' e Scientifique - B\^ atiment M2\\
F-59000 Lille, France}
\email{Benoit.Fresse@math.univ-lille1.fr}

\author{Victor Turchin}
\address{Department of Mathematics\\
Kansas State University\\
138 Cardwell Hall\\
Manhattan, KS 66506, USA}
\email{turchin@ksu.edu}

\date{}

\thanks{J.D. was partially supported by the NCCR SwissMAP funded by the Swiss National Science Foundation and the ERC starting grant GRAPHCPX and the Max Planck Institute for Mathematics in Bonn.  B.F. acknowledges support from the Labex CEMPI (ANR-11-LABX-0007-01) and from the FNS-ANR project OCHoTop (ANR-18CE93-0002-01). V.T. has benefited from a visiting position of the Labex CEMPI (ANR-11-LABX-0007-01) at the Universit\'e de Lille and a visiting position at the Max Planck Institute for Mathematics in Bonn for the achievement of this work. He  was also partially supported by the Simons Foundation  grant, award ID: 519474. J.D. and V.T. acknowledge the University of Lille for hospitality.}

%\sloppy

%\begin{document}

\maketitle %\vspace{-10pt}

\begin{center} \begin{Large} Julien Ducoulombier \hspace{20pt} Benoit Fresse \hspace{20pt} Victor Turchin \end{Large}\end{center} %\vspace{-10pt}

\begin{abstract}
We construct and study projective and Reedy model category structures for bimodules and infinitesimal bimodules over  topological operads. Both model structures produce the same homotopy categories. For the model categories in question, we build explicit cofibrant and fibrant replacements. We show that these categories are right proper and  under some conditions left proper. We also study the extension/restriction adjunctions. %We give  a characterization of Reedy cofibrations and we check that both model structures produce the same homotopy categories.% In the bimodules and infinitesimal bimodules cases, the Reedy and projective homotopy categories are the same.
\end{abstract}

\tableofcontents

\section*{Introduction}

In this paper, we set up a homotopy theory for the categories of bimodules and of infinitesimal bimodules over topological operads. More precisely, we  study two model structures, the \textit{projective} model 
structure and the Reedy model structure, which we define for both bimodule categories. The Reedy and the projective model structures have the same class of weak equivalences and, therefore, produce isomorphic homotopy categories. Both model categories find important applications in the manifold functor calculus, 
%structure and the \textit{Reedy} model structure, which we associate to such categories. Both model structures have the same class of weak equivalences
%and, therefore, produce isomorphic homotopy categories. Both of them find important applications in the manifold functor calculus, 
 specifically in the problems of delooping the functor
calculus towers~\cite{DT,Duc3,DTW}. It is well known that the arity zero elements  essentially complicate the homotopy theory
of such objects. However, in practical examples the arity zero component of the studied objects is often reduced to a point.                                                                   Such objects are called {\it reduced}. Motivated by the homotopy theory of the little 2-discs operad, the second author developed the Reedy model structure for reduced
operads~\cite{Fre1,Fre2}. %The purpose of this work is to
  We adapt this theory to the setting of bimodules and of infinitesimal bimodules.
One of the advantages of the Reedy model structure in comparison to the projective one is that the cofibrant resolutions are smaller as they do not take into account
the arity zero component. This makes the constructions of delooping in~\cite{DT,Duc3,DTW} simpler. By contrast,  while all the objects are fibrant for the projective model categories, there is no obvious Reedy fibrant coresolution. Consequently, both model structures have their advantages and it can be convenient to be able to switch from one structure to another.

%Reedy model structures enjoy better homotopy behavior. For example in~\cite{DT}, it is crucial that the  model
%category of bimodules is relative left proper. We establish this result in this paper.  the Reedy case in this paper.
%(In the case of the projective model category, we only prove the left properness for bimodules over operads with a void component in arity zero.)

The starting idea of the Reedy model structure for reduced operads is to encode the operadic composition operations
with the unique point in arity zero in an extension of the diagram structure
which underlies our objects.
In the usual category of symmetric operads, the diagram structure of the
 objects
is governed by the category $\Sigma = \coprod_n\Sigma_n$
which is defined by taking the disjoint union of the symmetric groups $\Sigma_n$.
In what follows, we use the expression `$\Sigma$-sequence'
for the objects of the category of diagrams
over $\Sigma$. We use the notation $\Sigma Seq$ for this category of diagrams and the notation $\Sigma\Operad$ for the category of symmetric operads.
To formalize the construction of the Reedy model structure, we consider the category $\Lambda$,
which has the finite sets $[n] = \{1,\dots,n\}$
as objects and all injective maps of finite sets $u: \{1,\dots,m\}\hookrightarrow\{1,\dots,n\}$
as morphisms.
We use the expression `$\Lambda$-sequence' for the objects of the category of contravariant diagrams
over $\Lambda$,
and we use the notation $\Lambda Seq$ for the category of $\Lambda$-sequences.
The composition operations with the arity zero term $P(0) = *$ in a reduced operad $P$
are equivalent to restriction operators $u^*: P(n)\rightarrow P(m)$,
which can be associated to the injective maps of finite sets $u: \{1,\dots,m\}\hookrightarrow\{1,\dots,n\}$
and hence to the morphisms in the category $\Lambda$.
This observation implies that the category of reduced operads is identified with a category $\Lambda_{\ast}\Operad$,
whose objects are operads shaped on this category of finite sets and injections $\Lambda$
instead of the category of permutations $\Sigma$.

The category of $\Lambda$-sequences inherits a Reedy model structure, in which the fibrations are defined by using a natural
notion of matching object.
The Reedy model structure of reduced operads is precisely defined by transferring this Reedy model structure
on the category of $\Lambda$-sequences $\Lambda Seq$
to our category of operads $\Lambda_{\ast}\Operad$,
while the projective model structure of symmetric operads is defined by transferring the projective model structure
on the category of $\Sigma$-sequences
$\Sigma Seq$ to $\Sigma\Operad$.

Throughout this paper, we work in the category of topological spaces, and we therefore deal with operads in topological spaces. In that context, the projective model category of symmetric operads is known to be left proper relative to $\Sigma$-cofibrant operads (i.e. operads that are cofibrant as $\Sigma$-sequences) and right proper \cite{HRY} making the homotopy colimits and limits easier to identify in this category.
Furthermore, all operads are fibrant in the projective model category of symmetric operads
in topological spaces.
In the Reedy model category of reduced operads, the objects are not necessarily fibrant. We use a notion of matching object
to define the class of fibrations (and unfortunately, we have no explicit definition of a fibrant coresolution functor at the time),
but the class of cofibrations is larger.
In fact,  a morphism of reduced operads is a cofibration with respect to the Reedy model structure
if and only if this morphism defines a projective cofibration of operads
after forgetting the arity zero components~\cite[Theorem~8.4.12]{Fre2} (thus if and only if this morphism defines a cofibration in the projective model category of operads
with a void component in arity zero).
This result implies that the Reedy model category of reduced operads is also left proper relative to $\Sigma$-cofibrant reduced operads.
Together with Willwacher, the second and third authors~\cite{FTW} showed that, for any reduced operads $P$ and $Q$,
there is a weak equivalence of derived mapping spaces
$$
\Sigma\Operad^{h}(P\,;\,Q)\simeq\Lambda_{\ast}\Operad^{h}(P\,;\,Q).
$$
Hence we can use the Reedy model category to compute mapping spaces in the usual category of topological operads.

The main purpose of this work is to extend the results of these operadic homotopy theories to the setting
of bimodules and of infinitesimal bimodules. First of all, we define our counterparts of the projective and Reedy model structures for bimodules and infinitesimal bimodules in topological spaces. We also address the definition of these model structures for truncated bimodules.

For the definition of the projective model structure, we work out difficulties that occur in the context of topological spaces, notably regarding the application of the small object argument (see the discussion of~\cite{Hov2}). This question is independent from other works on the projective model categories of modules over operads carried out in the litterature. In fact, the projective model structure of bimodules was defined in~\cite{Re1} for bimodules in simplicial sets and for bimodules in a category of simplicial bimodules over a ring. The paper~\cite{Ha} gives the definition of an analogous model structure for left modules over non-symmetric operads and for left modules over symmetric operads when every $\Sigma$-sequence is projectively cofibrant in the base category (for instance, when the base category is a category of chain complexes over a characteristic zero field). The book~\cite{Fre3}, by the second author, provides a general study of the homotopy theory of modules and bimodules over operads, but deals with semi-model structures (with a restriction of the application of the axioms to maps with a cofibrant source) to get results that are valid in any base monoidal model category. In the paper, we prove that, when we work in the category of topological spaces, we have a full validity of the definition of the projective model category of bimodules over a pair of operads, and this result holds without any assumption on our operads. We get the same result for the definition of the projective model category of infinitesimal bimodules.

For the definition of the Reedy model categories of bimodules and of infinitesimal bimodules, we rely on a preliminary definition of a fibrant coresolution functor and we apply a transfer argument, using an adjunction between (infinitesimal) bimodules and 
$\Lambda$-sequences. We just need a mild assumption on our operads to ensure the validity of the definition of the Reedy model structures (technically, we just need to consider well-pointed operads,  in which the inclusion of the operadic unit in arity one defines a cofibration of spaces).

%
%The main purpose of this work is to extend the results of these operadic homotopy theories to the setting of bimodules and of infinitesimal bimodules. We define an analogue of the projective and Reedy model structures in both bimodule categories. 
%In general, 

We use the notation $\Sigma \Bimod_{P\,;\,Q}$ for the category of bimodules associated to a pair of operads $(P,Q)$, while we adopt the notation $\Sigma \Ibimod_{O}$ for the category of infinitesimal bimodules over an operad $O$. To distinguish the Reedy model structure from the projective model structure, we adopt the convention to keep these notations $\Sigma \Bimod_{P\,;\,Q}$ and $\Sigma \Ibimod_{O}$ when we equip these bimodule categories with the projective model structure, and we pass to the notations $\Lambda \Bimod_{P\,;\,Q}$ and $\Lambda \Ibimod_{O}$ when we consider the Reedy model structure.
We prove that our model categories have the following features.
% We establish the following results.
%\vspace{5pt}

%\begin{main theorem}
%Under some  assumptions on the operads:
\begin{itemize}[leftmargin=10pt, itemsep=5pt]
\item[$\blacktriangleright$] Sections \ref{Z2} and \ref{ZZ2}: All the objects in $\Sigma \Bimod_{P\,;\,Q}$ and $\Sigma \Ibimod_{O}$ are fibrant. Furthermore, we give explicit fibrant coresolutions in the Reedy model categories $\Lambda \Bimod_{P\,;\,Q}$ and $\Lambda \Ibimod_{O}$. %\vspace{5pt}
\item[$\blacktriangleright$] Sections \ref{C0}, \ref{CH0}, \ref{CP0} and \ref{Fin2}: The categories $\Sigma \Bimod_{P\,;\,Q}$,
$\Lambda\Bimod_{P\,;\,Q}$,  $\Sigma \Ibimod_{O}$ and $\Lambda\Ibimod_{O}$ are right proper.
Moreover, if $P$ is either projectively or Reedy cofibrant, and $Q$, $O$ are componentwise cofibrant,  then the categories 
%Moreover, the categories
   $\Sigma \Bimod_{P\,;\,Q}$ and
$\Lambda\Bimod_{P\,;\,Q}$ are left proper relative to componentwise 
cofibrant objects while $\Sigma \Ibimod_{O}$ and $\Lambda\Ibimod_{O}$ are left proper.% \vspace{5pt}
\item[$\blacktriangleright$] Sections \ref{Fin1} and \ref{Fin2}: Let $Q_{>0}$ and $O_{>0}$ be the sub-operads obtained from $Q$ and $O$, respectively, by removing the arity zero components. Any map in $\Lambda \Bimod_{P\,;\,Q}$ and $\Lambda \Ibimod_{O}$ is a cofibration if and only if the corresponding map in $\Sigma \Bimod_{P\,;\,Q_{>0}}$ and $\Sigma \Ibimod_{O_{>0}}$, respectively, is a cofibration.% \vspace{5pt}
\item[$\blacktriangleright$] Sections \ref{sss:adj_bim} and \ref{E7_2}: If $M$ and $N$ are $(P\text{-}Q)$-bimodules, while $M'$ and $N'$ are $O$ infinitesimal bimodules, then one has weak equivalences between derived mapping spaces
$$
\Sigma \Bimod_{P\,;\,Q}^{h}(M\,;\,N)\simeq \Lambda \Bimod_{P\,;\,Q}^{h}(M\,;\,N) \hspace{15pt}\text{and} \hspace{15pt} \Sigma \Ibimod_{O}^{h}(M'\,;\,N')\simeq \Lambda \Ibimod_{O}^{h}(M'\,;\,N').
$$

\item[$\blacktriangleright$] Sections \ref{E8}, \ref{CH0}, \ref{EE8} and \ref{Fin2}: Let $\phi_{1}:P\rightarrow P'$, $\phi_{2}:Q\rightarrow Q'$ and $\phi:O\rightarrow O'$ be weak equivalences between $\Sigma$-cofibrant operads $P$, $P'$ and
componentwise cofibrant  operads $Q$, $Q'$, $O$, $O'$, then the extension and restriction functors form Quillen equivalences
$$
\begin{array}{cc}%\vspace{6pt}
\phi_{!}:\Sigma \Bimod_{P\,;\,Q}\rightleftarrows \Sigma \Bimod_{P'\,;\,Q'}:\phi^{*},\hspace{5pt} & \phi_{!}:\Sigma \Ibimod_{O}\rightleftarrows \Sigma \Ibimod_{O'}:\phi^{*}, \\
\phi_{!}:\Lambda \Bimod_{P\,;\,Q}\rightleftarrows \Lambda \Bimod_{P'\,;\,Q'}:\phi^{*},\hspace{5pt} & \phi_{!}:\Lambda \Ibimod_{O}\rightleftarrows \Lambda \Ibimod_{O'}:\phi^{*}.
\end{array}
$$
(Also, see  Sections~\ref{E8} and~\ref{sss:reduced_main}  for a refinement of this result when we forget about the arity zero components of bimodules or, respectively, if such components are reduced to a point.)
\end{itemize}
%\end{main theorem}

%\vspace{5pt}
\noindent \textbf{Organization of the paper:} We review the background of our constructions in the first section of the paper. We review the definition of the projective model category of $\Sigma$-sequences and the definition of the Reedy model category of $\Lambda$-sequences. We also build an explicit fibrant coresolution in the category of $\Lambda$-sequences.  Then we recall the definition of the projective model category of operads and the Reedy model category of reduced operads together with their properties. Most of the new results in this section appear at the very end in Subsection~\ref{ss:useful}, where we study some natural properties of 
cofibrant operads. 
%Most of the results in this section are already well known. 

In the second section, we define the projective model category of $(P\text{-}Q)$-bimodules. First, we show that this category is equivalent to the category of algebras over a colored operad. Then we give combinatorial descriptions of the free bimodule functor and of pushouts. After that, we define the projective model category structure for bimodules and we prove that this model structure is relatively left proper and that the extension/restriction adjunctions along weak equivalences of operads form Quillen equivalences. 

In the third section, we study the Reedy model category of bimodules. We check that (almost all) the constructions introduced in the second section can be extended to the Reedy model category. Furthermore, we give an explicit Reedy fibrant coresolution as well as a characterization of cofibrations. As a consequence of this characterization, we show that this model category is relatively left proper and that the extension/restriction adjunctions along weak equivalences of reduced operads form  Quillen equivalences. Both model structures having the same set of weak equivalences, they produce the same homotopy category. Then we construct a functorial cofibrant resolution for bimodules in both (projective and Reedy) model structures. As an application, we explain how our Reedy fibrant coresolution can be expressed in terms of internal hom in the category of $\Sigma$-sequences. In the last subsection, 
assuming  that both operads $P$ and $Q$ are reduced, we study the subcategory $\Lambda_*\Bimod_{P\,;\,Q}$ of reduced bimodules  equipped with the Reedy model category structure. This subcategory enjoys slightly  better properties as we compare the Reedy model structures of bimodules and of reduced bimodules.

In the fourth section we adapt the results from the second section to the context of infinitesimal bimodules. In that case, the proofs are easier since pushouts of infinitesimal bimodules coincide with pushouts taken componentwise. Similarly, we show that this category is equivalent to the category of algebras over a colored operad. After that, we introduce the projective model category structure and we prove that the extension/restriction adjunctions along weak equivalences of operads form  Quillen equivalences.  

In the fifth section we adapt the results from the third section to the context of infinitesimal bimodules. In the same way, we build an explicit fibrant coresolution. We give a characterization of cofibrations and, as a consequence of it, we show that the extension/restriction adjunctions along weak equivalences of reduced operads form  Quillen equivalences. We then compare the projective and  Reedy model structures on infinitesimal bimodules. 
At the end we exhibit explicit cofibrant resolutions.   

The last sixth section is an appendix where several technical lemmas that we use from the equivariant homotopy theory are  formulated and proved. 

\vspace{5pt}
\noindent \textbf{Notation:}
In~\cite{Fre1,Fre2}, the notation $\Lambda_{\ast}\Operad$ actually refers to a category of $\Lambda$-operads,
which is defined by dropping the arity zero component
of reduced operads. But we do not use this convention in this paper. We therefore forget about the refined structure of a $\Lambda$-operad
and we use the notation $\Lambda_{\ast}\Operad$ for the category of reduced operads. We keep the letter $\Lambda$
in order to emphasize the underlying $\Lambda$-diagram structure of our objects,
but we forget about further reductions in the definition
of our structures. We write $\Lambda_*$ instead of $\Lambda$ to remind that the operads in this category are reduced.

In the paper we use many different sets of rooted trees.  As a general rule, we use letter $\mathbb{P}$ for sets of planar trees and letter
$\mathbb{T}$
for non-planar trees. Usually the set $\ell(T)$ of leaves  of a planar tree $T$ is labelled by a permutation in $\Sigma_{|T|}$, where $|T|$ is the number
of leaves in the tree. Internal vertices in these trees are usually allowed to have any arity unless we use the superscript $\geq 1$ or $\geq 2$, like
in $\mathbb{P}^{\geq 1}$ or $\mathbb{T}^{\geq 2}$  meaning that the arities of vertices are $\geq 1$ or $\geq 2$, respectively. We use different terms to deal with trees in the context of operads, such as set of leaves $\ell(T)$, set of vertices $V(T)$, set of edges $E(T)$, the arity $|v|$ of a vertex $v$, etc from \cite[Section~5.8]{BM}. 
Even though we formally do not define these terms, we show them on figures, so that the reader can easily guess the meaning of those words without referring to loc. cit.

For a subspace of a space $X$, we sometimes use notation $\partial X$. By $\partial\prod_{i\in I} X_i$ we understand a subspace in $\prod_{i\in I}
X_i$ consisting of points with at least one coordinate in $\partial X_i$.  A point in the product space $\prod_{i\in I} X_i$ is usually denoted by $\{x_{i}\}_{i\in I}$ or just  $\{x_{i}\}$ if there is no ambiguity about the set $I$.

%\vspace{5pt}

\section{Model category structures for operads}%\vspace{5pt}

In this section, we introduce the categories $\Sigma Seq$ and $\Lambda Seq$ as well as their model category structures called \textit{projective} and \textit{Reedy} model category structures. These are categories whose objects are sequences of topological spaces with some extra structures. We also define the categories of operads $\Sigma\Operad$ and reduced operads $\Lambda_{\ast}\Operad$. Both categories inherit model category structures from the following adjunctions in which the functors $\mathcal{F}^{\Sigma}$ and $\mathcal{F}^{\Lambda}$ are the left adjoints to the forgetful functors:
$$
\mathcal{F}^{\Sigma}:\Sigma Seq\rightleftarrows \Sigma\Operad:\mathcal{U}^{\Sigma}\hspace{15pt}\text{and}\hspace{15pt} \mathcal{F}^{\Lambda}:\Lambda_{>0} Seq\rightleftarrows \Lambda_{\ast}\Operad:\mathcal{U}^{\Lambda}.
$$
Here $\Lambda_{>0} Seq$ can be interpreted as the full model subcategory of $\Lambda Seq$ composed of objects whose arity zero component is the one point topological space (see Section \ref{Sect1.2}). Both model categories $\Sigma\Operad$ and $\Lambda_{\ast}\Operad$ have  intensively been studied by Berger-Moerdijk \cite{BM} and the second author \cite{Fre1,Fre2}. We list their properties in Section~\ref{E4}. Usually, in order to define a model category structure from an adjunction, we use the following statement also called the \textit{transfer principle}: 

\begin{thm}{\cite[Section 2.5]{BM}}\label{E3}
Let $\mathcal{D}$ be a cofibrantly generated model category with a set of generating cofibrations $\mathcal{D}_{c}$ and a set of generating acyclic cofibrations $\mathcal{D}_{ac}$. Let $L:\mathcal{D}\rightleftarrows \mathcal{C}:R$ be an adjunction with left adjoint $L$ and right adjoint $R$. Assume that $\mathcal{C}$ is bicomplete. Define a map $f$ in $\mathcal{C}$ to be a weak equivalence (respectively, a fibration) if $R(f)$ is a weak equivalence (respectively, a fibration) in $\mathcal{D}$. If the following conditions are satisfied:
\begin{itemize}
\item[$(i)$] both sets $L(\mathcal{D}_{c})$ and $L(\mathcal{D}_{ac})$ permit the small object argument;
\item[$(ii)$] $\mathcal{C}$ has a fibrant replacement functor for objects;
\item[$(iii)$] $\mathcal{C}$ has a functorial path object for  fibrant objects, i.e. for any fibrant object $X$ there is a functorial factorization of the diagonal map into a weak equivalence followed by a fibration 
$$
\xymatrix{
X\ar[r]^{\hspace{-15pt}\simeq} & Path(X) \ar@{->>}[r] & X\times X;
}
$$
\end{itemize} 
\noindent then we have a cofibrantly generated model category structure on $\mathcal{C}$ in which the set of generating cofibrations (respectively acyclic cofibrations) is given by $L(\mathcal{D}_{c})$ (respectively, $L(\mathcal{D}_{ac})$). Furthermore, this model category structure makes the adjunction $(L;R)$ into a Quillen adjunction.
\end{thm}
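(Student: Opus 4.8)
The plan is to equip $\mathcal{C}$ with the three classes of maps forced by the transfer and to verify the model axioms by pulling everything back to $\mathcal{D}$ through $R$. Weak equivalences $W$ and fibrations $\mathrm{Fib}$ are the maps $f$ with $R(f)$ respectively a weak equivalence and a fibration in $\mathcal{D}$, and I define the cofibrations to be $\mathrm{Cof}:=\mathrm{LLP}(\mathrm{Fib}\cap W)$, the maps with the left lifting property against the acyclic fibrations. The first and most mechanical step is the adjunction lifting lemma: a lifting problem of $p$ (in $\mathcal{C}$) against $L(i)$ transposes, under $(L,R)$, to a lifting problem of $R(p)$ against $i$, with solutions in bijection. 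Since $\mathcal{D}$ is cofibrantly generated by $\mathcal{D}_{c}$ and $\mathcal{D}_{ac}$, this immediately yields $\mathrm{RLP}(L(\mathcal{D}_{c}))=\mathrm{Fib}\cap W$ and $\mathrm{RLP}(L(\mathcal{D}_{ac}))=\mathrm{Fib}$, the two identifications on which everything rests.

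Next I would dispatch the formal axioms. Bicompleteness is assumed; the two-out-of-three property and stability under retracts of $W$ and $\mathrm{Fib}$ are inherited from $\mathcal{D}$ because $R$ preserves retracts and composites and detects these classes by definition, while $\mathrm{Cof}$ is retract-closed as it is defined by a lifting property, and one half of the lifting axiom holds by the very definition of $\mathrm{Cof}$. For factorizations I invoke hypothesis $(i)$: the small object argument on $L(\mathcal{D}_{c})$ factors any map as a relative $L(\mathcal{D}_{c})$-cell complex followed by a map in $\mathrm{RLP}(L(\mathcal{D}_{c}))=\mathrm{Fib}\cap W$, and since relative cell complexes lie in $\mathrm{LLP}(\mathrm{RLP}(L(\mathcal{D}_{c})))=\mathrm{Cof}$, this is a (cofibration, acyclic fibration) factorization. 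Likewise the small object argument on $L(\mathcal{D}_{ac})$ produces a factorization into a relative $L(\mathcal{D}_{ac})$-cell complex followed by a map in $\mathrm{RLP}(L(\mathcal{D}_{ac}))=\mathrm{Fib}$.

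The heart of the argument, and the step I expect to be the main obstacle, is the acyclicity: I must show that every map $j$ in $\mathrm{LLP}(\mathrm{Fib})$ is a weak equivalence, since only then is the left factor just produced an acyclic cofibration (completing the second factorization), and only then does a retract argument upgrade acyclic cofibrations into $\mathrm{LLP}(\mathrm{Fib})$ to supply the remaining lifting axiom. This is where $(ii)$ and $(iii)$ enter, via Quillen's path object argument. Given $j:A\to B$ in $\mathrm{LLP}(\mathrm{Fib})$, I fibrantly replace using $(ii)$ to get $\eta_{A}:A\to A^{f}$, $\eta_{B}:B\to B^{f}$ and functorially $g^{f}:A^{f}\to B^{f}$; lifting $\eta_{A}$ through $j$ against the fibration $A^{f}\to *$ gives $t:B\to A^{f}$ with $t\circ j=\eta_{A}$, while lifting against the path-object fibration $\mathrm{Path}(B^{f})\twoheadrightarrow B^{f}\times B^{f}$ of $(iii)$ produces a right homotopy between $\eta_{B}$ and $g^{f}\circ t$. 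Applying $R$, which carries this factorization to a genuine path object in $\mathcal{D}$ (it preserves products and sends $\mathcal{C}$-fibrations and $\mathcal{C}$-weak equivalences to their $\mathcal{D}$-counterparts by definition), these two relations read in $\mathrm{Ho}(\mathcal{D})$ as $R(t)\circ R(j)=R(\eta_{A})$ and $R(g^{f})\circ R(t)=R(\eta_{B})$ with $R(\eta_{A}),R(\eta_{B})$ invertible; hence $R(t)$ is invertible and therefore so is $R(j)$, i.e. $R(j)$ is a weak equivalence and $j\in W$.

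With acyclicity in hand the remaining pieces fall into place: the second factorization becomes (acyclic cofibration, fibration), and the lifting axiom for fibrations against acyclic cofibrations follows from the standard retract argument (factor an acyclic cofibration as such a cell complex followed by a fibration, observe that fibration is acyclic by two-out-of-three, then realize the acyclic cofibration as a retract of the cell complex, using retract-closure of $\mathrm{LLP}(\mathrm{Fib})$). This establishes the model structure; by construction it is cofibrantly generated with generating cofibrations $L(\mathcal{D}_{c})$ and generating acyclic cofibrations $L(\mathcal{D}_{ac})$, and $(L,R)$ is a Quillen adjunction because, by the definitions of fibrations and weak equivalences in $\mathcal{C}$, the functor $R$ preserves fibrations and acyclic fibrations.
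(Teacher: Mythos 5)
The paper does not prove this statement itself---it is quoted as a known result from Berger--Moerdijk \cite[Section 2.5]{BM}---so the comparison is against the cited argument, which your proof reproduces essentially verbatim. Your write-up is correct and is exactly that standard transfer argument: the adjunction-lifting identifications $\mathrm{RLP}(L(\mathcal{D}_{c}))=\mathrm{Fib}\cap W$ and $\mathrm{RLP}(L(\mathcal{D}_{ac}))=\mathrm{Fib}$, the small object argument from hypothesis $(i)$ for the two factorizations, and Quillen's path-object argument using $(ii)$ and $(iii)$ to show every map in $\mathrm{LLP}(\mathrm{Fib})$ is a weak equivalence (the key points---that $R$, being a right adjoint, carries $Path(B^{f})$ to a genuine path object in $\mathcal{D}$, that right-homotopic maps agree in $\mathrm{Ho}(\mathcal{D})$, and that saturation upgrades invertibility in $\mathrm{Ho}(\mathcal{D})$ to a weak equivalence---are all handled correctly).
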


As explained in the following subsections, all objects in the category $\Sigma Seq$ are fibrant and the identity functor produces a functorial fibrant replacement in the category $\Sigma\Operad$. So, the transfer principle can easily be applied to the adjunction $(\mathcal{F}^{\Sigma};\mathcal{U}^{\Sigma})$. Unfortunately, the objects in $\Lambda_{>0} Seq$ are not necessarily fibrant and the second author proves in~\cite{Fre2} the existence of the model category structure for reduced operads without the transfer principle. In the present work, we build an explicit functorial fibrant replacement in both categories $\Lambda Seq$ and $\Lambda_{>0} Seq$. This resolution will be enhanced in the next sections in order to define Reedy model category structures for (infinitesimal) bimodules using the transfer principle.

Both categories $\Sigma Seq$ and $\Lambda Seq$ are obtained as categories of functors from $\Sigma$ and $\Lambda$ to topological spaces. So the following model category structures are particular cases of model categories of diagrams. We refer to \cite{GKR, HKRS, BHK} for a comprehensive study of projective model categories of diagrams over a discrete category (and of dual injective model categories of diagrams), to \cite{Mo, DRO} for a study of projective model categories of diagrams in the enriched setting. For Reedy model structures and applications to simplicial homotopy theory, we refer the reader to \cite{GJ,Ree} and to
 \cite{Ba,An,BM4,Re2,RV} for generalizations to enriched categories or extended Reedy categories.

\subsection{The projective model categories of $G$-spaces and of $\Sigma$-sequences}\label{MCSSpace}

\noindent \textit{$\bullet$ The model category of spaces.} In what follows, by spaces we mean  compactly generated, but not necessarily Hausdorff, topological 
spaces. Such spaces are often called  $k$-spaces~\cite{Hov1}.  One has 
a natural {\it kelleyfication functor} from the category of all topological spaces to $k$-spaces. The topology of mapping spaces, products, subspaces and more generally limits in this category are defined by taking kellyfication of  their usual compact-open, product and subspace topologies.  The coproducts, quotients and more generally any colimits of $k$-spaces are automatically $k$-spaces and kelleyfication is not necessary. The category of $k$-spaces has the advantage of being cartesian closed~\cite[Theorem 5.5]{Lew}. This statement implies that the cartesian products distribute over colimits, which is a prerequisite for the theory of operads and operadic objects. Moreover, it implies that the product of two quotient maps (in particular, of a quotient map and an identity one) is again a quotient map in the category of $k$-spaces \cite[Corollary~5.9.10]{Br}. We use this observation
 in our constructions of free objects and pushouts in the category of operads and in the categories of bimodules over operads.

%We denote by $Top$ the category of $k$-spaces which has the advantage to be cartesian closed~\cite[Theorem 5.5]{Lew}. % As explained in \cite[Lemma 2.4.1]{Hov1}, every topological space is small relative to the inclusions. 

The category of spaces, fixed in the previous paragraph (thus, the category of $k$-spaces), is denoted by $Top$ and is equipped with the Quillen model category structure (see \cite[Theorem 2.4.23]{Hov1})  in which a continuous map is a weak equivalence (respectively, a  fibration) if it is a weak homotopy equivalence (respectively, a Serre fibration). According to this definition, all spaces are fibrant and the model category $Top$ is cofibrantly generated. The set of  generating cofibrations $S_{c}$ and the set of generating acyclic cofibrations $S_{ac}$ are the following ones, where $S^{-1}$ denotes the empty set: 
$$
S_{c}= \big\{ \,\, S^{n-1}\hookrightarrow D^n,\text{ } n\geq 0\,\,
\big\} \hspace{15pt}\text{and} \hspace{15pt} 
S_{ac}=  \big\{\,\, 
D^n\times\{0\}\hookrightarrow D^n\times [0,1],
\text{ } n\geq 0
\,\,\big\}. \vspace{5pt}
$$

\noindent \textit{$\bullet$ Projective model category of $G$-spaces.} Let $G$ be a topological monoid. The category $G\text{-}Top$ of $G$-spaces consists of spaces equipped with a right action of $G$. There is an adjunction $G[-]:Top\rightleftarrows G\text{-}Top:\mathcal{U}$, where $\mathcal{U}$ is the forgetful functor and $G[-]$ is the functor sending a space $X$ to the $G$-space $G[X]=X\times G$. As a consequence of Theorem~\ref{E3}, the category $G\text{-}Top$ inherits a cofibrantly generated model category structure whose sets of generating cofibrations and acyclic cofibrations are $G[S_{c}]$ and $G[S_{ac}]$, respectively. Indeed, the identity functor provides a fibrant replacement functor while, for any $G$-space $X$, the functorial path object is given by the mapping space
$$
Path(X)= Map(\,[0\,,\,1]\,,\,X\,).
$$
Cofibrations and fibrations in this category will be called {\it $G$-cofibrations} and {\it $G$-fibrations}, respectively. We will be mostly using spaces with a right action
of a monoid (or a group). At few occasions we will need to deal with spaces endowed with a left action. Such spaces will be called {\it left} $G$-spaces and the category
of such will be denoted by $G^{op}\text{-}Top$.\vspace{5pt}

\noindent \textit{$\bullet$ Projective model category of $\Sigma$-sequences.} Let $\Sigma$ be the category whose objects are finite sets $[n]=\{1,\ldots,n\}$, with $n\geq 0$, and morphisms are bijections between them. By a $\Sigma$-sequence, we mean a contravariant functor from $\Sigma$ to the category of spaces. In practice, a $\Sigma$-sequence is given by a family of spaces $X(0),$ $X(1),\ldots$ together with an action of the symmetric group: for each permutation $\sigma \in \Sigma_{n}$, there is a map
\begin{equation}\label{A0}
\begin{array}{rcl}
\sigma^{\ast}:X(n) & \longrightarrow & X(n); \\ 
 x & \longmapsto & x\cdot \sigma,
\end{array} 
\end{equation}
satisfying the relations $(x\cdot\sigma)\cdot \tau=x\cdot(\sigma\tau)$, with $\tau\in \Sigma_{n}$, and $x\cdot e=x$. A morphism between $\Sigma$-sequences is a family of continuous maps that should preserve the right action of the symmetric groups. We denote by $\Sigma Seq$ the category of $\Sigma$-sequences and by $\Sigma_{>0} Seq$ its subcategory composed of $\Sigma$-sequences whose arity $0$ component is empty.

Given an integer $r\geq 0$, we also consider the category of $r$-truncated $\Sigma$-sequences $T_{r}\Sigma Seq$ which we define as follows. Let $T_{r}\Sigma$ be the category with objects $[n]=\{1,\ldots,n\}$, $0\leq n \leq r$, and bijections between them. An $r$-truncated $\Sigma$-sequence is a contravariant functor from $T_{r}\Sigma$ to the category of spaces. In practice, an $r$-truncated $\Sigma$-sequence is given by a family of spaces $X(0),\ldots, X(r)$ together with an action of the corresponding symmetric group $\Sigma_{n}$ for each $n\leq r$.

 A (possibly truncated) $\Sigma$-sequence is said to be \textit{pointed} if there is a distinguished element $\ast_{1}\in X(1)$ called \textit{unit}. It is said \textit{well-pointed} if the inclusion
$*_1\to X(1)$ is a cofibration. 
Recall that we deal with the Quillen model structure on topological spaces. Thus, we consider the cofibrations of the Quillen model structure in this definition, not the Hurewicz cofibrations of the usual notion of well-pointed space.
Note that a space $T$ is well-pointed in this sense as soon as it is cofibrant (with respect to the Quillen model structure). Indeed, in this case, $T$ occurs as a retract of a (generalized) CW-complex and we may just observe that the inclusion of any choice of base point in a (generalized) CW-complex is a cofibration to conclude that this is the same for $T$. From this observation, we deduce that a (truncated) $\Sigma$-sequence $X$ is well-pointed in the sense of our definition if and only if it is pointed and its  component $X(1)$ is cofibrant.

 There is an obvious functor called \textit{truncation functor} 
$$
T_{r}(-):\Sigma Seq \longrightarrow T_{r}\Sigma Seq.
$$

One has
$$
\Sigma Seq=\prod_{n\geq 0} \Sigma_{n}\text{-}Top, \hspace{20pt}\Sigma_{>0} Seq=\prod_{n\geq 1} \Sigma_{n}\text{-}Top \hspace{15pt}\text{and}\hspace{15pt} T_{r}\Sigma Seq= \prod_{0\leq n\leq r} \Sigma_{n}\text{-}Top.
$$%\vspace{5pt}

Since $\Sigma_{n}\text{-}Top$ is a cofibrantly generated model category for any $n\geq 0$, the categories $\Sigma Seq$, $\Sigma_{>0} Seq$ and $T_{r}\Sigma Seq$ are endowed with a cofibrantly generated model category structure, called \textit{the projective model category structure}, in which all objects are fibrant. More precisely, a map between (possibly truncated) $\Sigma$-sequences is a weak equivalence (respectively, a fibration) if the map is degreewise  a weak homotopy equivalence (respectively, a Serre fibration). The sets of generating cofibrations $S_{c}$ and acyclic cofibrations $S_{ac}$ of $\Sigma Seq$ (respectively, $\Sigma_{>0} Seq$ and $T_{r}\Sigma Seq$) are given by 
$$
S_{c}=\underset{\substack{n\geq 0 \\ (\text{resp. } n>0 \text{ and }\\  0\leq n \leq r )}}{\bigcup} \left( S_{c}^{n}\times \, \underset{m\neq n}{\prod}\, 1_{m}\right) \hspace{15pt}\text{and}\hspace{15pt}
S_{ac}=\underset{\substack{n\geq 0 \\ (\text{resp. } n>0 \text{ and }\\  0\leq n \leq r )}}{\bigcup} \left( S_{ac}^{n}\times \, \underset{m\neq n}{\prod}\, 1_{m}\right)
$$
where $S_{c}^{n}$ and $S_{ac}^{n}$ are the sets of generating cofibrations and  acyclic cofibrations, respectively, of $\Sigma_{n}\text{-}Top$ while $1_{m}\colon\emptyset\to\emptyset$ is the identity map of the initial object of $\Sigma_{m}\text{-}Top$ (see \cite[Proposition 11.1.10]{Hir}).\vspace{5pt}

\noindent \textit{$\bullet$ Notation for cofibrations.} Let $\mathcal{C}$ be a category together with a functor $\mathcal{U}$ from $\mathcal{C}$ to the category $\Sigma Seq$ (respectively the categories $\Sigma_{>0} Seq$ and $T_{r}\Sigma  Seq$). In the rest of the paper, an object $C$ in the category $\mathcal{C}$ is said to be \textit{$\Sigma$-cofibrant}  if the underlying $\Sigma$-sequence $\mathcal{U}(C)$ is cofibrant in the projective model category $\Sigma Seq$ (respectively, $\Sigma_{>0} Seq$ and $T_{r}\Sigma Seq$). It is called \textit{componentwise cofibrant} if every component  $U(C)(n)$ is cofibrant in $Top$. In case the
forgetful functor $U$ factors through the category of pointed (truncated) $\Sigma$-sequences, the object $C$ is called \textit{well-pointed} if $U(C)(1)$ is cofibrant. In the following the category $\mathcal{C}$ will be the category of operads $\Sigma\Operad$, of reduced operads $\Lambda_{\ast}\Operad$, of bimodules $\Sigma\Bimod_{P\,;\,Q}=\Lambda\Bimod_{P\,;\,Q}$, of
reduced bimodules $\Lambda_*\Bimod_{P\,;\,Q}$, or  of infinitesimal bimodules $\Sigma\Ibimod_O=
\Lambda\Ibimod_{O}$.

\subsection{The Reedy model categories of $\Lambda$- and $\Lambda_{>0}$-sequences}\label{Sect1.2}

\noindent \textit{$\bullet$ The category of $\Lambda$-sequences.} We refer the reader to \cite{Fre1,Fre2} for a detailed account on the categories introduced in this subsection. Let $\Lambda$ be the category whose objects are finite sets $[n]=\{1,\ldots,n\}$, with $n\geq 0$, and morphisms are injective maps between them. (Hence, $\Sigma$ is the subcategory of isomorphisms of $\Lambda$.) By a $\Lambda$-sequence, we understand a contravariant functor from $\Lambda$ to spaces and we denote the corresponding category by $\Lambda Seq$. In practice, such an object is given by a $\Sigma$-sequence $X(0),\,X(1),\ldots$ together with maps generated by applications of the form 
\begin{equation}\label{E5}
s^{\ast}_{i}:X(n)\longrightarrow X(n-1), \,\,\,\text{with } 1\leq i\leq n,
\end{equation}
associated to the injective maps 
$$
\begin{array}{ccccccc}
s_{i}:[n-1] & \longrightarrow & [n] & ; & \ell & \longmapsto & \left\{
 \begin{array}{ll}\vspace{5pt}
 \ell & \text{if } \ell< i, \\ 
 \ell+1 & \text{if } \ell\geq i.
 \end{array} 
 \right.
\end{array} 
$$

Given an integer $r\geq 0$, we also consider the full subcategory $T_{r}\,\Lambda$ whose objects are families of finite sets $[n]=\{1,\ldots,n\}$, with $0\leq n \leq r$. An $r$-truncated $\Lambda$-sequence is a contravariant functor from $T_{r}\,\Lambda$ to spaces and we denote by $T_{r}\,\Lambda Seq$ the associated category. We will also be using the categories $\Lambda_{>0}$ and $T_r\Lambda_{>0}$ which are full subcategories of non-empty objects of $\Lambda$ and $T_r\Lambda$. The categories  $\Lambda_{>0} Seq$ and $T_r\Lambda_{>0} Seq$ are similarly defined. There exist obvious truncation functors
$$
T_{r}(-):\Lambda Seq\longrightarrow T_{r}\,\Lambda Seq \hspace{20pt}\text{and} \hspace{20pt} T_{r}(-):\Lambda_{>0} Seq\longrightarrow T_{r}\,\Lambda_{>0} Seq.
$$ \vspace{5pt}

\noindent \textit{$\bullet$ Useful adjunctions.} The inclusions of categories $\Sigma\subset \Lambda$ and $T_{r}\Sigma\subset T_{r}\Lambda$ induce adjunctions between the categories of (possibly truncated) $\Sigma$-sequences and $\Lambda$-sequences 
$$
\Lambda[-]:\Sigma Seq \rightleftarrows \Lambda Seq:\mathcal{U} \hspace{15pt}\text{and}\hspace{15pt}\Lambda_{r}[-]:T_{r}\Sigma Seq \rightleftarrows T_{r}\Lambda Seq:\mathcal{U}
$$
where $\mathcal{U}$ is the obvious forgetful functor, which forgets about the operations generated by (\ref{E5}). The functor $\Lambda[-]$ sends a $\Sigma$-sequence $X$ to the $\Lambda$-sequence $\Lambda[X]$ given by 
$$
\Lambda[X](n):=\underset{\substack{\Lambda_{+}([n];[m])\\ m\geq n}}{\coprod}X(m),\hspace{15pt}\text{for all } n\geq 0,
$$
where $\Lambda_{+}$ is the subcategory of order preserving injective maps. A point in $\Lambda[X](n)$ is denoted by $(h;x)$ with $h:[n]\rightarrow [m]$ an order preserving injective map and $x\in X(m)$. For any permutation $\sigma \in \Sigma_{n}$ and any order preserving injective map $h:[n]\rightarrow [m]$, we denote by $\sigma_{h}\in \Sigma_{m}$ the permutation 
$$
\sigma_{h}(i):=\left\{
\begin{array}{cl}\vspace{5pt}
h(\sigma(j)), & \text{if } h(j)=i, \\ 
i, & \text{otherwise}.
\end{array} 
\right.
$$  
According to this notation, the $\Lambda$-structure on $\Lambda[X]$ is given by the following formulas:
$$
\begin{array}{cclcccc}%\vspace{9pt}
\sigma^{\ast}:\Lambda[X](n) & \longrightarrow & \Lambda[X](n) & ; & (h;x) & \longmapsto & (h;x\cdot \sigma_{h}), \\ 
s_{i}^{\ast}:\Lambda[X](n) & \longrightarrow & \Lambda[X](n-1) & ; & (h;x) & \longmapsto & (h\circ s_{i};x).
\end{array} \vspace{5pt}
$$

\noindent \textit{$\bullet$ The matching object.} For a (possibly truncated) $\Lambda$-sequence $X$, the \textit{matching object} of $X$, denoted by $\mathcal{M}(X)$, is the (possibly truncated) $\Sigma$-sequence defined as follows: 
\begin{equation}\label{eq:matching}
\mathcal{M}(X)(\,n\,)=\underset{\substack{h\in \Lambda_{+}([\ell]\,;\,[n])\\ \ell< n}}{\mathrm{lim}} X(\ell).
\end{equation}
Let $\sigma\in \Sigma_{n}$ be a permutation and $h:[\ell]\rightarrow [n]$ be an order preserving inclusion. We denote by $h\cdot \sigma:[\ell]\rightarrow [n]$ the unique order preserving inclusion whose image is $\mathrm{Im}(\sigma\circ h)$ and $\sigma[h]\in \Sigma_{\ell}$ is the unique permutation satisfying $\sigma[h](i)<\sigma[h](j)$ if and only if $\sigma(h(i))<\sigma(h(j))$. According to this notation, the action of the symmetric group on the matching object is the following one:
$$
\begin{array}{rcl}%\vspace{7pt}
\sigma^{\ast}:\mathcal{M}(X)(n) & \longrightarrow & \mathcal{M}(X)(n); \\ 
x=\{x_{h}\}_{h} & \longmapsto & \sigma^{\ast}(x)=\{x_{h\cdot \sigma}\cdot \sigma[h]\}_{h}.
\end{array} \vspace{5pt}
$$

\noindent \textit{$\bullet$ The Reedy model category of $\Lambda$-sequences.} According to \cite[Theorem 8.3.19]{Fre2}, the categories $\Lambda Seq$, $\Lambda_{>0} Seq$, $T_{r}\,\Lambda Seq$ and $T_{r}\,\Lambda_{>0} Seq$ are endowed with cofibrantly generated model category structures in which weak equivalences are objectwise weak homotopy equivalences. A morphism $f:X\rightarrow Y$ is a fibration if the corresponding maps
$$
X(n)\longrightarrow \mathcal{M}(X)(n)\times_{\mathcal{M}(Y)(n)}Y(n),
$$
whenever defined, are  Serre fibrations.  The set of generating cofibrations (respectively the set of generating acyclic cofibrations) consists of maps of the form
$$
(\Lambda[f],\iota):\Lambda[X] \underset{\partial \Lambda[X]}{\coprod} \partial \Lambda [Y]\longrightarrow \Lambda[Y],
$$
where $f:X\rightarrow Y$ is a generating cofibration (respectively a generating acyclic cofibration) in the projective model category of $\Sigma$-sequences. The map $\iota:\partial\Lambda[Y]\rightarrow \Lambda[Y]$ is the inclusion where $\partial \Lambda[-]$ is the functor from $\Sigma$-sequences to $\Lambda$-sequences left adjoint to the matching object functor $\mathcal M$ and expressed by the formula
$$
\partial \Lambda[Y](n)=\underset{\substack{h\in \Lambda_{+}([n]\,;\,[\ell])\\  \ell>n}}{\mathrm{colim}} \Lambda[Y](\ell) = 
\underset{\substack{\Lambda_{+}([n];[\ell])\\ \ell> n}}{\coprod}Y(\ell).
$$

%$\mathbb{T}$

\begin{rmk}\label{rmk:truncation}
The truncation functors 
$$
T_r\colon \Lambda Seq\to T_r\,\Lambda Seq;\quad
 T_r\colon \Lambda_{>0} Seq\to T_r\,\Lambda_{>0} Seq;\quad
  T_r\colon T_{r'}\, \Lambda Seq\to T_r\,\Lambda Seq, \,\,\,r'>r,
  $$
   preserve fibrations, cofibrations
and weak equivalences.
\end{rmk}

 For fibrations and weak equivalences, the statement follows from definition. For cofibrations we recall \cite[Theorem~8.3.20]{Fre2} that a morphism in the Reedy model structure is a cofibration if and only if it is a projective cofibration in the corresponding category of (truncated) $\Sigma$-sequences.

%$\mathbb{P}$

\subsection{A fibrant  replacement functor for $\Lambda$- and $\Lambda_{>0}$-sequences}\label{A9} 

%$\mathbb{P}$

As explained previously, in order to apply the transfer principle, described in Theorem \ref{E3}, we need a functorial fibrant replacement. Unfortunately, the objects in $\Lambda Seq$ and $\Lambda_{>0} Seq$ are not necessarily fibrant and therefore the identity functors can not be regarded as a fibrant replacement one. To solve this problem, we build explicit and functorial fibrant coresolutions
$$
(-)^{f}:\Lambda Seq \longrightarrow \Lambda Seq \hspace{20pt}\text{and}\hspace{20pt} (-)^{f}:\Lambda_{>0} Seq \longrightarrow \Lambda_{>0} Seq.
$$

For this purpose, we need some notation. For any map $h\in \Lambda_{+}([\ell];[n])$, we denote by $h_{c}\in \Lambda_{+}([n-\ell];[n])$ its complementary map which is the unique order preserving inclusion so that $Im(h)\cap Im(h_{c})=\emptyset$. Then, for any pair of order preserving inclusions of the form $s_{i}:[\ell]\rightarrow [\ell+1]$ and $h:[\ell+1]\rightarrow [n]$, we denote by $\varepsilon_{h;i}\in [n-\ell]$ the unique index such that the following diagram commute:
$$
\xymatrix{
[n-\ell-1] \ar[rr]^{s_{\varepsilon_{h;i}}}\ar[dr]_{h_{c}} & & [n-\ell]\ar[dl]^{(h\circ s_{i})_{c}} \\
& [n] & 
}
$$
Let $X$ be a $\Lambda$-sequence. The space $X^{f}(n)$ is the subspace  
$$
X^{f}(n)\subset \prod_{\substack{\Lambda_{+}([\ell]\,,\,[n])\\ \ell \leq n}} Map\big(\, [0\,,\,1]^{n-\ell}\,;\,X(\ell)\,\big),
$$
consisting of families of maps $\{f_{h}\}_{h\in \Lambda_{+}([\ell]\,,\,[n])}$ such that
\begin{equation}\label{A4}
\xymatrix@C=45pt@R=35pt{
[0\,,\,1]^{n-\ell-1}\ar[d]^{f_{h}} \ar[r]^{\tau_{1}[\varepsilon_{h;i}]} & [0\,,\,1]^{n-\ell}\ar[d]^{f_{h\circ s_{i}}} \\
X(\ell+1)\ar[r]^{s_{i}^{\ast}} & X(\ell)
}
\end{equation}
where $\tau_{t}[k]:[0\,,\,1]^{n-\ell-1}\rightarrow [0\,,\,1]^{n-\ell}$, with $t\in [0\,,\,1]$ and $k\in [n-\ell-1]$, inserts $t$ at the $k$-th position:%\vspace{5pt}
$$
\tau_{t}[k](t_{1},\ldots,t_{n-\ell-1})= (t'_{1},\ldots,t'_{n-\ell}) \hspace{15pt} \text{with}\hspace{15pt} 
t'_{j}=\left\{
\begin{array}{cc}\vspace{4pt}
t_{j} & \text{if } j< k, \\ \vspace{4pt}
t & \text{if } j= k,  \\ 
t_{j-1} & \text{if } j> k. 
\end{array} 
\right.
$$

\noindent \textit{$\bullet$ The $\Lambda$-structure on $X^{f}$.} In order to describe the $\Lambda$-structure we consider the following notation. For any order preserving inclusions $s_{i}:[n-1]\rightarrow [n]$ and $h:[\ell]\rightarrow [n-1]$, we denote by $\vartheta_{h;i}$ the unique index such that the following diagram commute:
$$
\xymatrix@C=35pt@R=35pt{
[n-\ell-1]\ar[r]^{h_{c}}\ar[d]_{s_{\vartheta_{h;i}}} & [n-1]\ar[d]^{s_{i}}\\
[n-\ell]\ar[r]_{(s_{i}\circ h)_{c}} & [n] 
}
$$
According to this notation, the $\Lambda$-structure operations are given by
$$
\begin{array}{ccllcccl}\vspace{9pt}
s^{\ast}_{i}: & X^{f}(n) & \longrightarrow & X^{f}(n-1) & ; & \{f_{h}\}_{h\in \Lambda_{+}([\ell]\,,\,[n])} & \longmapsto & \{(s^{\ast}_{i}\circ f)_{h}\}_{h\in \Lambda_{+}([\ell]\,,\,[n-1])}, \\ 
\sigma^{\ast}: & X^{f}(n) & \longrightarrow &  X^{f}(n) & ; & \{f_{h}\}_{h\in \Lambda_{+}([\ell]\,,\,[n])} & \longmapsto & \{(f\cdot \sigma)_{h}\}_{h\in \Lambda_{+}([\ell]\,,\,[n])},
\end{array} 
$$
where the continuous maps $(s^{\ast}_{i}\circ f)_{h}$ and $(f\cdot \sigma)_{h}$ are the following ones:
\begin{equation}\label{E1}
\begin{array}{rlcccrcl}\vspace{9pt}
(s^{\ast}_{i}\circ f)_{h}: & [0\,,\,1]^{n-\ell-1} & \longrightarrow & X(\ell) & ; & (t_{1},\ldots,t_{n-\ell-1}) & \longmapsto & f_{s_{i}\circ h}(\tau_{0}[\vartheta_{h;i}](t_{1},\ldots,t_{n-\ell-1})),\\ 
(f\cdot \sigma)_{h}: & [0\,,\,1]^{n-\ell} & \longrightarrow & X(\ell) & ; &  (t_{1},\ldots,t_{n-\ell}) & \longmapsto & f_{h\cdot\sigma}(t_{\sigma[h_{c}]^{-1}(1)},\ldots, t_{\sigma[h_{c}]^{-1}(n-\ell)})\cdot \sigma[h].
\end{array} 
\end{equation}

\noindent \textit{$\bullet$ The fibrant replacement functor.} The $\Lambda$-sequence $X^{f}$ is obviously functorial along the $\Lambda$-sequence $X$. Furthermore, there is a map $\varphi:X\rightarrow X^{f}$ sending a point $x\in X(n)$ to the family of constant maps $\{\varphi(x)_{h}\}_{h\in\Lambda_{+}([\ell]\,,\,[n])}$ obtained using the $\Lambda$-structure of $X$:
$$
\xymatrix@R=1pt{
\varphi(x)_{h}:\hspace{-20pt} & [0\,,\,1]^{n-\ell} \ar[r] & X(\ell);\\
& (t_{1},\ldots,t_{n-\ell}) \ar@{|->}[r] & h^{\ast}(x). 
}
$$
The map is well defined and preserves the $\Lambda$-structures. Indeed, one has the following equalities:
$$
\begin{array}{ccl}\vspace{7pt}
s^{\ast}_{i}(\varphi(x)) & = & s^{\ast}_{i}\left(\left\{
\begin{minipage}{10pt}
\xymatrix@R=-2pt{
\varphi(x)_{h}:\hspace{-30pt} & [0\,,\,1]^{n-\ell} \ar[r] & X(\ell);\\
& (t_{1},\ldots,t_{n-\ell}) \ar@{|->}[r] & h^{\ast}(x). 
}
\end{minipage}
 \right\}_{h\in \Lambda_{+}([\ell]\,,\,[n])}\right) \\ \vspace{7pt}
 
& = & \left\{
\begin{minipage}{10pt}
\xymatrix@R=-2pt{
s^{\ast}_{i}\circ\varphi(x)_{h}:\hspace{-30pt} & [0\,,\,1]^{n-\ell-1} \ar[r] & X(\ell);\\
& (t_{1},\ldots,t_{n-\ell-1}) \ar@{|->}[r] & \varphi(x)_{s_{i}\circ h}(t_{1},\ldots,t_{\vartheta_{h;i}-1},0,t_{\vartheta_{h;i}},\ldots,t_{n-\ell-1}). 
}
\end{minipage}
 \right\}_{h\in \Lambda_{+}([\ell]\,,\,[n-1])}\\  \vspace{5pt}

\varphi(s^{\ast}_{i}(x))  & = & \left\{
\begin{minipage}{10pt}
\xymatrix@R=-2pt{
\varphi(x)_{h}:\hspace{-30pt} & [0\,,\,1]^{n-\ell-1} \ar[r] & X(\ell);\\
& (t_{1},\ldots,t_{n-\ell-1}) \ar@{|->}[r] & h^{\ast}(s^{\ast}_{i}(x)). 
}
\end{minipage}
 \right\}_{h\in \Lambda_{+}([\ell]\,,\,[n-1])}

\end{array} 
$$

\begin{pro}\label{Z7}
The map $\phi:X\rightarrow X^{f}$ is a weak equivalence of $\Lambda$-sequences.
\end{pro}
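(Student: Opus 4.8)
The plan is to prove the statement arity by arity: I would show that for each $n$ the map $\varphi\colon X(n)\to X^{f}(n)$ is a strong deformation retract inclusion, which yields an objectwise homotopy equivalence and hence the asserted weak equivalence of $\Lambda$-sequences. First I would introduce the retraction $p\colon X^{f}(n)\to X(n)$ that records the component of a family $\{f_{h}\}$ indexed by the identity inclusion $\mathrm{id}\colon[n]\to[n]$; since $[0,1]^{n-n}$ is a point, this component is just an element of $X(n)$, so $p(\{f_{h}\})=f_{\mathrm{id}}$. Because $\varphi(x)_{\mathrm{id}}$ is the constant map with value $\mathrm{id}^{\ast}(x)=x$, one gets $p\circ\varphi=\mathrm{id}_{X(n)}$ at once.

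The core of the argument is a homotopy that contracts every cube toward its all-ones vertex. For $t\in[0,1]$ let $\rho_{t}\colon[0,1]^{k}\to[0,1]^{k}$ be the affine map $\rho_{t}(t_{1},\dots,t_{k})=((1-t)t_{1}+t,\dots,(1-t)t_{k}+t)$, so $\rho_{0}=\mathrm{id}$ and $\rho_{1}$ is constant at $(1,\dots,1)$. I would set $H(\{f_{h}\},t)=\{f_{h}\circ\rho_{t}\}$ as a map $X^{f}(n)\times[0,1]\to\prod_{h}\mathrm{Map}([0,1]^{n-\ell},X(\ell))$. The first thing to verify is that $H$ lands in the subspace $X^{f}(n)$, i.e. that it preserves the coherence relation (\ref{A4}). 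This follows from the identity $\rho_{t}\circ\tau_{1}[k]=\tau_{1}[k]\circ\rho_{t}$ — inserting a $1$ commutes with the scaling because $\rho_{t}$ fixes the value $1$ — combined with (\ref{A4}) for $\{f_{h}\}$: precomposing both sides of (\ref{A4}) with $\rho_{t}$ and moving it past $\tau_{1}[\varepsilon_{h;i}]$ reproduces (\ref{A4}) for $\{f_{h}\circ\rho_{t}\}$. Continuity of $H$ is automatic from cartesian closedness of $k$-spaces, since $t\mapsto\rho_{t}$ is continuous and composition is continuous.

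It remains to identify the two ends of $H$. Obviously $H(-,0)=\mathrm{id}_{X^{f}(n)}$. For the other end I would evaluate at the all-ones vertex and prove, by induction on the codimension $n-\ell$, that $f_{h}(1,\dots,1)=h^{\ast}(f_{\mathrm{id}})$: writing $h=\tilde h\circ s_{i}$ with $\tilde h\colon[\ell+1]\to[n]$ and applying (\ref{A4}) at $(1,\dots,1)$ (which $\tau_{1}[\varepsilon_{\tilde h;i}]$ preserves) gives $f_{h}(1,\dots,1)=s_{i}^{\ast}\bigl(f_{\tilde h}(1,\dots,1)\bigr)=s_{i}^{\ast}\tilde h^{\ast}(f_{\mathrm{id}})=h^{\ast}(f_{\mathrm{id}})$, using the inductive hypothesis and contravariance. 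Hence $H(\{f_{h}\},1)_{h}$ is the constant map with value $h^{\ast}(f_{\mathrm{id}})$, which is exactly $\varphi(p(\{f_{h}\}))_{h}$, so $H(-,1)=\varphi\circ p$. Moreover $H$ fixes the image of $\varphi$ pointwise, since precomposing a constant map with $\rho_{t}$ changes nothing; thus $H$ is a strong deformation retraction of $X^{f}(n)$ onto $\varphi(X(n))$. Together with $p\circ\varphi=\mathrm{id}$ this shows $\varphi$ is a homotopy equivalence in each arity, hence a weak equivalence of $\Lambda$-sequences; the identical computation applies to $\Lambda_{>0}$-sequences.

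The main obstacle is conceptual rather than computational: one must notice that the relations (\ref{A4}) pin down precisely the faces on which a cube coordinate equals $1$, which forces the contraction to be aimed at the all-ones vertex (not the origin). The two steps that then require careful bookkeeping with $\tau_{1}[\,\cdot\,]$ and the complementary-index data $\varepsilon_{h;i},\vartheta_{h;i}$ are checking that this contraction preserves $X^{f}(n)$ and that the all-ones value reconstructs $h^{\ast}(f_{\mathrm{id}})$.
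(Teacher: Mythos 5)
Your proposal is correct and follows essentially the same route as the paper: the retraction that records the identity-indexed component (the paper's $\psi_{n}(\{f_{h}\})=f_{[n]\rightarrow[n]}(\ast)$) together with the homotopy $f_{h}\mapsto f_{h}\bigl((1-t)t_{1}+t,\ldots,(1-t)t_{n-\ell}+t\bigr)$ contracting each cube to its all-ones vertex, exhibiting $\varphi$ as a deformation retract in each arity. The paper states this without verification; your checks that the contraction preserves relation (\ref{A4}) (via $\rho_{t}\circ\tau_{1}[k]=\tau_{1}[k]\circ\rho_{t}$) and that $f_{h}(1,\ldots,1)=h^{\ast}(f_{\mathrm{id}})$ by induction on codimension are exactly the details the paper leaves implicit, and they are correct.
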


\begin{proof}
More precisely, we show that the map of $\Lambda$-sequences $\varphi_{n}:X(n)\rightarrow X^{f}(n)$ is a homotopy equivalence of $\Sigma$-sequences. For this purpose, we introduce a map of $\Sigma$-sequences (which is not a map of $\Lambda$-sequences) $\psi:X^{f}\rightarrow X$ given by
$$
\xymatrix@R=-2pt{
\psi_{n}:\hspace{-30pt} &X^{f}(n) \ar[r] & X(n);\\
& \{f_{h}\}_{h\in \Lambda_{+}([\ell]\,,\,[n])} \ar@{|->}[r] & f_{[n]\rightarrow [n]}(\ast), 
}
$$
which makes $\varphi$ into a deformation retract. The homotopy consists in bringing the parameters to $1$:
$$
\begin{minipage}{10pt}
\xymatrix@R=-1pt{
H_{n}: [0\,,\,1]\times X^{f}(n)\ar[r] & X^{f}(n);\\
t\,;\,\{f_{h}\} \ar@{|->}[r] & \{H_{n}(t\,;\,f_{h})\},
}
\end{minipage}
$$
with $H_{n}(t\,;\,f_{h})(t_{1},\ldots, t_{n-\ell})=f_{h}\big( (1-t)t_{1}+t,\ldots,(1-t)t_{n-\ell}+t\big)$.
\end{proof}

\begin{pro}\label{E2}
The $\Lambda$-sequence $X^{f}$ is Reedy fibrant. 
\end{pro}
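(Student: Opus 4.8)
The plan is to verify the fibrancy condition directly from the definition of fibrations in $\Lambda Seq$. Applying that definition to the terminal map $X^{f}\to *$ and using that $\mathcal{M}(*)(n)=*=*(n)$, we see that $X^{f}$ is Reedy fibrant if and only if, for every $n\geq 0$, the matching map
$$
\rho_{n}\colon X^{f}(n)\longrightarrow \mathcal{M}(X^{f})(n)
$$
is a Serre fibration. I would check this by solving the lifting problem of $\rho_{n}$ against the generating acyclic cofibrations $D^{k}\times\{0\}\hookrightarrow D^{k}\times[0,1]$ of $Top$.

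First I would make $\rho_{n}$ completely explicit. Unwinding (\ref{eq:matching}) together with the formulas (\ref{E1}) for the $\Lambda$-structure of $X^{f}$, one finds that the datum of a point of $\mathcal{M}(X^{f})(n)$ is equivalent to the datum, for every proper order preserving injection $h'\colon[\ell']\hookrightarrow[n]$ (i.e. with $\ell'<n$), of the restriction of the map $f_{h'}\colon[0,1]^{n-\ell'}\to X(\ell')$ to the union of its ``$0$-faces'' (the facets on which one of the coordinates indexed by $[n]\setminus\mathrm{Im}(h')$ is set to $0$); indeed these faces are exactly what the insertions $\tau_{0}[\,\cdot\,]$ of (\ref{E1}) produce. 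Thus $\rho_{n}$ is a restriction map, and $X^{f}(n)$ is obtained from $\mathcal{M}(X^{f})(n)$ by adding the top value $f_{\mathrm{id}_{[n]}}\in X(n)$ and the interiors of all the cubes, subject only to the gluing relation (\ref{A4}), which identifies the ``$1$-faces'' of each cube $f_{h'}$ with the maps $s^{\ast}_{i}f_{h}$ attached to the injections $h$ whose image is $\mathrm{Im}(h')\cup\{j\}$ for some $j$.

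With this description I would solve the lifting problem by induction on the cube dimension $d=n-\ell'$. When $d=0$ the only cube is the point $f_{\mathrm{id}_{[n]}}\in X(n)$, which is seen neither by $\mathcal{M}(X^{f})(n)$ nor by any gluing relation, so the homotopy is extended freely (say constantly). For $d>0$, when we reach the cube of $f_{h'}$ all cubes of strictly larger image (hence strictly smaller dimension) have already been deformed; the relation (\ref{A4}) then prescribes the sought homotopy $\widetilde{f}_{h'}$ on the $1$-faces $[0,1]\times\{\,\text{coordinate}=1\,\}$ in terms of those already constructed pieces, the map into $\mathcal{M}(X^{f})(n)$ prescribes it on the $0$-faces, and the starting family prescribes it on $D^{k}\times\{0\}$. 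The remaining extension problem is an extension along the inclusion
$$
D^{k}\times\bigl(\{0\}\times[0,1]^{d}\ \cup\ [0,1]\times\partial[0,1]^{d}\bigr)\ \hookrightarrow\ D^{k}\times[0,1]\times[0,1]^{d},
$$
and since the ``box'' $\{0\}\times[0,1]^{d}\cup[0,1]\times\partial[0,1]^{d}$ is a retract of the cube $[0,1]^{d+1}$, this extension always exists. Performing these extensions in order of increasing $d$ yields the desired lift $\widetilde{f}=\{\widetilde{f}_{h'}\}$ in $X^{f}(n)$.

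The step that needs the most care, and which I expect to be the main obstacle, is checking that at each inductive stage the pieces prescribed on the various facets of a cube are mutually compatible, i.e. that they agree on the codimension-two faces where a $0$-face meets a $1$-face, so that they indeed assemble into a well-defined continuous map out of $\partial[0,1]^{d}$ that one can then extend. This is a coherence between the matching relations and the relations (\ref{A4}), and it follows from the commutative triangles defining the indices $\varepsilon_{h;i}$ and $\vartheta_{h;i}$ together with the cubical identities relating the insertions $\tau_{0}[\,\cdot\,]$ and $\tau_{1}[\,\cdot\,]$. Finally I would note that the whole construction is natural in $X$ and equivariant for the $\Sigma_{n}$-action permuting the cube coordinates; this equivariance costs nothing here, since by the definition recalled above the matching map need only be a Serre fibration of underlying spaces.
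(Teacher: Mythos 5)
Your proof is correct, and at its core it runs on the same engine as the paper's: an induction over the dimension of the cubes indexing the components of $X^{f}(n)$, with each inductive step reducing to the fact that a cube-boundary inclusion is a cofibration (equivalently, that the box $\{0\}\times[0,1]^{d}\cup[0,1]\times\partial[0,1]^{d}$ is a retract of $[0,1]^{d+1}$). The packaging, however, is genuinely different. The paper never solves lifting problems by hand: it presents $X^{f}(n)$ and the comparison map as towers of pullbacks of mapping spaces and invokes the patching lemma (Lemma~\ref{E6}) at every stage, which silently disposes of all corner compatibilities; it also splits off the component $f_{\mathrm{id}}\in X(n)$ together with the evaluations $f_{s_{i}}(1)$ in a preliminary patching step, which is why its description of the target uses the spaces $\partial'[0\,,\,1]^{n-\ell}$ ($0$-faces plus the all-ones point). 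You instead work in the adjoint picture: you identify $\mathcal{M}(X^{f})(n)$ with families of maps on unions of $0$-faces, absorb $X(n)$ as the $d=0$ stage of the same induction, prescribe the $1$-faces via relation~(\ref{A4}) from the homotopies already built, and extend along the box retract. This is more elementary (no patching lemma), but it shifts the burden onto the coherence check you flag, which does hold for the reasons you indicate: $1$-face/$1$-face overlaps agree because your partial homotopies satisfy~(\ref{A4}) by construction at each earlier stage, and $1$-face/$0$-face overlaps agree because the matching-object data itself satisfies~(\ref{A4}) and the stage-$(d-1)$ homotopies restrict on their $0$-faces to that data. In short, the two arguments are adjoint forms of one another: the paper's route buys automatic coherence from Lemma~\ref{E6}, while yours is self-contained and makes the cube geometry explicit.
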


\begin{lmm}[Fiber product version of Reedy's patching lemma, see Lemma $1.3$ in  \cite{Re1}]\label{E6}
 For any commutative diagram of spaces of the form
$$
\xymatrix@R=18pt{
A\ar[r]^{f} \ar[d]_{v_{A}} & B \ar[d]_{v_{B}} & \ar[l]_{g} C  \ar[d]_{v_{C}}\\
A'\ar[r]_{f'}  & B'  & \ar[l]^{g'} C' 
}
$$
the induced map between the limits of the horizontal diagrams 
$$
v\colon\mathrm{lim}\big( A\rightarrow B \leftarrow C\big) \longrightarrow \mathrm{lim}\big( A'\rightarrow B' \leftarrow C'\big)
$$
is a fibration if the map $v_{C}$ as well as the map
\begin{equation}\label{H0}
(v_{A};f):A\longrightarrow B\times_{B'}A'
\end{equation}
are fibrations. Furthermore, the map $v$ is an acyclic fibration if the map $v_{C}$ and (\ref{H0}) are acyclic fibrations.  
\end{lmm}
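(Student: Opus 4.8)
The plan is to factor the map $v$ as a composite of two maps, each of which is a base change (pullback) of one of the two maps assumed to be fibrations, and then to invoke the stability of (acyclic) fibrations under pullback and composition in $Top$. Write $A\times_B C=\mathrm{lim}(A\rightarrow B\leftarrow C)$ and similarly for the primed objects. The intermediate object will be the mixed pullback $A'\times_{B'} C$, formed along $f'\colon A'\to B'$ and $g'\circ v_C\colon C\to B'$, and I factor $v$ as
$$
A\times_B C \xrightarrow{\ w\ } A'\times_{B'} C \xrightarrow{\ \pi\ } A'\times_{B'} C',
$$
where $w(a,c)=(v_A(a),c)$ and $\pi(a',c)=(a',v_C(c))$. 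Commutativity of the two squares makes both maps well defined and their composite equal to $v$.

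First I would treat $\pi$. It is the base change of $v_C\colon C\to C'$ along the projection $A'\times_{B'} C'\to C'$: indeed $(A'\times_{B'} C')\times_{C'} C\cong A'\times_{B'} C$ over $A'\times_{B'} C'$, and under this identification the projection is exactly $\pi$. Since $v_C$ is a fibration and fibrations are stable under pullback, $\pi$ is a fibration.

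Next I would identify $w$ as a base change of the map (\ref{H0}). Define $\rho\colon A'\times_{B'} C\to B\times_{B'} A'$ by $\rho(a',c)=(g(c),a')$; this is well defined because $v_B(g(c))=g'(v_C(c))=f'(a')$ by commutativity of the right square together with the defining relation of $A'\times_{B'} C$. A direct check shows that the pullback $(A'\times_{B'} C)\times_{B\times_{B'} A'} A$ of (\ref{H0}) along $\rho$ is canonically isomorphic to $A\times_B C$, and that under this identification the projection to $A'\times_{B'} C$ is precisely $w$; here commutativity of the left square is what shows that $w$ lands in $A'\times_{B'} C$. As (\ref{H0}) is a fibration, so is $w$. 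Hence $v=\pi\circ w$ is a composite of fibrations, and therefore a fibration. For the acyclic case, the identical factorization applies verbatim: acyclic fibrations are also stable under pullback and composition, so if $v_C$ and (\ref{H0}) are acyclic fibrations, then $\pi$ and $w$, and therefore $v$, are acyclic fibrations.

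The only real subtlety is locating the correct intermediate object: one must resist symmetrizing the construction and instead respect the asymmetry of the hypotheses, in which the $C$-column contributes through $v_C$ itself while the $A$-column contributes through the relative map (\ref{H0}) to $B\times_{B'} A'$, and the intermediate object $A'\times_{B'} C$ is dictated by exactly this asymmetry. Once the factorization is in place, the two base-change identifications are routine diagram chases (each using commutativity of one square plus the defining relations of the relevant fiber products), and the conclusion follows formally from the pullback- and composition-stability of the two relevant classes of maps. I therefore expect the bookkeeping of the pullback squares, rather than any homotopical input, to be the main thing to get right.
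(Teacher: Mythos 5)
Your proof is correct, and it is precisely the standard argument: the paper does not prove this lemma itself but cites Rezk's Lemma 1.3 in \cite{Re1}, whose proof is exactly your factorization of $v$ through the mixed pullback $A'\times_{B'}C$, identifying the second map as a base change of $v_C$ and the first as a base change of $(v_A;f)$, and then using stability of (acyclic) fibrations under pullback and composition.
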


\begin{proof}[Proof of Proposition \ref{E2}]
We show that the map from $X^{f}(n)$ to the matching object $\mathcal{M}(X^{f})(n)$ is a Serre fibration. Let us remark that the spaces $X^{f}(n)$ and $M(X^{f})(n)$ can be expressed in terms of pullback diagrams. More precisely, one has
$$
X^{f}(n)=\mathrm{lim} \left(
A\longrightarrow \underset{s_{i}:[n-1]\rightarrow [n]}{\prod} X(n-1) \longleftarrow X(n)
\right)
\hspace{15pt}\text{where}\hspace{15pt}
A\subset \underset{\substack{\Lambda_{+}([\ell]\,,\,[n])\\ \ell<n}}{\prod} Map \big( [0\,,\,1]^{n-\ell}\,;\,X(\ell)\big)
$$
is the subspace satisfying the condition (\ref{A4}). The map from $A$ to the product $\prod_{s_{i}}X(n-1)$ sends a family of maps $\{f_{h}\}$ to the family of points $\{f_{s_{i}}(1)\}$ by taking the evaluation at the point $1$. Furthermore, one has the commutative diagram
$$
\xymatrix{
A\ar[r]\ar[d] &  \underset{s_{i}:[n-1]\rightarrow [n]}{\displaystyle\prod} X(n-1) \ar[d] & X(n)\ar[l]\ar[d] \\
\mathcal{M}(X^{f})(n)\ar[r] & \ast & \ast \ar[l]
}
$$
According to Lemma \ref{E6} and since $X(n)$ is fibrant, we only need to check that the map
\begin{equation}\label{A6}
A\longrightarrow A'=\mathcal{M}(X^{f})(n)\times \underset{s_{i}:[n-1]\rightarrow [n]}{\prod} X(n-1).
\end{equation}
is a Serre fibration. In other words, if we denote by $\partial' [0\,,\,1]^{n-\ell}$ the subspace of $[0\,,\,1]^{n-\ell}$ composed of the point $(1,\ldots,1)$ and the points having at least one coordinate equal to $0$, then $A'$ is the following subspace satisfying the relation (\ref{A4}):
$$
A' \subset  \underset{\substack{\Lambda_{+}([\ell]\,,\,[n])\\ l<n}}{\prod} Map \big( \partial' [0\,,\,1]^{n-\ell}\,;\,X(\ell)\big).
$$

Let us notice that the inclusion from $\partial' [0\,,\,1]^{n-\ell}$ into $[0\,,\,1]^{n-\ell}$ is a cofibration as an inclusion of CW-complexes. Unfortunately, we can not deduce directly the result due to condition (\ref{A4}). To solve this problem, we introduce a cofiltration of the map (\ref{A6}) according to the dimension of the cubes. Let us consider the following subspaces:
$$
A_{k}\subset\underset{\substack{\Lambda_{+}([\ell]\,,\,[n])\\ n-k\leq \ell<n}}{\prod} Map \big(  [0\,,\,1]^{n-\ell}\,;\,X(\ell)\big)
\hspace{15pt}\text{and}\hspace{15pt}
A'_{k}\subset\underset{\substack{\Lambda_{+}([\ell]\,,\,[n])\\ n-k\leq \ell<n}}{\prod} Map \big( \partial' [0\,,\,1]^{n-\ell}\,;\,X(\ell)\big)
$$
satisfying condition (\ref{A4}). In particular, one has $A_{n}=A$ and $A_{n}'=A'$. Furthermore, the spaces $A_{k}$ and $A'_{k}$ can be obtained from $A_{k-1}$ and $A'_{k-1}$, respectively, using the following pullback diagrams:
$$
\begin{minipage}{10pt}
\xymatrix@C=15pt{
A_{k} \ar[r] \ar[d] & \underset{[n-k]\rightarrow [n]}{\displaystyle \prod} Map\big( [0\,,\,1]^{k}\,;\,X(n-k)\big) \ar[d]\\
A_{k-1} \ar[r] & \underset{[n-k]\rightarrow [n-k+1]\rightarrow [n]}{\displaystyle\prod} Map\big( [0\,,\,1]^{k-1}\,;\,X(n-k)\big)
}
\end{minipage}
\hspace{15pt}%\text{and} \hspace{5pt}
\begin{minipage}{10pt}
\xymatrix@C=15pt{
A'_{k} \ar[r] \ar[d] & \underset{[n-k]\rightarrow [n]}{\displaystyle\prod} Map\big( \partial' [0\,,\,1]^{k}\,;\,X(n-k)\big) \ar[d]\\
A'_{k-1} \ar[r] & \underset{[n-k]\rightarrow [n-k+1]\rightarrow [n]}{\displaystyle\prod} Map\big( \partial' [0\,,\,1]^{k-1}\,;\,X(n-k)\big)
}
\end{minipage}
$$
We prove by induction that the maps $A_{k}\rightarrow A'_{k}$ are Serre fibrations. First, the map
$$
A_{1}=\underset{[n-1]\rightarrow [n]}{\prod} Map\big( [0\,,\,1]\,;\,X(n)\big)\longrightarrow \underset{[n-1]\rightarrow [n]}{\prod} Map\big( \partial [0\,,\,1]\,;\,X(n)\big) =A'_{1}
$$
is obviously a Serre fibration since the inclusion from $\partial [0\,,\,1]=\{0\,,\,1\}$ into the interval $[0\,,\,1]$ is a cofibration. From now on, we assume that the map $A_{k-1}\rightarrow A'_{k-1}$ is a Serre fibration. Then we consider the commutative diagram
$$
\xymatrix{
\underset{[n-k]\rightarrow [n]}{\displaystyle\prod} Map\big(  [0\,,\,1]^{k}\,;\,X(n-k)\big) \ar[r] \ar[d] &  \underset{[n-k]\rightarrow [n-k+1]\rightarrow [n]}{\displaystyle\prod} Map\big(  [0\,,\,1]^{k-1}\,;\,X(n-k)\big) \ar[d] &  A_{k-1}\ar[l] \ar[d]\\
\underset{[n-k]\rightarrow [n]}{\displaystyle\prod} Map\big( \partial' [0\,,\,1]^{k}\,;\,X(n-k)\big) \ar[r] &  \underset{[n-k]\rightarrow [n-k+1]\rightarrow [n]}{\displaystyle\prod} Map\big( \partial' [0\,,\,1]^{k-1}\,;\,X(n-k)\big) &  \ar[l]A'_{k-1}
}
$$
According to Lemma \ref{E6}, one has to check that the map from the space
$$
\underset{[n-k]\rightarrow [n]}{\prod} Map\big(  [0\,,\,1]^{k}\,;\,X(n-k)\big)
$$
to the limit of the diagram
$$
\xymatrix{
 &  \underset{[n-k]\rightarrow [n-k+1]\rightarrow [n]}{\displaystyle\prod} Map\big(  [0\,,\,1]^{k-1}\,;\,X(n-k)\big) \ar[d] \\
\underset{[n-k]\rightarrow [n]}{\displaystyle\prod} Map\big( \partial' [0\,,\,1]^{k}\,;\,X(n-k)\big) \ar[r] &  \underset{[n-k]\rightarrow [n-k+1]\rightarrow [n]}{\displaystyle\prod} Map\big( \partial' [0\,,\,1]^{k-1}\,;\,X(n-k)\big)
}
$$
is a Serre fibration. The limit corresponds to the space 
$$
\underset{[n-k]\rightarrow [n]}{\prod} Map\big(  \partial [0\,,\,1]^{k}\,;\,X(n-k)\big)
$$  
and the map 
$$
\underset{[n-k]\rightarrow [n]}{\prod} Map\big(  [0\,,\,1]^{k}\,;\,X(n-k)\big) \longrightarrow \underset{[n-k]\rightarrow [n]}{\prod} Map\big(  \partial [0\,,\,1]^{k}\,;\,X(n-k)\big)
$$
is obviously a Serre fibration since the inclusion from $\partial [0\,,\,1]^{k}$ to $[0\,,\,1]^{k}$ is a cofibration.
\end{proof}

\begin{rmk}
The same strategy can be used in order to get a fibrant replacement functor for $r$-truncated $\Lambda$-sequences. In that case, we only need to restrict our construction to order preserving inclusions $h:[\ell]\rightarrow [n]$ with $n\leq r$. Similarly, we get fibrant replacement functors for the categories $\Lambda_{>0}Seq$ and $T_{r}\Lambda_{>0}Seq$.
\end{rmk}

\subsection{The projective/Reedy model category of operads}\label{E4}

An \textit{operad} is a pointed $\Sigma$-sequence $O$ together with operations called \textit{operadic compositions}
\begin{equation}\label{OperadicComposition}
\circ_{i}:O(n)\times O(m)\longrightarrow O(n+m-1),\hspace{15pt} \text{with } 1\leq i \leq n.
\end{equation}
These operations satisfy  associativity and unit axioms as well as compatibility relations with the symmetric group action. More precisely, for any integers $i\in \{1,\ldots, n\}$, $j\in\{ i+1,\ldots , n\}$, $k\in \{1,\ldots, m\}$ and any permutations $\sigma\in \Sigma_{n}$ and $\tau\in \Sigma_{m}$, one has the following commutative diagrams:
$$
\underset{\text{Linear associativity axiom}}{\xymatrix@C=35pt{
O(n)\times O(m)\times O(\ell) \ar[r]^{\circ_{i}\times id} \ar[d]_{id\times \circ_{k}} & O(n+m-1)\times O(\ell)\ar[d]^{\circ_{i+k-1}} \\
O(n)\times O(m+\ell-1) \ar[r]_{\circ_{i}} & O(n+m+\ell-2)
}}\hspace{30pt}
\underset{\text{Ramified associativity axiom}}{\xymatrix@C=35pt{
O(n)\times O(m)\times O(\ell) \ar[r]^{\circ_{i}\times id} \ar[d]_{\circ_{j}\times id} & O(n+m-1)\times O(\ell)\ar[d]^{\circ_{i+m-1}} \\
O(n+\ell-1)\times O(m) \ar[r]_{\circ_{i}} & O(n+m+\ell-2)
}}\vspace{8pt}
$$
$$
\underset{\text{unit axiom}}{\xymatrix{
O(n)\times O(1)\ar[dr]_{\circ_{i}} & O(n) \ar[r]^{\hspace{-20pt}\ast_{1}\times id} \ar[l]_{\hspace{20pt}id\times \ast_{1}} \ar@{=}[d] & O(1)\times O(n) \ar[dl]^{\circ_{1}}\\
& O(n) & 
}}\hspace{60pt}
\underset{\text{compatibility with the symmetric group action}}{\xymatrix{
O(n)\times O(m) \ar[r]^{\circ_{i}} \ar[d]_{\sigma^{\ast}\times \tau^{\ast}} & O(n+m-1) \ar[d]^{(\sigma\circ_{\sigma(i)}\tau)^{\ast}}\\
O(n)\times O(m)\ar[r]_{\circ_{\sigma(i)}} & O(n+m-1) 
}}\vspace{8pt}
$$
where the permutation $\sigma\circ_{\sigma(i)}\tau$ is obtained from the well known operadic compositions on the symmetric groups (see \cite[Proposition 1.1.9]{Fre1}).

A map between operads should preserve the operadic compositions. We denote by $\Sigma\Operad$ the category of topological operads.  The category of operads is obviously endowed with a forgetful functor to the category of $\Sigma$-sequences by forgetting the operadic composition \eqref{OperadicComposition}:
\begin{equation}\label{A3}
\mathcal{U}^\Sigma:\Sigma\Operad\longrightarrow \Sigma Seq.\vspace{2pt}
\end{equation}

\noindent \textit{$\bullet$ The category of reduced operads and their underlying $\Lambda$-structures.} An operad~$O$ is said to be \textit{reduced} if $O(0)$ is the one point topological space. This point is denoted by $\ast_{0}$. We denote by $\Lambda_{\ast}\Operad$ the category of reduced operads. This category is equipped with a forgetful functor to the category of $\Lambda_{> 0}$-sequences,  which consists in forgetting  the arity zero component and the operadic compositions \eqref{OperadicComposition} for $m\geq 1$:
\begin{equation}\label{A5}
\mathcal{U}^\Lambda:\Lambda_{\ast}\Operad\longrightarrow \Lambda_{>0} Seq.
\end{equation}
(Note also that the category of $\Lambda_{>0}$-sequences is equivalent to the category of reduced $\Lambda$-sequences, i.e $\Lambda$-sequences $X$ so that $X(0)=\ast$.) Indeed, if $O$ is a reduced operad, then the $\Lambda_{>0}$-structure on $\mathcal{U}^\Lambda(O)$ is generated by the operations of the form
$$
\begin{array}{rcl}\vspace{3pt}
s_{i}^{\ast}:\mathcal{U}^\Lambda(O)(n) = O(n)& \longrightarrow & \mathcal{U}^\Lambda(O)(n-1)=O(n-1);  \\ 
\theta & \longmapsto & \theta\circ_{i}\ast_{0}.
\end{array} 
$$

\noindent \textit{$\bullet$ The model category of algebras over an operad.} An algebra over a (possibly reduced) operad~$O$, or $O$\textit{-algebra}, is a topological space $X$ together with operations of the form 
$$
\alpha_{n}:O(n)\times X^{\times n}\longrightarrow X, \hspace{20pt}\text{with } n\geq 0,
$$
compatible with the operadic structure (see \cite[Section 1.1.13 and Figure 1.9]{Fre1}). The category of $O$-algebras is denoted by $Alg_{O}$. It has been proved in \cite[Theorem 2.1]{BM3} that the category of algebras over any operad~$O$ in $Top$ inherits a cofibrantly generated model category structure by using the transfer principle applied to the adjunction $\mathcal{F}_{O}:Top\rightleftarrows Alg_{O}:\mathcal{U}$ where the free algebra functor $\mathcal{F}_{O}$ is the left adjoint to the forgetful functor $\mathcal{U}$.\vspace{5pt}

\noindent \textit{$\bullet$ The projective and Reedy model categories of operads and reduced operads.} Both forgetful functors (\ref{A3}) and (\ref{A5}) have left adjoints, which we respectively denote by $\mathcal{F}^\Sigma$ and $\mathcal{F}^\Lambda$, and which, given a $\Sigma$- or a $\Lambda_{>0}$-sequence $X$, produce the free operad generated by $X$. Explicitly, elements of $\mathcal{F}^\Sigma(X)$  and $\mathcal{F}^\Lambda(X)$ are described as rooted trees (without univalent vertices for reduced operads) with internal vertices labelled by elements of the  sequence. We refer the reader to \cite{Fre1} for a detailed account on these adjunctions:
$$
\mathcal{F}^\Sigma:\Sigma Seq\rightleftarrows \Sigma\Operad:\mathcal{U}^\Sigma\hspace{15pt}\text{and}\hspace{15pt} \mathcal{F}^{\Lambda}:\Lambda_{>0} Seq \rightleftarrows \Lambda_{\ast}\Operad:\mathcal{U}^\Lambda.
$$

As a consequence of the transfer principle \ref{E3}, the category $\Sigma\Operad$ inherits a cofibrantly generated model category structure which we call the \textit{projective} model structure (we refer the reader to \cite{BM} for more details). Similarly to the usual category of $\Lambda$-sequences, the category $\Lambda_{>0} Seq$ is also endowed with a (cofibrantly generated) Reedy model category structure. The second author in \cite{Fre2}  proves that $\Lambda_{\ast}\Operad$ has a cofibrantly generated model category structure called the \textit{Reedy} model structure. In both cases, a map of (possibly reduced) operads $f:P\rightarrow Q$ is a weak equivalence (respectively, a fibration) if the map $\mathcal{U}^\Sigma(f)$  or $\mathcal{U}^\Lambda(f)$ is a weak equivalence (respectively, a fibration) in the appropriate category. 
 
%  By convention, an operad is $\Sigma$-cofibrant if the corresponding $\Sigma$-sequence is cofibrant in the projective model category $\Sigma Seq$. Furthermore, the operad~$O$ is said to be \textit{well-pointed} if the inclusion $\ast_{1}\rightarrow O(1)$ from the unit of the operad is a cofibration in the category $Top$. In what follows, we list the main properties of these two model category structures.
%  

\begin{thm}\label{E9}
The projective and the Reedy model structures have the following properties:
\begin{itemize}
\item[$\blacktriangleright$] \cite[Section 2.5]{BM}: All operads are fibrant in $\Sigma\Operad$.
\item[$\blacktriangleright$] \cite[Theorem 3.1.10]{HRY}: The category of operads (respectively of reduced operads) is left proper (see Section \ref{C0}) relative to the class of $\Sigma$-cofibrant operads (respectively of reduced $\Sigma$-cofibrant operads).
\item[$\blacktriangleright$] \cite[Theorem 8.4.12]{Fre2}: A map of reduced operads $\phi:P\rightarrow Q$ is a cofibration in $\Lambda_{\ast}\Operad$ if and only if the corresponding map $\phi_{>0}:P_{>0}\rightarrow Q_{>0}$ is a cofibration in $\Sigma\Operad$ where $P_{>0}$ and $Q_{>0}$ are the sub-operads obtained from $P$ and $Q$, respectively, by redefining the arity zero components to be empty.
\item[$\blacktriangleright$] \cite[Theorem 1]{FTW}: If $P$  and $Q$ are reduced operads, then one has 
a weak equivalence between the derived mapping spaces
$$
\Sigma\Operad^{h}(P\,;\,Q)\simeq \Lambda_{\ast}\Operad^{h}(P\,;\,Q).
$$ 
\item[$\blacktriangleright$] \cite[Theorem 4.4]{BM}, \cite[Theorem 15.A]{Fre3}: If $\phi:P\rightarrow Q$ is a weak equivalence between  $\Sigma$-cofibrant operads, then the extension $\phi_{!}$ and restriction $\phi^\ast$ functors (see Section \ref{E8}) form a Quillen equivalence 
$$
\phi_{!}:Alg_{P}\rightleftarrows Alg_{Q}:\phi^{\ast}.
$$
\end{itemize}
\end{thm}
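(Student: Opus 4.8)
The assertions collected in this statement are all established in the literature, so the plan is to indicate, item by item, the strategy behind each proof and to point to the relevant reference. The first item is immediate from the transfer principle of Theorem~\ref{E3}: by construction a map of operads is a fibration exactly when its image under $\mathcal{U}^\Sigma$ is a fibration in $\Sigma Seq$, and since every space is fibrant for the Quillen model structure on $Top$, every $\Sigma$-sequence is fibrant. Applying $\mathcal{U}^\Sigma$ to the terminal map $O\to\ast$ of an arbitrary operad $O$ therefore yields a fibration, so $O$ is fibrant.

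For the relative left properness, the plan is to reduce, as in \cite{HRY}, to the analysis of a single pushout of a free operad along a generating cofibration with $\Sigma$-cofibrant source, and then to filter this pushout by a tower indexed on trees whose successive quotients are colimits of products of the generating spaces with the operad components. One then checks that tensoring such a tower with a weak equivalence between $\Sigma$-cofibrant operads preserves weak equivalences stagewise. The main obstacle is the bookkeeping of the free operad monad: the equivariant cofibrancy needed to invoke the pushout-product and gluing lemmas at each stage is exactly what the $\Sigma$-cofibrancy hypothesis supplies, and this is where that hypothesis is genuinely used.

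For the characterization of cofibrations of reduced operads from \cite[Theorem~8.4.12]{Fre2}, the key point I would exploit is that the $\Lambda$-diagram structure of a reduced operad is entirely generated by the compositions with the arity zero point $\ast_0$, so that forgetting the arity zero component intertwines the free functors $\mathcal{F}^\Lambda$ and $\mathcal{F}^\Sigma$. A Reedy cofibration is detected on its latching maps, and under this forgetful functor those latching maps become precisely the latching maps defining a projective cofibration of $\Sigma_{>0}$-sequences; matching the two latching constructions is the technical heart of the argument. The derived mapping space comparison of \cite[Theorem~1]{FTW} I would simply invoke, as it rests on a dedicated and delicate zig-zag of equivalences between cofibrant-fibrant models on the two sides.

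Finally, for the Quillen equivalence of the extension/restriction adjunction along a weak equivalence $\phi$ of $\Sigma$-cofibrant operads, I would first observe that $(\phi_!,\phi^\ast)$ is a Quillen pair, since $\phi^\ast$ preserves fibrations and acyclic fibrations, both being detected on underlying $\Sigma$-sequences on which $\phi^\ast$ acts as the identity. It then remains to show that the derived unit and counit are weak equivalences on cofibrant objects; this is carried out in \cite[Theorem~4.4]{BM} and \cite[Theorem~15.A]{Fre3} by the same tree filtration of free algebras, where the hypothesis that $P$ and $Q$ are $\Sigma$-cofibrant is what guarantees that the comparison maps are weak equivalences at each filtration stage. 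The common obstacle running through the last three items is thus the combinatorial control of the free operad (or free algebra) filtration under the $\Sigma$-cofibrancy assumption, which I expect to be the principal difficulty to keep in mind when importing these results into the present setting.
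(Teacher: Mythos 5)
Your proposal is correct and matches the paper's treatment: Theorem~\ref{E9} is stated in the paper as a compilation of known results with no proof given, only the citations to \cite{BM}, \cite{HRY}, \cite{Fre2}, \cite{FTW} and \cite{Fre3}, which is exactly how you handle it. Your additional sketches of the underlying strategies (transfer principle for fibrancy, tree filtrations under $\Sigma$-cofibrancy, matching the free functors after dropping arity zero) are consistent with those references and with the surrounding discussion in the paper.
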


%\noindent \textit{$\bullet$ Useful propositions.}
\subsection{Properties of cofibrant operads}\label{ss:useful}

The results of this section are used in Section~\ref{C0} in order to prove that the category of bimodules is relatively left proper.

\begin{pro}\label{p:useful3}
If $P$ is a cofibrant operad, then its suboperad $P_{>0}$, obtained by forgetting its arity zero component, is also cofibrant.
\end{pro}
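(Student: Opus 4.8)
The plan is to show that the arity-zero-removal endofunctor $(-)_{>0}\colon\Sigma\Operad\to\Sigma\Operad$ carries cofibrant objects to cofibrant objects, by analysing a cellular presentation of $P$. Since a cofibrant operad is a retract of a cell complex built from the generating cofibrations $\mathcal{F}^{\Sigma}(S_{c})$ of $\Sigma\Operad$, since $(-)_{>0}$ preserves retracts (being a functor), and since cofibrant objects are closed under retracts, I may assume that $P$ is cellular: $P=\operatorname{colim}_{\alpha<\lambda}P_{\alpha}$ with $P_{0}$ the initial operad $I$ (which already satisfies $I_{>0}=I$), and each $P_{\alpha+1}$ a pushout $P_{\alpha}\amalg_{\mathcal{F}^{\Sigma}(A)}\mathcal{F}^{\Sigma}(B)$ along an attaching map $\mathcal{F}^{\Sigma}(A)\to P_{\alpha}$, where $A\to B$ is a map $S^{k-1}\hookrightarrow D^{k}$ placed freely in a single arity $n$. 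The forgetful functor $\mathcal{U}^{\Sigma}$ is finitary, so transfinite compositions of operads are computed in $\Sigma Seq$; as removing the arity zero component commutes with colimits of $\Sigma$-sequences, $(-)_{>0}$ preserves these transfinite compositions and the colimits at limit stages. A transfinite composite of cofibrations being a cofibration, it therefore suffices to prove that each elementary map $(P_{\alpha})_{>0}\to(P_{\alpha+1})_{>0}$ is a cofibration in $\Sigma\Operad$.

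The key point --- and the reason $(-)_{>0}$ is not simply left Quillen --- is that it does \emph{not} preserve the pushout defining $P_{\alpha+1}$: positive-arity elements of $(P_{\alpha+1})_{>0}$ may be built from trees in which arity zero vertices (coming from $P_{\alpha}(0)$, or from the new cell itself when $n=0$) cap some inputs, and these are absent from the naive pushout of the truncated diagram. To handle this I would invoke the explicit filtration of the operadic pushout along a free map (see \cite{BM}), which presents $\mathcal{U}^{\Sigma}(P_{\alpha+1})$ as a sequential colimit of pushouts in $\Sigma Seq$ indexed by the number $r$ of occurrences of the new cell $B$, the $r$-th stage being attached along a map assembled from the $r$-fold pushout-product of $S^{k-1}\hookrightarrow D^{k}$ together with a space of trees whose remaining vertices are decorated by $P_{\alpha}$, with $\Sigma_{r}$ permuting the $r$ cell copies. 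Applying $(-)_{>0}$ and reorganising, I would exhibit $(P_{\alpha+1})_{>0}$ as a cell attachment to $(P_{\alpha})_{>0}$ in $\Sigma\Operad$ whose cells are the \emph{capped generators}: copies of $B$ in which each input is either a genuine leaf or is capped by an arity zero operation of $P_{\alpha}$, the number of genuine leaves being positive. The attaching maps of this reorganised presentation are built from the same pushout-products of $S^{k-1}\hookrightarrow D^{k}$, now multiplied with the positive-arity coefficient $\Sigma$-spaces coming from the tree combinatorics, from the components $P_{\alpha}(m)$, and from $P_{\alpha}(0)$.

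It then remains to verify that these attaching maps are genuine cofibrations of $\Sigma$-sequences, i.e. that the pushout-products of $S^{k-1}\hookrightarrow D^{k}$ with the coefficient spaces are cofibrations; this is what upgrades $(P_{\alpha})_{>0}\to(P_{\alpha+1})_{>0}$ to a cofibration of \emph{operads} rather than only of underlying $\Sigma$-sequences. The symmetric group actions must be controlled here: the coefficient spaces carry $\Sigma_{m}$-actions coming from permuting leaves and identical cell copies, and one needs these actions to be sufficiently free for the pushout-products to remain (generating) cofibrations. I would establish this through the equivariant homotopy lemmas of the appendix, in the same spirit as the proof that the free-operad functor $\mathcal{F}^{\Sigma}$ is left Quillen and hence sends cofibrant $\Sigma$-sequences to cofibrant operads, noting that capping inputs by arity zero operations preserves the requisite freeness. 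I expect the genuine obstacle to be precisely this equivariant bookkeeping in the case $n=0$ (and more generally whenever $P_{\alpha}(0)$ is non-trivial): one must check that distributing a single arity zero cell over all positive arities via capping yields attaching maps that are cofibrations, and not merely objectwise inclusions, so that the reorganised presentation is a bona fide cellular extension in $\Sigma\Operad$.
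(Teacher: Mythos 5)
Your reductions are sound: passing to a cellular $P$ by the retract argument, and reducing to the elementary maps $(P_{\alpha})_{>0}\to(P_{\alpha+1})_{>0}$ via preservation of transfinite composites, both work, and you correctly identify that the whole difficulty is that $(-)_{>0}$ does not preserve the defining pushouts. The gap is in your description of the reorganised cells. Caps taken from $P_{\alpha}(0)$ only are not enough: the attachment at stage $\alpha+1$ creates \emph{new} arity-zero operations (points of the interior of $B$ with all inputs capped, and iterates of this), and these new arity-zero operations can themselves occur as caps inside positive-arity elements of $P_{\alpha+1}$. Concretely, if $B$ sits in arity $2$, take $b_1,b_2$ in the interior of $B$ and $p,p'\in P_{\alpha}(0)$: the element $b_1(b_2(p,p'),x)$ of $(P_{\alpha+1})_{>0}(1)$ is indecomposable there --- its unique leaf is attached directly to the root, and the only composition producing it plugs the arity-zero element $b_2(p,p')$ into $b_1$, a composition that does not exist in an operad with empty arity-zero component --- yet it is neither of your proposed form nor a composite of your generators with elements of $(P_{\alpha})_{>0}$ (in any such composite, every input of every $B$-vertex is either a cap from $P_{\alpha}(0)$ or carries a positive-arity subtree, which fails for the first input of $b_1$). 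For the same reason you must also admit generators whose root vertex is labelled by $P_{\alpha}$, such as $p_2(b(q,q'),x)$ with $p_2\in P_{\alpha}(2)$; this is in fact the only kind of new generator when the attached cell has arity $0$. Repairing this forces the caps to range over $P_{\alpha+1}(0)$, hence a further filtration inside each step (say by the number of occurrences of $B$), and it puts the spaces $P_{\alpha}(m)$, for all $m$, into the cells themselves, so that you additionally need their ($\Sigma$-)cofibrancy before any pushout-product argument applies.

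The paper avoids this entire regress with one observation absent from your proposal: since each attachment map $\mathcal{F}(\partial X_{\alpha})\to\mathcal{F}(X_{\alpha})$ is free on an inclusion of $\Sigma$-sequences, the operad $P$ is, \emph{as an operad in sets}, free on the sequence $X=\coprod_{\alpha}(X_{\alpha}\setminus\partial X_{\alpha})$ of cell interiors. Consequently $P_{>0}$ is free in sets on the trees all of whose vertices are labelled by $X$, with every leaf attached to the root vertex and at least one leaf present; caps are themselves trees of cells from arbitrary stages, so the nesting issue never arises. The resulting topological cells are pure products $\prod_{v}X_{\alpha_v}$, with boundary the locus where some coordinate lies in $\partial X_{\alpha_v}$ --- no coefficient spaces $P_{\alpha}(m)$ occur at all --- and they are attached in the order obtained by comparing the multisets of labels $\{\alpha_v\}$: degenerating a coordinate into $\partial X_{\alpha_v}$ replaces that label by strictly smaller ones, so every attaching map automatically lands in the union of previously attached cells. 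This single global reordering of all cells at once, rather than a step-by-step reorganisation of each pushout, is what makes the argument close up.
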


\begin{proof}
%The result must be well-known. We briefly sketch its proof.
Since an operad is cofibrant if and only if it is a retract of a cellular one, we can assume that $P$ is cellular: $P=\mathrm{colim}_{\alpha<\lambda} P_\alpha$, where $P_{\alpha}$ is obtained from $P_{<\alpha}:=\mathrm{colim}_{\beta<\alpha}P_\beta$ using the pushout  
\begin{equation}\label{pushouteq1}
%\vcenter{
\xymatrix{
\mathcal{F}(\partial X_{\alpha}) \ar[r] \ar[d] & \mathcal{F}(X_{\alpha}) \ar[d]\\
P_{<\alpha} \ar[r] & P_{\alpha},
}%}
%\hspace{15pt} \text{with} \hspace{15pt} 
%\left\{
%\begin{array}{lc}\vspace{9pt}
%\partial X''_{\alpha}=X''_{\alpha}=X_{\alpha} & \text{if } ar_{\alpha}=1, \\ 
%\partial X''_{\alpha}= \partial X_{\alpha} \text{ and }  X''_{\alpha}=  X_{\alpha} & \text{if } ar_{\alpha}\geq 2.
%\end{array} 
%\right.
\end{equation}
where $\mathcal{F}(-)$ is the free operadic functor (see \cite{BM} for a combinatorial description of $\mathcal{F}$ in terms of trees) and each $\partial X_\alpha\to X_\alpha$ is a generating $\Sigma$-cofibration. 
%
%
%Each $\partial X_\alpha\to X_\alpha$ is a generating $\Sigma$-cofibration.
We need to show that $P_{>0}$ is also cellular. Note that as an operad in sets, $P$ is a free operad generated by the $\Sigma$-sequence $X=\coprod_{\alpha\in\lambda} X_\alpha\setminus
\partial X_\alpha$. We claim that $P_{>0}$ is also free as an operad in sets being generated by its $\Sigma$-subsequence represented by trees, whose vertices are labelled by $X$,
with the property that only their root vertex can have leaves and in fact must have at least one leaf attached,
see Figure~\ref{fig:new_decomp}. %Indeed, if $X_{\alpha}$ is concentrated in arity $0$, then we remove $X_{\alpha}$ of the construction of $P_{>0}$ but we need to keep track of the points obtained from the composition between $X_{\alpha}$ and  $P_{\leq \alpha}$ living in arity bigger than $1$. 

\begin{figure}[!h]
%\begin{center}
\includegraphics[scale=0.4]{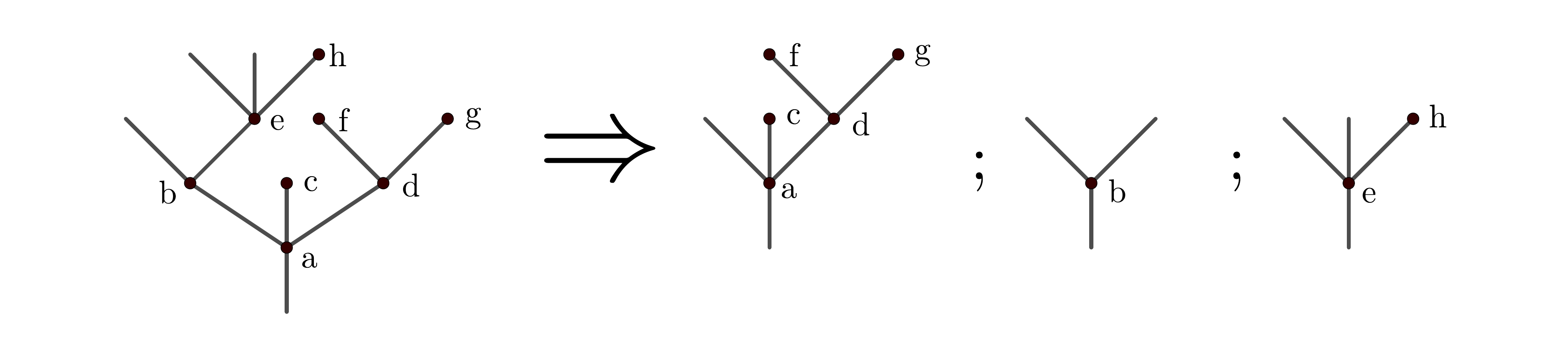}%\vspace{-15pt}
\caption{An element in a free operad and its decomposition in the new positive arity generators.}\label{fig:new_decomp}
%\end{center}
\end{figure}

This set of generating trees splits into cells of $P_{>0}$, namely by the
way from which $X_{\alpha}$'s, $\alpha\in\lambda$, the labels come from. So the set of cells can be described as the set of trees as above with vertices labelled by elements $\alpha$'s
from $\lambda$. One can define a total ordering of this set that prescribes in which order the new cells are attached. Given two such trees, we compare first the maximal elements from $\lambda$ 
they have as labels (including their arity zero vertices). If their maximal elements are the same, we compare which one has more such maximal labels. If the numbers of such labels 
are the same, we compare their next to maximal labels. And so on. If they have exactly the same sets of  labels, we put any random order between them,
or we put them together in one bigger cell.
\end{proof}
%\hspace{-65pt}

 Let $O$ be an operad. By $\Sigma_{k}\wr O(1)$, we understand the monoid that acts on $O(k)$ and is given by the following extension
$$
\xymatrix{
1 \ar[r] & O(1)^{\times k} \ar[r] & \Sigma_{k}\wr O(1) \ar[r] & \Sigma_{k} \ar[r] & 1.
}
$$

%For a monoid $\Gamma$ in $Top$ we denote by $\Gamma\text{-}Top$ the category of right $\Gamma$-modules (in $Top$). This category can be thought of as the category of algebras over an operad with only unary operations. As such it is endowed 
%with the projective model structure (see above), in which morphisms are fibrations, respectively weak equivalences, if and only if they are fibrations, 
%repectively equivalences in $Top$. Cofibrations, respectively fibrations, in this category will be called $\Gamma$-cofibrations, 
%respectively $\Gamma$-fibrations.

\begin{pro}\label{J1}
If $O$ is a cofibrant operad, then each space $O(n)$ is $\Sigma_{n}\wr O(1)$-cofibrant.
\end{pro}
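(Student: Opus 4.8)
The plan is to reduce to the case of a cellular operad and there to exploit the combinatorial description of $O$ as a free operad in sets, exactly as in the proof of Proposition~\ref{p:useful3}. Since a cofibrant operad is a retract of a cellular one and the class of $\Sigma_{n}\wr O(1)$-cofibrant spaces is closed under retracts, I would first settle the cellular case and then transfer the conclusion along the retraction. As explained at the end, the only delicate point is this last transfer, because it involves restriction along the monoid map $\Sigma_{n}\wr O(1)\to\Sigma_{n}\wr\widetilde{O}(1)$ attached to a cellular model $\widetilde{O}$ of $O$.

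So assume $O$ is cellular. By the description recalled in Proposition~\ref{p:useful3}, $O$ is, as an operad in sets, freely generated by a $\Sigma$-sequence $X$, so that a point of $O(n)$ is a tree with $n$ leaves whose vertices are labelled by $X$, and the space $O(n)$ carries the colimit topology coming from the cellular filtration. The key construction is the \emph{reduction} of such a tree: along each leaf $i$ one reads off the maximal chain of univalent vertices emanating from that leaf, which is a well-defined element $f_{i}\in O(1)$, and one removes it to obtain a \emph{reduced} tree in which every leaf is attached to a vertex of arity $\geq 2$. I would let $\mathrm{core}(n)\subset O(n)$ denote the subspace of reduced trees, with its residual $\Sigma_{n}$-action permuting leaves. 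Peeling off the univalent chains and grafting them back are mutually inverse operations, so I claim a $\Sigma_{n}\wr O(1)$-equivariant homeomorphism
$$
O(n)\;\cong\;\mathrm{core}(n)\times O(1)^{\times n},
$$
where the submonoid $O(1)^{\times n}\subset\Sigma_{n}\wr O(1)$ acts on the factor $O(1)^{\times n}$ by right multiplication (grafting further univalent trees onto the chains) and $\Sigma_{n}$ permutes simultaneously the leaves of the core and the corresponding factors $O(1)$. That this bijection is a homeomorphism, and not merely a continuous bijection, is where I would invoke that we work with $k$-spaces: products distribute over colimits, so both sides acquire matching cell decompositions indexed by the labelled trees.

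Granting this decomposition, the statement follows from two observations. First, the functor $F\colon\Sigma_{n}\text{-}Top\to(\Sigma_{n}\wr O(1))\text{-}Top$ given by $F(C)=C\times O(1)^{\times n}$, with the wreath action just described, preserves colimits and sends a generating (acyclic) cofibration $(S^{m-1}\hookrightarrow D^{m})\times\Sigma_{n}$ to $(S^{m-1}\hookrightarrow D^{m})\times(\Sigma_{n}\wr O(1))$, since $\Sigma_{n}\wr O(1)=\Sigma_{n}\times O(1)^{\times n}$ as a space; hence $F$ is a left Quillen functor and carries $\Sigma_{n}$-cofibrant objects to $\Sigma_{n}\wr O(1)$-cofibrant ones. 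Second, $\mathrm{core}(n)$ is a $\Sigma_{n}$-equivariant retract of $O(n)$, the reduction map retracting the inclusion of reduced trees, and $O(n)$ is $\Sigma_{n}$-cofibrant because a cofibrant, hence $\Sigma$-cofibrant, operad has $\Sigma_{n}$-cofibrant components (\cite{BM}); therefore $\mathrm{core}(n)$ is $\Sigma_{n}$-cofibrant. Combining the two identifications gives $O(n)=F(\mathrm{core}(n))$ as $\Sigma_{n}\wr O(1)$-cofibrant in the cellular case.

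The main obstacle is the passage from a cellular operad to a general cofibrant one. Writing $O$ as a retract of a cellular $\widetilde{O}$, each $O(n)$ is a $\Sigma_{n}\wr O(1)$-equivariant retract of $\widetilde{O}(n)$, but with the action obtained by restriction along $\Sigma_{n}\wr O(1)\to\Sigma_{n}\wr\widetilde{O}(1)$, and restriction along a monoid map need not preserve cofibrancy. Unwinding this through the same core decomposition, and using the induction functor from $O(1)^{\times n}$-spaces to $\Sigma_{n}\wr O(1)$-spaces, the question reduces to showing that $\widetilde{O}(1)$ is cofibrant as a right $O(1)$-space along the split inclusion $O(1)\hookrightarrow\widetilde{O}(1)$. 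I would dispatch this arity-one cofibrancy by the equivariant homotopy lemmas collected in the appendix; this, rather than the tree bookkeeping, is the technical heart of the argument.
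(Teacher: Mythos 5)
Your overall skeleton (reduce to a cellular operad, exploit the free-operad-in-sets description, handle the retract at the end) parallels the paper, but the central step of your cellular argument is false: for a cellular -- as opposed to free -- operad there is in general no $\Sigma_n\wr O(1)$-equivariant homeomorphism $O(n)\cong \mathrm{core}(n)\times O(1)^{\times n}$, and the reduction map $O(n)\to\mathrm{core}(n)$ is not even continuous. The decomposition is a bijection of sets (this is the set-level freeness used in Proposition~\ref{p:useful3}), and it is a homeomorphism when $O$ is a free operad on a $\Sigma$-sequence, but cell attachments destroy it, because an attaching map $\partial X_\alpha\to P_{<\alpha}$ is an arbitrary equivariant map and may hit composites with nontrivial leaf chains. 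Concretely, build $P$ from a free arity-one $0$-cell $a$, a free arity-two $0$-cell $b$, and one arity-two $1$-cell $(S^0\to D^1)\times\Sigma_2$ attached by sending $-1\mapsto b\circ_1 a$ and $+1\mapsto b$. Then $P(1)=\{1,a,a^2,\dots\}$ is discrete, and $P(2)$ contains an arc $\{x_t\}_{t\in[-1,1]}$ with $x_{-1}=b\circ_1 a$ and $x_{+1}=b$. Your bijection sends $x_t\mapsto (x_t,1,1)$ for $t>-1$ but $x_{-1}\mapsto (b,a,1)$; since $P(1)^{2}$ is discrete, $\mathrm{core}(2)\times\{(a,1)\}$ is open in the product, so $\{x_{-1}\}$ would have to be open in the arc, which it is not. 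Equivalently, $\mathrm{core}(x_t)$ jumps discontinuously from $x_t$ to $b$ at $t=-1$, so the core is not a retract of $O(2)$ either. Hence your left-Quillen-functor argument establishes cofibrancy of a space that is not $O(n)$. This failure is precisely what the paper's proof is engineered around: it never splits off the chains globally, but filters each cell attachment by the number of new-cell-labelled vertices and writes each filtration stage as a pushout of $\Sigma_n\wr P(1)$-spaces induced up from $Aut(T)\wr P(1)$-spaces $\overline{X}_\alpha(T)$, in which the arity-one chains appear as explicit $P(1)$-factors on the edges $E_1(T)$ of trees all of whose vertices have arity $\geq 2$; each such attachment is then shown to be a cofibration by a double induction using the monoid pushout-product Lemma~\ref{l:push_prod1}. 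The wreath factors can only be split off locally, cell by cell, never globally.

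Your treatment of the retract step is also incomplete, and the repair you propose is not the one that works. Reducing to ``$\widetilde{O}(1)$ is cofibrant as a right $O(1)$-space'' (so as to apply Lemma~\ref{l:adj_mon} to restriction along $\Sigma_n\wr O(1)\to\Sigma_n\wr\widetilde{O}(1)$) is an unsupported claim: $\widetilde{O}(1)$ is cellular as a monoid, not as a right module over its retract submonoid $O(1)$, and nothing in the appendix supplies such cofibrancy. The paper avoids needing any restriction-preserves-cofibrancy statement: with $f\colon\Sigma_n\wr O(1)\to\Sigma_n\wr P(1)$ and the retraction $g$ satisfying $g\circ f=id$, it restricts the given lifting problem along $g$ (restriction preserves acyclic fibrations), solves it in $\Sigma_n\wr P(1)$-spaces using the cofibrancy of $P(n)$ and the operadic retraction $P\to O$, and then restricts the lift back along $f$, the identity $g\circ f=id$ guaranteeing that this solves the original problem. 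Finally, note that you never reduce to $O(0)=\emptyset$ via Proposition~\ref{p:useful3}, as the paper does first; with arity-zero operations present, even the tree bookkeeping underlying your core construction requires further care.
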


%To formulate our second proposition

For each $1< k < n$, we denote by $O(n\,;\,k)$ the subspace of $O(n)$ which consists of points of the form 
$$
(\theta_{1}\circ_{1}\theta_{2})\cdot \sigma,
$$
with $\theta_{2}\in O(i)$, $2\leq i \leq k$, $\theta_{1}\in O(n-i+1)$ while $\sigma$  preserves the position of  $k+1,\ldots, n$ and shuffles $\{1,\ldots,i\}$ with $\{i+1,\ldots,k\}$. Both spaces $O(n)$ and $O(n\,;\,k)$ inherit an action of the monoid $( \Sigma_{k}\times\Sigma_{n-k})\wr O(1):=(\Sigma_k\wr O(1))\times (\Sigma_{n-k}
\wr O(1))$ and the inclusion from $O(n\,;\,k)$ into $O(n)$ is a $( \Sigma_{k}\times\Sigma_{n-k})\wr O(1)$-equivariant map.
%
% For shortness, we denote by $( \Sigma_{k}\wr O(1))\times \Sigma_{n-k}$ the monoid
%$$
%( \Sigma_{k}\wr O(1))\times \Sigma_{n-k}=( \Sigma_{k}\wr O(1))\times \Sigma_{n-k}.
%$$
%$( \Sigma_{k}\wr O(1))\times \Sigma_{n-k}$
%\newpage

\begin{pro}\label{J2}
If $O$ is a cofibrant operad, then $O(n\,;\,k)$ is  $( \Sigma_{k}\times\Sigma_{n-k})\wr O(1)$-cofibrant and the map
 \begin{equation}\label{eq:Onk}
 O(n\,;\,k)\rightarrow O(n)
 \end{equation}
  is a $( \Sigma_{k}\times\Sigma_{n-k})\wr O(1)$-cofibration. As a consequence, $O(n\,;\,k)$ is $( \Sigma_{k}\wr O(1))\times \Sigma_{n-k}$-cofibrant and
  the map~\eqref{eq:Onk} is a $( \Sigma_{k}\wr O(1))\times \Sigma_{n-k}$-cofibration.
\end{pro}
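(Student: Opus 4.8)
The plan is to mirror the cellular induction used for Proposition~\ref{J1}, but now tracking the pair $O(n\,;\,k)\hookrightarrow O(n)$ simultaneously. First I would reduce to the case where $O$ is cellular: both conclusions---being $(\Sigma_k\times\Sigma_{n-k})\wr O(1)$-cofibrant and being a $(\Sigma_k\times\Sigma_{n-k})\wr O(1)$-cofibration---are preserved under retracts, and the assignment $O\mapsto\big(O(n\,;\,k)\to O(n)\big)$ is functorial in operad maps, since $O(n\,;\,k)$ is defined through the operadic compositions and the shuffle action. Hence a retract of a cellular operad yields a retract of the corresponding equivariant maps, and it suffices to treat a cellular $O=\mathrm{colim}_{\alpha<\lambda}O_\alpha$ with each $O_\alpha$ obtained from $O_{<\alpha}$ by a pushout of the form~\eqref{pushouteq1}.

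Since equivariant cofibrations are stable under cobase change and transfinite composition, and equivariant cofibrant objects are built from the initial operad by such cells, I would run an induction on $\alpha$. The inductive statement is that both $O_{<\alpha}(n\,;\,k)\to O_\alpha(n\,;\,k)$ and the relative comparison map
$$
O_\alpha(n\,;\,k)\underset{O_{<\alpha}(n\,;\,k)}{\textstyle\coprod}O_{<\alpha}(n)\longrightarrow O_\alpha(n)
$$
are $(\Sigma_k\times\Sigma_{n-k})\wr O(1)$-cofibrations; together these give that $O(n\,;\,k)$ is cofibrant and that~\eqref{eq:Onk} is a cofibration. The heart of the induction is the single-cell step, for which I would invoke the combinatorial (tree) description of pushouts of operads from \cite{BM}: the new elements of $O_\alpha(n)$ are represented by trees carrying at least one vertex labelled by the new generator, and filtering by the number of such new vertices exhibits $O_{<\alpha}(n)\to O_\alpha(n)$ as a transfinite composite of cobase changes of products of the generating $\Sigma$-cofibrations $\partial X_\alpha\to X_\alpha$ with a free orbit. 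The subspace $O_\alpha(n\,;\,k)$ is cut out by those trees lying in the image of the prescribed compositions $\theta_1\circ_1\theta_2$ followed by a shuffle of the first $k$ leaves, so the same filtration restricts to $O_\alpha(n\,;\,k)$ and one is left to analyse the relative cells, i.e. the tree-monomials of $O_\alpha(n)$ that do not lie in $O_\alpha(n\,;\,k)$.

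The \textbf{main obstacle} is precisely this last equivariant bookkeeping: I must check that, on these relative cells, the group $(\Sigma_k\times\Sigma_{n-k})\wr O(1)$ acts freely, so that each relative cell is induced and the attaching map is a genuine cofibration. Freeness of the $\Sigma_k\times\Sigma_{n-k}$-part holds because, for a tree not lying in $O(n\,;\,k)$, the shuffle of the first $k$ leaves and the permutation of the last $n-k$ leaves move the leaf-labels through a free orbit; the $O(1)^{\times n}$-factor is handled by inserting elements of $O(1)$ at the $n$ leaves, which produces \emph{cofibrant} (not merely free) cells because $O(1)$ is cofibrant as a space, a cofibrant operad being componentwise cofibrant. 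Care is needed where the shuffle condition degenerates (the boundary strata of the cubes of labels), which is why the argument must be organised as a relative cell attachment rather than a plain product of cofibrations.

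Finally, for the ``as a consequence'' clause I would restrict along the submonoid inclusion
$$
(\Sigma_k\wr O(1))\times\Sigma_{n-k}\;\hookrightarrow\;(\Sigma_k\wr O(1))\times(\Sigma_{n-k}\wr O(1))=(\Sigma_k\times\Sigma_{n-k})\wr O(1),
$$
which is the identity on the first factor and the unit inclusion $\Sigma_{n-k}\hookrightarrow\Sigma_{n-k}\wr O(1)$ on the second. Since $\Sigma_{n-k}\wr O(1)$ is a free right $\Sigma_{n-k}$-space with orbit space $O(1)^{\times(n-k)}$, and $O(1)^{\times(n-k)}$ is cofibrant (a finite product of the cofibrant space $O(1)$), this $\Sigma_{n-k}$-space is cofibrant; hence restriction along the above inclusion preserves cofibrations and cofibrant objects, which is one of the equivariant restriction lemmas collected in the appendix. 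Applying this to the first assertion yields that $O(n\,;\,k)$ is $(\Sigma_k\wr O(1))\times\Sigma_{n-k}$-cofibrant and that~\eqref{eq:Onk} is a $(\Sigma_k\wr O(1))\times\Sigma_{n-k}$-cofibration.
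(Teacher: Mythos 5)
Your skeleton matches the paper's proof: reduction to a cellular operad via retracts (using functoriality of $O\mapsto\bigl(O(n\,;\,k)\to O(n)\bigr)$), a double filtration tracking $O(n\,;\,k)$ and the relative map simultaneously (your pushout formulation is essentially equivalent to the paper's filtrations $P_\alpha(n\,;\,k)=P(n\,;\,k)\cap P_\alpha(n)$ and $P'_\alpha(n)=P(n\,;\,k)\cup P_\alpha(n)$), a tree-wise analysis of relative cells in which the relevant dichotomy is whether some vertex has all of its inputs among the $k$ special leaves (the paper's sets $\mathbb{T}^{\geq 2}_{n,k}[m]_I$ and $\mathbb{T}^{\geq 2}_{n,k}[m]_{II}$ feeding the pushouts \eqref{eq:push_filt_I} and \eqref{eq:push_filt_II}), and the final restriction along $(\Sigma_k\wr O(1))\times\Sigma_{n-k}\to(\Sigma_k\times\Sigma_{n-k})\wr O(1)$, which you justify exactly as the paper does via Lemma~\ref{l:adj_mon} and cofibrancy of $O(1)$. (You omit the preliminary reduction to $O(0)=\emptyset$ via Proposition~\ref{p:useful3}, on which the cellular bookkeeping relies, but that is easily repaired.) The problem is the step you yourself single out as the main obstacle: it is not correct as stated, and it is precisely where all the work lies.

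Concretely, the relative cells are not free orbits of $(\Sigma_k\times\Sigma_{n-k})\wr O(1)$. On the group side, the cells are indexed by isomorphism classes of marked trees, and each cell carries an action of the stabilizer $Aut(T)$ of marking-preserving tree automorphisms, which is nontrivial in general: a root with two branches carrying the same generator label, each branch holding one special and one ordinary leaf, lies outside $O(n\,;\,k)$ yet admits the branch swap as a symmetry. So the cells are merely \emph{induced} from the monoid $Aut(T)\wr O(1)$, not free, and a set-theoretic freeness argument (even in the cases where it holds) does not by itself yield equivariant cofibrations. On the monoid side the situation is worse: insertions of $O(1)$-elements at leaves attached to old vertices (labelled by $P_{<\alpha}$) are absorbed into those labels, so the cells are in no reasonable sense free over $O(1)^{\times n}$; the only genuinely free $O(1)$-coordinates are those on edges adjacent to the new generators, i.e.\ the factors $\prod_{e\in E_1(T)}P(1)$ appearing in \eqref{J8}. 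Hence ``cofibrant because $O(1)$ is a cofibrant space'' is not a substitute for what is actually needed, namely that every boundary inclusion $\partial\overline{X}_\alpha(T)\to\overline{X}_\alpha(T)$ is an $Aut(T)\wr O(1)$-cofibration; once that is known, Lemma~\ref{l:adj_mon} (induction along $Aut(T)\wr O(1)\to(\Sigma_k\times\Sigma_{n-k})\wr O(1)$) finishes each cell step. That equivariant cofibrancy statement is the technical core of the proposition: the paper obtains it by restricting, along $Aut(T)\wr P(1)\to Aut(U(T))\wr P(1)$, the statement already established inside the proof of Proposition~\ref{J1}, whose own proof requires the monoid pushout-product Lemma~\ref{l:push_prod1} and a double induction over trees. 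Without importing or reproving that input, your induction does not close.
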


These propositions are proved by similar arguments which are both adaptation of Berger-Moerdijk's proof of \cite[Proposition~4.3]{BM} 
stating that cofibrant operads are always $\Sigma$-cofibrant. 
The latter result is obtained by  iteratively using \cite[Lemma~5.10]{BM}. Its slightly stronger version
 \cite[Lemma~2.5.3]{BM2}  is  Lemma~\ref{D2}, both being equivariant pushout-product type statements with respect to a discrete group action.
 In the Appendix we formulate and prove their %\todo{Question Benoit: en quoi ce lemma est analogue?}
  analogue -- Lemma~\ref{l:push_prod1} for topological monoids that applies to our case
 of action by monoids $\Sigma_n\wr O(1)$, $( \Sigma_{k}\times\Sigma_{n-k})\wr O(1)$,  $( \Sigma_{k}\wr O(1))\times \Sigma_{n-k}$ or alike. 

%
%
%\begin{lmm}{\cite[Proposition 2.6]{CS}}\label{D1}
%Dual to Lemma \ref{E6}, let $\mathcal{D}$ be the small category of shape $\{\xymatrix@C=15pt{\ast_{1} &\ar[r]^{c_{3}} \ast_{2} \ar[l]_{c_{1}}& \ast_{3}  }\}$ and $F_{1},F_{2}\in Func(\mathcal{D}; \mathcal{C})$ be two functors from $\mathcal{D}$ to a model category $\mathcal{C}$. Let $t:F_{1}\Rightarrow F_{2}$ be a natural transformation. The map $f$ induced by the natural transformation $t$ between the colimits:
%$$
%\xymatrix{
%\operatorname{colim}_{\mathcal{D}}F_{1}\ar[d]^{f} \ar@{=}[r] & \operatorname{colim}\big(\hspace{-31pt} & F_{1}(\ast_{1})\ar[d]_{t(\ast_{1})} &\ar[r]\ar[d]_{t(\ast_{2})} F_{1}(\ast_{2})\ar[l] & \ar[d]_{t(\ast_{3})} F_{1}(\ast_{3})\big)\\
%\operatorname{colim}_{\mathcal{D}}F_{2} \ar@{=}[r] &  \operatorname{colim}\big(\hspace{-31pt} & F_{2}(\ast_{1}) &\ar[r] F_{2}(\ast_{2})\ar[l] & F_{2}(\ast_{3})\big)
%}
%$$
%is a cofibration if the map $t(\ast_{3})$ and the map
%\begin{equation}\label{H1}
%t(\ast_{1})\cup F_{2}(c_{1}): F_{1}(\ast_{1})\underset{F_{1}(\ast_{2})}{\coprod}F_{2}(\ast_{2})\longrightarrow F_{2}(\ast_{1})
%\end{equation}
%are cofibrations. Furthermore, the map $f$ is an acyclic cofibration if the maps $t(\ast_{3})$ and (\ref{H1}) are also weak equivalences.
%\end{lmm}
%

\begin{proof}[Proof of Proposition \ref{J1}]
Let $O$ be a cofibrant operad. By Proposition~\ref{p:useful3}, without loss of generality, we can assume that $O(0)=\emptyset$. Since $O$ is cofibrant, this operad is a retract of a cellular operad $P$.   
% that is an operad obtained by a possibly transfinite composition $P=\mathrm{colim}_{\alpha< \lambda}P_{\alpha}$  of pushouts of the form 
%$$
%\xymatrix{
%\mathcal{F}(\partial X_{\alpha}) \ar[r] \ar[d] & \mathcal{F}(X_{\alpha}) \ar[d]\\
%P_{\alpha} \ar[r] & P_{\alpha +1}
%}
%$$
%where $\mathcal{F}(-)$ is the free operadic functor (see \cite{BM} for a combinatorial description of $\mathcal{F}$ using trees) and $\partial X_{\alpha}\rightarrow X_{\alpha}$ is a generating $\Sigma$-cofibration concentrated in arity $\geq 1$. In particular the operad $P$ satisfies also the condition $P(0)=\emptyset$.
 In what follows, we denote by $P_1$ the sub-operad of $P$ obtained by taking the restriction to the arity $1$: 
$$
%P'(n)
P_1(n)=\left\{
\begin{array}{cl}%\vspace{5pt}
P(1) & \text{if } n=1, \\ 
\emptyset & \text{otherwise}.
\end{array} 
\right.
$$
In the absence of arity zero operations, the cellular attachments can be reordered so that first we attach arity one cells, then arity two cells and so on. 
In particular, the map of operads $P_1\rightarrow P$ can also be seen as a cellular extension:% \vspace{5pt}
\begin{equation}\label{J6}
\xymatrix{
%P'=P_{0}\ar[r] & 
P_{1} \ar[r] &P_{2}\ar[r] &\cdots \ar[r] & P_{\alpha} \ar[r] & P_{\alpha +1} \ar[r] & \cdots \ar[r] & P,
}
\end{equation}% \vspace{5pt}
where $P_{\alpha}$, $\alpha>1$, is obtained from $P_{<\alpha}:=\operatorname{colim}_{\beta<\alpha}P_{\beta}$ using a pushout of the form~\eqref{pushouteq1}, 
%\begin{equation}\label{pushouteq1}
%%\vcenter{
%\xymatrix{
%\mathcal{F}(\partial X_{\alpha}) \ar[r] \ar[d] & \mathcal{F}(X_{\alpha}) \ar[d]\\
%P_{<\alpha} \ar[r] & P_{\alpha}
%}%}
%%\hspace{15pt} \text{with} \hspace{15pt} 
%%\left\{
%%\begin{array}{lc}\vspace{9pt}
%%\partial X''_{\alpha}=X''_{\alpha}=X_{\alpha} & \text{if } ar_{\alpha}=1, \\ 
%%\partial X''_{\alpha}= \partial X_{\alpha} \text{ and }  X''_{\alpha}=  X_{\alpha} & \text{if } ar_{\alpha}\geq 2.
%%\end{array} 
%%\right.
%\end{equation}
 where  each $\partial X_\alpha\to X_\alpha$ is a generating $\Sigma$-cofibration concentrated in arity $ar_\alpha\geq 2$.

%\noindent \textbf{Combinatorial description of the pushout \eqref{pushouteq1}.} 
For $n\geq 2$, one has $P_1(n)=\emptyset$, which is $\Sigma_n\wr P(1)$-cofibrant. 
In what follows, we will show that each map $P_{<\alpha}(n)\rightarrow P_{\alpha}(n)$ is a $\Sigma_{n}\wr P(1)$-cofibration. For this purpose, we need a combinatorial description of the pushout \eqref{pushouteq1} using the language of trees. Let $\mathbb{P}_{n}^{\geq 1}$ be the set of
planar rooted trees having exactly $n$ leaves indexed by an element of the symmetric group $\Sigma_{n}$ with internal vertices of arity $\geq 1$. According to this notation, $P_{\alpha}(n)$ is obtained from the set of trees $\mathbb{P}_{n}^{\geq 1}$ by indexing the vertices by points in $P_{<\alpha}(n)$ and $X_{\alpha}$. More precisely, one has 
\begin{equation}\label{eqpushout1}
P_{\alpha}(n)=\left.\underset{T\in \mathbb{P}_{n}^{\geq 1}}{\coprod}\,\, \underset{v\in V(T)}{\prod} \left[ P_{<\alpha}(|v|)\underset{\partial X_{\alpha}(|v|)}{\coprod}X_{\alpha}(|v|) \right]\right/\sim,
\end{equation}
where the equivalence relation is generated by the relation contracting two consecutive vertices indexed by points in $P_{<\alpha}(n)$ using its operadic structure, the compatibility with the symmetric group action and the removal of vertices indexed by the unit $\ast_{1}\in P_{<\alpha}(1)=P(1)$. Let us remark that the arity one vertices are necessarily indexed by points in $P(1)$.% since the $\Sigma$-sequences $\partial X''_{\alpha}\rightarrow  X''_{\alpha}$ are concentrated in arity $ar_{\alpha}\geq 2$ (otherwise the pushout is trivial by construction).  

%In order to remove the equivalence relation in \eqref{eqpushout1}, 
We equip $P_{\alpha}(n)$ with the filtration~\eqref{J5} given by the number of vertices indexed by $X_{\alpha}$. Let $\mathbb{T}_{n}^{\geq 2}[m]$,  $n\geq 2$ and $m\geq 1$, be the set of non-planar rooted trees with internal vertices of arity $\geq 2$, having $n$ leaves and two kinds of vertices called auxiliary and primary, respectively. The corresponding sets of vertices of a tree~$T$ are denoted by $V_{aux}(T)$ and $V_{pri}(T)$. Furthermore, we assume that  there are no consecutive primary vertices and each tree $T\in \mathbb{T}^{\geq 2}_{n}[m]$ has exactly $m$ auxiliary vertices. For any $T\in \mathbb{T}^{\geq 2}_{n}[m]$, we denote by $E_{1}(T)$ the subset of the set $E(T)$ of edges which consists of: the root edge of $T$ if it is adjacent to
an auxiliary vertex; the leaf edges of $T$ connected to auxiliary vertices; the inner edges of $T$ connecting two auxiliary vertices. According to this notation, we set
\begin{equation}\label{J8}
\overline{X}_{\alpha}(T)= % \left(
 \underset{v\in V_{pri}(T)}{\prod} P_{<\alpha}(n)(|v|) \times \underset{v\in V_{aux}(T)}{\prod} X_{\alpha}(|v|)  \times \underset{e\in E_{1}(T)}{\prod} P(1).
% \right)\underset{Aut(T)}{\times} \Sigma_{n},
\end{equation}
 The subspace $\partial \overline{X}_{\alpha}(T)\subset \overline{X}_{\alpha}(T)$ is defined as the one composed of elements having at least one auxiliary vertex indexed by $\partial X_{\alpha}$. 
The automorphism group $Aut(T)$ of the tree $T$ acts on these spaces  $\partial \overline{X}_{\alpha}(T)$ and $\overline{X}_{\alpha}(T)$
 by permuting the incoming edges of the vertices. We choose a bijection of the set $\ell(T)$ of leaves of~$T$ with the set~$[n]$.
This allows us to consider the group $Aut(T)$  as a subgroup of  the symmetric group $\Sigma_n$ observing how it permutes the $n$ leaves of $T$. 
 Denote by $Aut(T)\wr P(1)$ the submonoid of $\Sigma_n\wr P(1)$ defined as the pullback
\begin{equation}\label{eq:pb_aut}
\xymatrix@R=35pt{
Aut(T)\wr P(1)  \ar[r] \ar[d] & Aut(T) \ar[d] \\
\Sigma_n\wr P(1) \ar[r] & \Sigma_n.
}
\end{equation}
%Monoid $Aut(T)\wr P(1)$  acts on~$\partial \overline{X}_{\alpha}(T)$ and $\overline{X}_{\alpha}(T)$. Moreover,
%\begin{equation}\label{J8_wr}
%\overline{X}_{\alpha}(T)=  \left( \underset{v\in V_{pri}(T)}{\prod} P_{<\alpha}(n)(|v|) \times \underset{v\in V_{aux}(T)}{\prod} X_{\alpha}(|v|)  \times \underset{e\in E_{1}(T)}{\prod} P(1)\right)\underset{Aut(T)\wr P(1)}{\times} \Sigma_{n}\wr P(1).
%\end{equation}

One has the filtration by the number $m$ of vertices labelled by $X$:
\begin{equation}\label{J5}
\xymatrix{
P_{<\alpha}(n)= P_{\alpha}(n)_{0} \ar[r] & \cdots \ar[r] & P_{\alpha}(n)_{m-1}\ar[r] & P_{\alpha}(n)_{m} \ar[r] & \cdots \ar[r] & P_{\alpha}(n).
}
\end{equation} 
The inclusion $P_{\alpha}(n)_{m-1}\to P_{\alpha}(n)_{m}$ fits in the following pushout diagram of $\Sigma_{n}\wr P(1)$-spaces:
\begin{equation}\label{J7}
\xymatrix@R=35pt{
\underset{T\in \mathbb{T}_{n}^{\geq 2}[m]}{\displaystyle\coprod}\left( \partial \overline{X}_{\alpha}(T) \underset{Aut(T)\wr P(1)}{\times} \Sigma_{n}\wr P(1)\right)
  \ar[r] \ar[d] & \underset{T\in \mathbb{T}_{n}^{\geq 2}[m]}{\displaystyle\coprod} \left( \overline{X}_{\alpha}(T)\underset{Aut(T)\wr P(1)}{\times} \Sigma_{n}\wr P(1)\right)
  \ar[d] \\
P_{\alpha}(n)_{m-1} \ar[r] & P_{\alpha}(n)_{m}.
}
\end{equation}
%where the coproduct is taken along the set of isomorphism classes of trees with $n$ leaves. 
%In particular, $P'_{0}(n)=P'(n)=\emptyset$, $n\geq 2$, is  obviously $\Sigma_{n}\wr P(1)$-cofibrant  and 
The map $P_{\alpha}(n)_{m-1}\rightarrow P_{\alpha}(n)_{m}$ is a $\Sigma_{n}\wr P(1)$-cofibration if the upper horizontal arrow in the above diagram is one. % a $\Sigma_{n}\wr P(1)$-cofibration.
According to Lemma~\ref{l:adj_mon}, the extension functor preserves cofibrations and we are only left to showing that
every inclusion $\partial \overline{X}_{\alpha}(T)\rightarrow \overline{X}_{\alpha}(T)$ is an $Aut(T)\wr P(1)$-cofibration.\hspace{7pt}

%\begin{equation}\label{J4}
%\underset{T\in \mathbb{T}_{n}^{\geq 2}[m]}{\displaystyle\coprod} \partial \overline{X}_{\alpha}(T)\longrightarrow \underset{T\in \mathbb{T}_{n}[m]^{\geq 2}}{\displaystyle\coprod}  \overline{X}_{\alpha}(T).
%\end{equation}
%\vspace{.2cm}

\vspace{5pt}
\noindent \textbf{The map  $\partial \overline{X}_{\alpha}(T)\rightarrow \overline{X}_{\alpha}(T)$ is an $Aut(T)\wr P(1)$-cofibration.} 
%We argue by induction over the number of  vertices in $T$. 
We induct over the number of  vertices in $T$.
The base of induction is when $T$ has 0 vertices for which the statement is vacuously true. 
% is an $n$-corolla, whose only vertex must be auxiliary. 
%One has $\partial \overline{X}_{\alpha}(T)=\partial X_\alpha(n)\times P(1)^{\times n}$ and $\overline{X}_{\alpha}(T)= X_\alpha(m)\times P(1)^{\times n}$. The statement is obtained applying Lemma~\ref{l:push_prod1} to the case $\Gamma=\Sigma_n\wr P(1)$, $\Gamma_1= P(1)^m$, $\Gamma_2=G_2=
%\Sigma_n$, with $A\to B$ being $\partial X_\alpha\to X_\alpha$ and $X\to Y$ being $\emptyset\to P(1)^{\times m}$.
Now assume that $T$ has $\geq 1$ vertices and the statement holds for any other rooted tree with less vertices.
  We also assume that 
for any~$i$, $P_{<\alpha}(i)$ is $\Sigma_i\wr P(1)$-cofibrant as our argument follows a double induction. Let $r$ denote  the root vertex. Let us assume that its first $\ell$ incoming edges are grafted to trees $T_1,\ldots,T_\ell$, for some $1\leq \ell \leq |r|$, and 
the remaining $\ell-|r|$ edges connect to leaves. Consider the following short exact sequence of monoids:
\[
1\rightarrow \prod_{i=1}^\ell \bigl(Aut(T_i)\wr P(1)\bigr) \rightarrow Aut(T)\wr P(1)\rightarrow Aut(r)\times \left(\Sigma_{|r|-\ell}\wr P(1)\right)\rightarrow 1,
\]
where $Aut(r)\subset\Sigma_\ell$ is the subgroup of permutation of the first $\ell$ incoming edges of $r$ as a result of $Aut(T)$-action.
It is easy to see that this sequence is split-surjective (see Definition~\ref{d:mon_ex_seq}) and thus Lemma~\ref{l:push_prod1} can be applied.  There are two cases to consider: the vertex $r$  is auxiliary or it is primary. In both cases, $G_2=Aut(r)\times
\Sigma_{|r|-\ell}$ and we explain how to apply Lemma~\ref{l:push_prod1}.

In the first case we take $A\to B$ to be $P(1)\times \partial X_\alpha(|r|)
\times P(1)^{\times (|r|-\ell)}\to P(1)\times  X_\alpha(|r|)\times P(1)^{\times (|r|-\ell)}$, where the first factor $P(1)$ corresponds to the root edge,
while the other factors $P(1)$ correspond to the leaf edges connected to $r$.
It is an $Aut(r)\times (\Sigma_{|r|-\ell}\wr P(1))$-cofibration  by restriction (Lemma~\ref{l:adj_mon}) and also Lemma~\ref{l:push_prod1}.
For $X\to Y$ we take $\partial \prod_{i=1}^\ell  \overline{X}_{\alpha}(T_i)\rightarrow  \prod_{i=1}^\ell  \overline{X}_{\alpha}(T_i)$.
%, where 
% $\partial \prod_{i=1}^\ell  \overline{X}_{\alpha}(T_i)$ is the subset of the product with at least one coordinate in $\partial  \overline{X}_{\alpha}(T_i)$.
  It is a $\prod_{i=1}^\ell (Aut(T_i)\wr P(1))$-cofibration according to Example~\ref{ex:push_prod1}.
  Denote in this case by 
   $_{P(1)\backslash}\overline{X}_{\alpha}(T)$
  % $\overline{X}_{\alpha}(T)_{/P(1)}$
    the product~\eqref{J8} with one factor
 $P(1)$ missing, which corresponds to the root edge.  We also denote by
 $_{P(1)\backslash}\partial\overline{X}_{\alpha}(T)$
 % $\partial\overline{X}_{\alpha}(T)_{/P(1)}$
   the image of $\partial\overline{X}_{\alpha}(T)$ under the projection  
   $\overline{X}_{\alpha}(T)\to {_{P(1)\backslash}\overline{X}_{\alpha}(T)}$.
   % $\overline{X}_{\alpha}(T)\to \overline{X}_{\alpha}(T)_{/P(1)}$.
    Note that the induction
 step also allows us to conclude that the inclusion  
 $_{P(1)\backslash}\partial \overline{X}_{\alpha}(T)\rightarrow {_{P(1)\backslash}\overline{X}_{\alpha}(T)}$
% $\partial \overline{X}_{\alpha}(T)_{/P(1)}\rightarrow \overline{X}_{\alpha}(T)_{/P(1)}$
  is an $Aut(T)\wr P(1)$-cofibration.
 
 In the second case, if $r$ is primary, then we take for $A\to B$ the inclusion $\emptyset\rightarrow  P_{<\alpha}(|r|)$. It is an $Aut(r)\times \left(\Sigma_{|r|-\ell}\wr P(1)\right)$-cofibration  by restricting from $\Sigma_{|r|}\wr P(1)$ and applying Lemma~\ref{l:adj_mon}. 
 For $X\to Y$ we take the map 
 $\partial \prod_{i=1}^\ell  (_{P(1)\backslash}\overline{X}_{\alpha}(T_i))\rightarrow  \prod_{i=1}^\ell ( _{P(1)\backslash}\overline{X}_{\alpha}(T_i)
)$.

\medskip

\noindent \textbf{The space $O(n)$ is $\Sigma_n\wr O(1)$-cofibrant.} Since the operad~$O$ is a retract of the operad $P$, the monoid $\Sigma_{n}\wr O(1)$ is a retract of the monoid $\Sigma_{n}\wr P(1)$. Let $f:\Sigma_{n}\wr O(1)\rightarrow \Sigma_{n}\wr P(1)$ and $g:\Sigma_{n}\wr P(1)\rightarrow \Sigma_{n}\wr O(1)$ be these maps of monoids such that $g\circ f=id$. These maps give rise to Quillen adjunctions:
$$
\xymatrix{
\Sigma_{n}\wr O(1)\text{-}Top \ar@<0.5ex>[r]^{i_{1}} & \Sigma_{n}\wr P(1)\text{-}Top \ar@<0.5ex>[l]^{i_{2}} \ar@<0.5ex>[r]^{j_{1}} & \Sigma_{n}\wr O(1)\text{-}Top \ar@<0.5ex>[l]^{j_{2}}
}
$$
For any $A\in \Sigma_{n}\wr O(1)$-$Top$ and $B\in \Sigma_{n}\wr P(1)$-$Top$, the objects $j_{2}(A)$ and $i_{2}(B)$ are defined by $j_{2}(A)=A$ and $i_{2}(B)=B$ as spaces, with the structure operations such that:
$$
\begin{array}{ccccc}\vspace{7pt}
j_{2}(A) \times \Sigma_{n}\wr P(1) & \underset{id\times g}{\longrightarrow} & A\times \Sigma_{n}\wr O(1) & \longrightarrow & A, \\ 
i_{2}(B) \times \Sigma_{n}\wr O(1) & \underset{id\times f}{\longrightarrow} & B\times \Sigma_{n}\wr P(1) & \longrightarrow & B.
\end{array} 
$$

In order to show that $O(n)$ is $\Sigma_{n}\wr O(1)$-cofibrant, we consider the lifting problem
\begin{equation}\label{eq:J2}
\xymatrix{
\emptyset \ar[r] \ar[d] & X \ar@{->>}[d]^{\simeq} \\
O(n) \ar[r] & Y,
}
\end{equation}
where $X\rightarrow Y$ is an acyclic fibration in $\Sigma_{n}\wr O(1)$-$Top$. Then we apply the functor $j_{2}$ to Diagram \eqref{eq:J2}. The map $j_{2}(X)\rightarrow j_{2}(Y)$ is still an acyclic fibration in $\Sigma_{n}\wr P(1)$-$Top$ because the restriction functor $j_{2}$ creates fibrations and weak equivalences
(in the sense that a map $\beta$ in $ \Sigma_{n}\wr O(1)$-$Top$ is a fibration or a weak equivalence precisely if $j_2(\beta)$ is so in  $ \Sigma_{n}\wr P(1)$-$Top$). Furthermore, $O$ being a retract of $P$, the space $j_{2}(O(n))$ is a retract of $P(n)$ in $\Sigma_{n}\wr P(1)$-$Top$. Since the latter is a  $\Sigma_{n}\wr P(1)$-cofibration, the space  $j_{2}(O(n))$ is a  $\Sigma_{n}\wr P(1)$-cofibrant object and there is a map $h:j_{2}(O(n))\rightarrow j_{2}(X)$ solution of the lifting problem \eqref{eq:J2} in $\Sigma_{n}\wr P(1)$-$Top$. Finally, the map $i_{2}(h)$ provides a solution to the lifting problem \eqref{J2} due to the relation $g\circ f=id$. 
\end{proof}

\begin{proof}[Proof of Proposition~\ref{J2}]
In what follows, we overuse the notation introduced in the proof of Proposition~\ref{J1}. By Proposition~\ref{p:useful3}, without loss of generality we can assume that $O(0)=\emptyset$.
Arguing in the same way as at the end of the proof of Proposition~\ref{J1}, we can assume that $O$ is a cellular operad. Indeed, $O$ being a retract of a cellular operad $P$, the inclusion $O(n\,;\,k)\rightarrow O(n)$ is also a retract of $P(n\,;\,k)\rightarrow P(n)$.  For simplicity and to agree with the notation Proposition \ref{J1}, we assume that $O=P$. 

\medskip

Recall filtration~\eqref{J6} in $P(n)$. We consider a similar filtration in $P(n\,;\,k)$ by taking $P_{\alpha}(n\,;\,k):=P(n\,;\,k)\cap P_{\alpha}(n)$.
Similarly, we filter the inclusion $P(n\,;\,k)\to P(n)$ 
taking $P'_\alpha(n):=P(n\,;\,k)\cup P_{\alpha}(n)$. Denote by $P_{<\alpha}(n\,;\,k):=\mathrm{colim}_{\beta<\alpha}P_{\beta}(n\,;\,k)$
and $P'_{<\alpha}(n):=\mathrm{colim}_{\beta<\alpha}P'_{\beta}(n)$.  In order to prove the proposition, one has to show that the inclusions 
$P_{<\alpha}(n\,;\,k)\rightarrow P_{\alpha}(n\,;\,k)$ and $P'_{<\alpha}(n)\rightarrow P'_{\alpha}(n)$ are $(\Sigma_k\times\Sigma_{n-k})\wr P(1)$-cofibrations. 

\medskip

This is done by filtering these inclusions similarly to~\eqref{J5}. Namely, we define $P_{\alpha}(n\,;\,k)_m:=P(n\,;\,k)\cap P_{\alpha}(n)_m$ and
$P'_\alpha(n)_m:=P(n\,;\,k)\cup P_{\alpha}(n)_m$. We are left to showing that the inclusions 
$P_{\alpha}(n\,;\,k)_{m-1}\rightarrow P_{\alpha}(n\,;\,k)_m$ and $P'_\alpha(n)_{m-1}\rightarrow P'_{\alpha}(n)_m$ are $(\Sigma_k\times \Sigma_{n-k})\wr P(1)$-cofibrations. 

\medskip

Let $\mathbb{T}_{n,k}^{\geq 2}[m]$, $1<k<n$,  denote the set of exactly the same trees as in $\mathbb{T}_{n}^{\geq 2}[m]$ in which in addition 
 $k$ (out of~$n$) leaves are marked as special. For $T\in \mathbb{T}_{n,k}^{\geq 2}[m]$, we denote by $U(T)\in\mathbb{T}_{n}^{\geq 2}[m]$
 the tree obtained by forgetting which leaves are special. Let $Aut(T)$ denote the group of automorphisms of $T$. For each tree $T\in \mathbb{T}_{n,k}^{\geq 2}[m]$ we choose  an ordering of its leaves so that we count special leaves first. This ordering
 gives us an inclusion $Aut(T)\to\Sigma_n$ (as well as an inclusion $Aut(U(T))\to\Sigma_n$).  One obviously has
  $Aut(T)=Aut(U(T))\cap(\Sigma_k\times\Sigma_{n-k})$. We similarly define 
 $$
 Aut(T)\wr P(1):=\bigl(Aut(U(T))\wr 
 P(1)\bigr)\cap\bigl((\Sigma_k\times\Sigma_{n-k})\wr P(1)\bigr).
 $$
Consider the subset $ \mathbb{T}_{n,k}^{\geq 2}[m]_I\subset  \mathbb{T}_{n,k}^{\geq 2}[m]$ composed of trees that contain a vertex
whose all incoming edges are special leaf edges. Its complement is denoted by $ \mathbb{T}_{n,k}^{\geq 2}[m]_{II}:=
\mathbb{T}_{n,k}^{\geq 2}[m]\setminus \mathbb{T}_{n,k}^{\geq 2}[m]_I$.

One has similar to~\eqref{J7} pushout diagrams of $\bigl(\Sigma_{k}\wr P(1)\times\Sigma_{n-k}\wr P(1)\bigr)$-spaces.
\begin{equation}\label{eq:push_filt_I}
\xymatrix{
\underset{T\in \mathbb{T}_{n,k}^{\geq 2}[m]_I}{\displaystyle\coprod} \partial \overline{X}_{\alpha}(T) \underset{Aut(T)\wr P(1)}{\times} \bigl((\Sigma_{k}\times\Sigma_{n-k})\wr P(1)\bigr)
  \ar[r] \ar[d] & \underset{T\in \mathbb{T}_{n,k}^{\geq 2}[m]_I}{\displaystyle\coprod}  \overline{X}_{\alpha}(T)\underset{Aut(T)\wr P(1)}{\times} \bigl((\Sigma_{k}\times\Sigma_{n-k})\wr P(1)\bigr)
  \ar[d] \\
P_{\alpha}(n\,;\,k)_{m-1} \ar[r] & P_{\alpha}(n\,;\,k)_{m}.
}
\end{equation}
\begin{equation}\label{eq:push_filt_II}
\xymatrix{
\underset{T\in \mathbb{T}_{n,k}^{\geq 2}[m]_{II}}{\displaystyle\coprod} \partial \overline{X}_{\alpha}(T) \underset{Aut(T)\wr P(1)}{\times} \bigl((\Sigma_{k}\times\Sigma_{n-k})\wr P(1)\bigr)
  \ar[r] \ar[d] & \underset{T\in \mathbb{T}_{n,k}^{\geq 2}[m]_{II}}{\displaystyle\coprod}  \overline{X}_{\alpha}(T)\underset{Aut(T)\wr P(1)}{\times} \bigl((\Sigma_{k}\times\Sigma_{n-k})\wr P(1)\bigr)
  \ar[d] \\
P'_{\alpha}(n)_{m-1} \ar[r] & P'_{\alpha}(n)_{m}.
}
\end{equation}
In the above $ \overline{X}_{\alpha}(T)$ is defined by~\eqref{J8}.

The upper horizontal arrows in diagrams~\eqref{eq:push_filt_I} and~\eqref{eq:push_filt_II} are $\bigl(\Sigma_{k}\wr P(1)\times\Sigma_{n-k}\wr P(1)\bigr)$-cofibrations. Indeed, from the proof of Proposition~\ref{J1} we know that each inclusion $\partial\overline{X}_{\alpha}(T)
\rightarrow \overline{X}_{\alpha}(T)$ is an $Aut(U(T))\wr P(1)$-cofibration. Applying Lemma~\ref{l:adj_mon} to 
the restriction along $Aut(T)\wr P(1)\to Aut(U(T))\wr P(1)$, we get that such inclusion is an $Aut(T)\wr P(1)$-cofibration. 
Applying again this lemma to the induction along $Aut(T)\wr P(1)\to (\Sigma_k\times\Sigma_{n-k})\wr P(1)$ and using the fact that 
the diagrams~\eqref{eq:push_filt_I} and~\eqref{eq:push_filt_II}  are pushout squares, we conclude that  
$P_{\alpha}(n\,;\,k)_{m-1}\rightarrow P_{\alpha}(n\,;\,k)_m$ and $P'_\alpha(n)_{m-1}\rightarrow P'_{\alpha}(n)_m$ are $(\Sigma_k\times
\Sigma_{n-k})\wr P(1)$-cofibrations. Applying again Lemma~\ref{l:adj_mon} to the restriction along $(\Sigma_k\wr P(1))\times\Sigma_{n-k}
\rightarrow  (\Sigma_k\times\Sigma_{n-k})\wr P(1)$, these maps are also $(\Sigma_k\wr P(1))\times\Sigma_{n-k}$-cofibrations.
\end{proof}

\section{The projective model category of $(P\text{-}Q)$-bimodules}\vspace{5pt}

Let $P$ and $Q$ be two operads. A $(P\text{-}Q)$-bimodule is a $\Sigma$-sequence $M\in \Sigma Seq$ together with operations 
\begin{equation}\label{A2}
\begin{array}{llr}\vspace{7pt}
\gamma_{r}: & M(n)\times \underset{1\leq i \leq n}{\prod} Q(m_{i})\longrightarrow  M\big(\,\sum_{i}\,m_{i}\,\big), & \text{right operations},\\ \vspace{7pt}
\gamma_{\ell}: & P(n)\times \underset{1\leq i \leq n}{\prod} M(m_{i})\longrightarrow  M\big(\,\sum_{i}\,m_{i}\,\big),& \text{left operations},\vspace{-0pt}
\end{array}
\end{equation}
satisfying the following relations, with $\sigma\in \Sigma_{n}$ and $\tau_{i}\in \Sigma_{m_{i}}$:
$$
\underset{\text{Associativity for the right operations}}{\xymatrix@C=12pt{
M(n)\hspace{-1pt}\times\hspace{-5pt} \underset{1\leq i\leq n}{\prod}\hspace{-5pt}Q(m_{i})\hspace{-1pt}\times\hspace{-5pt} \underset{\substack{1\leq i \leq n\\ 1\leq j\leq m_{i}}}{\prod}\hspace{-8pt}Q(k_{i,j})\ar[r] \ar[d] & M(n)\hspace{-1pt}\times\hspace{-5pt} \underset{1\leq i\leq n}{\prod}\hspace{-5pt} Q(\sum_{j}\, k_{i,j}) \ar[d]\\
M(\sum_{i}\, m_{i})\times \hspace{-8pt}\underset{\substack{1\leq i \leq n\\ 1\leq j\leq m_{i}}}{\prod}\hspace{-8pt} Q(k_{i,j}) \ar[r] & M(\sum_{i,j}\, k_{i,j})
}}\hspace{20pt}
\underset{\text{Associativity for the left operations}}{\xymatrix@C=12pt{
P(n)\hspace{-1pt}\times\hspace{-5pt} \underset{1\leq i\leq n}{\prod}\hspace{-5pt}P(m_{i})\hspace{-1pt}\times\hspace{-5pt} \underset{\substack{1\leq i \leq n\\ 1\leq j\leq m_{i}}}{\prod}\hspace{-8pt}M(k_{i,j})\ar[r] \ar[d] & P(n)\hspace{-1pt}\times\hspace{-5pt} \underset{1\leq i\leq n}{\prod}\hspace{-5pt} M(\sum_{j}\, k_{i,j}) \ar[d]\\
P(\sum_{i}\, m_{i})\times \hspace{-8pt}\underset{\substack{1\leq i \leq n\\ 1\leq j\leq m_{i}}}{\prod}\hspace{-8pt} M(k_{i,j}) \ar[r] & M(\sum_{i,j}\, k_{i,j})
}}\vspace{5pt}
$$
$$
\underset{\text{Compatibility between the left and right operations}}{\xymatrix@C=12pt{
P(n)\hspace{-1pt}\times\hspace{-5pt} \underset{1\leq i\leq n}{\prod}\hspace{-5pt}M(m_{i})\hspace{-1pt}\times\hspace{-5pt} \underset{\substack{1\leq i \leq n\\ 1\leq j\leq m_{i}}}{\prod}\hspace{-8pt}Q(k_{i,j})\ar[r] \ar[d] & P(n)\hspace{-1pt}\times\hspace{-5pt} \underset{1\leq i\leq n}{\prod}\hspace{-5pt} M(\sum_{j}\, k_{i,j}) \ar[d]\\
M(\sum_{i}\, m_{i})\times \hspace{-8pt}\underset{\substack{1\leq i \leq n\\ 1\leq j\leq m_{i}}}{\prod}\hspace{-8pt} Q(k_{i,j}) \ar[r] & M(\sum_{i,j}\, k_{i,j})
}}\hspace{40pt}
\underset{\text{Compatibility with the unit of the operad}}{\xymatrix@R=43pt{
M(n)\times Q(1)\ar[dr]_{\gamma_{\ell}} & M(n) \ar[r] \ar[l] \ar@{=}[d] & P(1)\times M(n) \ar[dl]^{\gamma_{r}}\\
& M(n) & 
}}\vspace{15pt}
$$
$$
\underset{\text{Right compatibility with the symmetric group action}}{\xymatrix{
M(n)\hspace{-1pt}\times\hspace{-5pt} \underset{1\leq i\leq n}{\prod}\hspace{-5pt}Q(m_{i})\ar[r] \ar[d]_{\sigma^{\ast}\times \prod_{i} (\tau_{i})^{\ast}} & M(m_{1}+\cdots +m_{n}) \ar[d]^{(\sigma(\tau_{1},\ldots,\tau_{n}))^{\ast}}\\
M(n)\hspace{-1pt}\times\hspace{-5pt} \underset{1\leq i\leq n}{\prod}\hspace{-5pt}Q(m_{\sigma(i)}) \ar[r] & M(m_{1}+\cdots +m_{n})
}}\hspace{60pt}
\underset{\text{Left compatibility with the symmetric group action}}{\xymatrix{
P(n)\hspace{-1pt}\times\hspace{-5pt} \underset{1\leq i\leq n}{\prod}\hspace{-5pt}M(m_{i})\ar[r] \ar[d]_{\sigma^{\ast}\times \prod_{i} (\tau_{i})^{\ast}} & M(m_{1}+\cdots +m_{n}) \ar[d]^{(\sigma(\tau_{1},\ldots,\tau_{n}))^{\ast}}\\
P(n)\hspace{-1pt}\times\hspace{-5pt} \underset{1\leq i\leq n}{\prod}\hspace{-5pt}M(m_{\sigma(i)}) \ar[r] & M(m_{1}+\cdots +m_{n})
}}\vspace{15pt}
$$
where the permutation $\sigma(\tau_{1},\ldots,\tau_{n})$ is obtained from the well known operadic compositions on the symmetric groups (see \cite[Proposition 1.1.9]{Fre1}).

As part of the left operations, there is a map $\gamma_{0}:P(0)\rightarrow M(0)$ in arity $0$. A map between $(P\text{-}Q)$-bimodules should preserve these operations. We denote by  $\Sigma\Bimod_{P\,;\,Q}$ the category of $(P\text{-}Q)$-bimodules.  
%In the special case $P=Q=O$, we denote by  $\Sigma\Bimod_{O}$ the category of $(O\text{-}O)$-bimodules.
 Thanks to the unit in $Q(1)$, the right operations $\gamma_{r}$ can equivalently be defined as a family of continuous maps
$$
\circ^{i}:M(n)\times Q(m)\longrightarrow M(n+m-1),\hspace{15pt}\text{with }1\leq i\leq n.
$$

Given an integer $r\geq 0$, we also consider the category of $r$-truncated bimodules $T_{r}\Sigma\Bimod_{P\,;\,Q}$. An object is an $r$-truncated $\Sigma$-sequence endowed with left and right operations (\ref{A2}) under the conditions $n\leq r$ and $\sum m_{i}\leq r$ for $\gamma_{r}$ and the condition $\sum m_{i}\leq r$ for $\gamma_{\ell}$. One has an obvious truncation functor 
$$
T_{r}(-):\Sigma\Bimod_{P\,;\,Q}\longrightarrow T_{r}\Sigma\Bimod_{P\,;\,Q}.
$$
In the rest of the paper, we use the notation
$$
\begin{array}{rl}\vspace{5pt}
x\circ^{i}q=\circ^{i}(x;q), & \text{for } x\in M(n) \text{ and } q\in Q(m), \\ 
p(x_{1},\ldots,x_{n})=\gamma_{\ell}(p,x_{1},\ldots,x_{n}), & \text{for } p\in P(n) \text{ and } x_{i}\in M(m_{i}).
\end{array} 
$$

\begin{expl}
If $\eta:P\rightarrow Q$ is a map of operads, then $\eta$ is also a map of $P$-bimodules. Indeed, any operad is a bimodule over itself while the $P$-bimodule structure on $Q$ is given by the following formulas:
$$
\begin{array}{lcl}\vspace{7pt}
\circ^{i}: Q(n)\times P(m)\longrightarrow  Q(m+n-1)& ; & (q;p)\longmapsto q\circ_{i}\eta(p), \\ 
\gamma_{\ell}:  P(n)\times Q(m_{1})\times \cdots \times Q(m_{n}) \longrightarrow  Q(m_{1}+\cdots + m_{n})& ; & (p,q_{1},\ldots,q_{n})\longmapsto (\cdots ((\eta(p)\circ_{n}q_{n})\circ_{n-1}q_{n-1})\cdots)\circ_{1}q_{1}.
\end{array} 
$$
\end{expl}

\subsection{Properties of the category of bimodules}\label{Z6}

In this subsection we introduce some basic properties related to the category of $(P\text{-}Q)$-bimodules where $P$ and $Q$ are two fixed operads. First, we show that the category of $(P\text{-}Q)$-bimodules is equivalent to the category of algebras over an explicit colored operad denoted by $P{+}Q$. Thereafter,  we build the free bimodule functor using the language of trees. Using this explicit construction of the free functor, we are able to give a combinatorial description of the pushout for bimodules.

\subsubsection{Bimodules as algebras over a colored operad}\label{D8}

Given two operads $P$ and $Q$, we build a colored operad $P{+}Q$ such that the category of $(P\text{-}Q)$-bimodules is equivalent to the category of $(P{+}Q)$-algebras. By a colored operad $C$, with set of colors $S$, we understand a family of spaces $C=\{C(s_{1},\ldots,s_{n};s_{n+1}),\, n\geq 0,\, s_{i}\in S\}$,
equipped with an action of permutations such that $\sigma^*: C(s_{1},\ldots,s_{n};s_{n+1})\rightarrow C(s_{\sigma(1)},\ldots,s_{\sigma(n)};s_{n+1})$, for $\sigma\in\Sigma_n$,
with units $\ast_{s}\in C(s;s)$ for $s\in S$, and with operadic compositions
$$
\circ_{i}:C(s_{1},\ldots,s_{n};s_{n+1})\times C(s'_{1},\ldots,s'_{m};s_{i})\longrightarrow C(s_{1},\ldots,s_{i-1},s'_{1},\ldots,s'_{m},s_{i+1},\ldots, s_{n};s_{n+1}),
$$
that satisfy associativity, unit and equivariance relations which are similar to the ones introduced in 
Section~\ref{E4}.
An algebra over $C$ is a family of spaces $\{X_{s},\, s\in S\}$ together with operations of the form
$$
\alpha[s_{1},\ldots,s_{n};s_{n+1}]:C(s_{1},\ldots,s_{n};s_{n+1})\times X_{s_{1}}\times \cdots \times X_{s_{n}}\longrightarrow X_{s_{n+1}},
$$
compatible with the operadic structure.

The operad $P+Q$ has the set of non-negative integers $S = \mathbb{N}$ as colors so that an element $\theta\in(P+Q)(n_1,...,n_k;m)$
governs an operation of the form $\theta: M(n_1)\times\dots\times M(n_k)\rightarrow M(m)$
on a $(P\text{-}Q)$-bimodule $M$.
We define this operad $P+Q$ by a presentation by generators and relations. We take two kinds of generating operations,
which respectively encode the left $P$-action and the right $Q$-action
of a $(P\text{-}Q)$-bimodule structure.
We actually integrate the first kind of generators in a colored operad $P_1$ (encoding the left operations) and the second kind of generators in another colored operad $Q_1$ (encoding the right operations), which is concentrated in arity one.
We explain the definition of these operads $P_1$ and $Q_1$ and their action on a $(P\text{-}Q)$-bimodule $M$ in the next paragraph.
We shape the composite elements of the operad $P+Q$ on trees equipped with two sets of vertices, left vertices, represented by diamonds $\blacklozenge$, which correspond to the $P_1$-factors,
and right vertices, represented by circles $\bullet$, which correspond to the $Q_1$-factors.
We describe the structure of these trees in a second step and we explain afterwards the definition of the operad $P+Q$
with elements shaped on such trees, moded out by relations which reflect the structure relations
of left and right actions on bimodules. (We can actually identify $P+Q$ with a coproduct of the colored operads $P_1$
and $Q_1$ moded out by extra relations which reflect
the commutation of left and right actions
on a bimodule.)

\begin{defi}\label{D5-bis}
\textbf{The colored operads $P_{1}$ and $Q_{1}$}
\begin{itemize}[leftmargin=12pt]
\item[$\blacktriangleright$]
To any collection of non-negative integers $n_{1},\ldots, n_{k}$, we associate the space $P_{1}(n_{1},\ldots,n_{k};n_{1}+\cdots + n_{k})$
such that:
$$
P_{1}(n_{1},\ldots,n_{k};n_{1}+\cdots + n_{k}):=P(k).
$$
We have an action of permutations $\sigma^*: P_{1}(n_{1},\ldots,n_{k};n_{1}+\cdots + n_{k})\rightarrow P_{1}(n_{\sigma(1)},\ldots,n_{\sigma(k)};n_{1}+\cdots + n_{k})$,
induced by the symmetric structure of the operad, unit elements $*_n\in P_1(n,n)$, given by the unit of $P$,
and ``colored'' composition operations
\begin{equation}\label{D4}
\begin{array}{rcl}\vspace{5pt}
\circ_{i}:P_{1}(n_{1},\ldots,n_{k};m)\times P_{1}(n'_{1},\ldots,n'_{k'};n_{i}) & \longrightarrow &  P_{1}(n_{1},\ldots,n_{i-1},n'_{1},\ldots,n'_{k'},n_{i+1},\ldots,n_{k};m), \\
p\,;\,p' & \longmapsto & p\circ_{i}p',
\end{array}
\end{equation}
induced by the composition operations of $P$, for $n_{i}=n'_{1}+\cdots + n'_{k'}$ and $m=n_{1}+\cdots+ n_{k}$.
We immediately see that these structure operations fulfill the unit, associativity and equivariance axioms of colored operads (as a consequence of the operad
axioms in $P$).
We also see that the left $P$-action on a $(P$-$Q)$-bimodule $M$ gives an operation
$$
P_{1}(n_{1},\ldots,n_{k};n_{1}+\cdots + n_{k})\times M(n_1)\times\dots\times M(n_k)\rightarrow M(n_1+\dots+n_k),
$$
which makes $M$ and algebra over the colored operad $P_1$.
\item[$\blacktriangleright$]
To any integers $n$ and $m$, we associate the space $Q_{1}(n\,;\,m)$ such that:
$$
Q_{1}(n\,;\,m):=\underset{\alpha:[m]\rightarrow [n]}{\coprod}\,\,\, \underset{i\in [n]}{\prod} \, Q(|\alpha^{-1}(i)|).
$$
A point in the above space is denoted by $(\alpha\,;\, \{q_{i}\}_{i\in [n]})$.
We have a unit element $*_n\in Q_{1}(n\,;\,n)$ given by collection of operadic units $1\in Q(1)$
in the factor of $Q_{1}(n\,;\,n)$ indexed by the identity map $id: [n]\rightarrow [n]$.
We have compositions of the form
\begin{equation}\label{C9}
\begin{array}{rcl}\vspace{5pt}
 \mu: Q_{1}(n\,;\,m)\times Q_{1}(m\,;\, \ell) & \longrightarrow & Q_{1}(n\,;\,\ell); \\
(\alpha\,;\, \{q_{i}\})\,;\, (\alpha'\,;\, \{q'_{j}\}) & \longmapsto & (\alpha\circ\alpha'\,;\, \{z_{i}\}_{i\in [n]}),
\end{array}
\end{equation}
where $z_{i}$, with $i\in [n]$, is obtained using the operadic structure of $Q$:
$$
z_{i}:=\sigma_i(\alpha,\alpha)^*\left[\big(\cdots \big(\,\big(\, q_{i}\circ_{l_{i}}q'_{b_{\ell_{i}}}\,\big)\circ_{\ell_{i}-1} q'_{b_{l_{i-1}}}\,\big) \cdots \big)\circ_{1} q'_{b_{1}}\right],\hspace{15pt} \text{with } \alpha^{-1}(i)=\{b_{1}<\cdots < b_{\ell_{i}}\}.
$$
Here $\sigma_i(\alpha,\alpha)^*$ is the inverse of the shuffle permutation of the set $(\alpha\circ\alpha')^{-1}(i)$, reordering this set as $(\alpha')^{-1}(b_1),\ldots,(\alpha')^{-1}(b_{\ell_i})$.
The operadic axioms imply that these operations are associativity and unital, so that $Q_{1}$
forms a colored operad (concentrated in arity one).
We also see that the right $Q$-action on a $(P-Q)$-bimodule $M$ gives an operation
$$
\zeta: Q_{1}(n\,;\,m)\times M(n)\rightarrow M(m)
$$
by
$$
\zeta\big( (\alpha\,;\,\{q_{i}\}),x\big) = \sigma_\alpha^*\left(\big(\cdots\big(\,x\circ^{n} q_{n}\big)\cdots\big)\circ^{1}q_{1}\right),
$$
for any $(\alpha\,;\,\{q_{i}\})\in Q_{1}(n\,;\,m)$, $x\in M(n)$, where $\sigma_\alpha\in\Sigma_m$ is the inverse of the corresponding shuffle
of the blocks of size $|\alpha^{-1}(1)|,\ldots,|\alpha^{-1}(n)|$
in~$[m]$.
We readily deduce from the bimodule axioms that this operation is unital and associative with respect to the composition operation (\ref{C9}),
so that $M$ forms an algebra over the colored operad $Q_1$.
\end{itemize}
\end{defi}

\begin{defi}\textbf{The set of trees $\mathbb{P}[n_{1},\ldots,n_{k};m]$}

\noindent Let $n_{1},\ldots, n_{k}$ and $m$ be non-negative integers. An element in $\mathbb{P}[n_{1},\ldots,n_{k};m]$ is a tuple $T=(T,V_{\ell}(T),V_{r}(T),f)$, where $T$ is a planar rooted tree having $k$ leaves indexed by a permutation from $\Sigma_{k}$ and having two kinds of vertices called \textit{left} and \textit{right} vertices, respectively. The sets $V_{\ell}(T)$ and $V_{r}(T)$ consist of left vertices and right vertices, respectively. In particular, right vertices are necessarily of arity one and are represented by circles~$\bullet$ in the tree while the left vertices are represented by diamonds~$\blacklozenge$. Left vertices are allowed to be of any arity $\geq 0$.

\begin{figure}[!h]
\begin{center}
\includegraphics[scale=0.4]{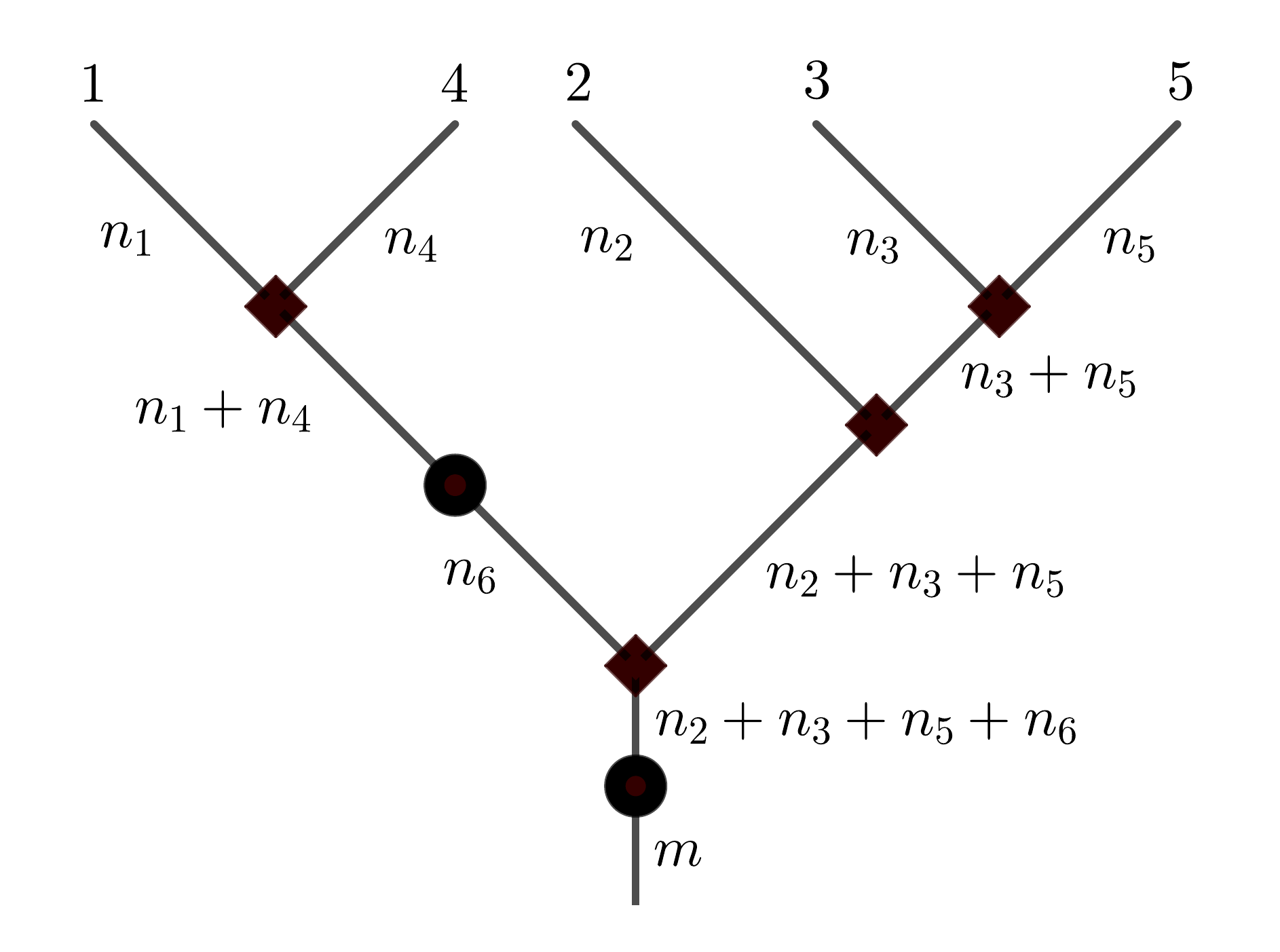}%\vspace{-10pt}
\caption{Illustration of an element in $\mathbb{P}[n_{1},n_{2},n_{3},n_{4},n_{5};m]$.}
\end{center}
\end{figure}

The assignment $f:E(T)\rightarrow \mathbb{N}$ labels the edges of the planar tree by integers. In particular, the outgoing edge of the tree is labelled by $m$ while the $k$ leaves are labelled by $n_{1},\ldots, n_{k}$, according to the permutation. We denote by $n_{1}^{v},\ldots, n_{|v|}^{v}$ the integers labelling the incoming edges of a vertex $v$ and by $n_{0}^{v}$ its output edge according to the orientation toward the root. Furthermore, if $v$ is a left vertex, then one has the relation  $n_{0}^{v}=n_{1}^{v}+\cdots + n_{|v|}^{v}$.\end{defi}

\begin{const}\label{D5}
The colored operad $P{+}Q$, with set of colors $\mathbb{N}$ (non-negative integers), is obtained from the sets of trees $\mathbb{T}[n_{1},\ldots,n_{k};m]$ by indexing the left vertices by points in $P_{1}$ and the right vertices by points in $Q_{1}$. More precisely, one has
$$
(P{+}Q)(n_{1},\ldots,n_{k};m):= \left. \underset{T\in \mathbb{T}[n_{1},\ldots,n_{k};m]}{\coprod}\, \left[\underset{v\in V_{r}(T)}{\prod} \, Q_{1}(n_{1}^{v}\,;\,n_{0}^{v}) \,\,\times \underset{v\in V_{\ell}(T)}{\prod}\, P_{1}(n_{1}^{v},\ldots, n_{|v|}^{v}\,;\, n_{0}^{v})\right]\right/ \!\!\!\sim \vspace{10pt}
$$
where the equivalence relation is generated by the following axioms:

\newpage

\begin{itemize}[leftmargin=12pt]
\item[$\blacktriangleright$] If a vertex is indexed by the unit of the operad $P_1$ or $Q_1$, then we remove it:

\begin{center}
\includegraphics[scale=0.3]{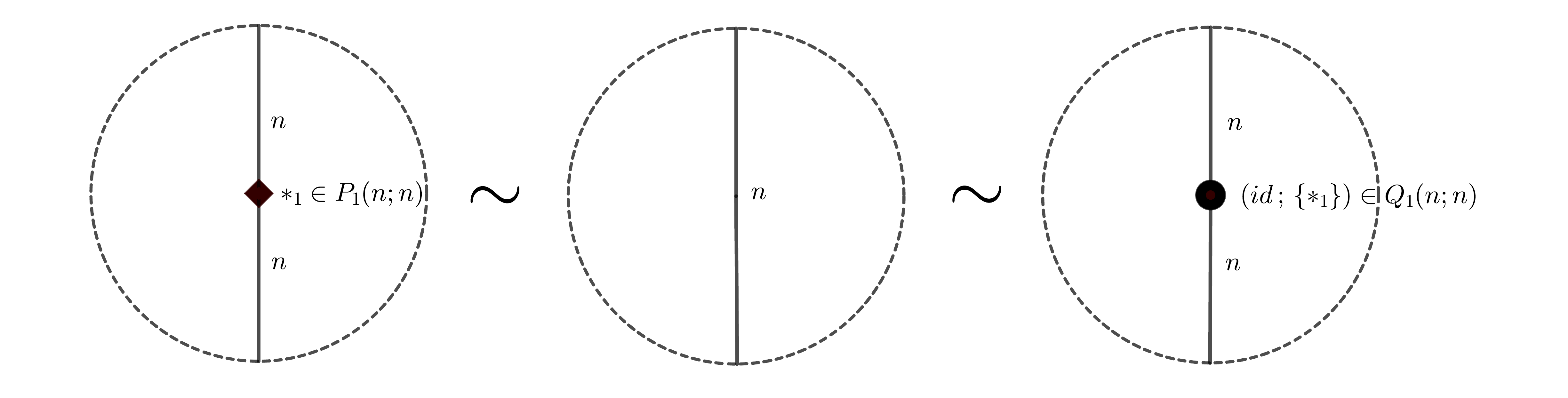}
\end{center}

\item[$\blacktriangleright$] If a left vertex is indexed by a point of the form $p\cdot \sigma$, with $p\in P(k)$ and $\sigma\in \Sigma_{k}$, then one has the following identity in which $\tau=\sigma(id_{\Sigma_{n_{1}}},\ldots,id_{\Sigma_{n_{k}}})\in \Sigma_{n_{1}+\cdots+n_{k}}$ is the element permuting the blocks $\{1,\ldots,n_{1}\},\ldots,\{n_{1}+\cdots +n_{k-1}+1,\ldots,n_{1}+\cdots+n_{k}\}$ in $[n_{1}+\cdots + n_{k}]$:

\begin{figure}[!h]
\begin{center}
\includegraphics[scale=0.32]{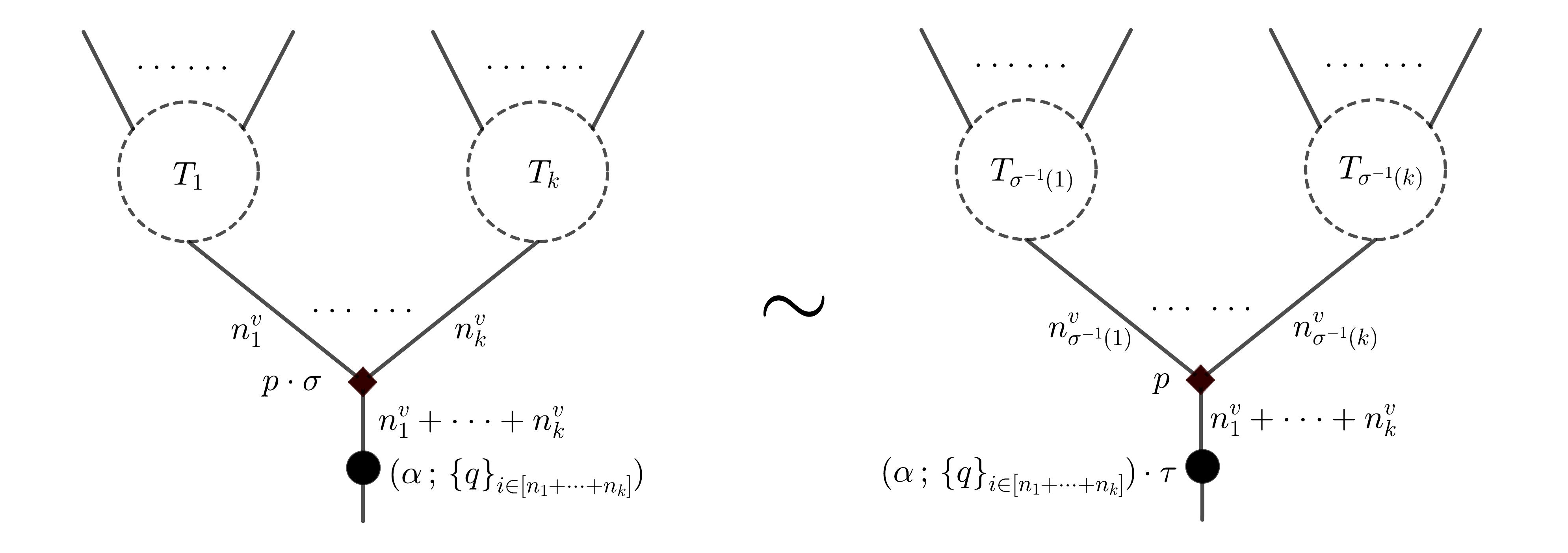}
\end{center}
\end{figure}

\item[$\blacktriangleright$] If there are two consecutive left vertices or two consecutive right vertices, then we contract the edge connecting them using the operation (\ref{C9}) or (\ref{D4}):

%\begin{figure}[!h]
\begin{center}
\includegraphics[scale=0.32]{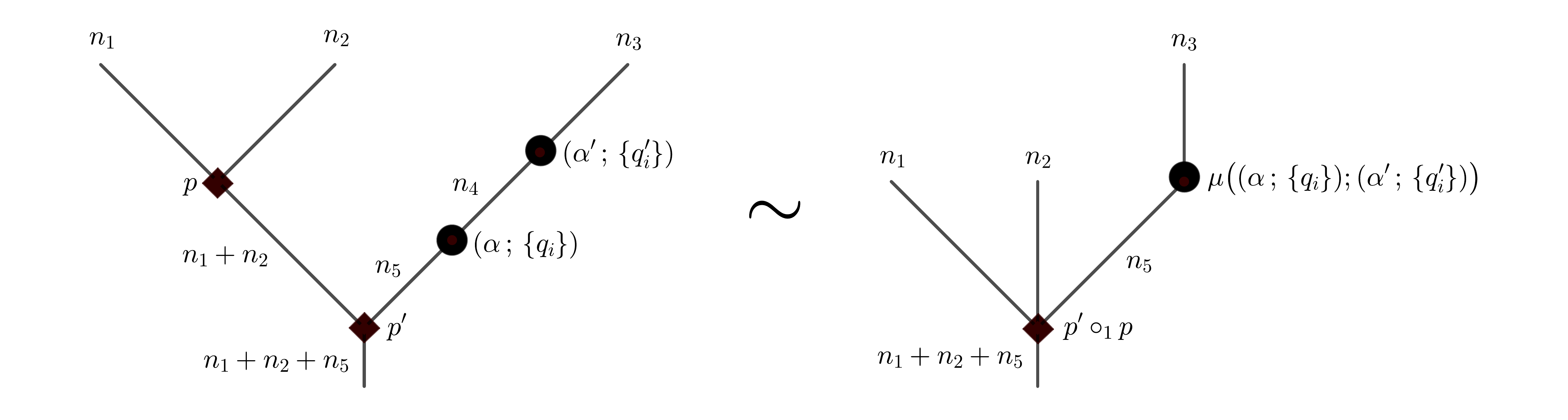}
\end{center}
%\end{figure}

\item[$\blacktriangleright$] If the incoming edges of a left vertex are connected to right vertices, then we can permute them:

\begin{figure}[!h]
\begin{center}
\includegraphics[scale=0.32]{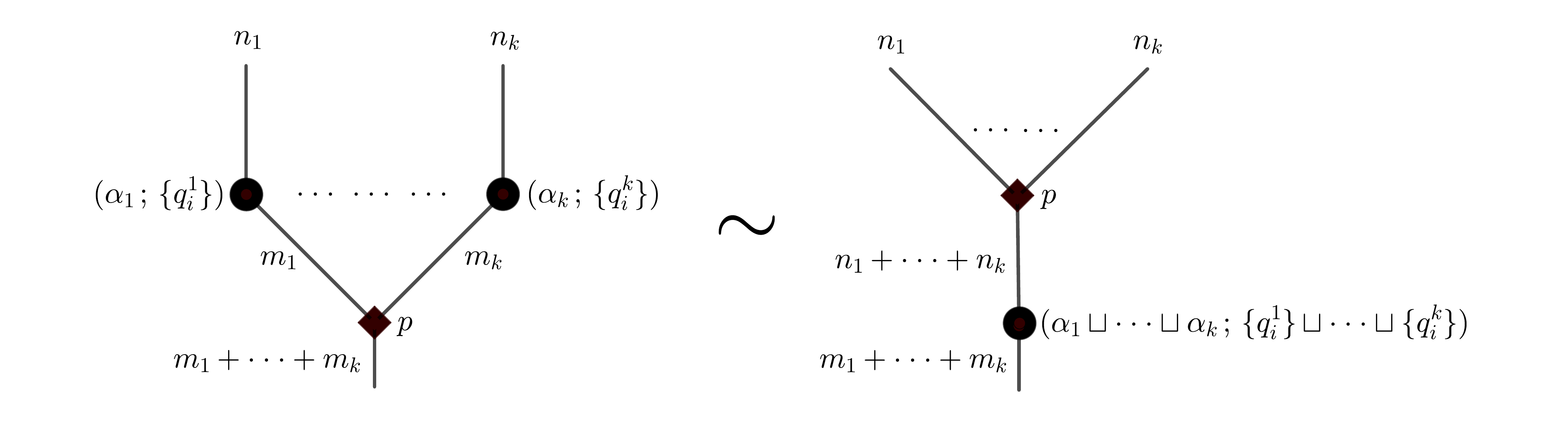}
\end{center}
\end{figure}
\end{itemize}

Let $x$ and $y$ be two points in $(P{+}Q)(n_{1},\ldots,n_{k};m)$ and $(P{+}Q)(n'_{1},\ldots,n'_{k};n_{i})$, respectively. The operadic composition $x\circ_{i}y$ consists in grafting the tree indexing $y$ into the $i$-th leaf of the tree indexing $x$ and keeping the labels of each tree in order to decorate the new one.\vspace{15pt}

\hspace{-52pt}
\includegraphics[scale=0.34]{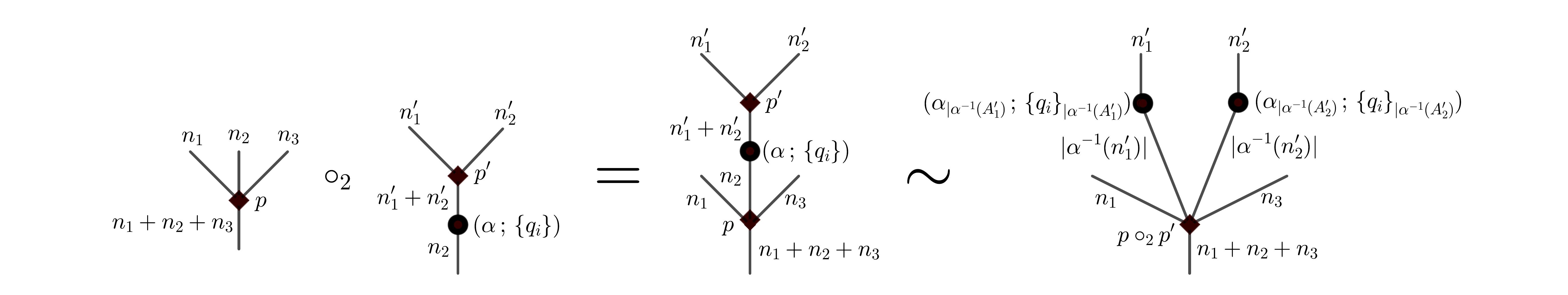}

\begin{figure}[!h]
%\begin{center}
\caption{Illustration of the operadic composition $\circ_{2}$.}
%\end{center}
\end{figure}
\end{const}

%\newpage

\begin{rmk}\label{D6}
According to the relations introduced in the previous construction, for any integers $n_{1},\ldots, n_{k}$ and $m$, each point in the component $(P{+}Q)(n_{1},\ldots,n_{k};m)$ has a representative element having exactly one left vertex and one right vertex such that the right vertex is the root of the tree,
see Figure~\ref{fig:P+Q}. Thanks to the second relation of Construction \ref{D5}, one can order the leaves from $1$ to $k$. Therefore,
as a space,
\begin{equation}\label{eq:P+Q}
(P{+}Q)(n_{1},\ldots,n_{k};m)=P(k)\times\left[ \underset{\alpha:[m]\rightarrow [n_{1}+ \cdots+n_{k}]}{\coprod}\,\, \underset{1\leq i \leq n_{1}+\cdots + n_{k}}{\prod} \,\, Q(|\alpha^{-1}(i)|)\right].
\end{equation}

\begin{figure}[!h]
\begin{center}
\includegraphics[scale=0.37]{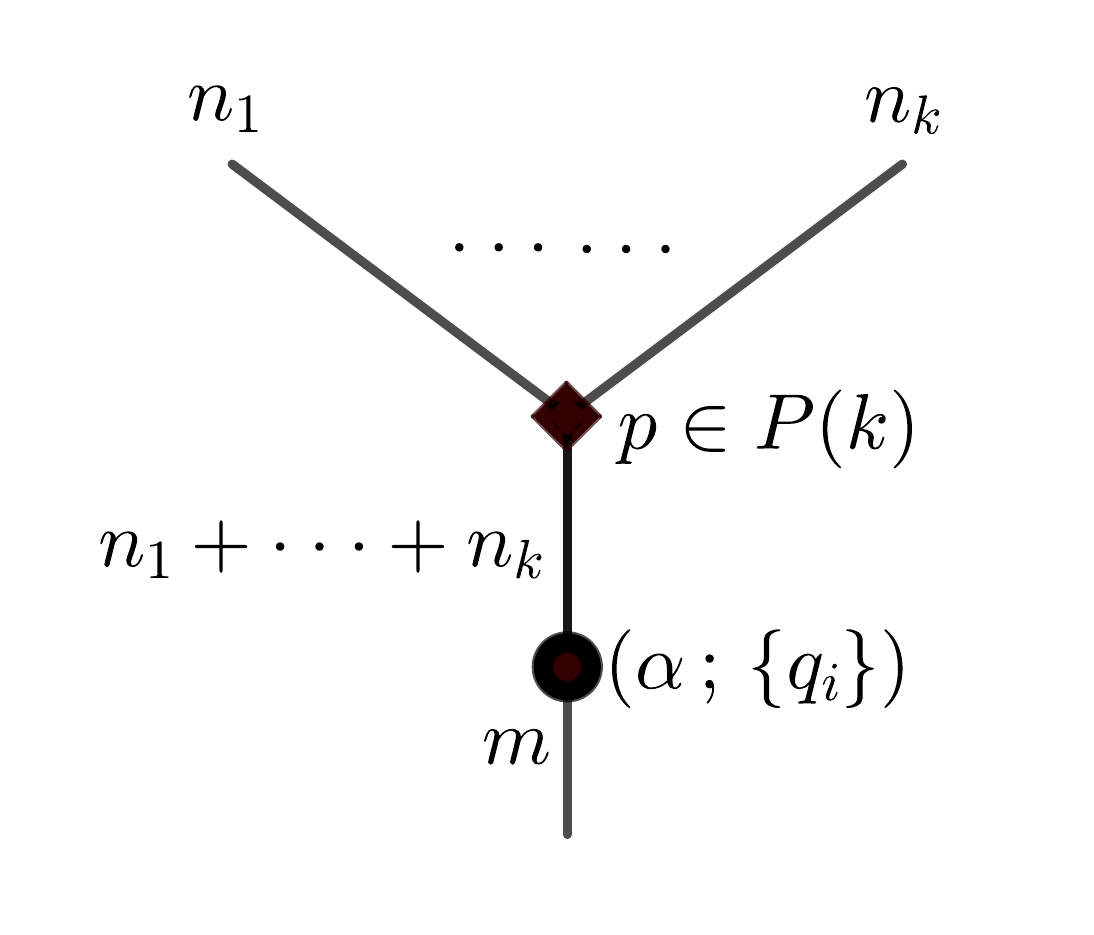}
\caption{Illustration of a representative element.}\label{fig:P+Q}%\vspace{-20pt}
\end{center}
\end{figure}
\end{rmk}

\begin{pro}
The category of $(P\text{-}Q)$-bimodules is equivalent to the category of $(P{+}Q)$-algebras.
\end{pro}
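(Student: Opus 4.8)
The plan is to exhibit mutually inverse functors $\Phi\colon \Sigma\Bimod_{P\,;\,Q}\to Alg_{P{+}Q}$ and $\Psi\colon Alg_{P{+}Q}\to \Sigma\Bimod_{P\,;\,Q}$, exploiting the fact that an algebra over an operad presented by generators and relations is the same thing as an object equipped with the generating operations subject to the relations. The whole argument rests on the normal form of Remark~\ref{D6}: every point of $(P{+}Q)(n_{1},\ldots,n_{k};m)$ is represented by a tree carrying a single left vertex (labelled by $P_{1}$) followed root-ward by a single right vertex (labelled by $Q_{1}$). This is the concrete form of the identification, noted in Construction~\ref{D5}, of $P{+}Q$ with the coproduct of the colored operads $P_{1}$ and $Q_{1}$ moded out by the commutation relations.

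First I would construct $\Psi$. Given a $(P{+}Q)$-algebra $X=\{X_{n}\}_{n\geq 0}$, I set $M(n):=X_{n}$ and restrict the action along the canonical maps of colored operads $P_{1}\to P{+}Q$ and $Q_{1}\to P{+}Q$ (sending a single decorated left, resp.\ right, vertex to the corresponding tree). By Definition~\ref{D5-bis} this equips $M$ with a $P_{1}$-algebra structure and a $Q_{1}$-algebra structure, that is, with the left action $\gamma_{\ell}$ and the right action $\gamma_{r}$ (equivalently the operations $\circ^{i}$). The associativity and equivariance axioms of $P_{1}$ and $Q_{1}$ translate directly into the associativity and symmetric-group-compatibility axioms for $\gamma_{\ell}$ and $\gamma_{r}$, the unit relations give the compatibilities with the operadic units, and the commutation relation (the fourth bullet of Construction~\ref{D5}) yields precisely the compatibility axiom between left and right operations. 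Hence $M=\Psi(X)$ is a genuine $(P\text{-}Q)$-bimodule.

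Conversely, for $\Phi$, a bimodule $M$ is by Definition~\ref{D5-bis} simultaneously a $P_{1}$-algebra and a $Q_{1}$-algebra. Using the normal form I define the action of a class $[(p\,;\,(\alpha\,;\,\{q_{i}\}))]\in(P{+}Q)(n_{1},\ldots,n_{k};m)$ on $M(n_{1})\times\cdots\times M(n_{k})$ by first applying $\gamma_{\ell}$ through $p\in P(k)$ and then applying the right action $\zeta(\alpha\,;\,\{q_{i}\})$. Well-definedness amounts to checking invariance under the four generating relations of Construction~\ref{D5}, which is exactly where the bimodule axioms enter; compatibility with the operadic composition of $P{+}Q$ follows by grafting two normal forms and reducing the result back to normal form via those same relations. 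One then checks that $\Phi$ and $\Psi$ are functorial and mutually inverse, the only non-formal point being $\Phi\Psi=\mathrm{id}$, which again reduces to the uniqueness of the normal form.

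The main obstacle I anticipate is the bookkeeping of the shuffle permutations. In Definition~\ref{D5-bis} the composition $\mu$ of $Q_{1}$ and the action $\zeta$ carry inverse shuffle permutations (the $\sigma_{\alpha}$ and $\sigma_{i}(\alpha,\alpha')^{*}$) that record the reordering of blocks of size $|\alpha^{-1}(i)|$; one must verify that these match, on the nose, the permutations $\sigma(\tau_{1},\ldots,\tau_{n})$ appearing in the right- and left-equivariance diagrams of a bimodule, and that they are coherent with the leaf-reordering relation (the second bullet of Construction~\ref{D5}). Establishing that the generating relations are \emph{exactly} the bimodule axioms, with neither too few (so that $\Phi$ is well defined) nor too many (so that $\Psi$ lands among honest bimodules), is the delicate step, but once the normal form of Remark~\ref{D6} is available it is purely a matter of unwinding the two sets of definitions term by term.
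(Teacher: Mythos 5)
Your proposal is correct and takes essentially the same route as the paper's own proof: both directions rest on the presentation of $P{+}Q$ by the generating colored operads $P_{1}$ and $Q_{1}$ of Definition~\ref{D5-bis} modulo the four relations of Construction~\ref{D5}, together with the normal form of Remark~\ref{D6}. The paper likewise turns a bimodule into a $(P{+}Q)$-algebra by composing the $P_{1}$- and $Q_{1}$-actions and checking compatibility with the relations, and recovers the bimodule structure on an algebra by restricting along the evident inclusions of $P(k)$ and $Q(m)$ into $P{+}Q$ (your maps $\Psi$ and $\Phi$ in the opposite order).
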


\begin{proof}
In one direction, we already explained that a $(P\text{-}Q)$-bimodule $M$ inherits an action of the operads $P_1$ and $Q_1$ that generate $P+Q$.
We just check that these actions are compatible with the relations of Construction~\ref{D5}
when we compose them to get an action of $P+Q$ on $M$ (the compatibility with the first three relations follows from the unit, associativity and equivariance
of the action of the operads $P_1$ and $Q_1$ on $M$, the compatibility with the fourth relation
follows from the commutation of the left and right actions).

In the converse direction, we show that a $(P{+}Q)$-algebra $M'$ inherits a bimodule structure.
First, $M'$ is a $\Sigma$-sequence thanks to the action of the left vertices indexed by $(\sigma\,;\,\{*_1\})$, where $\sigma$
is a permutation from some $\Sigma_k$ and $\{ *_1\}$ is a collection of $k$ identity elements from $Q(1)$.
Then we just inverse the constructions of Definition~\ref{D5-bis}
in order to retrieve left and right operations on $M'$.
To be explicit, in order to define the left operations $\gamma_{\ell}$,
we consider the inclusion $\iota_{\ell}: P(k)\rightarrow (P{+}Q)(n_{1},\ldots,n_{k};n_{1}+ \cdots + n_{k})$, for any integers $n_{1},\ldots,n_{k}$,
sending a point $p\in P(k)$ to the $k$-corolla whose root is a left vertex indexed by $p$.
The map $\gamma_{\ell}$ is defined as the following composite:
$$
\xymatrix{
P(k)\times M'(n_{1})\times \cdots \times M'(n_{k}) \ar[r]^{\gamma_{\ell}} \ar[d]_{\iota_{\ell}\times id \times \cdots \times id} & M'(n_{1}+ \cdots + n_{k})\\
(P{+}Q)(n_{1},\ldots,n_{k};n_{1}+ \cdots + n_{k})\times M'(n_{1})\times \cdots \times M'(n_{k}) \ar[ru] &
}
$$
Similarly, for any pair of integers $(n\,;\,m)$ and $1\leq i\leq n$, there is a map $\iota_{r}:Q(m)\rightarrow (P{+}Q)(n\,;\, n+m-1)$ sending a point $q\in Q(m)$ to the element $(\alpha\,;\,\{q_{j}\})$ where $\alpha$ is given by
$$
\alpha:[n+m-1]\longrightarrow [n]\,\,;\,\,j\longmapsto \left\{
\begin{array}{ll}
j & \text{if } j\leq i, \\
i & \text{if } i<j<i+m, \\
j-m+1 & \text{if } j\geq i+m.
\end{array}
\right.
$$
One has $q_{i}=q$ and $q_{j}=\ast_{1}$ for any $j\neq i$. The right operation $\circ^{i}$, with $1\leq i \leq n$, is defined as the following composite map:
$$
\xymatrix{
M'(n)\times Q(m) \ar[r]^{\circ^{i}} \ar[d]_{id\times \iota_{r}} & M'(n+m-1)\\
M'(n)\times (P{+}Q)(n\,;\, n+m-1) \ar[r]_{\cong} & (P{+}Q)(n\,;\, n+m-1)\times M'(n) \ar[u]
}
$$
The relations introduced in Construction \ref{D5} readily imply that these operations satisfy the bimodule axioms.
\end{proof}

\subsubsection{The free bimodule functor}\label{C2} 

We denote by $\Sigma Seq_{P}$ and $T_{r}\Sigma Seq_{P}$ the categories of $\Sigma$-sequences and $r$-truncated $\Sigma$-sequences $M$ equipped with a map $\gamma_{0}:P(0)\rightarrow M(0)$. In other words, if $P_{0}$ is the $\Sigma$-sequence given by $P_{0}(0)=P(0)$ and the empty set otherwise, then one has the following identities:
\begin{equation}\label{Z3}
\Sigma Seq_{P}:= P_{0}\downarrow\Sigma Seq \hspace{15pt}\text{and} \hspace{15pt} T_{r}\Sigma Seq_{P}:= P_{0}\downarrow T_{r}\Sigma Seq.
\end{equation}
Furthermore, there is a forgetful functor from the category of (possibly truncated) bimodules to the category of $\Sigma$-sequences endowed with a map from $P_{0}$:
\begin{equation}\label{A8}
\mathcal{U}^\Sigma:\Sigma\Bimod_{P\,;\,Q} \longrightarrow\Sigma Seq_{P}:\hspace{15pt}\text{and} \hspace{15pt} \mathcal{U}^{T_{r}\Sigma}:T_{r}\Sigma\Bimod_{P\,;\,Q} \longrightarrow T_{r}\Sigma Seq_{P}.
\end{equation}

Their left adjoints denoted $\mathcal{F}_{P\,;\,Q}^\Sigma$ and  $\mathcal{F}_{P\,;\,Q}^{T_{r}\Sigma}$, respectively, are some versions of free functors. As usual in the operadic theory, the free functor can be described as a coproduct indexed by a particular set of trees. In that case, we use the set of \textit{trees with section} which are pairs $T=(T\,;\,V^{p}(T))$, where $T$ is a planar rooted tree (whose leaves are labelled by some permutation) and $V^{p}(T)$ is a subset of vertices, called \textit{pearls}, satisfying the following condition: each path from a leaf or a univalent vertex to the root passes through a unique pearl.

\begin{figure}[!h]
\begin{center}
\includegraphics[scale=0.35]{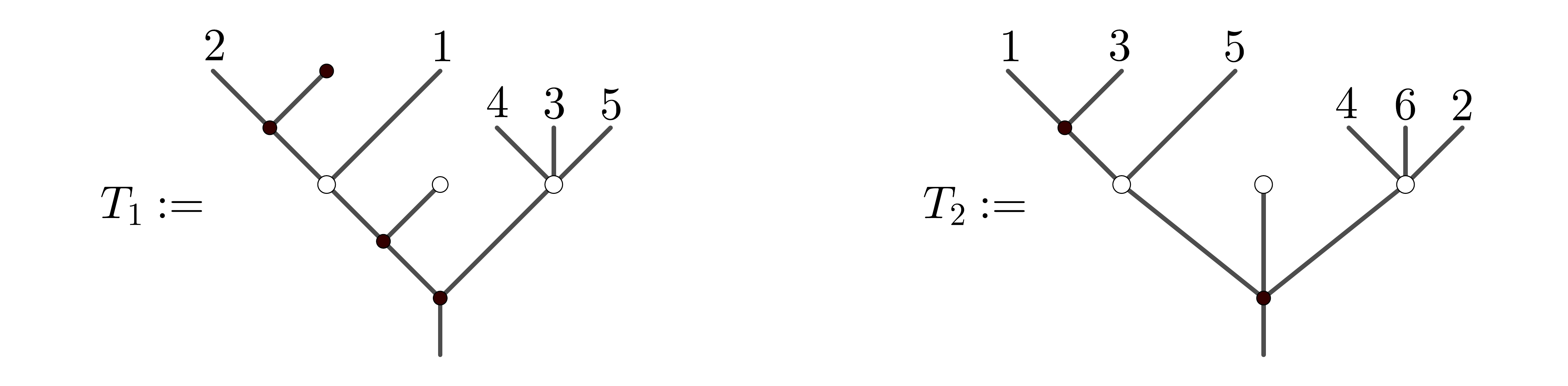}\vspace{-10pt}
\caption{Examples of a tree with section $T_{1}\in s\mathbb{P}_{5}$ and a reduced tree with section $T_{2}\in rs\mathbb{P}_{6}$.}
\end{center}
\end{figure}

The set of pearls forms a section cutting the tree into two parts. 
Note that the arity zero vertices can be either above or on the section, but never below.
We denote by $V^{u}(T)$ (respectively, $V^{d}(T)$) the vertices above the section (respectively below the section). A tree with section is said to be \textit{reduced} if each non-pearl  vertex is connected to a pearl by an inner edge.  We denote by $s\mathbb{P}_{n}$ and $rs\mathbb{P}_{n}$ the sets of trees with section and reduced trees with section, respectively, having exactly $n$ leaves labelled by a permutation of $\{1\ldots n\}$.

\begin{const}\label{F9}
Let $M=\{M(n)\}$ be a $\Sigma$-sequence equipped with a map $\gamma_{0}:P(0)\rightarrow M(0)$. The space $\mathcal{F}_{P\,;\,Q}^\Sigma(M)(n)$ is obtained from the set of reduced trees with section by indexing the pearls by points in $M$ whereas the  vertices above the section (respectively below the section) are indexed by points in the operad $Q$ (respectively the operad $P$). More precisely, one has 
\begin{equation}\label{G4}
\mathcal{F}_{P\,;\,Q}^\Sigma(M)(n)=\left. \left(
\underset{T\in rs\mathbb{P}_{n}}{\coprod}\hspace{5pt} \underset{p\in V^{p}(T)}{\prod} M(|p|)\times \underset{v\in V^{d}(T)}{\prod} P(|v|) \times \underset{v\in V^{u}(T)}{\prod} Q(|v|)  \right)\,\,
\right/\!\!\sim.
\end{equation}
A point in $\mathcal{F}_{P\,;\,Q}^\Sigma(M)$ is denoted by $[T\,;\,\{m_{p}\}\,;\,\{p_{v}\}\,;\,\{q_{v}\}]$ where $T$ is a reduced tree with section while $\{m_{p}\}_{p\in V^{p}(T)}$, $\{p_{v}\}_{v\in V^{d}(T)}$ and $\{q_{v}\}_{v\in V^{u}(T)}$ are points in $M$, $P$ and $Q$, respectively. The equivalence relation is generated by the following relations:
%\newpage

\begin{itemize}
\item[$i)$] \textit{The unit relation}: if a vertex is indexed by the unit of the operad $P$ or $Q$, then we can remove it: \vspace{-5pt}
\begin{figure}[!h]
\begin{center}
\includegraphics[scale=0.4]{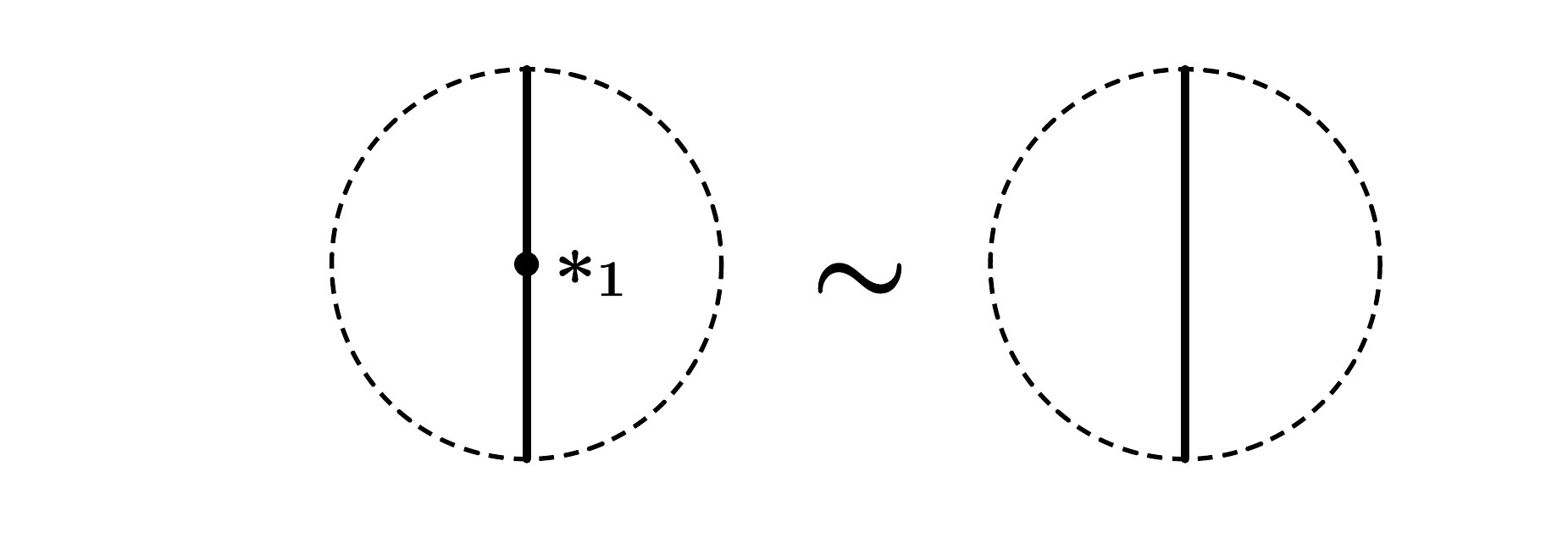}\vspace{-15pt}
%\caption{Illustration of the unit relation.}%\vspace{-20pt}
\end{center}
\end{figure}

\item[$ii)$] \textit{The compatibility with the symmetric group action}: if a vertex is labelled by $x\cdot \sigma$, with $x$ a point in  $P(n)$, $Q(n)$ or $M(n)$ and $\sigma\in \Sigma_{n}$, then we can remove $\sigma$ by permuting the incoming edges. \vspace{-5pt}
\begin{figure}[!h]
\begin{center}
\includegraphics[scale=0.48]{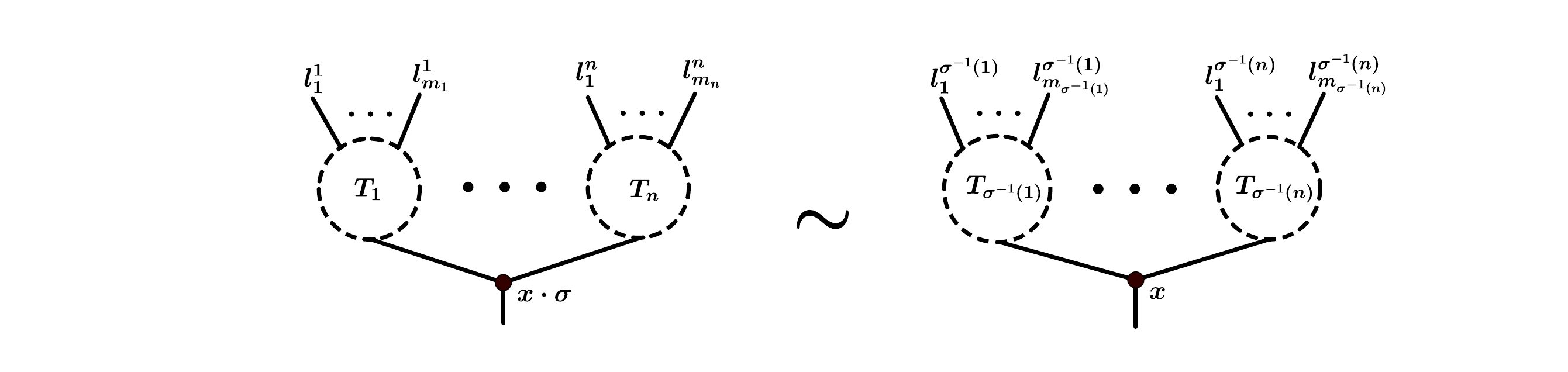}\vspace{-10pt}
\caption{Illustration of the compatibility with the symmetric group.}\vspace{-5pt}
\end{center}
\end{figure}

\item[$iii)$] \textit{The $\gamma_0$-relation}: if a pearl is indexed by a point of the form $\gamma_{0}(x)$, with $\gamma_0:P(0)\rightarrow M(0)$ and $x\in P(0)$, then we contract its output edge using the operadic structures of $P$. In particular, if the vertex below the section indexed by $p\in P(n)$ is connected only to univalent pearls indexed by $\gamma_{0}(p_{1}),\ldots,\gamma_{0}(p_{n})$, respectively, then we can contract all the incoming edges. The new vertex so obtained is a pearl indexed by $\gamma_{0}((\cdots ((p\circ_{n}p_{n})\circ_{n-1}p_{n-1})\cdots )\circ_{1}p_{1})$. \vspace{-5pt}
\begin{figure}[!h]
\begin{center}
\includegraphics[scale=0.3]{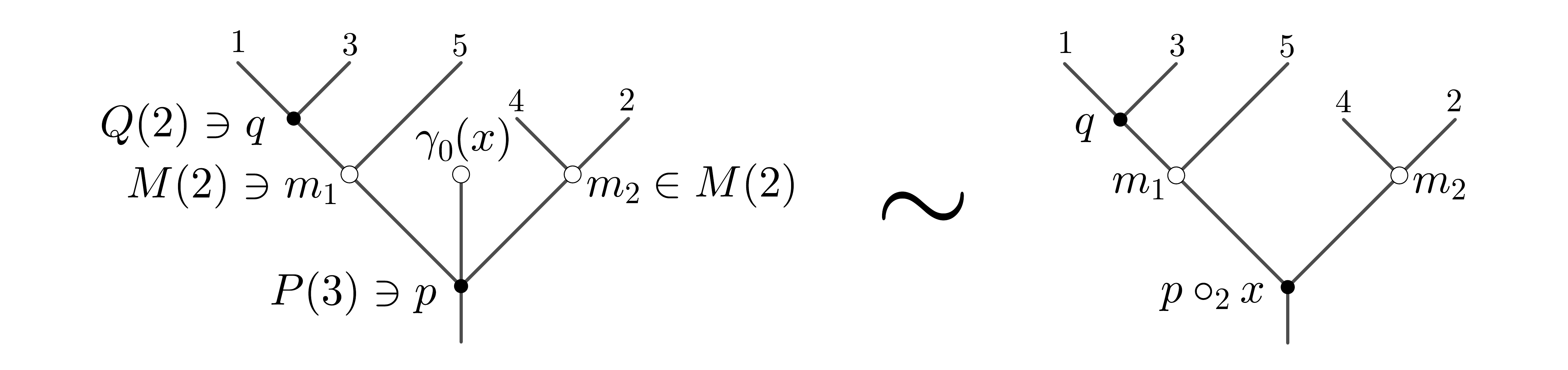}\vspace{-10pt}
\caption{Illustration of the $\gamma_0$-relation.}%\vspace{-20pt}
\end{center}
\end{figure}
\end{itemize}

The right operation $\circ^{i}$ with an element $q\in Q(m)$ consists in grafting the $m$-corolla indexed by $q$ into the $i$-th leaf of the reduced tree with section $T$. If the so obtained element contains an inner edge joining two consecutive vertices other than a pearl, then we contract it using the operadic structure of $Q$.

The left operation between an element $p\in P(n)$ and a family of points
$[T_{i}\,;\,\{m_{p}^{i}\}\,;\,\{p_{v}^{i}\}\,;\,\{q_{v}^{i}\}]\in \mathcal{F}_{P\,;\,Q}^\Sigma(M)$, with $1\leq i\leq n$, is defined as follows: each tree $T_{i}$, with $1\leq i\leq n$, is grafted from left to right to a leaf of the $n$-corolla whose vertex is indexed by $p$. If the so obtained element contains inner edges joining two consecutive vertices other than pearls, then we contract them using operadic structure of $P$. 

\hspace{-83pt}\includegraphics[scale=0.57]{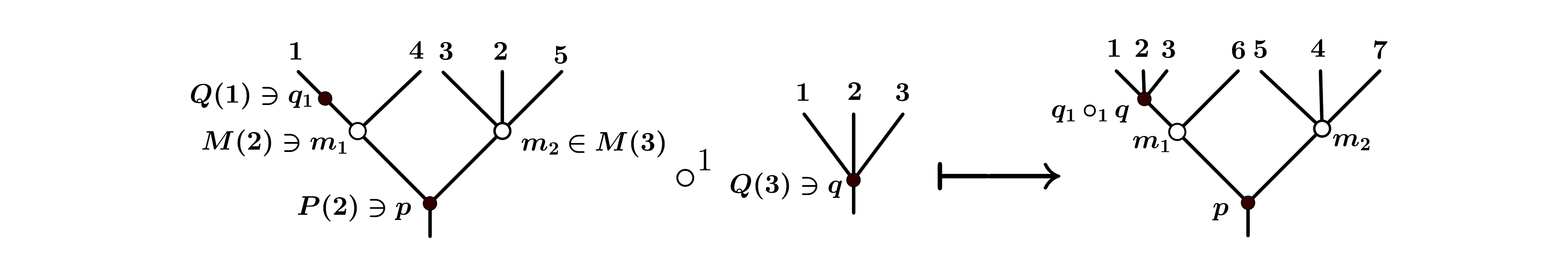}\vspace{-20pt}

\begin{figure}[!h]
\caption{Illustration of the right operation $\circ^{1}:\mathcal{F}_{P\,;\,Q}^\Sigma(M)(5)\times Q(3)\rightarrow \mathcal{F}_{P\,;\,Q}^\Sigma(M)(7)$.}\label{G7}%\vspace{-30pt}
\end{figure}

%\newpage

Finally, one has the following map sending $p\in P(0)$ to the element $[T\,;\,\{\gamma_{0}(p)\}\,;\,\emptyset\,;\,\emptyset]$ where $T$ is the pearled $0$-corolla whose root is indexed by $\gamma_{0}(p)$: 
$$
\gamma'_{0}:P(0) \longrightarrow \mathcal{F}_{P\,;\,Q}^\Sigma(M)(0).
$$

Similarly, the free $r$-truncated bimodule functor $\mathcal{F}_{P;Q,}^{T_{r}\Sigma}$ is obtained from the formula (\ref{G4}) by taking the restriction of the coproduct to the reduced trees with section  having at most $r$ leaves and such that each pearl has at most $r$ incoming edges. The equivalence relation, the left and right operations and the map $\gamma'_{0}$ are defined in the same way.  Finally, one has two functors:
$$
\mathcal{F}_{P\,;\,Q}^\Sigma:\Sigma Seq_{P}\longrightarrow\Sigma \Bimod_{P\,;\,Q}\hspace{15pt}\text{and} \hspace{15pt} \mathcal{F}_{P\,;\,Q}^{T_{r}\Sigma}:T_{r}\Sigma Seq_{P}\longrightarrow T_{r}\Sigma\Bimod_{P\,;\,Q}.
$$
\end{const}

\begin{thm}\label{THMadj}
One has the following adjunctions:
\begin{equation}\label{G5}
\mathcal{F}_{P\,;\,Q}^\Sigma:\Sigma Seq_{P}\rightleftarrows \Sigma \Bimod_{P\,;\,Q}:\mathcal{U}^\Sigma\hspace{15pt}\text{and} \hspace{15pt} \mathcal{F}_{P\,;\,Q}^{T_{r}\Sigma}:T_{r}\Sigma Seq_{P}\rightleftarrows T_{r}\Sigma\Bimod_{P\,;\,Q}:\mathcal{U}^{T_{r}\Sigma}.
\end{equation}
\end{thm}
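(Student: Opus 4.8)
\section*{Proof proposal}

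The plan is to establish the two adjunctions by verifying the universal property of the free functor directly, i.e. by producing a natural bijection
$$
\mathrm{Hom}_{\Sigma\Bimod_{P\,;\,Q}}\bigl(\mathcal{F}_{P\,;\,Q}^\Sigma(M)\,,\,N\bigr)\;\cong\;\mathrm{Hom}_{\Sigma Seq_{P}}\bigl(M\,,\,\mathcal{U}^\Sigma(N)\bigr),
$$
natural in $M\in\Sigma Seq_{P}$ and $N\in\Sigma\Bimod_{P\,;\,Q}$. The unit is the map of $\Sigma$-sequences $\eta_{M}\colon M\to\mathcal{U}^\Sigma\mathcal{F}_{P\,;\,Q}^\Sigma(M)$ sending a point $x\in M(n)$ to the class $[C_{n}\,;\,x\,;\,\emptyset\,;\,\emptyset]$ of the pearled $n$-corolla $C_n\in rs\mathbb{P}_n$ carrying $x$ on its unique pearl (this is a reduced tree with section, vacuously so). By the very definition of $\gamma'_{0}$ in Construction~\ref{F9}, one has $\eta_{M}\circ\gamma_{0}=\gamma'_{0}$, so $\eta_{M}$ is indeed a morphism in $\Sigma Seq_{P}$.

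Given $f\colon M\to\mathcal{U}^\Sigma(N)$ in $\Sigma Seq_{P}$, I would define its adjunct $\widetilde f\colon\mathcal{F}_{P\,;\,Q}^\Sigma(M)\to N$ by \emph{evaluating trees in} $N$. On a generator $[T\,;\,\{m_{p}\}\,;\,\{p_{v}\}\,;\,\{q_{v}\}]$ one replaces each pearl $p$ by the point $f(m_{p})\in N(|p|)$ and then contracts the tree using the bimodule structure of $N$: each vertex $v\in V^{u}(T)$ above the section acts by a right operation $\circ^{i}$ with its label $q_{v}\in Q(|v|)$, and each vertex $v\in V^{d}(T)$ below the section acts by a left operation $\gamma_{\ell}$ with its label $p_{v}\in P(|v|)$. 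Reading the tree from the leaves toward the root produces a well-defined point of $N(n)$, the point being independent of the order in which the contractions are performed thanks to the associativity and compatibility axioms for $N$.

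The core of the argument is to check that $\widetilde f$ is well defined and a morphism of bimodules. Well-definedness amounts to verifying that the prescription above respects the three families of relations generating $\sim$ in \eqref{G4}: the unit relation is matched by the unit axioms of the $P$- and $Q$-actions on $N$; the equivariance relation by the compatibility of the operations with the symmetric group action; and the $\gamma_{0}$-relation precisely by the hypothesis that $f$ is compatible with the maps $\gamma_{0}\colon P(0)\to M(0)$ and $\gamma_{0}\colon P(0)\to N(0)$ together with the arity-zero part of the left action on $N$. That $\widetilde f$ commutes with the left and right operations follows because these operations on $\mathcal{F}_{P\,;\,Q}^\Sigma(M)$ are defined by grafting corollas and contracting inner edges, operations that $\widetilde f$ turns into the corresponding operadic compositions in $N$ by associativity. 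Continuity of the induced map on the quotient is guaranteed by the cartesian closedness of the category of $k$-spaces recalled in Subsection~\ref{MCSSpace}, which ensures that the coproduct–product–quotient presentation \eqref{G4} interacts well with the continuous structure maps of $N$. Uniqueness is immediate: since $\mathcal{F}_{P\,;\,Q}^\Sigma(M)$ is generated as a bimodule by the pearled corollas in the image of $\eta_{M}$, any bimodule morphism extending $f$ along $\eta_{M}$ must agree with $\widetilde f$ on generators, hence everywhere.

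The inverse bijection sends a bimodule morphism $g$ to $\mathcal{U}^\Sigma(g)\circ\eta_{M}$; the two assignments are mutually inverse by the uniqueness just established and by the fact that $\eta_{M}$ hits every generator. Naturality in $M$ and $N$ is read off the explicit formulas. The truncated statement is proved by the same argument, restricting throughout to reduced trees with section having at most $r$ leaves and pearls of arity at most $r$, so that all contractions stay within the allowed range of arities. I expect the main obstacle to be the careful bookkeeping in the well-definedness step, and in particular reconciling the $\gamma_{0}$-relation with the left/right compatibility simultaneously, since these are exactly the places where the arity-zero data and the interaction of the two one-sided actions must be controlled.
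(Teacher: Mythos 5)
Your proposal is correct and follows essentially the same route as the paper: both prove the adjunction by verifying the universal property of the free functor, constructing the adjunct $\tilde{f}$ from the bimodule structure of the target (contracting decorated trees via the left/right operations) and deducing uniqueness from the fact that the pearled corollas generate $\mathcal{F}_{P\,;\,Q}^\Sigma(M)$. The only difference is presentational -- the paper defines $\tilde{f}$ by induction on the number of non-pearl vertices, so that well-definedness becomes independence of the chosen decomposition, whereas you write the evaluation map in one stroke and check compatibility with the generating relations (i)--(iii) of Construction \ref{F9}; these amount to the same verification.
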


\begin{proof}
Let $M'$ be a $(P\text{-}Q)$-bimodule and $f:M\rightarrow M'$ be a morphism in the category $\Sigma Seq_{P}$. One has to prove that there exists a unique map of $(P\text{-}Q)$-bimodules $\tilde{f}:\mathcal{F}_{P\,;\,Q}^\Sigma(M)\rightarrow M'$ such that the following diagram commutes:
\begin{equation}\label{b1}
\xymatrix{
M \ar[r]^{f} \ar[d]_{i} & M' \\
\mathcal{F}_{P\,;\,Q}^\Sigma(M)\ar@{-->}[ru]_{\exists\, !\, \tilde{f}} & 
}
\end{equation}

We build the map $\tilde{f}$ by induction on the number of vertices in the set $np(T)=V(T)\setminus V^{p}(T)$. Let $[(T\,;\,\sigma)\,;\,\{m_{p}\}\,;\,\{p_{v}\}\,;\,\{q_{v}\}]$ be a point in $\mathcal{F}_{P\,;\,Q}^\Sigma(M)$ such that $|nb(T)|=0$ and $\sigma$ is the permutation indexing the leaves of $T$. By construction, $T$ is necessarily a pearl corolla with only one vertex labelled by $m_{r}\in M$. Due to the commutativity of Diagram (\ref{b1}), the following equality has to be satisfied:
$$
\tilde{f}([(T\,;\,\sigma)\,;\,\{m_{p}\}\,;\,\{p_{v}\}\,;\,\{q_{v}\}])=f(m_{r})\cdot \sigma.
$$

Let $[(T\,;\,\sigma)\,;\,\{m_{p}\}\,;\,\{p_{v}\}\,;\,\{q_{v}\}]$ be a point in $\mathcal{F}_{P\,;\,Q}^\Sigma(M)$ where $T$ has only one vertex $v$ which is not a pearl. There are two cases to consider. If $v$ is the root of the tree $T$, then the root is labelled by a point $p_{v}\in P$ and $[(T\,;\,\sigma)\,;\,\{m_{p}\}\,;\,\{p_{v}\}\,;\,\{q_{v}\}]$ has a decomposition of the form 
$$
p_{v}(\,[(T_{1}\,;\,id)\,;\,\{m_{1}\}\,;\,\emptyset\,;\,\emptyset],\ldots, [(T_{|v|}\,;\,id)\,;\,\{m_{|v|}\}\,;\,\emptyset\,;\,\emptyset]\,)\cdot\sigma,
$$ 
where $T_{i}$ is a pearl corolla labelled by $m_{i}\in M$. Since $\tilde{f}$ has to be a $(P\text{-}Q)$-bimodule map, one has the equality
$$
\tilde{f}([(T\,;\,\sigma)\,;\,\{m_{p}\}\,;\,\{p_{v}\}\,;\,\{q_{v}\}])=p_{v}\big(\, f(m_{1}),\ldots, f(m_{|v|})\,\big)\cdot \sigma.
$$

If the root is a pearl, then there exists a unique inner edge $e$ such that $s(e)=v$ and $t(e)=r$. So, the point $[(T\,;\,\sigma)\,;\,\{m_{p}\}\,;\,\{p_{v}\}\,;\,\{q_{v}\}]$ has a decomposition on the form $([(T_{1}\,;\,id)\,;\,\{m_{p}\}\,;\,\emptyset\,;\,\emptyset]\circ^{i}q_{s(e)}) \cdot \sigma$ with $q_{s(e)}\in Q$ and $m_{t(e)}\in M$. Since $\tilde{f}$ has to be an $(P\text{-}Q)$-bimodule map, there is the equality
$$
\tilde{f}([(T\,;\,\sigma)\,;\,\{m_{p}\}\,;\,\{p_{v}\}\,;\,\{q_{v}\}])= \big(\,f(m_{t(e)})\circ^{i}q_{s(e)}\,\big)\cdot\sigma.
$$

Assume $\tilde{f}$ has been defined for $|np(T)|\leq n$. Let $[(T\,;\,\sigma)\,;\,\{m_{p}\}\,;\,\{p_{v}\}\,;\,\{q_{v}\}]$ be a point in $\mathcal{F}_{P\,;\,Q}^\Sigma(M)$ such that $|np(T)|= n+1$. By definition, there is an inner edge $e$ whose target vertex is a pearl. So, the point $[(T\,;\,\sigma)\,;\,\{m_{p}\}\,;\,\{p_{v}\}\,;\,\{q_{v}\}]$ has a decomposition of the form $([(T_{1}\,;\,id)\,;\,\{m_{p}\}\,;\,\{p_{v}\}\,;\,\{q_{v}\}\setminus \{q_{s(e)}\} ]\circ^{i} q_{s(e)})\cdot\sigma$ where $T_{1}$ is a planar tree with section such that $|np(T_{1})|= n$. Since $\tilde{f}$ has to be a $(P\text{-}Q)$-bimodule map, there is the equality
$$
\tilde{f}([(T\,;\,\sigma)\,;\,\{m_{p}\}\,;\,\{p_{v}\}\,;\,\{q_{v}\}])=\big(\,\tilde{f}([(T_{1}\,;\,id)\,;\,\{m_{p}\}\,;\,\{p_{v}\}\,;\,\{q_{v}\}\setminus \{q_{s(e)}\} ])\circ^{i}q_{s(e)}\,\big)\cdot\sigma.
$$ 
Due to the $(P\text{-}Q)$-bimodule axioms, $\tilde{f}$ does not depend on the choice of the decomposition and $\tilde{f}$ is a $(P\text{-}Q)$-bimodule map. The uniqueness follows from the construction. Similarly, we can prove that the functor $\mathcal{F}_{P\,;\,Q}^{T_{r}\Sigma}$ is the left adjoint to the forgetful functor.
\end{proof}

\subsubsection{Combinatorial description of the pushout}\label{B7}

Let $P$ and $Q$ be two topological operads. In the following, we use the notation introduced for the free bimodule functor in order to give an explicit description of the pushout in the category of $(P\text{-}Q)$-bimodules. This description will be used in the next subsections. We fix the following diagram in the category of $(P\text{-}Q)$-bimodules
\begin{equation}\label{G2}
\xymatrix{
A\ar[r]^{f_{1}} \ar[d]_{f_{2}} & C\\
B &
}
\end{equation}
Then we consider the $\Sigma$-sequence $D$ obtained from $A$, $B$, $C$ and the sets $s\mathbb{P}_n$ of trees with section (see Section \ref{C2}) by indexing the pearls of such trees by points in $B$ or $C$ whereas the other vertices below the section (respectively above the section) are indexed by points in the operad $P$ (respectively the operad $Q$). More precisely, one has 
\begin{equation}\label{G1}
D(n)=\left.\left(\underset{T\in s\mathbb{P}_{n}}{\coprod} \,\,\,\underset{p\in V^{p}(T)}{\prod} \left( B(|p|)\underset{A(|p|)}{\bigsqcup} C(|p|)\right) \times  \underset{v\in V^{d}(T)}{\prod} P(|v|)\times  \underset{v\in V^{u}(T)}{\prod} Q(|v|) \right)\right/\sim.
\end{equation}
By abuse of notation, we denote by $[T\,;\,\{m_{p}\}\,;\,\{p_{v}\}\,;\,\{q_{v}\}]$ a point in $D(n)$. The equivalence relation is generated
by relations (i), (ii), (iii) in Construction \ref{F9}  (unit relation, compatibility with the symmetric group action and $\gamma_0$-relation). Furthermore, one has also the following relations:
\begin{itemize}
\item[$iv)$] \textit{Pushout relation 1}: each inner edge, which is not connected to a pearl, is contracted using the operadic structures of $P$ and $Q$. 
\item[$v)$] \textit{Pushout relation 2}:  every inner edge above the section connected to a pearl is contracted using the right $Q$-module structures of $B$ and $C$.
\item[$vi)$] \textit{Pushout relation 3}: if a vertex $v$ below the section is connected to pearls indexed by points in $B$ (respectively, $C$), then we contract the incoming edges of $v$ using the $P$-module structure of $B$ (respectively, the left $P$-module structure of $C$).
\end{itemize}
\hspace{-35pt}\includegraphics[scale=0.38]{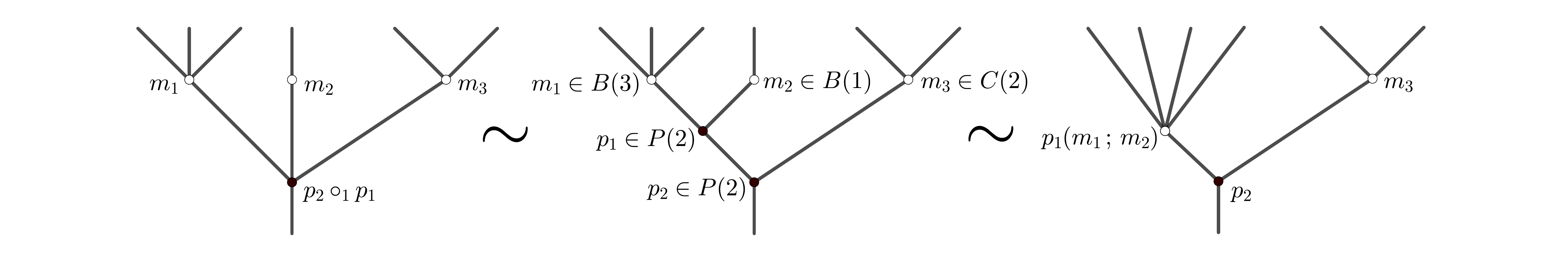}\vspace{-40pt}
\begin{figure}[!h]
\caption{Illustration of the pushout relations $(iv)$ and $(vi)$.}\vspace{-5pt}
\end{figure}

The $\Sigma$-sequence $D$ inherits a $(P\text{-}Q)$-bimodule structure from  $B$ and $C$. The right operations are defined using the right module structures of $B$ and $C$. Similarly, the left operation between an element $p\in P(n)$ and a family of points
$[T_{i}\,;\,\{m_{p}^{i}\}\,;\,\{p_{v}^{i}\}\,;\,\{q_{v}^{i}\}]\in D$, with $1\leq i\leq n$, is defined as follows: each tree $T_{i}$, with $1\leq i\leq n$, is grafted from left to right to a leaf of the $n$-corolla whose vertex is indexed by $p$. Moreover, there is a map 
$$
\gamma'_{0}:P(0)\longrightarrow D(0)
$$
sending a point $p\in P(0)$ to the $0$-corolla labelled by $\gamma_{0}(p)\in A(0)$. 

The reader can check that the so obtained bimodule is well defined and that this construction works in the context of truncated bimodules. If $A_{r}$, $B_{r}$, $C_{r}$ are $r$-truncated bimodules and $f_{1}$, $f_{2}$ are $r$-truncated bimodule maps, then the pushout in the category of $r$-truncated bimodules $D_{r}$ is obtained from the formula (\ref{G1}) by taking the restriction of the coproduct to the  trees with section  having at most $r$ leaves and such that each pearl has at most $r$ incoming edges. The equivalence relation, the left and right operations and the map $\gamma'_{0}$ are defined in the same way.

\begin{pro}
One has the following identities:
$$
D=\underset{\hspace{40pt}\Sigma\Bimod_{P\,;\,Q}}{\mathrm{colim}}\big( B \longleftarrow A \longrightarrow C\big)\hspace{15pt}\text{and}\hspace{15pt} D_{r}=\underset{\hspace{40pt}T_{r}\Sigma\Bimod_{P\,;\,Q}}{\mathrm{colim}}\big( B_{r} \longleftarrow A_{r} \longrightarrow C_{r}\big).
$$
\end{pro}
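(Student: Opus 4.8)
The plan is to verify directly that the bimodule $D$, together with natural maps from $B$ and $C$, satisfies the universal property of the pushout. First I would define the structure maps $\iota_{B}\colon B\to D$ and $\iota_{C}\colon C\to D$: a point $b\in B(n)$ (resp. $c\in C(n)$) is sent to the class $[T\,;\,\{b\}\,;\,\emptyset\,;\,\emptyset]$ (resp. $[T\,;\,\{c\}\,;\,\emptyset\,;\,\emptyset]$), where $T$ is the tree consisting of a single pearl with $n$ leaves. These are maps of $(P\text{-}Q)$-bimodules essentially by construction, since the right and left operations on $D$ were defined by reusing the module structures of $B$ and $C$, and both maps are compatible with $\gamma_{0}$ because the pearl labels are taken in the fibered coproduct $B(|p|)\sqcup_{A(|p|)}C(|p|)$ through which $\gamma_{0}\colon P(0)\to A(0)$ factors. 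The cocone condition $\iota_{B}\circ f_{2}=\iota_{C}\circ f_{1}$ holds precisely because, at the level of pearl labels, $f_{2}(a)$ and $f_{1}(a)$ are glued together in $B(|p|)\sqcup_{A(|p|)}C(|p|)$.

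Next, given any $(P\text{-}Q)$-bimodule $E$ together with bimodule maps $g_{B}\colon B\to E$ and $g_{C}\colon C\to E$ satisfying $g_{B}\circ f_{2}=g_{C}\circ f_{1}$, I would construct the mediating map $\Phi\colon D\to E$ by evaluating a decorated tree in $E$. Concretely, a representative $[T\,;\,\{m_{p}\}\,;\,\{p_{v}\}\,;\,\{q_{v}\}]$ is read as a formal composite built from the left $P$-operations at the vertices of $V^{d}(T)$, the bimodule elements sitting at the pearls (mapped into $E$ via $g_{B}$ or $g_{C}$ according to the type of the label), and the right $Q$-operations at the vertices of $V^{u}(T)$; I then interpret this composite using the left action $\gamma_{\ell}$ and the right actions $\circ^{i}$ of $E$. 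As in the proof of Theorem~\ref{THMadj}, this is organized by induction on the number of non-pearl vertices, and the bimodule axioms for $E$ guarantee that $\Phi$ does not depend on the chosen order of composition and that it is a map of bimodules.

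The main work, and the step I expect to be the principal obstacle, is to check that $\Phi$ is well defined on equivalence classes, i.e. that it respects the relation $\sim$ generated by (i)--(iii) and the three pushout relations (iv)--(vi). Relations (i) and (ii) reduce to the unit and equivariance axioms in $E$ combined with the equivariance of $g_{B}$ and $g_{C}$; relation (iii) uses that $g_{B}$ and $g_{C}$ are morphisms in $\Sigma Seq_{P}$, hence commute with the $\gamma_{0}$-maps. The pushout relations are where the hypothesis $g_{B}\circ f_{2}=g_{C}\circ f_{1}$ is genuinely used: contracting inner edges not adjacent to a pearl (iv) is associativity of the operad actions in $E$; contracting an edge above the section into a pearl (v) is the compatibility of $g_{B}$, $g_{C}$ with the right $Q$-action; and absorbing the incoming edges of a below-section vertex into adjacent pearls (vi) is the compatibility with the left $P$-action. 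Here the hypothesis $g_{B}\circ f_{2}=g_{C}\circ f_{1}$ guarantees that $\Phi$ is insensitive to whether a pearl labelled by an element in the image of $A$ is evaluated through $g_{B}$ or through $g_{C}$, which is exactly what makes the evaluation of trees carrying pearls of mixed type unambiguous. Once well-definedness is settled, commutativity $\Phi\circ\iota_{B}=g_{B}$ and $\Phi\circ\iota_{C}=g_{C}$ is immediate from the definition on single-pearl corollas, and uniqueness follows because every class in $D(n)$ is generated from such corollas by the left and right operations, on which $\Phi$ is forced. The truncated identity $D_{r}=\mathrm{colim}(B_{r}\leftarrow A_{r}\to C_{r})$ is proved by the identical argument after restricting all trees to those with at most $r$ leaves and pearls of arity at most $r$.
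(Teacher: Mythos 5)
Your proposal is correct and follows essentially the same route as the paper: both verify the universal property directly, with the mediating map defined by evaluating decorated trees in the target bimodule (forced by the requirement of being a bimodule map compatible with $g_{B}$ and $g_{C}$), the cocone hypothesis handling the ambiguity of pearls labelled by elements coming from $A$, and uniqueness following from the fact that every class is generated by single-pearl corollas under the left and right operations. The only difference is organizational: you check invariance under each generating relation (i)--(vi) explicitly, whereas the paper first uses the pushout relations to normalize a representative to a depth-two tree (a $P$-labelled root with pearl corollas) and defines the map on that normal form, appealing to the bimodule axioms for independence of the choice.
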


\begin{proof}
We need to check the universal property of the pushout in the category of $(P\text{-}Q)$-bimodules. Let $D'$ be a $(P\text{-}Q)$-bimodule together with $(P\text{-}Q)$-bimodule maps $g_{1}:B\rightarrow D'$ and $g_{2}:C\rightarrow D'$ such that $g_{1}\circ f_{1}=g_{2}\circ f_{2}$. One has to show that there is a unique $(P\text{-}Q)$-bimodule map $\delta:D\rightarrow D'$ such that the following diagram commutes:
\begin{equation}\label{G3}
\xymatrix@R=15pt{
A \ar[r]^{f_{2}} \ar[d]_{f_{1}} & C \ar[d]\ar[rdd]^{g_{2}} & \\
B \ar[r] \ar[rrd]_{g_{1}} & D \ar@{-->}[rd]^{\hspace{-4pt}\delta} & \\
  &   & D'
}
\end{equation}

Let $[(T\,;\,\sigma)\,;\,\{m_{p}\}\,;\,\{p_{v}\}\,;\,\{q_{v}\}]$ be a point $D$ where $\sigma$ is the permutation labelling the leaves. Due to the pushout relations, we can assume that $T$ is a reduced tree with section without vertices above the section. If the tree with section $T$ has only one vertex (which is necessarily a pearl) indexed by $m_{r}$ in $B$ or $C$, then, due to the commutative diagram (\ref{G3}), $\delta$ must be defined as follows:
$$
\delta([(T\,;\,\sigma)\,;\,\{m_{p}\}\,;\,\{p_{v}\}\,;\,\{q_{v}\}])=\left\{
\begin{array}{ll}\vspace{5pt}
g_{1}(m_{r})\cdot\sigma & \text{if } m_{r}\in B, \\ 
g_{2}(m_{r})\cdot\sigma & \text{if } m_{r}\in C.
\end{array} 
\right.
$$

If the tree with section $T$ has more than $2$ vertices, then the root of $T$ is indexed by a point $p_{r}$ in the operad $P$ and the point $[(T\,;\,\sigma)\,;\,\{m_{p}\}\,;\,\{p_{v}\}\,;\,\{q_{v}\}]$ has a decomposition of the form 
$$
(p_{r}([(T_{1}\,;\,id)\,;\,\{m_{1}\}\,;\,\emptyset\,;\,\emptyset],\cdots, [(T_{|v|}\,;\,id)\,;\,\{m_{|v|}\}\,;\,\emptyset\,;\,\emptyset]))\cdot \sigma
$$ 
where $T_{1},\ldots, T_{|v|}$ are corollas.  Since $\delta$ is a bimodule map, one has
$$
\delta([(T\,;\,\sigma)\,;\,\{m_{p}\}\,;\,\{p_{v}\}\,;\,\{q_{v}\}])= \big( p_{r}(\delta( [(T_{1}\,;\,id)\,;\,\{m_{1}\}\,;\,\emptyset]\,;\,\emptyset]),\ldots, \delta( [(T_{|v|}\,;\,id)\,;\,\{m_{|v|}\}\,;\,\emptyset\,;\,\emptyset]])\big)\cdot \sigma.
$$
Thanks to the $(P\text{-}Q)$-bimodule axioms, $\delta$ does not depend on the choice of the representative element and $\delta$ is a $(P\text{-}Q)$-bimodule map. The uniqueness follows from the construction. The same arguments work for the truncated case.
\end{proof}

\begin{lmm}\label{Final3}
Let  $\partial X\rightarrow X$ be a morphism of $\Sigma Seq_P$ which defines a closed inclusion of topological spaces objectwise. For every pushout diagram of the form 
\begin{equation}\label{eq:push_out_incl}
\xymatrix@R=15pt{
\mathcal{F}^{\Sigma}_{P;Q}(\partial X ) \ar[r] \ar[d] & \mathcal{F}^{\Sigma}_{P;Q}(X) \ar[d] \\
B\ar[r] & D,
}
\end{equation}
the $(P\text{-}Q)$-bimodule map $B\rightarrow D$ is also a closed inclusion of topological spaces objectwise. 
\end{lmm}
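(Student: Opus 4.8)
The plan is to equip the pushout $D$ with an exhaustive filtration by sub-$(P\text{-}Q)$-bimodules
$$
B = D_{0}\subseteq D_{1}\subseteq \cdots \subseteq D_{k-1}\subseteq D_{k}\subseteq\cdots, \qquad D=\mathrm{colim}_{k}\,D_{k},
$$
and to prove that each inclusion $D_{k-1}\to D_{k}$ is objectwise a closed inclusion. Since $B=D_{0}$, the lemma then follows from the two stability properties of closed inclusions in the category of $k$-spaces: a pushout of a closed inclusion is a closed inclusion, and a sequential colimit of closed inclusions is a closed inclusion. The filtration degree $k$ counts the number of pearls labelled by a \emph{genuine} new generator, i.e.\ a label in $X$ that does not lie in the image of $\partial X\to X$.

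First I would make the combinatorial model of $D$ explicit by specializing the pushout description \eqref{G1} to the free case $A=\mathcal{F}^{\Sigma}_{P;Q}(\partial X)$, $C=\mathcal{F}^{\Sigma}_{P;Q}(X)$. Using Construction~\ref{F9}, a point of $D(n)$ is represented by a tree with section $T\in s\mathbb{P}_{n}$ whose pearls are labelled either by points of $B$ or by generators in $X$, whose vertices below the section are labelled in $P$ and above the section in $Q$, modulo the unit, equivariance and $\gamma_{0}$-relations of Construction~\ref{F9} together with the pushout relations (iv)--(vi). The essential point is that a pearl labelled by an element in the image of $\partial X\to X$, together with the adjacent $P$- and $Q$-labelled structure surrounding it, can be absorbed into $B$ through the attaching map $\mathcal{F}^{\Sigma}_{P;Q}(\partial X)\to B$. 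This yields a normal form in which the $B$-labelled pearls carry all of the ``$\partial X$-part'' while the remaining pearls are labelled by honest generators of $X\smallsetminus\partial X$; the number of the latter is the filtration degree, and it is a well-defined invariant of the class in $D(n)$.

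With this model in hand, I would exhibit $D_{k}$ as a pushout of $D_{k-1}$, in complete analogy with the filtration \eqref{J5} and the pushout squares \eqref{J7} used in Section~\ref{ss:useful}. Letting $T$ range over the trees with section carrying exactly $k$ distinguished $X$-pearls, writing $\overline{Y}(T)$ for the product of all the remaining labels (the $B$-pearls, the $P$-vertices below and the $Q$-vertices above the section), and letting $Aut(T)\subseteq\Sigma_{n}$ act by permuting the incoming edges, one obtains a pushout diagram
$$
\xymatrix@R=16pt{
\displaystyle\coprod_{T}\Big(\overline{Y}(T)\times \partial\!\!\prod_{p}X(|p|)\Big)\underset{Aut(T)}{\times}\Sigma_{n} \ar[r]\ar[d] & \displaystyle\coprod_{T}\Big(\overline{Y}(T)\times \prod_{p}X(|p|)\Big)\underset{Aut(T)}{\times}\Sigma_{n} \ar[d]\\
D_{k-1}\ar[r] & D_{k},
}
$$
where $p$ runs over the $k$ distinguished pearls and $\partial\prod_{p}X(|p|)$ is, as in the notational conventions, the subspace of tuples with at least one coordinate in the image of $\partial X$. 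The left vertical map records precisely the absorption into $B$ that occurs when one of the distinguished labels lies in $\partial X$ (lowering the degree to $k-1$). The top horizontal map is a closed inclusion: since $\partial X\to X$ is objectwise a closed inclusion, so is each $\partial\prod_{p}X(|p|)\to\prod_{p}X(|p|)$, and closed inclusions are preserved by products with a fixed space, by coproducts, and by passage to the induced/orbit spaces under $Aut(T)$. Hence $D_{k-1}\to D_{k}$ is a closed inclusion, and the lemma follows.

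The main obstacle is the combinatorial bookkeeping underlying the normal form: one must check that the count of $X\smallsetminus\partial X$-labelled pearls is genuinely invariant under \emph{all} the relations---the unit and equivariance relations, the $\gamma_{0}$-relation, and the three pushout relations (iv)--(vi)---so that the filtration is well defined and the displayed square really has $D_{k-1}$ as its lower-left corner. The second, more routine point, deferred to the equivariant lemmas of the Appendix, is that forming orbit spaces under $Aut(T)$ and inducing along $Aut(T)\to\Sigma_{n}$ preserve closed inclusions; this rests on the $k$-space fact that a product of a quotient map with an identity is again a quotient map.
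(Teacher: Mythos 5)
Your strategy---filter $D$ by the number of pearls carrying labels in $X\setminus\partial X$, realize each filtration stage as a cell attachment, and invoke stability of closed inclusions under pushouts and sequential colimits---is genuinely different from the paper's argument, and is closer in spirit to the filtrations the paper itself uses in Proposition \ref{J1} and Theorem \ref{C4}. The paper instead argues in two short steps. For injectivity of $B(n)\to D(n)$ it uses a purely set-theoretic trick: the underlying set of $D(n)$ does not depend on the topology of $X$, so one may retopologize $X$ so that each $X(i)\setminus\partial X(i)$ is open and discrete; then $D$ becomes the coproduct of $(P\text{-}Q)$-bimodules $B\coprod\mathcal{F}^{\Sigma}_{P;Q}\bigl(P_0\sqcup(X\setminus\partial X)\bigr)$, in which injectivity of $B$ is clear because no relation can destroy a pearl labelled in $X\setminus\partial X$. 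For closedness it works directly with the quotient topology on $D(n)$: for closed $C\subseteq B(n)$, the preimage $\pi_T^{-1}(i(C))$ inside each $X(T)$ coincides with $(\pi_T^\partial)^{-1}(C)$, which is closed in $X^\partial(T)$, and $X^\partial(T)$ is closed in $X(T)$ because $\partial X\to X$ is an objectwise closed inclusion (kelleyfication preserving closed inclusions). Note that both routes rest on exactly the combinatorial invariance you single out---pearls labelled in $X\setminus\partial X$ survive all relations---but the paper needs \emph{only} this invariance, not any uniqueness of normal forms.

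That last point is where your write-up has a real gap: the displayed square is not a pushout as stated. With $\overline{Y}(T)$ taken to be the bare product of the $B$-, $P$- and $Q$-labels, distinct points of $\overline{Y}(T)\times\prod_{p}X(|p|)$ frequently represent the same point of $D_k$: vertices labelled by units of $P(1)$ or $Q(1)$ can be erased, a $Q$-vertex sitting above a $B$-pearl is contracted by relation $(v)$ of Section \ref{B7}, a $P$-vertex all of whose inputs are $B$-pearls is contracted by relation $(vi)$, and the equivariance relation at a pearl mixes the pearl label with the ordering of the subtrees above it. Hence the right-hand vertical map of your square cannot be the cobase change of the top map. To make the stage-to-stage comparison an honest pushout one must pass to reduced trees and replace $\overline{Y}(T)$ by a suitable quotient absorbing these residual identifications, in the style of the reduced trees $rs\mathbb{P}_n[i\,;\,\ell]$ and the coequalizers $P(\ell)\times_{P(1)^{\ell-i}}\prod_{j}A(|p_j|)$ appearing in the proof of Theorem \ref{C4}. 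After that repair your argument does go through---the attaching map remains of the form (closed inclusion) times a fixed space, induced up along finite groups, so your stability facts apply---but this is substantially more bookkeeping than the paper's direct argument, and its only extra payoff is a cellular description of $B\to D$ that the lemma does not require. One small verification you should also record for the well-definedness of the filtration degree: invariance under the $\gamma_0$-relation holds because $\partial X\to X$ is a morphism of $\Sigma Seq_P$, i.e.\ a map under $P_0$, so $\gamma_0(P(0))\subseteq X(0)$ lies in the image of $\partial X(0)$ and the $\gamma_0$-relation never consumes a pearl labelled in $X\setminus\partial X$.
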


\begin{proof}% \todo{PB}
%A map is a topological embedding with closed image if and only if it is injective and closed.

Let $s\mathbb{P}'_n$ denote the set of planar trees with section and with $n$ leaves labelled by a permutation
from $\Sigma_n$, whose set $V^p$ of pearls is partitioned into two subsets $V^p=V_{pri}^p\sqcup V_{aux}^p$
of primary and auxiliary pearls, respectively. One has,
\[
D(n)=\left.\left(\underset{T\in s\mathbb{P}'_{n}}{\coprod} \, X(T)\right)\right/\sim, 
\quad \text{where}\quad X(T)=\,\underset{p\in V^{p}_{pri}(T)}{\prod}  B(|p|) \times 
\underset{p\in V^{p}_{aux}(T)}{\prod}  X(|p|) \times
  \underset{v\in V^{d}(T)}{\prod} P(|v|)\times  \underset{v\in V^{u}(T)}{\prod} Q(|v|).
\]
The relations are (i)-(vi), see above. We denote by $\pi_T$ the map $\pi_T\colon X(T)\to D(n)$.

A closed injective map is always a topological inclusion. 
We prove first that each map $i\colon B(n)\to D(n)$, $n\geq 0$, is injective and then that it is closed.

 As a set, $D(n)$ does not depend on the topology of $X$. So,
we can choose a topology such that each $X(i)\setminus\partial X(i)$ is discrete and open in $X(i)$.  In the latter case, one has  $D=B\coprod
 \mathcal{F}^{\Sigma}_{P;Q}\bigl(P_0\sqcup(X\setminus\partial X)\bigr)$ and the inclusion of $B$ in this coproduct is obviously injective. Indeed, a tree with a vertex labelled
 by $x\in X\setminus\partial X$ in the coproduct can never loose such vertex by means of relations (i)-(vi) of the coproduct, and therefore
 can not produce a relation in~$B$.

Now we check that for any closed $C\subset B(n)$, the set $i(C)$ is closed in $D(n)$. Consider
  \[
X^\partial(T):= \underset{p\in V^{p}_{pri}(T)}{\prod}  B(|p|) \times 
\underset{p\in V^{p}_{aux}(T)}{\prod}\partial  X(|p|) \times
  \underset{v\in V^{d}(T)}{\prod} P(|v|)\times  \underset{v\in V^{u}(T)}{\prod} Q(|v|).
\]
Since each composition map $\pi^\partial_T\colon X^\partial(T)\to B(n)$ is continuous, the set $(\pi^\partial_T)^{-1}(C)$ is closed
in $X^\partial(T)$. On the other hand, since each map $\partial  X(|p|) \to X(|p|)$ is a closed inclusion, the map
$X^\partial(T)\to X(T)$ is also one. (Here, we use the fact that kelleyfication preserves closed inclusions.) 
%\footnote{Here one must be careful as  products are taken in $Top$ and therefore are endowed with the kelleyfication of
%the usual product topology. Nonetheless, it is easy to check that if $\partial S\to S$ is a closed inclusion of $k$-spaces and $T$ is a $k$-space, then
%$\partial S\times T\to S\times T$ is again a closed inclusion even when the products are taken in our category $Top$ of $k$-spaces.}.
Thus $(\pi^\partial_T)^{-1}(C)=(\pi_T)^{-1}(i(C))$ is closed in $X(T)$ for every $T$ and we conclude that so is $i(C)$ in $D(n)$.

\end{proof}

\subsection{The model category structure}\label{H2}

By using the identifications (\ref{Z3}), the categories $\Sigma Seq_{P}$ and $T_{r}\Sigma Seq_{P}$ inherit model category structures from the categories of $\Sigma$-sequences and truncated $\Sigma$-sequences, respectively. More precisely, a map is a weak equivalence, a fibration or a cofibration if the corresponding map is a weak equivalence, a fibration or a cofibration in the category of (truncated) $\Sigma$-sequences. In particular, $\Sigma Seq_{P}$ and $T_{r}\Sigma Seq_{P}$ are cofibrantly generated and all their objects are fibrant. 
Their (acyclic) generating cofibrations are $\{P_0\sqcup\partial X\to P_0\sqcup X\}$, where $\{\partial X\to X\}$ is the set of (acyclic) generating cofibrations of $\Sigma Seq$ or $T_r\Sigma Seq$,
respectively.
By applying the transfer principle \ref{E3} to the adjunctions
\begin{equation}\label{A8}
\mathcal{F}_{P\,;\,Q}^\Sigma:\Sigma Seq_{P}\rightleftarrows\Sigma \Bimod_{P\,;\,Q}:\mathcal{U}^\Sigma\hspace{15pt}\text{and} \hspace{15pt} \mathcal{F}_{P\,;\,Q}^{T_{r}\Sigma}:T_{r}\Sigma Seq_{P}\rightleftarrows T_{r}\Sigma\Bimod_{P\,;\,Q}:\mathcal{U}^{T_{r}\Sigma},
\end{equation}
we get the following statement:

\begin{thm}\label{ProjectBimod}
For any pair $(P,Q)$ of topological operads, the category of (truncated) $(P\text{-}Q)$-bimodules
$\Sigma \Bimod_{P\,;\,Q}$ (respectively, $T_r\Sigma \Bimod_{P\,;\,Q}$, $r\geq 0$) inherits a cofibrantly generated model category structure, called the projective model category structure, in which all objects are fibrant.  The model structure in question makes the adjunctions (\ref{A8}) into  Quillen adjunctions. More precisely, a bimodule map $f$  is a weak equivalence (respectively, a fibration) if and only if the induced map $\mathcal{U}^\Sigma(f)$ or $\mathcal{U}^{T_{r}\Sigma}(f)$ is a weak equivalence (respectively, a fibration) in the category of (possibly truncated) $\Sigma$-sequences. 
\end{thm}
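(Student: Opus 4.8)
The plan is to apply the transfer principle (Theorem~\ref{E3}) to the adjunctions (\ref{A8}), so that the work reduces to checking its three hypotheses for $L=\mathcal{F}_{P\,;\,Q}^\Sigma$, $R=\mathcal{U}^\Sigma$, with $\mathcal{D}=\Sigma Seq_{P}$ and $\mathcal{C}=\Sigma\Bimod_{P\,;\,Q}$ (and mutatis mutandis in the truncated setting). Conditions $(ii)$ and $(iii)$ are the routine ones, and I would dispatch them first. For $(ii)$, the forgetful functor $\mathcal{U}^\Sigma$ sends the terminal bimodule to the terminal object of $\Sigma Seq_{P}$; since all objects of $\Sigma Seq_{P}$ are fibrant, the map $\mathcal{U}^\Sigma(M)\to\ast$ is a fibration for every bimodule $M$, so every bimodule is fibrant and the identity functor serves as the fibrant replacement. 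For $(iii)$, I would build the path object arity-wise by $Path(M)(n):=Map([0\,,\,1]\,,\,M(n))$, equipped with the evident bimodule structure in which the left and right operations act on the time-$t$ values with the operad elements held fixed (this is continuous by cartesian closedness of $k$-spaces). The constant-path map $M\to Path(M)$ is an objectwise homotopy equivalence and the endpoint-evaluation map $Path(M)\to M\times M$ is an objectwise Serre fibration, so after applying $\mathcal{U}^\Sigma$ these become respectively a weak equivalence and a fibration, yielding the required functorial path object.

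The substance lies in condition $(i)$: the sets $\mathcal{F}_{P\,;\,Q}^\Sigma(\mathcal{D}_{c})$ and $\mathcal{F}_{P\,;\,Q}^\Sigma(\mathcal{D}_{ac})$ must permit the small object argument. This is exactly where the category of topological spaces causes trouble, since in compactly generated spaces smallness holds only relative to (transfinite composites of) closed inclusions, not arbitrary monomorphisms. Here Lemma~\ref{Final3} does the essential work. The generating (acyclic) cofibrations of $\Sigma Seq_{P}$ are of the form $P_{0}\sqcup\partial X\to P_{0}\sqcup X$ with $\partial X\to X$ an objectwise closed inclusion (built from $S^{n-1}\hookrightarrow D^n$ and $D^n\times\{0\}\hookrightarrow D^n\times[0\,,\,1]$). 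By Lemma~\ref{Final3}, any pushout in $\Sigma\Bimod_{P\,;\,Q}$ of a free map $\mathcal{F}_{P\,;\,Q}^\Sigma(\partial X)\to\mathcal{F}_{P\,;\,Q}^\Sigma(X)$ along an arbitrary bimodule map is again an objectwise closed inclusion; since transfinite composites of closed inclusions are closed inclusions in compactly generated spaces, every relative cell complex built from $\mathcal{F}_{P\,;\,Q}^\Sigma(\mathcal{D}_{c})$ or $\mathcal{F}_{P\,;\,Q}^\Sigma(\mathcal{D}_{ac})$ is objectwise a closed inclusion.

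With this in hand the smallness follows by adjunction, once one observes that $\mathcal{U}^\Sigma$ preserves the relevant transfinite composites: these are formed from finite tree data (each reduced tree with section has finitely many vertices, each decorated by a finite product of component spaces), so the underlying $\Sigma$-sequence of a cell attachment is computed arity-wise. Consequently, for a domain $\mathcal{F}_{P\,;\,Q}^\Sigma(A)$ with $A=P_{0}\sqcup\partial X$, the natural bijection $\mathrm{Hom}_{\Sigma\Bimod_{P\,;\,Q}}(\mathcal{F}_{P\,;\,Q}^\Sigma(A),-)\cong\mathrm{Hom}_{\Sigma Seq_{P}}(A,\mathcal{U}^\Sigma(-))$ reduces its smallness to the smallness of $A$ in $\Sigma Seq_{P}$ relative to the $\mathcal{U}^\Sigma$-images of relative cell complexes; and since maps under $P_{0}$ out of $P_{0}\sqcup\partial X$ are determined by maps out of $\partial X$, this in turn reduces to the smallness of the compact pieces $\partial X$. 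By the previous paragraph these images are objectwise closed inclusions, and the compact generating spaces (spheres, disks, and their products with $[0\,,\,1]$, possibly tensored with a finite group) are small relative to closed inclusions in compactly generated spaces (the point of \cite{Hov2}); hence each such $A$ is small relative to the required class. This verifies $(i)$ for both sets of maps, so Theorem~\ref{E3} applies and yields the cofibrantly generated projective model structure, with generating (acyclic) cofibrations $\mathcal{F}_{P\,;\,Q}^\Sigma(\mathcal{D}_{c})$ (respectively $\mathcal{F}_{P\,;\,Q}^\Sigma(\mathcal{D}_{ac})$), making $(\mathcal{F}_{P\,;\,Q}^\Sigma;\mathcal{U}^\Sigma)$ a Quillen adjunction and characterizing weak equivalences and fibrations through $\mathcal{U}^\Sigma$ as stated; all objects are fibrant by $(ii)$. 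The truncated case is identical, using $\mathcal{F}_{P\,;\,Q}^{T_{r}\Sigma}$ and the evident $r$-truncated analogue of Lemma~\ref{Final3}, with all tree combinatorics restricted to trees with at most $r$ leaves. The main obstacle throughout is condition $(i)$, and more precisely the closed-inclusion control supplied by Lemma~\ref{Final3}, without which the small object argument would be unavailable in topological spaces.
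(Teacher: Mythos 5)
Your proposal follows essentially the same route as the paper's proof: the transfer principle applied to the free/forgetful adjunctions, with Lemma~\ref{Final3} supplying the objectwise closed-inclusion control needed for the small object argument, the identity functor as fibrant replacement, and the arity-wise mapping space $Path(M)(n)=Map([0\,,\,1]\,;\,M(n))$ as path object. The only (harmless) imprecision is your justification of smallness: since non-$T_1$ spaces are allowed in $Top$, the paper deduces $\aleph_1$-smallness of the generating domains relative to closed inclusions from their separability rather than from compactness alone, the classical compactness argument requiring $T_1$ stages.
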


\begin{proof}
According to the transfer principle \ref{E3}, we have to check the small object argument as well as the existence of a functorial fibrant replacement and a functorial factorization of the diagonal map in the category  $\Sigma\Bimod_{P\,;\,Q}$.

We check first the small object argument. %We say that a $\Sigma$-sequence $X$ is {\it compact} if $\coprod_{n\geq 0} X(n)$ is compact. We say that  a sequence $X\in\Sigma Seq_P$ is
%compact if the quotient sequence $X/P_0$, defined as $X(0)/P(0)$ at $n=0$, and as $X(n)$ for $n>0$, is compact. 
% Compact sequences (similarly to compact spaces in $Top$)
%are $\aleph_0$-small relative to small inclusions. This means that for any ordinal $\lambda$ and any $\lambda$ sequence $A_\alpha$ of closed inclusions in $\Sigma Seq_P$,
%one has 
%\[
%\Sigma Seq_P(X\,;\,\mathrm{colim}_{\alpha<\lambda}A_\alpha)\cong \mathrm{colim}_{\alpha<\lambda}\Sigma Seq_P(X\, ;\, A_\alpha).
%\]
 Let $\mathcal{F}^{\Sigma}_{P;Q}(X)$ be a domain of an element in the set of generating (acyclic) cofibrations in $\Sigma \Bimod_{P\,;\,Q}$.
 % Let $\lambda=\mathfrak{c}^+$ be the cardinal following continuum $\mathfrak{c}$.
  Let $\lambda=\aleph_1$ be the first uncountable ordinal. Assume that $\{M_{\alpha}\}_{\alpha < \lambda}$ is a $\lambda$-sequence of   $(P\text{-}Q)$-bimodules, such that 
 each map $M_{<\alpha}:=\mathrm{colim}_{\beta<\alpha}M_\beta\to M_\alpha$  fits into a pushout square
\begin{equation}\label{eq:push_out_incl}
\xymatrix@R=15pt{
\mathcal{F}^{\Sigma}_{P;Q}(\partial Y ) \ar[r] \ar[d] & \mathcal{F}^{\Sigma}_{P;Q}(Y) \ar[d] \\
M_{<\alpha}\ar[r] & M_\alpha,
}
\end{equation}
where $\partial Y\to Y$ is a possibly infinite coproduct in $\Sigma Seq_P$ of generating (acyclic) cofibrations.
  Due to the adjunction \eqref{A8}, one has the identity 
$$
\Sigma\Bimod_{P\,;\,Q}\big(\,\mathcal{F}^{\Sigma}_{P;Q}(X)\,;\, \mathrm{colim}_{\alpha < \lambda} M_{\alpha}\,\big) \cong \Sigma Seq_{P}\big(\,X\,;\, \mathcal{U}(\mathrm{colim}_{\alpha < \lambda} M_{\alpha})\,\big).
$$ 
%We denote by $M_{<\alpha}:=\mathrm{colim}_{\beta<\alpha}M_\beta$.
Since the forgetful functor is monadic, it preserves filtered colimits. So, one has 
$$
\Sigma Seq_{P}\big(\,X\,;\, \mathcal{U}(\mathrm{colim}_{\alpha < \lambda} M_{\alpha})\,\big) \cong \Sigma Seq_{P}\big(\,X\,;\,\mathrm{colim}_{\alpha < \lambda}  \mathcal{U}(M_{\alpha})\,\big).
$$
It follows from Lemma~\ref{Final3}  that $M_{<\alpha}\rightarrow M_{\alpha}$ is an objectwise closed inclusion. The sequence $X\setminus P_0$ is concentrated in only one arity where it is a finite union of spheres (or discs), thus a separable space.
% whose set cardinality is $\leq\mathfrak{c}$. 
Therefore, $X$ is $\aleph_1$-small
%$\mathfrak{c}^+$-small
 relative to componentwise closed inclusions\footnote{In fact  $X$ is $\aleph_0$-small relative to componentwise closed inclusions of $T_1$-spaces, as $X$ being compact can not
 contain a discrete countable closed subspace. Note, however, that in our category $Top$ non-$T_1$ spaces are allowed.}
 and one has the identities
$$
\begin{array}{rclcc}
\Sigma Seq_{P}\big(\,X\,;\,\mathrm{colim}_{\alpha < \lambda}  \mathcal{U}(M_{\alpha})\,\big) & \cong & \mathrm{colim}_{\alpha < \lambda}\, \Sigma Seq_{P}\big(\,X\,;\,  \mathcal{U}(M_{\alpha})\,\big) &  \cong & \mathrm{colim}_{\alpha < \lambda}\, \Sigma \Bimod_{P\,;\,Q}\big(\,\mathcal{F}^{\Sigma}_{P;Q}(X)\,;\,  M_{\alpha}\,\big).
\end{array} 
$$
This proves the small object argument.

Since all objects are fibrant in the category of $\Sigma$-sequences, the identity functor provides a functorial fibrant replacement. For any $M\in \Sigma\Bimod_{P\,;\,Q}$, one needs to prove the existence of an element $Path(M)\in \Sigma\Bimod_{P\,;\,Q}$ inducing a factorization of the diagonal map
$$
\xymatrix{
\Delta: M \ar[r]^{\hspace{-5pt}\simeq}_{\hspace{-5pt}f_{1}} & Path(M) \ar@{->>}[r]_{f_{2}} & M\times M,
}
$$
where $f_{1}$ is a weak equivalence and $f_{2}$ is a Serre fibration. Let us consider
$$
Path(M)(n)=Map\big( [0\,,\,1]\,;\,M(n)\big).
$$
This $\Sigma$-sequence inherits a bimodule structure from $M$. The map from $M$ to $Path(M)$, sending a point to the constant path, is clearly a homotopy equivalence. Furthermore, the map 
$$
f_{2}:Map\big( [0\,,\,1]\,;\,M(n)\big)\longrightarrow Map\big( \partial [0\,,\,1]\,;\,M(n)\big)=(M\times M)(n)
$$
induced by the inclusion $i:\partial [0\,,\,1]\rightarrow [0\,,\,1]$ is a Serre fibration since the map $i$ is a cofibration.
\end{proof}

\begin{proof}[Alternative proof]
As explained in Section \ref{D8}, the category of $(P\text{-}Q)$-bimodules is equivalent to the category of algebras over a colored operad $P{+}Q$.
 According to 
the general result of \cite[Theorem 2.1]{BM3}, the category of algebras over any topological operad has a projective model structure.
 To be precise, let $Seq= Top^\mathbb{N}$ be the projective model category of sequences of topological spaces. % colored $\Sigma$-sequences with the set of colors $S=\mathbb{N}$.
   The adjunction between the forgetful functor and the free $(P{+}Q)$-algebra functor
$$
\mathcal{F}_{P{+}Q}:Seq\rightleftarrows Alg_{P{+}Q}:\mathcal{U},
$$
induces a cofibrantly generated model category structure on the category of $(P{+}Q)$-algebras. In this model structure, a map of $(P{+}Q)$-algebras $f$ is a weak equivalence (respectively, a fibration) if the corresponding map $\mathcal{U}(f)$ is a weak equivalence (respectively, a fibration) in the category of sequences. This model structure coincides with the model structure described in the theorem
because the projective model structure on $\Sigma Seq$ is itself transferred from $Seq$.   
\end{proof}

\subsubsection{Relative left properness of the projective model category}\label{C0}

\noindent \textit{$\bullet$ Definitions of relative left and right properness.} First, we recall the definition in a general setting. Let $\mathcal{C}$ be a model category and let $\mathcal{S}$ be a class of objects of $\mathcal{C}$. The model category $\mathcal{C}$ is said to be left proper (respectively right proper) relative to $\mathcal{S}$ if for each pushout diagram (respectively pullback diagram) of the form
$$
\xymatrix{
A \ar[r]^{f}_{\simeq} \ar@{^{(}->}[d]_{g} & B \ar[d]^{j} \\
C \ar[r]_{i} & D
}\hspace{30pt}  \hspace{10pt}
\xymatrix@R=5pt{
&A \ar[r]^{i} \ar[dd]_{j} & B \ar@{->>}[dd]^{g}  &\\
(\text{respectively,}& & & )\\
& C \ar[r]_{f}^{\simeq} & D &
}
$$
with $g$ a cofibration (respectively, a fibration)  and $f$ a weak equivalence between objects in $\mathcal{S}$, the morphism $i$ is also a weak equivalence. In particular, the category $\mathcal{C}$ is said to be left proper (respectively right proper) if $\mathcal{C}$ is left proper (respectively right proper) relative to all the objects. Furthermore, a model category $\mathcal{C}$ is said to be proper (relative to $\mathcal{S}$) if the category is both left and right proper (relative to $\mathcal{S}$). The advantage of such a category is that we have a criterion allowing to identify homotopy invariant colimits or/and limits.

\begin{pro}{\cite[Proposition A.2.4.4]{Lur}}\label{D0}
Let $\mathcal{C}$ be a model category which is left proper relative to a class of objects $\mathcal{S}$. For any commutative diagram in the category $\mathcal{C}$ of the form
$$
\xymatrix{
A \ar[d]_{v_{A}} &\ar[r]^{g} B \ar[d]_{v_{B}} \ar[l]_{f}&  C  \ar[d]_{v_{C}}\\
A'  &\ar[r]_{g'} B'  \ar[l]^{f'}&  C' 
}
$$
the induced map between the colimits of the horizontal diagrams 
$$
\mathrm{colim}\big( A\leftarrow B \rightarrow C\big) \longrightarrow \mathrm{colim}\big( A'\leftarrow B' \rightarrow C'\big)
$$
is a weak equivalence if the vertical morphisms are weak equivalences; one of the pairs $(f,C)$ or $(g,A)$ consists of a cofibration and an object in $\mathcal{S}$; one of the pairs $(f',C')$ or $(g',A')$ consists of a cofibration and an object in $\mathcal{S}$.

Dually, let $\mathcal{C}$ be a model category which is right proper relative to a class $\mathcal{S}$. For any commutative diagram in the category $\mathcal{C}$ of the form
$$
\xymatrix{
A \ar[d]_{v_{A}}\ar[r]^{f} & B \ar[d]_{v_{B}} &\ar[l]_{g}  C  \ar[d]_{v_{C}}\\
A'  \ar[r]_{f'}& B' &  \ar[l]^{g'} C' 
}
$$
the induced map between the limits of the horizontal diagrams 
$$
\mathrm{lim}\big( A\rightarrow B \leftarrow C\big) \longrightarrow \mathrm{lim}\big( A'\rightarrow B' \leftarrow C'\big)
$$
is a weak equivalence if the vertical morphisms are weak equivalences; one of the pairs $(f,C)$ or $(g,A)$ consists of a fibration and an object in $\mathcal{S}$; one of the pairs $(f',C')$ or $(g',A')$ consists of a fibration and an object in $\mathcal{S}$.

\end{pro}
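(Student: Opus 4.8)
The plan is to read this as a relative gluing (or cube) lemma. Since the colimit of a span $A\leftarrow B\rightarrow C$ is the pushout $A\sqcup_B C$, the statement compares the two pushouts $P=A\sqcup_B C$ and $P'=A'\sqcup_{B'}C'$ and asks that the map induced by $(v_A,v_B,v_C)$ be a weak equivalence. The only engine I would use is the hypothesis itself, rephrased as: the cobase change of a weak equivalence between objects of $\mathcal{S}$ along a cofibration is again a weak equivalence. I would prove the first (colimit) assertion in full and obtain the second by dualizing every step verbatim, replacing pushouts by pullbacks, cofibrations by fibrations, and colimits by limits; so I concentrate on the left-proper case.

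By the symmetric form of the hypotheses I may first reduce, up to relabelling the two legs, to the aligned configuration in which $f\colon B\to A$ and $f'\colon B'\to A'$ are cofibrations while $C,C'\in\mathcal{S}$. In this configuration the canonical maps $C\to P$ and $C'\to P'$ are cofibrations, being cobase changes of $f$ and $f'$. The first step changes the leg $C$: by the pasting law for pushouts one has $A\sqcup_B C'\cong(A\sqcup_B C)\sqcup_C C'=P\sqcup_C C'$, so the comparison $P\to A\sqcup_B C'$ is exactly the cobase change of the weak equivalence $v_C\colon C\to C'$ (between objects of $\mathcal{S}$) along the cofibration $C\to P$, and relative left properness makes it a weak equivalence. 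It then remains to compare $A\sqcup_B C'$ with $P'=A'\sqcup_{B'}C'$, that is, to perform the change of the apex $B\to B'$ together with the change of the cofibrant leg $A\to A'$, the leg $C'$ now being held fixed.

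For the apex I would factor $v_B$ as a trivial cofibration $j\colon B\to\hat B$ followed by a trivial fibration $p\colon\hat B\to B'$ and set $\hat A:=A\sqcup_B\hat B$. Since trivial cofibrations are stable under cobase change, $A\to\hat A$ is a trivial cofibration and $\hat B\to\hat A$ a cofibration; the pasting law yields a canonical isomorphism $A\sqcup_B C'\cong\hat A\sqcup_{\hat B}C'$, so the trivial-cofibration part of $v_B$ costs nothing on pushouts, while a two-out-of-three argument shows the residual leg map $\hat A\to A'$ is a weak equivalence. This reduces everything to the case of a \emph{trivial fibration on the apex}, which I expect to be the main obstacle: the cobase change one now wants to control moves a weak equivalence whose source and target are the apex objects $\hat B,B'$, and these are not assumed to lie in $\mathcal{S}$, so relative left properness does not apply directly. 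The hard point is therefore to trade this apex weak equivalence for manipulations among the legs, which I would attempt by exploiting the right lifting property of the trivial fibration $p$ against the cofibrations $\hat B\to\hat A$ and $f'\colon B'\to A'$ to build a retraction of the two pushouts and then close with two-out-of-three. This is precisely the step requiring the careful bookkeeping of \cite[Proposition A.2.4.4]{Lur}, and where one must check that every object to which relative left properness is ultimately applied does belong to $\mathcal{S}$.
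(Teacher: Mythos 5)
The paper itself contains no proof of this proposition (it is quoted from Lurie, whose Proposition A.2.4.4 is the \emph{absolute} statement for honestly left proper categories), so your proposal has to stand on its own. Your Steps 1 and 2 are correct: pushing $v_C$ along the cofibration $C\to A\sqcup_B C$ is a legitimate use of relative left properness (both $C$ and $C'$ lie in $\mathcal{S}$), and factoring $v_B$ as a trivial cofibration followed by a trivial fibration disposes of the trivial-cofibration part with no properness at all. But there are two genuine gaps. The first is the opening reduction: the hypotheses permit a \emph{mixed} configuration in which the top row satisfies the $(f,C)$ condition ($f$ a cofibration, $C\in\mathcal{S}$) while the bottom row satisfies the $(g',A')$ condition, and this cannot be removed by ``relabelling the two legs'', because the legs can only be swapped in both rows simultaneously (the vertical maps tie $A$ to $A'$ and $C$ to $C'$); relabelling interchanges the two aligned cases with each other and the two mixed cases with each other, so the mixed cases are never addressed.

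The second gap is the final step, and it is not ``careful bookkeeping''. Concretely, the lifting problems you invoke do not exist: a lifting square for the trivial fibration $p\colon\hat B\to B'$ against the cofibration $\hat B\to\hat A$ would require a map $\hat A\to B'$, a square against $f'\colon B'\to A'$ would require $f'$ to be a fibration or a map $B'\to\hat B$ or $A'\to B'$, and a section of $p$ would require $B'$ to be cofibrant --- none of which is available. The deeper problem is that after Steps 1--2 every $\mathcal{S}$-hypothesis (these concern only $C,C'$) has been spent, so the residual comparison --- cofibrations $\hat f, f'$, trivial fibration $p$ on the apex, weak equivalence $\hat A\to A'$, identity on $C'$ --- would have to be handled with no properness input whatsoever. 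Writing $\hat A\sqcup_{\hat B}C'=(\hat A\sqcup_{\hat B}B')\sqcup_{B'}C'$, every standard way to finish (Lurie's own two-step argument, or Ken Brown's lemma applied to the left Quillen cobase-change functor $(-)\sqcup_{B'}C'$ on the under-category of $B'$) requires first knowing that $\hat A\to\hat A\sqcup_{\hat B}B'$, the cobase change of the weak equivalence $p$ along the cofibration $\hat f$, is a weak equivalence. That is a \emph{second} application of left properness, and it concerns precisely the apex objects $\hat B$ and $B'$, which are not assumed to lie in $\mathcal{S}$; in a general (non-left-proper) model category such a cobase change need not be a weak equivalence, since $\hat B$ is not cofibrant. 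In Lurie's absolute setting this application is licit, which is why his proof closes; in the relative setting your reductions have manoeuvred the argument into a position where relative properness can no longer be invoked at all. So the missing step is not a detail to be checked but the crux: completing the proof requires either restructuring it so that properness is only ever applied to weak equivalences between objects of $\mathcal{S}$, or strengthening the hypotheses on the apices, and as written the proposal does not prove the statement.
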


\begin{rmk}\label{B6}
Even when  the category $\mathcal{C}$ is not left or right proper, the statement of Proposition~\ref{D0} still holds
provided  one of the pairs $(f\,;\,C)$ or $(g\,;\,A)$ consists of a (co)fibration and a (co)fibrant object whereas one of the pairs $(f'\,;\,C')$ or $(g'\,;\,A')$ consists of a (co)fibration and a (co)fibrant object.
%Proposition \ref{D0} is still true if the category $\mathcal{C}$ is not left or right proper under additional assumptions on the commutative diagram (see \cite[Proposition 13.1.2]{Hir}). In that case, we need to assume that one of the pairs $(f\,;\,C)$ or $(g\,;\,A)$ consists of a (co)fibration and a (co)fibrant object whereas one of the pairs $(f'\,;\,C')$ or $(g'\,;\,A')$ consists of a (co)fibration and a (co)fibrant object. 
\end{rmk}

\noindent \textit{$\bullet$ Application to the projective model category of bimodules.} Let $P$ and $Q$ be two topological operads. From now on, we focus our attention on the projective model category of $(P\text{-}Q)$-bimodules. More precisely, we show that this category is right proper relative to all the objects and is relatively  left proper.

\begin{thm}\label{th:right_proper}
The projective model category  $\Sigma\Bimod_{P\,;\,Q}$ is right proper.    
\end{thm}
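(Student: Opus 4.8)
The plan is to reduce right properness of $\Sigma\Bimod_{P\,;\,Q}$ to right properness of the category $Top$ of spaces, exploiting two structural facts: that weak equivalences and fibrations in the projective model structure are detected \emph{objectwise} in $Top$, and that pullbacks of bimodules are likewise computed objectwise. The first fact is exactly the definition recorded in Theorem~\ref{ProjectBimod}: a bimodule map $f$ is a weak equivalence (respectively a fibration) if and only if $\mathcal{U}^\Sigma(f)$ is one in $\Sigma Seq_P$, hence if and only if each component $f(n)$ is a weak homotopy equivalence (respectively a Serre fibration) in $Top$.

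For the second fact I would argue that the forgetful functor $\mathcal{U}^\Sigma\colon\Sigma\Bimod_{P\,;\,Q}\to\Sigma Seq_P$, being a right adjoint by the adjunction of Theorem~\ref{THMadj}, preserves all limits and in particular pullbacks. Composing with the forgetful functor $\Sigma Seq_P\to\Sigma Seq$ (which creates connected limits, pullbacks being such) and with the evaluation functors $\Sigma Seq\to Top$ at each arity (which preserve all limits), one concludes that for every pullback square of $(P\text{-}Q)$-bimodules and every $n\geq0$, the induced square of spaces obtained by evaluating in arity $n$ is again a pullback in $Top$.

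With these reductions in hand, I would take an arbitrary pullback diagram as in the definition of right properness, with $g\colon B\to D$ a fibration and $f\colon C\to D$ a weak equivalence, and the upper-left object $A$ the pullback. Evaluating at each arity $n$ produces a pullback square of spaces in which $g(n)\colon B(n)\to D(n)$ is a Serre fibration and $f(n)\colon C(n)\to D(n)$ is a weak homotopy equivalence. Since every object of the Quillen model category $Top$ is fibrant, $Top$ is right proper (a model category in which all objects are fibrant is right proper, by the standard argument, e.g. \cite[Corollary~13.1.3]{Hir}). Hence the base change $i(n)\colon A(n)\to B(n)$ of the weak equivalence $f(n)$ along the fibration $g(n)$ is a weak equivalence for each $n$. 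Therefore $i$ is an objectwise weak equivalence, that is, a weak equivalence in $\Sigma\Bimod_{P\,;\,Q}$, which is precisely the required conclusion.

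The argument carries no serious obstacle, since both weak equivalences and fibrations, as well as pullbacks, are governed objectwise by $Top$; the only points that genuinely need care are the verification that $\mathcal{U}^\Sigma$ transports pullbacks to objectwise pullbacks of spaces (which follows formally from adjunction together with the behaviour of limits in the comma category $\Sigma Seq_P=P_0\downarrow\Sigma Seq$) and the invocation of right properness of $Top$ in the compactly generated setting, where all objects are indeed fibrant. The same reasoning applies verbatim to the truncated categories $T_r\Sigma\Bimod_{P\,;\,Q}$.
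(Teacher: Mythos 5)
Your proof is correct, but it runs along a different track than the paper's. The paper never leaves the category $\Sigma\Bimod_{P\,;\,Q}$: it writes $i\colon A\to B$ as a map between two pullback diagrams ($A=\mathrm{lim}(C\to D\leftarrow B)$ and $B=\mathrm{lim}(D= D\leftarrow B)$) and invokes Remark~\ref{B6} (the limit-version gluing statement) together with the fact that \emph{all objects of the projective model category of bimodules are fibrant}; in particular it never needs to know how pullbacks of bimodules are actually computed. You instead push the whole problem down to $Top$: weak equivalences and fibrations are objectwise by definition of the transferred structure, pullbacks are objectwise by the formal right-adjoint/comma-category argument, and then right properness of $Top$ (all spaces fibrant) finishes the job arity by arity. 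Both proofs ultimately rest on the principle that fibrancy of all objects forces right properness, but applied at different levels. Your route is slightly more elementary in that it avoids the abstract gluing lemma inside the bimodule category, at the price of the (easy but necessary, and correctly handled by you) verification that $\mathcal{U}^\Sigma$ transports pullbacks to objectwise pullbacks of spaces. It is also worth noting that your argument is precisely the one the paper itself uses later for the Reedy model structure (Theorem~\ref{ThmProperness}), where the paper's projective-case argument is unavailable because not all objects are Reedy fibrant; so your proof has the advantage of covering both cases uniformly.
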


\begin{proof}
We consider the following pullback diagram in which $f$ is a weak equivalence and $g$ is a fibration:
$$
\xymatrix{
A = \mathrm{lim}\,\big(\hspace{-31pt} & C \ar[r]^{f}_{\simeq}& D &\ar@{->>}[l]_{g}  B\big).
}
$$
The map $i:A\rightarrow B$ is equivalent to the following map between pullback diagrams:
$$
\xymatrix{
A\ar[d]^{i} \ar@{=}[r] & \mathrm{lim}\,\big(\hspace{-31pt} & C\ar[d]^{\simeq} \ar[r]^{\simeq}&\ar@{=}[d] D &\ar@{->>}[l] \ar@{=}[d] B\,\,\big)\\
B \ar@{=}[r] &  \mathrm{lim}\,\big(\hspace{-31pt} & D \ar@{=}[r]& D &\ar@{->>}[l] B\,\,\big)
}
$$
According to Remark \ref{B6} and since all the objects in the projective model category of bimodules are fibrant, the map $i:A\rightarrow B$ is also a weak equivalence. Consequently, the category $\Sigma\Bimod_{P\,;\,Q}$ is right proper relative to all the objects.
\end{proof}

\begin{lmm}\label{Z0}
Let $C\leftarrow A \rightarrow B$ be a diagram in a cocomplete category $\mathcal{C}$. If $A\rightarrow C$ is a retract of $A'\rightarrow C'$, then the morphism $C\rightarrow \mathrm{colim}(C\leftarrow A \rightarrow B)$ is a retract of $C'\rightarrow \mathrm{colim}(C'\leftarrow A' \rightarrow A \rightarrow B)$. 
\end{lmm}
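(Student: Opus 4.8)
The statement concerns a retract in the arrow (morphism) category of $\mathcal{C}$. Write the span as $C\xleftarrow{\phi}A\xrightarrow{\psi}B$, so that $D:=\mathrm{colim}(C\leftarrow A\to B)$ is the pushout $C\sqcup_A B$ with canonical maps $j\colon C\to D$ and $k\colon B\to D$ satisfying $j\phi=k\psi$. The hypothesis that $\phi\colon A\to C$ is a retract of $\phi'\colon A'\to C'$ unpacks to morphisms $i_A\colon A\to A'$, $r_A\colon A'\to A$, $i_C\colon C\to C'$, $r_C\colon C'\to C$ with $r_Ai_A=\mathrm{id}_A$, $r_Ci_C=\mathrm{id}_C$ and commuting squares $\phi'i_A=i_C\phi$ and $\phi r_A=r_C\phi'$. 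The plan is to produce the retract data for the two pushout maps directly from universal properties.

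First I would simplify the second colimit. I claim that $D':=\mathrm{colim}(C'\xleftarrow{\phi'}A'\xrightarrow{r_A}A\xrightarrow{\psi}B)$ is just the pushout $C'\sqcup_{A'}B$ along $\phi'$ and the composite $\psi r_A\colon A'\to B$. Indeed, a cocone on the four-term diagram into an object $Z$ is a quadruple of maps out of $C',A',A,B$; the arrow $A\to B$ forces the map out of $A$ to factor as (the map out of $B$)$\,\circ\psi$, and then the two arrows out of $A'$ collapse the remaining constraints to the single equation (the map out of $C'$)$\,\circ\phi'=$(the map out of $B$)$\,\circ\,\psi r_A$. This is exactly the cocone datum for the pushout $C'\sqcup_{A'}B$, so the two colimits agree; write $j'\colon C'\to D'$ and $k'\colon B\to D'$ for its structure maps, satisfying $j'\phi'=k'\psi r_A$.

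Next I would build the two arrow-category morphisms. For $I_D\colon D\to D'$, I use the cocone $(j'i_C\colon C\to D',\ k'\colon B\to D')$; it is compatible over $A$ because $j'i_C\phi=j'\phi'i_A=k'\psi r_Ai_A=k'\psi$, using $\phi'i_A=i_C\phi$ and $r_Ai_A=\mathrm{id}_A$, so it induces $I_D$ with $I_Dj=j'i_C$ and $I_Dk=k'$. For $R_D\colon D'\to D$, I use the cocone $(jr_C\colon C'\to D,\ k\colon B\to D)$; it is compatible over $A'$ because $jr_C\phi'=j\phi r_A=k\psi r_A$, using $\phi r_A=r_C\phi'$ and $j\phi=k\psi$, so it induces $R_D$ with $R_Dj'=jr_C$ and $R_Dk'=k$. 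The identities $I_Dj=j'i_C$ and $R_Dj'=jr_C$ say precisely that $(i_C,I_D)$ and $(r_C,R_D)$ are morphisms of arrows $(j)\to(j')$ and $(j')\to(j)$, respectively.

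Finally I would check that the composite is the identity. Since $r_Ci_C=\mathrm{id}_C$ handles the source, it remains to verify $R_DI_D=\mathrm{id}_D$, and because $D$ is a pushout it suffices to test against the legs $j$ and $k$: one has $R_DI_Dj=R_Dj'i_C=jr_Ci_C=j$ and $R_DI_Dk=R_Dk'=k$, whence $R_DI_D=\mathrm{id}_D$. Thus $(r_C,R_D)\circ(i_C,I_D)=\mathrm{id}_{(j)}$, exhibiting $C\to D$ as a retract of $C'\to D'$. The only genuine subtlety is the mismatch of indexing shapes between the two colimits; this is the step I expect to be the crux, and it is resolved entirely by the reduction in the first paragraph, after which everything is a routine application of the pushout universal property.
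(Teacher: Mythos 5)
Your proof is correct and is essentially the paper's argument: both identify $\mathrm{colim}(C'\leftarrow A'\to A\to B)$ with the pushout $C'\sqcup_{A'}B$ along the composite $A'\to A\to B$ and then exploit functoriality of pushouts applied to the retract diagram of spans, the composite of the induced maps being the identity. You merely unpack, via explicit cocones and testing on the pushout legs, the universal-property verifications that the paper leaves implicit in its $3\times 3$ diagram.
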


\begin{proof}
Since $A\rightarrow C$ is a retract of $A'\rightarrow C'$, one has a commutative diagram 
\begin{equation}\label{Z4}
\xymatrix{
C\ar[r]^{g_{1}} & C'\ar[r]^{g_{2}} & C \\
A \ar[r]^{h_{1}} \ar[u] \ar[d] & A' \ar[r]^{h_{2}} \ar[u] \ar[d] & A  \ar[u] \ar[d] \\
B \ar@{=}[r] & B \ar@{=}[r] & B
}
\end{equation}
such that $g_{2}\circ g_{1}=id$ and $h_{2}\circ h_{1}=id$. By taking the pushout of the vertical diagrams in (\ref{Z4}), we get the commutative diagram
$$
\xymatrix@C=50pt{
C\ar[r]^{g_{1}} \ar[d] & C' \ar[d] \ar[r]^{g_{2}} & C\ar[d]\\
C\underset{A}{\bigsqcup} B \ar[r]_{k_{1}=g_{1}\underset{h_{1}}{\bigsqcup} id} & C'\underset{A'}{\bigsqcup} B \ar[r]_{k_{2}=g_{2}\underset{h_{2}}{\bigsqcup} id} & C\underset{A}{\bigsqcup} B 
}
$$
in which $g_{2}\circ g_{1}=id$ and $k_{2}\circ k_{1}=id$. 
\end{proof}

\begin{thm} \label{C4}
If $P$ and $Q$ are operads, such that  $P(0)\in Top$ is cofibrant, $P_{>0}\in\Sigma\Operad$ is a cofibrant operad,  and $Q$ is a componentwise cofibrant operad, then the projective model category  $\Sigma\Bimod_{P\,;\,Q}$ is left proper relative to
the class $\mathcal{S}$  of componentwise cofibrant bimodules $M$ for which the arity zero left action map $\gamma_0\colon P(0)\to M(0)$ is a cofibration.    
\end{thm}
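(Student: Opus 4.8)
The plan is to verify the relative left properness condition of Section~\ref{C0} head on: given a pushout square with $f\colon A\to B$ a weak equivalence between objects of $\mathcal{S}$ and $g\colon A\to C$ a cofibration, I must show that the induced map $i\colon C\to D$, where $D=\mathrm{colim}(B\leftarrow A\rightarrow C)$, is a weak equivalence. First I would reduce to the case of a single cell attachment. Since every cofibration is a retract of a cellular one and, by Lemma~\ref{Z0}, the map $i$ is then a retract of the corresponding map for the cellular replacement, it suffices to treat $g$ cellular, i.e.\ a transfinite composite of pushouts of generating cofibrations $\mathcal{F}^{\Sigma}_{P;Q}(\partial X)\to \mathcal{F}^{\Sigma}_{P;Q}(X)$. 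Writing $C=\mathrm{colim}_{\alpha}C_{\alpha}$ with $C_{0}=A$ and setting $D_{\alpha}=B\cup_{A}C_{\alpha}$, associativity of pushouts gives $D_{\alpha}=D_{<\alpha}\cup_{\mathcal{F}^{\Sigma}_{P;Q}(\partial X)}\mathcal{F}^{\Sigma}_{P;Q}(X)$, so $i_{\alpha}\colon C_{\alpha}\to D_{\alpha}$ is obtained from $i_{<\alpha}\colon C_{<\alpha}\to D_{<\alpha}$ by attaching the same cell. By Lemma~\ref{Final3} each map in these two towers is an objectwise closed inclusion, so at limit ordinals $i_{\lambda}$ is a filtered colimit of objectwise weak equivalences along objectwise closed inclusions, hence an objectwise weak equivalence. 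Everything thus reduces to the single attachment, with the weak equivalence being pushed out now allowed to be an arbitrary weak equivalence $i_{<\alpha}$ between componentwise cofibrant bimodules in $\mathcal{S}$; one checks along the way that $\mathcal{S}$ is stable under these cell attachments.

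For the single-cell step I would use the combinatorial model of the pushout from Section~\ref{B7}, formula~\eqref{G1}, and filter both $A'=A\cup_{\mathcal{F}^{\Sigma}_{P;Q}(\partial X)}\mathcal{F}^{\Sigma}_{P;Q}(X)$ over $A$ and $B'=B\cup_{\mathcal{F}^{\Sigma}_{P;Q}(\partial X)}\mathcal{F}^{\Sigma}_{P;Q}(X)$ over $B$ by the number $m$ of pearls labelled by the new generators $X$, exactly as in the filtrations~\eqref{J5}--\eqref{J7} used for operads. Each layer $A'_{m-1}\to A'_{m}$ then sits in a pushout square whose top map is a coproduct, indexed by trees with section $T$ carrying $m$ new pearls, of maps of the form $\partial\overline{X}^{A}(T)\times_{G_T}(-)\to \overline{X}^{A}(T)\times_{G_T}(-)$. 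Here $\overline{X}^{A}(T)$ is the product over the vertices of $T$ of the factors $A(|p|)$ at the old pearls, $X(|p|)$ at the new pearls, $P(|v|)$ below the section and $Q(|v|)$ above the section, and $G_T$ is the relevant automorphism-wreath monoid. The comparison with the $B$-side simply replaces the factors $A(|p|)$ by $B(|p|)$, leaving the $P$-, $Q$- and $X$-factors untouched.

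The crux, and the step I expect to be hardest, is to show that each layer comparison $A'_{m}\to B'_{m}$ is a weak equivalence, which I would prove by induction on $m$ via a gluing (cube) argument in the spirit of Proposition~\ref{D0} and Remark~\ref{B6}, applied to the two pushout squares above. This needs two equivariant inputs: that $\partial\overline{X}(T)\to\overline{X}(T)$ is a $G_T$-cofibration, and that $f\colon A\to B$ induces a $G_T$-equivariant weak equivalence $\overline{X}^{A}(T)\to\overline{X}^{B}(T)$ which remains a weak equivalence after passing to the quotient by $G_T$. Both come from the same pushout-product machinery developed for operads: the factors $P(|v|)$ are equivariantly cofibrant by Propositions~\ref{J1} and~\ref{J2} (using that $P_{>0}$ is cofibrant, via Proposition~\ref{p:useful3}), the factors $Q(|v|)$ and $A(|p|)$, $B(|p|)$ are componentwise cofibrant by hypothesis, and the appendix Lemmas~\ref{l:push_prod1} and~\ref{l:adj_mon} assemble these into the required $G_T$-cofibrancy and into the homotopy invariance of the quotient by $G_T$ on such inputs. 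The delicate bookkeeping lies in the arity-zero pearls, which may sit on or above the section and are governed by the $\gamma_0$-relation of Construction~\ref{F9}; it is precisely the hypotheses that $P(0)$ be cofibrant and that $\gamma_0\colon P(0)\to M(0)$ be a cofibration (the defining property of $\mathcal{S}$) that make the arity-zero contributions to $\overline{X}(T)$ cofibrant and keep $\mathcal{S}$ stable under the cell attachments. Granting these, the gluing argument upgrades the layerwise weak equivalences to $A'\to B'$, which completes the single-cell step and hence the theorem.
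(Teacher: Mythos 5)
Your architecture is the same as the paper's: reduce to a single cell attachment via Lemma~\ref{Z0} and componentwise cofibrancy, filter the pushout by the number of $X$-labelled pearls using the tree description of Section~\ref{B7}, prove the strata are equivariantly cofibrant via Propositions~\ref{p:useful3}, \ref{J1}, \ref{J2} and the appendix lemmas, glue with Proposition~\ref{D0}/Remark~\ref{B6}, and treat arity zero separately using the hypotheses on $P(0)$ and $\gamma_0$. However, there is a genuine gap at what you yourself call the crux: the homotopy invariance of the quotient by $G_T$. The monoid $G_T$ contains, besides the finite group of tree automorphisms, the factors $P(1)$ acting along the edges joining the root to the old pearls, so the stratum is really a coequalizer
\[
M_2(T\,;\,A)\;=\;P(\ell)\times_{P(1)^{\ell-i}}\prod_{1\leq j\leq \ell-i} A(|p_j|)
\]
(times the $X$- and $Q$-part). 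Lemmas~\ref{l:push_prod1} and~\ref{l:adj_mon}, which you cite for this step, are statements about cofibrations only; they do not say that the equivalence $\prod_j A(|p_j|)\to\prod_j B(|p_j|)$ descends to an equivalence of these coequalizers. Since $P(1)$ is a monoid and not a group, one cannot appeal to freeness of the action either. The tool that actually delivers this is Lemma~\ref{l:left_mod} (homotopy invariance of $Z\times_\Gamma(-)$ for a topological monoid $\Gamma$ and $\Gamma$-cofibrant $Z$), applied with $\Gamma=P(1)^{\ell-i}$ and $Z=P(\ell)$ or $P(\ell\,;\,\ell-i)$, whose $\Gamma$-cofibrancy is exactly what Propositions~\ref{J1} and~\ref{J2} provide. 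With that lemma inserted (and again, in the $P(0)\neq\emptyset$ case, together with Lemma~\ref{l:push_prod2} for the modified boundary built from $\gamma_0(P(0))$), your argument closes; without it, the key step is unjustified.

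A secondary, more technical point: a single filtration by the number of new pearls does not immediately yield the pushout squares you describe, because the degenerate locus of a stratum --- root label in $P(\ell\,;\,\ell-i)$, i.e.\ decomposable at an old pearl --- lies in the same filtration level but corresponds to a tree with fewer pearls. The paper resolves this with a second, finer filtration by the total number of pearls, and it is this finer analysis (giving componentwise cofibrations at every stage) that also makes the limit-ordinal step of your transfinite induction legitimate; the objectwise closed inclusions of Lemma~\ref{Final3} alone do not guarantee, in non-$T_1$ $k$-spaces, that weak equivalences pass to the colimit of the towers.
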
 

\begin{cor}\label{cor:left_prop}
If $Q$ is a componentwise cofibrant operad and $P$ is either a Reedy cofibrant or projectively cofibrant operad, then the projective model category 
 $\Sigma\Bimod_{P\,;\,Q}$ is relatively left proper.
 \end{cor}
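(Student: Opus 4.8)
The plan is to deduce the corollary directly from Theorem~\ref{C4} by checking that each of the two hypotheses on $P$ forces the conditions required there, namely that $P(0)$ is cofibrant in $Top$ and that $P_{>0}$ is a cofibrant operad (the hypothesis on $Q$ being already assumed). So the proof splits into the two cases for $P$, followed by a bookkeeping step identifying the class of objects over which we obtain relative left properness.

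First I would treat the projectively cofibrant case. Here $P$ is cofibrant in $\Sigma\Operad$, so by Proposition~\ref{J1} applied in arity zero---where the monoid $\Sigma_{0}\wr P(1)$ degenerates to the trivial monoid---the space $P(0)$ is cofibrant in $Top$; this also follows from the fact that cofibrant operads are $\Sigma$-cofibrant (\cite[Proposition~4.3]{BM}). Proposition~\ref{p:useful3} then yields that $P_{>0}$ is again cofibrant, so all the hypotheses of Theorem~\ref{C4} are satisfied. For the Reedy cofibrant case, $P$ lives in $\Lambda_{\ast}\Operad$ and is therefore reduced, so $P(0)=\ast$ is trivially cofibrant; and by the characterization of Reedy cofibrations of reduced operads (the third item of Theorem~\ref{E9}, i.e. \cite[Theorem~8.4.12]{Fre2}), applied to the map from the initial reduced operad whose positive-arity part is the initial operad of $\Sigma\Operad$, the operad $P_{>0}$ is cofibrant in $\Sigma\Operad$. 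Thus Theorem~\ref{C4} applies in this case as well.

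In both cases Theorem~\ref{C4} gives that $\Sigma\Bimod_{P\,;\,Q}$ is left proper relative to the class $\mathcal{S}$ of componentwise cofibrant bimodules $M$ whose arity zero structure map $\gamma_{0}\colon P(0)\to M(0)$ is a cofibration. The last step is to match $\mathcal{S}$ with the intended class. When $P$ is Reedy cofibrant one has $P(0)=\ast$, and since the inclusion of a point into any cofibrant space is a cofibration (as recalled in Subsection~\ref{MCSSpace}), every componentwise cofibrant bimodule automatically lies in $\mathcal{S}$; hence $\Sigma\Bimod_{P\,;\,Q}$ is left proper relative to all componentwise cofibrant objects. I expect the only genuine subtlety to be precisely this treatment of the arity zero action map: in the projectively cofibrant case $P(0)$ need not reduce to a point, so the requirement that $\gamma_{0}$ be a cofibration cannot in general be dropped from the defining class $\mathcal{S}$, and one must keep track of it when asserting relative left properness.
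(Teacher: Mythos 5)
Your proof is correct and follows essentially the same route as the paper: both cases are reduced to Theorem~\ref{C4}, using Proposition~\ref{p:useful3} for the projectively cofibrant case and the characterization of Reedy cofibrations (third item of Theorem~\ref{E9}, i.e.\ that an operad is Reedy cofibrant if and only if it is reduced with cofibrant positive-arity part) for the Reedy case. Your additional bookkeeping---verifying that $P(0)$ is cofibrant via the $\Sigma$-cofibrancy of cofibrant operads, and identifying the class $\mathcal{S}$ of Theorem~\ref{C4} in each of the two cases---only makes explicit what the paper's two-line proof leaves implicit.
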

 
 \begin{proof}
 An operad is Reedy cofibrant if and only if it is reduced and its positive arity part is cofibrant. This proves the first case. The second case follows from Proposition~\ref{p:useful3}.
 \end{proof}
 
\begin{proof}[Proof of Theorem~\ref{C4}]
We first prove this theorem assuming $P(0)=\emptyset$ and at the end we explain how the argument has to be adjusted to the general case $P(0)\neq\emptyset$.

In order to prove that the category is left proper relative to $\mathcal{S}$, we consider the following pushout diagram in the category of $(P\text{-}Q)$-bimodules in which the map $f$ is a weak equivalence between componentwise cofibrant objects and the map $g$ is a cofibration:
$$
\xymatrix{
D = \operatorname{colim}\,\big(\hspace{-31pt} & C & \ar@{_{(}->}[l]_{g} A \ar[r]_{\simeq}^{f}&  B\big).
}\vspace{-0pt}
$$  
Since any cofibration is a retract of a cellular extension, we can assume that $g$ is a cellular extension due to Lemma \ref{Z0}. On the other hand, any cellular extension is a possibly transfinite sequence of cell attachments. By Theorem~\ref{th:sigma_bimod_cof}a,  cofibrations with domain in  $\mathcal{S}$ are componentwise cofibrations. 
 Thus, without loss of generality, we can assume that $g$ is a cellular attachment and we restrict our study to diagrams of the form 
$$
\xymatrix@R=15pt{
\mathcal{F}_{P\,;\,Q}^\Sigma(\partial X) \ar[r] \ar[d] & A\ar[r] \ar[d] & B \ar[d] \\
\mathcal{F}_{P\,;\,Q}^\Sigma(X) \ar[r] & C\ar[r] & D,
}
$$
where $\partial X \rightarrow X$ is a generating cofibration in the category of $\Sigma$-sequences and both squares are
pushout diagrams. The strategy is to use the explicit description of the pushout from Section \ref{B7} and to introduce a filtration in $\Sigma$-sequences $C$ and $D$ according to the number of vertices in the trees indexed by $X$: 
\begin{equation}\label{B9}
\xymatrix@R=20pt{
A=C_{0} \ar[d] \ar[r] & C_{1} \ar[d] \ar[r] & \cdots \ar[r] & C_{i-1} \ar[d] \ar[r] & C_{i} \ar[d] \ar[r] & \cdots \ar[r] & C \ar[d]\\
B=D_{0} \ar[r] & D_{1} \ar[r] & \cdots \ar[r] & D_{i-1} \ar[r] & D_{i} \ar[r] & \cdots \ar[r] & D.
}
\end{equation}
We will prove that  in~\eqref{B9},  each horizontal map is a componentwise cofibration and each vertical map is a weak equivalence.

Let $rs\mathbb{P}_n[i]$ denote the set of reduced planar trees with section with $n$ leaves labelled by a permutation in $\Sigma_n$. In addition
we assume that it has two types of pearls: $i$ auxiliary ones and some number of primary ones, primary ones coming first. Moreover, 
it is required that all incoming edges of any primary pearl are leaf edges, while all incoming edges of any auxiliary pearl connects it to an internal vertex.

It follows from  Construction~\ref{F9} of a free bimodule and the combinatorial description of a pushout (Subsection~\ref{B7}) 
that any element in $C(n)$ or $D(n)$ can be obtained as a tree $T\in rs\mathbb{P}_n[i]$ (for some $i\geq 0$), whose root,
primary pearls, auxiliary pearls, and vertices above the section are labelled by $P$,  $A$ (or $B$), $X$, and $Q$, respectively. The filtration~\eqref{B9}
is by the number $i$ of auxiliary pearls in such elements. To show that maps $C_{i-1}(n)\to C_i(n)$ and $D_{i-1}(n)\to D_i(n)$ are
cofibrations, we additionally filter this inclusion by the total number $\ell$ of pearls:
\begin{equation}\label{eq_additional_filtr}
\xymatrix@R=20pt{
C_{i-1}(n)=:C_{i}(n)_{i-1} \ar[d] \ar[r] & C_{i}(n)_{i} \ar[d] \ar[r] & \cdots \ar[r] & C_{i}(n)_{\ell-1} \ar[d] \ar[r] & C_{i}(n)_\ell \ar[d] \ar[r] & \cdots \ar[r] & C_i(n) \ar[d]\\
D_{i-1}(n)=:D_{i}(n)_{i-1} \ar[r] & D_{i}(n)_{i} \ar[r] & \cdots \ar[r] & D_{i}(n)_{\ell-1} \ar[r] & D_{i}(n)_\ell \ar[r] & \cdots \ar[r] & D_i(n)
}
\end{equation}
We will show that each horizontal inclusion is a cofibration and each vertical map is an equivalence, which would prove the theorem.

\vspace{10pt}
\noindent \textbf{The maps $C_i(n)_{\ell-1}\to C_i(n)_\ell$ and $D_i(n)_{\ell-1}\to D_i(n)_\ell$  are 
cofibrations.}
For $\ell\geq i$, let $rs\mathbb{P}_n[i\,;\,\ell]\subset rs\mathbb{P}_n[i]$ be the subset of planar trees with section with exactly $\ell$ pearls. 
Let $rs\mathbb{T}_n[i\,;\,\ell]:= rs\mathbb{P}_n[i\,;\,\ell]/{\sim}$ be the set of equivalence classes, where two trees are equivalent if they are isomorphic as non-planar trees by an isomorphism that forgets the labels of leaves and that sends primary pearls to primary ones and same with auxiliary ones.\vspace{-10pt}

\begin{figure}[!h]
\begin{center}
\includegraphics[scale=0.35]{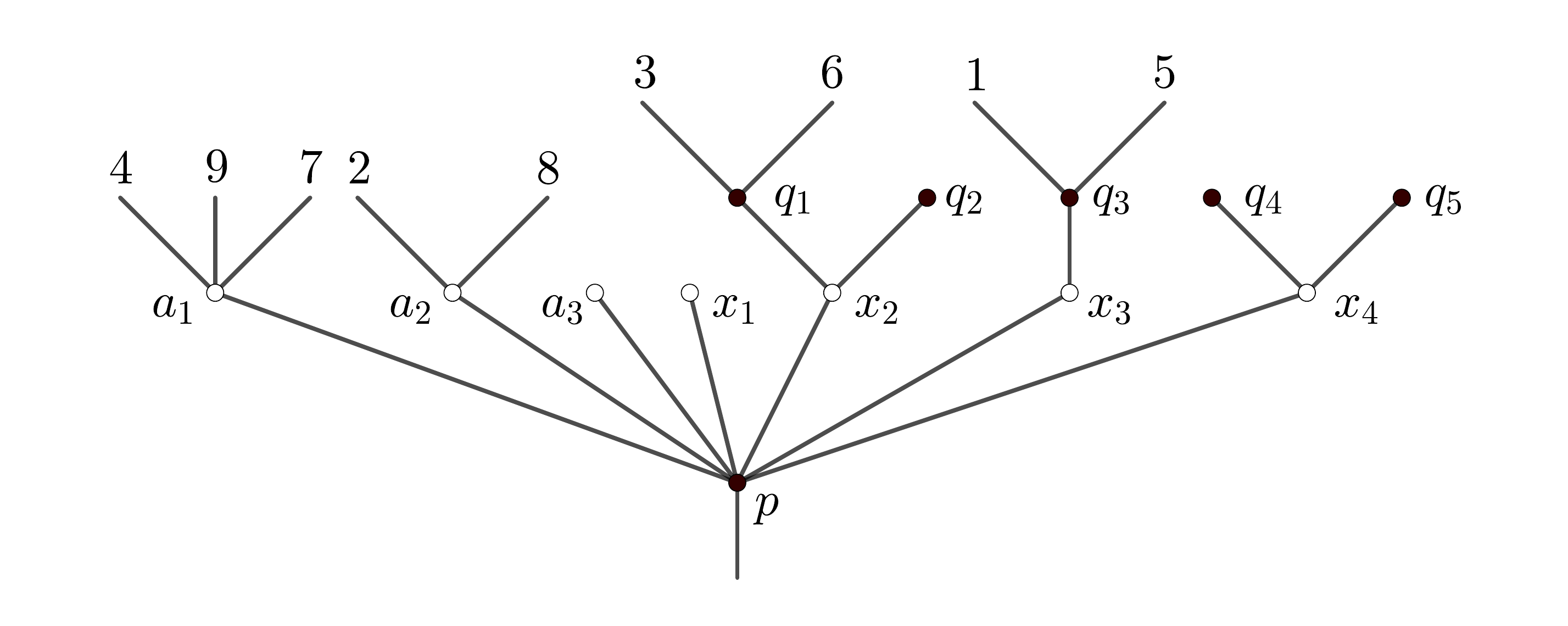}\vspace{-15pt}
\caption{Illustration of a tree in $rs\mathbb{P}_9[4\,;\,7]\subset rs\mathbb{P}_9[4]$ labelled by $P$, $A$, $X$ and $Q$.}\vspace{-5pt}
\end{center}
\end{figure}

For $T\in rs\mathbb{T}_n[i\,;\,\ell]$, denote by $Aut(T)$ the group of automorphisms of $T$ that does not mix the auxiliary pearls
with the primary ones (only arity zero ones can potentially be permuted, but we do not allow it). A choice of representative
of $T$ in $rs\mathbb{P}_n[i\,;\,\ell]$ gives a labeling of leaves by the set $[n]$ and therefore induces a homomorphism
$Aut(T)\to\Sigma_n$. Denote by $Aut_0(T)$ the kernel of this homomorphism which is the subgroup of $Aut(T)$ permuting  the univalent vertices and the auxiliary pearls that have only univalent vertices above them.  Let $p_1,\ldots,p_\ell$ be the pearls of the chosen planar 
representative of~$T$ labelled in the planar order they appear. 
 Then we consider the spaces 
$$
M_{1}(T)=\underset{\ell-i+1\leq j\leq \ell}{\prod} X(|p_{j}|)\times \underset{v\in V^{u}(T)}{\prod} Q(|v|),
%\hspace{15pt} 
$$
where $V^u(T)$ is the set of vertices of $T$ above the section,
and % \hspace{15pt}
\begin{multline*}
M_{2}(T\,;\,A)=P(\ell)\times_{P(1)^{\ell-i}} \underset{1\leq j\leq \ell-i}{\prod} A(|p_{j}|)=%\\
{\mathrm{coeq}}\left(P(\ell)\times{P(1)^{\ell-i}} \times\underset{1\leq j\leq \ell-i}{\prod} A(|p_{j}|) 
\rightrightarrows P(\ell)\times \underset{1\leq j\leq \ell-i}{\prod} A(|p_{j}|)\right).
\end{multline*}
The two arrows in the coequalizer correspond respectively to the action of $P(1)^{\ell-i}$ on the first $(\ell-i)$
inputs of $P(\ell)$ (by operadic composition) and on $\underset{1\leq j\leq \ell-i}{\prod} A(|p_{j}|)$ (by
the left $P$-action on $A$).

In other words, $M_{1}(T)$ is the space of indexations of the $i$ last pearls by points in $X$ and the vertices above the section by points in the operad $Q$. Similarly, $M_{2}(T\,;\,A)$ is the space of indexations of the other pearls by points in $A$ and the root by a point in $P(\ell)$. This presentation is not unique,
that is why we take the coequalizer.

Moreover, we denote by $M_{1}^{-}(T)$ the subspace of $M_{1}(T)$ formed by points having at least one pearl indexed by a point in $\partial X$. The space $M_{2}^{-}(T\,;\,A)$ consists of points in $M_{2}(T\,;\,A)$ for which the root is indexed by a point $p\in P(\ell\,;\,\ell-i)\subset P(\ell)$ (see
notation  before Proposition~\ref{J2}). We define
\[
M(T\,;\,A):=M_1(T)\times M_2(T\,;\,A)\quad \text{ and }\quad M^{-}(T\,;\,A):=M_1(T)\times M_2^{-}(T\,;\,A)\coprod_{M_1^{-}(T)\times M^{-}_2(T\,;\,A)} M^{-}_1(T)\times M_2(T\,;\,A).
\]
The spaces $M_2(T\,;\,B)$, $M_2^{-}(T\,;\,B)$, $M(T\,;\,B)$, $M^-(T\,;\,B)$ are defined similarly.

%of the form
%\begin{multline*}
%p=(p_{1}\circ_{1}p_{2})\cdot\sigma, \hspace{5pt}\text{with } p_{2}\in P(j),\, 2\leq j\leq \ell-i,\,\, p_{1}\in P(\ell-j+1),\, \\
%\text{ and } \sigma \text{ is a shuffle of } \{1,\ldots,j\} \text{ with } \{j+1,\ldots,\ell-i\}
%\text{ fixing } \{\ell-i+1,\ldots,\ell\}.
%\end{multline*}

One has the following pushout squares
\begin{equation}\label{eq:squares}
\xymatrix{
\underset{T\in rs\mathbb{T}_n[i;\ell]}{\displaystyle\coprod} M^{-}(T\,;\,A)\hspace{-5pt}\underset{Aut(T)}{\times} \hspace{-5pt}\Sigma_{n}\ar[r] \ar[d] &C_i(n)_{\ell-1} \ar[d] \\
\underset{T\in rs\mathbb{T}_n[i;\ell]}{\displaystyle\coprod} M(T\,;\,A)\hspace{-5pt}\underset{Aut(T)}{\times} \hspace{-5pt}\Sigma_{n}\ar[r] &
C_i(n)_{\ell}
}
\hspace{40pt} 
\xymatrix{
\underset{T\in rs\mathbb{T}_n[i;\ell]}{\displaystyle\coprod} M^{-}(T\,;\,B)\hspace{-5pt}\underset{Aut(T)}{\times} \hspace{-5pt}\Sigma_{n}\ar[r] \ar[d] &D_i(n)_{\ell-1} \ar[d] \\
\underset{T\in rs\mathbb{T}_n[i;\ell]}{\displaystyle\coprod} M(T\,;\,B)\hspace{-5pt}\underset{Aut(T)}{\times} \hspace{-5pt}\Sigma_{n}\ar[r] &
D_i(n)_{\ell}
}
\end{equation}

To prove that the right vertical maps are cofibrations one needs to show that the left ones are such. 
Recall that $Aut_0(T)$ denotes the kernel of $Aut(T)\to \Sigma_n$. Equivalently, for every $T\in rs\mathbb{T}_n[i;\ell]$ we need to show that the maps
\begin{equation}\label{eq:cof_M}
M^-(T\,;\,A)/Aut_0(T) \to M(T\,;\, A)/Aut_0(T)\quad \text{ and }\quad M^-(T\,;\,B)/Aut_0(T) \to M(T\,;\, B)/Aut_0(T)
\end{equation}
are cofibrations. We also need later that the sources of the maps are cofibrant.

Let $G_1$ be the subgroup of $Aut_0(T)$ that fixes all the pearls. It permutes the arity zero vertices above the section. The subgroup $G_1$ is normal and the quotient group $G_2=Aut_0(T)/G_1$ is responsible for the permutations of the arity zero primary pearls and the auxiliary pearls that 
have only arity zero vertices above them (in particular they can be of arity zero themselves). Note that $G_2\subset\Sigma_{\ell-i}\times\Sigma_i$.
By construction, the spaces $M_{1}^{-}(T)$, $M_{1}(T)$, $M_{2}^{-}(T\,;\,A)$ and $M_{2}(T\,;\,A)$ are equipped with an action 
of $Aut(T)$ and of $Aut_0(T)$ by taking restriction. Moreover, for the last two, the $Aut_0(T)$-action factors through $G_2$.
%the automorphism group $G_{2}$. All of them inherits an action of the automorphism group $Aut'(T)$. 

%Since the operad $Q$ is $\Sigma$-cofibrant and the map $\partial X \rightarrow X$ is a $\Sigma$-cofibration between $\Sigma$-cofibrant objects,
 The inclusion $M_{1}^{-}(T)\rightarrow M_{1}(T)$ is an $Aut_0(T)$-equivariant $G_{1}$-cofibration.   % between $G_{1}$-cofibrant objects.
 Indeed, $G_1=\prod_{j=\ell-i+1}^\ell\Sigma_{d_j}$, where $d_j$ is the number of univalent vertices above the $j$-th  pearl. Applying iteratively
 Lemma~\ref{D2},   we get that the map
 \begin{equation}\label{eq:cof_X}
 \partial\underset{\ell-i+1\leq j\leq \ell}{\prod} X(|p_{j}|)\rightarrow \underset{\ell-i+1\leq j\leq \ell}{\prod} X(|p_{j}|)
 \end{equation}
 is a $G_1$-cofibration. On the other hand, since $Q$ is componentwise cofibrant, the space 
 \[
  \underset{v\in V^{u}(T)}{\prod} Q(|v|)
  \]
  is a cofibrant (in $Top$) $G_1$-space. Applying again Lemma~\ref{D2}, we get that  $M_{1}^{-}(T)\rightarrow M_{1}(T)$ is a  $G_{1}$-cofibration. 
 
 The source of the map~\eqref{eq:cof_X} is also $G_1$-cofibrant as it can be obtained as a sequence of $G_1$-cofibrations
 \[
 Y_1\to Y_2\to\ldots\to Y_{i},
 \]
 where 
 \[
 Y_m=\partial\left(\underset{\ell-i+1\leq j\leq \ell-i+m}{\prod} X(|p_{j}|)\right)\times \underset{\ell-i+m+1\leq j\leq \ell}{\prod} \partial X(|p_{j}|).
 \]
 Note that $Y_1=\underset{\ell-i+1\leq j\leq \ell}{\prod} \partial X(|p_{j}|)$ is $G_1$-cofibrant being a product 
 of $\Sigma_{d_j}$-cofibrant spaces (see Example~\ref{ex:push_prod1}). We finally conclude that $M^-_1(T)$ is
 $Aut_0(T)$-cofibrant.

 %\todo{Demontrer que c'est une $G_{1}$-cofibration (idem pour le cas $G_{2}$.} 
   Besides, we claim that
 %   since the operad $P$ is cofibrant and the $\Sigma$-sequence $A$ is $\Sigma$-cofibrant, Proposition~\ref{J2} implies that
  the inclusion from $M_{2}^{-}(T\,;\,A)$ into $M_{2}(T\,;\,A)$ is a  $G_{2}$-cofibrant map  between $G_2$-cofibrant spaces. The assignment
  \[
  R\mapsto R\times_{P(1)^{\ell-i}} \underset{1\leq j\leq \ell-i}{\prod} A(|p_{j}|)
  \]
  can be viewed as a functor from $(\Sigma_{\ell-i}\wr P(1))\times\Sigma_i\text{-}Top$ to $G_2\text{-}Top$. This functor preserves cofibrations. Indeed,  it preserves
  colimits and sends any generating cofibration $S^{k-1}\times (\Sigma_{\ell-i}\wr P(1))\times\Sigma_i\rightarrow D^k\times (\Sigma_{\ell-i}\wr P(1))\times\Sigma_i$ to a $G_2$-cofibration:
  \[
\left( S^{k-1}\times \Sigma_{\ell-i}\times \underset{1\leq j\leq \ell-i}{\prod} A(|p_{j}|)\right)\times\Sigma_i \rightarrow \left(D^k
 \times \Sigma_{\ell-i}\times \underset{1\leq j\leq \ell-i}{\prod} A(|p_{j}|)\right)\times\Sigma_i.
 \]
 The map above is a $G_2$-cofibration by  Lemmas \ref{D2} and~\ref{l:adj_mon} (recall that $G_2\subset\Sigma_{\ell-i}\times\Sigma_i$).  On the other hand,
 since $P$ is a cofibrant operad, by Proposition~\ref{J2}, the inclusion $P(\ell\,;\,\ell-i)\to P(\ell)$ is a $(\Sigma_{\ell-i}\wr P(1))\times\Sigma_i$-cofibration
 with a $(\Sigma_{\ell-i}\wr P(1))\times\Sigma_i$-cofibrant source. We conclude that  $M_{2}^{-}(T\,;\,A)\to M_{2}(T\,;\,A)$ is a  $G_{2}$-cofibrant map  between $G_2$-cofibrant spaces.

   Now combining that $M_1^-(T)\to M_1(T)$ is an $Aut_0(T)$-equivariant $G_1$-cofibration and that $M_{2}^{-}(T\,;\,A)\to M_{2}(T\,;\,A)$
   is a $G_2$-cofibration and applying Lemma~\ref{D2}, we get that $M^-(T\,;\,A)\to M(T\,;\, A)$ is an $Aut_0(T)$-cofibration.
   (We get a similar statement for~$B$ as well by replacing $A$ with $B$ in all formulas.)
   
   We can similarly conclude that $M_1^-(T)\times M_2^-(T\,;\,A)$ is $Aut_0(T)$-cofibrant and the inclusions
   \[
   M_1^-(T)\times M_2^-(T\,;\,A)\to M_1^-(T)\times M_2(T\,;\,A)\to M^-(T\,;\,A)
   \]
   are $Aut_0(T)$-cofibrations. As a result, $M^-(T\,;\,A)$ and (arguing similarly) $M^-(T\,;\,B)$ are $Aut_0(T)$-cofibrant.
   
   \vspace{10pt}
\noindent \textbf{The map $ C_i(n)_\ell\to D_i(n)_\ell$  is a weak equivalence.}
   By induction we assume that the map $C_i(n)_{\ell-1}\to D_i(n)_{\ell-1}$ is a weak equivalence of cofibrant spaces.
   The spaces $C_i(n)_\ell$ and $D_i(n)_\ell$ are the colimits of the first and second lines, respectively, in the following diagram.
     $$
\xymatrix{
\underset{T\in rs\mathbb{T}_n[i;\ell]}{\displaystyle\coprod} M(T\,;\,A)\hspace{-5pt}\underset{Aut(T)}{\times} \hspace{-5pt}\Sigma_{n}\ar[d] & \ar[l] \underset{T\in rs\mathbb{T}_n[i;\ell]}{\displaystyle\coprod} M^-(T\,;\,A)\hspace{-5pt}\underset{Aut(T)}{\times} \hspace{-5pt}\Sigma_{n}\ar[r] \ar[d] & C_i(n)_{\ell-1} \ar[d]\\  
\underset{T\in rs\mathbb{T}_n[i;\ell]}{\displaystyle\coprod} M(T\,;\,B)\hspace{-5pt}\underset{Aut(T)}{\times} \hspace{-5pt}\Sigma_{n}& \ar[l]\underset{T\in rs\mathbb{T}_n[i;\ell]}{\displaystyle\coprod} M^-(T\,;\,B)\hspace{-5pt}\underset{Aut(T)}{\times} \hspace{-5pt}\Sigma_{n}\ar[r]  &  D_i(n)_{\ell-1}
}
$$
  As it follows from Proposition~\ref{D0}, the induced map of colimits is an equivalence provided all vertical maps are equivalences. 
  We are left to showing that the left two arrows are such. In other words, it only remains  to prove that for every tree $T\in rs\mathbb{T}_n[i;\ell]$,
  the maps
  \[
  M^-(T\,;\,A)\to M^-(T\,;\,B)\quad \text{ and } \quad M(T\,;\,A)\to M(T\,;\,B)
  \]
  are weak equivalences (as we already proved that all the four spaces above are $Aut_0(T)$-cofibrant). The latter fact
  is deduced from the fact that the induced maps
  \[
  M^-_2(T\,;\,A)\to M^-_2(T\,;\,B)\quad \text{ and } \quad M_2(T\,;\,A)\to M_2(T\,;\,B)
  \]
 are equivalences of cofibrant spaces. The latter is proved by Lemma~\ref{l:left_mod} 
 for which we take $\Gamma=P(1)^{\ell-i}$,  $Z=P(\ell)$ or $P(\ell\,;\,\ell-i)$, and the
 corresponding $\Gamma^{op}$-equivariant equivalence between cofibrant (in $Top$)
 objects is the map $\prod_{j=1}^{\ell-i}A(|p_j|)\to \prod_{j=1}^{\ell-i}B(|p_j|)$.
 
 \vspace{10pt}
 \noindent \textbf{Proving the case $P(0)\neq\emptyset$.}
 The only difference with  the case $P(0)=\emptyset$ is that we need to take into account the $\gamma_{0}$-relation contracting the univalent pearls indexed by points coming from the image of $P(0)$. For this purpose, we change slightly the definition of the spaces $M_2^-(T\,;\,A)$ and $M_2^-(T\,;\,B)$. Thus one also needs to check the three statements below.
 \begin{itemize}
 \item $M_2^-(T\,;\,A)$ and $M_2^-(T\,;\,B)$ are $G_2$-cofibrant.
 \item $M_2^-(T\,;\,A)\to M_2(T\,;\,A)$ and $M_2^-(T\,;\,B)\to M_2(T\,;\,B)$  are  $G_2$-cofibrations.
 \item $M_2^-(T\,;\,A)\to M_2^-(T\,;\,B)$ is a weak equivalence.
 \end{itemize}
 Denote by $\vec{A}:=\prod_{j=1}^{\ell-i}A(|p_j|)$ and by $\partial\vec{A}\subset\vec{A}$ the subset consisting of points with at least one coordinate in $\gamma_0(P(0))\subset A(0)$.
 One has
 \[
 M_2^-(T\,;\,A)=\left( P(\ell)\times_{P(1)^{\ell-i}}\partial\vec{A}\right) \underset{P(\ell\,;\,\ell-i)\times_{P(1)^{\ell-i}}\partial\vec{A}}{\coprod}
 \left(P(\ell\,;\,\ell-i)\times_{P(1)^{\ell-i}}\vec{A}\right).
 \]
 By the same argument as above, the functor
 \[
 (-)\times_{P(1)^{\ell-i}}\partial\vec{A}\colon (\Sigma_{\ell-i}\wr P(1))\times\Sigma_i\text{-}Top \to   G_2\text{-}Top
 \]
 preserves cofibrations. As a consequence $P(\ell\,;\,\ell-i)\times_{P(1)^{\ell-i}}\partial\vec{A}$ is $G_2$-cofibrant and the inclusion
 $P(\ell\,;\,\ell-i)\times_{P(1)^{\ell-i}}\partial\vec{A}\to P(\ell)\times_{P(1)^{\ell-i}}\partial\vec{A}$ is a $G_2$-cofibration. Moreover, we know that $P(\ell\,;\,\ell-i)\times_{P(1)^{\ell-i}}\vec{A}$ is $G_2$-cofibrant. We conclude that $M_2^-(T\,;\,A)$ is so.
 
 The fact that $M_2^-(T\,;\,A)\to M_2(T\,;\,A)$ is a $G_2$-cofibration follows from Lemma~\ref{l:push_prod2} applied to $\Gamma=P(1)^{\ell-i}$, $K_1=1$, $K=K_2=G_2$, $A\to B$ being
 $P(\ell\,;\,\ell-i)\to P(\ell)$, and $X\to Y$ being $\partial\vec{A}\to\vec{A}$.
 
 Finally, to prove that $M_2^-(T\,;\,A)\to M_2^-(T\,;\,B)$ is a weak equivalence, we apply Lemma~\ref{l:left_mod} again to show that 
$$
\begin{array}{l}\vspace{4pt}
P(\ell\,;\,\ell-i)\times_{P(1)^{\ell-i}}\partial\vec{A} \to P(\ell\,;\,\ell-i)\times_{P(1)^{\ell-i}}\partial\vec{B}, \\ \vspace{4pt}
P(\ell)\times_{P(1)^{\ell-i}}\partial\vec{A} \to P(\ell)\times_{P(1)^{\ell-i}}\partial\vec{B}, \\ 
P(\ell\,;\,\ell-i)\times_{P(1)^{\ell-i}}\vec{A} \to P(\ell\,;\,\ell-i)\times_{P(1)^{\ell-i}}\vec{B},
\end{array} 
$$
are all the three weak equivalences. The statement follows.
 \end{proof}

\begin{thm}\label{th:sigma_bimod_cof}
(a) Let $P$, $Q$ and $\mathcal{S}$ be as in Theorem~\ref{C4}. In the category $\Sigma\Bimod_{P\,;\,Q}$, cofibrations with domain in  $\mathcal{S}$ are componentwise cofibrations. In particular,
the class $\mathcal{S}$   of  objects
is closed under cofibrations and  cofibrant $(P\text{-}Q)$-bimodules are always componentwise cofibrant.

(b) Assume additionally that $Q$ is $\Sigma$-cofibrant.  In the category $\Sigma\Bimod_{P\,;\,Q}$, cofibrations with domain in the subclass $\mathcal{S}^\Sigma\subset\mathcal{S}$ of $\Sigma$-cofibrant objects, are $\Sigma$-cofibrations. The class $\mathcal{S}^\Sigma$
is closed under cofibrations and  cofibrant $(P\text{-}Q)$-bimodules are always 
$\Sigma$-cofibrant.
\end{thm}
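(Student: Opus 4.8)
The plan is to establish both parts simultaneously by reducing to a single cellular attachment and then harvesting the equivariant cofibrancy estimates already produced inside the proof of Theorem~\ref{C4}. First I would record that componentwise cofibrations (resp. $\Sigma$-cofibrations) are stable under retracts and transfinite composition, and that every cofibration of $\Sigma\Bimod_{P\,;\,Q}$ is a retract of a transfinite composition of pushouts of generating cofibrations $\mathcal{F}_{P\,;\,Q}^\Sigma(\partial X)\to\mathcal{F}_{P\,;\,Q}^\Sigma(X)$. Since the initial bimodule $I$ (with $I(0)=P(0)$, $I(n)=\emptyset$ for $n\geq 1$, and $\gamma_0=\mathrm{id}$) lies in $\mathcal{S}$ for (a) and in $\mathcal{S}^{\Sigma}$ for (b), and since I intend to prove at the same time that a single such pushout with domain in $\mathcal{S}$ (resp. $\mathcal{S}^{\Sigma}$) again lands in $\mathcal{S}$ (resp. $\mathcal{S}^{\Sigma}$), a transfinite induction will yield both the ``componentwise/$\Sigma$-cofibration'' assertion and the closure of the class under cofibrations. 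The claim that cofibrant bimodules are componentwise (resp. $\Sigma$-) cofibrant is then the special case of the cofibration $I\to M$. I emphasize that the cofibrancy computations I invoke below are exactly those carried out in the portion of the proof of Theorem~\ref{C4} showing the horizontal maps of the filtrations \eqref{B9}--\eqref{eq_additional_filtr} are cofibrations, and these are logically independent of the left-properness statement of Theorem~\ref{C4} (and of the present theorem), so there is no circularity.

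It therefore suffices to treat one pushout of $\mathcal{F}_{P\,;\,Q}^\Sigma(\partial X)\to\mathcal{F}_{P\,;\,Q}^\Sigma(X)$ along a map out of $A\in\mathcal{S}$ (resp. $A\in\mathcal{S}^{\Sigma}$), producing $C$, where $\partial X\to X$ is a generating cofibration of $\Sigma Seq_{P}$. Here I would reuse verbatim the double filtration \eqref{B9}--\eqref{eq_additional_filtr} of $A\to C$ and the left-hand pushout squares of \eqref{eq:squares}, whose horizontal attaching maps are induced from $M^{-}(T\,;\,A)\to M(T\,;\,A)$ by the functor $(-)\underset{Aut(T)}{\times}\Sigma_{n}$. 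The proof of Theorem~\ref{C4} already establishes, using Proposition~\ref{J2} for the factor $P(\ell\,;\,\ell-i)\to P(\ell)$ together with the componentwise cofibrancy of $Q$, $X$ and $A$, that $M^{-}(T\,;\,A)\to M(T\,;\,A)$ is an $Aut_0(T)$-cofibration between $Aut_0(T)$-cofibrant spaces. For part~(a) it then remains to observe that $(-)\underset{Aut(T)}{\times}\Sigma_{n}$ factors as the orbit functor $(-)/Aut_0(T)$ followed by induction along the free action of $Aut(T)/Aut_0(T)\hookrightarrow\Sigma_{n}$; the first is left Quillen into $Top$ and the second is, as a space, a coproduct of copies. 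Hence each $C_i(n)_{\ell-1}\to C_i(n)_{\ell}$, and so each $A(n)\to C(n)$, is a cofibration in $Top$. The arity zero component is controlled exactly as in the paragraph ``Proving the case $P(0)\neq\emptyset$'' of the proof of Theorem~\ref{C4}, which keeps $P(0)\to C(0)$ a cofibration; combined with $P(0)\to A(0)$ being a cofibration this gives $C\in\mathcal{S}$.

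For part~(b) the only modification is that the entire chain of cofibrancy statements must be upgraded from the kernel group $Aut_0(T)$ to the full automorphism group $Aut(T)$, which acts by permuting the leaves of $T$ and hence acts on the factors $Q(|v|)$, $X(|p_{j}|)$ and $A(|p_{j}|)$ through their symmetric groups. The hypotheses that $Q$ is $\Sigma$-cofibrant and $A\in\mathcal{S}^{\Sigma}$ make each such factor $\Sigma_{|v|}$- (resp. $\Sigma_{|p_{j}|}$-) cofibrant, while Proposition~\ref{J1} makes $P(\ell)$ a $\Sigma_{\ell}\wr P(1)$-cofibrant space. Feeding these into the equivariant pushout-product Lemmas~\ref{D2} and~\ref{l:push_prod1} exactly as in the proofs of Propositions~\ref{J1} and~\ref{J2}, but now tracking the full $Aut(T)\wr P(1)$-action rather than its restriction, yields that $M^{-}(T\,;\,A)\to M(T\,;\,A)$ is an $Aut(T)$-cofibration between $Aut(T)$-cofibrant spaces. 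Since induction $(-)\underset{Aut(T)}{\times}\Sigma_{n}\colon Aut(T)\text{-}Top\to\Sigma_{n}\text{-}Top$ is left Quillen for the (free) projective structures (Lemma~\ref{l:adj_mon}), each $A(n)\to C(n)$ becomes a $\Sigma_{n}$-cofibration, so $C\in\mathcal{S}^{\Sigma}$.

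The hard part will be precisely this equivariance bookkeeping in (b): one must check that the free $\Sigma_{n}$-cell structure survives the several identifications involved (the coequalizer defining $M_{2}(T\,;\,A)$ over $P(1)^{\ell-i}$, the partition of pearls into primary and auxiliary ones, and the $\gamma_{0}$-contractions in arity zero), and that the split monoid extensions governing $Aut(T)\wr P(1)$ still satisfy the hypotheses of Lemma~\ref{l:push_prod1}. By contrast, part~(a) only requires cofibrancy in $Top$ after passing to $Aut_0(T)$-orbits, where the non-free part of the $Aut(T)$-action is harmless; this is why (a) holds under mere componentwise cofibrancy of $Q$, whereas (b) genuinely needs $Q$ (and the source) to be $\Sigma$-cofibrant.
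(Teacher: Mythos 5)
Your part (a) is essentially the paper's own proof: reduce to a cellular extension, reuse the filtrations \eqref{B9} and \eqref{eq_additional_filtr} and the pushout squares \eqref{eq:squares} from the proof of Theorem~\ref{C4}, and note that $(-)\underset{Aut(T)}{\times}\Sigma_{n}$ applied to the $Aut_0(T)$-cofibration $M^{-}(T\,;\,A)\to M(T\,;\,A)$ (orbit functor followed by induction along the free coset action) yields componentwise cofibrations; your observation that this creates no circularity with Theorem~\ref{C4} is correct and worth making explicit.

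In part (b), however, the step you yourself single out as ``the hard part'' is not closed by the tools you invoke. Once the full group $Aut(T)$ is tracked, the quotient group $H_{2}$ responsible for permuting the pearls also permutes the \emph{leaves above the primary pearls}, so it acts on $\prod_{j}A(|p_{j}|)$ not only by permuting factors but internally through the groups $\Sigma_{|p_{j}|}$, and this action has to be pushed through the coequalizer $M_{2}(T\,;\,A)=P(\ell)\times_{P(1)^{\ell-i}}\prod_{j}A(|p_{j}|)$. Lemma~\ref{l:push_prod1}, and the arguments of Propositions~\ref{J1} and~\ref{J2} that rest on it, only produce equivariant cofibrancy of pushout-products of \emph{plain} products $B\times Y$ under monoid actions; they say nothing about $B\times_{\Gamma}Y$. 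The statement that actually closes this gap is Lemma~\ref{l:push_prod2}, which is tailored to quotients $\times_{\Gamma}$ with a compatible group action: the paper applies it with $\Gamma=P(1)^{\ell-i}$, $K=H_{2}$, $K_{1}\subset H_{2}$ the subgroup of permutations of leaves above the primary pearls, with $A\to B$ being $P(\ell\,;\,\ell-i)\to P(\ell)$ (a $\Gamma\rtimes K_{2}$-cofibration via Proposition~\ref{J2} and Lemma~\ref{l:adj_mon}) and $X\to Y$ built from $\prod_{j}A(|p_{j}|)$, which is $K_{1}$-cofibrant precisely because $A\in\mathcal{S}^{\Sigma}$. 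Here $H_{1}\subset Aut(T)$ is the subgroup fixing all pearls and all leaves above primary pearls, $H_{2}=Aut(T)/H_{1}$; showing $M_{1}^{-}(T)\to M_{1}(T)$ is an $Aut(T)$-equivariant $H_{1}$-cofibration (that part of your upgrade, using $\Sigma$-cofibrancy of $Q$ and of the generating cells, is fine) and then combining with the $H_{2}$-cofibration via Lemma~\ref{D2} finishes the argument. So your outline coincides with the paper's, but as written part (b) has a genuine gap: the coequalizer step needs Lemma~\ref{l:push_prod2}, not Lemma~\ref{l:push_prod1}, together with this explicit $H_{1}$/$H_{2}$ decomposition of $Aut(T)$.
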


\begin{proof}
Let $A$ be a conponentwise cofibrant  $(P\text{-}Q)$-bimodule. It is enough to check that any cellular extension $A\to C$ 
%, where
%\[
%C:=A\coprod_{\mathcal{F}_{P\,;\,Q}^\Sigma(\partial X) }\mathcal{F}_{P\,;\,Q}^\Sigma(X), 
%\]
is a componentwise cofibration (respectively, $\Sigma$-cofibration). In the proof of Theorem~\ref{C4} we define 
a filtration~\eqref{B9} in~$C$ in which, as we proved, every inclusion $C_{i-1}\to C_i$ is a componentwise cofibration. 

Additionally
assuming that $Q$ and $A$ are $\Sigma$-cofibrant, one can show  that any such inclusion is
a $\Sigma$-cofibration. The strategy is the same to filter this inclusion as in~\eqref{eq_additional_filtr} and then to show that the left
vertical map in the first square~\eqref{eq:squares} is a $\Sigma_n$-cofibration. The latter is proved by showing
that each inclusion $M^-(T\,;\,A)\to M(T\,;\,A)$ is an $Aut(T)$-cofibration. Let $H_1\subset Aut(T)$ be the subgroup of elements 
that fix all the pearls and the leaves above the  primary pearls.  This group permutes vertices above the section and their leaves. It is normal and the quotient group $H_2=Aut(T)/H_1$ is responsible for permuting  the pearls and leaves above 
the primary pearls. By essentially the same argument, the inclusion $M_1^-(T)\to M_1(T)$ is an $Aut(T)$-equivariant 
$H_1$-cofibration and the inclusion $M_2^-(T\,;\,A)\to M_2(T\,;\,A)$ is an $H_2$-cofibration (by Lemma~\ref{l:push_prod2} applied for $\Gamma=P(1)^{\ell-i}$,
$K=H_2$, $K_1\subset H_2$ -- the subgroup of permutations of leaves above primary pearls). The proof
is completed by revoking Lemma~\ref{D2}.
\end{proof}
%\todo{Finish the proof. The idea is to analize the }
%The proof is similar to that of the fact \cite[Corollary 5.2]{BM} that the class of $\Sigma$-cofibrant operads is closed under cofibrations.
%One needs to show that for any generating cofibration $\partial X\rightarrow X$ of $\Sigma Seq$, the inclusion of free bimodules 
%$\mathcal{F}_{P\,;\,Q}^\Sigma(\partial X)\rightarrow \mathcal{F}_{P\,;\,Q}^\Sigma(X)$ is a $\Sigma$-cofibration. 

\subsubsection{Extension/restriction adjunction for the projective model category of bimodules} \label{E8}

Let $\phi_{1}:P\rightarrow P'$ and $\phi_{2}:Q\rightarrow Q'$ be  maps of operads. Similarly to the category of algebras (see Theorem~\ref{E9}), we show that the projective model categories of $(P\text{-}Q)$-bimodules and of $(P'\text{-}Q')$-bimodules are Quillen equivalent under some conditions on the maps $\phi_{1}$ and $\phi_{2}$. For this purpose, we recall the constructions of the \textit{restriction} functor $\phi^{\ast}$ and the \textit{extension} functor $\phi_{!}$ in the context of bimodules:
$$
\phi_{!}:\Sigma\Bimod_{P\,;\,Q}\rightleftarrows \Sigma\Bimod_{P'\,;\,Q'}:\phi^{\ast}.
$$

\noindent \textit{$\bullet$ The restriction functor.} The restriction functor $\phi^{\ast}$ sends a $(P'\text{-}Q')$-bimodule $M$ to the $(P\text{-}Q)$-bimodule $\phi^{\ast}(M)=\{\phi^{\ast}(M)(n)=M(n),\,n\geq 0\}$ in which the $(P\text{-}Q)$-bimodule structure is defined as follows using the $(P'\text{-}Q')$-bimodule structure of $M$:
$$
\begin{array}{rcl}\vspace{3pt}
\circ^{i}: \phi^{\ast}(M)(n)\times Q(m) & \longrightarrow & \phi^{\ast}(M)(n+m-1);\\ \vspace{10pt}
 x\,;\,q & \longmapsto & x\circ^{i}\phi_{2}(q), \\ \vspace{3pt}
\gamma_{\ell}:P(n)\times \phi^{\ast}(M)(m_{1})\times \cdots \times \phi^{\ast}(M)(m_{n}) & \longrightarrow & \phi^{\ast}(M)(m_{1}+\cdots + m_{n}); \\
p\,;\,x_{1},\ldots, x_{n} & \longmapsto & \phi_{1}(p)(x_{1},\ldots, x_{n}).
\end{array} 
$$

\noindent \textit{$\bullet$ The extension functor.} The extension functor $\phi_{!}$ is obtained as a quotient of the free $(P'\text{-}Q')$-bimodule functor introduced in Section \ref{C2}. More precisely, if $M$ is a $(P\text{-}Q)$-bimodule, then the extension functor is given by the formula 
$$
\phi_{!}(M)(n)=\mathcal{F}^{\Sigma}_{P'\,;\,Q'}(\mathcal{U}^{\Sigma}(M))(n)/\sim
$$ 
where the equivalence relation is generated by the axiom which consists in contracting inner edges having a vertex $v$ below the section (respectively above the section) indexed by a point of the form $\phi_{1}(p)$ (respectively a point of the form $\phi_{2}(q)$) using the left $P$-module structure (respectively the right $Q$-module structure) of $M$ as illustrated in the following picture:

\hspace{-30pt}\includegraphics[scale=0.33]{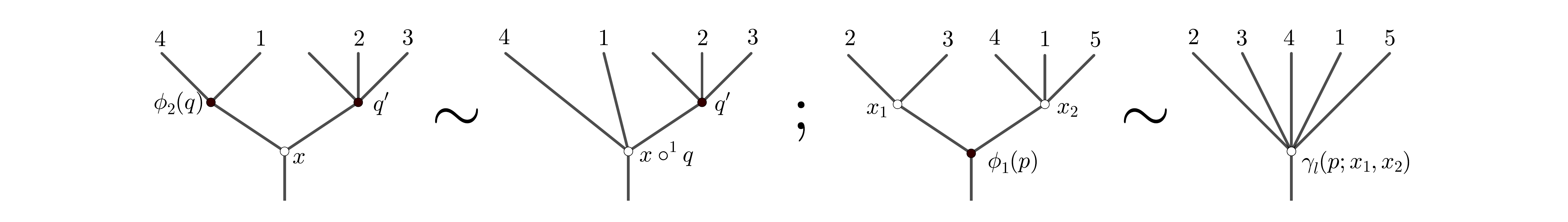}

The $(P'\text{-}Q')$-bimodule structure on the free object is compatible with the equivalence relation and provides a $(P'\text{-}Q')$-bimodule structure on $\phi_{!}(M)$. Let us remark that the $(P\text{-}Q)$-bimodule map $M\rightarrow \phi^{\ast}(\phi_{!}(M))$, sending a point $x\in M(n)$ to the $n$-corolla indexed by $x$, is not necessarily injective. For instance, if there are $q_{1}\neq q_{2}$ in $Q(m)$ and $x\in M(n)$ such that $\phi_{2}(q_{1})=\phi_{2}(q_{2})$ and $x\circ^{i}q_{1}\neq x\circ^{i}q_{2}$, then $x\circ^{i}q_{1}$ and $x\circ^{i}q_{2}$ have the same image in $\phi^{\ast}(\phi_{!}(M))$ as illustrated in the following picture:\vspace{-5pt}

\begin{figure}[!h]
\begin{center}
\includegraphics[scale=0.33]{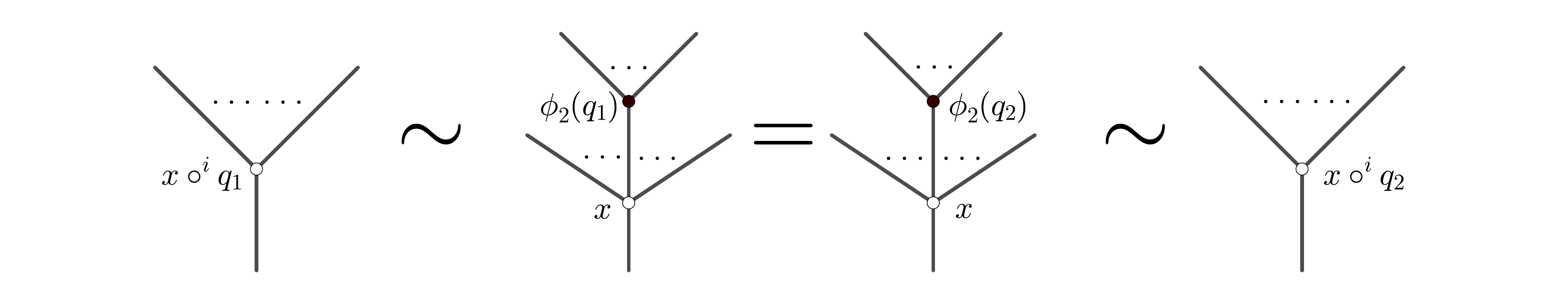}\vspace{-15pt}
\end{center}
\end{figure}

\begin{thm}\label{G6}
Let $\phi_{1}:P\rightarrow P'$ be a weak equivalence between  $\Sigma$-cofibrant operads and $\phi_{2}:Q\rightarrow Q'$ be a weak equivalence between componentwise cofibrant  operads. The extension and restriction functors, as well as their truncated versions, give rise to Quillen equivalences: 
\begin{equation}\label{eq:bimod_ind_restr1}
\phi_!\colon\Sigma\Bimod_{P\,;\,Q}\rightleftarrows\Sigma\Bimod_{P'\,;\,Q'}\colon\phi^*,
\end{equation}
\begin{equation}\label{eq:tr_bimod_ind_restr}
\phi_!\colon\TT_r \Sigma\Bimod_{P\,;\,Q}\rightleftarrows\TT_r \Sigma\Bimod_{P'\,;\,Q'}\colon\phi^*.
\end{equation}
\end{thm}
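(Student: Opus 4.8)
The plan is to identify bimodules with algebras over the colored operad constructed in Section~\ref{D8} and to deduce the statement from the operadic Quillen equivalence quoted in Theorem~\ref{E9}. Recall from Section~\ref{D8} that $\Sigma\Bimod_{P\,;\,Q}$ is equivalent, as a model category (compare the alternative proof of Theorem~\ref{ProjectBimod}), to the category of algebras over the colored operad $P{+}Q$, and likewise $\Sigma\Bimod_{P'\,;\,Q'}\simeq Alg_{P'{+}Q'}$. The pair $(\phi_1,\phi_2)$ induces a morphism of colored operads
\begin{equation*}
\Phi:=\phi_1{+}\phi_2\colon P{+}Q\longrightarrow P'{+}Q',
\end{equation*}
and under these equivalences the pair $(\phi_!,\phi^*)$ for bimodules is precisely the extension/restriction pair $(\Phi_!,\Phi^*)$ for algebras: both right adjoints restrict the structure along $\phi_1,\phi_2$, and the two left adjoints agree because each is determined by being left adjoint to the same functor. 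Hence it suffices to show that $\Phi$ is a weak equivalence between $\Sigma$-cofibrant colored operads and then to invoke the colored-operad version of the last item of Theorem~\ref{E9} (i.e. \cite[Theorem~4.4]{BM}), which makes $(\Phi_!,\Phi^*)$ a Quillen equivalence.

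First I would check that $\Phi$ is an objectwise weak equivalence. By the description recorded in Remark~\ref{D6} one has $(P{+}Q)(n_{1},\ldots,n_{k};m)\cong P(k)\times B$ with $B=\coprod_{\alpha}\prod_{i}Q(|\alpha^{-1}(i)|)$, and on this product $\Phi$ acts by $\phi_1$ on the factor $P(k)$ and by $\phi_2$ on each factor $Q(|\alpha^{-1}(i)|)$. Since $P,P'$ are $\Sigma$-cofibrant and $Q,Q'$ are componentwise cofibrant, the spaces $P(k),P'(k)$ and all the factors $Q(\ldots),Q'(\ldots)$ are cofibrant; as finite products and (disjoint) coproducts of weak equivalences between cofibrant spaces are again weak equivalences, $\Phi$ is a weak equivalence on each component.

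Next I would verify that $P{+}Q$, and likewise $P'{+}Q'$, is a $\Sigma$-cofibrant colored operad, i.e. that each component $P(k)\times B$ is cofibrant for the action of the stabilizer $H\subset\Sigma_k$ of the color tuple $(n_1,\ldots,n_k)$. The key point is that $P(k)$, being $\Sigma_k$-cofibrant, restricts to an $H$-cofibrant space built from free cells $H\times D^p$; for the diagonal $H$-action the product $(H\times D^p)\times B$ untwists $H$-equivariantly to $H\times(D^p\times B)$, a free $H$-cell with cofibrant base $D^p\times B$, because $B$ is a cofibrant space (a coproduct of products of the cofibrant spaces $Q(\ldots)$). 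Assembling these cells shows that $P(k)\times B$ is $H$-cofibrant, as required. Note that this route uses only the $\Sigma$-cofibrancy of $P,P'$ and componentwise cofibrancy of $Q,Q'$, matching the hypotheses exactly, and in particular it avoids the stronger cofibrant-operad assumptions behind Propositions~\ref{J1} and~\ref{J2}.

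The only genuine subtlety, and the step I would treat most carefully, is precisely this equivariant cofibrancy verification: one must make sure the diagonal stabilizer action on $P(k)\times B$ is handled by the untwisting argument rather than by demanding that $B$ be cofibrant as an $H$-space, which need not hold. Once $P{+}Q$ and $P'{+}Q'$ are known to be $\Sigma$-cofibrant and $\Phi$ a weak equivalence, the quoted operadic theorem yields the Quillen equivalence~\eqref{eq:bimod_ind_restr1}. The truncated statement~\eqref{eq:tr_bimod_ind_restr} then follows verbatim: $T_r\Sigma\Bimod_{P\,;\,Q}$ is the category of algebras over the evident $r$-truncated colored operad obtained from $P{+}Q$ by discarding the components with some $n_i>r$ or with $m>r$; this truncated colored operad is $\Sigma$-cofibrant and maps by a weak equivalence to its primed analogue by the same componentwise argument, and the operadic Quillen equivalence applies once more.
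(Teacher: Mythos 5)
Your proposal is correct and takes essentially the same route as the paper: its proof likewise identifies bimodules with algebras over the colored operad $P{+}Q$, shows (Lemma~\ref{D9}) that $(\phi_1,\phi_2)$ induces a weak equivalence $\phi_{+}\colon P{+}Q\to P'{+}Q'$ of $\Sigma$-cofibrant colored operads via the description of Remark~\ref{D6} and the stabilizer subgroup $\Sigma'$ of the color tuple, and then invokes \cite[Theorem~4.4]{BM}, with the truncated case handled by the same argument. Your untwisting argument for the $H$-equivariant cofibrancy of $P(k)\times B$ is just an explicit spelling-out of the step the paper states tersely in Lemma~\ref{D9} (it amounts to applying Lemma~\ref{D2} with trivial $G_1$), so the two proofs coincide in substance.
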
\vspace{5pt}

\begin{lmm}\label{D9}
Any pair of operadic maps $(\phi_{1}\,,\,\phi_{2})$, with  $\phi_{1}:P\rightarrow P'$  a weak equivalence between  $\Sigma$-cofibrant operads and $\phi_{2}:Q\rightarrow Q'$  a weak equivalence between componentwise cofibrant operads, induces a weak equivalence $\phi_{+}:P{+}Q\rightarrow P'{+}Q'$ between $\Sigma$-cofibrant colored operads where $P{+}Q$ and $P'{+}Q'$ are colored operads obtained from Construction \ref{D5}.
\end{lmm}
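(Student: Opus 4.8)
The plan is to prove both assertions---that $\phi_{+}$ is a weak equivalence and that $P{+}Q$ and $P'{+}Q'$ are $\Sigma$-cofibrant colored operads---by reducing everything to the pointwise description of Remark~\ref{D6}. By \eqref{eq:P+Q}, each component is, naturally in the pair $(\phi_{1},\phi_{2})$, a product
\[
(P{+}Q)(n_{1},\ldots,n_{k};m)\cong P(k)\times Q_{1}(N\,;\,m),\qquad N=n_{1}+\cdots +n_{k},
\]
where $Q_{1}(N\,;\,m)=\coprod_{\alpha:[m]\to [N]}\prod_{i\in[N]}Q(|\alpha^{-1}(i)|)$ is the space of Definition~\ref{D5-bis}. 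Under this identification $\phi_{+}$ becomes $\phi_{1}(k)\times\bigl(\coprod_{\alpha}\prod_{i}\phi_{2}(|\alpha^{-1}(i)|)\bigr)$.

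First I would treat the weak equivalence. Since $\phi_{1}$ and $\phi_{2}$ are weak equivalences of operads, each $\phi_{1}(k)\colon P(k)\to P'(k)$ and each $\phi_{2}(\ell)\colon Q(\ell)\to Q'(\ell)$ is a weak homotopy equivalence. Finite products of weak homotopy equivalences are weak homotopy equivalences (homotopy groups commute with finite products), and disjoint unions of weak equivalences are weak equivalences; hence $\coprod_{\alpha}\prod_{i}\phi_{2}(|\alpha^{-1}(i)|)$ is a weak equivalence, and so is its product with $\phi_{1}(k)$. As weak equivalences of colored operads are detected componentwise on the underlying collection, this shows $\phi_{+}$ is a weak equivalence.

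The substantial point is $\Sigma$-cofibrancy; I would argue it for $P{+}Q$, the case of $P'{+}Q'$ being identical. Cofibrancy of the underlying collection decomposes over arities and over $\Sigma_{k}$-orbits of input-color tuples: writing $\Gamma=\mathrm{Stab}_{\Sigma_{k}}(n_{1},\ldots,n_{k})\subseteq\Sigma_{k}$, the orbit of $(n_{1},\ldots,n_{k})$ contributes $\Sigma_{k}\times_{\Gamma}(P{+}Q)(n_{1},\ldots,n_{k};m)$, so by Lemma~\ref{l:adj_mon} (induction preserves cofibrations) it suffices to prove that each component $P(k)\times Q_{1}(N\,;\,m)$ is $\Gamma$-cofibrant, where $\Gamma$ acts diagonally: on $P(k)$ through the restriction of the $\Sigma_{k}$-action and on $Q_{1}(N\,;\,m)$ through the block-permutation homomorphism $\Gamma\to\Sigma_{N}$. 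Now $P$ is $\Sigma$-cofibrant, so $P(k)$ is $\Sigma_{k}$-cofibrant and hence $\Gamma$-cofibrant by restriction (Lemma~\ref{l:adj_mon}); and $Q_{1}(N\,;\,m)$, being a finite coproduct of finite products of the componentwise cofibrant spaces $Q(\ell)$, is cofibrant as a plain space. It then remains to see that the diagonal product of a $\Gamma$-cofibrant space with an underlying-cofibrant $\Gamma$-space is again $\Gamma$-cofibrant.

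This last step is the crux, and is where the free $\Gamma$-action on $P(k)$ must absorb the a priori non-free block-permutation action on $Q_{1}(N\,;\,m)$. I would reduce $P(k)$ to a retract of a free $\Gamma$-cell complex and cross each cell $\Gamma\times D^{n}$ with $Q_{1}(N\,;\,m)$; using that $Top$ is cartesian closed, so that $(-)\times Q_{1}(N\,;\,m)$ preserves the cellular pushouts, together with the shearing isomorphism that untwists the diagonal action into a free action on the $\Gamma$-factor, each cell becomes a genuine free $\Gamma$-cell with cofibrant coefficient space. This is exactly the content of the equivariant product statement (Lemma~\ref{D2}, cf.\ Example~\ref{ex:push_prod1}), which I would invoke directly. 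I expect this equivariant-cofibrancy bookkeeping---rather than the weak-equivalence part, which is routine---to be the main obstacle, the delicate feature being that $Q$ is only assumed componentwise cofibrant, so $Q_{1}(N\,;\,m)$ is cofibrant merely as a plain space and its $\Gamma$-cofibrancy genuinely relies on the free action supplied by $P(k)$.
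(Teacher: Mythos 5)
Your proof is correct and takes essentially the same route as the paper's: the componentwise identification of Remark~\ref{D6}, the stabilizer subgroup of $(n_{1},\ldots,n_{k})$ in $\Sigma_{k}$ (your $\Gamma$ is exactly the paper's $\Sigma'$), restriction for the factor $P(k)$, plain cofibrancy of the $Q_{1}$ factor, and the equivariant pushout-product statement (Lemma~\ref{D2}) to get $\Gamma$-cofibrancy of the diagonal product. Your write-up merely makes explicit the orbit decomposition and the invocation of Lemma~\ref{D2} that the paper compresses into its final ``Consequently.''
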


\begin{proof}
According to Remark \ref{D6}, for any family of integers $n_{1},\ldots,n_{k}$ and $m$, the spaces $(P{+}Q)(n_{1},\ldots,n_{k};m)$ and $(P'{+}Q')(n_{1},\ldots,n_{k};m)$ have the following description: 
$$
\begin{array}{ccccc}\vspace{9pt}
(P{+}Q)(n_{1},\ldots,n_{k};m) & \cong & P(k)\times Q_{1}(n_{1}+ \ldots + n_{k};m) & \cong & P(k)\times \underset{\alpha:[m]\rightarrow [n_{1}+\cdots + n_{k}]}{\displaystyle\coprod}\hspace{6pt}\underset{i\in [n_{1}+\cdots + n_{k}]}{\displaystyle\prod} Q(|\alpha^{-1}(i)|), \\ 
(P'{+}Q')(n_{1},\ldots,n_{k};m) & \cong & P'(k)\times Q'_{1}(n_{1}+ \ldots + n_{k};m) & \cong & P'(k)\times \underset{\alpha:[m]\rightarrow [n_{1}+\cdots + n_{k}]}{\displaystyle\coprod}\hspace{6pt}\underset{i\in [n_{1}+\cdots + n_{k}]}{\displaystyle\prod} Q'(|\alpha^{-1}(i)|).
\end{array} 
$$
Consequently, the weak equivalences $\phi_{1}:P\rightarrow P'$ and $\phi_{2}:Q\rightarrow Q'$  induce a weak equivalence of colored operads $\phi_{+}:P{+}Q\rightarrow P'{+}Q'$. Let $\Sigma'$ be the subgroup $\Sigma_k$ which can send $i$ to $j$ if and only if $n_i=n_j$.
This group   acts as a subgroup of $\Sigma_k$ on the factor $P(k)$ (respectively the factor $P'(k)$) and it 
acts on the factor $Q_1(n_{1}+ \ldots + n_{k};m)$ (respectively, the factor $Q'_1(n_{1}+ \ldots +n_{k};m)$) by reordering the summands in the disjoint union labelled by maps $\alpha\colon [m]\to [n_1+\ldots+n_k]$. 
The reordering is induced by permutation of the blocks $\{1,\ldots,n_{1}\},\ldots,\{n_{1}+\cdots + n_{k-1}+1,\ldots, n_{1}+\cdots + n_{k}\}$ in
$[n_1+\ldots+n_k]$.   
Consequently, the colored operads $P{+}Q$ and $P'{+}Q'$ are $\Sigma$-cofibrant as soon as the operads $P$ and $P'$ are $\Sigma$-cofibrant
and the components of $Q$ and $Q'$ are cofibrant.
\end{proof}

\begin{proof}[Proof of Theorem \ref{G6}]
As explained in Section \ref{D8}, the projective model category of bimodules over an operad is equivalent to the projective model category of algebras over a specific colored operad. If we denote by $P{+}Q$ and $P'{+}Q'$ the colored operads obtained from Construction \ref{D5}, then one has 
$$
\phi_! :\Sigma\Bimod_{P\,;\,Q}= Alg_{P{+}Q} \rightleftarrows Alg_{P'{+}Q'} = \Sigma\Bimod_{P'\,;\,Q'}:\phi^*,
$$
induced by the extension/restriction adjunction between the categories of algebras. Due to Lemma \ref{D9}, the induced map $\phi_{+}:P{+}Q\rightarrow P'{+}Q'$ is a weak equivalence between $\Sigma$-cofibrant colored operads. Consequently, according to~\cite[Theorem~4.4]{BM} the extension/restriction adjunction between the categories of algebras is a Quillen equivalence. 
\end{proof}\vspace{10pt}

\noindent \textit{$\bullet$ Bimodules with the empty set in arity zero.} 
Consider the case where the acting operads $P$ and $Q$ are trivial in arity zero $P(0)=Q(0)=\emptyset$
and consider the full subcategory  $\Sigma_{>0}\Bimod_{P\,;\,Q}$ of $(P\text{-}Q)$-bimodules $M$ also satisfying $M(0)=
\emptyset$. One can similarly to Construction~\ref{D5} define a colored operad $(P{+}Q)_{>0}$ that governs this
algebraic structure. Its set of colors is the set $\NN_{>0}$ of positive integers. Its components can be similarly described:
\begin{equation}\label{eq:P+Q>0}
(P{+}Q)_{>0}(n_1,\ldots,n_k;m) = P(k)\times \underset{\alpha:[m]\twoheadrightarrow [n_{1}+\cdots + n_{k}]}{\displaystyle\coprod}\hspace{6pt}\underset{i\in [n_{1}+\cdots + n_{k}]}{\displaystyle\prod} Q(|\alpha^{-1}(i)|).
\end{equation}
The crucial difference is that the coproduct is taken over surjective maps $\alpha:[m]\twoheadrightarrow [n_{1}+\cdots + n_{k}]$.

\begin{pro}\label{p:P+Q>0}
Let $P$ and $Q$ be componentwise cofibrant operads   satisfying $P(0)=Q(0)=\emptyset$, then
the colored operad $(P{+}Q)_{>0}$ is $\Sigma$-cofibrant.
\end{pro}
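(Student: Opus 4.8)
The plan is to follow the strategy of Lemma~\ref{D9}, checking $\Sigma$-cofibrancy one component at a time, but to exploit the surjectivity built into~\eqref{eq:P+Q>0} to compensate for the fact that $P$ is now assumed only componentwise cofibrant rather than $\Sigma$-cofibrant. Recall that a colored symmetric sequence is cofibrant in the projective model structure precisely when, for every tuple of input colors $(n_{1},\ldots,n_{k})$ and output color $m$, the corresponding component is cofibrant as a $\Sigma'$-space, where $\Sigma'\subset\Sigma_{k}$ is the subgroup of permutations preserving the tuple $(n_{1},\ldots,n_{k})$ (exactly as in Lemma~\ref{D9}). So I would fix such a component
$$
(P{+}Q)_{>0}(n_{1},\ldots,n_{k};m)=P(k)\times\Bigl[\coprod_{\alpha:[m]\twoheadrightarrow[N]}\prod_{j\in[N]}Q(|\alpha^{-1}(j)|)\Bigr],\qquad N=n_{1}+\cdots+n_{k},
$$
and analyse the diagonal $\Sigma'$-action: $\Sigma'$ acts on $P(k)$ by restriction of the $\Sigma_{k}$-action, and on the bracketed factor by sending the summand indexed by $\alpha$ to the summand indexed by $\hat\sigma\circ\alpha$, where $\hat\sigma$ is the block permutation of $[N]$ induced by $\sigma$, with the $Q$-factors permuted accordingly.

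The key observation, and the whole point of the $>0$ restriction, is that $\Sigma'$ acts \emph{freely} on the indexing set of surjections. Indeed, if $\hat\sigma\circ\alpha=\alpha$ with $\alpha$ surjective, then $\hat\sigma=\mathrm{id}_{[N]}$; and since every color $n_{i}$ is positive, every block of $[N]$ is nonempty, so $\hat\sigma=\mathrm{id}_{[N]}$ forces $\sigma=\mathrm{id}$. First I would record this as a short lemma. It follows that the diagonal $\Sigma'$-action on the whole component is free: choosing one representative $\alpha_{0}$ per $\Sigma'$-orbit and untwisting, one obtains a $\Sigma'$-equivariant homeomorphism
$$
(P{+}Q)_{>0}(n_{1},\ldots,n_{k};m)\;\cong\;\Sigma'\times\Bigl(P(k)\times\coprod_{[\alpha_{0}]}\prod_{j\in[N]}Q(|\alpha_{0}^{-1}(j)|)\Bigr),
$$
with $\Sigma'$ acting only on the left factor.

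To conclude I would invoke that the free-$\Sigma'$-space functor $\Sigma'\times(-)\colon Top\to\Sigma'\text{-}Top$ is left Quillen (its image on cofibrant objects is exactly the family of generating cofibrations $\Sigma'[S_{c}]$ of the projective model structure on $\Sigma'$-spaces), so it suffices that the space $W=P(k)\times\coprod_{[\alpha_{0}]}\prod_{j}Q(|\alpha_{0}^{-1}(j)|)$ be cofibrant. This holds because $P(k)$ is cofibrant by componentwise cofibrancy of $P$, each $Q(|\alpha_{0}^{-1}(j)|)$ is cofibrant by componentwise cofibrancy of $Q$ (here $|\alpha_{0}^{-1}(j)|\geq 1$, so we never meet $Q(0)$), and finite products and arbitrary coproducts of cofibrant spaces are cofibrant. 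Hence each component is $\Sigma'$-cofibrant, and $(P{+}Q)_{>0}$ is $\Sigma$-cofibrant.

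The main obstacle, really the only nontrivial input, is the freeness of the $\Sigma'$-action on the surjections, which is precisely what distinguishes this statement from Lemma~\ref{D9}: there one needed $P(k)$ itself to be $\Sigma'$-cofibrant, forcing the hypothesis that $P$ be $\Sigma$-cofibrant, whereas here the freeness is supplied entirely by the combinatorics of surjective $\alpha$ with nonempty fibers, so the weaker componentwise cofibrancy of $P$ suffices. I should take care in the untwisting step to verify that the homeomorphism is genuinely $\Sigma'$-equivariant; this is the standard fact that a free action of a discrete group on a coproduct of orbits with homeomorphic summands splits off a factor of $\Sigma'$, but it is the one place where the bookkeeping of how $\sigma$ permutes the $Q$-factors must be matched against its action on the $\alpha$'s.
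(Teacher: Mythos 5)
Your proof is correct, and it rests on exactly the same key observation as the paper's: because every $\alpha$ in \eqref{eq:P+Q>0} is surjective and every color $n_{i}$ is positive, the group $\Sigma'$ acts freely on the indexing set of the coproduct --- this is precisely what the restriction to positive arities buys, and why componentwise cofibrancy of $P$ suffices here where Lemma~\ref{D9} needed $\Sigma$-cofibrancy. Where you diverge is in how the two factors are combined. The paper keeps them separate: it records that the coproduct factor is $\Sigma'$-cofibrant (by the freeness, with cofibrancy of the summands left implicit) and that $P(k)$ is a cofibrant space equipped with a $\Sigma'$-action, and then invokes the equivariant pushout-product Lemma~\ref{D2} with $G_{1}=1$, $G=G_{2}=\Sigma'$ to conclude that the product of the two is $\Sigma'$-cofibrant. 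You instead absorb the $P(k)$ factor into the orbit decomposition: since the diagonal action permutes the summands $P(k)\times\prod_{j}Q(|\alpha^{-1}(j)|)$ freely, the whole component untwists $\Sigma'$-equivariantly as $\Sigma'[W]$ for a cofibrant space $W$, and equivariant cofibrancy follows from the free functor $\Sigma'[-]$ being left Quillen. Your route is more self-contained --- it needs no equivariant pushout-product statement, only the adjunction $\Sigma'[-]\dashv\mathcal{U}$ --- and it makes explicit what the paper leaves tacit, namely that freeness on the index set must be paired with cofibrancy of the summands (and with the equivariant identification of the $Q$-factors across an orbit) to yield $\Sigma'$-cofibrancy; the price is the untwisting bookkeeping, which the paper's appeal to Lemma~\ref{D2} bypasses.
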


\begin{proof}
Consider the component~\eqref{eq:P+Q>0} of $(P{+}Q)_{>0}$.
 Let $\Sigma'$ be the subgroup of $\Sigma_k$ which can send $i$ to $j$ if and only if $n_i=n_j$. One has that $P(k)$ is a cofibrant $\Sigma'$-space, while the second factor
\[
 \underset{\alpha:[m]\twoheadrightarrow [n_{1}+\cdots + n_{k}]}{\displaystyle\coprod}\hspace{6pt}\underset{i\in [n_{1}+\cdots + n_{k}]}{\displaystyle\prod} Q(|\alpha^{-1}(i)|)
 \]
 is $\Sigma'$-cofibrant. Indeed, since all $\alpha$'s are surjective in the coproduct, the group $\Sigma'$ acts freely on this disjoint union. Applying Lemma~\ref{D2} for $G_1=1$, $G=G_2=\Sigma'$,
 we get that the component~\eqref{eq:P+Q>0} is $\Sigma'$-cofibrant. 
\end{proof}

\begin{thm}\label{th:>0}
Let $\phi_{1}:P\rightarrow P'$  and $\phi_{2}:Q\rightarrow Q'$ be  weak equivalences between componentwise cofibrant  operads 
 satisfying $P(0)=P'(0)=Q(0)=Q'(0)=\emptyset$. The extension and restriction functors, as well as their truncated versions, give rise to Quillen equivalences: 
\begin{equation}\label{eq:bimod_ind_restr2}
\phi_!\colon\Sigma_{>0}\Bimod_{P\,;\,Q}\rightleftarrows\Sigma_{>0}\Bimod_{P'\,;\,Q'}\colon\phi^*,
\end{equation}
\begin{equation}\label{eq:tr_bimod_ind_restr}
\phi_!\colon\TT_r \Sigma_{>0}\Bimod_{P\,;\,Q}\rightleftarrows\TT_r \Sigma_{>0}\Bimod_{P'\,;\,Q'}\colon\phi^*.
\end{equation}
\end{thm}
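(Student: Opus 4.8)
The plan is to run exactly the same argument as in the proof of Theorem~\ref{G6}, but with the colored operad $P{+}Q$ replaced by its positive-arity analogue $(P{+}Q)_{>0}$ introduced just before the statement. As recalled there, the category $\Sigma_{>0}\Bimod_{P\,;\,Q}$ is equivalent to the category of algebras over $(P{+}Q)_{>0}$, and the truncated category is governed by the corresponding truncation of this colored operad. Under these identifications, the extension/restriction adjunction \eqref{eq:bimod_ind_restr2} coincides with the extension/restriction adjunction for algebras along the morphism of colored operads $\phi_{+}\colon (P{+}Q)_{>0}\rightarrow (P'{+}Q')_{>0}$ induced by the pair $(\phi_{1},\phi_{2})$. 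So the whole statement reduces to checking that $\phi_{+}$ is a weak equivalence between $\Sigma$-cofibrant colored operads and then invoking the general Quillen equivalence result for algebras (\cite[Theorem~4.4]{BM}, recalled in Theorem~\ref{E9}).

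First I would check that $\phi_{+}$ is an objectwise weak equivalence. Using the explicit description \eqref{eq:P+Q>0}, each component $(P{+}Q)_{>0}(n_1,\ldots,n_k;m)$ is the product of $P(k)$ with a coproduct, indexed by surjections $\alpha\colon [m]\twoheadrightarrow[n_1+\cdots+n_k]$, of the spaces $\prod_i Q(|\alpha^{-1}(i)|)$, and $\phi_{+}$ acts componentwise through $\phi_{1}$ and $\phi_{2}$. Since $\phi_{1}$ and $\phi_{2}$ are componentwise weak equivalences between componentwise cofibrant operads, and finite products and arbitrary coproducts of weak equivalences between cofibrant spaces are again weak equivalences, it follows that $\phi_{+}$ is a componentwise weak equivalence, exactly as in Lemma~\ref{D9}.

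Next I would invoke Proposition~\ref{p:P+Q>0}, which gives precisely that $(P{+}Q)_{>0}$ and $(P'{+}Q')_{>0}$ are $\Sigma$-cofibrant colored operads under the standing hypotheses ($P$, $Q$, $P'$, $Q'$ componentwise cofibrant with empty arity zero). This is the crucial point that distinguishes the present statement from Theorem~\ref{G6}: here we do \emph{not} need $P$ and $P'$ to be $\Sigma$-cofibrant, only componentwise cofibrant, because the surjectivity of the indexing maps $\alpha$ forces the relevant subgroup $\Sigma'\subset\Sigma_k$ to act freely on the coproduct, so the pushout-product argument of Proposition~\ref{p:P+Q>0} applies with $G_1=1$. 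Having established that $\phi_{+}$ is a weak equivalence between $\Sigma$-cofibrant colored operads, \cite[Theorem~4.4]{BM} yields that \eqref{eq:bimod_ind_restr2} is a Quillen equivalence, and the truncated case follows verbatim since the truncation of $(P{+}Q)_{>0}$ has the same components in the relevant arities and hence remains $\Sigma$-cofibrant.

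The only genuinely new verification, and thus the main point to be careful about, is the identification of $\Sigma_{>0}\Bimod_{P\,;\,Q}$ with $\mathrm{Alg}_{(P{+}Q)_{>0}}$: one must observe that when $P(0)=Q(0)=\emptyset$ there are no operations producing or consuming an arity-zero input, so discarding the color $0$ loses no structure, and that the surjectivity condition on $\alpha$ in \eqref{eq:P+Q>0} encodes exactly the fact that no composite right $Q$-operation can output an arity-zero element. Once this identification is in place, everything else is a transcription of the proof of Theorem~\ref{G6}; I expect no serious obstacle beyond making this bookkeeping precise.
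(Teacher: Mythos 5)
Your proposal is correct and follows essentially the same route as the paper: the paper's proof of this theorem simply says to repeat the argument of Theorem~\ref{G6} (identification with algebras over the colored operad, weak equivalence of colored operads, then \cite[Theorem~4.4]{BM}), replacing Lemma~\ref{D9} by Proposition~\ref{p:P+Q>0} to get $\Sigma$-cofibrancy of $(P{+}Q)_{>0}$ and $(P'{+}Q')_{>0}$ under the weaker componentwise-cofibrancy hypothesis. Your additional care about the identification $\Sigma_{>0}\Bimod_{P\,;\,Q}\simeq \mathrm{Alg}_{(P{+}Q)_{>0}}$ and the role of the surjectivity of $\alpha$ is exactly the bookkeeping the paper leaves implicit.
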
\vspace{5pt}

\begin{proof}
The proof is similar to that of Theorem~\ref{G6}. We use Proposition~\ref{p:P+Q>0} that the operads
$(P{+}Q)_{>0}$ and $(P'{+}Q')_{>0}$ governing these structures are $\Sigma$-cofibrant.
\end{proof}

%
%\begin{rmk}\label{r:comp_proj_bim}
%If $P$ and $Q$ are operads with $P(0)=Q(0)=\emptyset$ and if we consider the categoris of $(P\text{-}Q)$-bimodules
% also satisfying that their arity zero component is empty, then the corresponding operad $P{+}Q$ that encodes this structure is 
% $\Sigma$-cofibrant provided $P$ and $Q$ have cofibrant components. The $\Sigma$-cofibrancy of $P$ is not necessary because the summands
% in the components  $Q_1(n_1,\ldots,n_k;n)$ are labelled by surjective maps
%$M(0)=\varempty$, then 
%\end{rmk}
%\newpage

\section{The Reedy model category of $(P\text{-}Q)$-bimodules}\vspace{5pt}

Let $P$ be any topological operad and $Q$ be a reduced operad. From now on, we denote by $\Lambda \Bimod_{P\,;\,Q}$  and $T_{r}\, \Lambda \Bimod_{P\,;\,Q}$ the categories of $(P\text{-}Q)$-bimodules and $r$-truncated $(P\text{-}Q)$-bimodules, respectively, equipped with the Reedy model category structures. This structure is transferred from the categories $\Lambda Seq_P:=
P_0\downarrow\Lambda Seq$ and $T_{r}\,\Lambda Seq_P:=P_0\downarrow T_{r}\,\Lambda Seq $, respectively, along the adjunctions
\begin{equation}\label{F7}
\begin{array}{rcl}\vspace{8pt}
\mathcal{F}_{P\,;\,Q}^{\Lambda}:\Lambda Seq_P & \rightleftarrows & \Lambda \Bimod_{P\,;\,Q}:\mathcal{U}^{\Lambda}, \\ 
\mathcal{F}_{P\,;\,Q}^{T_{r}\Lambda}:T_{r}\Lambda Seq_P& \rightleftarrows & T_{r}\Lambda \Bimod_{P\,;\,Q}:\mathcal{U}^{\Lambda},
\end{array} 
\end{equation}
where both free functors are obtained from the functors $\mathcal{F}_{P\,;\,Q}^\Sigma$ and $\mathcal{F}_{P\,;\,Q}^{T_{r}\Sigma}$ by taking the restriction of the coproduct (\ref{G4}) to the reduced trees with section without univalent vertices other than the pearls. The (acyclic) generating cofibrations in $\Lambda Seq_P$ and $T_{r}\,\Lambda Seq_P$ are $\{P_0\sqcup\partial X\to P_0\sqcup X\}$, where $\{\partial X\to X\}$ is the set of (acyclic) generating cofibrations of $\Lambda Seq$ and  $T_r\Lambda\, Seq$,
respectively.

If we denote by $Q_{>0}$ the operad obtained from $Q$ by changing the arity $0$ component to the empty set (i.e. $Q_{>0}(0)=\emptyset$ and $Q_{>0}(n)=Q(n)$ for $n\geq 1$), then for any (possibly truncated) $\Lambda$-sequence $M$ and $n\geq 0$, one has
\begin{equation}\label{eq:free}
\mathcal{F}_{P\,;\,Q}^{\Lambda}(M)(n)\coloneqq 
\mathcal{F}^{\Sigma}_{P\,;\,Q_{>0}}(M)(n) \hspace{20pt} \text{and}\hspace{20pt} \mathcal{F}_{P\,;\,Q}^{T_{r}\Lambda}(M)(n)\coloneqq 
\mathcal{F}^{T_{r}\Sigma}_{P\,;\,Q_{>0}}(M)(n).
\end{equation}
By construction, the above $\Sigma$-sequences are equipped with a  (possibly truncated) right module structures over $Q_{>0}$. We can extend this structure in order to get a (possibly truncated) right $Q$-module structures using the operadic structure of $Q$ and the $\Lambda$ structure of $M$.

\hspace{-55pt}\includegraphics[scale=0.32]{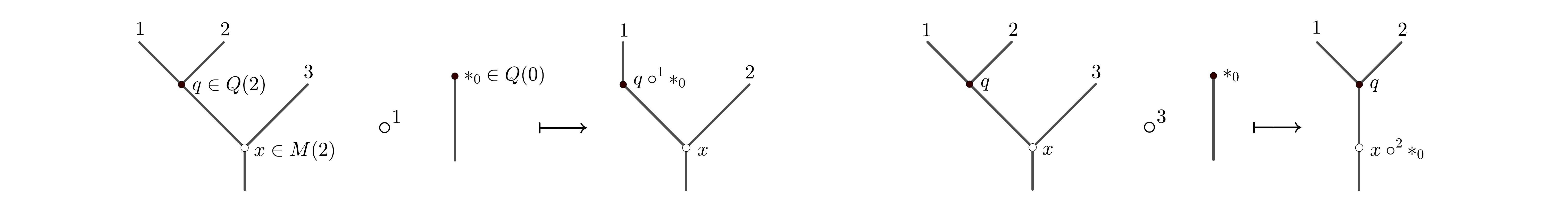}\vspace{-10pt}
 
\begin{figure}[!h]
\caption{Illustration of the right action by $\ast_{0}$.}
\end{figure}

\begin{thm}\label{Z5}
Let $P$ be an operad and $Q$ be a reduced  well-pointed operad. The categories $\Lambda \Bimod_{P\,;\,Q}$ and $T_{r}\Lambda \Bimod_{P\,;\,Q}$, with $r\geq 0$, admit cofibrantly generated model category structures, called Reedy model category structures, transferred from $\Lambda Seq_P$ and $T_{r}\Lambda Seq_P$, respectively, along the adjunctions $(\ref{F7})$. In particular, these model category structures make the pairs of functors $(\ref{F7})$ into Quillen adjunctions.
\end{thm}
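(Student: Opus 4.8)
The plan is to apply the transfer principle of Theorem~\ref{E3} to the adjunctions~(\ref{F7}), with $\mathcal{D}=\Lambda Seq_P$ (respectively $T_{r}\Lambda Seq_P$) equipped with its Reedy model structure and $\mathcal{C}=\Lambda\Bimod_{P\,;\,Q}$ (respectively $T_{r}\Lambda\Bimod_{P\,;\,Q}$), the weak equivalences and fibrations in $\mathcal{C}$ being the maps $f$ for which $\mathcal{U}^{\Lambda}(f)$ is one. As $\mathcal{C}$ is bicomplete, it remains to check the three conditions of Theorem~\ref{E3}: the small object argument for the images under $\mathcal{F}^{\Lambda}_{P\,;\,Q}$ of the generating (acyclic) cofibrations, a functorial fibrant replacement, and a functorial path object for fibrant objects. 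I treat the non-truncated case; the truncated statements follow by restricting every set of trees to trees with at most $r$ leaves and each pearl with at most $r$ inputs, as in Construction~\ref{F9}, and using the remark following Proposition~\ref{E2}.

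For the small object argument I would argue exactly as in the proof of Theorem~\ref{ProjectBimod}. By~(\ref{eq:free}) the underlying $\Sigma$-sequence of $\mathcal{F}^{\Lambda}_{P\,;\,Q}(M)$ is $\mathcal{F}^{\Sigma}_{P\,;\,Q_{>0}}(M)$, so the combinatorial descriptions of the free bimodule and of pushouts from Sections~\ref{C2} and~\ref{B7} apply verbatim once the trees with section are required to have no univalent vertices other than pearls. In particular the analogue of Lemma~\ref{Final3} holds, so every cellular attachment $M_{<\alpha}\to M_{\alpha}$ along a generating (acyclic) cofibration is an objectwise closed inclusion of spaces. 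The domains of the generating (acyclic) cofibrations are free bimodules on $\Lambda$-sequences that are, arity by arity, finite unions of cells: indeed $\Lambda[X](n)$ is a \emph{finite} coproduct of copies of $X$ since $\Lambda_{+}([n]\,;\,[k])$ is finite. Hence these domains are $\aleph_{1}$-small relative to componentwise closed inclusions, and both sets permit the small object argument.

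The main obstacle is the fibrant replacement, since the objects of $\Lambda Seq_P$ need not be Reedy fibrant. Here I would enhance the functor $(-)^{f}$ of Section~\ref{A9} to bimodules. For a $(P\text{-}Q)$-bimodule $M$, I would equip the $\Lambda$-sequence $(\mathcal{U}^{\Lambda}M)^{f}$, whose arity $n$ component consists of families $\{f_{h}\}_{h\in\Lambda_{+}([\ell]\,,\,[n])}$ of maps $f_{h}\colon[0\,,\,1]^{n-\ell}\to M(\ell)$ satisfying~(\ref{A4}), with left $P$- and right $Q$-actions defined coordinatewise in the cube parameters, using the corresponding actions on $M$ together with the $\Lambda$-structure of $Q$ (available since $Q$ is reduced) to handle the interaction of arity-zero compositions with the cube coordinates. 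The delicate point, where well-pointedness of $Q$ enters exactly as for reduced operads in~\cite{Fre2}, is to verify that these operations respect the constraint~(\ref{A4}) and obey the bimodule axioms, so that $M^{f}$ is a bimodule and $\varphi\colon M\to M^{f}$ refines the map of Proposition~\ref{Z7} to a bimodule map. Granting this, $\mathcal{U}^{\Lambda}(M^{f})=(\mathcal{U}^{\Lambda}M)^{f}$ is Reedy fibrant by Proposition~\ref{E2}, so $M^{f}$ is fibrant in $\mathcal{C}$, while $\varphi$ is a weak equivalence by Proposition~\ref{Z7}; functoriality is clear from the construction.

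Finally, for the path object I would set $\mathrm{Path}(M)(n)=\mathrm{Map}([0\,,\,1]\,;\,M(n))$ with the pointwise bimodule structure inherited from $M$. The inclusion of constant paths $M\to\mathrm{Path}(M)$ is an objectwise homotopy equivalence, hence a weak equivalence in $\mathcal{C}$. Since $\mathrm{Map}([0\,,\,1]\,;\,-)$ is a right adjoint it commutes with the limits defining the matching objects~(\ref{eq:matching}), giving $\mathcal{M}(\mathrm{Path}(M))(n)=\mathrm{Map}([0\,,\,1]\,;\,\mathcal{M}(M)(n))$. For a fibrant $M$ the relative matching map of $\mathrm{Path}(M)\to M\times M$ in arity $n$ is then canonically identified with the Leibniz cotensor of the cofibration $\partial[0\,,\,1]\hookrightarrow[0\,,\,1]$ against the Serre fibration $M(n)\to\mathcal{M}(M)(n)$, that is, the map $\mathrm{Map}([0\,,\,1]\,;\,M(n))\to \mathrm{Map}(\partial[0\,,\,1]\,;\,M(n))\times_{\mathrm{Map}(\partial[0\,,\,1]\,;\,\mathcal{M}(M)(n))}\mathrm{Map}([0\,,\,1]\,;\,\mathcal{M}(M)(n))$, which is a Serre fibration by the pushout-product axiom in $Top$. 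Hence $\mathrm{Path}(M)\to M\times M$ is a Reedy fibration in $\mathcal{C}$, providing the functorial path object for fibrant objects. With conditions (i)--(iii) verified, Theorem~\ref{E3} yields the transferred Reedy model category structures and makes the adjunctions~(\ref{F7}) Quillen adjunctions.
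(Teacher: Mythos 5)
Your treatment of the small object argument and of the path object is correct and essentially identical to the paper's own proof (closed inclusions via Lemma~\ref{Final3} together with~(\ref{eq:free}), and the Leibniz-cotensor/patching argument of Lemma~\ref{E6} for $\mathrm{Path}(M)$). The gap is in the fibrant replacement, which is the heart of the theorem.

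You propose to endow the $\Lambda$-sequence coresolution $(\mathcal{U}^{\Lambda}M)^{f}$ of Section~\ref{A9} with a $(P\text{-}Q)$-bimodule structure ``defined coordinatewise in the cube parameters'' and then write ``Granting this, \dots''. That granting is precisely the missing content, and it is not a routine check, because the definition itself is problematic before any axiom can be verified. A point of $(\mathcal{U}^{\Lambda}M)^{f}(n)$ is a family $f_{h}\colon[0,1]^{n-\ell}\to M(\ell)$, $h\in\Lambda_{+}([\ell],[n])$; to define $(f\circ^{i}q)_{h'}$ for $q\in Q(m)$ and $h'\in\Lambda_{+}([\ell'],[n+m-1])$ whose image misses $m-s$ inputs of the block of $q$, the only available datum is $f_{h}$ composed with the restriction $u^{*}(q)\in Q(s)$, and the cube of $(f\circ^{i}q)_{h'}$ then has $m-s$ coordinates more than the cube of $f_{h}$. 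Some rule must be chosen for these surplus coordinates, and any candidate rule must be checked to be simultaneously compatible with the constraint~(\ref{A4}), with associativity of the right action (which imposes a nontrivial composition identity on the rule itself), with the left $P$-action, and with the symmetric group actions. Your proposal specifies no such rule and performs none of these verifications. This is exactly the difficulty the authors avoid: their fibrant replacement (Section~\ref{Z2}) is not an enhancement of the Section~\ref{A9} coresolution but a different, tree-indexed construction (families of maps $H(T)\times D(T)\to M(|T|)$, $T\in\mathbb{P}[n]$, with vertices decorated by points of $Q$ and interval parameters, later identified with the internal hom $[Q^{c},M]_{Q}$), and the paper states explicitly that this replacement ``is different from the fibrant coresolution functor for $\Lambda$-sequences defined in Section~\ref{A9}''.

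A symptom that the proposal has not engaged with the actual difficulty is where you locate the use of well-pointedness. Constraint~(\ref{A4}) and the bimodule axioms are algebraic identities; no cofibrancy hypothesis can enter their verification. In the paper, well-pointedness of $Q$ is used in the proof of Proposition~\ref{MF}, i.e., for the \emph{fibrancy} of the tree-based replacement: the unit relation~(\ref{rel1}) forces one to consider the subspace of $Q(1)^{\times t}$ of tuples having at least one coordinate equal to $\ast_{1}$, and the equivariant cellular argument there requires the inclusion $\ast_{1}\to Q(1)$ to be a cofibration. By contrast, in your plan fibrancy would come from Proposition~\ref{E2}, which needs no hypothesis at all, so the well-pointedness assumption of the theorem would never be used. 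Either you actually construct the bimodule structure on $(\mathcal{U}^{\Lambda}M)^{f}$ and verify the axioms (stating precisely the rule for the surplus cube coordinates), or you must fall back on a construction of the paper's type, in which case well-pointedness enters for the reason above and not for the reason you give.
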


\begin{proof}
According to the transfer principle \ref{E3}, we have to check the small object argument as well as the existence of a functorial fibrant replacement and a functorial factorization of the diagonal map in the category $\Lambda \Bimod_{P\,;\,Q}$. 

For the small object argument, let us remark that the pushout in the category of $(P\text{-}Q)$-bimodules is defined exactly as in the  category of $(P\text{-}Q_{>0})$-bimodules. More precisely, for any $n\geq 0$, one has\footnote{The forgetful functor from the category of $(P\text{-}Q)$-bimodules
to the category of $(P\text{-}Q_{>0})$-bimodules preserves colimits as it admits a right adjoint by Proposition~\ref{p:tens_hom}b for $Q_1=Q$, $Q_2=Q_{>0}$, $Y=Q$.}: 
\begin{equation}\label{eq:colim_PQ}
\underset{\hspace{30pt}\Lambda \Bimod_{P;Q}}{\mathrm{colim}}\big(\, B \leftarrow A \rightarrow C\,\big)(n)=
\hspace{-30pt}\underset{\hspace{40pt}\Sigma \Bimod_{P;Q_{>0}}}{\mathrm{colim}}\big(\, B \leftarrow A \rightarrow C\,\big)(n).
\end{equation}

 Let $\mathcal{F}^{\Lambda}_{P;Q}(X)$ be a domain of an element in the set of generating (acyclic) cofibrations in $\Lambda \Bimod_{P\,;\,Q}$. Let $\lambda=\aleph_1$ %=\mathfrak{c}^+$
 and assume that $\{M_{\alpha}\}_{\alpha < \lambda}$ is a $\lambda$-sequence of   $(P\text{-}Q)$-bimodules, such that 
 each map $M_{<\alpha}:=\mathrm{colim}_{\beta<\alpha}M_\beta\to M_\alpha$  fits into a pushout square
\begin{equation}\label{eq:push_out_incl}
\xymatrix@R=15pt{
\mathcal{F}^{\Lambda}_{P;Q}(\partial Y ) \ar[r] \ar[d] & \mathcal{F}^{\Lambda}_{P;Q}(Y) \ar[d] \\
M_{<\alpha}\ar[r] & M_\alpha,
}
\end{equation}
where $\partial Y\to Y$ is a possibly infinite coproduct in $\Lambda Seq_P$ of generating (acyclic) cofibrations. It follows from Lemma~\ref{Final3} and equations~\eqref{eq:free} and~\eqref{eq:colim_PQ}, that each map $M_{<\alpha}\to M_\alpha$
is an objectwise closed inclusion. Moreover, $\left(\coprod_{n\geq 0}X(n)\right)\setminus P(0)$ is a finite union of spheres (or discs), thus separable,
and therefore, $X$ is $\aleph_1$-small relative to componentwise closed inclusions.
% $\mathfrak{c}^+$-small.  
  So, the same argument used for the proof of the small objects argument in Theorem~\ref{ProjectBimod} works.

%
%and for every ordinal $\lambda$ and every $\lambda$-sequence of cofibrations of reduced $(P\text{-}Q)$-bimodules is also a  $\lambda$-sequence of closed inclusions, while
%the the domains of generating (acyclic) cofibrations of $\Lambda Seq_P$ are again compact. So, the same argument used for the proof of the small objects argument in Theorem~\ref{ProjectBimod} works for reduced bimodules.

Contrary to the category of $\Sigma \Bimod_{P;Q}$, the objects in the category $\Lambda \Bimod_{P;Q}$ are not necessarily fibrant and the identity functor is not a fibrant replacement functor. The aim of Section \ref{Z2} is to introduce such a fibrant replacement functor if $Q$ is well-pointed. This fibrant replacement is different from the fibrant coresolution functor for $\Lambda$-sequences defined in Section \ref{A9}.

From now on, we introduce a functorial path object in the Reedy model category of $(P\text{-}Q)$-bimodules. In other words, for any $M\in \Lambda\Bimod_{P\,;\,Q}$ which is fibrant in the category of $\Lambda$-sequences, we build an element $Path(M)\in \Lambda\Bimod_{P\,;\,Q}$ such that there is a factorization of the diagonal map
$$
\xymatrix{
\Delta: M \ar[r]^{\hspace{-5pt}\simeq}_{\hspace{-5pt}f_{1}} & Path(M) \ar@{->>}[r]_{f_{2}} & M\times M,
}
$$
where $f_{1}$ is a weak equivalence and $f_{2}$ is a fibration. Let us consider
\begin{equation}\label{PathObject}
Path(M)(n)=Map\big( [0\,,\,1]\,;\,M(n)\big).
\end{equation}
The object so obtained inherits a bimodule structure from $M$. The map from $M$ to $Path(M)$, sending a point to the constant path, is clearly a homotopy equivalence. Furthermore, let us remark that one has the following identities:
$$
\mathcal{M}(Path(M))(n)=Map\big( [0\,,\,1]\,;\,\mathcal{M}(M)(n)\big)\hspace{15pt}\text{and}\hspace{15pt}\mathcal{M}(M\times M)(n)=\mathcal{M}(M)(n)\times \mathcal{M}(M)(n).
$$
So, the map $f_{2}$ is a fibration if the map between the limits induced by the natural transformation 
$$
\xymatrix{
Path(M)(n)\ar@{=}[r]\ar[d] & Path(M)(n) \ar[d] & Path(M)(n)\ar@{=}[l]\ar[d]\\
Map\big( [0\,,\,1]\,;\,\mathcal{M}(M)(n)\big) \ar[r] & \mathcal{M}(M)(n)\times \mathcal{M}(M)(n) & M(n)\times M(n)\ar[l]
}
$$
is a Serre fibration. The right vertical map is obviously a Serre fibration because the inclusion from $\partial [0\,,\,1]$ into $[0\,,\,1]$ is a cofibration (see the proof of Theorem \ref{ProjectBimod}). Moreover, since the inclusion $\partial [0\,,\,1]\rightarrow [0\,,\,1]$ is a cofibration and the map $M(n)\rightarrow \mathcal{M}(M)(n)$ is a fibration, an alternative version of the pushout product lemma  (see \cite[Section 9.1.5]{Hir}) implies that the map
$$
Map\big([0\,,\,1]\,;\, M(n)\big)\longrightarrow Map\big([0\,,\,1]\,;\, \mathcal{M}(M)(n)\big) \underset{Map\big(\partial [0\,,\,1]\,;\, \mathcal{M}(M)(n)\big)}{\times} Map\big(\partial [0\,,\,1]\,;\, M(n)\big)
$$
is also a Serre fibration. As a consequence of Lemma \ref{E6}, the map $f_{2}$ is a Serre fibration.  
\end{proof}

\subsection{Properties of the Reedy model category of bimodules}\label{SectPropReedyBimod}

This subsection is divided into three parts. The first one is devoted to the construction of an explicit fibrant coresolution functor for reduced bimodules. In the second part, we  characterize (acyclic) cofibrations in the Reedy model category of bimodules as (acyclic) cofibrations in the usual projective model category of bimodules. The last part consists in extending the properties introduced in Section \ref{H2} to the Reedy model category. 

\subsubsection{A Reedy fibrant replacement functor for bimodules}\label{Z2}

Let $P$ be an operad and $Q$ be a reduced operad. The goal of this section is to give an explicit Reedy fibrant replacement in the category of bimodules if the operad $Q$ is well-pointed.
 A  conceptual description of this fibrant coresolution in terms of internal hom is given in Section~\ref{ss:int_hom}.\vspace{5pt}
 % In what follows, the constructions are explicit and more convenient to verify the Reedy fibrant conditions. Furthermore, we expect to extend this method to a more general context or different settings.

\noindent \textit{$\bullet$ The set of trees $\mathbb{P}[n]$.} Let $\mathbb{P}[n]$ be the set 
 of planar
trees~$T$ whose roots have exactly $n$ incoming edges. We label their leaves with the identity permutation in $\Sigma_{|T|}$,
where $|T|$ is the number of leaves in~$T$. We also label the $n$ incoming edges bijectively by the set $[n]$ in the planar order
from left to right. Such a tree $T$ is equipped with an orientation towards the root and we say that $v<v'$ if the path joining the vertex $v'$ with the root passes through the vertex $v$. It makes the set of vertices $V(T)$ into a partially ordered set. Moreover, we consider the operations
$$
\begin{array}{rcll}\vspace{8pt}
\delta_{i,m}:\mathbb{P}[n+m-1] & \longrightarrow &  \mathbb{P}[n], & \text{with } n,m\in \mathbb{N} \text{ and } 1\leq i\leq n, \\ 
\Gamma^{m}_{k}:\mathbb{P}[n] & \longrightarrow & \mathbb{P}[m], & \text{with } n,m,k\in \mathbb{N} \text{ and } m+k\leq n, 
\end{array}  
$$

The operation $\delta_{i,m}(T)$ is defined as follows. If $m=0$, then $\delta_{i,m}$ consists in adding an incoming edge to the root of $T$ at the $i$-th position. The new incoming edge is connected to a univalent vertex. If $m>0$, then $\delta_{i,m}(T)$ is obtained from $T$ by gluing together the incoming edges $i,i+1,\ldots,i+m-1$, of the root counted according to the planar order. In both cases, $\delta_{i,m}(T)$ has one additional vertex to those of $T$. The new vertex has exactly $m$ incoming edges.\vspace{20pt}

\hspace{-40pt}\includegraphics[scale=0.4]{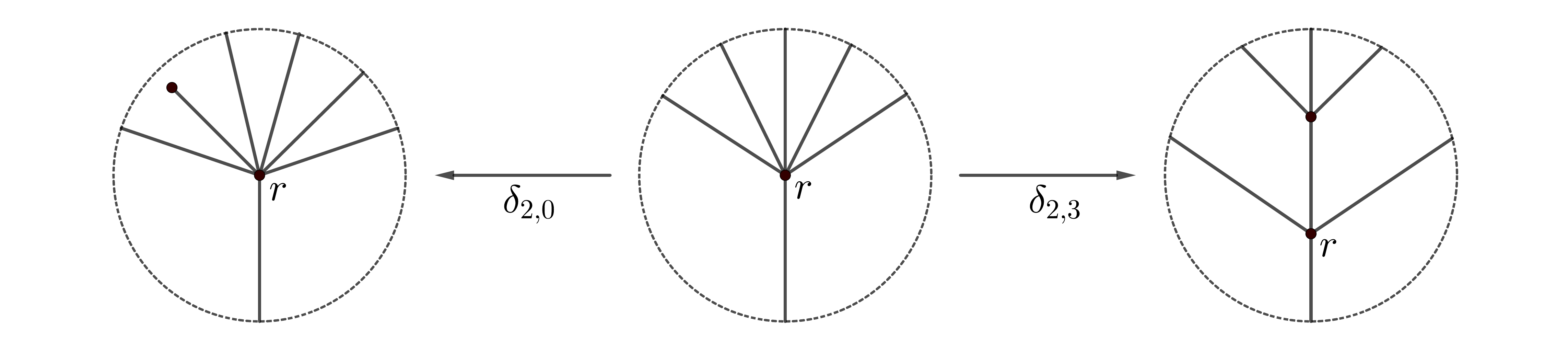}\vspace{-20pt}
\begin{figure}[!h]
\begin{center}
\caption{Illustration of the applications $\delta_{2,0}$ and $\delta_{2,3}$.}
\end{center}
\end{figure}

 For any $n$, $m$ and $k$ such that $m+k\leq n$, the map $\Gamma_{k}^{m}:\mathbb{P}[n]\rightarrow \mathbb{P}[m]$  consists in removing $n-m$ incoming edges together with the trees attached to them. The 
 removed edges  are  those labelled by $[n]\setminus \{k+1,\ldots,k+m\}$.%\vspace{-20pt}
\begin{figure}[!h]
\begin{center}
\includegraphics[scale=0.4]{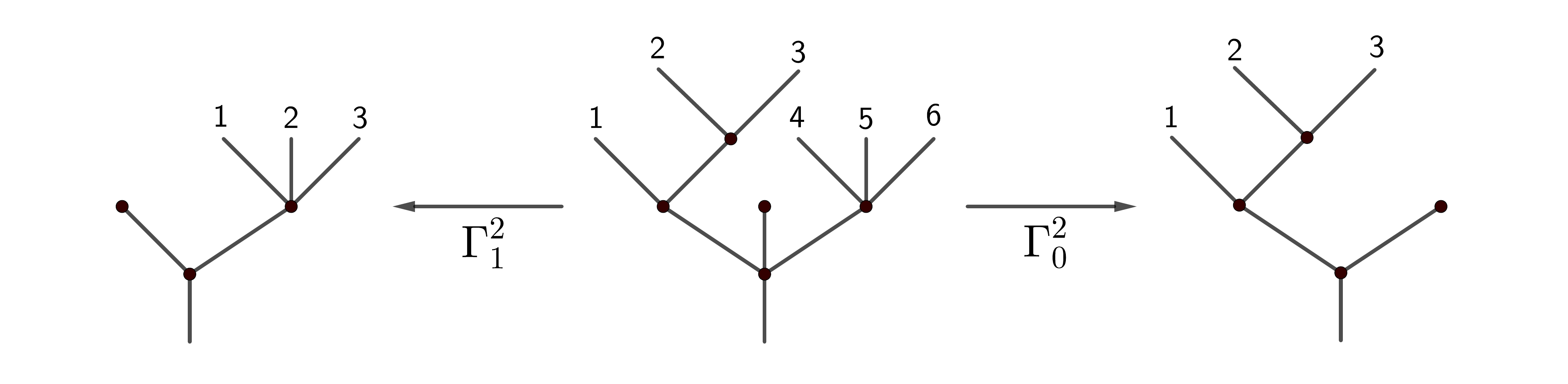}%\vspace{-15pt}
\caption{Illustration of the applications $\Gamma_{1}^{2}$ and $\Gamma_{0}^{2}$.}
\end{center}
\end{figure}
\\{}\\

\noindent \textit{$\bullet$ Construction of the fibrant replacement functor.} From now on, we fix a $(P\text{-}Q)$-bimodule $M$. Let $T$ be an element in $\mathbb{P}[n]$. We consider the spaces $H(T)$ of indexing the vertices of $T$ other than the root~$r$ by real numbers and points in the operad $Q$, respectively. More precisely, one has   
$$
\begin{array}{lcl}\vspace{4pt}
D(T) & = & \underset{v\in V(T)\setminus \{r\}}{\displaystyle\prod} Q(|v|), \\ 
H(T) & = & \left\{\left. \{t_{v}\}\in [0\,,\,1]^{|V(T)\setminus \{r\}|}\,\,\right| \forall v<v', t_{v}\leq t_{v'}
\right\}.
\end{array} 
$$
In other words, $H(T)$ is the space of order preserving maps
\[
V(T)\setminus\{r\} \to [0,1].
\]
Finally, we denote by $M^{f}(n)$ the subspace
\begin{equation}\label{eq:Mf}
M^{f}(n)\subset \underset{T\in \mathbb{P}[n]}{\prod} Map\big(\, H(T)\times D(T)\,;\,M(|T|)\,\big)
\end{equation}
composed of families of continuous maps $\{f_{T}\}_{T\in \mathbb{P}[n]}$ satisfying the following conditions:

\begin{itemize}[itemsep=7pt]

\item[$1.$] Let $T$ be an element in $\mathbb{P}[n]$ having a bivalent vertex $v$ other than the root. Then one has 
\begin{equation}\label{rel1}
\xymatrix{
H(T)\times D(T\setminus \{v\}) \ar[r] \ar[d] & H(T)\times D(T) \ar[d]^{f_{T}} \\
H(T\setminus \{v\})\times D(T\setminus \{v\}) \ar[r]_{f_{T\setminus \{v\}}} & M(|T\setminus \{v\}|)=M(|T|)
}
\end{equation}
where $T\setminus \{v\}$ is the tree obtained from $T$ by removing the bivalent vertex $v$ (i.e. by replacing the incoming and outgoing edges of $v$ by a single edge). The upper horizontal map indexes the vertex $v$ by the unit of the operad $Q$ while the left vertical map is obtained by forgetting the the real number indexing the bivalent vertex $v$. 

\item[$2.$] For any non-root vertex $v$ of $T$ and any permutation $\sigma$ of the incoming edges of $v$, one has 
\begin{equation}\label{rel0}
\xymatrix{
H(T)\times D(T) \ar[r] \ar[d]_{f_{T}} & H(T\cdot\sigma)\times D(T\cdot\sigma) \ar[d]^{f_{T\cdot\sigma}} \\
M(|T|) \ar[r]_{\hspace{-20pt}\sigma_{L}[T]^{\ast}} & M(|T\cdot\sigma|)=M(|T|)
}
\end{equation}
where $T\cdot\sigma$ is the tree obtained from $T$ by permuting the incoming edges of $v$ according to the permutation $\sigma$ and $\sigma_{L}[T]\in\Sigma_{|T|}$ is the induced permutation of the leaves of $T$. The upper horizontal map sends the decorations of the tree $T$ to the corresponding decorations of $T\cdot\sigma$  and acts on the $Q$-decoration of the vertex $v$ using the $\Sigma$-structure of the operad $Q$.

\item[$3.$] For any inner edge $e$, which is not connected to the root, one has
\begin{equation}\label{rel2}
\xymatrix{
H(T/e)\times D(T) \ar[r] \ar[d] & H(T)\times D(T) \ar[d]^{f_{T}} \\
H(T/e)\times D(T/e) \ar[r]_{f_{T/e}} & M(|T/e|)=M(|T|)
}
\end{equation}
where $T/e$ is the tree obtained from $T$ by contracting the edge $e$. The upper horizontal map indexes the source and the target vertices of $e$ by the real number in $H(T/e)$ corresponding to the vertex resulting from the contraction of $e$. The left vertical map is defined using the operadic structure of~$Q$.

\item[$4.$] Any tree $T$ has a unique decomposition of the form $T=T_{e}\circ_{i(e)}T_{e}'$ along any edge $e$. Then one has
\begin{equation}\label{rel3}
\xymatrix{
H(T_{e})\times D(T_{e})\times \underset{v\in V(T_{e}')}{\prod}Q(|v|) \ar[r] \ar[d]_{f_{T_{e}}\times \eta} & H(T)\times D(T) \ar[d]^{f_{T}}\\
M(|T_{e}|)\times Q(|T_{e}'|)\ar[r]_{\circ^{i(e)}} & M(|T|)
}
\end{equation}
The upper horizontal map consists in indexing the vertices associated to the tree $T_{e}'$ by $1$. The map $\eta:\prod_{v\in V(T_{e}')}Q(|v|)\rightarrow Q(|T_{e}'|)$ is defined using the operadic structure of $Q$ while the lower horizontal map is obtained using the right $Q$-module structure of $M$. 
\end{itemize}

\begin{rmk}
Let us notice that, as a special case of the fourth condition, for any univalent vertex $v$, one has 
\begin{equation*}
\xymatrix{
H(T\setminus \{v\})\times D(T\setminus \{v\})\ar[r] \ar[d]_{f_{T\setminus \{v\}}} & H(T)\times D(T) \ar[d]^{f_{T}}\\
M(|T\setminus \{v\}|)=M(|T|+1) \ar[r]_{\hspace{40pt}\circ^{v}\ast_{0}} & M(|T|)
}
\end{equation*}
where $T\setminus \{v\}$ is the tree obtained from $T$ by removing the univalent vertex $v$ (and thus producing one more leaf). The upper horizontal map consists in indexing the vertex $v$ by the real number $1$ and the unique point $\ast_{0}\in Q(0)$. The lower horizontal map composes the input of $M(|T\setminus \{v\}|)$, corresponding to the vertex $v$, with the point $\ast_{0}\in Q(0)$ using the right $Q$-module structure of $M$.
\end{rmk}\vspace{3pt}

\noindent \textit{$\bullet$ The $\Sigma$-structure on the fibrant coresolution.} The space $M^{f}(n)$ inherits an action of the permutation group $\Sigma_{n}$. More precisely, for any $\sigma\in \Sigma_{n}$, we denote by $T_{\sigma}$ the tree obtained from $T\in \mathbb{P}[n]$ by permuting the incoming edges associated to the root of $T$ according to the permutation $\sigma$. Such a permutation induces the following two bijections:
\begin{equation}\label{Finish1}
\begin{array}{rrlll}\vspace{8pt}
\sigma_{V}[T]: & V(T\setminus \{r\}) & \longrightarrow &  V(T_{\sigma}\setminus \{r\}) & \in \Sigma_{|V(T)\setminus \{r\}|},  \\ 
\sigma_{L}[T]:  & \ell(T) & \longrightarrow & \ell(T_{\sigma}) &\in \Sigma_{|T|}. 
\end{array} 
\end{equation}
Here, $\ell(T)$ denotes the set $[|T|]$ of leaves. 
So, the action of the permutation group $\sigma^{\ast}:M^{f}(n)\rightarrow M^{f}(n)$ sends a family of continuous maps $\{f_{T}\}_{T\in \mathbb{P}[n]}$ to the family $\{(f\cdot\sigma)_{T}\}_{T\in \mathbb{P}[n]}$ given by the formula
$$
\begin{array}{rcl}\vspace{8pt}
(f\cdot\sigma)_{T}: H(T)\times D(T) & \longrightarrow & M(|T|); \\ 
\{t_{v}\}\,\,,\,\,\{q_{v}\} & \longmapsto & f_{T_{\sigma}}\big(\, \{t_{\sigma_{V}[T](v)}\}\,\,,\,\,\{q_{\sigma_{V}[T](v)}\}\,\big)\cdot \sigma_{L}[T].
\end{array} 
$$\vspace{3pt}

\noindent \textit{$\bullet$ The bimodule structure on the fibrant coresolution.} Since one has the identity $M^{f}(0)=M(0)$, one has a map $\gamma_{0}:P(0)\rightarrow M^{f}(0)$ and the $\Sigma$-sequence $M^{f}$ inherits a $(P\text{-}Q)$-bimodule structure whose right operations are given by
\begin{equation}\label{Finish2}
\begin{array}{rcl}\vspace{5pt}
\circ^{i}:M^{f}(n)\times Q(m) & \longrightarrow & M^{f}(n+m-1); \\ 
\{f_{T}\}_{T\in \mathbb{P}[n]}\,\,,\,\,q & \longmapsto & \{(f\circ^{i}q)_{T}\}_{T\in \mathbb{P}[n+m-1]},
\end{array} 
\end{equation}
where $(f\circ^{i}q)_{T}$ is the composite map:
$$
\xymatrix{
(f\circ^{i}q)_{T}:H(T)\times D(T) \ar[r] & H(\delta_{i,m}(T))\times  D(\delta_{i,m}(T)) \ar[r]_{f_{\delta_{i,m}(T)}} & M(|\delta_{i,m}(T)|)=M(|T|).
}
$$
The left hand side map consists in indexing the new vertex by the real number $0$ and the point $q\in Q(m)$. Similarly, the left $P$-module structure on $M^{f}$ is given by the operations 
$$
\begin{array}{rlcl}\vspace{5pt}
\gamma_{\ell}:&P(k)\times M^{f}(n_{1})\times\cdots \times M^{f}(n_{k}) & \longrightarrow & M^{f}(n_{1}+\cdots+n_{k}); \\ 
&p\,\,,\,\, \{f_{T}^{1}\}_{T\in \mathbb{P}[n_{1}]},\ldots, \{f_{T}^{k}\}_{T\in \mathbb{P}[n_{k}]} & \longmapsto & \{p(f^{1},\ldots,f^{k})_{T}\}_{T\in \mathbb{P}[n_{1}+\cdots+n_{k}]},
\end{array} 
$$
where $p(f^{1},\ldots,f^{k})_{T}$ is the composite map
$$
\xymatrix@C=75pt{
H(T)\times D(T) \ar[d]_{\hspace{-60pt}\cong}\ar[r]^{\hspace{30pt}p(f^{1},\ldots,f^{k})_{T}} &  M(|T|)  \\
\underset{1\leq i \leq k}{\prod} H(\Gamma_{n_{1}+\cdots + n_{i-1}}^{n_{i}}(T))\times D(\Gamma_{n_{1}+\cdots + n_{i-1}}^{n_{i}}(T)) \ar[r]^{\hspace{30pt}\times_{i}f_{\Gamma_{n_{1}+\cdots + n_{i-1}}^{n_{i}}(T)}} &  \underset{1\leq i \leq k}{\prod} M(|\Gamma_{n_{1}+\cdots + n_{i-1}}^{n_{i}}(T)|)\ar[u]_{\gamma_{\ell}(p;-,\cdots, -)}.
}
$$ 

Finally, there is a map of $(P\text{-}Q)$-bimodules $\eta:M\rightarrow M^{f}$ sending a point $m\in M(n)$ to the family of continuous maps $\{\eta(m)_{T}\}_{T\in \mathbb{P}[n]}$ given by the formula
$$
\begin{array}{rcl}\vspace{5pt}
\eta(m)_{T}:H(T)\times D(T) & \longrightarrow & M(|T|); \\ 
\{t_{v}\}\,\,,\,\,\{q_{v}\} & \longmapsto & m\circ\{q_{v}\},
\end{array} 
$$
using the right $Q$-bimodule structure of $M$. It means that $m$ is taken for a root and we compose with $\{q_{v}\}$ using the operadic structure of $Q$ and the right module structure of $M$.

%\newpage

\begin{pro}\label{ProFibrantEq}
The map $\eta:M\rightarrow M^{f}$ is a weak equivalence.
\end{pro}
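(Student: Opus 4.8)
The plan is to prove, exactly as in Proposition \ref{Z7}, that for each $n$ the component map $\eta_n\colon M(n)\to M^f(n)$ is a homotopy equivalence of topological spaces. Since weak equivalences of (possibly $\Lambda$-structured) $\Sigma$-sequences are detected objectwise, and a homotopy equivalence is in particular a weak homotopy equivalence, this will suffice. First I would construct an explicit retraction $\psi\colon M^f\to M$, which is a map of $\Sigma$-sequences (not of bimodules). Let $c_n\in\mathbb{P}[n]$ be the $n$-corolla, whose only vertex is the root~$r$; then $V(c_n)\setminus\{r\}=\emptyset$, so $H(c_n)\times D(c_n)=\ast$ and the component $f_{c_n}$ of a family $\{f_T\}\in M^f(n)$ is simply a point of $M(n)$. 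I set
\[
\psi_n\colon M^f(n)\longrightarrow M(n);\qquad \{f_T\}_{T\in\mathbb{P}[n]}\longmapsto f_{c_n}(\ast).
\]
Equivariance of $\psi_n$ under the $\Sigma_n$-action is immediate from the formula defining the $\Sigma$-structure on $M^f$, since permuting the incoming edges of the root sends $c_n$ to itself.

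Next I would check that $\psi\circ\eta=\mathrm{id}_M$: evaluating $\eta(m)$ at the corolla gives $\eta(m)_{c_n}(\ast)=m$, there being no non-root vertices to compose with. The substantive part is to exhibit a deformation retraction making $\eta\circ\psi$ homotopic to the identity of $M^f$. Following the homotopy of Proposition \ref{Z7}, I would define
\[
H\colon[0,1]\times M^f(n)\longrightarrow M^f(n);\qquad \bigl(t\,;\,\{f_T\}\bigr)\longmapsto \{H(t\,;\,f_T)\},
\]
where $H(t\,;\,f_T)(\{t_v\},\{q_v\})=f_T(\{(1-t)t_v+t\},\{q_v\})$; that is, the homotopy pushes all the real parameters indexing the non-root vertices towards~$1$. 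At $t=0$ this is the identity, and because the affine reparametrization $t_v\mapsto(1-t)t_v+t$ is order-preserving it respects the constraint $t_v\le t_{v'}$ defining $H(T)$. At the endpoint $t=1$ every parameter is sent to $1$, and applying condition \eqref{rel3} repeatedly (decomposing $T=T_e\circ_{i(e)}T_e'$ edge by edge and setting the parameters above each edge to $1$) identifies $f_T(\{1\},\{q_v\})$ with the point obtained by grafting $f_{c_n}(\ast)=\psi_n(\{f_T\})$ onto the $Q$-decorations $\{q_v\}$ through the right $Q$-module structure of $M$. This is precisely $\eta(\psi_n(\{f_T\}))_T(\{t_v\},\{q_v\})$, so $H(1\,;\,-)=\eta\circ\psi$, and $H$ realizes the desired deformation retraction.

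The main obstacle will be verifying that $H$ is well defined, namely that for each fixed $t$ the reparametrized family $\{H(t\,;\,f_T)\}$ still lies in the subspace $M^f(n)$, i.e.\ satisfies conditions \eqref{rel1}, \eqref{rel0}, \eqref{rel2} and \eqref{rel3}. Condition \eqref{rel1} is preserved because forgetting the real number of a bivalent vertex commutes with the uniform reparametrization of the remaining parameters, while the $Q$-unit labelling lives in the $D$-factor and is untouched by $H$; condition \eqref{rel0} holds since the reparametrization acts on values whereas the permutation acts on labels, so the two commute; condition \eqref{rel2} is preserved because the structure map in \eqref{rel2} assigns a \emph{common} value to the source and target of the contracted edge, and the reparametrization applied identically to both endpoints keeps these values equal; and condition \eqref{rel3} is compatible because $(1-t)\cdot 1+t=1$, so parameters already equal to $1$ are fixed by the homotopy. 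Once these four compatibilities are confirmed, each $\eta_n$ is a homotopy equivalence and the proposition follows.
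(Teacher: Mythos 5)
Your proof is correct and takes essentially the same approach as the paper's: the identical retraction $\psi_n(\{f_T\})=f_{C_n}(\ast)$ given by evaluation at the $n$-corolla, and the identical deformation retraction obtained by pushing the real parameters of the non-root vertices to $1$. The paper states this in two lines and leaves implicit both the endpoint identification $H(1;-)=\eta\circ\psi$ via relation \eqref{rel3} and the verification that the reparametrized families still satisfy \eqref{rel1}--\eqref{rel3}, which you correctly spell out.
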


\begin{proof}
The proof is similar to the proof of Proposition \ref{Z7}. We show that the map of $\Lambda$-sequences $\eta_{n}:M(n)\rightarrow M^{f}(n)$ is a homotopy equivalence of $\Sigma$-sequences. For this purpose, we introduce a map of $\Sigma$-sequences $\psi:M^{f}\rightarrow M$ given by
$$
\xymatrix@R=-2pt{
\psi_{n}:\hspace{-30pt} &M^{f}(n) \ar[r] & M(n);\\
& \{f_{T}\}_{T\in \mathbb{P}[n]} \ar@{|->}[r] & f_{C_{n}}(\ast), 
}
$$
where $C_{n}$ is the $n$-corolla whose corresponding space $H(C_{n})\times D(C_{n})$ is necessarily the one point topological space. The map $\psi$ so obtained makes $\eta$ into a deformation retract and the homotopy consists in bringing the real numbers indexing the vertices other than the root to $1$. 
\end{proof}

\begin{pro}\label{MF}
If the operad $Q$ is well-pointed, then the $(P\text{-}Q)$-bimodule $M^{f}$ is Reedy fibrant. 
\end{pro}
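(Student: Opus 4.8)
The plan is to follow closely the strategy of the proof of Proposition \ref{E2} (that the $\Lambda$-sequence $X^f$ is Reedy fibrant), replacing the cubes $[0\,,\,1]^{n-\ell}$ by the parameter spaces $H(T)\times D(T)$ and keeping track of the extra operadic ($Q$-decoration) directions. By definition of the Reedy structure transferred to $\Lambda\Bimod_{P\,;\,Q}$, it suffices to prove that for each $n$ the canonical map $M^f(n)\to\mathcal{M}(M^f)(n)$ is a Serre fibration. First I would unwind the matching object: since the $\Lambda$-structure on $M^f$ is given by the right action of $\ast_0\in Q(0)$, which by Formula (\ref{Finish2}) and the Remark following it amounts to adjoining a univalent vertex at the root decorated by $\ast_0$ and setting its real parameter to $0$, the matching object $\mathcal{M}(M^f)(n)$ is the space of families $\{f_T\}$ defined only over those trees carrying at least one such univalent root-vertex, subject to the relations (1)--(4). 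The matching map is then the restriction of the full family to this subconfiguration.

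Next, exactly as in Proposition \ref{E2}, I would write both $M^f(n)$ and $\mathcal{M}(M^f)(n)$ as limits of fiber-product diagrams — separating off the corolla term $f_{C_n}\in M(n)$ (for which $H(C_n)\times D(C_n)=\ast$) from the terms indexed by trees with a univalent root-vertex — and apply Lemma \ref{E6}. Since $M(n)$ is fibrant, this reduces the problem to showing that a single relative restriction map is a Serre fibration, of the form $\prod_T \mathrm{Map}\big(H(T)\times D(T)\,;\,M(|T|)\big)\to\prod_T\mathrm{Map}\big(A(T)\,;\,M(|T|)\big)$, where $A(T)\subset H(T)\times D(T)$ is the boundary subspace dictated by the matching configuration together with the relations (1)--(4). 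As in the cube case, the relations linking different trees obstruct a direct application of the pushout-product property, so I would introduce a cofiltration — here by the number of vertices of $T$ (equivalently the dimension of $H(T)$) — and prove the fibration statement by induction, at each stage reducing via Lemma \ref{E6} to an elementary restriction map between mapping spaces.

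The inductive step rests on two cofibrancy inputs. In the real-parameter ($H(T)$) directions the relevant inclusions are inclusions of subcomplexes of the ordered simplices $H(T)$, hence cofibrations of CW-pairs, precisely as the inclusions $\partial'[0\,,\,1]^{k}\hookrightarrow[0\,,\,1]^{k}$ were used before. The genuinely new ingredient lives in the operadic ($D(T)$) directions: the bivalent-vertex relation (\ref{rel1}) forces the $Q(1)$-decoration of a bivalent vertex to interact with the inserted unit $\ast_1\in Q(1)$, so that, in those coordinates, $A(T)$ is built from the inclusion $\ast_1\hookrightarrow Q(1)$. This inclusion is a cofibration exactly when $Q$ is well-pointed, which is where the hypothesis enters; combined with the pushout-product lemma and the fact — already used in Theorem \ref{ProjectBimod} — that mapping out of a cofibration yields a Serre fibration, it guarantees that each elementary restriction map in the cofiltration is a Serre fibration.

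The main obstacle will be the bookkeeping of the boundary subspace $A(T)\subset H(T)\times D(T)$: one must combine all four relations (1)--(4) and the tree operations $\delta_{i,m}$ and $\Gamma_k^m$ into a single coherent description of $A(T)$ at each filtration stage, and then verify that the inclusion $A(T)\hookrightarrow H(T)\times D(T)$ is a cofibration. This is where the simplicial directions and the $\ast_1\hookrightarrow Q(1)$ directions must be interleaved correctly, and where the well-pointedness of $Q$ is indispensable. Once this cofibrancy is established, the induction and the repeated use of Lemma \ref{E6} close the argument just as in Proposition \ref{E2}.
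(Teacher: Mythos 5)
Your outline does track the paper's actual proof quite closely: the paper likewise identifies the matching object with families of maps restricted to the locus $H_{0}(T)\subset H(T)$ where some root-adjacent univalent vertex carries the parameter $0$, filters the restriction map $M^{f}(n)\to {\mathcal M}_{0}(n)$ by the number of vertices of the trees, and at each stage reduces to elementary restriction maps whose cofibrancy inputs are exactly the two you name: the simplicial (CW) structure of $H(T)$, and, via well-pointedness, the inclusion $\ast_{1}\hookrightarrow Q(1)$ in the arity-one decorations singled out by relation (\ref{rel1}).

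There is, however, a genuine gap. You treat relation (2) (equation (\ref{rel0})) as one more constraint to be folded into a boundary subspace $A(T)\subset H(T)\times D(T)$, but it is not of that type: it identifies $f_{T\cdot\sigma}$ with a twist of $f_{T}$, and whenever $\sigma$ induces an automorphism of $T$ as a non-planar tree it becomes an \emph{equivariance} condition on $f_{T}$ itself. Consequently $M^{f}(n)$ is really a subspace of $\prod_{T\in \mathbb{T}[n]} Map_{Aut(T)}\big(H(T)\times D(T)\,;\,M(|T|)\big)$, a product of $Aut(T)$-equivariant mapping spaces over isomorphism classes of trees, and every elementary restriction map in your cofiltration is a map between such equivariant mapping spaces. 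The non-equivariant fact you import from Theorem \ref{ProjectBimod} (mapping out of a cofibration into a fibrant space gives a Serre fibration) is then insufficient: for $Map_{Aut(T)}(-\,;\,M(|T|))$ to convert an inclusion into a Serre fibration, the inclusion must be an equivariant cofibration of the right kind, and the relevant inclusions are \emph{not} projective $Aut(T)$-cofibrations, because the action has non-trivial stabilizers (for instance, points of $H(T)$ where two branches exchanged by an automorphism carry equal parameters, which need not lie in the boundary locus). This is precisely why the paper develops the notion of cellularly $G$-equivariant cofibrations in the Appendix (Definition \ref{d:ap_cell}): it proves $H^{-}(T)\to H(T)$ is one using the simplicial description of $H(T)$ (Lemma \ref{l:app_ss}), feeds in well-pointedness through the equivariant pushout-product and restriction/induction lemmas (Lemmas \ref{l:app_ind_restr}, \ref{l:app_push_prod}, \ref{l:app_sigma}), and then applies Lemma \ref{l:app_restr} — whose proof rests on fibrancy of the fixed-point spaces $M(|T|)^{H}$ — to conclude that each stage of the tower is a Serre fibration. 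A second, smaller omission: the higher-arity decorations are split off as $D(T)=D_{1}(T)\times D_{\neq 1}(T)$ and absorbed by the exponential adjunction $Map_{Aut(T)}\big(X\times D_{\neq 1}(T)\,;\,M(|T|)\big)\cong Map_{Aut(T)}\big(X\,;\,Map(D_{\neq 1}(T),M(|T|))\big)$, so that only the interval parameters and the $Q(1)$-factors need to carry a cofibration structure; your sketch leaves those factors unaccounted for. Without the equivariant upgrade, the inductive step of your argument does not close.
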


\begin{proof}
The proof is divided  into two parts. First, we identify the matching object $\mathcal{M}(M^{f})(n)$ with a space ${\mathcal M}_{0}(n)$ defined in the same way as $M^{f}(n)$ by changing slightly the construction of the space $H(T)$. Thereafter, we build a tower of fibrations related to $ M^{f}(n)$ and ${\mathcal M}_{0}(n)$ according to the number of vertices of the trees in $\mathbb{P}[n]$. In the
proof we  use equivariant homotopy
theory  techniques
from Appendix~\ref{ss:ap_cell}.\vspace{7pt}

\noindent \textbf{Simplification of the matching object: }
We say that two trees $T_1$ and $T_2$ from $\mathbb{P}[n]$ are equivalent if they are isomorphic as non-planar trees and moreover the isomorphism between $T_1$ and $T_2$ preserves the order of the incoming edges of the root~$r$.
We denote by $\mathbb{T}[n]$ the so obtained set of equivalence classes. For $T\in\mathbb{T}[n]$ we denote by $Aut(T)$
the set of automorphisms of~$T$ which preserve the order of the incoming edges to the root.  Let $|T|$ denote the number of leaves of $T$. 
 For each $T\in\mathbb{T}[n]$ we choose a planar representative and  we label the leaves of $T$ in the corresponding planar order they appear. This gives us a homomorphism $Aut(T)\to \Sigma_{|T|}$, which is not always injective due to
 the presence of arity zero vertices.

Because of the relation~\eqref{rel0}, the space $M^f(n)$ can be equivalently described as the subspace
\[
M^{f}(n)\subset \underset{T\in \mathbb{T}[n]}{\prod} Map_{Aut(T)}\big(\, H(T)\times D(T)\,;\,M(|T|)\,\big)
\]
composed of families of continuous maps $\{f_{T}\}_{T\in \mathbb{T}[n]}$ satisfying the conditions~\eqref{rel1},
\eqref{rel2}, \eqref{rel3} properly understood. 

 For any tree $T\in \mathbb{T}[n]$, we denote by $H_{0}(T)$ the subspace of $H(T)$ which consists of points having at least one univalent vertex labelled by $0$ and connected to the root of $T$. Let ${\mathcal M}_{0}(n)$ be the subspace 
$$
{\mathcal M}_{0}(n)\subset \underset{T\in \mathbb{T}[n]}{\prod} Map_{Aut(T)}\big(\, H_{0}(T)\times D(T)\,;\,M(|T|)\,\big),
$$
satisfying the relations \eqref{rel1},  \eqref{rel2} and \eqref{rel3}. In order to show that the space ${\mathcal M}_{0}(n)$ is homeomorphic to the matching object $\mathcal{M}(M^{f})(n)$, we recall that a point in $\mathcal{M}(M^{f})(n)$ is a family of maps $\{\phi_{T,h}\}$, indexed by $h\in \Lambda_{+}([\ell],[n])$, $T\in \mathbb{T}[\ell]$ and $\ell<n$, satisfying some conditions related to the limit~\eqref{eq:matching} as well as the relations \eqref{rel1},   \eqref{rel2} and \eqref{rel3}.

For each pair $(T,h)$, with $h\in \Lambda_{+}([\ell],[n])$ and $T\in \mathbb{T}[\ell]$, we denote by $T[h]$ the tree in $\mathbb{T}[n]$ obtained from $T$ by adding univalent vertices connected to the root according to the order preserving map $h$ (see Figure \ref{FigureSimplification}). So the map 
$$
\begin{array}{rcl}\vspace{5pt}
\alpha:{\mathcal M}_{0}(n) & \longrightarrow & \mathcal{M}(M^{f})(n); \\ 
\phi=\{\phi_{T}\} & \longmapsto & \{(\alpha\circ\phi)_{T,h}\},
\end{array} 
$$ 
is given by the composite maps
$$
\xymatrix{
(\alpha\circ\phi)_{T,h}:H(T)\times D(T) \ar[r] & H_{0}(T[h])\times D(T[h]) \ar[r]^{\phi_{T[h]}} & M(|T[h]|)=M(|T|),
} 
$$
where the left hand side map consists in indexing the new univalent vertices by the unique point in $Q(0)$ and the real number $0$.

\begin{figure}[!h]
\begin{center}
\includegraphics[scale=0.32]{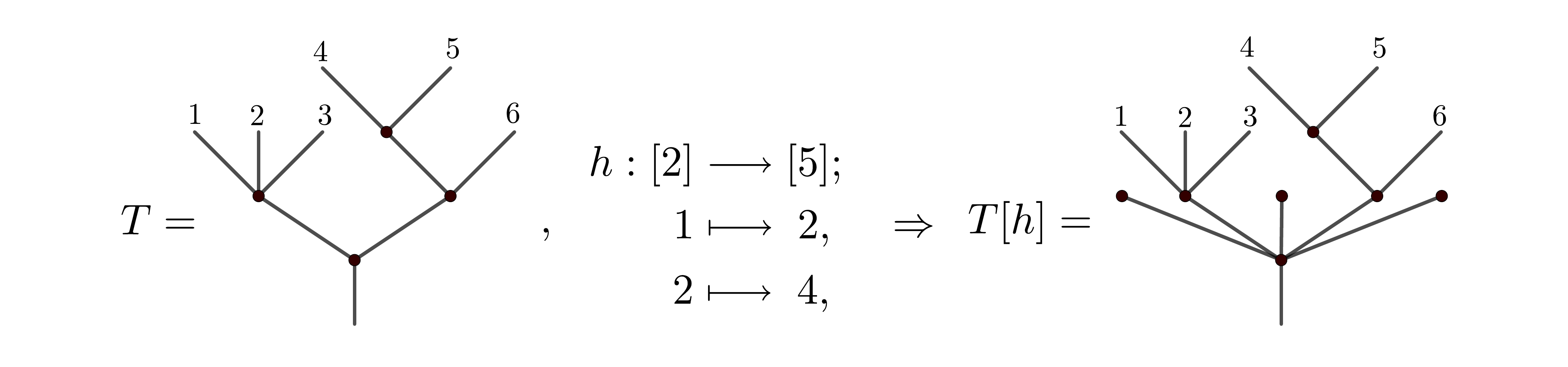}\vspace{-10pt}
\caption{Illustration of the construction of $T[h]$.}\label{FigureSimplification}\vspace{-10pt}
\end{center}
\end{figure}

Conversely, for any tree $T\in \mathbb{T}[n]$ and $\{t_{v}\}\in H_{0}(T)$, we denote by $\ell_{\{t_{v}\}}$ the number of incoming edges of the root of $T$ which are not connected to univalent vertices indexed by $0$. Furthermore, we denote by $h[\{t_{v}\}]:[\ell_{\{t_{v}\}}]\rightarrow [n]$ the order preserving map which keeps in mind the position %(according to the planar order of the tree)
 of the incoming edges of the root which are not connected to a univalent vertex indexed by $0$. Finally, $T[\{t_{v}\}]\in \mathbb{T}[\ell_{\{t_{v}\}}]$ is the tree obtained from $T$ by removing the incoming edges of the root  which are  connected to a univalent vertex indexed by $0$. So, one has the map 
$$
\begin{array}{rcl}\vspace{5pt}
\beta:\mathcal{M}(M^{f})(n) & \longrightarrow &{\mathcal M}_{0}(n) ; \\ 
\phi=\{\phi_{T,h}\} & \longmapsto & \{(\beta\circ\phi)_{T}\},
\end{array} 
$$ 
given by 
$$
\begin{array}{rcl}\vspace{5pt}
(\beta\circ\phi)_{T}:H_{0}(T)\times D(T) & \longrightarrow & M(|T|) ; \\ 
\{t_{v}\}\,,\,\{q_{v}\} & \longmapsto & \phi_{T[\{t_{v}\}],h[\{t_{v}\}]}(\{\tilde{t}_{v}\}\,,\,\{\tilde{q}_{v}\}),
\end{array} 
$$
where the families $\{\tilde{t}_{v}\}$ and $\{\tilde{q}_{v}\}$ are obtained from $\{t_{v}\}$ and $\{q_{v}\}$, respectively, by removing the parameters corresponding to the univalent vertices indexed by $0$ and connected to the root. The map $\beta$ is well defined due to the relations induced by the limit and, together with the map $\alpha$, induces a homeomorphism between ${\mathcal M}_{0}(n)$ and the matching object $\mathcal{M}(M^{f})(n)$. Furthermore, the map from $M^{f}(n)\rightarrow \mathcal{M}(M^{f})(n)$ is equivalent to the restriction map 
$$
r:M^{f}(n)\longrightarrow {\mathcal M}_{0}(n),
$$ 
induced by the inclusion $H_{0}(T)\rightarrow H(T)$ for any tree $T\in \mathbb{T}[n]$.\vspace{7pt}

\noindent \textbf{Construction of the tower of fibrations:} % For both spaces $M^{f}(n)$ and ${\mathcal M}_{0}(n)$, 
 We construct a tower of fibrations according to the number of vertices of the trees in $\mathbb{T}[n]$:
$$
\xymatrix{
{\mathcal M}_{0}(n)=A_0&
A_{1}\ar[l]& \ar[l] \cdots & \ar[l]  A_{k-1} & \ar[l] A_{k} & \ar[l] \cdots & \ar[l] M^{f}(n). 
}
$$

For this purpose, we introduce the set $\mathbb{T}[n,k]$ of  trees in $\mathbb{T}[n]$ having exactly $k$ vertices. In particular, $\mathbb{T}[n,1]$ has only one element which is the $n$-corolla $C_{n}$. The space $A_k$ is defined as the subspace 
\[
A_k\subset \left(\underset{\underset{i\leq k}{T\in \mathbb{T}[n,i]}}{\prod} Map_{Aut(T)}\big(\, H(T)\times D(T)\,;\,M(|T|)\,\big)
\right)\times 
 \left(\underset{\underset{i> k}{T\in \mathbb{T}[n,i]}}{\prod} Map_{Aut(T)}\big(\, H_{0}(T)\times D(T)\,;\,M(|T|)\,\big)
\right)
\]
composed of families of continuous maps $\{f_{T}\}_{T\in \mathbb{T}[n]}$ satisfying the conditions~\eqref{rel1},
\eqref{rel2}, \eqref{rel3}.

% Consequently, one has the identifications $A_{1}=Map( H(C_{n})\times D(C_{n}) \,;\, M(n)))=M(n)$ and $B_{1}=\ast$. 
The space $A_{k}$ is defined by induction from $A_{k-1}$  using the pullback diagram:%\todo{Prendre arbres planaires et Aut(T)-morphismes}
\begin{equation}\label{S1}
\xymatrix@C=15pt{
A_{k} \ar[r]\ar[d] & \underset{[T]\in \mathbb{T}[n,k]}{\displaystyle \prod} Map_{Aut(T)}\big( H(T)\times D(T) \,;\, M(|T|)\,\big) \ar[d]\\
A_{k-1}\ar[r] &  \underset{[T]\in \mathbb{T}[n,k]}{\displaystyle \prod} Map_{Aut(T)}\big( (H\times D)^{-}(T) \,;\, M(|T|)\,\big),
}
%\hspace{15pt}
%\xymatrix@C=15pt{
%B_{k} \ar[r]\ar[d] & \underset{[T]\in \mathbb{T}[n,k]}{\displaystyle \prod} Map_{Aut(T)}\big( H_{0}(T)\times D(T) \,;\, M(|T|)\,\big) \ar[d]\\
%B_{k-1}\ar[r] &  \underset{[T]\in \mathbb{T}[n,k]}{\displaystyle \prod} Map_{Aut(T)}\big( (H_{0}\times D)^{-}(T) \,;\, M(|T|)\,\big).
%}
\end{equation}
 In the diagram above, for any tree $T\in \mathbb{T}[n,k]$, $D^{-}(T)$ is the subspace of $D(T)$ formed by points having at least one bivalent vertex labelled by the unit of the operad $Q$. On the other hand, the space $H^{-}(T)\subset H(T)$ consists of points in $H_{0}(T)$ or having two consecutive vertices indexed by the same real number or having a vertex indexed by $1$. Then we set 
$$
(H\times D)^{-}(T):=\big( H^{-}(T)\times D(T)\big) \underset{H^{-}(T)\times D^{-}(T)}{\coprod}\big( H(T)\times D^{-}(T)\big).
$$ 
%$$
%(H_{0}\times D)^{-}(T):=\big( H_{0}^{-}(T)\times D(T)\big) \underset{H_{0}^{-}(T)\times D^{-}(T)}{\coprod}\big( H_{0}(T)\times D^{-}(T)\big).
%$$ 

%Since the operad $Q$ is well-pointed and $\Sigma$-cofibrant, the inclusion from $D^{-}(T)$ into $D(T)$ is a cofibration. Furthermore, the inclusions $H^{-}(T)\rightarrow H(T)$ and $H^{-}_{2}(T)\rightarrow H_{0}(T)$ are cofibrations as inclusions of $CW$-complexes. As a consequence of Lemma \ref{D2}, the maps 
%$$
%(H\times D)^{-}(T)\longrightarrow H(T)\times D(T)\hspace{15pt} \text{ and } \hspace{15pt} (H_{0}\times D)^{-}(T)\longrightarrow H_{0}(T)\times D(T)
%$$
%are cofibrations and the vertical maps in the diagrams \eqref{S1} are Serre fibrations. \vspace{7pt}

To show that the map $A_k\to A_{k-1}$ is a Serre fibration, one has to prove that the right
vertical map in~\eqref{S1} is one. Let $t$ be the number of arity one (non-root) vertices of~$T$. Denote by
\[
D_1(T) : = Q(1)^{\times t}\,\,,\quad\quad D_{\neq 1}(T) : =   \underset{\underset{|v|\neq 1}{v\in V(T)\setminus \{r\}}}{\prod} Q(|v|).
\]
One has, $D(T)=D_1(T)\times D_{\neq 1}(T)$. We similarly denote by $D_1^{-}(T)$ the subset  consisting of points
having at least one coordinate equal to the unit $\ast_1\in Q(1)$ and define 
$$
(H\times D_1)^{-}(T):=\big( H^{-}(T)\times D_1(T)\big) \underset{H^{-}(T)\times D_1^{-}(T)}{\coprod}\big( H(T)\times D_1^{-}(T)\big). 
$$ 
We have
\begin{equation}\label{eq:HD}
(H\times D)^{-}(T)=(H\times D_1)^{-}(T)\times D_{\neq 1}(T) \quad \text{and} \quad
(H\times D)(T)=(H\times D_1)(T)\times D_{\neq 1}(T).
\end{equation}
Recall Definition~\ref{d:ap_cell} of  a {\it cellularly} equivariant cofibration.  Let $\underline{n}$ denote the ordered set $\{0,1,\ldots,n+1\}$. The dual
 to the simplicial 
indexing category $\Delta$ can be defined as the category having $\{\underline{n}\,,\,\, n\in\NN\}$ as the set of objects with morphisms
order preserving maps that also preserve both extrema. The space $H(T)$ can be described as the realization 
of the simplicial set $H_T(\bullet)$ with $H_T(\underline{n})$ being the set of order preserving maps 
\begin{equation}\label{eq:T_ss}
V(T)\setminus\{r\}\to
\underline{n}.
\end{equation}
 Note that  $H^{-}(T)$ is the realization of a simplicial subset $H^{-}_T(\bullet)\subset H_T(\bullet)$.
It consists of order-preserving maps~\eqref{eq:T_ss} which are either sending an arity zero vertex of $T$ to~0 or non-injective or sending
some vertex of~$T$ to the maximal element. %For $H^{-}(T)$ one should take maps~\eqref{eq:T_ss}
%. For $H_2^-(T)$ one takes the intersection of the latter two. 
 By Lemma~\ref{l:app_ss}, the inclusion $H^{-}(T)\to H(T)$ is a  cellularly
$Aut(T)$-equivariant cofibration. Using the fact that the operad~$Q$ is well-pointed and also applying Lemmas~\ref{l:app_ind_restr}, \ref{l:app_push_prod}, \ref{l:app_sigma}, we get that the inclusion
$(H\times D_1)^{-}(T)\to H(T)\times D_1(T)$ %and $(H_2\times D_1)^{-}(T)\to H_2(T)\times D_1(T)$ are
is a cellularly $Aut(T)$-equivariant cofibration. 

Using the fact that the category $Top$ is cartesian closed, for any $X\in Aut(T)\text{-}Top$, one has a homeomorphism
\begin{equation}\label{eq:Aut_adj}
Map_{Aut(T)}\bigl(X\times D_{\neq 1}(T)\,;\, M(|T|)\bigr) = Map_{Aut(T)}\bigl(X\,;\, Map\bigl(D_{\neq 1}(T)\,,\,M(|T|)
\bigr)\bigr),
\end{equation}
where the $Aut(T)$-action on $ Map\bigl(D_{\neq 1}(T)\,,\,M(|T|)
\bigr)$ is defined by $(f\cdot g)(x)=f(x\cdot g^{-1})\cdot g$, $g\in Aut(T)$. Applying this homeomorphism  to~\eqref{eq:HD} and then 
Lemma~\ref{l:app_restr} to  the right vertical map in~\eqref{S1}, we get that the latter is a  Serre fibration.

\end{proof}

\begin{rmk}\label{r:trunc_bim_Reedy}
The same strategy can be used in order to get a fibrant replacement functor for the Reedy model category of $r$-truncated $(P\text{-}Q)$-bimodules.
The fibrant replacement should be defined as a subspace of the product~\eqref{eq:Mf} with an additional restriction
$|T|\leq r$. The  constraints are the same.
\end{rmk}

\subsubsection{Characterization of Reedy cofibrations for bimodules}\label{Fin1}

It has been shown by the second author \cite[Theorem 8.3.20]{Fre2} that a map of $\Lambda$-sequences is a Reedy cofibration if and only if it
is a projective $\Sigma$-cofibration.
In the context of operads,  he also proves \cite[Theorem 8.4.12]{Fre2} that a map $\phi:P\rightarrow Q$ between reduced operads is cofibrant in the Reedy model category $\Lambda_{\ast}\Operad$ if and only if the corresponding map $\phi_{>0}:P_{>0}\rightarrow Q_{>0}$  is a cofibration in the projective model structure of (not necessarily reduced) operads. In what follows, we prove an analogous version in the context of operadic bimodules. 

\begin{thm}\label{C1}
Let $P$ and $Q$ be as in Theorem~\ref{Z5}. A morphism $\phi:M\rightarrow N$ in the category of (possibly truncated) $(P\text{-}Q)$-bimodules is a Reedy cofibration if and only if the corresponding map $\phi$ is a cofibration in the projective model category of (possibly truncated) $(P\text{-}Q_{>0})$-bimodules.
\end{thm}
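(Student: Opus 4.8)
The plan is to prove the two implications separately by comparing the class of Reedy cofibrations in $\Lambda\Bimod_{P\,;\,Q}$ with the class of projective cofibrations in $\Sigma\Bimod_{P\,;\,Q_{>0}}$, using throughout the restriction functor $\rho\colon\Lambda\Bimod_{P\,;\,Q}\to\Sigma\Bimod_{P\,;\,Q_{>0}}$ that forgets the right action of $\ast_{0}\in Q(0)$. Two structural facts do most of the bookkeeping: the free functors agree on underlying spaces by~\eqref{eq:free}, and $\rho$ preserves colimits (Proposition~\ref{p:tens_hom}), so pushouts and transfinite composites in the two categories are computed by the same formula~\eqref{eq:colim_PQ}. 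I will also use that, in a Reedy model structure, a map is a trivial fibration if and only if it is objectwise a trivial fibration; hence a map $\psi$ is a Reedy trivial fibration in $\Lambda\Bimod_{P\,;\,Q}$ precisely when its underlying map of $\Sigma$-sequences is objectwise a trivial fibration, that is, precisely when $\rho(\psi)$ is a projective trivial fibration in $\Sigma\Bimod_{P\,;\,Q_{>0}}$.

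For the implication ``Reedy cofibration $\Rightarrow$ projective cofibration'' I would first reduce to a cellular extension: every cofibration is a retract of a relative cell complex, and Lemma~\ref{Z0} transports the statement along retracts. A generating Reedy cofibration of $\Lambda\Bimod_{P\,;\,Q}$ is $\mathcal{F}^{\Lambda}_{P\,;\,Q}$ applied to a generating cofibration of $\Lambda Seq_P$, i.e.\ to a latching map $(\Lambda[f],\iota)$. By~\cite[Theorem~8.3.20]{Fre2} (see Remark~\ref{rmk:truncation}) such a latching map is a projective $\Sigma$-cofibration, and since $\mathcal{F}^{\Lambda}_{P\,;\,Q}$ agrees with $\mathcal{F}^{\Sigma}_{P\,;\,Q_{>0}}$ on underlying $\Sigma$-sequences by~\eqref{eq:free}, its image is a projective cofibration of $(P\text{-}Q_{>0})$-bimodules. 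Because $\rho$ preserves the pushouts and transfinite compositions defining a Reedy cell complex, the whole extension restricts to a projective $(P\text{-}Q_{>0})$-cofibration, which settles this direction.

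The converse ``projective cofibration $\Rightarrow$ Reedy cofibration'' is the main obstacle. Here I cannot argue on generators, because the generating projective cofibrations $\mathcal{F}^{\Sigma}_{P\,;\,Q_{>0}}(P_0\sqcup\partial X\to P_0\sqcup X)$ are not maps of $(P\text{-}Q)$-bimodules: a free $(P\text{-}Q_{>0})$-bimodule on a bare $\Sigma$-cell carries no compatible $\ast_{0}$-action. Instead I would test $\phi$ against a Reedy trivial fibration $\psi\colon E\to B$ of $(P\text{-}Q)$-bimodules by the lifting property. Restricting the square along $\rho$ turns $\psi$ into a projective trivial fibration in $\Sigma\Bimod_{P\,;\,Q_{>0}}$, against which $\rho(\phi)$ lifts by hypothesis; the difficulty is that the resulting lift is only a $(P\text{-}Q_{>0})$-bimodule map and need not commute with the $\ast_{0}$-action, equivalently with the $\Lambda$-structure. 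The genuine work is to promote this lift to a $(P\text{-}Q)$-bimodule map, which I would do by reorganizing the cellular attachment of $\phi$ so that each top cell is attached together with its entire $\Lambda$-orbit of $\ast_{0}$-faces, thereby repackaging a single-arity projective attachment as a latching (Reedy) attachment. Carrying this out requires the explicit tree-with-section descriptions of the free bimodule functor and of pushouts from Sections~\ref{C2} and~\ref{B7}, together with the matching-object structure of $\psi$ and an induction on arity in which $E(n)\to\mathcal{M}(E)(n)\times_{\mathcal{M}(B)(n)}B(n)$ is the relevant trivial fibration.

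The crux of the whole proof is precisely this compatibility: I must show that the $\Lambda$-completion of a projective cell attachment is exactly a free Reedy latching cell attachment of $(P\text{-}Q)$-bimodules, and that the relative latching maps produced in the reorganization are cofibrations. It is at this step that the well-pointedness of $Q$ enters, as in the proof of Theorem~\ref{th:sigma_bimod_cof}, to handle the unit and bivalent vertices appearing in the tree combinatorics. Granting this, closure of Reedy cofibrations under pushout, transfinite composition and retract (Lemma~\ref{Z0}) completes the argument. Finally, the truncated statement follows verbatim once all trees and arities are restricted to be $\leq r$, exactly as for the Reedy model structures established in Theorem~\ref{Z5} and Remark~\ref{rmk:truncation}.
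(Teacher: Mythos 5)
Your first implication (Reedy cofibration $\Rightarrow$ projective cofibration in $\Sigma\Bimod_{P\,;\,Q_{>0}}$) is correct and is essentially the paper's own argument: reduce to generating Reedy cofibrations, use the identification~\eqref{eq:free} of the free functors, the fact that cofibrations of $\Lambda$-sequences are $\Sigma$-cofibrations, and colimit preservation by the forgetful functor. The gap is in the converse, which you yourself call ``the main obstacle'': you reduce it to the claim that a projective cellular attachment can be reorganized ``so that each top cell is attached together with its entire $\Lambda$-orbit of $\ast_{0}$-faces'', and then you write ``Granting this \ldots completes the argument''. That granted claim is not a step you can defer --- it \emph{is} the theorem --- and the proposed mechanism does not work as stated. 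The intermediate stages of a projective cellular extension in $\Sigma\Bimod_{P\,;\,Q_{>0}}$ carry no action of $\ast_{0}$ whatsoever, so the ``$\ast_{0}$-faces'' of a cell are not defined at the stage where the cell is attached; the faces that the $\Lambda$-structure of $N$ eventually assigns to points of a cell are arbitrary points of lower arity of $N$, which need not lie in earlier stages of the filtration nor be expressible through the free structure of a latching cell $\mathcal{F}_{P\,;\,Q}^{\Lambda}\bigl(\Lambda[X]\coprod_{\partial\Lambda[X]}\partial\Lambda[Y]\to\Lambda[Y]\bigr)$; and $\phi$ is in general only a \emph{retract} of a cellular map, so its ``cells'' are not even subobjects of $N$. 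In short, ``the $\Lambda$-completion of a projective cell attachment is exactly a free Reedy cell attachment'' is a restatement of what must be proved, not an argument.

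The idea your proposal is missing is the one the paper actually uses for this direction: an induction on arity through the adjunction $ar_{s}=L_{s}\circ T_{s}\dashv cosk_{s}=R_{s}\circ T_{s}$ between the arity filtration and the coskeleton functors on $\Lambda\Bimod_{P\,;\,Q}$. The lifting problem of $\phi$ against a Reedy trivial fibration $p\colon A\to B$ is solved stage by stage: the stage-$s$ problem, whose left map is $ar_{s}(M)\coprod_{ar_{s-1}(M)}ar_{s-1}(N)\to ar_{s}(N)$, is transposed along this adjunction into a lifting problem in $\Sigma\Bimod_{P\,;\,Q_{>0}}$ whose left map is $ar_{s}(\phi)$ --- a projective cofibration, because $ar_{s}$ preserves projective cofibrations --- and whose right map is $cosk_{s}(A)\to cosk_{s-1}(A)\times_{cosk_{s-1}(B)}cosk_{s}(B)$, which is an acyclic Serre fibration by Fresse's analysis of coskeleta of Reedy fibrations of $\Lambda$-sequences. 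The decisive point, which a ``lift projectively and fix it afterwards'' strategy cannot reproduce, is that lifts produced through this adjunction are \emph{automatically} compatible with the $\Lambda$-structure, verified using the identification $cosk_{s-1}(N)(s)=\mathcal{M}(N)(s)$. Finally, one genuine error in your preamble: it is false that, in a Reedy model structure, a map is a trivial fibration if and only if it is objectwise a trivial fibration; Reedy trivial fibrations are the maps whose relative matching maps $A(n)\to\mathcal{M}(A)(n)\times_{\mathcal{M}(B)(n)}B(n)$ are trivial fibrations, which is strictly stronger (consider a $\Lambda$-sequence with contractible components whose restriction maps are not suitably compatible). You only ever use the true implication (Reedy trivial fibration $\Rightarrow$ objectwise trivial fibration), so this does not damage what you did prove, but the ``if and only if'' must be removed.
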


\begin{proof}
First, we show that if the induced map is a cofibration in the projective model category of $(P\text{-}Q_{>0})$-bimodules then the map $\phi:M\rightarrow N$ is a Reedy cofibration in the category of $(P\text{-}Q)$-bimodules. For this purpose, we consider the following lifting problem in the category of $(P\text{-}Q)$-bimodules:
\begin{equation}\label{C5}
\xymatrix{
M\ar[r]^{i}\ar[d]_{\phi} & A\ar@{->>}[d]^{p}_{\simeq}\\
N\ar[r]_{j} \ar@{-->}[ru]^{\exists \varphi ?} & B
}
\end{equation}
where $p:A\rightarrow B$ is an acyclic Reedy fibration. The strategy is to build the map $\varphi$ by induction using an adjunction between $(P\text{-}Q)$-bimodules 
$$
ar_{s}:\Lambda \Bimod_{P\,;\,Q}\rightleftarrows \Lambda \Bimod_{P\,;\,Q}: cosk_{s}.
$$
The functor $ar_{s}$, called the arity filtration functor, sends a $(P\text{-}Q)$-bimodule $M$ to the bimodule $ar_{s}(M)$ defined as a quotient of the free $(P\text{-}Q_{>0})$-bimodule generated by the first $s$ components of $M$ where the equivalence relation is determined by the restriction of the bimodule structure on the first $s$ components of $M$. In other words, if $L_{s}$ denotes the left adjoint to the truncation functor $\TT_{s}$, then the arity filtration functor is given by 
$
ar_{s}=L_{s}\circ \TT_{s}.
$
By construction, $ar_{s}(M)$ is a $(P\text{-}Q)$-bimodule and  one has the identities 
$$
ar_{s-1}\circ ar_{s}=ar_{s-1}, \hspace{20pt}
 \mathrm{colim}_{s}ar_{s}(M)=M \hspace{15pt}\text{and}\hspace{15pt}ar_s(M)(k)=M(k),\text{ for } k\leq s.
$$
Let us notice that, even though this functor is called filtration, the natural map $ar_s(M)\rightarrow M$ might not be an inclusion.
  First, we give an explicit description of the right adjoint of the arity filtration functor. \vspace{7pt}

\noindent \textbf{The $\Lambda$-coskeletons associated to a bimodule.} According to the notation introduced by the second author \cite{Fre2}, the $\Sigma$-sequence $cosk_{s}(M)=\{cosk_{s}(M)(n),n\geq 0\}$, called the $s$-th coskeleton associated to the bimodule $M$, is given by the formula
\begin{equation}\label{DefCoskeleton}
cosk_{s}(M)(n)=\underset{\substack{h\in \Lambda_{+}([i]\,;\,[n])\\ 0\leq i\leq  s}}{\mathrm{lim}} M(i), \hspace{30pt}\text{with } n\geq 0.
\end{equation}
As a matter of fact, the right adjoint $R_s$, to the $s$-truncation functor $T_s$,  is defined by the same formula. Similarly to the arity filtration functor, the functor $cosk_{s}$ is also given by 
$$
cosk_{s}=R_{s}\circ \TT_{s}.
$$

By construction, a point $x\in cosk_{s}(M)(n)$ is a family of elements $x=\{x_{h}\in M(i), h\in \Lambda_{+}([i]\,;\,[n])\text{ and } i\leq s\}$ satisfying the relation of the limit: for any $f\in \Lambda_{+}([i]\,;\,[j])$ and  $g\in \Lambda_{+}([j]\,;\,[n])$, one has $f^{\ast}(x_{g})=x_{g\circ f}$. In particular, $cosk_{s}(M)(0)=M(0)$ is obviously endowed with a map from $P(0)$. Furthermore, the $\Sigma$-sequence $cosk_{s}(M)$ inherits a $\Lambda$-structure. For any $f\in \Lambda_{+}([n_{1}]\,;\,[n_{2}])$, one has 
\begin{equation}\label{CoskeletonLambda}
\begin{array}{rcl}\vspace{7pt}
f^{\ast}:cosk_{s}(M)(n_{2}) & \longrightarrow & cosk_{s}(M)(n_{1});\\ 
\{x_{u}\}_{h\in \Lambda_{+}([i]\,;\,[n_{2}])}^{0\leq i\leq s} & \longmapsto & \{x_{f\circ u}\}_{u\in \Lambda_{+}([i]\,;\,[n_{1}])}^{0\leq i\leq s}.
\end{array} 
\end{equation}

The $\Lambda$-sequence $cosk_{s}(M)$ is also a $(P\text{-}Q)$-bimodule. In order to define the right operations, we introduce some notation. Let $n,m>0$, $l\in \{1,\ldots,n\}$ and $h\in \Lambda_{+}([i]\,;\,[n+m-1])$. If we denote by $\ell_{1}\in \Lambda_{+}([m]\,;\,[n+m-1])$ and $\ell_{2}\in \Lambda_{+}([n]\,;\,[n+m-1])$ the order preserving maps
$$
\begin{array}{rclcrcl}\vspace{3pt}
\ell_{1}:[m] & \longrightarrow & [n+m-1]; & \hspace{10pt}\text{and}\hspace{10pt} & \ell_{2}:[n] & \longrightarrow & [n+m-1]; \\ 
\alpha & \longmapsto & \alpha +\ell, &  & \alpha & \longmapsto & \left\{
 \begin{array}{ll}
 \alpha & \text{if } \alpha\leq \ell, \\ 
 \alpha+m & \text{if } \alpha > \ell,
 \end{array} 
 \right.
\end{array} 
$$

\noindent then there exist unique morphisms $h_{1}$ and $h_{2}$ making the following diagrams commute:
$$
\xymatrix{
[i]\ar[r]^{h} & [n+m-1] \\
[|\mathrm{Im}(\ell_{1})\cap Im(h)|]\ar[u] \ar[r]_{\hspace{30pt}h_{1}} & [m]\ar[u]_{\ell_{1}} 
}\hspace{30pt}
\xymatrix{
[i]\ar[r]^{h} & [n+m-1] \\
[|\mathrm{Im}(\ell_{2}\setminus \{\ell\})\cap Im(h)|]\ar[u] \ar[r]_{\hspace{40pt}h_{2}} & [n]\ar[u]_{\ell_{2}} 
}
$$
Finally, if we denote by $\overline{\ell}=\ell-|\{\alpha\in [i]\,|\, h(\alpha)<\ell\}|$, then the right operations are defined as follows:
\begin{equation}\label{CoskeletonStruct}
\begin{array}{rcl}\vspace{5pt}
\circ^{i}:cosk_{s}(M)(n)\times Q(m) & \longrightarrow & cosk_{s}(M)(n+m-1);  \\ 
\{x_{u}\}_{u\in \Lambda_{+}([i]\,;\,[n])}^{0\leq i\leq s}\,\,;\,\,q & \longmapsto & \{x_{h_{2}}\circ^{\overline{\ell}}h_{1}^{\ast}(q)\}_{h\in \Lambda_{+}([i]\,;\,[n+m-1])}^{0\leq i\leq s}
\end{array} 
\end{equation}

Let $k_{1},\ldots,k_{\ell}>0$ and $h\in \Lambda_{+}([i]\,;\,[k_{1}+\cdots+k_{\ell}])$. In order to define the left operation, we introduce the morphism $\ell_{i}\in \Lambda_{+}([k_{i}]\,;\,[k_{1}+\cdots+k_{\ell}])$ sending $\alpha$ to $\alpha +k_{1}+\cdots + k_{i-1}$. Then there exists a unique morphism $h_{i}$ such that the following diagram commutes:
$$
\xymatrix{
[i] \ar[r]^{h} & [k_{1}+\cdots +k_{\ell}]\\
[|Im(h)\cap \mathrm{Im}(\ell_{i})|] \ar[r]_{h_{i}}\ar[u] & [k_{i}]\ar[u]_{\ell_{i}} 
}
$$
Finally, the left operation is given by the formula
$$
\begin{array}{rcl}%\vspace{7pt}
\gamma:P(\ell)\times cosk_{s}(M)(k_{1})\times \cdots \times cosk_{s}(M)(k_{\ell}) & \longrightarrow  & cosk_{s}(M)(k_{1}+\cdots + k_{\ell}); \\ 
p\,;\,\{x_{u_{1}}^{1}\}\,,\,\ldots, \{x_{u_{\ell}}^{\ell}\} & \longmapsto & \{p(x_{h_{1}}^{1},\ldots, x_{h_{\ell}}^{\ell})\}_{h\in \Lambda_{+}([i]\,;\,[k_{1}+\cdots + k_{\ell}])}^{0\leq i\leq s}.
\end{array}% \vspace{7pt}
$$

\noindent \textbf{The arity filtration and the coskeleton functors form an adjunction.} For any pair of reduced $(P\text{-}Q)$-bimodules $M$ and $N$, we 
define below a homeomorphism between the  mapping spaces of $(P\text{-}Q)$-bimodules:
\begin{equation}\label{C6}
F:\Lambda\Bimod_{P\,;\,Q}(ar_{s}(M)\,;\,N)\rightleftarrows \Lambda\Bimod_{P\,;\,Q}(M\,;\,cosk_{s}(N)):G.
\end{equation}
Let $f:M\rightarrow cosk_{s}(N)$ be a bimodule map. The bimodule map $G(f)$ is defined by induction. If $n\leq s$ and $x\in ar_{s}(M)(n)=M(n)$, then $G(f)(x)=f(x)_{[n]\rightarrow [n]}$, the point indexed by the identity order preserving map. From now on, we assume that $G(f)$ is defined for any element $ar_{s}(M)$ until the arity $n\geq s$. Let $x\in ar_{s}(M)(n+1)$. Then one has
$$
G(f)(x)= \left\{
\begin{array}{ll}\vspace{5pt}
G(f)(x')\circ^{i}y & \text{if } x=x'\circ^{i}y, \\ 
y(G(f)(x_{1}),\ldots,G(f)(x_{\ell})) & \text{if } x=y(x_{1},\ldots,x_{\ell}).
\end{array} 
\right.
$$
Conversely, let $g:ar_{s}(M)\rightarrow N$ be a bimodule map. Then one has
$$
\begin{array}{rcl}\vspace{5pt}
F(g)_{n}:M(n) & \longrightarrow & cosk_{s}(N)(n);  \\ 
x  & \longmapsto & \{x_{h}=g\circ h^{\ast}(x)\}_{h\in \Lambda_{+}([i]\,;\,[n])}^{0\leq i\leq s} .
\end{array} 
$$

\noindent \textbf{The arity functor $ar_{s}$ preserves $\Sigma\Bimod_{P,Q_{>0}}$-cofibrations.} The functor $ar_s$ can be defined on the category $\Sigma\Bimod_{P,Q_{>0}}$ and it fits into a Quillen adjunction
\begin{equation}\label{eq:tr>0_adj}
ar_s\colon\Sigma\Bimod_{P,Q_{>0}}\rightleftarrows\Sigma\Bimod_{P,Q_{>0}}\colon tr_s,
\end{equation}
where $tr_s$ is the truncation functor
$
tr_s(L)(n)=
\begin{cases}
L(n),&n\leq s;\\
*,& n>s.
\end{cases}
$
 
One has a square of functors
\[
\xymatrix{
\Lambda\Bimod_{P\,;\,Q} \ar[r]^{ar_s} \ar[d]_{\mathcal{U}} &  \Lambda\Bimod_{P\,;\,Q} \ar[d]^{\mathcal{U}} \\
\Sigma\Bimod_{P,Q_{>0}} \ar[r]^{ar_s} & \Sigma\Bimod_{P,Q_{>0}}.
}
\]
The vertical arrows denote the forgetful functor.
This square commutes by the same argument as in the case of operads, see \cite[Theorem 8.4.12]{Fre2}. One checks it 
first for free bimodules: $ar_s\mathcal{F}_{P\,;\,Q}^\Lambda(L)=\mathcal{F}_{P\,;\,Q}^\Lambda(L_{\leq s})$ and 
$ar_s\mathcal{F}_{P\,;\,Q_{> 0}}^\Sigma(L)=\mathcal{F}_{P\,;\,Q_{>0}}^\Sigma(L_{\leq s})$.  
Here, 
$L_{\leq s}$
denotes the sequence
$$ 
\begin{cases}
L(n),&n\leq s;\\
\emptyset,& n>s.
\end{cases}
$$
It follows from~\eqref{eq:free} that  the square commutes for free
bimodules. The forgetful functor $\mathcal{U}$ preserves colimits as it has a right adjoint by Proposition~\ref{p:tens_hom}b for $Q_1=Q$, $Q_2=Q_{>0}$, $Y=Q$.  So do the functors $ar_s$.
On the other hand, any bimodule can naturally be seen as a coequalizer of free ones. The adjunction~\eqref{eq:tr>0_adj} is  a Quillen adjunction as $tr_s$ preserves fibrations and equivalences. As a consequence, $ar_s\colon \Sigma\Bimod_{P,Q_{>0}}\to \Sigma\Bimod_{P,Q_{>0}}$ preserves cofibrations.\vspace{7pt}

\noindent \textbf{Construction of the lift by induction.} We work out our lifting problem (\ref{C5}) by an inductive construction on arity. By definition, one has $ar_{0}(N)=N_{0}$ (that is $N(0)$ in arity $0$ and the empty set otherwise) and $ar_{0}(M)=M_{0}$. Consequently, the bimodule map $\varphi_{0}:ar_{0}(N)\rightarrow A$ is obtained as a lift of the following diagram of $P$-algebras: 
\[
\xymatrix{
M(0) \ar[r]^{i_{0}} \ar[d]_{\phi_{0}} & A(0) \ar[d]^{p_{0}} \\
N(0) \ar[r]_{j_{0}} & B(0)
}
\]
Such a lift exists. Indeed, the left vertical map is a cofibration of $P$-algebras since the arity functor preserves cofibrations. Moreover, since  $p$ is an acyclic Reedy fibration, the map $A(0)\rightarrow B(0)\times_{\mathcal{M}(B)(0)}\mathcal{M}(A)(0)=B(0)$ is an acyclic Serre fibration.  

Then we assume that the bimodule map $\varphi_{s-1}:ar_{s-1}(N)\rightarrow A$ is well defined. We consider the following diagram in the category of reduced $(P\text{-}Q)$-bimodules: %\todo{explain}
\begin{equation}\label{C7}
\xymatrix@C=60pt{
ar_{s}(M)\underset{ar_{s-1}(M)}{\displaystyle\coprod}  ar_{s-1}(N) \ar[r]^{\hspace{40pt}(i\circ \iota\,;\, \varphi_{s-1})}\ar[d]_-{(ar_{s}(\phi)\,;\, \iota)} & A \ar@{->>}[d]^{p}_\simeq \\
ar_{s}(N) \ar[r]_{j\circ \iota} \ar@{-->}[ru]_{\exists \varphi_{s}?}& B
}
\end{equation}
where the upper horizontal map is defined using the map $i\circ \iota:ar_{s}(M)\rightarrow M \rightarrow A$ on the first summand and the map $\varphi_{s-1}:ar_{s-1}(N)\rightarrow A$ on the second summand. By applying the identifications $ar_{s-1}\circ ar_{s}=ar_{s-1}$ and $ar_{s}\circ ar_{s}=ar_{s}$ together with the adjunction (\ref{C6}), we get that the lifting problem (\ref{C7}) becomes equivalent to a lifting problem of the following form in the category of reduced $(P\text{-}Q)$-bimodules: %\todo{I am confused by your argument. Why lift in (38) is equivalent to a lift in (39)?}
\begin{equation}\label{C8}
\xymatrix@C=70pt{
ar_{s}(M) \ar[r]^{F(i\circ \iota)} \ar[d]_{ar_{s}(\phi)} & cosk_{s}(A)\ar[d]^{\pi\times cosk_{s}(p)}_\simeq \\
ar_{s}(N) \ar[r]_{\hspace{-45pt}F(\varphi_{s-1})\times F(ar_{s}(j))} \ar@{-->}[ru]^{\exists \tilde{\varphi}_{s}} & cosk_{s-1}(A) \underset{cosk_{s-1}(B)}{\times} cosk_{s}(B).
}
\end{equation}
The left vertical map is a cofibration in $\Sigma\Bimod_{P,Q_{>0}}$ since the arity functor preserves cofibrations. On the other hand, the right vertical map is an acyclic Serre fibration in both the projective model category of $\Sigma$-sequences and the Reedy model categories of $\Lambda$-sequences, see the proof of  \cite[Theorem 8.3.20]{Fre2}.  In that proof  the second author has a similar diagram (loc. cit. equation~(6)), but in the category of $\Lambda$-sequence.   The coskeleton functors are defined on the category $\Lambda Seq$ by the same formula~\eqref{DefCoskeleton}. Thus, the second author shows that for any Reedy fibration $A\to B$ of $\Lambda$-sequences, the induced map
$$
cosk_s(A)\to cosk_{s-1}(A) \underset{cosk_{s-1}(B)}{\times} cosk_{s}(B)
$$
is an acyclic both  Reedy and (therefore) Serre fibration. As
 a consequence the lift $\tilde{\varphi_{s}}$ of Diagram (\ref{C8}) exists in the category  of $(P\text{-}Q_{>0})$-bimodules. \vspace{7pt}
 %Consequently, the map $\varphi_{s}=G(\tilde{\varphi_{s}})$ provides a solution to the lifting problem of Diagram (\ref{C7}).

\noindent \textbf{The morphism $\varphi$ is compatible with the $\Lambda$-structure.} By definition, let us remark that one has the identifications $cosk_{s-1}(N)(s)=\mathcal{M}(N)(s)$ and $cosk_{s-1}(A)(s)=\mathcal{M}(A)(s)$. By construction of the map $F$ from~(\ref{C6}) applied to  $\varphi_{s-1}:ar_{s-1}\circ ar_{s}(N)(s)\rightarrow A(s)$, on has the factorization
$$
\xymatrix{
F(\varphi_{s-1}):ar_{s}(N)(s)=N(s) \ar[r] & \mathcal{M}(N)(s) \ar[r]^{\mathcal{M}(p)} & \mathcal{M}(A)(s).
}
$$ 
Consequently, the commutativity of diagram (\ref{C8}) implies the commutativity of the square:
$$
\xymatrix{
ar_{s}(N)(s)=N(s) \ar[r] \ar[d] & cosk_{s}(A)(s)=A(s) \ar[d]^{\pi} \\
\mathcal{M}(N)(s) \ar[r] & cosk_{s-1}(A)(s)=\mathcal{M}(A)(s).
}
$$
It follows that our morphism $\tilde{\varphi}_{s}$ intertwines the action of the restriction operators $h:[n_{1}]\rightarrow [n_{2}]$.
This proves that  $\tilde{\varphi}_s$ and, therefore, $\varphi_s$ are morphisms of reduced bimodules preserving the $\Lambda$-structure. Thus so is $\varphi=\mathrm{lim}_{s}\varphi_{s}$.\vspace{5pt}

\noindent \textbf{Conversely.} %\todo{voir commentaire Victor} 
The forgetful functor $\mathcal{U}\colon\Lambda\Bimod_{P\,;\,Q}\to\Sigma\Bimod_{P,Q_{>0}}$ preserves colimits. Therefore,
it is enough to check the statement for  generating  cofibrations in $\Lambda\Bimod_{P\,;\,Q}$. The latter are given by applying the free
functor $\mathcal{F}_{P\,;\,Q}^\Lambda$ to the generating  cofibrations of $\Lambda Seq_P$. On the other hand,
$\Lambda$-cofibrations are always $\Sigma$-cofibrations and the free functor $\mathcal{F}_{P\,;\,Q}^\Lambda$ 
agrees with $\mathcal{F}_{P\,;\,Q_{>0}}^\Sigma$, see~\eqref{eq:free}.
%
%  It has been established that each map in the set of generating (acyclic) cofibrations of $\Lambda$-sequences is an (acyclic) cofibration of $\Sigma$-sequences with the empty set in arity $0$. By adjunction, this result implies that the morphism of free bimodules induced by a generating (acyclic) cofibration of $\Lambda$-sequences has the left lifting property with respect to all acyclic fibration (just fibration) in the category of bimodules with the empty set in arity $0$. Consequently, the relative cell complexes of generating (acyclic) cofibrations of reduced bimodules define (acyclic) cofibrations in the category of bimodules with the empty set in arity $0$ and equipped with the projective model category structure. 
\end{proof}

\subsubsection{Left properness and extension/restriction functors for the Reedy model structure}\label{CH0}

As seen in Section \ref{H2}, under some conditions on the operads, the projective model category of bimodules is relatively left proper % relative to the class of componentwise cofibrant bimodules 
  and the extension/restriction adjunctions are Quillen equivalences. In what follows, we show that the Reedy model category inherits the same properties as a consequence of the characterization  of  Reedy cofibrations in the previous section. %We also show in Section \ref{reduced} that if we restrict our Reedy model category $\Lambda\Bimod_{P\,;\,Q}$ to subcategories, then we can slightly improve the assumptions on the operads $P$ and $Q$. 

\begin{thm} \label{ThmProperness} 
Let $P$ be an operad and $Q$ be a reduced and well-pointed operad. The Reedy model category  $\Lambda\Bimod_{P\,;\,Q}$
is right proper. If $P(0)\in Top$ is a cofibrant space, $P_{>0}\in\Sigma\Operad$ is a cofibrant operad and $Q$ is componentwise cofibrant, then 
 $\Lambda\Bimod_{P\,;\,Q}$ is left proper relative to the class $\mathcal{S}$ of componentwise cofibrant bimodules for which the  arity zero left action map $\gamma_0$ is a cofibration.
 In the latter case, cofibrations with domain in $\mathcal{S}$ are componentwise cofibrations, implying that the class of objects $\mathcal{S}$ is closed under cofibrations. If in addition $Q$ is $\Sigma$-cofibrant, then cofibrations with domain in the subclass $\mathcal{S}^\Sigma\subset\mathcal{S}$
 of $\Sigma$-cofibrant objects are $\Sigma$-cofibrations, and $\mathcal{S}^\Sigma$ is also closed under cofibrations.
\end{thm}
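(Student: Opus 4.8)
The plan is to establish right properness by a direct objectwise argument, and to obtain the left properness and the two cofibration statements by transporting the corresponding results for the projective model category (Theorems~\ref{C4} and~\ref{th:sigma_bimod_cof}) through the comparison of Reedy and projective cofibrations proved in Theorem~\ref{C1}.

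For right properness, I would first recall that a Reedy fibration is automatically an objectwise Serre fibration. Indeed, the matching map factors as $X(n)\to\mathcal{M}(X)(n)\times_{\mathcal{M}(Y)(n)}Y(n)\to Y(n)$, whose first arrow is a fibration by definition of a Reedy fibration and whose second arrow is the base change of $\mathcal{M}(X)(n)\to\mathcal{M}(Y)(n)$; an induction on the arity (the matching object at $n$ involves only lower arities, together with Lemma~\ref{E6}) shows these matching object maps are fibrations, so the composite is one. Since the Reedy structure on $\Lambda\Bimod_{P\,;\,Q}$ is transferred along $\mathcal{U}^{\Lambda}$, Reedy fibrations of bimodules are objectwise Serre fibrations. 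As $\mathcal{U}^{\Lambda}$ is a right adjoint it preserves pullbacks, so pullbacks of bimodules are computed objectwise; weak equivalences are objectwise by definition. A pullback of a weak equivalence along a Reedy fibration is then objectwise a pullback of a weak homotopy equivalence along a Serre fibration, and right properness of $Top$ gives the conclusion.

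For the remaining statements I would pass to the projective model category $\Sigma\Bimod_{P\,;\,Q_{>0}}$, using three compatibilities. By Theorem~\ref{C1}, a map of (possibly truncated) $(P\text{-}Q)$-bimodules is a Reedy cofibration exactly when it is a projective $(P\text{-}Q_{>0})$-cofibration; by the identity~\eqref{eq:colim_PQ}, pushouts in $\Lambda\Bimod_{P\,;\,Q}$ agree objectwise with pushouts in $\Sigma\Bimod_{P\,;\,Q_{>0}}$; and the weak equivalences coincide, being objectwise weak homotopy equivalences in both cases. Since the underlying $\Sigma$-sequence and the arity zero map $\gamma_0$ are unaffected by forgetting the $\ast_0$-action, the class $\mathcal{S}$ and its subclass $\mathcal{S}^{\Sigma}$ are literally the same classes in both categories.

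It then remains to check that $Q_{>0}$ inherits the hypotheses placed on the right acting operad: componentwise cofibrancy of $Q$ passes to $Q_{>0}$ because its only altered component $Q_{>0}(0)=\emptyset$ is cofibrant, and $\Sigma$-cofibrancy passes for the same reason. Applying Theorem~\ref{C4} to the pair $(P,Q_{>0})$ and transporting the pushout square through the three identifications above yields left properness relative to $\mathcal{S}$; applying Theorem~\ref{th:sigma_bimod_cof}(a) and~(b) to $(P,Q_{>0})$ yields that cofibrations with domain in $\mathcal{S}$ (respectively $\mathcal{S}^{\Sigma}$) are componentwise (respectively $\Sigma$-) cofibrations, together with the closure statements. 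I expect the only genuine subtlety to be this bookkeeping: confirming that the hypotheses transfer from $Q$ to $Q_{>0}$ and that the classes $\mathcal{S}$, $\mathcal{S}^{\Sigma}$ and the pushouts match exactly, so that the projective results apply verbatim to the Reedy diagrams.
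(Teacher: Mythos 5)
Your proposal is correct and follows essentially the same route as the paper: right properness is obtained objectwise from the facts that Reedy fibrations are componentwise Serre fibrations, pullbacks of bimodules are computed componentwise, and $Top$ is right proper; the remaining statements are transported from Theorems~\ref{C4} and~\ref{th:sigma_bimod_cof} for $\Sigma\Bimod_{P\,;\,Q_{>0}}$ via the forgetful functor, which preserves pushouts (equation~\eqref{eq:colim_PQ}) and creates cofibrations (Theorem~\ref{C1}). Your bookkeeping that $Q_{>0}$ inherits componentwise (resp.\ $\Sigma$-) cofibrancy and that the classes $\mathcal{S}$, $\mathcal{S}^{\Sigma}$ match under the forgetful functor is exactly the check the paper's argument implicitly relies on.
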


\begin{proof}
The first statement is proved in exactly the same way as Theorem~\ref{th:right_proper} using the facts that a Reedy fibration is always a projective (componentwise) fibration and that  pullbacks (and more generally limits) in the category of  bimodules are defined componentwise and the fact that the category of spaces is right proper.  
For the other statements, we first recall that the forgetful functor $\mathcal{U}\colon \Lambda\Bimod_{P\,;\,Q}\to \Sigma\Bimod_{P\,;\,Q_{>0}}$ preserves colimits\footnote{In fact
the forgetful functor from the category of $(P$-$Q)$-bimodules to left $P$-modules creates colimits. It preserves colimits as it has a right adjoint by Proposition~\ref{p:tens_hom}b for 
$Q_1=Q$, $Q_2= \mathbb{1}$, $Y=Q$. The fact that a cocone in bimodules is a colimit  if it is a colimit in the category of left modules is easily verified.}. In particular, pushouts are sent to pushouts.
By Theorem~\ref{C1}, it also creates cofibrations. The statements then follow from the analogous statements Theorems~\ref{C4} and~\ref{th:sigma_bimod_cof} for $\Sigma\Bimod_{P\,;\,Q_{>0}}$.
%
%For the second statement, we consider the following pushout diagram in the category $\Lambda \Bimod_{P\,;\,Q}$:
%\begin{equation}\label{H3}
%\xymatrix{
%A\ar[r]^{f}_{\simeq} \ar@{^{(}->}[d]_{g} & B \ar[d]^{j} \\
%C \ar[r]_{i} & D
%}
%\end{equation}
%where the map $f$ is a weak equivalence between componentwise cofibrant bimodules whereas the map $g$ is a Reedy cofibration. By construction of the pushout in the Reedy model category of bimodules, the above diagram \eqref{H3} is also a pushout diagram in the projective model category of $(P\text{-}Q_{>0})$-bimodules. Furthermore, the map $f$ induces a weak equivalence $f:A\rightarrow B$ between componentwise cofibrant $(P\text{-}Q_{>0})$-bimodules and, as a consequence of Theorem \ref{C1}, the map $g$ induces a cofibration $g:A\rightarrow C$ in the projective model category of $(P\text{-}Q_{>0})$-bimodules. Since the operad $P$ is cofibrant with $P(0)=\emptyset$, the projective model category of $(P\text{-}Q_{>0})$-bimodules is left proper with respect to componentwise cofibrant objects and diagram \eqref{H3} admits a lift.
\end{proof}

Let $\phi_{1}:P\rightarrow P'$ be a weak equivalence of operads and $\phi_{2}:Q\rightarrow Q'$ be a weak equivalence of reduced  operads. Similarly to Section \ref{E8}, we show that,
under some conditions, the Reedy model categories of $(P\text{-}Q)$-bimodules and $(P'\text{-}Q')$-bimodules are Quillen equivalent. By abuse of notation, we denote by $\phi^{\ast}$ and $\phi_{!}$ the restriction functor and the extension functor, respectively, between the Reedy model categories:
\begin{equation}
\phi_{!}:\Lambda \Bimod_{P\,;\,Q}\rightleftarrows \Lambda \Bimod_{P'\,;\,Q'}:\phi^{\ast}.
\end{equation}
In the same way as in Section \ref{E8}, for any $M\in \Lambda \Bimod_{P\,;\,Q}$ and $M'\in \Lambda \Bimod_{P'\,;\,Q'}$, one has
$$
\begin{array}{rcl}\vspace{7pt}
\phi_{!}(M) & = & \{\phi_{!}(M)(n)=\mathcal{F}_{P';Q'}^{\Lambda}(\mathcal{U}^\Lambda(M))(n)/\sim,\,\,\,n\geq 0 \}, \\ 
\phi^{\ast}(M') & = & \{\phi^{\ast}(M')(n)=M'(n),\,\,n\geq 0\}. 
\end{array} 
$$ 

\begin{thm}\label{ThmExt/rest}
Let $\phi_{1}\colon P\rightarrow P'$ be a  weak equivalences of $\Sigma$-cofibrant operads and $\phi_{2}\colon Q\rightarrow Q'$  be a weak equivalence between reduced and componentwise cofibrant operads. Then  one has Quillen equivalences
\begin{equation}\label{eq:bimod_ind_restr3}
\phi_!\colon\Lambda\Bimod_{P\,;\,Q}\rightleftarrows\Lambda\Bimod_{P'\,;\,Q'}\colon\phi^*,
\end{equation}
\begin{equation}\label{eq:tr_bimod_ind_restr}
\phi_!\colon\TT_r\Lambda\Bimod_{P\,;\,Q}\rightleftarrows\TT_r\Lambda\Bimod_{P'\,;\,Q'}\colon\phi^*.
\end{equation}
\end{thm}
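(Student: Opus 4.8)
The plan is to reduce the statement to the projective Quillen equivalence of Theorem~\ref{G6} by means of the forgetful functor $\mathcal{U}\colon\Lambda\Bimod_{P\,;\,Q}\to\Sigma\Bimod_{P\,;\,Q_{>0}}$ and the characterization of Reedy cofibrations of Theorem~\ref{C1}. Since the underlying categories $\Lambda\Bimod_{P\,;\,Q}$ and $\Sigma\Bimod_{P\,;\,Q}$ coincide and the functors $\phi_{!}$, $\phi^{\ast}$ are literally the ones introduced in Section~\ref{E8}, the pair~\eqref{eq:bimod_ind_restr3} is already an adjunction; only the model structures change, so the work is entirely about (co)fibrations and the unit.

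First I would check that $(\phi_{!},\phi^{\ast})$ is a Quillen adjunction for the Reedy structures. The restriction functor $\phi^{\ast}$ is the identity on underlying $\Lambda$-sequences: because $\phi_{2}$ is a morphism of reduced operads one has $\phi_{2}(\ast_{0})=\ast_{0}$, so the $\Lambda$-operations $s_{i}^{\ast}=(-)\circ^{i}\ast_{0}$ of $\phi^{\ast}(M')$ agree with those of $M'$. Hence $\phi^{\ast}$ preserves the matching objects $\mathcal{M}(-)$ and therefore Reedy fibrations, while preserving objectwise weak equivalences and therefore acyclic Reedy fibrations. This yields the Quillen adjunction, and the same verification works in the truncated case.

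For the Quillen equivalence, since $\phi^{\ast}$ both preserves and reflects all weak equivalences, it suffices to show that the unit $\eta_{M}\colon M\to\phi^{\ast}\phi_{!}(M)$ is a weak equivalence for every Reedy-cofibrant $M$; the derived unit coincides with $\eta_{M}$ because all objects of $\Sigma\Bimod_{P'\,;\,Q'}$ are fibrant by Theorem~\ref{ProjectBimod}, and the second defining condition of a Quillen equivalence is automatic from $\phi^{\ast}$ reflecting weak equivalences. The key input is a compatibility between extension and the forgetful functor: writing $\psi=(\phi_{1},\phi_{2,>0})\colon(P,Q_{>0})\to(P',Q'_{>0})$, I claim that the underlying $(P'\text{-}Q'_{>0})$-bimodule of $\phi_{!}(M)$ is canonically $\psi_{!}(\mathcal{U}(M))$ and that $\mathcal{U}(\eta_{M})=\eta_{\mathcal{U}(M)}$. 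This follows from~\eqref{eq:free}, which identifies $\mathcal{F}^{\Lambda}_{P'\,;\,Q'}$ with $\mathcal{F}^{\Sigma}_{P'\,;\,Q'_{>0}}$ on underlying objects, together with the observation that the defining relation of the extension functor only contracts vertices labelled by $\phi_{1}(p)$ below the section and by $\phi_{2}(q)$ above the section with $q$ of positive arity (the reduced trees with section carry no univalent non-pearl vertices); thus the relation uses only $\phi_{1}$, $\phi_{2,>0}$ and the $(P\text{-}Q_{>0})$-module structure of $M$, and the unit sends a point to its corolla in either description.

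With this compatibility the argument closes formally. By Theorem~\ref{C1}, and since $\mathcal{U}$ preserves initial objects (it is a left adjoint by Proposition~\ref{p:tens_hom}b, hence preserves colimits), a Reedy-cofibrant $M$ has $\mathcal{U}(M)$ projectively cofibrant in $\Sigma\Bimod_{P\,;\,Q_{>0}}$. The operads $Q_{>0}$, $Q'_{>0}$ are componentwise cofibrant and $\phi_{2,>0}$ is a weak equivalence, while $\phi_{1}$ is a weak equivalence of $\Sigma$-cofibrant operads, so Theorem~\ref{G6} applies to $\psi$ and shows $(\psi_{!},\psi^{\ast})$ is a Quillen equivalence; as every object of $\Sigma\Bimod_{P'\,;\,Q'_{>0}}$ is fibrant, the unit $\eta_{\mathcal{U}(M)}\colon\mathcal{U}(M)\to\psi^{\ast}\psi_{!}(\mathcal{U}(M))$ is a weak equivalence. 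Because $\mathcal{U}$ is the identity on underlying spaces and $\mathcal{U}(\eta_{M})=\eta_{\mathcal{U}(M)}$, the unit $\eta_{M}$ is an objectwise weak equivalence, proving~\eqref{eq:bimod_ind_restr3}; the truncated statement~\eqref{eq:tr_bimod_ind_restr} is obtained verbatim from the truncated versions of Theorems~\ref{G6} and~\ref{C1}. The main obstacle I anticipate is establishing the identification $\mathcal{U}\phi_{!}\cong\psi_{!}\mathcal{U}$ and $\mathcal{U}(\eta_{M})=\eta_{\mathcal{U}(M)}$ at the level of the combinatorial tree descriptions of the extension functor; once this is in place everything else is either formal or a direct consequence of earlier results.
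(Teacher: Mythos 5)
Your proposal is correct and follows essentially the same route as the paper's proof: both reduce to the projective Quillen equivalence of Theorem~\ref{G6} for the pair $(\phi_{1},\phi_{2,>0})\colon(P,Q_{>0})\to(P',Q'_{>0})$, using the identification $\phi^{\ast}(\phi_{!}(M))=\tilde{\phi}^{\ast}(\tilde{\phi}_{!}(M))$ together with the characterization of Reedy cofibrations (Theorem~\ref{C1}) to conclude that a Reedy cofibrant $M$ is projectively cofibrant over $(P,Q_{>0})$, whence the unit is a weak equivalence. Your extra verifications (that $\phi^{\ast}$ preserves matching objects, and the tree-level identification $\mathcal{U}\phi_{!}\cong\psi_{!}\mathcal{U}$) are details the paper leaves implicit, and the only slight imprecision is justifying the identification of the derived unit via projective fibrancy rather than via the fact that $\phi^{\ast}$ preserves all weak equivalences, which is harmless.
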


\begin{proof}
Since the restriction functor creates weak equivalences, %(in the sense that a map $f$ in $\Lambda \Bimod_{P'\,;\,Q'}$ is a weak equivalence precisely if $\phi^{\ast}(f)$ is a weak equivalence in $\Lambda \Bimod_{P\,;\,Q}$),
  one has a Quillen equivalence if, for any Reedy cofibrant object $M$ in $\Lambda \Bimod_{P\,;\,Q}$, the adjunction unit 
\begin{equation}\label{H4}
M\longrightarrow \phi^{\ast}(\phi_{!}(M))
\end{equation}
is a weak equivalence. To prove it,  we consider the adjunction $\tilde{\phi}_!\colon\Sigma\Bimod_{P\,;\,Q_{>0}}\rightleftarrows\Sigma\Bimod_{P'\,;\,Q'_{>0}}\colon \tilde{\phi}^*$ induced by the weak equivalences of operads $\phi_{1}:P\rightarrow P'$ and $\phi_{2}^{>0}\colon Q_{>0}\rightarrow Q'_{>0}$. By construction, one has the identity 
$$
\phi^{\ast}(\phi_{!}(M))=\tilde{\phi}^{\ast}(\tilde{\phi}_{!}(M)).
$$
By Theorem \ref{G6}, the extension/restriction adjunction $(\tilde{\phi}_!,\tilde{\phi}^*)$ is a Quillen equivalence. Moreover, thanks to the characterization of Reedy cofibrations, $M$ is also cofibrant in the projective model category $\Sigma\Bimod_{P,Q_{>0}}$ and the map
$M\longrightarrow \tilde{\phi}^{\ast}(\tilde{\phi}_{!}(M))= \phi^{\ast}(\phi_{!}(M))$ is a weak equivalence.
%
%
%one has the identification
%$$
%\phi^{\ast}(\phi_{!}(M))_{>0}=(\phi_{>0})^{\ast}\big( (\phi_{>0})_{!}(M_{>0})\big).
%$$
%According to Theorem~\ref{th:>0}, since $\phi_{1}^{>0}:P_{>0}\rightarrow P'_{>0}$ and $\phi_{2}^{>0}:Q_{>0}\rightarrow Q'_{>0}$ are weak equivalences between  componentwise cofibrant operads, the pair of functors $((\phi_{>0})_{!}\,;\, (\phi_{>0})^{\ast})$ gives rise to a Quillen equivalence and the map $M(n)\rightarrow \phi^{\ast}(\phi_{!}(M))(n)$, with $n\geq 1$, is a weak equivalence. Moreover, the map $M(0)=\ast\rightarrow \phi^{\ast}(\phi_{!}(M))(0)=\ast$ is obviously a weak equivalence. %Finally, the map (\ref{H4}) is a weak equivalence too.
\end{proof}

\subsection{The connection between the model category structures on bimodules}\label{Fin3}

Similarly to the operadic case in \cite{FTW}, we build a Quillen adjunction between the projective model category of $(P\text{-}Q)$-bimodules and the Reedy model category of $(P\text{-}Q)$-bimodules where $P$ is an operad and $Q$ is a reduced operad. Furthermore, if $M$ and $N$ are two bimodules, then we show that there is a weak equivalence between the derived mapping spaces:
$$
\Sigma\Bimod_{P\,;\,Q}^{h}(M\,;\,N)\simeq \Lambda \Bimod_{P\,;\,Q}^{h}(M\,;\,N).
$$
For completeness of exposition, the last subsection is devoted to adapt the Boardman-Vogt resolution (well known for operads, see \cite{BM2}) to the context of  bimodules. We refer the reader to \cite{Duc2} for a detailed account of this construction.

\subsubsection{Quillen adjunction between the model category structures}\label{sss:adj_bim}

Let $P$ be an operad and $Q$ be a well-pointed reduced operad. The projective and the Reedy model categories of bimodules have the same class of weak equivalences and induce the same homotopy category. Consequently, one has the following statement about the adjunctions
\begin{equation}\label{eq:unitar_bim2b}
\begin{array}{rcl}\vspace{5pt}
id\colon\Sigma\Bimod_{P\,;\,Q} & \rightleftarrows & \Lambda\Bimod_{P\,;\,Q}\colon id, \\ 
id\colon\TT_r \Sigma \Bimod_{P\,;\,Q} & \rightleftarrows & \TT_r\Lambda\Bimod_{P\,;\,Q}\colon id.
\end{array} 
\end{equation}

\begin{thm}\label{th:S-L_equiv_bim}
For any operad $P$ and any well-pointed reduced operad~$Q$, the pairs of functors \eqref{eq:unitar_bim2b} form Quillen equivalences. Furthermore,  for any pair $M,\, N\in\Lambda\Bimod_{P\,;\,Q}$, one has
\begin{equation}\label{eq:der_bim_map2b}
\Sigma\Bimod_{P\,;\,Q}^h( M,N)\simeq \Lambda\Bimod_{P\,;\,Q}^h(M,N).
\end{equation}
Moreover, if $M,\, N\in\TT_r\Lambda\Bimod_{P\,;\,Q}$, with $r\geq 0$, then one has
\begin{equation}\label{eq:der_tr_bim_map2b}
\TT_r\Sigma\Bimod_{P\,;\,Q}^h( M,N)\simeq \TT_r\Lambda\Bimod_{P\,;\,Q}^h( M,N).
\end{equation}
\end{thm}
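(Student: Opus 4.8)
The plan is to exploit the fact that both model structures in \eqref{eq:unitar_bim2b} are carried by one and the same underlying category and, as already observed, share the same class of weak equivalences; consequently the statement is essentially formal once the two classes of (co)fibrations are compared. First I would record the key comparison: \emph{every Reedy fibration is a projective fibration}. Indeed, if $f\colon A\to B$ is a fibration in $\Lambda\Bimod_{P\,;\,Q}$, then $\mathcal{U}^{\Lambda}(f)$ is a Reedy fibration of $\Lambda$-sequences, so each map $A(n)\to \mathcal{M}(A)(n)\times_{\mathcal{M}(B)(n)}B(n)$ is a Serre fibration. By the standard Reedy induction on arity, using that the matching functor carries Reedy fibrations to fibrations between matching objects (so that the projection $\mathcal{M}(A)(n)\times_{\mathcal{M}(B)(n)}B(n)\to B(n)$ is a Serre fibration), the composite $A(n)\to B(n)$ is a Serre fibration for every $n$. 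Thus $f$ is a componentwise Serre fibration, i.e.\ a projective fibration in $\Sigma\Bimod_{P\,;\,Q}$. Dually, every projective cofibration is a Reedy cofibration; this inclusion can also be read directly off the characterization of Theorem~\ref{C1}.

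Next I would conclude that the identity functors form a Quillen adjunction. The left adjoint $id\colon\Sigma\Bimod_{P\,;\,Q}\to\Lambda\Bimod_{P\,;\,Q}$ sends projective cofibrations to Reedy cofibrations by the previous paragraph, and it sends weak equivalences to weak equivalences because the two structures share their weak equivalences; in particular it preserves acyclic cofibrations. This is exactly the assertion that \eqref{eq:unitar_bim2b} is a Quillen adjunction. The upgrade to a Quillen equivalence is then automatic: for a Quillen adjunction given by identity functors between two model structures on a fixed category with a common class of weak equivalences, the Quillen equivalence criterion holds trivially, since for $M$ projectively cofibrant and $N$ Reedy fibrant a map $M\to N$ is a weak equivalence in $\Lambda\Bimod_{P\,;\,Q}$ if and only if it is one in $\Sigma\Bimod_{P\,;\,Q}$, and this map coincides with its own adjoint. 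The truncated pair is handled identically, Theorems~\ref{C1} and~\ref{Z5} both being available in the $\TT_r$-setting.

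Finally, the weak equivalences of derived mapping spaces \eqref{eq:der_bim_map2b} follow from the invariance of homotopy function complexes under Quillen equivalences. The point to be careful about is that the two derived mapping spaces are computed by different replacements: since all objects are projectively fibrant (Theorem~\ref{ProjectBimod}), $\Sigma\Bimod_{P\,;\,Q}^{h}(M,N)$ is modeled by the mapping space out of a projectively cofibrant replacement of $M$, whereas $\Lambda\Bimod_{P\,;\,Q}^{h}(M,N)$ requires a Reedy fibrant replacement of $N$, which is provided explicitly by the functor $(-)^{f}$ of Section~\ref{Z2} together with Propositions~\ref{ProFibrantEq} and~\ref{MF} (here the hypothesis that $Q$ is well-pointed is used). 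The Quillen equivalence identifies the two complexes, giving \eqref{eq:der_bim_map2b}; the truncated identity \eqref{eq:der_tr_bim_map2b} is obtained in the same way, the truncated Reedy fibrant replacement being furnished by Remark~\ref{r:trunc_bim_Reedy}.

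I expect no serious obstacle here: the genuinely technical inputs (the characterization of Reedy cofibrations in Theorem~\ref{C1} and the explicit Reedy fibrant replacement of Propositions~\ref{ProFibrantEq}--\ref{MF}) are already established, and the remaining work is the formal comparison above. The only delicate point deserving explicit care is the derived mapping space statement, where one must invoke the Reedy fibrant replacement of Section~\ref{Z2} to make the Reedy-side complex computable, since not all objects are Reedy fibrant.
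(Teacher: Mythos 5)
Your proof is correct and follows essentially the same route as the paper: show that Reedy fibrations are projective (componentwise) fibrations so that the identity functors form a Quillen adjunction, observe that the equivalence is automatic because both structures live on the same category with the same weak equivalences, and obtain \eqref{eq:der_bim_map2b} and \eqref{eq:der_tr_bim_map2b} by combining a projective cofibrant replacement of $M$ with the explicit Reedy fibrant replacement of $N$ from Section~\ref{Z2}. The only cosmetic difference is that the paper deduces the fibration comparison from the Quillen adjunction $\Lambda[-]\dashv\mathcal{U}$ of \cite[Theorem 8.3.20]{Fre2}, whereas you re-derive it by a matching-object induction; both arguments are valid.
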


\begin{proof}
In order to prove that the pairs of functors \eqref{eq:unitar_bim2b} form Quillen adjunctions we check that the right adjoint functors preserve fibrations and acyclic fibrations. Let $f:M\rightarrow N$ be an (acyclic) fibration in the Reedy model category of bimodules. By definition, the map $f$ is a fibration if the corresponding map $\mathcal{U}^\Lambda(f)$ in the category of $\Lambda$-sequences is a fibration. In other words, it means that the maps 
\begin{equation}\label{B2b}
M(n)\longrightarrow \mathcal{M}(M)(n)\times_{\mathcal{M}(N)(n)}N(n),\hspace{15pt}\text{with } n\in \mathbb{N},
\end{equation}
are (acyclic) Serre fibrations. On the other hand, the map $id(f)$ is a fibration in the projective model category of bimodules if the maps $M(n)\rightarrow N(n)$, with $n\in \mathbb{N}$, are Serre fibrations. According to the notation introduced in Section \ref{Sect1.2}, the pair of functors 
$$
\Lambda[-]:\Sigma Seq \rightleftarrows \Lambda Seq :\mathcal{U}(-)
$$ 
forms a Quillen adjunction (see \cite[Theorem 8.3.20]{Fre2}). In particular, the forgetful functor $\mathcal{U}$ preserves (acyclic) fibrations. As a consequence,  if the maps (\ref{B2b}) are (acyclic) Serre fibrations, then the induced maps $M(n)\rightarrow N(n)$, with $n\in \mathbb{N}$, are (acyclic) Serre fibrations. Furthermore, the Quillen adjunction so obtained is obviously a Quillen equivalence since we consider identity functors. Finally, the identities \eqref{eq:der_bim_map2b} and \eqref{eq:der_tr_bim_map2b} are induced by taking a projective cofibrant replacement of $M$ and a Reedy fibrant replacement of~$N$
and using the fact that any projectively cofibrant object is Reedy cofibrant and any Reedy fibrant object is  projectively fibrant. 
\end{proof}

\subsubsection{Boardman-Vogt type resolution in the projective/Reedy model category}\label{3.2.1}

From a $(P\text{-}Q)$-bimodule $M$, we build a $(P\text{-}Q)$-bimodule $\mathcal{B}_{P\,;\,Q}(M)$. The points of $\mathcal{B}_{P\,;\,Q}(M)(n)$, $n\geq 0$, are equivalence classes $[T\,;\, \{t_{v}\}\,;\, \{p_{v}\}\,;\,\{m_{v}\}\,;\,\{q_{v}\}]$, where $T\in s\mathbb{P}_n$ (see Section \ref{C2}) is a tree with section while $\{p_{v}\}_{v\in V^{d}(T)}$, $\{m_{v}\}_{v\in V^{p}(T)}$ and $\{q_{v}\}_{v\in V^{u}(T)}$ are families of points labelling the vertices below the  section, on the section and above the section, respectively, by points in $P$, $M$ and $Q$, respectively. Furthermore, $\{t_{v}\}_{v\in V(T)\setminus V^{p}(T)}$ is a family of real numbers in the interval $[0\,,\,1]$ indexing the vertices which are not pearls. If $e$ is an inner edge above the section, then $t_{s(e)}\geq t_{t(e)}$. Similarly, if $e$ is an inner edge below the section, then $t_{s(e)}\leq t_{t(e)}$. In other words, closer to a pearl is a vertex, smaller is the corresponding number. The space $\mathcal{B}_{P\,;\,Q}(M)(n)$ is a quotient of the subspace of 
\begin{equation}\label{eq:union_stree}
\underset{T\in s\mathbb{P}_n}{\coprod}\,\,\,\underset{v\in V^{p}(T)}{\prod}\,M(|v|)\,\,\times\underset{v\in V^{d}(T)}{\prod}\,\big[\,P(|v|)\times [0\,,\,1]\big]\,\,\times\underset{v\in V^{u}(T)}{\prod}\,\big[\,Q(|v|)\times [0\,,\,1]\big]
\end{equation} 
determined by the restrictions on the families $\{t_{v}\}$. The equivalence relation is generated by the conditions:\vspace{10pt}

\begin{itemize}[topsep=0pt, leftmargin=*]
\item[$i)$] If a vertex is labelled by a unit in $P(1)$ or $Q(1)$, then locally one has the identity
%\begin{figure}[!h]
\begin{center}
\includegraphics[scale=0.35]{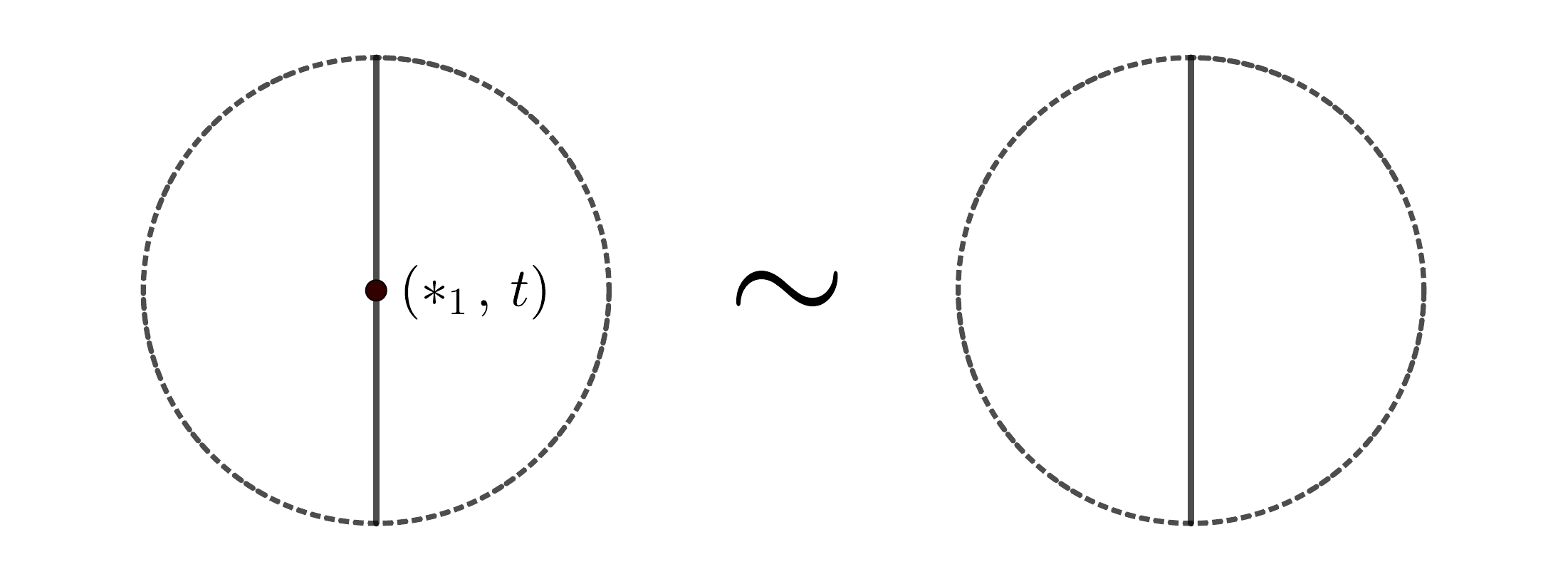}
\end{center}
%\end{figure}

\item[$ii)$] If a vertex is indexed by $a\cdot \sigma$, with $\sigma\in \Sigma$, then %\vspace{-10pt}
\begin{center}
\includegraphics[scale=0.35]{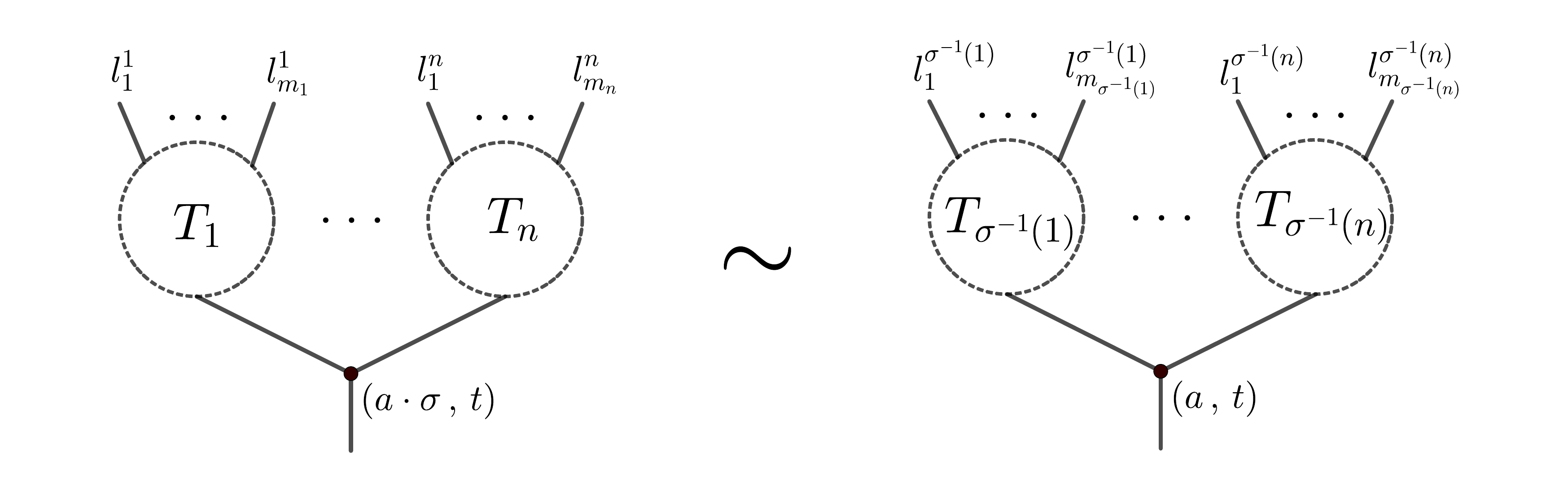}
\end{center}

\item[$iii)$] If two consecutive vertices, connected by an edge $e$, are indexed by the same real number $t\in [0\,,\,1]$, then~$e$ is contracted using the operadic structures of~$P$ and $Q$. The vertex so obtained is indexed by the real number~$t$.

\hspace{-55pt}\includegraphics[scale=0.33]{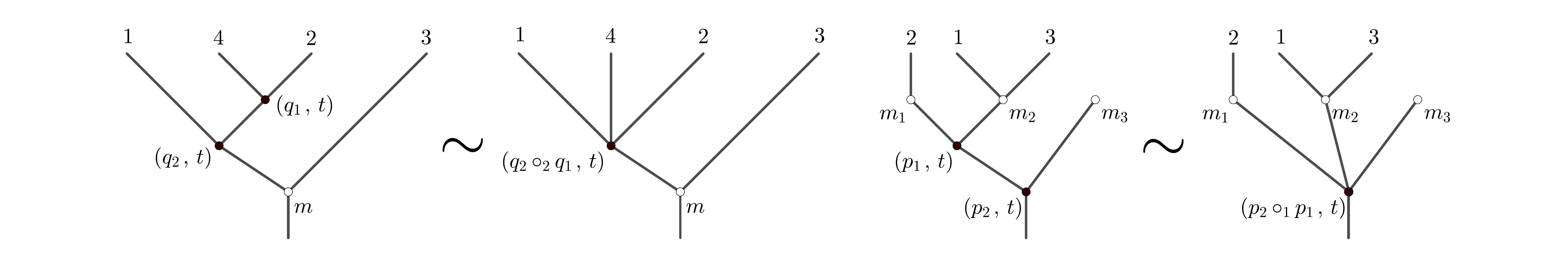}

\item[$iv)$]  If a vertex above the section is indexed by $0$, then its output edge is contracted by using the right module structure. Similarly, if a vertex below the section is indexed by $0$, then all its incoming edges are contracted by using the left module structure. In both cases the new vertex becomes a pearl.\vspace{5pt}

\hspace{-55pt}\includegraphics[scale=0.33]{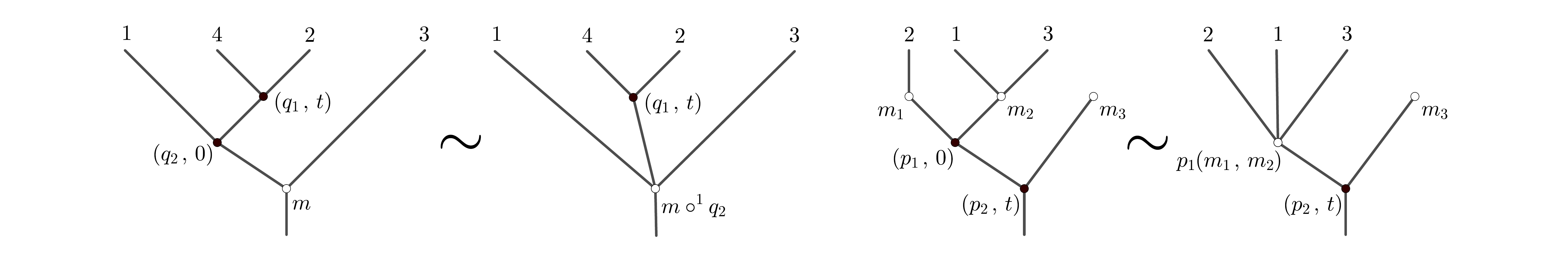}

\item[$v)$] If a univalent pearl is indexed by a point of the form $\gamma_{0}(x)$, with $x\in P(0)$, then we contract its output edge by using the operadic structure of $P$. In particular, if all the pearls connected to a vertex $v$ are univalent and of the form $\gamma_{0}(x_{v})$, then the vertex is identified to the pearled corolla with no input.%\vspace{-10pt}

\begin{center}
\includegraphics[scale=0.33]{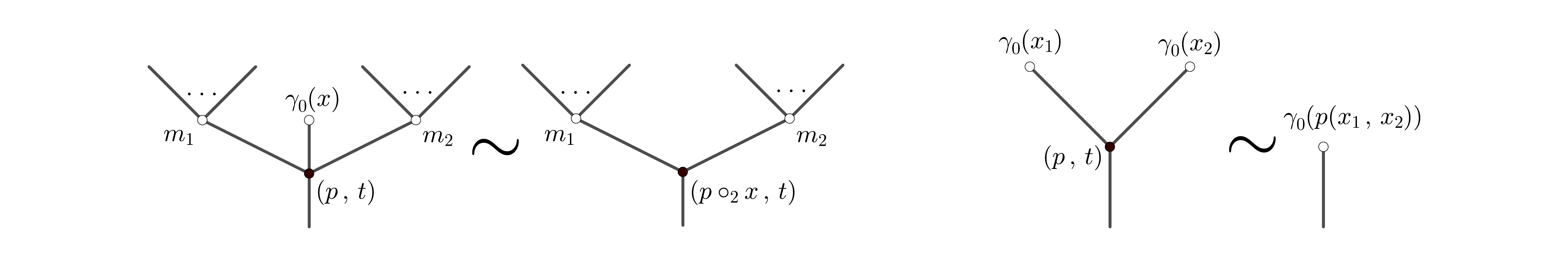}%\vspace{-5pt}
\end{center}

\end{itemize}

 Let us describe the $(P\text{-}Q)$-bimodule structure. Let $q\in Q(n)$ and $[T\,;\, \{t_{v}\}\,;\, \{p_{v}\}\,;\,\{m_{v}\}\,;\,\{q_{v}\}]$ be a point in $\mathcal{B}_{P\,;\,Q}(M)(m)$. The right operation $[T\,;\, \{t_{v}\}\,;\, \{p_{v}\}\,;\,\{m_{v}\}\,;\,\{q_{v}\}]\circ^{i}q$ consists in grafting the $n$-corolla labelled by $q$ to the $i$-th incoming edge of $T$ and indexing the new vertex by $1$. Similarly, let $[T_{i}\,;\, \{t_{v}^{i}\}\,;\, \{p_{v}^{i}\}\,;\,\{m_{v}^{i}\}\,;\,\{q_{v}^{i}\}]$ be a family of points in the spaces 
$\mathcal{B}_{P\,;\,Q}(M)(m_{i})$. The left module structure over $P$ is defined as follows: each tree of the family is grafted to a leaf of the $n$-corolla labelled by $p\in P(n)$ from left to right. The new vertex, coming from the $n$-corolla, is indexed by $1$.\vspace{5pt}

\hspace{-50pt}\includegraphics[scale=0.45]{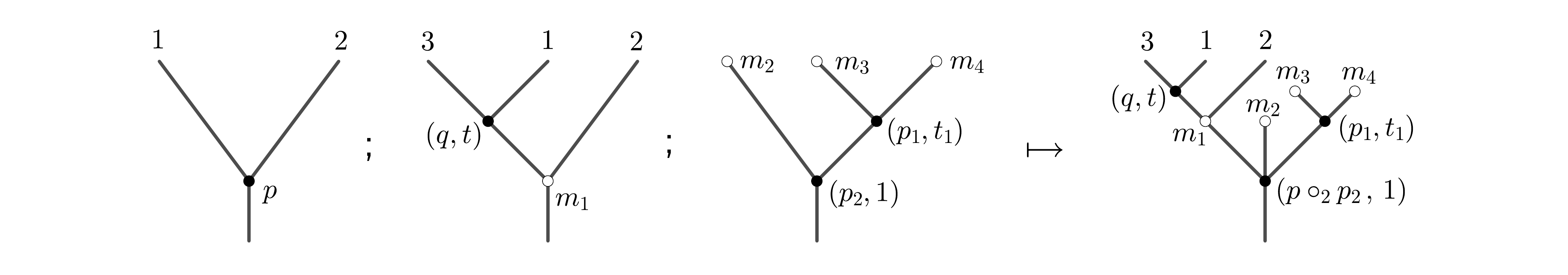}\vspace{-25pt}

\begin{figure}[!h]
\caption{Illustration of the left module structure $\gamma_{\ell}:P(2)\times \mathcal{B}_{P\,;\,Q}(M)(3) \times \mathcal{B}_{P\,;\,Q}(M)(0)\rightarrow \mathcal{B}_{P\,;\,Q}(M)(3)$.}
\end{figure}

One has an obvious inclusion of $\Sigma$-sequences $\iota \colon M\to \mathcal{B}_{P\,;\,Q}(M)$, where each element
$m\in M(n)$ is sent to an $n$-corolla labelled by $m$, whose only vertex is a pearl.  Furthermore, the map
\begin{equation}\label{D3}
\mu:\mathcal{B}_{P\,;\,Q}(M)\rightarrow M\,\,;\,\, [T\,;\, \{t_{v}\}\,;\, \{p_{v}\}\,;\,\{m_{v}\}\,;\,\{q_{v}\}]\mapsto [T\,;\, \{0\}\,;\, \{p_{v}\}\,;\,\{m_{v}\}\,;\,\{q_{v}\}],
\end{equation}
is defined by sending the real numbers indexing the vertices to $0$. The obtained element  is identified to the pearled corolla labelled by a point in $M$.  It is easy to see that $\mu$ is a $(P\text{-}Q)$-bimodule map.\vspace{10pt}

In order to get resolutions for truncated bimodules, one considers a filtration in $\mathcal{B}_{P\,;\,Q}(M)$ according to  the number of {\it geometrical inputs} which is the number of leaves plus the number of univalent vertices above the section. A point in $\mathcal{B}_{P\,;\,Q}(M)$ is said to be \textit{prime} if the real numbers indexing the vertices are strictly smaller than $1$. Otherwise, a point is said to be \textit{composite} and can be decomposed into \textit{prime components} as shown in Figure~\ref{B1}. More precisely, the prime components are obtained by removing the vertices indexed by $1$. 

\hspace{-40pt}\includegraphics[scale=0.38]{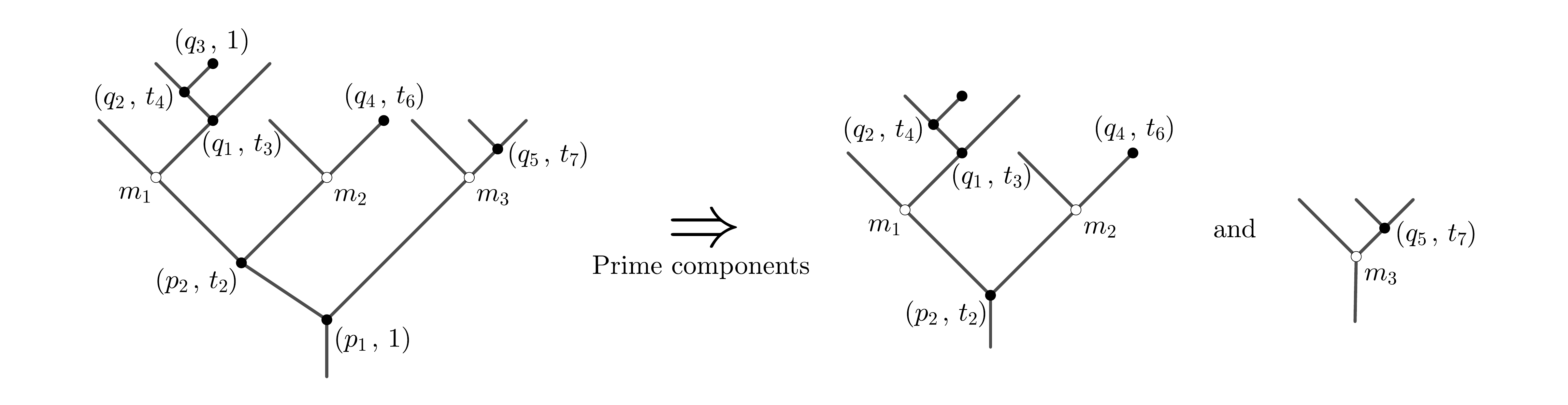}\vspace{-15pt}

\begin{figure}[!h]
\caption{A composite point and its prime components.}\label{B1}
\end{figure}

A prime point is in the $r$-th filtration layer $\mathcal{B}_{P\,;\,Q}(M)_{r}$ if the number of its geometrical inputs is at most~$r$. Similarly, a composite point is in the $r$-th filtration layer if its all prime components are in $\mathcal{B}_{P\,;\,Q}(M)_{r}$. For instance, the composite point in Figure~\ref{B1} is in the filtration layer $\mathcal{B}_{P\,;\,Q}(M)_{6}$. For each $r$, $\mathcal{B}_{P\,;\,Q}(M)_{r}$ is a $(P\text{-}Q)$-bimodule and one has the following filtration of $\mathcal{B}_{P\,;\,Q}(M)$:
\begin{equation}\label{B3}
\xymatrix{
 \mathcal{B}_{P\,;\,Q}(M)_{0} \ar[r] & \mathcal{B}_{P\,;\,Q}(M)_{1}\ar[r] & \cdots \ar[r] & \mathcal{B}_{P\,;\,Q}(M)_{r-1} \ar[r] & \mathcal{B}_{P\,;\,Q}(M)_{r} \ar[r] & \cdots \ar[r] &  \mathcal{B}_{P\,;\,Q}(M).
}
\end{equation}

\begin{thm}[Theorem 2.12 in \cite{Duc2}]\label{th:BV_proj_bimod}
Assume that  $P$ and $Q$ are  $\Sigma$-cofibrant operads, and $M$ is a $\Sigma$-cofibrant 
$(P\text{-}Q)$-bimodule for which the arity zero left action map $\gamma_0\colon P(0)\to M(0)$ is a cofibration. Then the objects $\mathcal{B}_{P\,;\,Q}(M)$ and $\TT_{r}\mathcal{B}_{P\,;\,Q}(M)_{r}$ are cofibrant replacements of $M$ and $\TT_{r}M$ in the categories  $\Sigma\Bimod_{P\,;\,Q}$ and $\TT_{r}\Sigma\Bimod_{P\,;\,Q}$, respectively. In particular, the maps $\mu$ and $\TT_{r}\mu|_{\TT_{r}\mathcal{B}_{P\,;\,Q}(M)_{r}}$ are weak equivalences.
\end{thm}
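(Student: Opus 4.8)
The plan is to establish two facts: that the bimodule map $\mu\colon\mathcal{B}_{P\,;\,Q}(M)\to M$ of~\eqref{D3} is a weak equivalence, and that $\mathcal{B}_{P\,;\,Q}(M)$ is cofibrant in $\Sigma\Bimod_{P\,;\,Q}$. Together with the compatibility of the whole construction with the filtration~\eqref{B3} and truncation, these yield the claim for both $\mathcal{B}_{P\,;\,Q}(M)$ and $\TT_{r}\mathcal{B}_{P\,;\,Q}(M)_{r}$. Since weak equivalences and fibrations in $\Sigma\Bimod_{P\,;\,Q}$ are detected on underlying $\Sigma$-sequences (Theorem~\ref{ProjectBimod}), the first fact may be checked spacewise.

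For the weak equivalence I would argue exactly as in the proofs of Propositions~\ref{Z7} and~\ref{ProFibrantEq}. The inclusion $\iota\colon M\to\mathcal{B}_{P\,;\,Q}(M)$ sending $m$ to the pearl corolla satisfies $\mu\circ\iota=\mathrm{id}$ by~\eqref{D3}. In the other direction, $\iota\circ\mu$ is homotopic to the identity through the homotopy that rescales all the real parameters toward $0$, namely $(s,\{t_{v}\})\mapsto\{(1-s)\,t_{v}\}$; at $s=1$ every vertex reaches its pearl and one recovers $\iota\circ\mu$, while the order inequalities defining the subspace of~\eqref{eq:union_stree} are preserved throughout. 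This exhibits $\iota$ as a deformation retract, so $\mu$ is a spacewise homotopy equivalence of $\Sigma$-sequences, hence a weak equivalence; the same homotopy restricts to the truncated layers, giving the statement for $\TT_{r}\mu|_{\TT_{r}\mathcal{B}_{P\,;\,Q}(M)_{r}}$.

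The substantial part is cofibrancy, for which I would use the filtration~\eqref{B3} by the number of geometrical inputs together with the decomposition of composite points into prime components. The strategy mirrors the proof of Theorem~\ref{C4}: I would show that each inclusion $\mathcal{B}_{P\,;\,Q}(M)_{r-1}\to\mathcal{B}_{P\,;\,Q}(M)_{r}$ arises from a pushout in $\Sigma\Bimod_{P\,;\,Q}$ of a generating cofibration, the new cells being indexed by isomorphism classes of trees with section whose prime components realize exactly $r$ geometrical inputs. For a fixed such tree $T$ the cell is the product over its vertices of the labelling spaces $M(|v|)$, $P(|v|)$, $Q(|v|)$ times the cube $[0\,,\,1]^{V(T)\setminus V^{p}(T)}$ cut out by the order inequalities on $\{t_{v}\}$; the attaching map is the restriction to the locus where some parameter equals $1$ (yielding a composite point, already present through its prime components) or equals $0$ (where relations $(iii)$–$(v)$ apply). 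I would formulate this as an equivariant pushout-product over $Aut(T)$ and reduce cofibrancy of the attaching inclusion to the pushout-product lemmas (Lemma~\ref{D2} and its monoidal analogues), using that $P$ and $Q$ are $\Sigma$-cofibrant, that $M$ is $\Sigma$-cofibrant, and—crucially for the arity zero vertices—that $\gamma_{0}\colon P(0)\to M(0)$ is a cofibration; the relevant equivariant cofibrancy of the operadic factors is supplied by Proposition~\ref{J2}.

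I expect the main obstacle to be the bookkeeping in this last step: identifying the boundary of the parameter cube with the union of the loci governed by relations $(iii)$–$(v)$ and with the composite/prime decomposition, and then handling the tree-automorphism equivariance—including the permutation of arity zero vertices constrained by the $\gamma_{0}$-relation—so that the pushout-product lemmas apply. This is precisely where the $\Sigma$-cofibrancy hypotheses and the cofibrancy of $\gamma_{0}$ enter, exactly as in Theorems~\ref{C4} and~\ref{th:sigma_bimod_cof}. Once the attaching maps are recognized as $Aut(T)$-equivariant cofibrations with cofibrant source, assembling the filtration~\eqref{B3} into a proof that $\mathcal{B}_{P\,;\,Q}(M)$ is cofibrant, and restricting to geometrical inputs $\leq r$ for the truncated statement, is then formal.
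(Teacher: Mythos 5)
First, a point of order: the paper does not prove this statement at all — it is imported verbatim as Theorem 2.12 of \cite{Duc2}, and the paper only uses it (e.g.\ to deduce the Reedy variant, Proposition~\ref{p:BV_reedy_bimod}). So the comparison here is with the cited argument and with the analogous proofs the paper does spell out. For the untruncated half your outline matches that standard argument: $\mu\circ\iota=\mathrm{id}$; the scaling homotopy $t_{v}\mapsto(1-s)t_{v}$ is compatible with relations $(i)$--$(v)$ of Section~\ref{3.2.1} (equal parameters stay equal, $0$ stays $0$), so $\iota$ is a spacewise deformation retract and $\mu$ is a weak equivalence; and cofibrancy is obtained by exhibiting the filtration \eqref{B3} as a sequence of pushouts of free cells $\mathcal{F}^{\Sigma}_{P\,;\,Q}(\partial X)\to\mathcal{F}^{\Sigma}_{P\,;\,Q}(X)$ with $\partial X\to X$ a $\Sigma$-cofibration, the equivariant pushout-product lemmas being exactly where $\Sigma$-cofibrancy of $P$, $Q$, $M$ and the cofibration hypothesis on $\gamma_{0}$ enter. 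As a sketch this is sound and is essentially how the cited proof and the paper's related arguments (Theorem~\ref{C4}, Theorem~\ref{th:sigma_bimod_cof}) proceed.

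There is, however, a genuine gap in the truncated half: the claim that ``the same homotopy restricts to the truncated layers'' is false. Scaling by $(1-s)$ turns every composite point into a prime point (all parameters equal to $1$ become $1-s<1$, and no new contractions occur because scaling preserves distinctness of parameters), and the number of geometrical inputs of the resulting prime point can strictly exceed that of every prime component of the original point. Concretely, take $m\in M(2)$, $q'\in Q(2)$, $q_{0}\in Q(0)$ and consider $x=(\iota(m)\circ^{2}q')\circ^{1}q_{0}\in\mathcal{B}_{P\,;\,Q}(M)(2)$. Both grafted vertices carry parameter $1$, the unique prime component of $x$ is the pearl corolla $\iota(m)$ with $2$ geometrical inputs, so $x\in\TT_{2}\mathcal{B}_{P\,;\,Q}(M)_{2}(2)$; but for $0<s<1$ the scaled point is prime with $2$ leaves plus one univalent vertex ($q_{0}$) above the section, i.e.\ $3$ geometrical inputs, hence it lies outside $\mathcal{B}_{P\,;\,Q}(M)_{2}$. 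Thus $H_{s}$ does not map $\TT_{r}\mathcal{B}_{P\,;\,Q}(M)_{r}$ to itself, and the weak equivalence of $\TT_{r}\mu|_{\TT_{r}\mathcal{B}_{P\,;\,Q}(M)_{r}}$ requires a separate argument — for instance comparing $\TT_{r}\mathcal{B}_{P\,;\,Q}(M)_{r}$ with $\TT_{r}\mathcal{B}_{P\,;\,Q}(M)$ through the cellular structure, or constructing a homotopy genuinely adapted to the filtration. This is precisely the part of the cited theorem that your sketch does not capture.
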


Now we change slightly the above construction in order to produce Reedy cofibrant replacements for $(P\text{-}Q)$-bimodules when $Q$ is a reduced operad. Let $M$ be a  $(P\text{-}Q)$-bimodule. We consider the $\Sigma$-sequence
\[
\calB_{P\,;\,Q}^\Lambda(M):=
\mathcal{B}_{P\,;\,Q_{>0}}(M).
\]
The superscript $\Lambda$
is to emphasize that we get a cofibrant replacement in the Reedy model category structure.  
% The map $\mu\colon \calB^\Lambda_{P\,;\,Q}(M)\to M$ is extended to  arity zero in the obvious way.
  The left action and the positive arity right action are defined in the same way as for
 $\calB_{P\,;\,Q_{>0}}(M)$. %The right action by the positive arity components is defined as it is on $\calB_{P\,;\,Q_{>0}}(M)$.
  The right action by $*_0\in Q(0)$ is defined in the obvious way as the right
 action by $*_0$ on $a$ in the vertex $(a,t)$ connected to the leaf labelled by $i$ as illustrated in the Figure \ref{A1}.\vspace{10pt}
 
 \begin{figure}[!h]
 \begin{center}
 \includegraphics[scale=0.25]{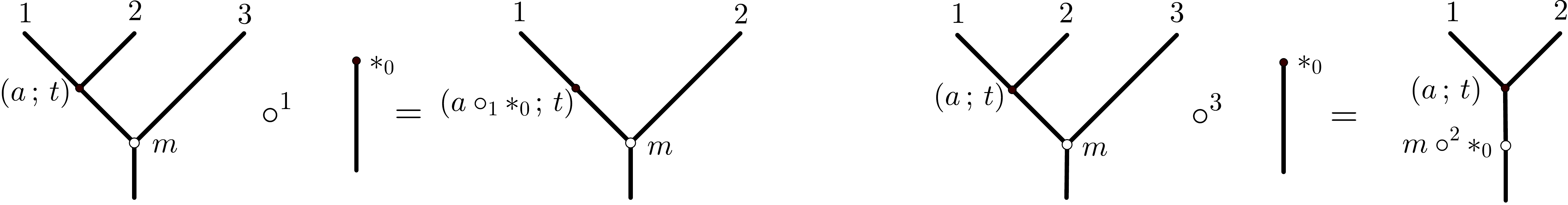}
 \caption{Illustration of the right action by $\ast_{0}$.}\label{A1}
 \end{center}
 \end{figure}

Note that since the arity zero component of $Q_{>0}$ is empty, in the union~\eqref{eq:union_stree} 
we can consider only trees whose all non-pearl vertices  have arities $\geq 1$. We denote this set by 
$s\mathbb{P}^{\geq 1}_n$. In other words, the space $\mathcal{B}^{\Lambda}_{P\,;\,Q}(M)$ can be obtained as the restriction of the coproduct \eqref{eq:union_stree} to this set. 

\begin{pro}\label{p:BV_reedy_bimod}
Assume that $P$ and $Q$ are $\Sigma$-cofibrant topological operads with $Q(0)=\ast$, and $M$ is a $\Sigma$-cofibrant $(P\text{-}Q)$-bimodule for which 
the arity zero left action map $\gamma_0\colon P(0)\to M(0)$ is a cofibration. Then the objects $\mathcal{B}_{P\,;\,Q}^\Lambda(M)$ and $\TT_{r}\mathcal{B}_{P\,;\,Q}^\Lambda(M)$ are cofibrant  replacements of $M$ and $\TT_{r}M$ in the categories $\Lambda\Bimod_{P\,;\,Q}$ and $\TT_{r}\Lambda\Bimod_{P\,;\,Q}$, respectively. In particular, the maps $\mu$ and $\TT_{r}\mu$ are weak equivalences.
\end{pro}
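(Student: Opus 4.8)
The plan is to deduce this proposition from its projective counterpart, Theorem~\ref{th:BV_proj_bimod}, applied to the pair of operads $(P,Q_{>0})$, using the two facts that govern the comparison between the projective and Reedy settings: first, that the weak equivalences of $\Lambda\Bimod_{P\,;\,Q}$ are exactly the objectwise weak equivalences, hence coincide with those of $\Sigma\Bimod_{P\,;\,Q_{>0}}$; and second, the characterization of Reedy cofibrations in Theorem~\ref{C1}, which identifies them with projective $(P\text{-}Q_{>0})$-cofibrations. The starting observation is that $Q_{>0}$ is $\Sigma$-cofibrant whenever $Q$ is (its arity zero component is now the initial object $\emptyset$, which is cofibrant), and that by the very definition $\calB_{P\,;\,Q}^\Lambda(M)=\mathcal{B}_{P\,;\,Q_{>0}}(M)$ the forgetful functor $\mathcal{U}\colon\Lambda\Bimod_{P\,;\,Q}\to\Sigma\Bimod_{P\,;\,Q_{>0}}$ carries $\calB_{P\,;\,Q}^\Lambda(M)$ to the projective Boardman--Vogt resolution $\mathcal{B}_{P\,;\,Q_{>0}}(M)$ and the map $\mu$ to the projective map $\mu$ of Theorem~\ref{th:BV_proj_bimod}.

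For the weak equivalence statement, I would apply Theorem~\ref{th:BV_proj_bimod} with $Q$ replaced by $Q_{>0}$: since $P$ and $Q_{>0}$ are $\Sigma$-cofibrant, $M$ is $\Sigma$-cofibrant (a property of the underlying $\Sigma$-sequence, insensitive to whether one records the full or the positive-arity right action) and $\gamma_{0}$ is a cofibration, the projective map $\mu\colon\mathcal{B}_{P\,;\,Q_{>0}}(M)\to M$ is a weak equivalence of $\Sigma$-sequences. As this is precisely $\mathcal{U}(\mu)$ and Reedy weak equivalences are objectwise, $\mu$ is a weak equivalence in $\Lambda\Bimod_{P\,;\,Q}$.

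For cofibrancy, I would test the map $\iota_{0}$ from the initial object of $\Lambda\Bimod_{P\,;\,Q}$ to $\calB_{P\,;\,Q}^\Lambda(M)$ against Theorem~\ref{C1}: it is a Reedy cofibration if and only if its image in $\Sigma\Bimod_{P\,;\,Q_{>0}}$ is a projective cofibration. The forgetful functor $\mathcal{U}$ preserves colimits (it has a right adjoint, by Proposition~\ref{p:tens_hom}b with $Q_{1}=Q$, $Q_{2}=Q_{>0}$, $Y=Q$), hence sends the initial object to the initial object; so $\mathcal{U}(\iota_{0})$ is the map from the initial object of $\Sigma\Bimod_{P\,;\,Q_{>0}}$ to $\mathcal{B}_{P\,;\,Q_{>0}}(M)$, which is a projective cofibration because $\mathcal{B}_{P\,;\,Q_{>0}}(M)$ is a cofibrant object by Theorem~\ref{th:BV_proj_bimod}. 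Thus $\calB_{P\,;\,Q}^\Lambda(M)$ is Reedy cofibrant, and combined with the previous paragraph it is a cofibrant replacement of $M$.

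The truncated case runs along identical lines, replacing every functor and statement by its $r$-truncated analogue (the truncated form of Theorem~\ref{C1} and the truncated part of Theorem~\ref{th:BV_proj_bimod}). Here one should note that, in contrast with the projective statement involving $\TT_{r}\mathcal{B}_{P\,;\,Q}(M)_{r}$, no filtration subscript is needed: since $Q_{>0}(0)=\emptyset$ the resolution $\calB_{P\,;\,Q}^\Lambda(M)$ only involves trees in $s\mathbb{P}^{\geq 1}_{n}$, which carry no arity zero vertices above the section, so the number of geometrical inputs of any point equals its number of leaves and the plain truncation $\TT_{r}$ already selects the correct subobject. The argument is otherwise formal; the only genuinely delicate points to check are the identification of $\mathcal{U}(\iota_{0})$ with the initial map in the projective category (which rests on $\mathcal{U}$ preserving colimits) and the compatibility of the $\ast_{0}$-action built into $\calB_{P\,;\,Q}^\Lambda(M)$ with this identification, ensuring that $\mu$ is genuinely a map of $(P\text{-}Q)$-bimodules and not merely of $(P\text{-}Q_{>0})$-bimodules.
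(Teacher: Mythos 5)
Your proposal is correct and follows essentially the same route as the paper's own proof: apply the projective Boardman--Vogt theorem (Theorem~\ref{th:BV_proj_bimod}) to the pair $(P,Q_{>0})$ to get that $\mu$ is a weak equivalence and that $\mathcal{B}_{P\,;\,Q_{>0}}(M)$ is projectively cofibrant, then invoke the characterization of Reedy cofibrations (Theorem~\ref{C1}) to conclude Reedy cofibrancy, and finally observe that no filtration subscript is needed in the truncated case because arity zero vertices above the section are excluded. Your additional care about the forgetful functor preserving the initial object and about $\mu$ being a genuine $(P\text{-}Q)$-bimodule map only makes explicit what the paper leaves implicit.
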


\begin{proof}
The map $\mu:\mathcal{B}^{\Lambda}_{P\,;\,Q}(M)=\mathcal{B}_{P\,;\,Q_{>0}}(M)\rightarrow M$, which sends the real numbers indexing the vertices to $0$, is a homotopy equivalence as shown by Theorem~\ref{th:BV_proj_bimod}. %More precisely, it is a homotopy equivalence in the category of $\Sigma$-sequences in which the homotopy consists in bringing the real numbers to $0$.
   Furthermore, by the same theorem, $\mathcal{B}^{\Lambda}_{P\,;\,Q}(M)=\mathcal{B}_{P\,;\,Q_{>0}}(M)$ is cofibrant in the projective model category of $(P\text{-}Q_{>0})$-bimodules. Due to~Theorem \ref{C1}, $\mathcal{B}_{P\,;\,Q}^{\Lambda}(M)$ is also Reedy cofibrant and it gives rise to a cofibrant resolution of $M$ in the Reedy model category $\Lambda \Bimod_{P\,;\,Q}$. The same arguments work for the truncated case. Note that  $\TT_{r}\mathcal{B}_{P\,;\,Q}^\Lambda(M)_r=\TT_{r}\mathcal{B}_{P\,;\,Q}^\Lambda(M)$
   since arity zero vertices above the section are not permitted.
\end{proof}

\subsubsection{A functorial cofibrant replacement in the projective/Reedy model category}\label{ss:cof_repl_bimod}

In the previous subsection we described a construction of projective and Reedy cofibrant replacements provided the bimodule and the acting operads are $\Sigma$-cofibrant. In this subsection we explain how that construction can be used to functorially define a cofibrant replacement without 
any assumption on the bimodule, while assuming that the right-acting operad~$Q$ has cofibrant components and the left-acting operad~$P$
is $\Sigma$-cofibrant. 
% in case of the projective model category $\Sigma\Bimod_{P\,;\,Q}$, and  has cofibrant components in case of the  Reedy model category $\Lambda\Bimod_{P\,;\,Q}$. 

Given a $(P\text{-}Q)$-bimodule $M$, we first replace it by the $\Sigma$-sequence $M':=|S_\bullet M|$, whose $n$-th space is the realization
of the simplicial set of singular simplices in~$M(n)$. The obtained object is a $(P'\text{-}Q')$-bimodule, where the operads $P'$ and $Q'$
are similarly defined as $P':=|S_\bullet P|$, $Q':=|S_\bullet Q|$. Let $E_\infty$ as usual denote a reduced $\Sigma$-cofibrant model of
the commutative operad. Define $\Sigma$-sequences $M'_\infty$, $P'_\infty$, $Q'_\infty$ as objectwise product
\[
M'_\infty(n):=M'(n)\times E_\infty(n),\quad P'_\infty(n):=P'(n)\times E_\infty(n),\quad Q'_\infty(n):=Q'(n)\times E_\infty(n).
\]
We get that $M'_\infty$ is a $\Sigma$-cofibrant bimodule over a pair $(P'_\infty,Q'_\infty)$ of $\Sigma$-cofibrant operads. We can
therefore apply 
the construction from the previous subsection. 

For the following theorem denote by
\[
\phi_1\colon P'_\infty \xrightarrow{\simeq} P'\xrightarrow{\simeq} P, \quad \phi_2\colon Q'_\infty\xrightarrow{\simeq} Q'\xrightarrow{\simeq} Q
\]
the natural induced equivalences of operads. Note that
\[
\phi_0\colon M'_\infty \xrightarrow{\simeq} M'\xrightarrow{\simeq} M
\]
is an equivalence of $(P'_\infty\text{-}Q'_\infty)$-bimodules.

\begin{pro}\label{p:cof_repl_bimod}
(a) Assume that $P$ is a $\Sigma$-cofibrant operad and $Q$ is a  componentwise cofibrant operad. Let
$M$ be any $(P\text{-}Q)$-bimodule for which the arity zero left action map $\gamma_0\colon P(0)\to M(0)$ is injective.
Then the objects $\phi_!\left(\mathcal{B}_{P'_\infty\,;\,Q'_\infty}(M'_\infty)\right)$ and $\phi_!\left(\TT_{r}\mathcal{B}_{P'_\infty\,;\,Q'_\infty}(M'_\infty)_r\right)$ are cofibrant
 replacements of $M$ and $\TT_{r}M$ in the categories $\Sigma\Bimod_{P\,;\,Q}$ and $\TT_{r}\Sigma\Bimod_{P\,;\,Q}$, respectively.

(b) Assume in addition that $Q$ is reduced. Then the objects $\phi_!\left(\mathcal{B}_{P'_\infty\,;\,Q'_\infty}^\Lambda(M'_\infty)\right)$ and $\phi_!\left(\TT_{r}\mathcal{B}_{P'_\infty\,;\,Q'_\infty}^\Lambda(M'_\infty)\right)$ are cofibrant
 replacements of $M$ and $\TT_{r}M$ in the categories $\Lambda\Bimod_{P\,;\,Q}$ and $\TT_{r}\Lambda\Bimod_{P\,;\,Q}$, respectively.
\end{pro}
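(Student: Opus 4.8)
The plan is to apply the Boardman--Vogt resolution theorems of Subsection~\ref{3.2.1} to the $\Sigma$-cofibrant bimodule $M'_\infty$ over the pair of $\Sigma$-cofibrant operads $(P'_\infty,Q'_\infty)$, and then to transport the resulting resolution along the extension/restriction Quillen equivalence attached to the operadic equivalences $\phi_1$ and $\phi_2$. First I would check the hypotheses of Theorem~\ref{th:BV_proj_bimod} (for part~(a)) and of Proposition~\ref{p:BV_reedy_bimod} (for part~(b)). The only point that is not immediate from the construction is the arity zero condition: in part~(a) we merely assume $\gamma_0\colon P(0)\to M(0)$ injective, whereas the resolution theorem requires $\gamma_0\colon P'_\infty(0)\to M'_\infty(0)$ to be a cofibration, and this is precisely the reason for passing to the singular realization. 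Indeed $S_\bullet(\gamma_0)$ is a monomorphism of simplicial sets, so its realization $|S_\bullet P(0)|\to |S_\bullet M(0)|$ is a cofibration of spaces, and since $E_\infty(0)=\ast$ multiplying by $E_\infty(0)$ leaves it a cofibration. For part~(b) I would moreover observe that $Q'_\infty$ is reduced, since $Q'_\infty(0)=|S_\bullet Q(0)|\times E_\infty(0)=\ast$ using $Q(0)=\ast$ and $E_\infty(0)=\ast$; being $\Sigma$-cofibrant it is componentwise cofibrant, hence well-pointed.

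With these hypotheses in place, Theorem~\ref{th:BV_proj_bimod} gives that $\mathcal{B}_{P'_\infty\,;\,Q'_\infty}(M'_\infty)$ and $\TT_r\mathcal{B}_{P'_\infty\,;\,Q'_\infty}(M'_\infty)_r$ are cofibrant in $\Sigma\Bimod_{P'_\infty\,;\,Q'_\infty}$ and $\TT_r\Sigma\Bimod_{P'_\infty\,;\,Q'_\infty}$, with $\mu$ a weak equivalence onto $M'_\infty$; likewise Proposition~\ref{p:BV_reedy_bimod} gives the analogous Reedy statements for $\mathcal{B}_{P'_\infty\,;\,Q'_\infty}^\Lambda(M'_\infty)$. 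Next I would invoke the Quillen equivalence: the pair $(\phi_1,\phi_2)$, with $\phi_1\colon P'_\infty\xrightarrow{\simeq}P$ a weak equivalence of $\Sigma$-cofibrant operads and $\phi_2\colon Q'_\infty\xrightarrow{\simeq}Q$ a weak equivalence of componentwise cofibrant operads (reduced in part~(b)), satisfies the hypotheses of Theorem~\ref{G6} in the projective case and of Theorem~\ref{ThmExt/rest} in the Reedy case, together with their truncated versions. Hence $(\phi_!,\phi^*)$ is a Quillen equivalence, and since $\phi_!$ is left Quillen it carries the cofibrant resolutions above to cofibrant objects of $\Sigma\Bimod_{P\,;\,Q}$ (respectively $\Lambda\Bimod_{P\,;\,Q}$), and similarly in the truncated settings.

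It then remains to produce a weak equivalence to $M$. I would set $g:=\phi_0\circ\mu\colon \mathcal{B}_{P'_\infty\,;\,Q'_\infty}(M'_\infty)\to \phi^*(M)$, a composite of weak equivalences of $(P'_\infty\text{-}Q'_\infty)$-bimodules, and let $\psi\colon \phi_!(\mathcal{B}_{P'_\infty\,;\,Q'_\infty}(M'_\infty))\to M$ be its adjunct under $(\phi_!,\phi^*)$. To see that $\psi$ is a weak equivalence I would use that $\phi^*$ is the identity on underlying $\Sigma$- (respectively $\Lambda$-) sequences, so it both preserves and reflects weak equivalences. For a Quillen equivalence the derived unit $X\to\phi^*(\phi_!(X)^{\mathrm{fib}})$ is a weak equivalence for cofibrant $X$; combining this with the fact that $\phi^*$ sends the fibrant replacement $\phi_!(X)\to\phi_!(X)^{\mathrm{fib}}$ to a weak equivalence and applying two-out-of-three shows that the underived unit $\eta_X\colon X\to\phi^*(\phi_!(X))$ is already a weak equivalence for cofibrant $X$. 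Taking $X=\mathcal{B}_{P'_\infty\,;\,Q'_\infty}(M'_\infty)$ and using $\phi^*(\psi)\circ\eta_X=g$, two-out-of-three forces $\phi^*(\psi)$ to be a weak equivalence, and reflection by $\phi^*$ gives that $\psi$ is one. The truncated cases and part~(b) are handled verbatim, the only difference being that the fibrant replacement in the Reedy setting is the one of Proposition~\ref{MF} (available since $Q$ is well-pointed), whereas in the projective setting all objects are already fibrant by Theorem~\ref{ProjectBimod}.

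I expect the main obstacle to be this last step, namely reconciling the weak equivalence $g$, which lives over the operads $(P'_\infty,Q'_\infty)$, with a weak equivalence over $(P,Q)$. The delicate point is that in the Reedy structure objects are not fibrant, so the naive principle ``the adjunct of a weak equivalence is a weak equivalence'' fails unless one either feeds in a fibrant replacement or exploits that $\phi^*$ detects weak equivalences; the derived-unit argument is what makes the projective and Reedy cases uniform.
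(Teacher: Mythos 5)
Your proof is correct and follows essentially the same route as the paper, whose own proof simply declares the statement an immediate consequence of Theorems~\ref{G6}, \ref{ThmExt/rest}, \ref{th:BV_proj_bimod} and Proposition~\ref{p:BV_reedy_bimod}. The details you supply---that $|S_\bullet(\gamma_0)|$ is a cofibration (so the Boardman--Vogt hypotheses hold for $M'_\infty$ given only injectivity of $\gamma_0$), and that the underived unit is a weak equivalence on cofibrant objects because $\phi^*$ preserves and reflects weak equivalences---are exactly what the paper leaves implicit.
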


\begin{proof}
The result is an immediate consequence of Theorems~\ref{G6},  \ref{ThmExt/rest}, \ref{th:BV_proj_bimod} and Proposition~\ref{p:BV_reedy_bimod}.
\end{proof}

\subsection{The Reedy fibrant replacement as an internal hom}\label{ss:int_hom}

In this subsection, we provide a more conceptual understanding of the Reedy fibrant replacement described in Subsection~\ref{Z2}. More precisely we explain this construction in terms of internal hom in the category of $\Sigma$-sequences, see Proposition~\ref{p:int_hom}.

\subsubsection{The right closed monoidal category of symmetric sequences}\label{sss:r_closed}

It is well known and appears in almost any textbook on the theory of operads that the category $\Sigma Seq$ of $\Sigma$-sequences has a monoidal structure $(\Sigma Seq,\circ,\mathbb{1})$ with the unit
\[
\mathbb{1}(k)=
\begin{cases}
*,  &k=1;\\
\emptyset, & k\neq 1;\\
\end{cases}
\]
and the composition product
\[
(X\circ Y)(k)=\coprod_{n\geq 0} X(n)\times_{\Sigma_n}\coprod_{\beta\colon [k]\to[n]}\,\,
\prod_{i=1}^n Y(|\beta^{-1}(i)|).
\]
Monoids in $(\Sigma Seq,\circ,\mathbb{1})$ are usual topological operads. 

It is  less known that $\Sigma Seq$ is {\it  closed} with respect to this monoidal structure. This fact is true for the category of $\Sigma$-sequences in any bicomplete closed symmetric monoidal category and was noticed by G.~M.~Kelly back in the 1970s~\cite{MaxKelly}.
More recently this also appeared in~\cite[Section~2.2]{Re1} and in~\cite[Section~3]{Ha}\footnote{We also refer to~\cite[Definition~1.20]{AroneChing}, where  this structure appears implicitly and from where our formula~\eqref{eq:int_hom} is borrowed.}. This means that $\Sigma Seq$ is endowed with an internal hom functor
 \[
 [-,-]\colon\Sigma Seq^{op}\times\Sigma Seq\to \Sigma Seq,
 \]
 such that for any $X\in\Sigma Seq$, the  functor $(-)\circ X$ is left adjoint to
 $[X,-]$. Sometimes this structure on a category is called {\it right} closed monoidal instead of just closed monoidal as a monoidal product with an object on the {\it right} has an adjoint. Explicitly,
 \begin{equation}\label{eq:int_hom}
 [X,Y](k)=\prod_{n\geq 0}\left[\prod_{\alpha\colon [n]\to [k]} Map\left(\prod_{i=1}^k
 X(|\alpha^{-1}(i)|), Y(n)\right)\right]^{\Sigma_n}.
 \end{equation}
 
 \subsubsection{The tensor-hom adjunction}\label{sss:tens_hom}
For concreteness all the statements in this subsection are made for the category $\Sigma Seq$. 
One should mention, however, that they hold true for any bicomplete right
closed monoidal category. (One only needs to replace the word \lq\lq{}operad\rq\rq{} by \lq\lq{}monoid\rq\rq{}.)

\begin{lmm}\label{l:int_hom_module}
Let $P,Q\in\Sigma\mathrm{Operad}$ and $X,Y\in\Sigma Seq$. Consider the internal hom object
$[X,Y]\in\Sigma Seq$.
\begin{itemize}
\item[$(a)$] If $X$ is a left $Q$-module, then $[X,Y]$ is a right $Q$-module.
\item[$(b)$] If $Y$ is a left $P$-module, then $[X,Y]$ is a left $P$-module.
\item[$(c)$] If $X$ is a left $Q$-module and $Y$ is a left $P$-module, then $[X,Y]$ is a $(P$-$Q)$-bimodule.
\end{itemize}
\end{lmm}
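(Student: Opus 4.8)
The plan is to establish all three parts from the single adjunction that defines the internal hom, namely that $(-)\circ X$ is left adjoint to $[X,-]$. The key observation is that the module structures on $[X,Y]$ should be the ones uniquely determined by requiring the tensor-hom adjunction to be compatible with the relevant actions. I will give the structure maps explicitly via the formula~\eqref{eq:int_hom}, then verify the module axioms by transposing them across the adjunction into statements about the composition product $\circ$, where they reduce to the associativity and unit of the operad actions on $X$ and $Y$.

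First I would prove $(a)$. Suppose $X$ is a left $Q$-module, with action $\lambda\colon Q\circ X\to X$. I want to produce a right action $\rho\colon [X,Y]\circ Q\to [X,Y]$. The cleanest route is adjunction: a right $Q$-module structure on $[X,Y]$ is the same as a map $[X,Y]\circ Q\to[X,Y]$ satisfying associativity and unit, and by the defining adjunction $([X,Y]\circ Q)\circ X\cong [X,Y]\circ(Q\circ X)$ such a map corresponds under transposition to a map built from the evaluation $\mathrm{ev}\colon [X,Y]\circ X\to Y$ precomposed with $\mathrm{id}\circ\lambda\colon [X,Y]\circ(Q\circ X)\to[X,Y]\circ X$. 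Concretely, I would define $\rho$ as the adjoint transpose of the composite
\[
([X,Y]\circ Q)\circ X\xrightarrow{\ \cong\ }[X,Y]\circ(Q\circ X)\xrightarrow{\mathrm{id}\circ\lambda}[X,Y]\circ X\xrightarrow{\ \mathrm{ev}\ }Y.
\]
The associativity of $\rho$ then follows by transposing across the adjunction and using the associativity of $\lambda$ together with the coherence isomorphisms of the monoidal product $\circ$; the unit axiom follows from the unit axiom for $\lambda$ and the triangle identity for the adjunction. I expect this transposition argument to be routine but slightly delicate because one must keep track of the associator for $\circ$.

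For $(b)$, the left $P$-action on $[X,Y]$ when $Y$ carries a left $P$-action $\mu\colon P\circ Y\to Y$ is more direct, since it is covariant in $Y$ and does not interact with the $X$-variable through the adjunction. Here $[X,-]$ is a functor, and I would simply observe that $[X,-]$ is lax monoidal (or at least sends left $P$-modules to left $P$-modules) because $P\circ[X,Y]$ maps to $[X,P\circ Y]$ via the natural assembly map coming from the monoidal structure, and then postcomposing with $[X,\mu]$ gives $P\circ[X,Y]\to[X,Y]$. The axioms reduce to functoriality of $[X,-]$ and the axioms for $\mu$. Finally, for $(c)$ the only remaining point is the compatibility (commutation) of the left $P$-action and right $Q$-action, i.e. that they assemble into a genuine bimodule structure rather than two independent actions. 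I would verify this by transposing the compatibility square across the adjunction: the left action is defined through $\mu$ on the target $Y$, while the right action is defined through $\lambda$ on the source $X$, and these act on disjoint variables, so their commutation follows formally once both are written as adjoint transposes. The main obstacle will be bookkeeping in $(a)$: correctly identifying the canonical isomorphism $([X,Y]\circ Q)\circ X\cong[X,Y]\circ(Q\circ X)$ and checking that the action so defined is genuinely a right module action rather than merely a map, but since $\circ$ is associative with unit $\mathbb{1}$ and the adjunction is natural, these checks are formal.
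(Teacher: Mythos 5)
Your proposal is correct and takes essentially the same route as the paper: in both, the right $Q$-action is the adjoint transpose of $[X,Y]\circ Q\circ X\xrightarrow{\mathrm{id}\circ\lambda}[X,Y]\circ X\xrightarrow{\mathrm{ev}}Y$, the left $P$-action is the transpose of $\mu_Y\circ(\mathrm{id}_P\circ\mathrm{ev})$, and the module axioms and their commutation in $(c)$ are checked by transposing across the adjunction. Your part $(b)$, phrased as the strength map $P\circ[X,Y]\to[X,P\circ Y]$ followed by $[X,\mu_Y]$, unwinds by naturality of the adjunction to exactly the paper's map, so the only blemish is the parenthetical suggestion that $[X,-]$ is lax monoidal --- the map you actually use is a strength, not a lax monoidal structure map $[X,A]\circ[X,B]\to[X,A\circ B]$ (which need not exist here) --- but nothing in your argument depends on that remark.
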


\begin{proof}[Sketch of the proof] 
For (a), the right action map $[X,Y]\circ Q\to [X,Y]$ is the adjoint to the composition
\[
[X,Y]\circ Q\circ X\xrightarrow{id_{[X,Y]}\circ\mu_X} [X,Y]\circ X\xrightarrow{ev_{X,Y}} Y.
\]
Here $\mu_X\colon Q\circ X\to X$ is the left $Q$-action on X, and $ev_{X,Y}$ is the adjoint
to the identity map $id_{[X,Y]}$.

For (b), the left action map $P\circ [X,Y]\to [X,Y]$ is adjoint to the composition
\[
P\circ[X,Y]\circ X\xrightarrow{id_P\circ ev_{X,Y}}P\circ Y\xrightarrow{\mu_Y} Y.
\]
Here $\mu_Y$ is the left $P$-action on $Y$. 

The facts that these formulae correctly define $P$ and $Q$ actions are easily checked as well as the fact
that these actions commute in case of (c).
\end{proof}

If $X$ is a right module over an operad $Q$ and $Y$ is a left $Q$-module, one defines $X\circ_Q Y\in
\Sigma Seq$ as the coequalizer
\[
X\circ_Q Y=\mathrm{coeq}\left(X\circ Q\circ Y\rightrightarrows
X\circ Y\right),
\]
where both arrows are $\mu_x\circ id_Y$ and $id_X\circ\mu_Y$.

In case $X$ and $Y$ are both right modules over an operad $Q$, one defines $[X,Y]_Q\in\Sigma Seq$ as the
equalizer
\[
[X,Y]_Q=\mathrm{eq}\left([X,Y]\rightrightarrows [X\circ Q,Y]\right),
\]
where the upper map is induced by the right $Q$-action $\mu_X\colon X\circ Q\to X$, and the lower map
is the adjoint to the composition
\[
[X,Y]\circ X\circ Q\xrightarrow{ev_{X,Y}\circ id_Q} Y\circ Q\xrightarrow{\mu_Y} Y.
\]

\begin{pro}\label{p:tens_hom}
{\normalfont (\cite[Proposition~5.22]{Ha})}
Let $Q_1,Q_2,P\in\Sigma\mathrm{Operad}$ and $Y\in\Sigma\mathrm{Bimod}_{Q_1;Q_2}$.
\begin{itemize}
\item[$(a)$]One has an adjunction between the categories of right $Q_1$ and $Q_2$ modules
\begin{equation}\label{eq:tens_hom1}
(-)\circ_{Q_1} Y\colon\Sigma\mathrm{RMod}_{Q_1}\rightleftarrows\Sigma\mathrm{RMod}_{Q_2}\colon
[Y,-]_{Q_2}.
\end{equation}

\item[$(b)$]One has an adjunction between the categories of $(P$-$Q_1)$ and $(P$-$Q_2)$ bimodules
\begin{equation}\label{eq:tens_hom2}
(-)\circ_{Q_1} Y\colon\Sigma\mathrm{Bimod}_{P;Q_1}\rightleftarrows\Sigma\mathrm{Bimod}_{P;Q_2}\colon
[Y,-]_{Q_2}.
\end{equation}

\end{itemize}
\end{pro}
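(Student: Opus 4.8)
The plan is to deduce both adjunctions formally from the basic right closed monoidal adjunction $\Sigma Seq(W\circ Y,Z)\cong\Sigma Seq(W,[Y,Z])$ recalled in \S\ref{sss:r_closed}, by matching the defining coequalizer of $(-)\circ_{Q_1}Y$ and the defining equalizer of $[Y,-]_{Q_2}$ against the equivariance conditions imposed on morphisms. Throughout I would use Lemma~\ref{l:int_hom_module} to produce the module structures on the internal hom and only need to check that they descend to the relevant equalizer or coequalizer; the compatibility making every such descent legitimate is exactly the commutation of the left $Q_1$- and right $Q_2$-actions on the bimodule $Y$.

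For part $(a)$ I would exhibit, for a right $Q_1$-module $X$ and a right $Q_2$-module $Z$, a natural bijection
\[
\Sigma\mathrm{RMod}_{Q_2}(X\circ_{Q_1}Y,\,Z)\;\cong\;\mathcal{E}\;\cong\;\Sigma\mathrm{RMod}_{Q_1}(X,\,[Y,Z]_{Q_2}),
\]
where $\mathcal{E}$ is the set of maps $f\colon X\circ Y\to Z$ in $\Sigma Seq$ that are right $Q_2$-equivariant and coequalize the pair $\mu_X\circ\mathrm{id}_Y,\ \mathrm{id}_X\circ\mu_Y\colon X\circ Q_1\circ Y\rightrightarrows X\circ Y$ defining $X\circ_{Q_1}Y$. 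The left bijection is the universal property of the coequalizer, once one observes that the right $Q_2$-action on $X\circ Y$ induced by $\mu_Y\colon Y\circ Q_2\to Y$ descends to $X\circ_{Q_1}Y$ because the left $Q_1$- and right $Q_2$-actions on $Y$ commute.

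For the right bijection I would feed $f$ through the basic monoidal adjunction to get its adjunct $g\colon X\to[Y,Z]$ and unwind the two defining conditions. The crucial computation is that $g$ lands in the equalizer $[Y,Z]_{Q_2}$ precisely when $f$ is right $Q_2$-equivariant: the two maps $X\rightrightarrows[Y\circ Q_2,Z]$ cutting out the equalizer correspond under adjunction to the composites $X\circ Y\circ Q_2\to Z$ given by $f\circ(\mathrm{id}_X\circ\mu_Y)$ (via the right $Q_2$-action $\mu_Y\colon Y\circ Q_2\to Y$) and by $\mu_Z\circ(f\circ\mathrm{id}_{Q_2})$, whose coincidence is exactly the right $Q_2$-equivariance of $f$. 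Dually, with the right $Q_1$-action on $[Y,Z]_{Q_2}$ furnished by Lemma~\ref{l:int_hom_module}$(a)$, the adjunct $g$ is right $Q_1$-equivariant exactly when the composites $X\circ Q_1\circ Y\to Z$ given by $f\circ(\mu_X\circ\mathrm{id}_Y)$ and $f\circ(\mathrm{id}_X\circ\mu_Y)$ (now via the left $Q_1$-action $\mu_Y\colon Q_1\circ Y\to Y$) agree, i.e.\ when $f$ coequalizes the pair defining $X\circ_{Q_1}Y$. Naturality of all three bijections is inherited from naturality of the monoidal adjunction, which settles $(a)$. Part $(b)$ I would then obtain by adjoining left $P$-equivariance as one further condition at both ends: Lemma~\ref{l:int_hom_module}$(b,c)$ makes $[Y,Z]_{Q_2}$ a $(P\text{-}Q_1)$-bimodule, the left $P$-action on $X\circ Y$ coming from $X$ descends to $X\circ_{Q_1}Y$ and commutes with the right $Q_2$-action, and since the left $P$-actions arise solely from $X$ and $Z$ while the coequalizer involves only the $Q_i$-structures, the bijection of $(a)$ carries left $P$-equivariance of $f$ to that of $g$ by the same style of computation; restricting to these sub-collections yields $(b)$.

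I expect the only real difficulty to be organizational rather than conceptual: one must verify carefully that each of the three module actions genuinely descends to the relevant equalizer or coequalizer, and that the single adjunction bijection respects all of the commuting equivariance conditions simultaneously. The bimodule axioms of $Y$ are precisely what validate each descent, so the task reduces to a bookkeeping of which action originates from which factor.
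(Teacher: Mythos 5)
Your proposal is correct and follows exactly the route the paper intends: its own proof is only the one-line remark that the statement holds in any bicomplete right closed monoidal category as a categorification of the classical tensor-hom adjunction for (bi)modules over rings, and your argument is precisely that categorification spelled out — the coequalizer/equalizer universal properties matched through the basic adjunction $\Sigma Seq(W\circ Y,Z)\cong\Sigma Seq(W,[Y,Z])$, with the bimodule axioms of $Y$ justifying each descent. The detailed verifications you flag (equivariance of the adjunct, descent of the actions to $(-)\circ_{Q_1}Y$ and $[Y,-]_{Q_2}$) are exactly the routine checks the paper leaves implicit, so there is no gap.
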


\begin{proof}[Sketch of the proof]
The statements are general and hold in any bicomplete (right) closed monoidal category. The 
proof is essentially a categorification  of the tensor-hom adjunction between the categories of (bi)modules over rings.
\end{proof}

\subsubsection{The Reedy fibrant replacement}\label{sss:reedy}
Let $N$ be a $\Sigma$-cofibrant right module over a $\Sigma$-cofibrant operad $Q$. Viewed as a $(\mathbb{1}$-$Q)$-bimodule, we consider its resolution ${\mathcal B}_{\mathbb{1};Q}(N)$, see Subsection~\ref{3.2.1}, which is its cofibrant replacement as a right $Q$-module. It is easy to see that
in case $N$ is a $(P$-$Q)$-bimodule for some operad $P$, the sequence ${\mathcal B}_{\mathbb{1};Q}(N)$
has also a natural structure of a $(P$-$Q)$-bimodule. Denote by $Q^c:={\mathcal B}_{\mathbb{1};Q}(Q)$.
One can show that $Q^c$ is a cofibrant replacement of $Q$ as a $Q$-bimodule. Roughly speaking it is because even before taking its resolution, $Q$ is cofibrant as a left module over itself. 

We leave the following proposition as an exercise to the reader. For this proposition we do not assume that $Q$ is $\Sigma$-cofibrant.

\begin{pro}\label{p:int_hom}
Let $M\in \Lambda\mathrm{Bimod}_{P;Q}$, let $M^f$ be its Reedy fibrant replacement as defined in Subsection~\ref{Z2}, and let $Q^c:={\mathcal B}_{\mathbb{1};Q}(Q)$, see Subsection~\ref{3.2.1}. One has an isomorphism of  $(P$-$Q)$-bimodules:
\[
M^f=[Q^c,M]_Q.
\]
Moreover, the fibrant replacement map $M\to M^f$ is induced by the (cofibrant, in case $Q$ is $\Sigma$-cofibrant) 
replacement map $Q^c\to Q$:
\[
M=[Q,M]_Q\to [Q^c,M]_Q.
\]
\end{pro}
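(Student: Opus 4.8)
The plan is to compare the two explicit constructions directly, arity by arity, and exhibit a natural homeomorphism of $\Sigma$-sequences that is moreover compatible with the $(P\text{-}Q)$-bimodule structure supplied by Lemma~\ref{l:int_hom_module}(c) on the right-hand side and by Subsection~\ref{Z2} on the left. First I would unfold $[Q^c,M]_Q$ using the closed structure of $(\Sigma Seq,\circ,\mathbb 1)$. By definition $Q^c={\mathcal B}_{\mathbb 1;Q}(Q)$ is a coproduct indexed by trees with section whose (unique, since the left operad is $\mathbb 1$) pearl sits at the root and whose above-section vertices are labelled by $Q$ and decorated by heights in $[0,1]$. Since the internal hom \eqref{eq:int_hom} turns coproducts in its first variable into products and $Map(-,M(n))$ sends the coproduct over tree summands to a product, the factor $Map\big(\prod_{i=1}^k Q^c(|\alpha^{-1}(i)|),M(n)\big)$ decomposes as a product over $k$-tuples $(S_1,\dots,S_k)$ of $Q^c$-trees.

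The key combinatorial step is the reindexing. A choice of $\alpha\colon[n]\to[k]$ together with $Q^c$-trees $S_i$ of arity $|\alpha^{-1}(i)|$ is exactly the data of a single planar tree $T\in\mathbb P[k]$ whose root has arity $k$ and whose subtree over the $i$-th incoming edge is $S_i$; here $\alpha$ is the induced map from the $n=|T|$ leaves of $T$ to its $k$ root-inputs. Under this bijection the decoration data of the $S_i$ is meant to match the $Q$-labels $D(T)$ and the heights $H(T)$, while the $\Sigma_n$-invariance coming from the internal hom becomes the symmetry condition~\eqref{rel0}. I would then check that the two remaining families of constraints match: the relations defining the quotient $Q^c$ (unit removal, equivariance, contraction of equal-height and of height-zero vertices) translate, for a map out of $Q^c$, into conditions \eqref{rel1}, \eqref{rel0}, \eqref{rel2} together with the univalent-vertex remark of Subsection~\ref{Z2}; and the equalizer defining $[-,-]_Q$, which imposes compatibility with the right $Q$-action of $Q^c$ (grafting of $Q$-corollas at height~$1$), is exactly the grafting condition~\eqref{rel3}. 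Having identified the underlying spaces $M^f(n)$ and $[Q^c,M]_Q(n)$, one verifies that the left $P$-action and right $Q$-action on $[Q^c,M]$ produced by Lemma~\ref{l:int_hom_module} agree with the operations \eqref{Finish2} and the left operation defined on $M^f$, which is a routine unwinding of the $ev$ and action maps appearing in the proof of that lemma.

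For the \emph{Moreover} statement I would first record the canonical isomorphism $[Q,M]_Q\cong M$, which is the case $Q_1=Q_2=Q$, $Y=Q$ of the tensor–hom adjunction Proposition~\ref{p:tens_hom}(b) (so that $[Q,-]_Q$ is the identity), under which a point $m\in M(k)$ corresponds to the family acting on tuples from $Q$ by the right $Q$-action on $M$. The cofibrant-replacement map $Q^c\to Q$, which collapses all heights to $0$ and composes the $Q$-labels, induces by contravariant functoriality of $[-,M]_Q$ the map $M\cong[Q,M]_Q\to[Q^c,M]_Q$; tracing $m$ through this composite produces the family $(S_1,\dots,S_k)\mapsto m\circ\{q_v\}$ obtained by composing $m$ with all $Q$-labels, which is exactly the map $\eta$ of Proposition~\ref{ProFibrantEq}.

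The main obstacle is the combinatorial bookkeeping in the reindexing step, and in particular the careful matching of the height data and of the pearl. The pearls of the $Q^c$-summands carry the module-valued labels but no height, whereas in $M^f$ the corresponding bottom vertices are treated on the same footing as the other non-root vertices; so one must verify scrupulously that the order-preserving constraints of $H(T)$, the contraction relations \eqref{rel1}–\eqref{rel2}, and the equalizer/grafting condition \eqref{rel3} conspire so that the two decoration spaces and all their relations correspond bijectively. Establishing this dictionary as a \emph{homeomorphism} and a \emph{bimodule map}, rather than merely a bijection on points, is where the real work lies; I expect the cleanest route is to set up the two comparison maps $M^f\rightleftarrows[Q^c,M]_Q$ explicitly (plugging $Q^c$-trees into a family $\{f_T\}$, and conversely restricting a module-hom along the tree decompositions) and then check mutual inversion against the relations, exactly as in the analogous coskeleton computation used in the proof of Theorem~\ref{C1}.
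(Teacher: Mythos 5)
The paper offers no proof of Proposition~\ref{p:int_hom} to compare against (it is explicitly left as an exercise), so your proposal has to stand on its own. Its architecture is sound: unfolding the internal hom \eqref{eq:int_hom}, turning the coproduct defining $Q^c$ into a product of mapping spaces, matching the equalizer defining $[-,-]_Q$ with the grafting condition \eqref{rel3} and the $\Sigma_n$-invariance with \eqref{rel0}, and deducing the \emph{Moreover} clause from $[Q,-]_Q\cong\mathrm{id}$ (Proposition~\ref{p:tens_hom}) together with contravariant functoriality along $Q^c\to Q$ are all correct steps; the last of these you carry out completely, and the identification of the composite with $\eta$ of Proposition~\ref{ProFibrantEq} is right.

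However, the central step --- your ``reindexing'' --- is false as stated and never repaired, and it is exactly where the content of the exercise lies. A tuple $(S_1,\dots,S_k)$ of $Q^c$-trees is \emph{not} the same data as a point of $H(T)\times D(T)$ for the grafted tree $T\in\mathbb{P}[k]$: the pearls of the $S_i$ carry $Q$-labels but no heights, whereas in $M^f$ every non-root vertex of $T$, including the root-children, carries a height; moreover the root-children of a tree in $\mathbb{P}[k]$ may be leaves, while every $S_i$ contains at least its pearl. You flag this mismatch as ``the main obstacle'' but leave it unresolved, so the two comparison maps you promise to ``set up explicitly'' are never actually defined --- and defining them is the whole point. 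The missing mechanism is the dictionary: from $M^f$ to $[Q^c,M]_Q$, evaluate $f_T$ on the grafted tree with every pearl placed at height $0$; in the other direction, given $T\in\mathbb{P}[k]$ with arbitrary heights, insert a unit-labelled pearl at the bottom of each subtree over a root edge (a root edge which is a leaf becomes the unit pearl alone) and feed the resulting tuple to the given element of $[Q^c,M]_Q$. Mutual inversion is then not formal: one composite is the identity because of the unit relation \eqref{rel1} in $M^f$ (the inserted unit vertices at height $0$ may be erased), and the other because of relation (iv) of the Boardman--Vogt construction (a vertex at height $0$ adjacent to the pearl is absorbed into it), with \eqref{rel2} also needed to check that the two right $Q$-actions (adding a vertex at height $0$ in $M^f$ versus merging pearls through the left $Q$-action on $Q^c$) agree under the dictionary. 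Without this, the claimed bijection of decoration spaces does not exist, and the proof is incomplete precisely at its heart.
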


As an interesting consequence we have the following.

\begin{cor}\label{cor:fib_fib}
For any $M\in \Lambda\mathrm{Bimod}_{P;Q}$, one has a homeomorphism $(M^f)^f\cong M^f$ of fibrant replacements.
\end{cor}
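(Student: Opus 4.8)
The plan is to deduce everything from the internal-hom description of the Reedy fibrant replacement furnished by Proposition~\ref{p:int_hom}. Writing $Q^c={\mathcal B}_{\mathbb{1};Q}(Q)$, that proposition identifies $M^f=[Q^c,M]_Q$ naturally in $M\in\Lambda\Bimod_{P\,;\,Q}$. Applying it twice first gives
$$
(M^f)^f=[Q^c,M^f]_Q=[Q^c,[Q^c,M]_Q]_Q,
$$
so the whole statement reduces to understanding the iterated internal hom $[Q^c,[Q^c,M]_Q]_Q$.

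Next I would simplify this iterated hom by the tensor--hom adjunction. Regarding $Q^c$ as a $(Q\text{-}Q)$-bimodule, Proposition~\ref{p:tens_hom}(a) makes $[Q^c,-]_Q$ the right adjoint of $(-)\circ_Q Q^c$ on right $Q$-modules. Hence, for every right $Q$-module $A$, two applications of the adjunction together with the associativity of the relative composite $\circ_Q$ yield
$$
\Sigma\mathrm{RMod}_Q\bigl(A,[Q^c,[Q^c,M]_Q]_Q\bigr)\cong \Sigma\mathrm{RMod}_Q\bigl(A\circ_Q(Q^c\circ_Q Q^c),M\bigr),
$$
so that the Yoneda lemma gives a natural isomorphism $[Q^c,[Q^c,M]_Q]_Q\cong[Q^c\circ_Q Q^c,M]_Q$; keeping track of the left $P$-actions through Lemma~\ref{l:int_hom_module} upgrades this to an isomorphism of $(P\text{-}Q)$-bimodules. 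Thus $(M^f)^f\cong[Q^c\circ_Q Q^c,M]_Q$, and it remains only to compare $Q^c\circ_Q Q^c$ with $Q^c$.

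The crux, and the main obstacle, is therefore to show that the canonical augmentation $\varepsilon\colon Q^c\to Q$ induces an isomorphism $\varepsilon\circ_Q Q^c\colon Q^c\circ_Q Q^c\xrightarrow{\ \cong\ } Q\circ_Q Q^c=Q^c$ of $(Q\text{-}Q)$-bimodules; equivalently, that the replacement object $Q^c$ is idempotent for the relative composite $\circ_Q$ (this is exactly what is needed for $[\varepsilon\circ_Q Q^c,M]_Q$, and hence the replacement map $M^f\to(M^f)^f$ of Proposition~\ref{p:int_hom}, to be a homeomorphism for all $M$). This I would verify by hand from the Boardman--Vogt model of Subsection~\ref{3.2.1}, in which $Q^c={\mathcal B}_{\mathbb{1};Q}(Q)$ consists of trees carrying a single pearl labelled by $Q$ below a forest of $Q$-labelled vertices weighted by times in $[0\,,\,1]$. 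The decomposition of a point into its prime components along the vertices of weight $1$ (see the prime/composite analysis and Figure~\ref{B1}) is precisely the datum of a two-layer element of $Q^c\circ_Q Q^c$, and the coequalizer relations defining $\circ_Q$ match the grafting identifications of the construction. Checking that these two assignments are mutually inverse and continuous is where the real work lies: the delicate point is to confirm that the relative composite does not record a genuinely independent second family of time parameters, so that $Q^c\circ_Q Q^c$ is recovered on the nose from the composite points of $Q^c$. Granting this isomorphism, we obtain $(M^f)^f\cong[Q^c\circ_Q Q^c,M]_Q\cong[Q^c,M]_Q=M^f$, which is the desired homeomorphism.
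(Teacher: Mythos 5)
Your first two steps are exactly those of the paper: Proposition~\ref{p:int_hom} together with the adjunction~\eqref{eq:tens_hom2} reduce the corollary to an identification $Q^c\circ_Q Q^c\cong Q^c$. The gap is in your third step: the map $\varepsilon\circ_Q Q^c\colon Q^c\circ_Q Q^c\to Q\circ_Q Q^c= Q^c$ induced by the augmentation (the map $\mu$ of Subsection~\ref{3.2.1}) is \emph{not} a homeomorphism, so the statement you propose to verify is false. The reason is that $\varepsilon$ forgets the time parameters of the bottom layer, and these are genuine coordinates of $Q^c\circ_Q Q^c$: none of the relations defining the coequalizer $\circ_Q$, nor the Boardman--Vogt relations inside each factor, moves the height of a non-unit vertex lying strictly between $0$ and $1$ (the right $Q$-action on $Q^c$ grafts at height $1$, the left action only merges pearl labels, and the internal relations concern heights $0$, $1$ and coinciding heights). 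Concretely, if $q\in Q(2)$ is any element, the two points of $(Q^c\circ_Q Q^c)(2)$ whose bottom factor is the pearl corolla labelled by the unit with the single vertex $q$ attached at height $1/3$, respectively $2/3$, and whose top factors are unit pearl corollas, are distinct, yet both are sent by $\varepsilon\circ_Q Q^c$ to the pearl corolla labelled by $q$. For the same reason your back-up plan via prime components cannot work: grafting the top trees onto vertices of height $1$ forces, by the monotonicity constraint on heights, every parameter of the top layer to equal $1$; so the ``genuinely independent second family of time parameters'' whose existence you hoped to rule out really is there, and the identification you want cannot be the one along the augmentation.

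The paper's proof is built precisely around this difficulty: it produces an explicit homeomorphism $Q^c\xrightarrow{\ \cong\ } Q^c\circ_Q Q^c$ by cutting each tree along the horizontal line $t=1/2$ and rescaling, the heights $t\in[0\,,\,1/2]$ below the cut by $t\mapsto 2t$ and the heights $t\in(1/2\,,\,1]$ above the cut by $t\mapsto 2t-1$, so that each of the two layers recovers a full interval of parameters. This homeomorphism is \emph{not} induced by $\varepsilon$, and accordingly Corollary~\ref{cor:fib_fib} only claims that \emph{some} homeomorphism $(M^f)^f\cong M^f$ exists, not that the canonical coaugmentation $M^f\to (M^f)^f$ is one; by your own Yoneda observation, that stronger claim would force $\varepsilon\circ_Q Q^c$ to be an isomorphism, which the example above excludes. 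If you keep your first two steps and replace the third by the rescaling homeomorphism, your argument becomes the paper's proof.
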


\begin{proof}
It follows from Proposition~\ref{p:int_hom}  and  adjunction~\eqref{eq:tens_hom2} that
\[
(M^f)^f=[Q^c,[Q^c,M]_Q]_Q\cong[Q^c\circ_Q Q^c,M]_Q.
\]
On the other hand, it is easy to see that $Q^c\circ_Q Q^c\cong Q^c$. To define an explicit
homeomorphism $Q^c\xrightarrow{\cong} Q^c\circ_Q Q^c$, for each tree in $Q^c$ we draw a horizontal line $t=1/2$
and then replace all the real parameters in the vertices below the horizontal section $t\in[0,1/2]$ by $2t$ and all the real parameters of the vertices above the section $t\in(1/2,1]$ 
by $2t-1$.\footnote{In fact for any right $Q$-module $N$, one has 
${\mathcal B}_{\mathbb{1};Q}(N)= N\circ_Q Q^c$ and it is always true that 
${\mathcal B}_{\mathbb{1};Q}({\mathcal B}_{\mathbb{1};Q}(N))\cong {\mathcal B}_{\mathbb{1};Q}(N)$.}
\end{proof}\vspace{5pt}

\subsection{The subcategory of reduced bimodules}\label{reduced}

\subsubsection{Main properties}\label{sss:reduced_main}

Let $P$ and $Q$ be two reduced operads. We can then consider the category $\Lambda_*\Bimod_{P\,;\,Q}$ of reduced $(P$-$Q)$-bimodules.
% In the general setting of ($P$-$Q$)-bimodules, some technical issues arise from the map $\gamma_{0}:P(0)\rightarrow M(0)$. Thanks to the $\gamma$-relation that contracts points coming from $P(0)$ in must of the constructions done in the previous sections, we can slightly improve the theorems if we restrict our study to reduced bimodules that is the full subcategory $\Lambda_{\ast}\Bimod_{P\,;\,Q}$ whose objects are ($P$-$Q$)-bimodules $M$ with $M(0)=\ast$. 
  This category   has been used by the first and third authors \cite{DT} in order to get delooping theorems 
  for the Taylor tower approximations of mapping spaces avoiding multi-singularities, i.e. singularities depending on several points. 

The present section is devoted to adapt the constructions and theorems introduced in the previous sections to the category $\Lambda_{\ast}\Bimod_{P\,;\,Q}$. By considering only reduced bimodules, we can simplify some constructions.  % and we can improve slightly the main theorems (in particular the assumptions on the reduced operad $P$). 
  Since the proofs are almost the same, we list altogether the main statements. Note that the statements are slightly improved  for the functorial cofibrant resolution and for the extension/restriction adjunction.

\begin{thm}\label{MainThmRed}
Let $P$ and $Q$ be  reduced operads and, moreover, assume that  $Q$ is well-pointed.

\begin{itemize}[itemsep=10pt, leftmargin=20pt]
\item[$(i)$] \textbf{Reedy model structure.} The category $\Lambda_{\ast} \Bimod_{P\,;\,Q}$ admits a cofibrantly generated model category structure, called the Reedy model category structure,  transferred from the Reedy model category $\Lambda_{>0} Seq$ along the adjunction 
\begin{equation*}
\begin{array}{rcl}\vspace{8pt}
\mathcal{F}_{P\,;\,Q}^{\Lambda_{\ast}}:\Lambda_{>0} Seq & \rightleftarrows & \Lambda_{\ast} \Bimod_{P\,;\,Q}:\mathcal{U}^{\Lambda}, 
\end{array} 
\end{equation*}
where the free functor is given by 
\begin{equation}\label{eq:F_Lambda_*}
\mathcal{F}_{P\,;\,Q}^{\Lambda_{\ast}}(M)(n)\coloneqq \left\{ 
\begin{array}{cl}\vspace{5pt}
\mathcal{F}^{\Sigma}_{P_{>0}\,;\,Q_{>0}}(M)(n), &  n\geq 1, \\ 
\ast, &  n=0,
\end{array} 
\right.
\end{equation}
The model category so obtained makes the adjunction $(\mathcal{F}_{P\,;\,Q}^{\Lambda_{\ast}}, \mathcal{U}^{\Lambda})$ into a Quillen adjunction. Moreover, the fibrant coresolution functor introduced in Section \ref{Z2}   restricts to the category $\Lambda_{\ast} \Bimod_{P\,;\,Q}$ giving rise to a functorial fibrant replacement.
 In case $P$ and $Q$ are componentwise cofibrant,  the  functor $M\mapsto  \phi_!\left(\mathcal{B}_{P'_\infty\,;\,Q'_\infty}^\Lambda(M'_\infty)\right)$  of Proposition~\ref{p:cof_repl_bimod}  %Section \ref{ss:cof_repl_bimod} restricts to the category $\Lambda_{\ast} \Bimod_{P\,;\,Q}$ and it
  gives rise to a functorial  cofibrant resolution in  $\Lambda_{\ast} \Bimod_{P\,;\,Q}$.  

\item[$(ii)$] \textbf{Characterization of Reedy cofibrations.} A morphism $\phi:M\rightarrow N$ in the category $\Lambda_{\ast} \Bimod_{P\,;\,Q}$ is a Reedy cofibration if and only if the corresponding map $\phi_{>0}:M_{>0}\rightarrow N_{>0}$ is a cofibration in the projective model category of $(P_{>0}\text{-}Q_{>0})$-bimodules.

\item[$(iii)$] \textbf{Left and Right properness.} The Reedy model category  $\Lambda_{\ast}\Bimod_{P\,;\,Q}$ is right proper. If $P$ is Reedy cofibrant and $Q$ is componentwise cofibrant, then  $\Lambda_{\ast}\Bimod_{P\,;\,Q}$ is left proper relative to the class of componentwise cofibrant bimodules. In the latter case, the class of componentwise cofibrant bimodules is closed 
under cofibrations. In particular, cofibrant bimodules are componentwise cofibrant. If in addition $Q$ is $\Sigma$-cofibrant, the class of $\Sigma$-cofibrant bimodules is also closed under cofibrations
and cofibrant bimodules are $\Sigma$-cofibrant.

\item[$(iv)$] \textbf{Extension/restriction adjunctions.} Let $\phi_{1}\colon P\rightarrow P'$  and $\phi_{2}\colon Q\rightarrow Q'$  be  weak equivalences between  componentwise cofibrant  reduced operads. One has a Quillen equivalence
\begin{equation*}
\phi_!\colon\Lambda_{\ast}\Bimod_{P\,;\,Q}\rightleftarrows\Lambda_{\ast}\Bimod_{P'\,;\,Q'}\colon\phi^*,
\end{equation*}
\end{itemize}

\noindent All the above statements admit truncated versions for the categories  $T_{r}\Lambda_{\ast} \Bimod_{P\,;\,Q}$ with $r\geq 0$.

\end{thm}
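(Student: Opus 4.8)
The plan is to deduce every assertion from the corresponding statement already established for non-reduced bimodules, exploiting the fact that fixing $M(0)=\ast$ simply discards the arity-zero degrees of freedom. Concretely, a reduced $(P\text{-}Q)$-bimodule is the same datum as an object of $\Sigma_{>0}\Bimod_{P_{>0};Q_{>0}}$ (empty arity zero, over the operads $P_{>0}$, $Q_{>0}$ which also have empty arity zero), so the forgetful functor $\mathcal{U}^{\Lambda}\colon\Lambda_*\Bimod_{P;Q}\to\Sigma_{>0}\Bimod_{P_{>0};Q_{>0}}$ is the bridge between the reduced Reedy theory and the projective theory of Section~\ref{H2}. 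Because both $P$ and $Q$ are now reduced, this target falls into the regime of Theorem~\ref{th:>0} (empty arity zero on both acting operads), which is precisely what yields the improvements over the general case.

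For part $(i)$ I would apply the transfer principle, Theorem~\ref{E3}, to the adjunction $(\mathcal{F}_{P;Q}^{\Lambda_*},\mathcal{U}^{\Lambda})$. The small object argument runs as in the proof of Theorem~\ref{Z5}: the free functor~\eqref{eq:F_Lambda_*} is the restriction of $\mathcal{F}_{P;Q}^{\Lambda}$ to trees with section having no univalent non-pearl vertices, pushouts are computed componentwise as in~\eqref{eq:colim_PQ}, Lemma~\ref{Final3} provides objectwise closed inclusions, and the generators have separable non-basepoint part and are thus $\aleph_1$-small. The functorial path object of~\eqref{PathObject} restricts, since $Path(M)(0)=Map([0,1],\ast)=\ast$ keeps the object reduced. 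The crucial point is that the Reedy fibrant replacement $M^f$ of Subsection~\ref{Z2} is automatically reduced: one has $M^f(0)=M(0)=\ast$, so Propositions~\ref{ProFibrantEq} and~\ref{MF} restrict verbatim and supply the functorial fibrant coresolution. The functorial cofibrant resolution is inherited from Proposition~\ref{p:cof_repl_bimod} once $P$ and $Q$ are componentwise cofibrant, the Boardman--Vogt construction $\mathcal{B}^\Lambda$ creating no arity-zero vertices above the section and hence preserving reducedness.

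Parts $(ii)$ and $(iii)$ then follow the non-reduced pattern. For $(ii)$ I would repeat the arity-filtration/coskeleton induction of Theorem~\ref{C1}, now with both operads reduced; the adjunction $(ar_s,cosk_s)$ and the inductive lift of Diagram~\eqref{C8} are insensitive to the extra reduction imposed on $P$, so $\phi$ is a reduced Reedy cofibration exactly when $\phi_{>0}$ is a projective cofibration in $\Sigma\Bimod_{P_{>0};Q_{>0}}$. Granting $(ii)$, the forgetful functor creates cofibrations and preserves colimits, whence right properness is obtained as in Theorem~\ref{ThmProperness} (limits are componentwise and $Top$ is right proper), while relative left properness and the closure/componentwise-cofibrancy statements are imported from Theorems~\ref{C4} and~\ref{th:sigma_bimod_cof} applied to $\Sigma\Bimod_{P_{>0};Q_{>0}}$, using that $P$ is Reedy cofibrant iff $P$ is reduced with $P_{>0}$ cofibrant.

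Finally, for $(iv)$ the improvement to merely componentwise-cofibrant operads is exactly where Theorem~\ref{th:>0}, built on Proposition~\ref{p:P+Q>0}, enters: after passing to $P_{>0}$, $Q_{>0}$, the governing colored operad $(P_{>0}{+}Q_{>0})_{>0}$ is $\Sigma$-cofibrant purely from componentwise cofibrancy, since surjectivity of the indexing maps forces a free symmetric-group action. Thus $(\tilde\phi_!,\tilde\phi^*)$ on $\Sigma_{>0}\Bimod$ is a Quillen equivalence, and as in Theorem~\ref{ThmExt/rest} one checks that for a Reedy cofibrant $M$ the adjunction unit $M\to\phi^*(\phi_!(M))=\tilde\phi^*(\tilde\phi_!(M))$ is a weak equivalence, invoking $(ii)$ to see that $M$ is projectively cofibrant after forgetting arity zero. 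I expect the only genuinely delicate point to be the bookkeeping in $(ii)$: one must verify that the coskeleton functor and the acyclic-fibration factorization of~\eqref{C8} remain compatible with forcing $M(0)=\ast$, i.e. that the reduced $\Lambda$-structure in arity zero does not obstruct the inductive lift. Everything else is a routine transcription of the non-reduced proofs, and the truncated versions over $T_r\Lambda_*\Bimod_{P;Q}$ are handled by the same restriction to trees with at most $r$ leaves.
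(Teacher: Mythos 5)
Your overall strategy---deducing everything from the positive-arity projective theory---is the same as the paper's, and your treatments of (i) and (iv) essentially reproduce its arguments (in particular you correctly locate the source of the improvement in (iv) in Theorem~\ref{th:>0} and Proposition~\ref{p:P+Q>0}). The structural difference is in (ii) and (iii): the paper does not rerun the coskeleton/arity-filtration induction of Theorem~\ref{C1} in the reduced setting. Instead it first proves that the inclusion $\iota\colon\Lambda_{\ast}\Bimod_{P\,;\,Q}\to\Lambda\Bimod_{P\,;\,Q}$ creates weak equivalences, fibrations, cofibrations, limits and colimits (Proposition~\ref{ProCompMCb}), and then simply quotes Theorem~\ref{C1} for (ii) and Theorem~\ref{ThmProperness} for (iii). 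Your direct rerun of the induction is viable, but it is longer and, more importantly, it does not let you bypass the key categorical input that Proposition~\ref{ProCompMCb} supplies.

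That input is the genuine gap in your write-up. You assert that fixing $M(0)=\ast$ ``simply discards the arity-zero degrees of freedom'' and that the relevant forgetful functors ``preserve colimits,'' but neither is automatic, and both are used essentially: in the small object argument of (i) (pushouts of cellular attachments must remain reduced and be objectwise closed inclusions), in both directions of (ii) (pushouts along generating cofibrations for the lifting induction, and the reduction to generating cofibrations for the converse), and in (iii). Colimits of bimodules are not computed componentwise, and a colimit of one-point spaces need not be a point (a coproduct of points is not a point), so one must actually prove that a colimit of reduced bimodules is again reduced. The paper's argument: the arity-zero part of a colimit of bimodules is the colimit of the arity-zero parts taken in $P$-algebras (the truncation $T_0$ preserves colimits, having a right adjoint), the one-point space is the free $P$-algebra $F_P(\emptyset)$, and $F_P$ preserves colimits, so the colimit of the constant-$\emptyset$ diagram being $\emptyset$ forces the answer to be $\ast$. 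Relatedly, a reduced bimodule is \emph{not} ``the same datum'' as an object of $\Sigma_{>0}\Bimod_{P_{>0}\,;\,Q_{>0}}$: the right action of $\ast_0\in Q(0)$ is extra $\Lambda$-structure, which is precisely why the Reedy structure differs from the projective one. Finally, a small repair in (i): Proposition~\ref{p:cof_repl_bimod} as stated requires $P$ to be $\Sigma$-cofibrant; the reason componentwise cofibrancy suffices in the reduced setting is not that $\mathcal{B}^{\Lambda}$ avoids arity-zero vertices, but that the proof of that proposition runs through the extension/restriction equivalence, which in the reduced case is your statement (iv) and only needs componentwise cofibrant operads.
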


\begin{proof}
The proof of $(i)$ is exactly the same as for the construction of the usual Reedy model category of ($P$-$Q$)-bimodules and the construction of the fibrant coresolution. Note that for the
cofibrant replacement functor we do not require $P$ to be $\Sigma$-cofibrant, but only that it is componentwise cofibrant. The reason is that the argument of Proposition~\ref{p:cof_repl_bimod}
uses the extension/restriction adjunction. The latter has also the same improved requirement that $P$ and $P'$ are  componentwise cofibrant, see statement $(iv)$ above.

Statement $(ii)$ follows from the fact that the forgetful functor $\iota\colon \Lambda_{\ast}\Bimod_{P\,;\,Q}\to \Lambda\Bimod_{P\,;\,Q}$ creates cofibrations, see 
Proposition~\ref{ProCompMCb}. Applying
the characterization of Reedy cofibrations for bimodules Theorem~\ref{C1}, a map $M\to N$ in $\Lambda_{\ast}\Bimod_{P\,;\,Q}$ is a cofibration if and only if it is
a cofibration in $\Sigma\Bimod_{P\,,\,Q_{>0}}$. Since $P(0)=M(0)=N(0)=\ast$, the arity zero component can be naturally ignored, and the map in question is a cofibration if and only if 
$M_{>0}\to N_{>0}$  is one in $\Sigma\Bimod_{P_{>0}\,,\,Q_{>0}}$ (or, equivalently, in $\Sigma_{>0}\Bimod_{P_{>0}\,,\,Q_{>0}}$).

Statement $(iii)$ follows from Theorem~\ref{ThmProperness} and the fact that the inclusion functor $\iota$ creates weak equivalences,  fibrations, pullbacks, and also
cofibrations and pushouts, see Proposition~\ref{ProCompMCb}. 

We now prove $(iv)$. Since the restriction functor creates weak equivalences, one has to check that, for any Reedy cofibrant object $M$ in $\Lambda_{\ast}\Bimod_{P\,;\,Q}$, the adjunction unit 
\begin{equation*}
M\longrightarrow \phi^{\ast}(\phi_{!}(M))
\end{equation*}
is a weak equivalence. Due to the characterization of Reedy cofibrations, $M_{>0}$ is also cofibrant in the projective model category $\Sigma_{>0}\Bimod_{P_{>0},Q_{>0}}$. 
Since $(\phi_1)_{>0}:P_{>0}\rightarrow P'_{>0}$ and $(\phi_2)_{>0}:Q_{>0}\rightarrow Q'_{>0}$ are still  weak equivalences between componentwise  cofibrant operads, by Theorem~\ref{th:>0}, the pair of functors $((\phi_{>0})_{!}\,;\, (\phi_{>0})^{\ast})$ gives rise to a Quillen equivalence
\begin{equation*}%\label{eq:bimod_ind_restr2}
(\phi_{>0})_!\colon\Sigma_{>0}\Bimod_{P\,;\,Q}\rightleftarrows\Sigma_{>0}\Bimod_{P'\,;\,Q'}\colon(\phi_{>0})^*.
\end{equation*}
Therefore, the map $M_{>0}\rightarrow (\phi_{>0})^{\ast}\bigl((\phi_{>0})_{!}(M_{>0})\bigr)(n)$ is a weak equivalence. The statement is a consequence of the identity 
$$
\phi^{\ast}(\phi_{!}(M))_{>0}=(\phi_{>0})^{\ast}\big( (\phi_{>0})_{!}(M_{>0})\big).
$$

\end{proof}

\subsubsection{Quillen adjunction between the Reedy model categories}\label{E7}

Let $P$ and $Q$ be two reduced operads with $Q$ well-pointed. The inclusion functor $\iota$ from the category of (possibly truncated) reduced bimodules into the category of (possibly truncated) bimodules has a left adjoint $\tau$ called the unitarization functor. This latter one consists in collapsing the arity zero component to a point and adjusting the other components according to the equivalence relation induced by this collapse:
\begin{equation}\label{eq:unitar_bimred}
\begin{array}{rcl}\vspace{5pt}
\tau\colon\Lambda\Bimod_{P\,;\,Q} & \rightleftarrows & \Lambda_{\ast}\Bimod_{P\,;\,Q}\colon\iota, \\ 
\tau\colon\TT_r \Lambda \Bimod_{P\,;\,Q} & \rightleftarrows & \TT_r\Lambda_{\ast}\Bimod_{P\,;\,Q}\colon\iota.
\end{array} 
\end{equation}

\begin{pro}\label{ProCompMCb}
The pairs of functors (\ref{eq:unitar_bimred}) form Quillen adjunctions. Moreover, the inclusion functor $\iota$ creates weak equivalences, fibrations, cofibrations, limits, and colimits.
\end{pro}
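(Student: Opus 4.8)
The plan is to establish the adjunction first and then verify the creation properties one at a time, leveraging that both categories are defined by the same underlying data modulo the arity-zero collapse. First I would make explicit the unitarization functor $\tau$: given a $(P\text{-}Q)$-bimodule $M$, the bimodule $\tau(M)$ has $\tau(M)(0)=\ast$ and, in positive arities, $\tau(M)(n)$ is the quotient of $M(n)$ by the equivalence relation generated by identifying $x$ with $x'$ whenever they differ by a right action involving the distinguished point of the collapsed arity-zero component. Since $Q$ is reduced, $Q(0)=\ast$, so this relation is well-behaved, and the left and right operations descend to $\tau(M)$. The adjunction isomorphism $\Lambda_{\ast}\Bimod_{P\,;\,Q}(\tau(M),N)\cong\Lambda\Bimod_{P\,;\,Q}(M,\iota(N))$ follows formally: a bimodule map $M\to\iota(N)$ must send the arity-zero component of $M$ into $N(0)=\ast$, hence factors uniquely through the collapse defining $\tau(M)$.

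Next I would verify that $\iota$ creates weak equivalences and fibrations, which simultaneously shows $(\tau,\iota)$ is a Quillen adjunction. This is essentially immediate: weak equivalences and Reedy fibrations in both categories are detected on the underlying (truncated) $\Lambda$-sequences via the matching-object criterion of Section~\ref{Sect1.2}, and $\iota$ is the identity on the underlying $\Lambda$-sequence structure in all arities (it merely records that the arity-zero component is a point). Thus a map $f$ in $\Lambda_{\ast}\Bimod_{P\,;\,Q}$ is a weak equivalence, respectively a fibration, precisely when $\iota(f)$ is one in $\Lambda\Bimod_{P\,;\,Q}$, which is the definition of creation. Since $\iota$ preserves fibrations and acyclic fibrations, its left adjoint $\tau$ is left Quillen and the pair forms a Quillen adjunction.

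For limits I would observe that $\iota$ has a left adjoint $\tau$ and is itself right adjoint to nothing problematic here, but more directly: limits in both bimodule categories are computed componentwise on the underlying $\Sigma$-sequences (pullbacks and products of bimodules are componentwise, as used already in the proof of Theorem~\ref{ThmProperness}), and a componentwise limit of reduced bimodules is again reduced since the arity-zero components are all the one-point space, whose limit is again a point. Hence $\iota$ creates limits. Colimits require slightly more care because colimits of bimodules are not componentwise in general; however, the key point is that $\tau$ is a left adjoint, so for a diagram in $\Lambda_{\ast}\Bimod_{P\,;\,Q}$ its colimit computed in $\Lambda\Bimod_{P\,;\,Q}$ has arity-zero component the colimit of a diagram of one-point spaces, which is again a point, so the colimit already lies in the reduced subcategory and $\iota$ creates it.

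Finally, creation of cofibrations is the step I expect to be the main obstacle, since cofibrations are not detected on underlying objects by a simple limit or colimit formula. My plan is to use the characterization results already proved: by Theorem~\ref{C1}, a map in $\Lambda\Bimod_{P\,;\,Q}$ is a Reedy cofibration if and only if it is a projective cofibration in $\Sigma\Bimod_{P\,;\,Q_{>0}}$, and the analogous statement for reduced bimodules is part $(ii)$ of Theorem~\ref{MainThmRed}, identifying Reedy cofibrations of reduced bimodules with projective cofibrations of the $(P_{>0}\text{-}Q_{>0})$-bimodules obtained by deleting arity zero. Since for reduced $M,N$ one has $P(0)=M(0)=N(0)=\ast$, deleting the arity-zero component and applying $\iota$ are compatible operations, so a map $f$ in $\Lambda_{\ast}\Bimod_{P\,;\,Q}$ is a cofibration exactly when $\iota(f)$ is a cofibration in $\Lambda\Bimod_{P\,;\,Q}$. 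Combined with the fact that $\iota$ preserves pushouts (which follows from the colimit-creation argument above, as pushouts of reduced bimodules stay reduced), this gives creation of cofibrations and completes the proof.
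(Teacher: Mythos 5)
Your treatment of the adjunction, of the creation of weak equivalences and fibrations, and of limits agrees with the paper's proof and is correct. The colimit step, however, has a genuine gap. Colimits in $\Lambda\Bimod_{P\,;\,Q}$ are not computed componentwise (the arity-zero component carries a $P$-algebra structure, and the left actions $P(n)\times M(0)^{\times n}\to M(0)$ can create new elements in a colimit), so your assertion that the colimit of $\iota D$ ``has arity-zero component the colimit of a diagram of one-point spaces'' does not follow from $\tau$ being a left adjoint: that fact only gives $\tau(\mathrm{colim}\,\iota D)\cong\mathrm{colim}\,D$, which is of no use precisely because one still needs to know that the colimit is already reduced before identifying it with its unitarization. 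Worse, the slogan you rely on is false in $Top$: a coproduct of two one-point spaces is a two-point space. The paper closes this gap as follows: the truncation $T_0$ preserves colimits since it admits a right adjoint; the category $T_0\Lambda\Bimod_{P\,;\,Q}$ is equivalent to $Alg_P$; and in $Alg_P$ the one-point algebra is the free algebra $F_P(\emptyset)$, so, since $F_P$ preserves colimits and the colimit of a constant-$\emptyset$ diagram in $Top$ is $\emptyset$, the colimit in $Alg_P$ of any constant one-point diagram is $F_P(\emptyset)=\ast$. Note that this is exactly where reducedness of $P$ enters.

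The cofibration step is circular in the context of this paper. You invoke Theorem~\ref{MainThmRed}$(ii)$, but the paper's proof of that statement is itself deduced from Proposition~\ref{ProCompMCb} --- the very proposition you are proving --- so it is not available here. (Theorem~\ref{C1}, for non-reduced bimodules, is legitimately available; its reduced analogue is not.) The paper instead argues directly: since $\iota$ preserves colimits (the previous step), and since $\tau$, being left Quillen, preserves cofibrations with $\tau\circ\iota=id$, it suffices to check that the generating cofibrations $\mathcal{F}_{P\,;\,Q}^{\Lambda_{\ast}}(\partial X)\to\mathcal{F}_{P\,;\,Q}^{\Lambda_{\ast}}(X)$ of $\Lambda_{\ast}\Bimod_{P\,;\,Q}$ are cofibrations in $\Lambda\Bimod_{P\,;\,Q}$. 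This follows from the identity $\mathcal{F}_{P\,;\,Q}^{\Lambda_{\ast}}(Y)=\mathcal{F}_{P\,;\,Q}^{\Lambda}(Y_+)$, the fact that $(\partial X)_+\to X_+$ is a cofibration in $\Lambda Seq_P$, and the fact that the left Quillen functor $\mathcal{F}_{P\,;\,Q}^{\Lambda}$ preserves cofibrations. If you wish to keep your route, you would first have to prove the reduced characterization of cofibrations independently, which amounts to redoing the proof of Theorem~\ref{C1} in the reduced setting; the paper's generating-cofibration check is both shorter and is what the later Theorem~\ref{MainThmRed}$(ii)$ is built on.
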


\begin{proof}
By construction, $\iota$ creates equivalences and fibrations. As a consequence, the adjunction~\eqref{eq:unitar_bimred} is a Quillen one. Since the limits are taken objectwise, and the limit of any point-constant diagram is a point, $\iota$ creates limits. Note that $\tau$, as a left adjoint, preserves colimits and cofibrations, and $\tau\circ\iota=id$. Therefore, one only needs to check that $\iota$ 
preserves colimits and cofibrations. 

For colimits, we again have  to show that the colimit of any diagram in $\Lambda\Bimod_{P\,;\,Q}$ with values in reduced bimodules
is a reduced bimodule. The truncation functor $T_r\colon \Lambda\Bimod_{P\,;\,Q}\to T_r\Lambda \Bimod_{P\,;\,Q}$ preserves colimits as it admits a right adjoint, see Section~\ref{Fin1}. 
In the case $r=0$, the category  $T_0\Lambda \Bimod_{P\,;\,Q}$ is equivalent to the category of $P$-algebras in $Top$. The one-point space is a free $P$-algebra generated by the empty set $F_P(\emptyset)$.
On the other hand, the free $P$-algebra functor $F_P\colon Top\to Alg_P$ preserves colimits and the colimit of any empty set constant diagram  is the empty set. The statement follows.
 
Finally, for cofibrations, since $\iota$ creates colimits, it is enough
to check that the generating cofibrations in $\Lambda_*\Bimod_{P\,;\,Q}$ are cofibrations in $\Lambda\Bimod_{P\,;\,Q}$. The generating cofibrations in the former
are the maps 
\begin{equation}\label{eq:gen_cof_L*}
\mathcal{F}_{P\,;\,Q}^{\Lambda_{\ast}}(\partial X) \to \mathcal{F}_{P\,;\,Q}^{\Lambda_{\ast}}(X),
\end{equation}
where $\partial X\to X$ is one of the generating cofibrations of $\Lambda_{>0}Seq$. For a $\Lambda_{>0}$-sequence $Y$, denote by $Y_+$ the $\Lambda$-sequence that agrees with $Y$ in positive arities and has one point in arity zero. From the formulas~\eqref{eq:F_Lambda_*} and~\eqref{eq:free}, it follows that $\mathcal{F}_{P\,;\,Q}^{\Lambda_{\ast}}(Y)=
\mathcal{F}_{P\,;\,Q}^{\Lambda}(Y_+)$. The inclusion $(\partial X)_+\to X_+$ is a $\Sigma$-cofibration and, as a consequence,
is  a $\Lambda Seq_P$-cofibration. Together with the fact that the functor $\mathcal{F}_{P\,;\,Q}^{\Lambda}$ preserves cofibrations, we conclude that the inclusion~\eqref{eq:gen_cof_L*}, or equivalently $\mathcal{F}_{P\,;\,Q}^{\Lambda}((\partial X)_+) \to \mathcal{F}_{P\,;\,Q}^{\Lambda}(X_+)$,  is a cofibration in $\Lambda\Bimod_{P\,;\,Q}$.

%We show that $\iota$ preserves fibrations and acyclic fibrations. Let $f:M\rightarrow N$ be an (acyclic) fibration in the Reedy model category of reduced bimodules. By definition, the map $f$ is an (acyclic) fibration if the corresponding map $\mathcal{U}^\Lambda(f)$ in the category of $\Lambda$-sequences is an (acyclic) fibration. In other words, it means that the maps 
%\begin{equation}\label{B2}
%M(n)\longrightarrow \mathcal{M}(M)(n)\times_{\mathcal{M}(N)(n)}N(n),\hspace{15pt}\text{with } n\in \mathbb{N},
%\end{equation}
%are (acyclic) Serre fibrations. On the over hand, the map $\iota(f)$ is a fibration in the in the Reedy model category of (non necessarily reduced) bimodules if the maps \eqref{B2} are (acyclic) Serre fibrations. Consequently, the functor $\iota$ obviously preserves (acyclic) fibrations and the adjunctions \eqref{eq:unitar_bimred} are Quillen adjunctions.  
\end{proof}

\begin{thm}\label{th:bimod_maps}
Let $P$ and $Q$ be as in Theorem~\ref{MainThmRed}. The inclusion functors $\iota$ 
from~\eqref{eq:unitar_bimred} % $\iota\colon \Lambda_{\ast}\Bimod_{P\,;\,Q}\to \Lambda\Bimod_{P\,;\,Q}$
induce  fully faithful inclusions of homotopy categories. Moreover, for any pair $M,\, N\in\Lambda_{\ast}\Bimod_{P\,;\,Q}$, one has
\begin{equation}\label{eq:der_bim_map1}
 \Lambda_{\ast}\Bimod_{P\,;\,Q}^h(M,N)\simeq\Lambda\Bimod_{P\,;\,Q}^h(\iota M,\iota N)\simeq \Sigma\Bimod_{P\,;\,Q}^h(\iota M,\iota N).
\end{equation}
Furthermore, if $M,\, N\in\TT_r\Lambda_{\ast}\Bimod_{P\,;\,Q}$, with $r\geq 0$, then one has
\begin{equation}\label{eq:der_tr_bim_map2}
\TT_r\Lambda_{\ast}\Bimod_{P\,;\,Q}^h(M,N)\simeq\TT_r\Lambda\Bimod_{P\,;\,Q}^h(\iota M,\iota N) \simeq\TT_r\Sigma\Bimod_{P\,;\,Q}^h(\iota M,\iota N).
\end{equation}
%
%(b) Let $P$ and $Q$ be reduced  componentwise cofibrant operads  and let $P_\infty$ be  a reduced $\Sigma$-cofibrant operad equivalent to
%$P$ due to an equivalence of operads $p\colon P_\infty\xrightarrow{\simeq}P$ (for example, it can be the 
%objectwise product $P_\infty:=P\times E_\infty$). For any  pair $M,\, N\in\Lambda_{\ast}\Bimod_{P\,;\,Q}$, one has
%\begin{equation}\label{eq:der_bim_map3}
% \Lambda_{\ast}\Bimod_{P\,;\,Q}^h(M,N)\simeq\Lambda\Bimod_{P_\infty\,;\,Q}^h(p^*\iota M,p^*\iota N).
%\end{equation}
%Moreover, if $M,\, N\in\TT_r\Lambda_{\ast}\Bimod_{P\,;\,Q}$, with $r\geq 0$, then one has
%\begin{equation}\label{eq:der_tr_bim_map4}
%\TT_r\Lambda_{\ast}\Bimod_{P\,;\,Q}^h(M,N)\simeq\TT_r\Lambda\Bimod_{P_\infty\,;\,Q}^h(p^*\iota M,p^*\iota N).
%\end{equation}
\end{thm}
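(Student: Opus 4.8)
The plan is to reduce the statement to two inputs already in hand: Theorem~\ref{th:S-L_equiv_bim}, which, applied to the pair $(\iota M,\iota N)$, supplies the second weak equivalence in \eqref{eq:der_bim_map1} and \eqref{eq:der_tr_bim_map2} and also handles the $\Sigma$-comparison of homotopy categories; and Proposition~\ref{ProCompMCb}, which asserts that $\iota$ creates weak equivalences, fibrations, cofibrations, limits, and colimits. Granting these, the whole content of the theorem is the first weak equivalence, namely that $\iota$ induces an equivalence of derived mapping spaces $\Lambda_{\ast}\Bimod_{P\,;\,Q}^h(M,N)\simeq\Lambda\Bimod_{P\,;\,Q}^h(\iota M,\iota N)$; the fully faithful inclusion of homotopy categories will then drop out by passing to $\pi_0$, and composing with Theorem~\ref{th:S-L_equiv_bim} gives the corresponding statement for $\Sigma\Bimod_{P\,;\,Q}$.

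To establish the first equivalence I would first record that $\iota$ is fully faithful as a $Top$-enriched functor: a map of reduced bimodules is precisely a map of bimodules with reduced source and target, and since the arity-zero mapping space $Map(\ast,\ast)$ is a point, one has a homeomorphism $\Lambda_{\ast}\Bimod_{P\,;\,Q}(A,B)\cong\Lambda\Bimod_{P\,;\,Q}(\iota A,\iota B)$ of topological mapping spaces. Next I would check that $\iota$ carries replacements to replacements. Because $\iota$ creates colimits it preserves the initial object, so for a cofibrant replacement $M^{c}\xrightarrow{\simeq} M$ in $\Lambda_{\ast}\Bimod_{P\,;\,Q}$ the object $\iota M^{c}$ is cofibrant, being the image of the cofibration out of the initial object, and $\iota M^{c}\to\iota M$ is a weak equivalence. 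Dually, the fibrant coresolution functor $(-)^{f}$ of Subsection~\ref{Z2} preserves reducedness (its arity-zero component is unchanged, $M^{f}(0)=M(0)$) and is given by the same formula on $\Lambda_{\ast}\Bimod_{P\,;\,Q}$ by Theorem~\ref{MainThmRed}(i), so $\iota(N^{f})=(\iota N)^{f}$ is a Reedy fibrant replacement of $\iota N$ by Propositions~\ref{ProFibrantEq} and~\ref{MF}.

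Computing the derived mapping spaces through the topological enrichment as $Map(M^{c},N^{f})$ on a cofibrant source and a fibrant target (exactly as in the proof of Theorem~\ref{th:S-L_equiv_bim}), the enriched full faithfulness then yields
\[
\Lambda_{\ast}\Bimod_{P\,;\,Q}^h(M,N)=\Lambda_{\ast}\Bimod_{P\,;\,Q}(M^{c},N^{f})\cong\Lambda\Bimod_{P\,;\,Q}(\iota M^{c},\iota N^{f})=\Lambda\Bimod_{P\,;\,Q}^h(\iota M,\iota N),
\]
where the last identification uses that $\iota M^{c}$ and $\iota N^{f}$ are a cofibrant and a fibrant replacement of $\iota M$ and $\iota N$ and that the derived mapping space is independent of such choices. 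Applying $\pi_0$ gives the bijection on homotopy-category hom-sets, hence the fully faithful inclusion $\mathrm{Ho}(\Lambda_{\ast}\Bimod_{P\,;\,Q})\hookrightarrow\mathrm{Ho}(\Lambda\Bimod_{P\,;\,Q})$. The truncated assertions are proved verbatim from the truncated versions of Proposition~\ref{ProCompMCb}, Theorem~\ref{MainThmRed}(i), and Theorem~\ref{th:S-L_equiv_bim}. The step requiring the most care is the middle one: verifying that $\iota$ sends cofibrant and fibrant replacements to cofibrant and fibrant replacements, which is exactly where preservation of the initial and terminal objects (not merely of cofibrations and fibrations) is used, together with the fact that the topological enrichment genuinely models the derived mapping space on a cofibrant–fibrant pair.
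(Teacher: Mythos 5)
Your proposal is correct and follows essentially the same route as the paper: the paper's proof likewise combines Theorem~\ref{th:S-L_equiv_bim} with Proposition~\ref{ProCompMCb} and the assertion that $\iota$ is fully faithful and preserves cofibrant and fibrant replacements, then computes derived mapping spaces on a cofibrant--fibrant pair. The only difference is one of detail: the paper states the preservation of replacements without justification, whereas you verify it explicitly (initial object plus cofibrations for the cofibrant side, the coresolution of Theorem~\ref{MainThmRed}(i) for the fibrant side), which is exactly the content being invoked.
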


\begin{proof}
The first statement follows from the fact that the functors $\iota$ themselves are fully faithful inclusions of model categories preserving equivalences, fibrations, and cofibrations,
see Proposition~\ref{ProCompMCb}. The equivalences~\eqref{eq:der_bim_map1} and~\eqref{eq:der_tr_bim_map2} follow from Theorem~\ref{th:S-L_equiv_bim}  and the fact
that   $\iota$ preserve cofibrant and fibrant
replacements and are fully faithful. 

\end{proof}

\section{The projective model category of $O$ infinitesimal bimodules}\label{SectProjIbimod}\vspace{5pt}

Let $O$ be an operad. An $O$ infinitesimal bimodule, or just an $O$-Ibimodule, is a $\Sigma$-sequence $M\in \Sigma Seq$ together with operations 
\begin{equation}\label{AA2}
\begin{array}{llr}\vspace{7pt}
\circ^{i}: & M(n)\times O(m)\longrightarrow  M(n+m-1), & \text{right infinitesimal operations with } i\in \{1,\ldots,n\},\\ \vspace{7pt}
\circ_{i}: & O(n)\times M(m)\longrightarrow  M(n+m-1), & \text{left infinitesimal operations with } i\in \{1,\ldots,n\},
\end{array}
\end{equation}
satisfying compatibility relations with the symmetric group action as well as associativity and unit axioms. More precisely, for any integers $i\in \{1,\ldots, n\}$, $j\in\{ i+1,\ldots , n\}$, $k\in \{1,\ldots, m\}$ and any permutation $\sigma\in \Sigma_{n}$ and $\tau\in \Sigma_{m}$, one has the following commutative diagrams: %\todo{Question Benoit: Il manque un axiom? There should also be a ramified compatibility where O comes before M}
$$
\underset{\text{Linear associativity for the right infinitesimal  operations }}{\xymatrix{
M(n)\times O(m)\times O(\ell)\ar[r]^{\circ_{k}} \ar[d]_{\circ^{i}} & M(n)\times O(m+\ell-1) \ar[d]^{\circ^{i}}\\
M(n+m-1)\times O(\ell) \ar[r]_{\circ^{k+i-1}} & M(n+m+\ell-2)
}}\hspace{20pt}
\underset{\text{Ramified associativity for the right infinitesimal operations}}{\xymatrix{
M(n)\times O(m)\times O(\ell)\ar[r]^{\circ^{i}} \ar[d]_{\circ^{j}} & M(n+m-1)\times O(\ell) \ar[d]^{\circ^{j+m-1}}\\
M(n+\ell-1)\times O(m) \ar[r]_{\circ^{i}} & M(n+m+\ell-2)
}}\vspace{10pt}
$$
$$
\underset{\text{Ramified compatibility between the left and right operations  1}}{\xymatrix{
O(n)\times M(m)\times O(\ell)\ar[r]^{\circ_{i}} \ar[d]_{\circ_{j}} & M(n+m-1)\times O(\ell) \ar[d]^{\circ^{j+m-1}}\\
O(n+\ell-1) \times M(m) \ar[r]_{\circ_{i}} & M(n+m+\ell-2)
}}\hspace{20pt}
\underset{\text{Ramified compatibility between the left and right operations  2}}{\xymatrix{
O(n)\times O(m)\times M(\ell)\ar[r]^{\circ_{i}} \ar[d]_{\circ_{j}} & O(n+m-1)\times M(\ell) \ar[d]^{\circ_{j+m-1}}\\
M(n+\ell-1) \times O(m) \ar[r]_{\circ^{i}} & M(n+m+\ell-2)
}}
\vspace{10pt}
$$
$$
\underset{\text{Linear compatibility between the left and right operations}}{\xymatrix{
O(n)\times M(m)\times O(\ell)\ar[r]^{\circ_{i}} \ar[d]_{\circ^{k}} & M(n+m-1)\times O(\ell) \ar[d]^{\circ^{k+i-1}}\\
O(n) \times M(m+\ell-1) \ar[r]_{\circ_{i}} & M(n+m+\ell-2)
}}\hspace{40pt}\underset{\text{Linear associativity for the left infinitesimal operations }}{\xymatrix{
O(n)\times O(m)\times M(\ell)\ar[r]^{\circ_{k}} \ar[d]_{\circ_{i}} & O(n)\times M(m+\ell-1) \ar[d]^{\circ_{i}}\\
O(n+m-1)\times M(\ell) \ar[r]_{\circ_{k+i-1}} & M(n+m+\ell-2)
}}
\vspace{10pt}
$$
$$
\underset{\text{Compatibility with the symmetric group action 1}}{\xymatrix{
O(n)\times M(m) \ar[r]^{\circ_{i}} \ar[d]_{\sigma^{\ast}\times \tau^{\ast}} & M(n+m-1) \ar[d]^{(\sigma\circ_{\sigma(i)} \tau)^{\ast})}\\
O(n)\times M(m)\ar[r]_{\circ_{\sigma(i)}} & M(n+m-1) 
}}\hspace{80pt}
\underset{\text{Compatibility with the symmetric group action 2}}{\xymatrix{
M(n)\times O(m) \ar[r]^{\circ^{i}} \ar[d]_{\sigma_{\ast}\times \tau^{\ast}} & M(n+m-1) \ar[d]^{(\sigma\circ_{\sigma(i)} \tau)^{\ast}}\\
M(n)\times O(m)\ar[r]_{\circ^{\sigma(i)}} & M(n+m-1) 
}}\vspace{10pt}
$$
$$
\underset{\text{Compatibility with the unit of the operad}}{\xymatrix@R=23pt{
M(n)\times O(1)\ar[dr]_{\circ^{i}} & M(n) \ar[r] \ar[l] \ar@{=}[d] & O(1)\times M(n) \ar[dl]^{\circ_{1}}\\
& M(n) & 
}}\vspace{10pt}
$$
Note that  the \textit{ramified compatibility between the left and right operations}~2 follows  from the \textit{ramified compatibility between the left and right operations}~1 and the \textit{compatibility with the symmetric group action} and was given for clarity. 

A map between $O$-Ibimodules should preserve these operations. We denote by  $\Sigma\Ibimod_{O}$ the category of $O$-Ibimodules. Given an integer $r\geq 0$, we also consider the category of $r$-truncated Ibimodules $T_{r}\Sigma\Ibimod_{O}$. An object of this category is an $r$-truncated $\Sigma$-sequence endowed with left and right operations (\ref{AA2}) which are defined under the conditions $n\leq r$ and $n+m-1\leq r$. One has an obvious truncation functor 
$$
T_{r}(-):\Sigma\Ibimod_{O}\longrightarrow T_{r}\Sigma\Ibimod_{O}.
$$
In the rest of the paper, we use the notation
$$
\begin{array}{rl}\vspace{5pt}
x\circ^{i}\theta=\circ^{i}(x;\theta), & \text{for } x\in M(n) \text{ and } \theta\in O(m), \\ 
\theta\circ_{i}x=\circ_{i}(\theta;x), & \text{for } \theta\in O(n) \text{ and } x\in M(m), 
\end{array} 
$$

\begin{expl}
If $\eta:O\rightarrow M$ is a map of $O$-bimodules, then $\eta$ is also a map of $O$-Ibimodules. Indeed, any operad is an infinitesimal bimodule over itself. Since the right operations and the right infinitesimal operations are the same, the $O$-Ibimodule structure on $M$ is given by the following left infinitesimal operations:
$$
\begin{array}{rcl} \vspace{5pt}
\circ_{i}:  O(n)\times M(m) & \longrightarrow & M(n+m-1);\\ (\theta,x) &\longmapsto & \gamma_{\ell}(\theta;\eta(\ast_{1}),\cdots,\eta(\ast_{1}),x,\eta(\ast_{1}),\cdots,\eta(\ast_{1})).
\end{array} 
$$
\end{expl}

\subsection{Properties of the category of infinitesimal bimodules}\label{Z6}

In this subsection we introduce some basic properties related to the category of $O$-Ibimodules where $O$ is a fixed operad. First, we show that the category of $O$-Ibimodules is equivalent to the category of algebras over an explicit colored operad denoted by $O_{+}$. Thereafter, we build the free bimodule functor using the language of trees. Finally, we  give a combinatorial description of the pushout for infinitesimal bimodules.

\subsubsection{Infinitesimal bimodules as algebras over a colored operad}\label{DD8}

From an operad~$O$, we build a colored operad $O_{+}$ such that the category of $O$-Ibimodules is equivalent to the category of $O_{+}$-algebras. More precisely, the colored operad $O_{+}$, with set of colors $S=\mathbb{N}$, is concentrated in arity $1$ and it is given by the formula
\begin{equation}\label{eq:O+}
O_{+}(n\,;\,m):=\underset{\alpha_{+}:[m]_{+}\rightarrow [n]_{+}}{\coprod}\,\,\, \underset{i\in [n]_{+}}{\prod} \, O(|\alpha_{+}^{-1}(i)|),
\end{equation}
where $[n]_{+}$ is the set obtained from $[n]$ by adding a basepoint denoted by $0$. The map $\alpha_{+}$ is a map of sets preserving the basepoint. In order to define   operadic compositions 
$$
\circ_{1}:O_{+}(n\,;\,m)\times O_{+}(k\,;\,n)\longrightarrow O_{+}(k\,;\,m),
$$
we fix two pointed maps  $\alpha_{+}:[m]_{+}\rightarrow [n]_{+}$ and $\beta_{+}:[n]_{+}\rightarrow [k]_{+}$ and we build a map of the form
$$
\underset{i\in [n]_{+}}{\prod}\hspace{5pt}O(\,|\alpha^{-1}_{+}(i)|\,)\times \underset{j\in [k]_{+}}{\prod}\hspace{5pt}O(\,|\beta^{-1}_{+}(j)|\,) \longrightarrow \underset{j\in [k]_{+}}{\prod}\hspace{5pt}O(\,|(\beta_{+}\circ\alpha_{+})^{-1}(j)|\,).
$$

\noindent For this purpose, we rewrite the left hand side term as follows: \vspace{5pt}
$$
\underset{\text{Part 1}}{\underbrace{O(\,|\alpha^{-1}_{+}(0)|\,)\times O(\,|\beta^{-1}_{+}(0)|\,) \times \underset{i\in \beta_{+}^{-1}(0)\setminus \{0\}}{\prod}\hspace{-5pt}O(\,|\alpha_{+}^{-1}(i)|\,)}} \times \underset{j\in [k]}{\prod}\hspace{5pt} \underset{\text{Part 2}}{\underbrace{ O(\,|\beta^{-1}_{+}(j)|\,) \times \underset{i\in \beta^{-1}_{+}(j)}{\prod} \hspace{5pt} O(\,|\alpha^{-1}_{+}(i)|\,) }}. \vspace{3pt}
$$
In Part 2, as in formula \eqref{C9}, we use the operadic structure of~$O$ in order to get an element in $O(\, |(\beta_{+}\circ\alpha_{+})^{-1}(j)|\,)$ with $j\neq 0$.
 For Part 1, let $\beta_{+}^{-1}(0)\setminus \{0\} =\{i_1,\ldots,i_p\}$, $a_0\in O(\,|\alpha^{-1}_{+}(0)|\,)$,
 $b_0\in O(\,|\beta^{-1}_{+}(0)|\,)$, $a_{r}\in O(\,|\alpha^{-1}_{+}(i_r)|\,)$, $1\leq r\leq p$. Then the element
 $(a_0,b_0,a_{1},\ldots,a_{p})$ in the Part~1 product is sent to $a_0\circ_1 b_0(id,a_{1},\ldots,a_{p})$.
% we first use the composition $O(\,|\alpha^{-1}_{+}(0)|\,)\circ_{1} O(\,|\beta^{-1}_{+}(0)\,)$, which corresponds to the left operation indexed by the basepoint, followed by the operadic compositions $\circ_{i}$.

\begin{pro}[Proposition 4.9 in \cite{AT}]
The category of $O$-Ibimodules is equivalent to the category of $O_{+}$-algebras. %\todo{citation}
\end{pro}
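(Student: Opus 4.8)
The plan is to construct a pair of mutually inverse functors between $\Sigma\Ibimod_{O}$ and the category $Alg_{O_{+}}$ of algebras over the colored operad $O_{+}$, in direct analogy with the equivalence $\Sigma\Bimod_{P\,;\,Q}\simeq Alg_{P{+}Q}$ established above. Since $O_{+}$ is concentrated in arity one, an $O_{+}$-algebra is simply a collection of spaces $\{X_{n}\}_{n\in\NN}$ equipped with unital and associative action maps $O_{+}(n\,;\,m)\times X_{n}\to X_{m}$, with no equivariance condition to impose. Thus the whole content of the statement is the identification of this datum with the left and right infinitesimal operations \eqref{AA2}.

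In the forward direction, given an $O$-Ibimodule $M$, I would define the action of a generator $(\alpha_{+}\,;\,\{\theta_{i}\}_{i\in[n]_{+}})\in O_{+}(n\,;\,m)$ on $x\in M(n)$ by first applying the right infinitesimal operations to graft each $\theta_{i}\in O(|\alpha_{+}^{-1}(i)|)$ onto the $i$-th input of $x$ (for $i\in[n]$), then plugging the resulting element into the basepoint operation $\theta_{0}\in O(|\alpha_{+}^{-1}(0)|)$ by a left infinitesimal operation $\circ_{j_{0}}$, where $j_{0}$ records the position of the basepoint $0$ inside $\alpha_{+}^{-1}(0)$, and finally reindexing by the shuffle permutation $\sigma_{\alpha_{+}}$ that $\alpha_{+}$ prescribes on $[m]$. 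A short arity count, $\sum_{i\in[n]_{+}}|\alpha_{+}^{-1}(i)|-1=m$, confirms that the output lies in $M(m)$. The unit $*_{n}\in O_{+}(n\,;\,n)$, carrying the identity pointed map and operadic units, clearly acts as the identity, so the substantive point is associativity.

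Conversely, from an $O_{+}$-algebra $M'$ I would recover the infinitesimal operations by restricting the action along distinguished inclusions $\iota_{r}\colon O(m)\to O_{+}(n\,;\,n+m-1)$ and $\iota_{\ell}\colon O(n)\to O_{+}(m\,;\,n+m-1)$. The map $\iota_{r}$ sends $\theta$ to the generator whose basepoint component is the operadic unit and whose $i$-th component is $\theta$ (all others units), encoding $\circ^{i}$; the map $\iota_{\ell}$ sends $\theta$ to the generator whose basepoint component is $\theta$ with the basepoint placed in position $i$, all other components units, encoding $\circ_{i}$. One then verifies each of the ramified and linear associativity axioms, the two compatibilities between left and right operations, and the unit axiom of \eqref{AA2} by exhibiting both sides as the action of a single common element of $O_{+}$, equivariance of $M'$ being built into the reindexing bijections already present in \eqref{eq:O+}. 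The two constructions agree on underlying sequences, and since the inclusions $\iota_{r},\iota_{\ell}$ generate $O_{+}$ as a colored operad (a normal-form statement analogous to Remark~\ref{D6}), the action of $O_{+}$ is determined by its restriction to these generators; this forces the round trips to be the identity and yields the equivalence.

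The main obstacle is the associativity verification, namely matching the operadic composition $\circ_{1}\colon O_{+}(n\,;\,m)\times O_{+}(k\,;\,n)\to O_{+}(k\,;\,m)$ --- with its asymmetric ``Part~1/Part~2'' normal form, in which the basepoint contributions are combined as $a_{0}\circ_{1}b_{0}(\mathrm{id},a_{1},\ldots,a_{p})$ --- against the nested application of the Ibimodule operations. The delicate case is the interaction of the basepoint (left) operation with right operations grafted on inputs that $\alpha_{+}$ sends into the fibre over $0$: this is precisely what the \emph{ramified compatibility between the left and right operations} encodes, and tracking the induced shuffles $\sigma_{\alpha_{+}}$ through these identities is where all the bookkeeping concentrates. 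As the result is recorded in \cite[Proposition~4.9]{AT}, I would present this verification concisely, emphasizing the dictionary between the two ``Part~1/Part~2'' factors and the left/right infinitesimal axioms rather than writing out every shuffle in full.
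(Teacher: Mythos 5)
Your proposal is correct and is essentially the approach taken both in the paper and in its source: the paper itself offers no proof here, deferring to \cite[Proposition~4.9]{AT}, but its proof of the bimodule analogue ($(P\text{-}Q)$-bimodules $\simeq$ $(P{+}Q)$-algebras) runs exactly as yours does, with the action of generators built from the left/right infinitesimal operations in one direction, restriction along the distinguished inclusions $\iota_{\ell}$, $\iota_{r}$ in the other, and a normal-form argument showing these generate. One cosmetic remark: in \eqref{eq:O+} the basepoint is always the minimal element of $\alpha_{+}^{-1}(0)$ (the composition formula $a_{0}\circ_{1}b_{0}(id,a_{1},\ldots,a_{p})$ presumes this), so your $\iota_{\ell}$ with ``basepoint placed in position $i$'' must be realized by precomposing $\theta$ with a cycle permutation, a shuffle already absorbed by the $\Sigma$-action bookkeeping you flag.
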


Note that since the operad $O_+$ has only unary operations, it can be viewed as an enriched in $Top$ category.  One has that  an $O_+$-algebra is the same thing as an $O_+$-shaped diagram in $Top$:
\[
Alg_{O_+}=Top^{O_+}.
\]

\subsubsection{The free Ibimodule functor}\label{CC2}

In what follows, we introduce the left adjoints of the forgetful functors
$$
\mathcal{U}^{\Sigma}: \Sigma\Ibimod_{O}\longrightarrow \Sigma Seq \hspace{15pt}\text{and} \hspace{15pt} \mathcal{U}^{\TT_{r}\Sigma}: \TT_{r}\Sigma\Ibimod_{O}\longrightarrow \TT_{r}\Sigma Seq,
$$
denoted by $\mathcal{IF}_{O}^\Sigma$ and  $\mathcal{IF}_{O}^{T_{r}\Sigma}$, respectively. As usual in the operadic theory, the free functor can be described as a coproduct indexed by a particular set of trees. In that case, we use the set of \textit{pearled trees} which are pairs $T=(T\,;\,p)$ where $T$ is a planar rooted tree, with leaves labelled by a permutation, and $p$ is a particular vertex, called the \textit{pearl}. A pearled tree is said to be \textit{reduced} if each vertex is connected to the pearl by an inner edge.  We denote by $p\mathbb{P}_{n}$ and $rp\mathbb{P}_{n}$ the sets of pearled trees and reduced pearled trees, respectively, having exactly $n$ leaves.%\vspace{-5pt}

\begin{figure}[!h]
\begin{center}
\includegraphics[scale=0.3]{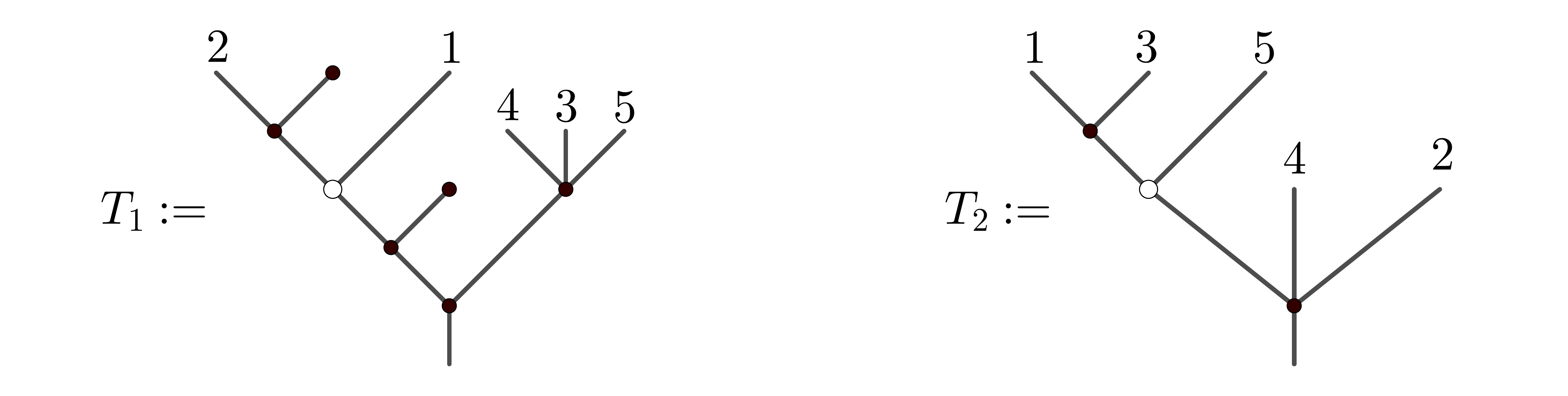}%\vspace{-15pt}
\caption{Examples of a pearled tree $T_{1}\in p\mathbb{P}_{5}$ and a reduced pearled tree $T_{2}\in rp\mathbb{P}_{5}$.}
\end{center}%\vspace{-30pt}
\end{figure}

%\newpage

\begin{const}\label{FF9}
Let $M=\{M(n)\}$ be a $\Sigma$-sequence. The space $\mathcal{IF}_{O}^\Sigma(M)(n)$ is obtained from the set of reduced pearled trees by indexing the pearl by a point in $M$ whereas the other vertices are indexed by points in the operad~$O$. More precisely, one has 
\begin{equation}\label{GG4}
\mathcal{IF}_{O}^\Sigma(M)(n)=\left. \left(
\underset{T\in rp\mathbb{P}_{n}}{\coprod}\hspace{5pt}  M(|p|)\times \underset{v\in V(T)\setminus \{p\}}{\prod} O(|v|) \right)\,\,
\right/\!\!\sim.
\end{equation}
A point in $\mathcal{IF}_{O}^\Sigma(M)$ is denoted by $[T\,;\,x_{p}\,;\,\{\theta_{v}\}]$ where $T$ is a reduced pearled tree, $x_{p}$ is a point in $M$ and $\{\theta_{v}\}_{v\in V(T)\setminus \{p\}}$ is a family of points in $O$. The equivalence relation is generated by the following relations:
\vspace{5pt}

\begin{itemize}
\item[$i)$] \textit{The unit relation}: if a vertex is indexed by the unit of the operad~$O$, then we can remove it.

\vspace{-10pt}
\begin{figure}[!h]
\begin{center}
\includegraphics[scale=0.35]{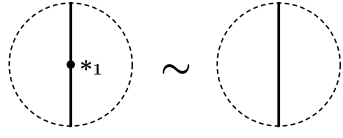}\vspace{-10pt}
\caption{Illustration of the unit relation.}
\end{center}
\end{figure}

\item[$ii)$] \textit{The compatibility with the symmetric group action}: if a vertex is labelled by $x\cdot \sigma$, with $x$ a point in  $O(n)$ or $M(n)$ and $\sigma\in \Sigma_{n}$, then we can remove $\sigma$ by permuting the incoming edges.

\begin{figure}[!h]
\begin{center}
\includegraphics[scale=0.5]{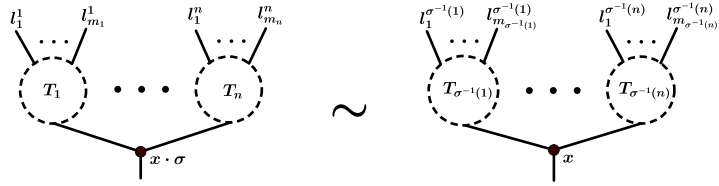}\vspace{-10pt}
\caption{Illustration of the compatibility with the symmetric group.}%\vspace{-15pt}
\end{center}
\end{figure}

\end{itemize}

%\newpage

The right infinitesimal operation $\circ^{i}$ (respectively the left infinitesimal operation $\circ_{i}$) of a point $[T\,;\,x_{p}\,;\,\{\theta_{v}\}]$ with an element $\theta\in O(m)$ consists in grafting the $m$-corolla indexed by $\theta$ (respectively the reduced pearled tree $T$)  into the $i$-th leaf of the reduced tree with section $T$ (respectively the $m$-corolla indexed by $\theta$). If the obtained element  contains an inner edge joining two consecutive vertices other than a pearl, then we contract it using the operadic structure of~$O$.

\begin{figure}[!h]
\begin{center}
\includegraphics[scale=0.45]{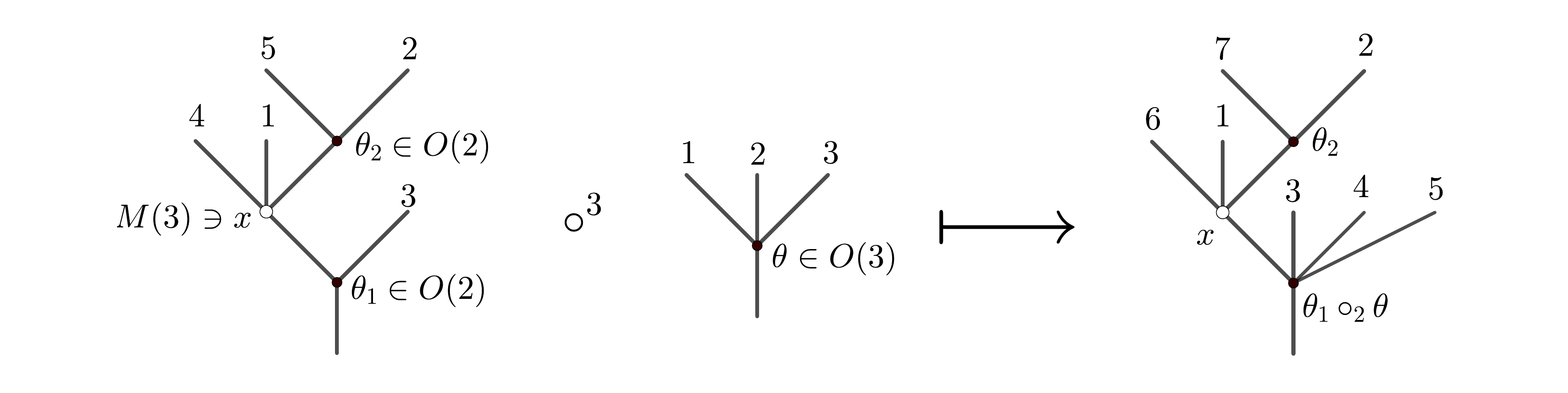}\vspace{-10pt}
\caption{Illustration of the right infinitesimal operation $\circ^{3}:\mathcal{IF}_{O}^\Sigma(M)(5)\times O(3)\rightarrow \mathcal{IF}_{O}^\Sigma(M)(7)$.}\label{G7}
\end{center}\vspace{-10pt}
\end{figure}

Similarly, the free $r$-truncated bimodule functor $\mathcal{IF}_{O}^{T_{r}\Sigma}$ is obtained from the formula (\ref{G4}) by taking the restriction of the coproduct to the reduced pearled trees having at most $r$ leaves and such that the pearl has at most $r$ incoming edges. The equivalence relation, the left and right infinitesimal operations are defined in the same way.  Finally, one has two functors:
$$
\mathcal{IF}_{O}^\Sigma:\Sigma Seq\longrightarrow\Sigma \Ibimod_{O}\hspace{15pt}\text{and} \hspace{15pt} \mathcal{IF}_{O}^{T_{r}\Sigma}:T_{r}\Sigma Seq\longrightarrow T_{r}\Sigma\Ibimod_{O}.
$$
\end{const}

\begin{thm}
One has the following adjunctions:
\begin{equation}\label{GG5}
\mathcal{IF}_{O}^\Sigma:\Sigma Seq\rightleftarrows \Sigma \Ibimod_{O}:\mathcal{U}^\Sigma\hspace{15pt}\text{and} \hspace{15pt} \mathcal{IF}_{O}^{T_{r}\Sigma}:T_{r}\Sigma Seq\rightleftarrows T_{r}\Sigma\Ibimod_{O}:\mathcal{U}^{T_{r}\Sigma}.
\end{equation}
\end{thm}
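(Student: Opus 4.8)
The plan is to establish the adjunction exactly as in the proof of Theorem~\ref{THMadj}, by verifying the universal property of the free functor. Concretely, I would fix an $O$-Ibimodule $N$ together with a morphism of $\Sigma$-sequences $f\colon M\to \mathcal{U}^\Sigma(N)$, and show that there is a unique morphism of $O$-Ibimodules $\tilde{f}\colon \mathcal{IF}_{O}^\Sigma(M)\to N$ making the triangle commute, where the unit $i\colon M\to \mathcal{IF}_{O}^\Sigma(M)$ sends $x\in M(n)$ to the pearled $n$-corolla whose pearl is labelled by $x$. Naturality in $M$ and $N$ is then routine, and the two adjunctions of~\eqref{GG5} follow.

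The construction of $\tilde{f}$ proceeds by induction on the number $|V(T)\setminus\{p\}|$ of non-pearl vertices of the indexing tree, mirroring the bimodule argument. For the base case, an element $[(T\,;\,\sigma)\,;\,x_{p}\,;\,\emptyset]$ with no non-pearl vertex is a pearled corolla, and commutativity of the triangle forces $\tilde{f}([(T\,;\,\sigma)\,;\,x_{p}\,;\,\emptyset])=f(x_{p})\cdot\sigma$. For the inductive step, recall that in a \emph{reduced} pearled tree every non-pearl vertex is adjacent to the pearl: each incoming edge of the pearl may carry a corolla (above the pearl), and the single output edge of the pearl may carry at most one corolla (below the pearl). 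Choosing such an adjacent vertex $v$ labelled by $\theta_{v}\in O$ yields a decomposition of the element either as a right infinitesimal operation $[(T'\,;\,\cdot)\,;\,x_{p}\,;\,\{\theta_{w}\}]\circ^{i}\theta_{v}$ (if $v$ lies above the pearl) or as a left infinitesimal operation $\theta_{v}\circ_{j}[(T'\,;\,\cdot)\,;\,x_{p}\,;\,\{\theta_{w}\}]$ (if $v$ lies below the pearl), where $T'$ has one fewer non-pearl vertex. Since $\tilde{f}$ must be a morphism of $O$-Ibimodules, its value is forced to be $\tilde{f}([T'\,;\,\ldots])\circ^{i}\theta_{v}$, respectively $\theta_{v}\circ_{j}\tilde{f}([T'\,;\,\ldots])$, and the inductive hypothesis applies. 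This gives both existence (the formula defines a value) and uniqueness (every value is forced by these cases together with the base case).

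The main point, exactly as in Theorem~\ref{THMadj}, will be to check that $\tilde{f}$ is \emph{well defined}, i.e.\ independent of the chosen adjacent vertex $v$ peeled off first. This is where the full list of infinitesimal-bimodule axioms enters: the linear and ramified associativities for the right operations handle the commutation of peeling two distinct above-vertices, the linear associativity for the left operations and the two left–right compatibilities handle the interaction between an above-vertex and the below-vertex, and the equivariance relations together with relations (i) and (ii) of Construction~\ref{FF9} (unit and symmetric-group relations) match the corresponding unit and compatibility axioms of $N$. Once well-definedness is established, the fact that $\tilde{f}$ commutes with $\circ^{i}$ and $\circ_{i}$ is immediate from the inductive formulas, so $\tilde{f}$ is a morphism of $O$-Ibimodules.

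For the truncated statement I would run the identical argument with $\mathcal{IF}_{O}^{T_{r}\Sigma}$, whose defining coproduct is restricted to reduced pearled trees with at most $r$ leaves and pearl of arity at most $r$. The induction and the well-definedness verification go through verbatim, since the truncated infinitesimal operations are defined precisely under the constraints $n\le r$ and $n+m-1\le r$, which are exactly the constraints respected by the admissible trees. I expect no new phenomena beyond those in the bimodule case; the situation here is in fact simpler, since $\mathcal{IF}_{O}^\Sigma$ involves a single operad $O$ and a single pearl rather than two operads and a section, so the only genuine labour is the bookkeeping in the well-definedness check.
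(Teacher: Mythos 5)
Your proposal is correct and follows essentially the same route as the paper: the paper's proof simply invokes the argument of Theorem~\ref{THMadj}, building the unique induced map $\tilde{f}$ by induction on the number of (non-pearl) vertices and referring to \cite[Proposition 2.3]{Duc1} for details, which is exactly the induction you carry out, including the well-definedness check via the Ibimodule axioms and the verbatim truncated variant.
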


\begin{proof}
The proof is similar to the proof of Theorem \ref{THMadj}. For any $O$-Ibimodule $M'$ and for any morphism of $\Sigma$-sequences $f:M\rightarrow M'$, we can build a unique map of $O$-Ibimodules $\tilde{f}:\mathcal{IF}_{O}^\Sigma(M)\rightarrow M'$ by induction on the number of vertices, such that $f=\tilde{f}\circ i$. We refer the refer to \cite[Proposition 2.3]{Duc1} for more details.
\end{proof}

\subsubsection{Combinatorial description of the pushout}\label{BB7}

Let $O$ be a topological operad. Contrary to the bimodule case, the left infinitesimal operations are unary. As a consequence, the pushout in the category of $O$-Ibimodules coincides with the pushout in the underlying category of sequences. More precisely, for any pushout diagrams 
\begin{equation}\label{GG2}
\xymatrix{
A\ar[r]^{f_{1}} \ar[d]_{f_{2}} & C\\
B &
}
\hspace{30pt} 
\xymatrix@R=3pt{
& A_{r}\ar[r]^{f_{1}} \ar[dd]_{f_{2}} & C_{r} & \\
(respectively& & & )\\
& B_{r} & & 
}
\end{equation}
in the category of $O$-Ibimodules (respectively, $r$-truncated $O$-Ibimodules) we introduce the $\Sigma$-sequences
\begin{equation}\label{GG1}
D(n)= \left( B(n)\underset{A(n)}{\bigsqcup} C(n)\right)
\hspace{15pt}\text{ and } \hspace{15pt} D_{r}(n)= \left( B_{r}(n)\underset{A_{r}(n)}{\bigsqcup} C_{r}(n)\right).
\end{equation}
The above sequences inherit a (possibly truncated) infinitesimal bimodule structure over $O$ and one has the following statement:

\begin{pro}
One has the following identities in the category of (possibly truncated) $O$-Ibimodules:
$$
D=\underset{\hspace{40pt}\Sigma\Ibimod_{O}}{\mathrm{colim}}\big( B \longleftarrow A \longrightarrow C\big)\hspace{15pt}\text{and}\hspace{15pt} D_{r}=\underset{\hspace{40pt}T_{r}\Sigma\Ibimod_{O}}{\mathrm{colim}}\big( B_{r} \longleftarrow A_{r} \longrightarrow C_{r}\big).
$$
\end{pro}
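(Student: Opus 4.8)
The plan is to verify the universal property of the pushout directly, exploiting the key structural fact that in the category of $O$-Ibimodules the left infinitesimal operations $\circ_i\colon O(n)\times M(m)\to M(n+m-1)$ are \emph{unary} in the bimodule variable. This is what makes the present argument drastically simpler than the bimodule case of the analogous proposition in Section~\ref{B7}: there, a left operation $\gamma_\ell$ takes a tuple of bimodule inputs and produces genuine coupling between the factors, forcing the elaborate tree description of the pushout. Here no such coupling occurs, so the colimit is computed objectwise in $\Sigma Seq$.

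First I would equip the $\Sigma$-sequence $D$ defined by~\eqref{GG1} with its infinitesimal bimodule structure. The right infinitesimal operations $\circ^i\colon D(n)\times O(m)\to D(n+m-1)$ and the left infinitesimal operations $\circ_i\colon O(n)\times D(m)\to D(n+m-1)$ are induced from those of $B$ and $C$: since $f_1,f_2$ are $O$-Ibimodule maps, the structure maps of $B$ and $C$ agree after restriction along $A$, so they descend to the pushout $B(n)\sqcup_{A(n)} C(n)$ in $Top$. I would check that the associativity, unit, and equivariance axioms of Section~\ref{SectProjIbimod} hold for $D$; each axiom holds because it holds in $B$ and in $C$ and the quotient map $B\sqcup C\to D$ is a map of the relevant diagrams. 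This is routine and I would only indicate that it follows from the corresponding axioms in $B$ and~$C$.

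Next I would establish the universal property. Given an $O$-Ibimodule $D'$ together with $O$-Ibimodule maps $g_1\colon B\to D'$ and $g_2\colon C\to D'$ satisfying $g_1\circ f_1=g_2\circ f_2$, the objectwise maps $g_1(n),g_2(n)$ induce, by the universal property of the pushout of spaces $B(n)\sqcup_{A(n)}C(n)$, a unique continuous map $\delta(n)\colon D(n)\to D'(n)$ for each $n$. I would then verify that the resulting family $\delta=\{\delta(n)\}$ is a morphism of $O$-Ibimodules, i.e.\ that it commutes with $\circ^i$ and $\circ_i$; this is immediate from the fact that the structure operations on $D$ were defined precisely so that the quotient map from $B\sqcup C$ is structure-preserving, together with the fact that $g_1,g_2$ are structure-preserving. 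Uniqueness of $\delta$ follows because $\delta$ is forced on $B(n)$ and $C(n)$ by the commutativity constraints and these images generate $D(n)$.

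The main (though mild) obstacle is bookkeeping in the truncated case: for $D_r$ one must confirm that the truncated infinitesimal operations, defined only under the arity constraints $n\le r$ and $n+m-1\le r$, still assemble coherently on the objectwise pushout and that the universal property holds within $T_r\Sigma\Ibimod_O$. Since these operations are again computed objectwise and the truncation imposes no new relations beyond restricting the arities, the same argument applies verbatim. I would conclude by remarking that both identities are instances of the general principle that a colimit in a category of algebras over a \emph{unary} colored operad (here $O_+$, see Section~\ref{DD8}) is created by the forgetful functor to $Top^{\mathbb N}$, giving a conceptual confirmation of the explicit computation.
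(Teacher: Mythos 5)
Your proposal is correct and follows essentially the same route as the paper: the paper likewise equips the objectwise pushout of~\eqref{GG1} with the induced $O$-Ibimodule structure and then verifies the universal property directly, defining the induced map $\delta$ case by case on the images of $B$ and $C$. Your additional observation that this reflects the identification $\Sigma\Ibimod_O \simeq Alg_{O_+} = Top^{O_+}$ for the unary colored operad $O_+$ of Section~\ref{DD8}, so that colimits are created objectwise, is consistent with the paper and merely makes explicit why the argument is so much simpler than in the bimodule case of Section~\ref{B7}.
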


\begin{proof}
Let $h_{1}:C\rightarrow D'$ and $h_{2}:B\rightarrow D'$ be two maps of $O$-Ibimodules such that $h_{1}\circ f_{1}=h_{2}\circ f_{2}$. Then there exists a unique map of $O$-Ibimodule $\delta:D\rightarrow D'$ given by 
$$
\delta(x)=\left\{
\begin{array}{cl}\vspace{5pt}
h_{1}(x) & \text{if } x\in C, \\ 
h_{2}(x) & \text{if } x\in B.
\end{array} 
\right.
$$
This map is well defined and proves that $D$ satisfies the universal property of the pushout.
\end{proof}

\subsection{Model category structure}\label{HH2}

By applying the transfer principle \ref{E3} to the adjunctions introduced in Section \ref{CC2}

\begin{equation}\label{AA8}
\mathcal{IF}_{O}^\Sigma:\Sigma Seq\rightleftarrows\Sigma \Ibimod_{O}:\mathcal{U}^\Sigma\hspace{15pt}\text{and} \hspace{15pt} \mathcal{IF}_{O}^{T_{r}\Sigma}:T_{r}\Sigma Seq\rightleftarrows T_{r}\Sigma\Ibimod_{O}:\mathcal{U}^{T_{r}\Sigma},
\end{equation}
we get the following statement:

\begin{thm}\label{G9}
Let $O$ be any topological operad.
The category of (truncated) infinitesimal bimodules $\Sigma \Ibimod_{O}$ (respectively, $T_r\Sigma \Ibimod_{O}$, $r\geq 0$) inherits a cofibrantly generated model category structure, %\todo{Condition sur O. Preuve: le foncteur oublie et l'argeument du petit objet.} 
  called the projective model category structure, in which all objects are fibrant.  The model structure in question makes the adjunctions (\ref{AA8}) into  Quillen adjunctions. More precisely, a (possibly truncated) Ibimodule map $f$  is a weak equivalence (respectively, a fibration) if and only if the induced map $\mathcal{U}^\Sigma(f)$ is a weak equivalence (respectively, a fibration) in the category of (possibly truncated) $\Sigma$-sequences. 
\end{thm}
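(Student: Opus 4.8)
The plan is to apply the transfer principle of Theorem~\ref{E3} to the adjunction $(\mathcal{IF}_{O}^\Sigma\,;\,\mathcal{U}^\Sigma)$ (and its truncated analogue $(\mathcal{IF}_{O}^{T_{r}\Sigma}\,;\,\mathcal{U}^{T_{r}\Sigma})$), exactly as in the proof of Theorem~\ref{ProjectBimod} for bimodules. Since $\Sigma Seq$ (respectively $T_{r}\Sigma Seq$) is cofibrantly generated with all objects fibrant, and $\Sigma\Ibimod_{O}$ is bicomplete, it remains only to verify the three hypotheses of Theorem~\ref{E3}: that the images under $\mathcal{IF}_{O}^\Sigma$ of the generating (acyclic) cofibrations permit the small object argument; that there is a functorial fibrant replacement; and that there is a functorial path object for fibrant objects. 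The last two conditions are immediate and are handled exactly as before. All objects of $\Sigma Seq$ are fibrant, so the identity functor provides a fibrant replacement, while $Path(M)(n)=Map([0,1];M(n))$, equipped with the infinitesimal bimodule structure induced from $M$, gives a functorial path object: the factorization $M\to Path(M)\to M\times M$ is a homotopy equivalence (sending a point to the constant path) followed by a Serre fibration, the latter because the inclusion $\partial[0,1]\hookrightarrow[0,1]$ is a cofibration.

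The main work, as in the bimodule case, is the small object argument, but here it is considerably simpler thanks to the observation of Section~\ref{BB7} that pushouts of $O$-Ibimodules are computed componentwise in $\Sigma Seq$. Concretely, I would first prove the infinitesimal analogue of Lemma~\ref{Final3}: if $\partial X\to X$ is a map in $\Sigma Seq$ that is objectwise a closed inclusion, then $\mathcal{IF}_{O}^\Sigma(\partial X)\to\mathcal{IF}_{O}^\Sigma(X)$ is again objectwise a closed inclusion. This follows directly from the explicit description~\eqref{GG4} of the free functor as a coproduct, over reduced pearled trees, of products in which the single pearl is indexed by the argument sequence and all other vertices by $O$; since each factor map $\partial X(|p|)\to X(|p|)$ is a closed inclusion, since the only relations are the unit relation and the symmetric-group compatibility, and since kelleyfication preserves closed inclusions, the resulting map of quotients of coproducts is a closed inclusion. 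Using that pushouts in $\Sigma\Ibimod_{O}$ agree componentwise with pushouts in $\Sigma Seq$, and that pushouts of closed inclusions along arbitrary maps of $k$-spaces remain closed inclusions (again via the preservation of closed inclusions by kelleyfication, as in the proof of Lemma~\ref{Final3}), every cell attachment $M_{<\alpha}\to M_{\alpha}$ is then objectwise a closed inclusion.

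It then remains to argue $\aleph_1$-smallness exactly as in Theorem~\ref{ProjectBimod}: the domain of a generating (acyclic) cofibration is $\mathcal{IF}_{O}^\Sigma(X)$ with $X$ concentrated in a single arity, where it is a finite union of spheres (or discs) and hence separable, so $X$ is $\aleph_1$-small relative to componentwise closed inclusions. Combining the adjunction isomorphism, the fact that the monadic forgetful functor $\mathcal{U}^\Sigma$ preserves filtered colimits, and the smallness of $X$, one identifies maps out of $\mathcal{IF}_{O}^\Sigma(X)$ into a transfinite colimit $\mathrm{colim}_{\alpha<\lambda}M_{\alpha}$ with the colimit of the mapping sets $\mathrm{colim}_{\alpha<\lambda}\,\Sigma\Ibimod_{O}(\mathcal{IF}_{O}^\Sigma(X)\,;\,M_{\alpha})$, which is precisely the small object argument. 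I expect this closed-inclusion bookkeeping to be the only genuine obstacle, and it is milder here than for bimodules: the pearled-tree description together with componentwise pushouts removes the need for the elaborate six-relation analysis of Section~\ref{B7}.

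Alternatively, and more conceptually, one may invoke the identification of Section~\ref{DD8}, namely $\Sigma\Ibimod_{O}\simeq Alg_{O_{+}}=Top^{O_{+}}$, where $O_{+}$ is the colored operad~\eqref{eq:O+} concentrated in arity one and hence a category enriched in $Top$. The projective model structure on such an enriched diagram category exists by the general results cited for \cite{Mo, DRO} (or by \cite[Theorem~2.1]{BM3} applied to the operad $O_{+}$), and it coincides with the transferred structure since the projective model structure on $\Sigma Seq$ is itself a diagram structure. The truncated statements are obtained verbatim by restricting all the constructions above to arities $\le r$.
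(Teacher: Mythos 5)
Your proposal is correct and follows exactly the route the paper intends: the paper's own proof of Theorem~\ref{G9} is literally ``Similar to the proof of Theorem~\ref{ProjectBimod},'' i.e.\ the transfer principle of Theorem~\ref{E3} applied to the adjunctions~(\ref{AA8}), with the identity functor as fibrant replacement, the mapping-space path object, and the small object argument handled via closed inclusions and $\aleph_1$-smallness. Your observation that the closed-inclusion step simplifies because pushouts of infinitesimal bimodules are computed componentwise (Section~\ref{BB7}) is precisely the simplification the paper itself flags in its introduction, and your alternative argument via $Alg_{O_+}$ matches the paper's alternative proof in the bimodule case.
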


\begin{proof}
Similar to the proof of Theorem \ref{ProjectBimod}. 
\end{proof}

\subsubsection{Relative left properness of the projective model category}\label{CP0}

\begin{thm}\label{th:properness_ibimod}
For any topological operad~$O$, the projective model category  $\Sigma\Ibimod_{O}$ is right proper. It is left proper provided $O$ is componentwise cofibrant.
\end{thm}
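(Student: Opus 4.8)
The plan is to exploit the identification, recorded in Section~\ref{DD8}, of $\Sigma\Ibimod_O$ with the category of algebras over the colored operad $O_+$. Since $O_+$ is concentrated in arity one it may be regarded as a $Top$-enriched category, so that $\Sigma\Ibimod_O=Top^{O_+}$ is a category of enriched diagrams. The decisive structural feature, established in Section~\ref{BB7} (see~\eqref{GG1}), is that in $\Sigma\Ibimod_O$ both limits and colimits are computed objectwise; combined with the fact that weak equivalences and fibrations are objectwise by Theorem~\ref{G9}, this reduces both assertions to the corresponding properties of $Top$ with the Quillen model structure, which is proper (see \cite{Hir}).

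For right properness I would argue exactly as in the proof of Theorem~\ref{th:right_proper}, which needs no hypothesis on $O$. Given a pullback square with $f\colon C\to D$ a weak equivalence and $g\colon B\to D$ a fibration, the induced map $i\colon A=C\times_D B\to B$ is computed objectwise, and in each arity $n$ it is the base change of the weak equivalence $f(n)$ along the Serre fibration $g(n)$. Since all objects are fibrant (Theorem~\ref{G9}) and $Top$ is right proper, Remark~\ref{B6} applied in each arity shows that every $i(n)$ is a weak homotopy equivalence, whence $i$ is a weak equivalence.

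For left properness, assume $O$ is componentwise cofibrant, and consider a pushout square in which $f\colon A\to B$ is a weak equivalence and $g\colon A\to C$ is a cofibration; by~\eqref{GG1} the map $C\to D=C\sqcup_A B$ is computed objectwise. It therefore suffices to show that $g$ is objectwise a cofibration of spaces, for then in each arity $D(n)=C(n)\sqcup_{A(n)}B(n)$ is the pushout of the objectwise weak equivalence $f(n)$ along the cofibration $g(n)$, which is a weak homotopy equivalence by left properness of $Top$. The key point is that every cofibration of $\Sigma\Ibimod_O$ is an objectwise cofibration once $O$ is componentwise cofibrant. Indeed, by~\eqref{eq:O+} the enriched hom-spaces $O_+(n;m)=\coprod_{\alpha_+}\prod_{i}O(|\alpha_+^{-1}(i)|)$ are coproducts of finite products of cofibrant spaces, hence cofibrant (products and coproducts of cofibrant spaces are cofibrant in $Top$). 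A generating cofibration of $\Sigma\Ibimod_O$ is $\mathcal{IF}_O^\Sigma$ applied to a free $\Sigma_n$-cell $\Sigma_n\times(S^{k-1}\hookrightarrow D^k)$; evaluating the free $O_+$-algebra functor (cf.\ Construction~\ref{FF9}), the $\Sigma_n$-action is absorbed and one obtains in arity $m$ the map $O_+(n;m)\times(S^{k-1}\hookrightarrow D^k)$, which is a cofibration of spaces because $O_+(n;m)$ is cofibrant (pushout-product, see \cite[Section~9.1.5]{Hir}). Since objectwise pushouts, transfinite compositions, coproducts and retracts all preserve objectwise cofibrations, every cofibration of $\Sigma\Ibimod_O$ is an objectwise cofibration.

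The main obstacle is precisely this last step: establishing that projective cofibrations are objectwise cofibrations, which is exactly where the hypothesis on $O$ enters, through the cofibrancy of the hom-spaces $O_+(n;m)$. Once this is secured, the objectwise nature of pushouts turns left properness of $\Sigma\Ibimod_O$ into a formal consequence of left properness of $Top$. I would emphasize that, in contrast with the bimodule case of Theorem~\ref{C4}, no relativization of left properness is needed here, and this is attributable solely to the fact that colimits of infinitesimal bimodules are computed objectwise.
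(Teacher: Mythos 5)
Your proposal is correct and follows essentially the same route as the paper: right properness from fibrancy of all objects, and left properness from the objectwise nature of pushouts combined with the fact that cofibrations are objectwise cofibrations when $O$ is componentwise cofibrant, reducing everything to properness of $Top$. The only difference is presentational: where you verify objectwise cofibrancy of cofibrations directly, by computing the generating cofibrations in arity $m$ as $O_+(n;m)\times(S^{k-1}\hookrightarrow D^k)$ with $O_+(n;m)$ cofibrant, the paper delegates this step to Theorem~\ref{th:proj_ibim_cof}(a), whose proof is the same pushout-product argument.
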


\begin{proof}
Since all the objects in $\Sigma\Ibimod_{O}$ are fibrant, this category is right proper (we refer the reader to the proof 
of Theorem~\ref{C0} for more details). Furthermore, as explained in Section ~\ref{BB7}, the pushout in the category of $O$-Ibimodules coincides with the pushout in the category of (non-$\Sigma$) sequences, which is obviously left proper as the category $Top$ is such.  By part (a) of the next theorem, 
 cofibrations in  $\Sigma\Ibimod_{O}$ are componentwise cofibrations provided $O$ is componentwise cofibrant.  We conclude that
 in this case, $\Sigma\Ibimod_{O}$ is also left proper. %relative to  objects with cofibrant components.
\end{proof}

\begin{thm}\label{th:proj_ibim_cof}
(a) If $O$ is a  componentwise cofibrant operad, then cofibrations  in the category $\Sigma\Ibimod_O$ are cofibrations componentwise. 
In particular, the class of  componentwise cofibrant objects  is closed under cofibrations and cofibrant objects have cofibrant components.

(b) If $O$ is a $\Sigma$-cofibrant operad, then cofibrations  in the category $\Sigma\Ibimod_O$ are $\Sigma$-cofibrant. In particular,  the class of $\Sigma$-cofibrant objects  is closed under cofibrations and cofibrant objects are $\Sigma$-cofibrant.
\end{thm}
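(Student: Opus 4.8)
The plan is to exploit the fact that pushouts in $\Sigma\Ibimod_O$ are computed componentwise (Section~\ref{BB7}) together with the explicit description of the free functor $\mathcal{IF}_O^\Sigma$. Since every cofibration is a retract of a transfinite composition of pushouts of generating cofibrations, and since componentwise cofibrations (resp.\ $\Sigma$-cofibrations) are stable under retracts, cobase change and transfinite composition, it suffices to treat a single cell attachment
$$
\xymatrix@R=15pt{
\mathcal{IF}_O^\Sigma(\partial X) \ar[r] \ar[d] & \mathcal{IF}_O^\Sigma(X) \ar[d] \\
A \ar[r] & C,
}
$$
where $\partial X\to X$ is a generating cofibration of $\Sigma Seq$, concentrated in some arity $j$ and of the form $(S^{k-1}\hookrightarrow D^k)\times\Sigma_j$.

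First I would record the identity $\mathcal{IF}_O^\Sigma(M)(m)=\coprod_n O_+(n;m)\times_{\Sigma_n}M(n)$, which follows from Construction~\ref{FF9}: the reduced pearled trees whose pearl has arity $n$ account precisely for the summand $O_+(n;m)$, and relation $ii)$ produces the quotient by $\Sigma_n$. Substituting $X(j)=D^k\times\Sigma_j$ and $\partial X(j)=S^{k-1}\times\Sigma_j$, the free $\Sigma_j$-action cancels, giving $\mathcal{IF}_O^\Sigma(X)(m)=O_+(j;m)\times D^k$ and $\mathcal{IF}_O^\Sigma(\partial X)(m)=O_+(j;m)\times S^{k-1}$, with top map $\mathrm{id}_{O_+(j;m)}\times(S^{k-1}\hookrightarrow D^k)$, which is $\Sigma_m$-equivariant with $\Sigma_m$ acting on the output of $O_+(j;m)$ only. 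As the pushout is componentwise, $A(m)\to C(m)$ is the cobase change of this map, so it suffices to show that $O_+(j;m)\times(S^{k-1}\hookrightarrow D^k)$ is a cofibration in $Top$ (for (a)), resp.\ a $\Sigma_m$-cofibration (for (b)). Since multiplication by a cofibrant (resp.\ $\Sigma_m$-cofibrant) space preserves cofibrations, the whole problem reduces to the cofibrancy of the single space $O_+(j;m)$.

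For (a), each factor $O(|\alpha_+^{-1}(i)|)$ in formula~\eqref{eq:O+} is cofibrant by hypothesis, hence so is the finite product $\prod_{i\in[j]_+}O(|\alpha_+^{-1}(i)|)$ and the coproduct $O_+(j;m)$ over all $\alpha_+$. For (b), the decisive step is to prove $O_+(j;m)$ is $\Sigma_m$-cofibrant. I would decompose the coproduct~\eqref{eq:O+} into $\Sigma_m$-orbits of the indexing maps $\alpha_+\colon[m]_+\to[j]_+$, with $\Sigma_m$ acting on the domain $[m]$: for an orbit representative $\alpha_+$ with stabilizer $H=\prod_{i\in[j]_+}\Sigma_{c_i}$, where $c_i=|\alpha_+^{-1}(i)\cap[m]|$, the corresponding summand is the induced space $\Sigma_m\times_H\big(\prod_{i\in[j]_+}O(|\alpha_+^{-1}(i)|)\big)$. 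Each factor $O(|\alpha_+^{-1}(i)|)$ is $\Sigma_{c_i}$-cofibrant: for $i\neq0$ one has $c_i=|\alpha_+^{-1}(i)|$ and $\Sigma$-cofibrancy of $O$ gives this directly, whereas for $i=0$ the group $\Sigma_{c_0}=\Sigma_{|\alpha_+^{-1}(0)|-1}$ is the stabilizer of the basepoint input inside $\Sigma_{|\alpha_+^{-1}(0)|}$, so cofibrancy is preserved under the restriction of Lemma~\ref{l:adj_mon}. The external product is then $H$-cofibrant by iterating Lemma~\ref{D2} (cf.\ Example~\ref{ex:push_prod1}), and induction $\Sigma_m\times_H(-)$ is left Quillen again by Lemma~\ref{l:adj_mon}, so each summand, hence $O_+(j;m)$, is $\Sigma_m$-cofibrant. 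The handling of the basepoint fiber (which forces the $i=0$ factor to be acted on through the proper subgroup $\Sigma_{|\alpha_+^{-1}(0)|-1}$) is the one genuinely delicate point; the rest is formal.

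Finally, the ``in particular'' assertions follow at once: the initial Ibimodule $\mathcal{IF}_O^\Sigma(\emptyset)$ is empty in every arity, hence both componentwise and $\Sigma$-cofibrant, so a cofibrant object $M$ has $\emptyset\to M$ a componentwise (resp.\ $\Sigma$-) cofibration and is therefore componentwise (resp.\ $\Sigma$-) cofibrant; and if $A$ lies in the respective class and $A\to B$ is a cofibration, then $A\to B$ is a componentwise (resp.\ $\Sigma$-) cofibration, so $B$ is again in the class.
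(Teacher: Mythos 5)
Your proof is correct and follows essentially the same route as the paper's: reduce to the generating cofibrations (using that pushouts and transfinite compositions of Ibimodules are computed componentwise), settle (a) by the pushout-product axiom, and settle (b) by an equivariant cofibrancy analysis of the generating cofibrations. The only difference is one of presentation: where the paper disposes of (b) by citing the operadic argument of \cite[Corollary~5.2]{BM}, you carry out its Ibimodule incarnation explicitly — the formula $\mathcal{IF}_O^\Sigma(M)(m)=\coprod_n O_+(n;m)\times_{\Sigma_n}M(n)$, the orbit decomposition of $O_+(j;m)$ over the maps $\alpha_+$, and the restriction along $\Sigma_{c_0}\subset\Sigma_{c_0+1}$ at the basepoint fiber — which is precisely the computation that citation stands for.
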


\begin{proof}
It is enough to check that the generating cofibrations in $\Sigma\Ibimod_O$ are cofibrations in $Top$ componentwise, in the case (a), and
 are $\Sigma$-cofibrations, in the case (b). The first is a consequence of the pushout-product axiom, while the second is proved
in exactly the same way as the similar statement for operads \cite[Corollary 5.2]{BM}.  
\end{proof}

\subsubsection{Extension/restriction adjunction for the projective model category of infinitesimal bimodules} \label{EE8}

Let $\phi:O\rightarrow O'$ be a map of operads. Similarly to the category of algebras (see Theorem~\ref{E9}) and bimodules (see Section \ref{E8}), we show that the projective model categories of $O$-Ibimodules and $O'$-Ibimodules are Quillen equivalent under some conditions on the operads. For this purpose, we recall the construction of the \textit{restriction} functor $\phi^{\ast}$ and of the \textit{extension} functor $\phi_{!}$ in the context of infinitesimal bimodules:
$$
\phi_{!}:\Sigma\Ibimod_{O}\rightleftarrows \Sigma\Ibimod_{O'}:\phi^{\ast}.
$$

\noindent \textit{$\bullet$ The restriction functor.} The restriction functor $\phi^{\ast}$ sends an $O'$-Ibimodule $M$ to the $O$-Ibimodule $\phi^{\ast}(M)=\{\phi^{\ast}(M)(n)=M(n),\,n\geq 0\}$ in which the $O$-bimodule structure is defined using the $O'$-bimodule structure of $M$
as follows:
$$
\begin{array}{rcl}\vspace{3pt}
\circ^{i}: \phi^{\ast}(M)(n)\times O(m) & \longrightarrow & \phi^{\ast}(M)(n+m-1);\\ \vspace{10pt}
 x\,;\,\theta & \longmapsto & x\circ^{i}\phi(\theta), \\ \vspace{3pt}
\circ_{i}:O(n)\times \phi^{\ast}(M)(m) & \longrightarrow & \phi^{\ast}(M)(n+m-1); \\
\theta\,;\,x & \longmapsto & \phi(\theta)\circ_{i}x.
\end{array} 
$$

\noindent \textit{$\bullet$ The extension functor.} The extension functor $\phi_{!}$ is obtained as a quotient of the free $O'$-Ibimodule functor introduced in Section \ref{CC2}. More precisely, if $M$ is an $O$-Ibimodule, then the extension functor is given by the formula 
$$
\phi_{!}(M)(n)=\mathcal{IF}^{\Sigma}_{O'}(\mathcal{U}^{\Sigma}(M))(n)/\sim
$$ 
where the equivalence relation is generated by the axiom which consists in contracting inner edges having a vertex $v$  indexed by a point of the form $\phi_{1}(\theta)$ using the $O$ infinitesimal bimodule structure of $M$ as illustrated in the following picture:\vspace{5pt}

\hspace{-30pt}\includegraphics[scale=0.35]{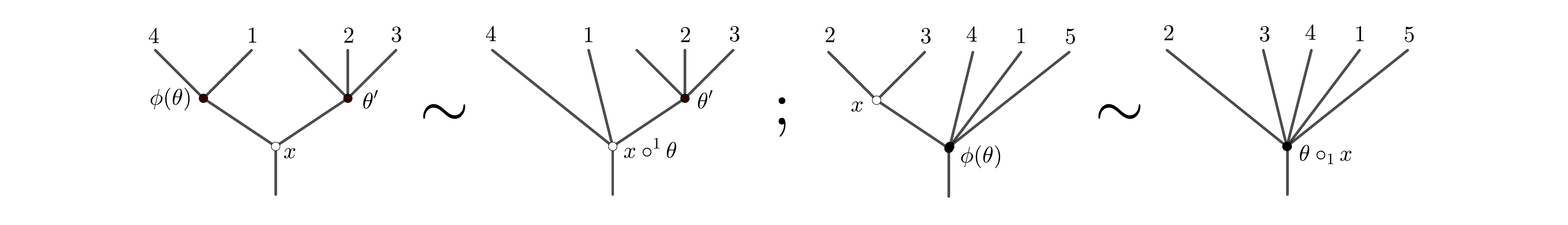}\vspace{5pt}

The $O'$-infinitesimal bimodule structure on the free object is compatible with the equivalence relation and provides an $O'$-Ibimodule structure on $\phi_{!}(M)$. Let us remark that, similarly to the bimodule case, the $O$-Ibimodule map $M\rightarrow \phi^{\ast}(\phi_{!}(M))$, sending a point $x\in M(n)$ to the $n$-corolla indexed by $x$, is not necessarily injective. 

\begin{thm}\label{G6_2}
Let $\phi:O\rightarrow O'$ be a weak equivalence between  operads with cofibrant 
components. The extension and restriction functors, as well as their truncated versions, give rise to Quillen equivalences: 
\begin{equation}\label{eq:bimod_ind_restr4}
\phi_!\colon\Sigma\Ibimod_{O}\rightleftarrows\Sigma\Ibimod_{O'}\colon\phi^*,
\end{equation}
\begin{equation}\label{eq:tr_bimod_ind_restr}
\phi_!\colon\TT_r \Sigma\Ibimod_{O}\rightleftarrows\TT_r \Sigma\Ibimod_{O'}\colon\phi^*.
\end{equation}
\end{thm}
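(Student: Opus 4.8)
The plan is to reduce the statement to the corresponding result for algebras over a colored operad, exactly as was done for bimodules in the proof of Theorem~\ref{G6}. Recall from Section~\ref{DD8} that the category $\Sigma\Ibimod_{O}$ is equivalent to the category of algebras over the colored operad $O_{+}$ defined by~\eqref{eq:O+}, and this operad is concentrated in arity one. Thus a weak equivalence $\phi\colon O\rightarrow O'$ induces a map of colored operads $\phi_{+}\colon O_{+}\rightarrow O'_{+}$, and the extension/restriction adjunction $(\phi_{!},\phi^{*})$ between infinitesimal bimodules is identified with the extension/restriction adjunction
$$
(\phi_{+})_{!}\colon Alg_{O_{+}}\rightleftarrows Alg_{O'_{+}}\colon (\phi_{+})^{*}
$$
between the corresponding algebra categories. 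So the whole theorem will follow once I verify the two hypotheses needed to apply the operadic Quillen equivalence criterion of~\cite[Theorem~4.4]{BM} (listed in Theorem~\ref{E9}): first, that $\phi_{+}$ is a weak equivalence of colored operads, and second, that both $O_{+}$ and $O'_{+}$ are $\Sigma$-cofibrant colored operads.

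First I would establish that $\phi_{+}$ is a weak equivalence. This is immediate from the explicit formula~\eqref{eq:O+}: each component $O_{+}(n\,;\,m)$ is a coproduct over pointed maps $\alpha_{+}\colon[m]_{+}\rightarrow[n]_{+}$ of finite products of spaces $O(|\alpha_{+}^{-1}(i)|)$, and $\phi_{+}$ acts factorwise by $\phi$. Since $\phi$ is a componentwise weak equivalence and a finite product of weak equivalences between (cofibrant) spaces is a weak equivalence, and coproducts of weak equivalences are weak equivalences, we conclude that $\phi_{+}$ is a componentwise weak equivalence of colored operads.

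Next I would prove the $\Sigma$-cofibrancy of $O_{+}$ (and symmetrically $O'_{+}$). Here I would argue in the spirit of Lemma~\ref{D9} and Proposition~\ref{p:P+Q>0}: since $O_{+}$ is concentrated in arity one, its symmetric group actions are trivial on the operadic inputs, so $\Sigma$-cofibrancy of each component reduces to checking that the space~\eqref{eq:O+} is cofibrant in $Top$. As a coproduct over the set of pointed maps $\alpha_{+}$ of products $\prod_{i\in[n]_{+}}O(|\alpha_{+}^{-1}(i)|)$, this space is cofibrant as soon as each factor $O(k)$ is cofibrant, which is exactly the hypothesis that $O$ is componentwise cofibrant. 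Hence $O_{+}$ and $O'_{+}$ are $\Sigma$-cofibrant.

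With both hypotheses verified, the operadic extension/restriction result of \cite[Theorem~4.4]{BM} applies and yields a Quillen equivalence between $Alg_{O_{+}}$ and $Alg_{O'_{+}}$, which translates back into the Quillen equivalence~\eqref{eq:bimod_ind_restr4}. The truncated version is handled identically, restricting to the truncated colored operad governing $\TT_{r}\Sigma\Ibimod_{O}$, which is again $\Sigma$-cofibrant by the same computation with the coproduct restricted to the relevant arities. The main obstacle here is, frankly, rather mild compared to the bimodule case (Theorem~\ref{G6}): because infinitesimal left operations are unary, there is no subtle interaction between left and right actions to check, and the colored operad $O_{+}$ has a completely transparent componentwise description. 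The only point requiring a little care is the verification that the $\Sigma$-action on the components~\eqref{eq:O+} is sufficiently free (it acts by permuting factors indexed by $[n]_{+}$ and reindexing the coproduct over $\alpha_{+}$), so that componentwise cofibrancy genuinely upgrades to $\Sigma$-cofibrancy; this is the analogue of the freeness observation used in Proposition~\ref{p:P+Q>0} and I would spell it out using Lemma~\ref{D2}.
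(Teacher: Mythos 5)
Your proposal is correct and follows essentially the same route as the paper's proof: identify $\Sigma\Ibimod_{O}$ with $Alg_{O_{+}}$, observe from the explicit formula \eqref{eq:O+} that componentwise cofibrancy of $O$ gives componentwise cofibrancy of $O_{+}$, note that $O_{+}$ is concentrated in arity one and hence $\Sigma$-cofibrant, apply \cite[Theorem~4.4]{BM}, and restrict colors for the truncated case. One remark: your closing worry about the $\Sigma$-action being "sufficiently free" is vacuous rather than delicate — since every operation of $O_{+}$ is unary, the relevant symmetric groups are trivial, so no analogue of Proposition~\ref{p:P+Q>0} or appeal to Lemma~\ref{D2} is needed, exactly as you yourself observed earlier in the argument.
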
\vspace{5pt}

\begin{proof}
As explained in Section \ref{DD8}, the projective model category of infinitesimal bimodules over an operad is equivalent to the projective model category of algebras over a specific colored operad. We denote by $O_{+}$ and $O_{+}'$ the corresponding colored operads associated to $O$ and $O'$, respectively. One has that  the adjunction~\eqref{eq:bimod_ind_restr4} is induced  by the extension/restriction adjunction between the categories of algebras:
$$
\phi_! :\Sigma\Ibimod_{O}= Alg_{O_{+}} \rightleftarrows Alg_{O'_{+}} = \Sigma\Ibimod_{O'}:\phi^*.
$$
From the fact that $O$ and $O'$ have cofibrant components and from the explicit formula~\eqref{eq:O+} for the components of $O_+$ and $O'_+$, we obtain that the colored operads $O_{+}$ and $O'_{+}$ have cofibrant components. Since they are concentrated in arity one, they
are $\Sigma$-cofibrant. Applying \cite[Theorem~4.4]{BM}, we conclude
   that the extension/restriction adjunction between the categories of algebras is a Quillen equivalence. 
   
   The truncated case is done similarly by restricting the operads $O_+$ and $O'_+$  to their subsets of colors.
\end{proof}%\vspace{5pt}

\section{The Reedy model category of $O$ infinitesimal bimodules}

Let $O$ be a reduced operad. From now on, we denote by $\Lambda \Ibimod_{O}$ and $T_{r}\, \Lambda \Ibimod_{O}$ the categories of $O$-Ibimodules and truncated  $O$-Ibimodules, respectively, equipped with the Reedy model category structures. 
Note that as categories, $\Lambda\Ibimod_O=\Sigma\Ibimod_O$. Only the model structure is different.
Similarly to the case of reduced operads and bimodules,
this structure is transferred from the categories $\Lambda Seq$ and $T_{r}\,\Lambda Seq $ along the adjunctions
\begin{equation}\label{FF7}
\begin{array}{rcl}\vspace{8pt}
\mathcal{IF}_{O}^{\Lambda}:\Lambda Seq & \rightleftarrows & \Lambda \Ibimod_{O}:\mathcal{U}^{\Lambda}, \\ 
\mathcal{IF}_{O}^{T_{r}\Lambda}:T_{r}\Lambda Seq& \rightleftarrows & T_{r}\Lambda \Ibimod_{O}:\mathcal{U}^{\Lambda},
\end{array} 
\end{equation}
where both free functors are obtained from the functors $\mathcal{IF}_{O}^\Sigma$ and $\mathcal{IF}_{O}^{T_{r}\Sigma}$ by taking the restriction of the coproduct (\ref{GG4}) to the reduced pearled trees without univalent vertices other than the pearl. In other words, one has
$$
\mathcal{IF}_{O}^{\Lambda}(M)\coloneqq \mathcal{IF}_{O_{>0}}^{\Sigma}(M)
, \hspace{15pt} \text{and}\hspace{15pt} \mathcal{IF}_{O}^{T_{r}\Lambda}(M)\coloneqq \mathcal{IF}_{O_{>0}}^{T_{r}\Sigma}(M).
$$
By construction, the above $\Sigma$-sequences are equipped with a (truncated) infinitesimal bimodule structure over $O_{>0}$. We can extend this structure in order to get a (truncated) $O$ infinitesimal bimodule structure using the operadic structure of~$O$ and the $\Lambda$ structure of $M$.\vspace{10pt}

\hspace{-55pt}\includegraphics[scale=0.32]{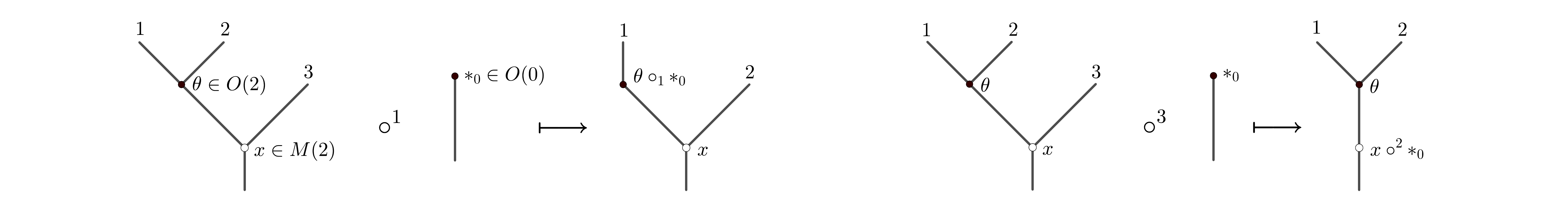}\vspace{-10pt}
 
\begin{figure}[!h]
\caption{Illustration of the right action by $\ast_{0}$.}\vspace{-5pt}
\end{figure}

\begin{thm}\label{ZZ5}
Let $O$ be a reduced well-pointed operad.
The categories $\Lambda \Ibimod_{O}$ and $T_{r}\Lambda \Ibimod_{O}$, with $r\geq 0$, admit cofibrantly generated model category structures, %\todo{Condition sur O. Preuve: le foncteur oublie et l'argeument du petit objet.} 
  called Reedy model category structures, transferred from $\Lambda Seq$ and $T_{r}\Lambda Seq$, respectively, along the adjunctions $(\ref{FF7})$. In particular, these model category structures make the pairs of functors $(\ref{FF7})$ into Quillen adjunctions.
\end{thm}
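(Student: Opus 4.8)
The plan is to verify the three hypotheses of the transfer principle (Theorem~\ref{E3}) for the adjunctions~\eqref{FF7}, exactly as was done for the Reedy model structure on bimodules in Theorem~\ref{Z5}. The three conditions to check are: the small object argument for $\mathcal{IF}_O^\Lambda$ applied to the generating (acyclic) cofibrations, the existence of a functorial fibrant replacement, and the existence of a functorial path object for fibrant objects. Since $\Lambda Seq$ and $T_r\Lambda Seq$ carry cofibrantly generated Reedy model structures (by \cite[Theorem~8.3.19]{Fre2}), the transfer will produce the desired model structures on $\Lambda\Ibimod_O$ and $T_r\Lambda\Ibimod_O$ and make~\eqref{FF7} Quillen adjunctions.

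First I would dispose of the small object argument. Here the situation is markedly simpler than for bimodules: by the combinatorial description in Section~\ref{BB7}, pushouts (and filtered colimits) of infinitesimal bimodules are computed componentwise in the underlying category of sequences, and the free functor $\mathcal{IF}_O^\Lambda(M)=\mathcal{IF}_{O_{>0}}^\Sigma(M)$ is a coproduct indexed by reduced pearled trees. A cell attachment along $\mathcal{IF}_O^\Lambda(\partial Y)\to\mathcal{IF}_O^\Lambda(Y)$ is therefore an objectwise closed inclusion (the same reasoning as Lemma~\ref{Final3}, but easier, since there is no nontrivial quotient to control beyond the unit and equivariance relations). As in the proof of Theorem~\ref{ProjectBimod}, taking $\lambda=\aleph_1$ and using that the domains $X$ of generating cofibrations are finite unions of spheres or discs (hence $\aleph_1$-small relative to componentwise closed inclusions), together with monadicity of the forgetful functor (which gives preservation of filtered colimits), yields the adjunction isomorphism chain that establishes smallness.

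For the fibrant replacement, I would invoke the explicit construction of the preceding section. Since the statement of the theorem requires $O$ to be well-pointed, the fibrant coresolution functor $M\mapsto M^f$ and Propositions~\ref{ProFibrantEq} and~\ref{MF} (adapted to infinitesimal bimodules, as the section is set up to do) provide a functorial weak equivalence $\eta\colon M\to M^f$ with $M^f$ Reedy fibrant; this gives condition~$(ii)$. For the path object I would take $Path(M)(n)=Map([0,1];M(n))$ with the induced infinitesimal bimodule structure, exactly as in the proof of Theorem~\ref{Z5}: the constant-path inclusion $M\to Path(M)$ is a homotopy equivalence, and since $\mathcal{M}(Path(M))(n)=Map([0,1];\mathcal{M}(M)(n))$ and $\mathcal{M}(M\times M)(n)=\mathcal{M}(M)(n)\times\mathcal{M}(M)(n)$, the pushout-product lemma \cite[Section~9.1.5]{Hir} applied to the cofibration $\partial[0,1]\hookrightarrow[0,1]$ and the matching fibration $M(n)\to\mathcal{M}(M)(n)$, combined with Lemma~\ref{E6}, shows that the evaluation map $Path(M)\to M\times M$ is a Reedy fibration. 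The truncated case is identical, using Remark~\ref{r:trunc_bim_Reedy}.

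The main obstacle, and the only place where the hypothesis on $O$ genuinely enters, is condition~$(ii)$: one must know that the fibrant coresolution $M^f$ is actually Reedy fibrant, which is precisely where well-pointedness of $O$ is used (cf. Proposition~\ref{MF} and its reliance on the equivariant cofibrancy lemmas of the Appendix). Everything else is a routine transcription of the bimodule arguments, made lighter by the componentwise nature of infinitesimal-bimodule colimits. I would therefore keep the proof short, stating that it proceeds exactly as the proof of Theorem~\ref{Z5} with the path object~\eqref{PathObject} and the fibrant replacement of the next section, and pointing to the componentwise pushout description of Section~\ref{BB7} as the simplification that makes the small object argument immediate.
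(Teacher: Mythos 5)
Your proposal is correct and follows essentially the same route as the paper: the paper's proof simply states that the argument of Theorem~\ref{Z5} carries over, using the path object~\eqref{PathObject} and the fibrant coresolution of Subsection~\ref{ZZ2} (which is where well-pointedness of~$O$ enters), with the small object argument simplified by the componentwise nature of colimits of infinitesimal bimodules. Your write-up just makes explicit the steps the paper leaves implicit.
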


\begin{proof}
The proof is similar to the proof of Theorem \ref{Z5}. The path object is given by the same formula \eqref{PathObject} and the functorial fibrant coresolution (for which we need the assumption $Q$ is well-pointed) is defined in  Subsection~\ref{ZZ2}. 
\end{proof}

\subsection{Properties of the Reedy model category of infinitesimal bimodules}

This subsection is divided into two parts. The first one is devoted to the construction of an explicit fibrant coresolution functor for infinitesimal bimodules. In the second part, we  characterize (acyclic) cofibrations in the Reedy model category of Ibimodules, we prove properness 
and we study extension/restriction adjunctions.
% The last part consists in extending the properties introduced in Section \ref{HH2} to the Reedy model category. 

\subsubsection{Fibrant replacement functor for infinitesimal bimodules}\label{ZZ2}

Let $O$ be a reduced operad. In this subsection we produce a construction of a Reedy fibrant coresolution for infinitesimal bimodules if the operad $O$ is well-pointed. More precisely, we show that the coresolution introduced in Section \ref{Z2} gives rise to a coresolution for infinitesimal bimodules too. In other words, in both cases, bimodules and infinitesimal bimodules, we only need the right module part of the structures in order to get Reedy fibrant replacements. In the following, we overuse the notation introduced in  Section~\ref{Z2}. Given an infinitesimal bimodule $M$ over $O$, we consider the space $M^{f}(n)$  from~\eqref{eq:Mf} taking $Q=O$. The family  $M^{f}=\{M^{f}(n),\,n\geq 0\}$ admits a $\Sigma$-structure and a right module structure from  operations %\eqref{Finish1} and
 \eqref{Finish2}.\vspace{5pt}

In order to introduce the left infinitesimal operations, we need the following notation. For a tree $T\in \mathbb{P}[n]$, we denote by $T_{1},\ldots, T_{n}$ the sub-trees grafting to the root of $T$ according to the planar order. In particular, one has the identity $T=c_{n}( T_{1},\ldots, T_{n})$ where $c_{n}$ is the $n$-corolla. The number of leaves of each tree $T_{j}$, with $1\leq j \leq n$, is denoted by $n_{j}[T]$. For any tree $T\in \mathbb{P}[n]$ and $1\leq j \leq n$, we also consider the operations
$$
\begin{array}{llcl}\vspace{5pt}
\beta_{j}: & D(T) & \longrightarrow & O(n_{j}[T]); \\ 
& \{\theta_{v}\}_{v\in V(T)\setminus \{r\}} & \longmapsto & \eta(\{\theta_{v}\}_{v\in V(T_{j})}),
\end{array} 
$$
given by composing the points of $O$ indexing the vertices of the sub-tree $T_{j}$. \vspace{5pt}

Finally, by using the operation $\Gamma_{k}^{m}$ introduced in Section \ref{Z2}, one can define the left infinitesimal action
\begin{equation*}
\begin{array}{rcl}\vspace{5pt}
\circ_{i}:O(n)\times M^{f}(m) & \longrightarrow & M^{f}(n+m-1); \\ 
\theta \,\,,\,\,\{f_{T}\}_{T\in \mathbb{P}[n]} & \longmapsto & \{(\theta\circ_{i}f)_{T}\}_{T\in \mathbb{P}[n+m-1]},
\end{array} 
\end{equation*}
where $(\theta\circ_{i}f)_{T}$ is the composite map:
$$
\xymatrix{
H(T)\times D(T) \ar[r]^{(\theta\circ_{i}f)_{T}} \ar[d] & M(|T|) \\
\left(\underset{\substack{1\leq j \leq n+m-1 \\ j\notin \{i,\ldots ,i+m-1\}}}{\displaystyle\prod} O(n_{j}[T])\right) \times  \left( H(\Gamma_{i-1}^{m}(T)) \times D(\Gamma_{i-1}^{m}(T)) \right)    \ar[r] & O\left( \underset{\substack{1\leq j \leq n+m-1 \\ j\notin \{i,\ldots ,i+m-1\}}}{\displaystyle\sum} n_{j}[T]  \right) \times M(|\Gamma_{i-1}^{m}(T)|) \ar[u]
}
$$
If we denote by $I$ the set $\{1,\ldots, i-1\}\cup \{i+m,\ldots n+m-1\}$, then the left vertical map is given by the product $\prod_{j\in I} \beta_{j}$ to get the first factor and the operation induced by $\Gamma_{i-1}^{m}$, removing the incoming edges of the root of $T$ corresponding to the set $I$, in order to get the second factor.  The lower horizontal map is given by the map $f_{\Gamma_{i-1}^{m}(T)}$ on the second factor and the following map 
$$
\begin{array}{rcl}\vspace{5pt}
\theta(-,\cdots,id,\cdots,-):\left(\underset{j\in \{1,\ldots, i-1\} }{\displaystyle\prod} O(n_{j}[T])\right) \times \left(\underset{j\in \{i+m,\ldots n+m-1\} }{\displaystyle\prod} O(n_{j}[T])\right) & \longrightarrow &  O\left( \underset{\substack{1\leq j \leq n+m-1 \\ j\notin \{i,\ldots ,i+m-1\}}}{\displaystyle\sum} n_{j}[T]  \right); \\ 
 (\theta_{1},\ldots,\theta_{i-1});(\theta_{1}',\ldots,\theta_{n-i-1}')& \longmapsto & \theta(\theta_{1},\ldots,\theta_{i-1},id,\theta_{1}',\ldots,\theta_{n-i-1}'),\end{array} 
$$
using the operadic structure of $O$, on the first factor. Finally, the right vertical map is obtained using the left infinitesimal operation $\circ_{\ell}$, with $\ell=\sum_{1\leq j\leq i-1}n_{j}[T]+1$. The fact that the compatibility relations between the left and the right infinitesimal operations (defined at the beginning of Section \ref{SectProjIbimod}) are satisfied is a consequence of the following observations:
\begin{itemize}[leftmargin=10pt]
\item[$\blacktriangleright$] \textit{Ramified compatibility between the left and right operations}: for $T\in \mathbb{P}[n+m+\ell-2]$, $i\in \{1,\ldots,n-1\}$ and $j\in \{i+1,\ldots, n\}$, one has
$$
\Gamma^{m}_{i-1}(T)=\Gamma_{i-1}^{m}\big( \delta_{j+m-1;\ell}(T)\big).
$$

\item[$\blacktriangleright$] \textit{Linear compatibility between the left and right operations}: for $T\in \mathbb{P}[n+m+\ell-2]$, $i\in \{1,\ldots,n\}$ and $k\in \{1,\ldots, m\}$, one has
$$
\Gamma^{m}_{i-1}\big( \delta_{k+i-1;\ell}(T)\big)=  \delta_{k;\ell}\big(\Gamma^{m+\ell-1}_{i-1}(T)\big).
$$
\end{itemize}

\begin{pro}
The map $\eta:M\rightarrow M^{f}$ is a weak equivalence of $O$-Ibimodules. Furthermore, if the operad~$O$ is well-pointed, then the $O$-Ibimodule $M^{f}$ is Reedy fibrant.
\end{pro}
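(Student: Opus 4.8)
The plan is to reduce both assertions to the already-established bimodule statements in Section~\ref{Z2}. The key observation is structural: the underlying $\Sigma$-sequence $M^f$ and its right $O$-module structure (operations \eqref{Finish2}) are \emph{identical} to the ones used in the bimodule construction with $Q=O$. The only new data are the left infinitesimal operations $\circ_i$ just defined, but none of the maps $\eta$, $\psi$, the homotopy $H_n$, or the matching-object analysis from the proofs of Propositions~\ref{ProFibrantEq} and~\ref{MF} involve the left action. Thus the two halves of the statement should follow with essentially no new work, by importing those proofs verbatim.

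For the weak equivalence, I would observe that the map $\eta\colon M\to M^f$ is, on underlying $\Sigma$-sequences, exactly the map from Proposition~\ref{ProFibrantEq}, which was shown there to be a homotopy equivalence of $\Sigma$-sequences via the retraction $\psi_n\colon\{f_T\}\mapsto f_{C_n}(\ast)$ and the homotopy bringing the real parameters to $1$. Since weak equivalences of $O$-Ibimodules are detected on underlying $\Sigma$-sequences (Theorem~\ref{G9}), and $\eta$ is a map of Ibimodules by construction, the same $\psi$ and $H_n$ witness that $\eta$ is a weak equivalence. I only need to note that $\eta$ genuinely respects the left infinitesimal operations, which is a direct check using the definition of $\circ_i$ on $M^f$ together with the operadic composition in $O$; this is routine and parallels the compatibility already verified for $\eta$ in Section~\ref{Z2}.

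For Reedy fibrancy, the matching object $\mathcal{M}(M^f)(n)$ depends only on the $\Lambda$-structure (the $s_i^*$ restriction operators), which in turn is built purely from the $\Sigma$-structure and the right action by $\ast_0\in O(0)$. Since these coincide with the bimodule case, the homeomorphism $\mathcal{M}(M^f)(n)\cong\mathcal{M}_0(n)$ and the entire tower-of-fibrations argument of Proposition~\ref{MF} apply word-for-word, using the well-pointedness of $O$ and Lemmas~\ref{l:app_ss}, \ref{l:app_ind_restr}, \ref{l:app_push_prod}, \ref{l:app_sigma}, \ref{l:app_restr} from the appendix. I would therefore write the proof as: ``The proof that $\eta$ is a weak equivalence is identical to that of Proposition~\ref{ProFibrantEq}, and the proof that $M^f$ is Reedy fibrant is identical to that of Proposition~\ref{MF}, since neither the map $M^f(n)\to\mathcal{M}(M^f)(n)$ nor the homotopy equivalence $\eta$ involves the left (infinitesimal) action.''

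The genuinely new content — and the only place requiring care — is confirming that the left infinitesimal operations $\circ_i$ endow $M^f$ with a well-defined $O$-Ibimodule structure, i.e.\ that the ramified and linear compatibility axioms hold. The author has already isolated the combinatorial heart of this as the two identities $\Gamma^m_{i-1}(T)=\Gamma^m_{i-1}(\delta_{j+m-1;\ell}(T))$ and $\Gamma^m_{i-1}(\delta_{k+i-1;\ell}(T))=\delta_{k;\ell}(\Gamma^{m+\ell-1}_{i-1}(T))$, which track how the edge-removal operations $\Gamma$ interact with the edge-gluing operations $\delta$. I expect this bookkeeping with the operations on $\mathbb{P}[n]$ to be the main obstacle, but it is a finite verification on trees rather than a homotopical difficulty; once it is in place, the homotopical statements are pure transport from Section~\ref{Z2}. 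I would keep this compatibility check brief in the final write-up, citing the two displayed identities as the crux.
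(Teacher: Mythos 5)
Your proposal is correct and follows exactly the paper's own argument: the paper's proof consists of noting that $\eta$ is a map of $O$-Ibimodules (a routine check, resting on the $\Gamma$/$\delta$ identities established just before the statement) and then citing Propositions~\ref{ProFibrantEq} and~\ref{MF}, since neither the retraction/homotopy argument nor the matching-object analysis involves the left infinitesimal action. Your write-up just makes this transport explicit in slightly more detail than the paper does.
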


\begin{proof}
The reader can easily check that $\eta$ is a map of $O$ infinitesimal bimodules. The other statements are consequences Propositions~\ref{ProFibrantEq} and~\ref{MF}.
\end{proof}

\begin{rmk}\label{r:trunc_ibim_Reedy} 
The same strategy can be used in order to get a fibrant replacement functor for  $r$-truncated reduced $O$-Ibimodules.
The fibrant replacement should be defined as a subspace of the product with an additional restriction
$|T|\leq r$. The  constraints are the same.
\end{rmk}

\subsubsection{Characterization of cofibrations /  left properness / extension-restriction adjunction}\label{Fin2}

In this section, we show that the properties related to the Reedy model category of reduced bimodules introduced in Section~\ref{SectPropReedyBimod} admit counterparts in the context of infinitesimal bimodules. It means that we are able to give a characterization of Reedy cofibrations and we prove that $\Lambda\Ibimod_{O}$ is left proper relative to componentwise cofibrant objects. We also prove that the extension/restriction adjunction gives rise to a Quillen equivalence between Reedy model categories of Ibimodules under some conditions on the operads.
%
% Finally we compare the 
%Projective and Reedy model category structures and we give a construction of a Reedy cofibrant replacement in $\Lambda\Ibimod_O$ provided
%$O$ is componentwise cofibrant.

\begin{thm}\label{CC1}
Let $O$ be a reduced  well-pointed operad. %\todo{Condition sur l'opérade O.} 
 A morphism $\phi\colon M\rightarrow N$ in the category of (possibly truncated)  $O$-Ibimodules is a Reedy cofibration if and only if $\phi$ is a cofibration in the projective model category of (possibly truncated) $O_{>0}$-Ibimodules.
\end{thm}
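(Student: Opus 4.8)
The plan is to mirror the proof of Theorem~\ref{C1}, exploiting the fact that for infinitesimal bimodules the free functor, the coskeleton functor, and the arity filtration functor all behave exactly as in the bimodule case once we replace the right $Q$-action by the right $O$-action. First I would set up the arity filtration adjunction
$$
ar_{s}:\Lambda\Ibimod_{O}\rightleftarrows \Lambda\Ibimod_{O}:cosk_{s},
$$
where $ar_{s}=L_{s}\circ T_{s}$ and $cosk_{s}=R_{s}\circ T_{s}$, with $cosk_{s}(M)(n)=\mathrm{lim}_{h\in\Lambda_{+}([i];[n]),\,i\leq s}M(i)$ as in~\eqref{DefCoskeleton}. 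The explicit right $O$-infinitesimal operations and the left $O$-infinitesimal operations on $cosk_{s}(M)$ are defined by formulas analogous to~\eqref{CoskeletonStruct}; since the left infinitesimal operations $\circ_{i}$ are unary, their description on the coskeleton is in fact simpler than the left operations $\gamma_{\ell}$ of the bimodule case. I would then verify the tensor–hom style adjunction~\eqref{C6} between $ar_{s}$ and $cosk_{s}$, which is checked inductively on arity exactly as before.

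Next I would establish that the arity filtration functor $ar_{s}$ preserves cofibrations in the projective model category $\Sigma\Ibimod_{O_{>0}}$, by exhibiting the Quillen adjunction $ar_{s}\colon\Sigma\Ibimod_{O_{>0}}\rightleftarrows\Sigma\Ibimod_{O_{>0}}\colon tr_{s}$ with $tr_{s}$ the (right Quillen) truncation functor sending $L(n)$ to $\ast$ for $n>s$, and checking on free objects that the square
\[
\xymatrix{
\Lambda\Ibimod_{O} \ar[r]^{ar_s} \ar[d]_{\mathcal{U}} &  \Lambda\Ibimod_{O} \ar[d]^{\mathcal{U}} \\
\Sigma\Ibimod_{O_{>0}} \ar[r]^{ar_s} & \Sigma\Ibimod_{O_{>0}}
}
\]
commutes. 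Here the forgetful functor $\mathcal{U}$ creates colimits (being monadic with a right adjoint, as in the bimodule footnotes), and every Ibimodule is a reflexive coequalizer of free ones, so commutation on free objects $ar_s\,\mathcal{IF}_{O}^{\Lambda}(L)=\mathcal{IF}_{O}^{\Lambda}(L_{\leq s})$ suffices.

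With these tools in place, for the forward direction I would resolve a lifting problem against an acyclic Reedy fibration $p\colon A\to B$ by induction on $s$, using the adjunction~\eqref{C6} to convert the lifting problem~\eqref{C7} into one of the form~\eqref{C8} whose right-hand vertical map is the acyclic fibration $cosk_{s}(A)\to cosk_{s-1}(A)\times_{cosk_{s-1}(B)}cosk_{s}(B)$; this map is an acyclic Serre fibration by the coskeleton computation in the proof of~\cite[Theorem~8.3.20]{Fre2}, and the left vertical map is a cofibration since $ar_{s}$ preserves cofibrations in $\Sigma\Ibimod_{O_{>0}}$. The identifications $cosk_{s-1}(M)(s)=\mathcal{M}(M)(s)$ then show that the assembled lift $\varphi=\mathrm{lim}_{s}\varphi_{s}$ respects the $\Lambda$-structure. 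The converse direction is the routine observation that $\mathcal{U}\colon\Lambda\Ibimod_{O}\to\Sigma\Ibimod_{O_{>0}}$ preserves colimits, that $\Lambda$-cofibrations of sequences are $\Sigma$-cofibrations, and that the free functors agree, $\mathcal{IF}_{O}^{\Lambda}=\mathcal{IF}_{O_{>0}}^{\Sigma}$, so it is enough to check generating cofibrations.

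The main obstacle I anticipate is purely bookkeeping rather than conceptual: writing down the left infinitesimal operation $\circ_{i}$ on $cosk_{s}(M)$ correctly and checking that the pair $(ar_{s},cosk_{s})$ really forms an adjunction in the infinitesimal setting. Because the left action is now unary, one must be careful that the coskeleton's left operation is compatible with all the ramified and linear associativity axioms listed at the start of Section~\ref{SectProjIbimod}; however, since infinitesimal pushouts and colimits are computed componentwise (Section~\ref{BB7}), there are no subtle equivalence-relation contractions to track, so in fact this step is easier than its bimodule analogue. I expect that essentially no new equivariant homotopy input is needed beyond what Theorem~\ref{C1} already used, and the proof can legitimately be summarized as \emph{identical to that of Theorem~\ref{C1}, with the right $Q$-action replaced throughout by the right $O$-action and the unary left infinitesimal action in place of $\gamma_{\ell}$}.
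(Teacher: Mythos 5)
Your proposal is correct and follows essentially the same route as the paper's own proof, which likewise reduces the statement to the argument of Theorem~\ref{C1} by constructing the adjunction $(ar_{s},cosk_{s})$ for infinitesimal bimodules, equipping $cosk_{s}(M)$ with the $\Lambda$-structure, the right infinitesimal operations as in \eqref{CoskeletonLambda}--\eqref{CoskeletonStruct}, and an explicit (unary, hence simpler) left infinitesimal operation, and then running the same inductive lifting argument and the same colimit-plus-generating-cofibrations argument for the converse. The only content the paper adds beyond your outline is the explicit formula $\theta\,;\,\{x_{u}\}\mapsto\{h_{2}^{\ast}(\theta)\circ_{\overline{\ell}}x_{h_{1}}\}$ for the left action on the coskeleton, which you correctly anticipated as routine bookkeeping.
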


\begin{proof}[Idea of the proof]
The strategy used for the proof of Theorem \ref{C1} works with no change in the context of infinitesimal bimodules. Again, we introduce an adjunction $ar_{s}:\Lambda Ibimod_{O}\rightleftarrows \Lambda Ibimod_{O}:cosk_{s}$ where $ar_{s}$ and  $cosk_{s}$ are the arity filtration and the coskeleton functors, respectively. More precisely, if $L_{s}$ and $R_{s}$ denote the left adjoint and the right adjoint, respectively, to the truncation functor $T_{s}:\Lambda Ibimod_{O}\rightarrow T_{s}\Lambda Ibimod_{O}$, then one has the following identities:
$$
ar_{s}=L_{s}\circ T_{s} \hspace{20pt}\text{and} \hspace{20pt} cosk_{s}=R_{s}\circ T_{s}.
$$

In particular, the coskeleton functor is given by the formula \eqref{DefCoskeleton} and inherits an infinitesimal bimodule structure over $O$. The $\Lambda$-structure and the right infinitesimal operations are given by \eqref{CoskeletonLambda} and \eqref{CoskeletonStruct}, respectively. In order to define the left infinitesimal operations, we recall the following notation. Let $n,m>0$, $\ell\in \{1,\ldots,n\}$ and $h\in \Lambda_{+}([i]\,;\,[n+m-1])$. If we denote by $\ell_{1}\in \Lambda_{+}([m]\,;\,[n+m-1])$ and $\ell_{2}\in \Lambda_{+}([n]\,;\,[n+m-1])$ the morphisms
$$
\begin{array}{rclcrcl}\vspace{3pt}
\ell_{1}:[m] & \longrightarrow & [n+m-1]; & \hspace{10pt}\text{and}\hspace{10pt} & \ell_{2}:[n] & \longrightarrow & [n+m-1]; \\ 
\alpha & \longmapsto & \alpha +\ell, &  & \alpha & \longmapsto & \left\{
 \begin{array}{ll}
 \alpha & \text{if } \alpha\leq \ell, \\ 
 \alpha+m & \text{if } \alpha > \ell,
 \end{array} 
 \right.
\end{array} 
$$

\noindent then there exist unique morphisms $h_{1}$ and $h_{2}$ making the following diagrams commute:
$$
\xymatrix{
[i]\ar[r]^{h} & [n+m-1] \\
[|\mathrm{Im}(\ell_{1})\cap Im(h)|]\ar[u] \ar[r]_{\hspace{30pt}h_{1}} & [m]\ar[u]_{l_{1}} 
}\hspace{30pt}
\xymatrix{
[i]\ar[r]^{h} & [n+m-1] \\
[|\mathrm{Im}(\ell_{2}\setminus \{\ell\})\cap Im(h)|]\ar[u] \ar[r]_{\hspace{40pt}h_{2}} & [n]\ar[u]_{l_{2}} 
}
$$
Finally, if we denote by $\overline{\ell}=\ell-|\{\alpha\in [i]\,|\, h(\alpha)<\ell\}|$, then the left infinitesimal operations are given by
$$
\begin{array}{rcl}\vspace{5pt}
\circ_{i}:O(n)\times cosk_{s}(M)(m) & \longrightarrow & cosk_{s}(M)(n+m-1);  \\ 
\theta\,;\,\{x_{u}\}_{u\in \Lambda_{+}([i]\,;\,[m])}^{0\leq i\leq s} & \longmapsto & \{h_{2}^{\ast}(\theta)\circ_{\overline{\ell}}x_{h_{1}}\}_{h\in \Lambda_{+}([i]\,;\,[n+m-1])}^{0\leq i\leq s}.
\end{array} 
$$

The rest of the proof is the same as the proof of Theorem \ref{C1}. It consists in using the adjunction $(ar_{s},cosk_{s})$ in order to define by induction a solution to the lifting problem.   
\end{proof}

\begin{thm}\label{th:properness_ibimod2}
For any reduced well-pointed operad~$O$, the Reedy model category $\Lambda\Ibimod_O$ is right proper.
 It  is left proper provided~$O$ is componentwise cofibrant. In the latter case, cofibrations are componentwise cofibrations, and, as a consequence, the class of componentwise cofibrant objects is closed under cofibrations and 
 cofibrant Ibimodules are componentwise cofibrant. If $O$ is $\Sigma$-cofibrant, then the cofibrations are $\Sigma$-cofibrations, the class of $\Sigma$-cofibrant objects is closed under cofibrations,
 and cofibrant objects are $\Sigma$-cofibrant.  
\end{thm}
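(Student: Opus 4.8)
The plan is to mirror exactly the structure of the proof of Theorem~\ref{th:properness_ibimod}, importing the combinatorial facts about infinitesimal bimodules that make the argument dramatically simpler than in the ordinary bimodule case. The essential observation, recorded in Subsection~\ref{BB7}, is that pushouts (and more generally all colimits) in $\Sigma\Ibimod_O$ are computed objectwise in the underlying category of $\Sigma$-sequences, because the left infinitesimal operations $\circ_i$ are unary. Consequently the free functor $\mathcal{IF}_O^\Sigma$ attaches cells in a far more transparent way than $\mathcal{F}^\Sigma_{P;Q}$: a free extension is indexed by \emph{reduced pearled trees} $rp\mathbb{P}_n$, in which there is a single pearl and all other vertices carry labels from $O$. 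I would first reduce the whole statement, via Theorem~\ref{CC1} and Proposition~\ref{ProCompMCb}-style reasoning, to checking the claim on generating cofibrations, exactly as in the proof of Theorem~\ref{th:proj_ibim_cof}.

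First I would treat part~(a). It suffices to show that each generating cofibration $\mathcal{IF}_O^\Sigma(\partial X)\to\mathcal{IF}_O^\Sigma(X)$ is a componentwise cofibration whenever $O$ is componentwise cofibrant. Using Construction~\ref{FF9}, I would filter this inclusion by the number of vertices labelled by $X$ (here at most one, since $X$ labels only the pearl), and then by the total number of vertices of the pearled tree, producing pushout squares analogous to~\eqref{eq:squares} but substantially simpler: for each equivalence class $T$ of reduced pearled trees the relevant attaching map is built from a single factor $X(|p|)$ or $\partial X(|p|)$ at the pearl and factors $O(|v|)$ at the remaining vertices. The $\mathrm{Aut}(T)$-action permutes the non-pearl subtrees, and I would invoke Lemma~\ref{D2} together with componentwise cofibrancy of $O$ to conclude that each map $M^-(T)\to M(T)$ descends to an $\mathrm{Aut}(T)$-equivariant cofibration, hence, after inducing up to $\Sigma_n$ via Lemma~\ref{l:adj_mon}, to a componentwise $\Sigma_n$-cofibration. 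Closure of the class of componentwise cofibrant objects under cofibrations, and componentwise cofibrancy of cofibrant objects, then follow formally since the initial object is componentwise cofibrant.

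For part~(b), assuming $O$ is $\Sigma$-cofibrant, I would upgrade the same filtration argument to show that the attaching maps are $\Sigma_n$-cofibrations (not merely cofibrations in each $\Sigma_n$-equivariant sense after forgetting). The key input is that when $O$ is $\Sigma$-cofibrant, each space $\prod_{v\in V(T)\setminus\{p\}} O(|v|)$ is an $\mathrm{Aut}(T)$-cofibrant space by the equivariant pushout-product Lemma~\ref{D2}, so that the induced map $M^-(T)\underset{\mathrm{Aut}(T)}{\times}\Sigma_n\to M(T)\underset{\mathrm{Aut}(T)}{\times}\Sigma_n$ is a $\Sigma_n$-cofibration by Lemma~\ref{l:adj_mon}. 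The right properness statement is immediate and I would dispatch it exactly as in Theorem~\ref{th:right_proper}: Reedy fibrations are in particular objectwise Serre fibrations, limits in $\Sigma\Ibimod_O=\Lambda\Ibimod_O$ are computed objectwise, and $Top$ is right proper. For left properness relative to componentwise cofibrant objects I would combine part~(a) with the objectwise computation of pushouts and left properness of $Top$, precisely paralleling Theorem~\ref{th:properness_ibimod}.

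The main obstacle I anticipate is not any single hard estimate but rather verifying that the characterization of Reedy cofibrations (Theorem~\ref{CC1}) correctly transfers the projective statements of Theorem~\ref{th:proj_ibim_cof} into the Reedy setting; concretely, one must confirm that the forgetful functor $\Lambda\Ibimod_O\to\Sigma\Ibimod_{O_{>0}}$ creates cofibrations and preserves pushouts, so that a Reedy cofibration with $\Sigma$-cofibrant (resp.\ componentwise cofibrant) domain is automatically a cofibration with the same type of domain in $\Sigma\Ibimod_{O_{>0}}$. Since $O_{>0}$ is $\Sigma$-cofibrant (resp.\ componentwise cofibrant) as soon as $O$ is, the projective results of Theorem~\ref{th:proj_ibim_cof} apply verbatim to $\Sigma\Ibimod_{O_{>0}}$, and the Reedy conclusions follow. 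Apart from this bookkeeping, every step is a routine simplification of the bimodule arguments, owing to the objectwise nature of infinitesimal-bimodule pushouts.
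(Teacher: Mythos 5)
Your proposal is correct and takes essentially the same route as the paper: right properness via the objectwise nature of limits together with the fact that Reedy fibrations are projective fibrations, and the left properness and cofibration statements by reducing through the characterization of Reedy cofibrations (Theorem~\ref{CC1}) to the projective facts for $\Sigma\Ibimod_{O_{>0}}$ established in Theorems~\ref{th:properness_ibimod} and~\ref{th:proj_ibim_cof}. The filtration analysis of generating cofibrations that you sketch is unnecessary overhead, since it merely re-proves Theorem~\ref{th:proj_ibim_cof}, which (applied to $O_{>0}$) the paper simply cites, and your anticipated obstacle is already settled by Theorem~\ref{CC1} combined with the objectwise description of pushouts in Subsection~\ref{BB7}.
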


\begin{proof}
Right properness immediately follows from Theorem~\ref{th:properness_ibimod}. Indeed, $\Lambda\Ibimod_O=\Sigma\Ibimod_O$ as categories.
Therefore, they have the same  pullbacks. Moreover, a Reedy fibration is always a projective fibration. For left properness  and the properties of cofibrations, we use the characterization
of cofibrations Theorem~\ref{CC1} together with the analogous properties of $\Sigma\Ibimod_{O_{>0}}$ established in Theorems~\ref{th:properness_ibimod} and~\ref{th:proj_ibim_cof}.
% Moreover, they have the same class of weak equivalences.
%The proof is similar to the proof of Theorem \ref{ThmProperness}. Any pushout diagram $j\circ f=i\circ g$ (see Diagram \eqref{H3}), with $f$ a weak equivalence and $g$ a Reedy cofibration in the Reedy model category $\Lambda \Ibimod_{O}$, induces a pushout diagram $j\circ f=i\circ g$ in the projective model category $\Sigma \Ibimod_{O_{>0}}$. Due to Theorem \ref{CC1}, $g$ is still a cofibration in the projective model category and $f$ is a weak equivalence. The statement follows from the fact that  $\Sigma \Ibimod_{O_{>0}}$ is left proper. 
\end{proof}

Let $\phi:O\rightarrow O'$ be a weak equivalence of reduced operads. Similarly to Section \ref{EE8}, we show that the Reedy model categories of $O$-Ibimodules and $O'$-Ibimodules are Quillen equivalent. By abuse of notation, we denote by $\phi^{\ast}$ and $\phi_{!}$ the restriction functor and the extension functor, respectively, between the Reedy model categories:
$$
\phi_{!}:\Lambda \Ibimod_{O}\rightleftarrows \Lambda \Ibimod_{O'}:\phi^{\ast}.
$$
In the same way as in Section \ref{EE8}, for any $M\in \Lambda \Ibimod_{O}$ and $M'\in \Lambda \Ibimod_{O'}$, one has
$$
\begin{array}{rcl}\vspace{7pt}
\phi_{!}(M) & = & \{\phi_{!}(M)(n)=\mathcal{F}_{O}^{\Lambda}(\mathcal{U}^\Lambda(M))(n)/\sim,\,\,\,n\geq 0 \}, \\ 
\phi^{\ast}(M') & = & \{\phi^{\ast}(M')(n)=M'(n),\,\,n\geq 0\}. 
\end{array} 
$$ 

\begin{thm}\label{eq:ind_rest_reduced_ibimod}
Let $\phi\colon O\rightarrow O'$ be a weak equivalence between reduced componentwise cofibtant operads. One has Quillen equivalences
\begin{equation}\label{eq:bimod_ind_restr5}
\phi_!\colon\Lambda\Ibimod_{O}\rightleftarrows\Lambda\Ibimod_{O'}\colon\phi^*,
\end{equation}
\begin{equation}\label{eq:tr_bimod_ind_restr}
\phi_!\colon\TT_r\Lambda\Ibimod_{O}\rightleftarrows\TT_r\Lambda\Ibimod_{O'}\colon\phi^*.
\end{equation}
\end{thm}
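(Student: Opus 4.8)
The plan is to imitate the proof of Theorem~\ref{ThmExt/rest} for bimodules essentially line by line, reducing the Reedy statement to the projective comparison of Theorem~\ref{G6_2} by means of the characterization of Reedy cofibrations. The pair $(\phi_!,\phi^*)$ is a Quillen adjunction by the same argument as in Theorem~\ref{ThmExt/rest}, and the restriction functor $\phi^*$ creates weak equivalences, being the identity on underlying spaces while a Reedy weak equivalence is just an objectwise weak homotopy equivalence. Hence it suffices to show that for every Reedy cofibrant $M\in\Lambda\Ibimod_{O}$ the adjunction unit $M\longrightarrow\phi^*(\phi_!(M))$ is a weak equivalence: the derived unit into $\phi^*$ of a fibrant replacement of $\phi_!(M)$ then agrees with this map up to a weak equivalence, and the same reduction applies verbatim to the truncated adjunction.

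Next I would introduce the projective extension/restriction adjunction $\tilde\phi_!\colon\Sigma\Ibimod_{O_{>0}}\rightleftarrows\Sigma\Ibimod_{O'_{>0}}\colon\tilde\phi^*$ associated to the positive-arity map $\phi_{>0}\colon O_{>0}\to O'_{>0}$, and establish the key identity
\[
\phi^*(\phi_!(M))=\tilde\phi^*(\tilde\phi_!(M)).
\]
This rests on the defining identity $\mathcal{IF}_{O}^{\Lambda}=\mathcal{IF}_{O_{>0}}^{\Sigma}$ and on the observation that the equivalence relation cutting out the extension functor only contracts edges carrying positive-arity operations of the operad; the extra $\Lambda$-structure, namely the right action by $\ast_0\in O(0)$, is then reconstructed in the same way on both sides from the operadic structure of $O$. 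Checking that these two $\Lambda$-structures genuinely coincide is the most delicate bookkeeping step, and I expect it to be the main (though routine) obstacle, exactly as in the bimodule case.

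With the identity in hand the conclusion is immediate. Since $O$ and $O'$ are reduced and componentwise cofibrant, the operads $O_{>0}$ and $O'_{>0}$ have cofibrant components (their arity zero term being now empty, hence cofibrant), and $\phi_{>0}$ remains a weak equivalence; Theorem~\ref{G6_2} therefore makes $(\tilde\phi_!,\tilde\phi^*)$ a Quillen equivalence. By the characterization of Reedy cofibrations in Theorem~\ref{CC1}, a Reedy cofibrant $M$ is also cofibrant in the projective model category $\Sigma\Ibimod_{O_{>0}}$, so the unit $M\to\tilde\phi^*(\tilde\phi_!(M))$ of this projective Quillen equivalence is a weak equivalence. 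Combined with the displayed identity, the unit $M\to\phi^*(\phi_!(M))$ is a weak equivalence, which proves~\eqref{eq:bimod_ind_restr5}. The truncated equivalence follows by the identical argument, applying the truncated versions of Theorems~\ref{G6_2} and~\ref{CC1} together with the truncated fibrant replacement of Remark~\ref{r:trunc_ibim_Reedy}.
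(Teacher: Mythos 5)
Your proposal is correct and follows essentially the same route as the paper's own proof: reduce to the weak equivalence of the unit on Reedy cofibrant objects (using that $\phi^*$ creates weak equivalences), invoke the characterization of Reedy cofibrations (Theorem~\ref{CC1}) to view $M$ as projectively cofibrant over $O_{>0}$, and conclude via the projective Quillen equivalence for $\phi_{>0}$ together with the identity $\phi^{\ast}(\phi_{!}(M))=(\phi_{>0})^{\ast}\bigl((\phi_{>0})_{!}(M)\bigr)$. Your explicit remark that the derived unit agrees with the plain unit, and your flagging of the displayed identity as the step requiring verification, are both accurate refinements of what the paper leaves implicit.
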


\begin{proof}
%The proof is similar to the proof of Theorem \ref{ThmExt/rest}. 
Since the restriction functor creates weak equivalences, one has to check that, for any Reedy cofibrant object $M$ in $\Lambda \Ibimod_{O}$, the adjunction unit 
\begin{equation*}
M\longrightarrow \phi^{\ast}(\phi_{!}(M))
\end{equation*}
is a weak equivalence. Due to the characterization of Reedy cofibrations, $M$ is also cofibrant in the projective model category of $O_{>0}$-Ibimodules. Since $\phi_{>0}:O_{>0}\rightarrow O'_{>0}$ is still a weak equivalence between componentwise  cofibrant operads, by Theorem~\ref{G6}, the pair of functors $((\phi_{>0})_{!}\,;\, (\phi_{>0})^{\ast})$ gives rise to a Quillen equivalence and the map $M(n)\rightarrow (\phi_{>0})^{\ast}\big((\phi_{>0})_{!}(M)\big)(n)$ is a weak equivalence. The statement is a consequence of the identity 
$$
\phi^{\ast}(\phi_{!}(M))=(\phi_{>0})^{\ast}\big( (\phi_{>0})_{!}(M)\big).
$$
\end{proof}

\subsection{The connection between the model category structures on infinitesimal bimodules}\label{Fin4}

Similarly to the operadic case in \cite{FTW}, we build a Quillen adjunction between the projective and the Reedy model categories of infinitesimal bimodules over a reduced operad~$O$. Furthermore, if $M$ and $N$ are two infinitesimal bimodules, then we show that there is a weak equivalence between the derived mapping spaces:
$$
\Sigma\Ibimod_{O}^{h}(M\,;\,N)\simeq \Lambda \Ibimod_{O}^{h}(M\,;\,N).
$$

For completeness of exposition, at the end of the subsection, we explain how to adapt the Boardman-Vogt resolution (well known for operads, see \cite{BM2}) to the context of infinitesimal bimodules. We refer the reader to~\cite{DT} where this construction was defined by the first and third authors. 
Using this construction we define a functorial cofibrant replacement in the categories $\Sigma\Ibimod_O$ and $\Lambda\Ibimod_O$
provided $O$ is componentwise cofibrant.

\subsubsection{Quillen adjunction between the model category structures}\label{E7_2}

Let $O$ be a reduced operad. The projective and the Reedy model categories of infinitesimal bimodules over $O$ have the same set of weak equivalence and induce the same homotopy category. Consequently, one has the following statement about the adjunctions
\begin{equation}\label{eq:unitar_bim2}
\begin{array}{rcl}\vspace{5pt}
id\colon\Sigma\Ibimod_{O} & \rightleftarrows & \Lambda\Ibimod_{O}\colon id, \\ 
id\colon\TT_r \Sigma \Ibimod_{O} & \rightleftarrows & \TT_r\Lambda\Ibimod_{O}\colon id.
\end{array} 
\end{equation}

\begin{thm}\label{th:S-L_equiv_Ibim}
For any well-pointed reduced operad~$O$, the pairs of functors \eqref{eq:unitar_bim2} form Quillen equivalences. Furthermore,  for any pair $M,\, N\in\Lambda\Ibimod_{O}$, one has
\begin{equation}\label{eq:der_bim_map2}
\Sigma\Ibimod_{O}^h( M,N)\simeq \Lambda\Ibimod_{O}^h(M,N).
\end{equation}
Moreover, if $M,\, N\in\TT_r\Lambda\Ibimod_{O}$, with $r\geq 0$, then one has
\begin{equation}\label{eq:der_tr_bim_map3}
\TT_r\Sigma\Ibimod_{O}^h( M, N)\simeq \TT_r\Lambda\Ibimod_{O}^h(M,N).
\end{equation}
\end{thm}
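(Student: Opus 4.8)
The plan is to follow verbatim the strategy of Theorem~\ref{th:S-L_equiv_bim}, exploiting the fact noted above that $\Lambda\Ibimod_{O}=\Sigma\Ibimod_{O}$ coincide as categories, so that the adjunctions \eqref{eq:unitar_bim2} genuinely involve only identity functors and differ solely in the model structure. First I would verify that these are Quillen adjunctions by showing the right adjoint $id\colon\Lambda\Ibimod_{O}\to\Sigma\Ibimod_{O}$ preserves fibrations and acyclic fibrations. By definition, a map $f\colon M\to N$ is a Reedy fibration if $\mathcal{U}^{\Lambda}(f)$ is a fibration in $\Lambda Seq$, i.e.\ the matching maps $M(n)\to\mathcal{M}(M)(n)\times_{\mathcal{M}(N)(n)}N(n)$ are Serre fibrations. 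Since the forgetful functor $\mathcal{U}\colon\Lambda Seq\to\Sigma Seq$ is right Quillen (this is the adjunction $\Lambda[-]\dashv\mathcal{U}$ of Section~\ref{Sect1.2}, cf.\ \cite[Theorem~8.3.20]{Fre2}), it carries these to Serre fibrations $M(n)\to N(n)$, which is exactly the condition for $id(f)$ to be a projective fibration; the same argument applies to acyclic fibrations, and the truncated versions are identical with $n\leq r$ throughout.

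Next I would observe that, because the two functors are literally the identity and the projective and Reedy model structures share the same class of weak equivalences (componentwise weak homotopy equivalences), the resulting Quillen adjunction is automatically a Quillen equivalence: the derived unit and counit are represented by identity maps, which are weak equivalences in both structures. For the derived mapping space identities \eqref{eq:der_bim_map2} and \eqref{eq:der_tr_bim_map3}, I would choose a projective cofibrant replacement $M^{c}\to M$ and a Reedy fibrant replacement $N\to N^{f}$, the latter provided by the coresolution of Subsection~\ref{ZZ2} since $O$ is well-pointed. As $id$ is left Quillen, $M^{c}$ is also Reedy cofibrant; as $id$ is right Quillen, $N^{f}$ is also projectively fibrant (indeed every object of $\Sigma\Ibimod_{O}$ is fibrant). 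Thus the single mapping space $\Ibimod_{O}(M^{c},N^{f})$ simultaneously computes $\Sigma\Ibimod_{O}^{h}(M,N)$ and $\Lambda\Ibimod_{O}^{h}(M,N)$, giving the desired weak equivalence, and likewise in the truncated setting using Remark~\ref{r:trunc_ibim_Reedy}.

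I do not expect a serious obstacle here, as the infinitesimal case is if anything cleaner than the bimodule case treated in Theorem~\ref{th:S-L_equiv_bim}: pushouts are computed componentwise by Section~\ref{BB7}, and the two categories coincide on the nose. The only point requiring mild care is the bookkeeping confirming that fibrations and weak equivalences in both $\Sigma\Ibimod_{O}$ and $\Lambda\Ibimod_{O}$ are created by the respective forgetful functors to (truncated) sequences, so that the transfer of the Quillen adjunction $\Lambda[-]\dashv\mathcal{U}$ at the level of sequences lifts to the level of infinitesimal bimodules; this is immediate from Theorems~\ref{G9} and~\ref{ZZ5}. The whole argument can therefore be stated concisely as being parallel to the proof of Theorem~\ref{th:S-L_equiv_bim}, substituting the Reedy fibrant replacement of Subsection~\ref{ZZ2} and the path object of \eqref{PathObject}.
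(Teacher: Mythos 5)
Your proposal is correct and follows essentially the same route as the paper: the paper's proof of this theorem simply refers back to the bimodule case (Theorem~\ref{th:S-L_equiv_bim}), whose argument you reproduce faithfully — right Quillen-ness of the identity via the forgetful functor $\mathcal{U}\colon\Lambda Seq\to\Sigma Seq$ preserving (acyclic) fibrations, the automatic upgrade to a Quillen equivalence since both structures share weak equivalences and the functors are identities, and the computation of derived mapping spaces via a projective cofibrant replacement of $M$ and the Reedy fibrant replacement of $N$ from Subsection~\ref{ZZ2}. No gaps.
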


\begin{proof}
The proof is similar to that of Theorem~\ref{th:S-L_equiv_bim}.
% that the pairs of functors \eqref{eq:unitar_bim2} form Quillen adjunctions is similar to the proof of Proposition~\ref{ProCompMC} in which we check that the right adjoint functors preserve fibrations and acyclic fibrations. The identifications \eqref{eq:der_bim_map2} and \eqref{eq:der_tr_bim_map2} are induced by taking a Projective cofibrant replacement of $M$ and a Reedy fibrant replacement of~$N$
%and using the fact that any projective cofibration is a Reedy cofibration and any Reedy fibration is a projective fibration. 
\end{proof}

\subsubsection{Cofibrant resolution in the projective/Reedy model category}\label{5.2.1}

Let $O$ be an operad not necessarily reduced. From an $O$-Ibimodule $M$, we build an $O$-Ibimodule $\mathcal{I}b_{O}(M)$. The points of $\mathcal{I}b_{O}(M)(n)$, $n\geq 0$, are equivalence classes $[T\,;\, \{t_{v}\}\,;\, x_{p}\,;\,\{\theta_{v}\}]$, where $T\in p\mathbb{P}_n$ (see Section \ref{CC2}) is a pearled tree, $x_{p}$ is a point in $M$ labelling the pearl and $\{\theta_{v}\}_{v\in V(T)\setminus \{p\}}$ is a family of points in $O$ labelling the vertices  other than the pearl. Furthermore, $\{t_{v}\}_{v\in V(T)\setminus V^{p}(T)}$ is a family of real numbers in the interval $[0\,,\,1]$ indexing the vertices which are not pearls. According to the orientation toward the pearl, if $e$ is an inner edge, then $t_{s(e)}\geq t_{t(e)}$. In other words, the closer to the pearl is a vertex, the  smaller is the corresponding number. The space $\mathcal{I}b_{O}(M)(n)$ is a quotient of the subspace of 
\begin{equation}\label{eq:union_stree2}
%\left.
\underset{T\in p\mathbb{P}_n}{\coprod}\,\,\,M(|p|)\,\,\times\underset{v\in V(T)\setminus \{p\}}{\prod}\,\big[\,O(|v|)\times [0\,,\,1]\big]
\end{equation} 
determined by the restrictions on the families $\{t_{v}\}$. The equivalence relation is generated by the unit condition $(i)$ and the compatibility with the symmetric group relation $(ii)$ of Construction \ref{FF9} as well as the following~conditions:

\begin{itemize}%[topsep=0pt, leftmargin=18pt]

\item[$iii)$] If two consecutive vertices, connected by an edge $e$, are indexed by the same real number $t\in [0\,,\,1]$, then~$e$ is contracted using the operadic structure of~$O$. The vertex produced by this edge contraction is indexed by the real number~$t$.\vspace{5pt}

\hspace{-57pt}\includegraphics[scale=0.32]{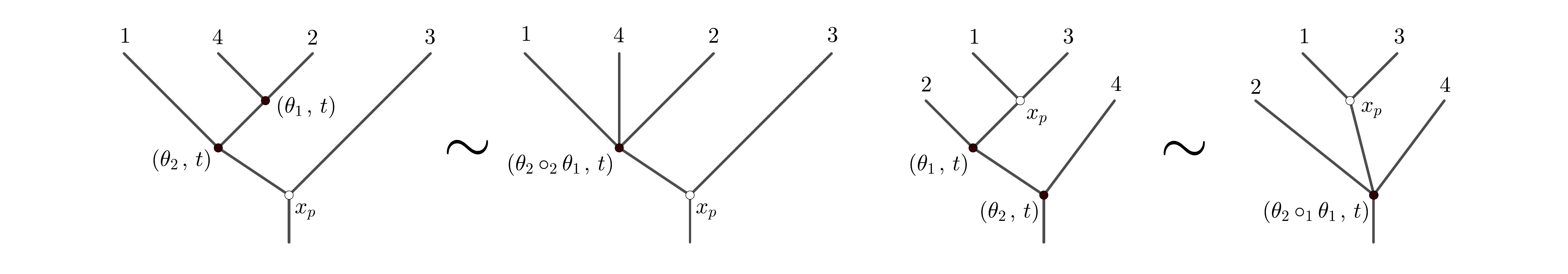}\vspace{5pt}

\item[$iv)$]  If a vertex connected to the pearl is indexed by $0$, then we contract the inner edge connecting them using the infinitesimal bimodule structure of $M$. In that case the new vertex, produced by the contraction, becomes the pearl.\vspace{5pt}

\hspace{-66pt}\includegraphics[scale=0.32]{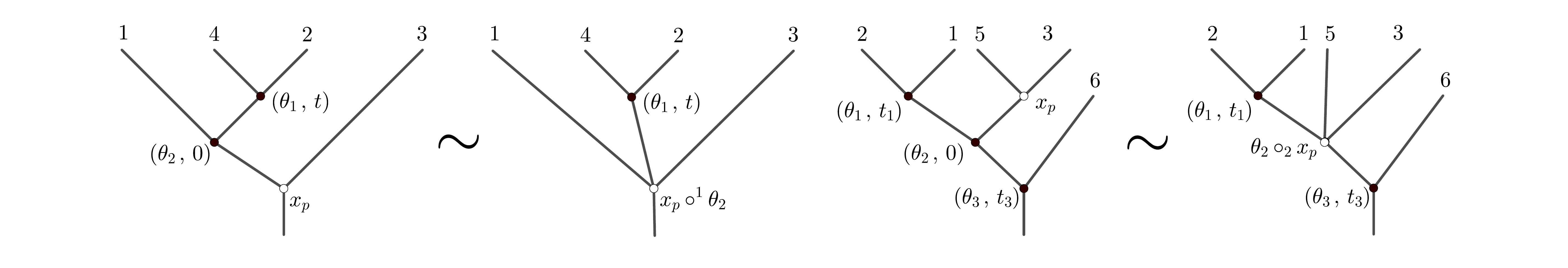}\vspace{5pt}

\end{itemize}

 Let us describe the $O$-Ibimodule structure. Let $\theta\in O(n)$ and $[T\,;\, \{t_{v}\}\,;\, x_{p}\,;\,\{\theta_{v}\}]$ be a point in $\mathcal{I}b_{O}(M)(m)$. The right infinitesimal operation $[T\,;\, \{t_{v}\}\,;\, x_{p}\,;\,\{\theta_{v}\}]\circ^{i}\theta$ consists in grafting the $n$-corolla labelled by $\theta$ to the $i$-th incoming edge of $T$ and indexing the new vertex by $1$. Similarly, the left infinitesimal operation $\theta\circ_{i}[T\,;\, \{t_{v}\}\,;\, x_{p}\,;\,\{\theta_{v}\}]$ consists in grafting the pearled tree $T$ to the $i$-th incoming edge of $n$-corolla labelled by $\theta$ and indexing the new vertex by $1$.%\vspace{-10pt}

\begin{figure}[!h]
\begin{center}
\includegraphics[scale=0.3]{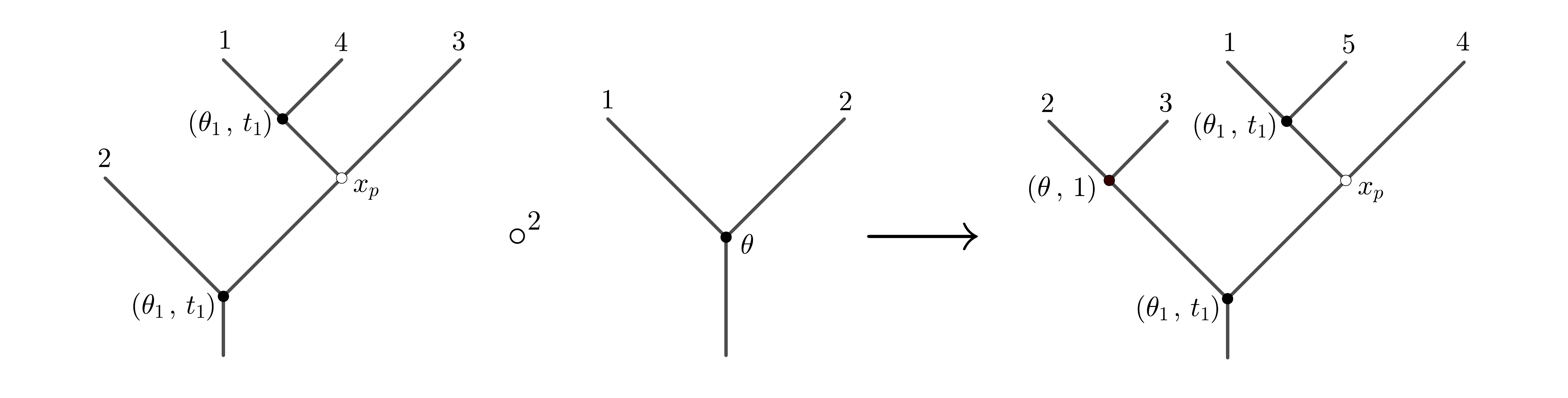}\vspace{-10pt}
\caption{Illustration of the right infinitesimal operation.}
\end{center}%\vspace{-10pt}
\end{figure}

One has an obvious inclusion of $\Sigma$-sequences $\iota \colon M\to \mathcal{I}b_{O}(M)$, where each element
$x\in M(n)$ is sent to an $n$-corolla labelled by $x$, whose only vertex is a pearl.  Furthermore, the following map:
\begin{equation}\label{DD3}
\mu:\mathcal{I}b_{O}(M)\rightarrow M\,\,;\,\, [T\,;\, \{t_{v}\}\,;\, x_{p}\,;\,\{\theta_{v}\}]\mapsto [T\,;\, \{0\}\,;\, x_{p}\,;\,\{\theta_{v}\}],
\end{equation}
is defined by sending the real numbers indexing the vertices other than the pearl to $0$. The so obtained element is identified to the pearled corolla labelled by a point in $M$.  It is easy to see that $\mu$ is an $O$-Ibimodule map.

In order to get resolutions for truncated infinitesimal bimodules, one considers a filtration in $\mathcal{I}b_{O}(M)$ according to  the number of {\it geometrical inputs} which is the number of leaves plus the number of univalent vertices other than the pearl. A point in $\mathcal{I}b_{O}(M)$ is said to be \textit{prime} if the real numbers indexing the vertices are strictly smaller than $1$. Otherwise, a point is said to be \textit{composite} and can be associated to a \textit{prime component} as shown in Figure~\ref{BB1}. More precisely, the prime component is obtained by removing the vertices indexed by $1$. \vspace{-10pt}

\begin{figure}[!h]
\begin{center}
\includegraphics[scale=0.4]{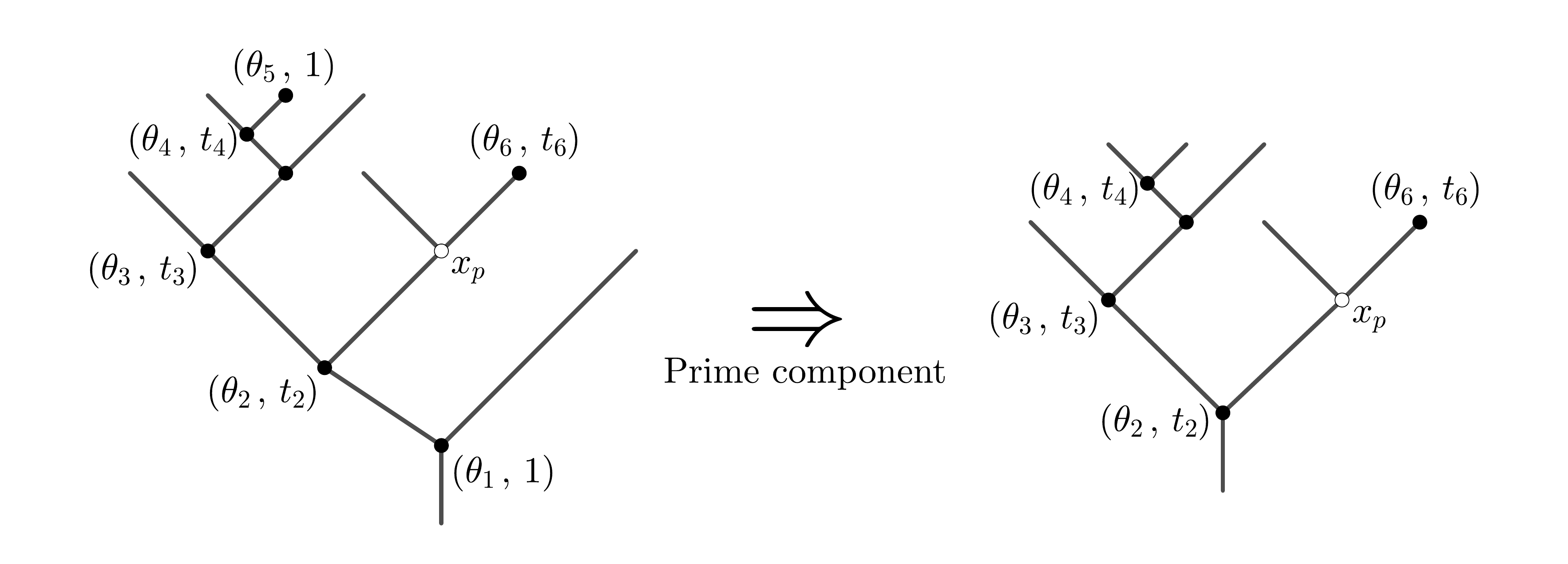}\vspace{-10pt}
\caption{A composite point and its prime components.}\label{BB1}
\end{center}
\end{figure}

A prime point is in the $r$-th filtration layer $\mathcal{I}b_{O}(M)_{r}$ if the number of its geometrical inputs is at most~$r$. Similarly, a composite point is in the $r$-th filtration layer if its prime component is in $\mathcal{I}b_{O}(M)_{r}$. For instance, the composite point in Figure~\ref{BB1} is in the filtration layer $\mathcal{I}b_{O}(M)_{6}$. For each $r$, $\mathcal{I}b_{O}(M)_{r}$ is an $O$-Ibimodule and one has the following filtration of $\mathcal{I}b_{O}(M)$:
\begin{equation}\label{BB3}
\xymatrix{
 \mathcal{I}b_{O}(M)_{0} \ar[r] & \mathcal{I}b_{O}(M)_{1}\ar[r] & \cdots \ar[r] & \mathcal{I}b_{O}(M)_{r-1} \ar[r] & \mathcal{I}b_{O}(M)_{r} \ar[r] & \cdots \ar[r] &  \mathcal{I}b_{O}(M).
}
\end{equation}

\begin{thm}[Theorem 3.10 in \cite{DT}]\label{th:BV_proj_Ibimod}
Assume that  $O$ is a $\Sigma$-cofibrant operad, and $M$ is a $\Sigma$-cofibrant 
$O$-Ibimodule. Then the objects $\mathcal{I}b_{O}(M)$ and $\TT_{r}\mathcal{I}b_{O}(M)_{r}$ are cofibrant replacements of $M$ and $\TT_{r}M$ in the categories  $\Sigma\Ibimod_{O}$ and $\TT_{r}\Sigma\Ibimod_{O}$, respectively. In particular, the maps $\mu$ and $\TT_{r}\mu|_{\TT_{r}\mathcal{I}b_{O}(M)_{r}}$ are weak equivalences.
\end{thm}

Now we slightly change the above construction in order to produce Reedy cofibrant replacements for $O$-Ibimodules when $O$ is a reduced operad. Let $M$ be an infinitesimal bimodule over $O$. As a $\Sigma$-sequence, we set
\[
\mathcal{I}b_{O}^\Lambda(M):=\mathcal{I}b_{O_{>0}}(M)
\]
The superscript $\Lambda$
is to emphasize that we get a cofibrant replacement in the Reedy model category structure. The right and left action by the positive arity components is defined as it is on $\mathcal{I}b_{O_{>0}}(M)$. The right action by $*_0\in O(0)$ is defined in the obvious way as the right  action by $*_0$ on $a$ in the vertex $(a,t)$ connected to the leaf labelled by $i$ as illustrated in the Figure \ref{A1}.\vspace{10pt}

Note that since the arity zero component of $O_{>0}$ is empty, in the union~\eqref{eq:union_stree2} 
we can consider only trees whose all non-pearl vertices have arities $\geq 1$. We denote this set by 
$p\mathbb{P}^{\geq 1}_n$. In other words, the space $\mathcal{I}b_{O}^\Lambda(M)$ can be obtained as the restriction of the coproduct \eqref{eq:union_stree2} to this set.

\begin{pro}{\cite[Proposition 3.12]{DT}}\label{p:BV_reedy_Ibimod}
Assume that $O$ is a reduced  $\Sigma$-cofibrant operad, and $M$ is a $\Sigma$-cofibrant $O$-Ibimodule. Then the objects $\mathcal{I}b_{O}^\Lambda(M)$ and $\TT_{r}\mathcal{I}b_{O}^\Lambda(M)$ are cofibrant
 replacements of $M$ and $\TT_{r}M$ in the categories $\Lambda\Ibimod_{O}$ and $\TT_{r}\Lambda\Ibimod_{O}$, respectively. In particular, the maps $\mu$ and $\TT_{r}\mu$ are weak equivalences.
\end{pro}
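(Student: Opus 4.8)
The plan is to transport the proof verbatim from the bimodule setting, namely Proposition~\ref{p:BV_reedy_bimod}, by reducing the Reedy statement to the \emph{projective} Boardman--Vogt resolution over the suboperad $O_{>0}$ (Theorem~\ref{th:BV_proj_Ibimod}) together with the characterization of Reedy cofibrations for infinitesimal bimodules (Theorem~\ref{CC1}). The key structural observation, built into the construction of Subsection~\ref{ZZ2}, is that $\mathcal{I}b_{O}^{\Lambda}(M)=\mathcal{I}b_{O_{>0}}(M)$ as $\Sigma$-sequences, and that the augmentation $\mu$ sending all real parameters to $0$ is literally the projective Boardman--Vogt augmentation for the operad $O_{>0}$. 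So no new homotopical input beyond these two theorems will be needed.

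First I would check that the hypotheses of Theorem~\ref{th:BV_proj_Ibimod} hold for the pair $(O_{>0},M)$: since $O(0)=\ast$ and $O(n)=O_{>0}(n)$ for $n\geq 1$, and $\emptyset$ is cofibrant in $Top$, the $\Sigma$-cofibrancy of $O$ passes to $O_{>0}$; likewise $M$ is $\Sigma$-cofibrant as an $O_{>0}$-Ibimodule because its underlying $\Sigma$-sequence is unchanged. Applying Theorem~\ref{th:BV_proj_Ibimod} to $(O_{>0},M)$ then yields simultaneously that $\mathcal{I}b_{O_{>0}}(M)$ is cofibrant in the projective model category $\Sigma\Ibimod_{O_{>0}}$ and that $\mu\colon\mathcal{I}b_{O}^{\Lambda}(M)=\mathcal{I}b_{O_{>0}}(M)\to M$ is a weak equivalence. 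As the weak equivalences of $\Sigma\Ibimod_{O}$ and $\Lambda\Ibimod_{O}$ coincide (objectwise weak homotopy equivalences, and $\Lambda\Ibimod_{O}=\Sigma\Ibimod_{O}$ as categories), $\mu$ is a Reedy weak equivalence as well. To upgrade projective cofibrancy to Reedy cofibrancy I would apply Theorem~\ref{CC1} to the map from the initial object into $\mathcal{I}b_{O}^{\Lambda}(M)$; this requires $O$ to be well-pointed, which is automatic here since $\Sigma$-cofibrancy forces $O(1)$ to be cofibrant and $O$ is pointed. Because that map is a cofibration in the projective model category of $O_{>0}$-Ibimodules, Theorem~\ref{CC1} makes it a Reedy cofibration, so $\mathcal{I}b_{O}^{\Lambda}(M)$ is Reedy cofibrant, and combined with the fact that $\mu$ is a weak equivalence this exhibits it as a cofibrant resolution of $M$ in $\Lambda\Ibimod_{O}$.

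For the truncated statement I would run the same argument through the truncated versions of Theorems~\ref{th:BV_proj_Ibimod} and~\ref{CC1}, and then note that $\TT_{r}\mathcal{I}b_{O}^{\Lambda}(M)_{r}=\TT_{r}\mathcal{I}b_{O}^{\Lambda}(M)$: since $O_{>0}(0)=\emptyset$, the pearled trees occurring in $\mathcal{I}b_{O}^{\Lambda}(M)$ carry no univalent non-pearl vertices, so the number of geometrical inputs of a point equals its number of leaves and the filtration by geometrical inputs already agrees with the $r$-truncation. The one step demanding genuine care---although it is entirely absorbed by Theorem~\ref{CC1}---is the identification of initial objects: to read projective $O_{>0}$-cofibrancy of $\mathcal{I}b_{O}^{\Lambda}(M)$ as an \emph{object} against the morphism criterion, one must know that the restriction to $O_{>0}$ of the initial $O$-Ibimodule is the initial $O_{>0}$-Ibimodule. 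This holds because in both categories the initial object is the everywhere-empty $\Sigma$-sequence, there being no nullary infinitesimal operation that could create elements of $M$. I expect this bookkeeping, rather than any homotopy-theoretic difficulty, to be the only place where the argument must be stated with precision.
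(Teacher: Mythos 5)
Your proposal is correct and follows essentially the same route as the paper's proof: deduce from Theorem~\ref{th:BV_proj_Ibimod} that $\mathcal{I}b_{O}^{\Lambda}(M)=\mathcal{I}b_{O_{>0}}(M)$ is projectively cofibrant over $O_{>0}$ and that $\mu$ is a weak equivalence, then upgrade to Reedy cofibrancy via Theorem~\ref{CC1}, and handle the truncated case by the identification $\TT_{r}\mathcal{I}b_{O}^{\Lambda}(M)_{r}=\TT_{r}\mathcal{I}b_{O}^{\Lambda}(M)$ coming from the absence of arity zero non-pearl vertices. The extra bookkeeping you flag (hypotheses for $(O_{>0},M)$, well-pointedness from $\Sigma$-cofibrancy, agreement of initial objects) is left implicit in the paper but is correctly resolved in your argument.
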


\begin{proof}
The map $\mu:\mathcal{I}b^{\Lambda}_{O}(M)\rightarrow M$, which changes the assignment of the real numbers indexing the vertices to $0$, is a homotopy equivalence. More precisely, it is a deformation retract in the category of $\Sigma$-sequences in which the homotopy consists in bringing the real numbers to $0$. Furthermore, as a consequence of Theorem \ref{th:BV_proj_Ibimod}, $\mathcal{I}b^{\Lambda}_{O}(M)=\mathcal{I}b_{O_{>0}}(M)$ is cofibrant in the projective model category of $O_{>0}$-Ibimodules. Due to Theorem~\ref{CC1}, $\mathcal{I}b_{O}^{\Lambda}(M)$ is also Reedy cofibrant and it gives rise to a cofibrant resolution of $M$ in the Reedy model category $\Lambda \Ibimod_{O}$. The same arguments work for the truncated case. Note that $\TT_{r}\mathcal{I}b_{O}^\Lambda(M)_r=
\TT_{r}\mathcal{I}b_{O}^\Lambda(M)$, since arity zero non-pearl vertices are not permitted.
\end{proof}

By means of Theorem~\ref{th:BV_proj_Ibimod} and Proposition~\ref{p:BV_reedy_Ibimod}, we construct a functorial cofibrant replacement in $\Sigma\Ibimod_O$ and $\Lambda\Ibimod_O$ assuming 
that $O$ is componentwise cofibrant. We adapt notation from Subsection~\ref{ss:cof_repl_bimod}. Given an $O$-Ibimodule $M$, we define
$M'_\infty:=|S_\bullet M|\times E_\infty$, $O'_\infty:=|S_\bullet|\times E_\infty$. One has that both $M'_\infty$ and $O'_\infty$ are $\Sigma$-cofibrant
and $M'_\infty$ is an $O'_\infty$-bimodule. Let
\[
\phi\colon O'_\infty\xrightarrow{\simeq} O,\quad\text{ and }\quad \phi_0\colon M'_\infty\xrightarrow{\simeq} M
\]
be the natural projections. Note that $\phi_0$ can be viewed as a map of $O'_\infty$-Ibimodules.

\begin{pro}\label{p:cof_repl_Ibimod}
(a) Assume that  $O$ is a componentwise cofibrant operad. Let
$M$ be any $O$-Ibimodule.
Then the objects $\phi_!\left(\mathcal{I}b_{O'_\infty}(M'_\infty)\right)$ and $\phi_!\left(\TT_{r}\mathcal{I}b_{O'_\infty}(M'_\infty)_r\right)$ are cofibrant
 replacements of $M$ and $\TT_{r}M$ in the categories $\Sigma\Ibimod_{O}$ and $\TT_{r}\Sigma\Ibimod_{O}$, respectively.

(b) Assume in addition that the operad~$O$ is reduced. Then the objects $\phi_!\left(\mathcal{I}b_{O'_\infty}^\Lambda(M'_\infty)\right)$ and $\phi_!\left(\TT_{r}\mathcal{I}b_{O'_\infty}^\Lambda(M'_\infty)\right)$ are cofibrant
 replacements of $M$ and $\TT_{r}M$ in the categories $\Lambda\Ibimod_{O}$ and $\TT_{r}\Lambda\Ibimod_{O}$, respectively.
\end{pro}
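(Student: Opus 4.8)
The plan is to follow, mutatis mutandis, the proof of Proposition~\ref{p:cof_repl_bimod}, replacing the Boardman--Vogt resolution of bimodules and the bimodule extension/restriction results by their infinitesimal counterparts. First I would record that the auxiliary objects are $\Sigma$-cofibrant: the realization $|S_\bullet X|$ is a CW-complex and hence cofibrant in $Top$, and $E_\infty$ is $\Sigma$-cofibrant by choice, so the objectwise products $O'_\infty=|S_\bullet O|\times E_\infty$ and $M'_\infty=|S_\bullet M|\times E_\infty$ are $\Sigma$-cofibrant, with $M'_\infty$ an infinitesimal bimodule over the $\Sigma$-cofibrant operad $O'_\infty$. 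When $O$ is reduced one has $O'_\infty(0)=|S_\bullet O(0)|\times E_\infty(0)=\ast$, so $O'_\infty$ is reduced as well; moreover componentwise cofibrancy makes $O'_\infty$ well-pointed, so the Reedy model structure of Theorem~\ref{ZZ5} is available.

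With $\Sigma$-cofibrancy in hand the explicit resolutions apply directly. By Theorem~\ref{th:BV_proj_Ibimod} the object $\mathcal{I}b_{O'_\infty}(M'_\infty)$ (resp. its truncation) is a cofibrant replacement of $M'_\infty$ in $\Sigma\Ibimod_{O'_\infty}$, the structure map $\mu$ being a weak equivalence; in the reduced case Proposition~\ref{p:BV_reedy_Ibimod} gives that $\mathcal{I}b^\Lambda_{O'_\infty}(M'_\infty)$ is a cofibrant replacement in $\Lambda\Ibimod_{O'_\infty}$. Next I would invoke the extension/restriction Quillen equivalences: Theorem~\ref{G6_2} (projective) and Theorem~\ref{eq:ind_rest_reduced_ibimod} (Reedy) apply because $\phi\colon O'_\infty\to O$ is a weak equivalence between componentwise cofibrant (and, in the Reedy case, reduced) operads. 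Since $\phi_!$ is left Quillen it preserves cofibrant objects, whence $\phi_!\!\left(\mathcal{I}b_{O'_\infty}(M'_\infty)\right)$ and $\phi_!\!\left(\mathcal{I}b^\Lambda_{O'_\infty}(M'_\infty)\right)$ are cofibrant in $\Sigma\Ibimod_O$ and $\Lambda\Ibimod_O$ respectively.

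It then remains to produce the comparison weak equivalence to $M$. I would take the adjoint under $(\phi_!,\phi^*)$ of the composite $\mathcal{I}b_{O'_\infty}(M'_\infty)\xrightarrow{\mu} M'_\infty\xrightarrow{\phi_0}\phi^*M$, where $\phi_0$ is regarded as a map of $O'_\infty$-Ibimodules; this composite is an objectwise weak equivalence because both $\mu$ and $\phi_0$ are. In the projective case all objects are fibrant, so $M$ is fibrant while the source is cofibrant, and the characterization of Quillen equivalences immediately yields that the adjoint $\phi_!\!\left(\mathcal{I}b_{O'_\infty}(M'_\infty)\right)\to M$ is a weak equivalence, proving $(a)$. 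Functoriality is clear, since every step ($|S_\bullet-|$, $(-)\times E_\infty$, $\mathcal{I}b$, $\phi_!$) is functorial.

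The main obstacle is the Reedy case $(b)$, where $M$ need not be Reedy fibrant, so the Quillen-equivalence characterization cannot be applied with $M$ as the fibrant target. I would circumvent this exactly as in the proof of Theorem~\ref{eq:ind_rest_reduced_ibimod}: the Reedy and projective model structures on infinitesimal bimodules share the same weak equivalences, and the restriction functor $\phi^*$ is the identity on underlying $\Sigma$-sequences, so it detects objectwise weak homotopy equivalences. Using the characterization of Reedy cofibrations (Theorem~\ref{CC1}) and the identity $\phi^*\phi_!=(\phi_{>0})^*(\phi_{>0})_!$ from the cited proof, the claim for $\phi_!\!\left(\mathcal{I}b^\Lambda_{O'_\infty}(M'_\infty)\right)\to M$ reduces to the already-settled projective statement over the positive-arity operads $(O'_\infty)_{>0}\to O_{>0}$, where all objects are fibrant; being a statement about objectwise weak homotopy equivalences it transports back to $\Lambda\Ibimod_O$, giving $(b)$. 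The truncated assertions follow by the same argument using the truncated versions of all the cited results.
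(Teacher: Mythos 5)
Your proof is correct and follows the same route as the paper: the paper's own proof is a one-line deduction from Theorems~\ref{G6_2}, \ref{eq:ind_rest_reduced_ibimod}, \ref{th:BV_proj_Ibimod} and Proposition~\ref{p:BV_reedy_Ibimod}, which are exactly the four results you invoke. Your additional care in the Reedy case (where $M$ need not be fibrant, so one argues via the unit of the adjunction and the fact that $\phi^*$ creates weak equivalences, rather than via the cofibrant-to-fibrant characterization of Quillen equivalences) is a legitimate unpacking of what the cited Theorem~\ref{eq:ind_rest_reduced_ibimod} already provides.
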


\begin{proof}
The result is an immediate consequence of Theorems~\ref{G6_2}, \ref{eq:ind_rest_reduced_ibimod}, \ref{th:BV_proj_Ibimod} and Proposition~\ref{p:BV_reedy_Ibimod}.
\end{proof}

\appendix
\section{Equivariant homotopy theory}\label{s:appendix}

\subsection{Projective cofibrations for monoidal action}\label{ss:ap_monoid}

In what follows, for any topological monoid $G$, we consider the projective model structure on the category of $G$-spaces denoted $G$-$Top$. We refer the reader to Subsection \ref{MCSSpace} for more details. We start by recalling the statement of Berger-Moerdijk concerning the pushout product axiom. 

\begin{lmm}{\cite[Lemma 2.5.3]{BM2}}\label{D2}
Let $1\rightarrow G_{1}\rightarrow G \rightarrow G_{2}\rightarrow 1$ be a short exact sequence of discrete groups. Let $A\rightarrow B$ be a $G_{2}$-cofibration and $X\rightarrow Y$ be a $G$-equivariant $G_{1}$-cofibration. Then the pushout product map $(A\times Y)\cup_{A\times X}(B\times X)\rightarrow B\times Y$ is a $G$-cofibration. Moreover, the latter is acyclic if $A\rightarrow B$ or $X\rightarrow Y$ is.
\end{lmm}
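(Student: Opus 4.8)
The plan is to reduce the statement to the classical (non-equivariant) pushout-product axiom for the Quillen model structure on $Top$, using the projective characterization of $G$-cofibrations. Recall that in the projective model structure on $G\text{-}Top$ the generating cofibrations are the maps $G[S^{n-1}\hookrightarrow D^n]=(S^{n-1}\hookrightarrow D^n)\times G$, and a general $G$-cofibration is a retract of a cellular extension built from these. The key reformulation I would use is that a map is a $G$-cofibration precisely when it has the left lifting property against all $G$-equivariant acyclic Serre fibrations, i.e. against maps that are underlying acyclic Serre fibrations together with a compatible $G$-action. So I would phrase the whole proof as a lifting problem and exploit the short exact sequence $1\to G_1\to G\to G_2\to 1$ to decompose the relevant function complexes.

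First I would set up the adjunction machinery. For a $G$-space $Z$ and the pushout-product source $P:=(A\times Y)\cup_{A\times X}(B\times X)$, giving a $G$-equivariant lift in the square
\[
\xymatrix{
P \ar[r] \ar[d] & Z \ar[d] \\
B\times Y \ar[r] \ar@{-->}[ru] & W
}
\]
against a $G$-equivariant acyclic fibration $Z\to W$ is equivalent, by the usual exponential/hom-tensor adjunction in the cartesian closed category of $k$-spaces, to solving a lifting problem obtained by "moving $B\times Y$ to the other side." Concretely I would reorganize the data so that the map $A\to B$ (a $G_2$-cofibration) is tested against an induced map of mapping spaces built from $X\to Y$ and $Z\to W$. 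The point of the short exact sequence is that the $G_1$-equivariant, $G$-equivariant input $X\to Y$ interacts with the mapping object so that what remains to be checked is a lifting problem living in $G_2\text{-}Top$.

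The central step is then to verify that, after this adjunction, the map of mapping objects
\[
\mathrm{Map}_{G_1}(Y,Z)\longrightarrow \mathrm{Map}_{G_1}(X,Z)\times_{\mathrm{Map}_{G_1}(X,W)}\mathrm{Map}_{G_1}(Y,W)
\]
is a $G_2$-equivariant acyclic Serre fibration whenever $X\to Y$ is a $G$-equivariant $G_1$-cofibration and $Z\to W$ is a $G$-equivariant acyclic fibration; here the $G_2$-action arises because $G_1$ is normal, so $G/G_1=G_2$ acts on the $G_1$-fixed mapping spaces. This is the enriched pushout-product/SM7-type statement: testing it against the generating $G_1$-cofibrations $G_1[S^{n-1}\hookrightarrow D^n]$ reduces, via $\mathrm{Map}_{G_1}(G_1[K],Z)=\mathrm{Map}(K,Z)$, to the ordinary statement that $\mathrm{Map}(D^n,Z)\to \mathrm{Map}(S^{n-1},Z)\times_{\mathrm{Map}(S^{n-1},W)}\mathrm{Map}(D^n,W)$ is an acyclic Serre fibration, which is exactly the classical pushout-product axiom for $Top$ together with the fact that all objects are fibrant. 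Once this map is a $G_2$-equivariant acyclic fibration, the lift against the $G_2$-cofibration $A\to B$ exists, and unwinding the adjunction yields the desired $G$-equivariant lift.

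For the acyclicity addendum I would simply track which of the two inputs is acyclic through the same reduction: if $A\to B$ is an acyclic $G_2$-cofibration, its lifting property against the (non-acyclic) $G_2$-fibration produced from the function complex still gives the lift, while if $X\to Y$ or $Z\to W$ carries the acyclicity, the classical pushout-product axiom already outputs an acyclic fibration of mapping objects, so the pushout-product map lifts against all $G$-fibrations, hence is an acyclic $G$-cofibration. The main obstacle I anticipate is bookkeeping the $G_2$-action on the $G_1$-equivariant mapping spaces correctly—ensuring that the normality of $G_1$ in $G$ is used precisely where the residual $G_2=G/G_1$ symmetry is needed, and that the cartesian-closedness of $k$-spaces legitimately converts the $G$-equivariant lifting problem into a $G_2$-equivariant one without losing continuity or the quotient-map properties noted in Subsection~\ref{MCSSpace}. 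Everything else is a routine transcription of the discrete-group case of \cite[Lemma~2.5.3]{BM2}, which is precisely what the present lemma reproduces.
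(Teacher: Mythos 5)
Your proof is correct and follows essentially the same route as the paper: although the paper itself only cites \cite[Lemma 2.5.3]{BM2} for Lemma~\ref{D2}, its appendix proves the monoid generalizations (Lemmas~\ref{l:push_prod0} and~\ref{l:push_prod1}) by exactly your argument --- adjoin the lifting problem over to $G_1$-equivariant mapping spaces, use normality of $G_1$ to obtain the residual $G_2$-action on them, and reduce the corner map to the classical pushout-product axiom in $Top$. The only cosmetic difference is that the paper checks the corner map is a (trivial) fibration by testing against $S^{k-1}\rightarrow D^k$ and re-adjoining (invoking Lemma~\ref{l:push_prod0}), whereas you run a cell induction over the generating $G_1$-cofibrations $G_1[S^{n-1}\hookrightarrow D^n]$; both reductions are standard and equivalent.
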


We use an analogue of this result (namely, Lemmas~\ref{l:push_prod1} and~\ref{l:push_prod2} below) that can be applied to $\Sigma_{n}\wr O(1)$ and  to $( \Sigma_{k}\times \Sigma_{n-k})\wr O(1)$, which are not groups, but monoids, and also are not discrete. In fact non-discreteness is not a big problem. It is not hard to see that Berger-Moerdijk's proof 
of \cite[Lemma~2.5.3]{BM2} works for topological groups as well. On the contrary, for their proof it is critical that the action is by groups. 
In fact the statement of Lemma~\ref{D2} in general does not hold when $G$, $G_1$, and $G_2$ are monoids. Indeed,
consider $G_1=1$ and $G_2=G=\NN$ the monoid of non-negative natural numbers with the natural addition operation. Then for $A\rightarrow B$
being $S^{k-1}\times\NN\rightarrow D^k\times\NN$, and $X\rightarrow Y$ being $\NN\rightarrow\NN$, $i\mapsto i+1$,  the pushout product map  $(A\times Y)\cup_{A\times X}(B\times X)\rightarrow B\times Y$ is not an $\NN$-cofibration.

\begin{lmm}\label{l:push_prod0}
Let $\Gamma$ be a topological monoid, $A\rightarrow B$ be a $\Gamma$-cofibration, $X\rightarrow Y$ be a cofibration (in $Top$), then
the pushout-product map  $(A\times Y)\cup_{A\times X}(B\times X)\rightarrow B\times Y$ is a $\Gamma$-cofibration, where $X$ and $Y$ are regarded as spaces endowed with a trivial
 action of~$\Gamma$. Moreover, the latter is acyclic if $A\rightarrow B$ or $X\rightarrow Y$ is.
\end{lmm}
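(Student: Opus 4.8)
The plan is to reduce the statement to the case where both maps are generating (acyclic) cofibrations, and then to transport the pushout-product axiom for $Top$ across the free functor $\Gamma[-]$, using crucially that the $\Gamma$-action on $X$ and $Y$ is trivial. For maps $f\colon A\to B$ in $\Gamma\text{-}Top$ and $g\colon X\to Y$ in $Top$, write $f\,\widehat{\times}\,g$ for the pushout-product map $(A\times Y)\cup_{A\times X}(B\times X)\to B\times Y$, regarded in $\Gamma\text{-}Top$. The bifunctor $\widehat{\times}$ preserves colimits in each variable separately, so by the standard closure argument it suffices to verify the conclusion when $f$ and $g$ are generating cofibrations, together with the acyclicity when in addition one of them is a generating acyclic cofibration. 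Concretely, fixing a generating cofibration $g$ of $Top$, the class of $f$ in $\Gamma\text{-}Top$ for which $f\,\widehat{\times}\,g$ is a $\Gamma$-cofibration is closed under pushouts, transfinite compositions and retracts; since $\Gamma\text{-}Top$ is cofibrantly generated with generating cofibrations $\Gamma[S_c]$ (see Subsection~\ref{MCSSpace}), it is enough to treat $f\in\Gamma[S_c]$. Fixing such an $f$ and running the same argument in the $Top$-variable reduces $g$ to $S_c$ (respectively $S_{ac}$).

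So I would take $f=\Gamma[i]$ with $i\colon S^{m-1}\to D^m$ in $S_c$, and $g=j$ one of the generators $S^{n-1}\to D^n$ (respectively $D^n\times\{0\}\to D^n\times[0,1]$), equipped with the trivial $\Gamma$-action. Since $\Gamma[-]=(-)\times\Gamma$ and the action on $X,Y$ is trivial, for any trivially-acted-on space $W$ there is a natural $\Gamma$-equivariant identification $\Gamma[Z]\times W\cong\Gamma[Z\times W]$; and, being a left adjoint, $\Gamma[-]$ preserves the pushout defining the source of $f\,\widehat{\times}\,g$. Combining these gives a $\Gamma$-equivariant isomorphism
\[
f\,\widehat{\times}\,g\;\cong\;\Gamma[\,i\,\widehat{\times}\,j\,],
\]
where $i\,\widehat{\times}\,j$ is the pushout-product taken in $Top$.

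Now the Quillen model structure on the cartesian closed category $Top$ of $k$-spaces fixed in Subsection~\ref{MCSSpace} satisfies the pushout-product axiom, so $i\,\widehat{\times}\,j$ is a cofibration in $Top$, and it is an acyclic cofibration as soon as $i$ or $j$ is. Applying the left Quillen functor $\Gamma[-]$ from the Quillen adjunction $\Gamma[-]\colon Top\rightleftarrows\Gamma\text{-}Top\colon\mathcal{U}$, which sends cofibrations to $\Gamma$-cofibrations and acyclic cofibrations to acyclic $\Gamma$-cofibrations, yields the conclusion on generators, and hence in general by the reduction above.

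The \textbf{main obstacle} is bookkeeping rather than a genuine difficulty: one must be careful in the closure/reduction step (the interaction of $\widehat{\times}$ with colimits and retracts in each variable) and, more importantly, in justifying both the identification $\Gamma[Z]\times W\cong\Gamma[Z\times W]$ and the commutation of $\Gamma[-]$ with the pushout-product. It is precisely the triviality of the $\Gamma$-action on $X$ and $Y$ that makes this identification available, which is why the monoid (as opposed to group) structure of $\Gamma$ and its non-discreteness cause no trouble here. By contrast, for the genuinely equivariant statements (Lemmas~\ref{l:push_prod1} and~\ref{l:push_prod2}) no such reduction to $Top$ is possible and a direct cellular analysis is needed.
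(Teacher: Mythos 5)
Your proof is correct, but it takes a genuinely different route from the paper's. The paper argues by transposition of lifting problems: given a trivial $\Gamma$-fibration $Z\rightarrow W$, it uses cartesian closedness of $Top$ to convert the lifting problem for the pushout-product map into a lifting problem of $A\rightarrow B$ against the pullback-hom map $Map(Y,Z)\rightarrow Map(Y,W)\times_{Map(X,W)}Map(X,Z)$, where $\Gamma$ acts on the mapping spaces through the target (this transposition is where triviality of the action on $X$ and $Y$ enters); since the forgetful functor creates fibrations and weak equivalences, it then suffices to check that this pullback-hom map is a trivial fibration in $Top$, which again by adjunction is exactly the pushout-product property of the Quillen model structure on $Top$. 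You instead run a saturation argument: reduce both maps to generators using closure of the Leibniz product under cobase change, transfinite composition and retracts in each variable, then observe the $\Gamma$-equivariant isomorphism $\Gamma[i]\,\widehat{\times}\,j\cong\Gamma[\,i\,\widehat{\times}\,j\,]$ (valid precisely because the action on $X,Y$ is trivial) and push the $Top$-level pushout-product axiom through the left Quillen functor $\Gamma[-]$. Both proofs ultimately rest on the same input, the pushout-product axiom in $Top$, and both isolate correctly where triviality of the action is used. Your version has the merit of exhibiting the pushout-product as a retract of a relative $\Gamma[S_c]$-cell complex, a slightly finer conclusion; the paper's version avoids all transfinite bookkeeping and, more importantly, its template (transposed lifts, actions on mapping spaces defined via the target) is the one that survives in Lemmas \ref{l:push_prod1} and \ref{l:push_prod2}, where the action on $X$ and $Y$ is no longer trivial and no reduction to $Top$ is available — a limitation of your method that you correctly point out yourself.
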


The proof of this lemma is identical to that of \cite[Lemma~2.5.2]{BM2} in which $\Gamma$ is not a topological monoid but a discrete group. For the convenience of the reader the argument is given below.

\begin{proof}
Let $Z\rightarrow W$ be a trivial $\Gamma$-fibration. One has to show that any square 
\begin{equation}\label{eq:lift1}
\xymatrix{
(A\times Y)\cup_{A\times X}(B\times X)
 \ar[r] \ar[d] & Z \ar[d]\\
B\times Y \ar[r] & W
}
\end{equation}
 has a $\Gamma$-equivariant lift. Since $Top$ is cartesian closed, the existence of such lift is equivalent to the existence of a $\Gamma$-equivariant lift
 of the square
 \begin{equation}\label{eq:lift2}
\xymatrix{
A
 \ar[r] \ar[d] & Map(Y,Z) \ar[d]\\
B \ar[r] & Map(Y,W)\times_{Map(X,W)} Map(X,Z).
}
\end{equation}
Here the $\Gamma$-action on the mapping spaces $Map(Y,Z)$, $Map(Y,W)$, etc. is defined through the action on the target: $(f\cdot\gamma)(y):=f(y)\cdot\gamma$. Since the left vertical arrow is a $\Gamma$-cofibration, it is enough to show that the right arrow is a trivial $\Gamma$-fibration.
On the other hand, since the forgetful functor $\Gamma\text{-}Top\rightarrow Top$ creates fibrations and weak equivalences, it suffices 
to verify that the right arrow is a trivial fibration in $Top$, in other words that any square
$$
\xymatrix{
S^{k-1}
 \ar[r] \ar[d] & Map(Y,Z) \ar[d]\\
D^k \ar[r] & Map(Y,W)\times_{Map(X,W)} Map(X,Z)
}
$$
 has a lift in $Top$. The latter is equivalent to the pushout-product property that the map
$(S^{k-1}\times Y)\cup_{S^{k-1}\times X}(D^k\times X)\rightarrow D^k\times Y$ is a cofibration. 

The acyclicity statement is proved similarly by starting with any not-necessarily trivial $\Gamma$-fibration $Z\to W$.
 \end{proof}

Given a homomorphism of monoids $j\colon \Gamma\to\Gamma'$, one gets an extension-restriction %\todo{extension/restriction} 
adjunction
\begin{equation}\label{eq:adj_mon}
j_!\colon\Gamma\text{-}Top\rightleftarrows\Gamma'\text{-}Top\colon j^*.
\end{equation}

\begin{lmm}\label{l:adj_mon}
For any morphism of topological monoids $j\colon \Gamma\to\Gamma'$, the extension-restriction %\todo{extension/restriction} 
adjunction~\eqref{eq:adj_mon}
is a Quillen adjunction. In particular, the extension functor $j_!$ preserves cofibrations and acyclic cofibrations. The restriction functor 
$j^*$ preserves fibrations and weak equivalences. Moreover, if $\Gamma'$ is cofibrant as a right $\Gamma$-module, then
$j^*$ also preserves cofibrations.
\end{lmm}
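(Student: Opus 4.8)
The plan is to exploit that both projective model structures are cofibrantly generated, with fibrations and weak equivalences created by the forgetful functors $\mathcal{U}\colon\Gamma\text{-}Top\to Top$ and $\mathcal{U}\colon\Gamma'\text{-}Top\to Top$, and with generating cofibrations $\Gamma[S_c]$, $\Gamma'[S_c]$ and generating acyclic cofibrations $\Gamma[S_{ac}]$, $\Gamma'[S_{ac}]$, in the notation of Section~\ref{MCSSpace}. The two easy halves I would record first. Since $\mathcal{U}\circ j^*=\mathcal{U}$, and since a map in either category is a fibration (resp. weak equivalence) precisely when its underlying map of spaces is a Serre fibration (resp. weak homotopy equivalence), the restriction functor $j^*$ preserves — indeed creates — fibrations and weak equivalences, which is the second assertion of the lemma.

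For the Quillen adjunction it then suffices to show that $j_!$ preserves cofibrations and acyclic cofibrations. Here I would observe that $j_!\circ\Gamma[-]=\Gamma'[-]$: both $j_!\circ\Gamma[-]$ and $\Gamma'[-]$ are left adjoint to the common composite $\mathcal{U}\circ j^*=\mathcal{U}\colon\Gamma'\text{-}Top\to Top$, so they agree by uniqueness of adjoints. Consequently $j_!$ carries $\Gamma[S_c]$ to $\Gamma'[S_c]$ and $\Gamma[S_{ac}]$ to $\Gamma'[S_{ac}]$, i.e. it sends generating (acyclic) cofibrations to generating (acyclic) cofibrations. As a left adjoint, $j_!$ preserves colimits and retracts, and every cofibration (resp. acyclic cofibration) is a retract of a transfinite composite of pushouts of generating (acyclic) cofibrations; hence $j_!$ preserves all cofibrations and acyclic cofibrations, and $(j_!,j^*)$ is a Quillen adjunction.

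The only substantial point is the last assertion, that $j^*$ preserves cofibrations when $\Gamma'$ is cofibrant as a right $\Gamma$-module. The key preliminary observation is that $j^*$ also preserves colimits: colimits in $\Gamma\text{-}Top$ and $\Gamma'\text{-}Top$ are created by the forgetful functors, and $j^*$ commutes with them, so the underlying spaces agree while the induced $\Gamma$-actions coincide by naturality. Because $j^*$ preserves colimits and retracts, it is once more enough to check that $j^*$ sends each generating cofibration $S^{n-1}\times\Gamma'\to D^n\times\Gamma'$ to a $\Gamma$-cofibration. Writing $\Gamma'_{\Gamma}:=j^*(\Gamma')$ for $\Gamma'$ regarded as a right $\Gamma$-space via $j$, this map is $(S^{n-1}\to D^n)\times\mathrm{id}_{\Gamma'_{\Gamma}}$. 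I would then apply Lemma~\ref{l:push_prod0} with the $\Gamma$-cofibration $A\to B$ taken to be $\emptyset\to\Gamma'_{\Gamma}$ and the cofibration $X\to Y$ of $Top$ taken to be $S^{n-1}\to D^n$: the resulting pushout-product map is exactly $S^{n-1}\times\Gamma'_{\Gamma}\to D^n\times\Gamma'_{\Gamma}$, which is therefore a $\Gamma$-cofibration precisely when $\emptyset\to\Gamma'_{\Gamma}$ is one, i.e. precisely when $\Gamma'$ is cofibrant as a right $\Gamma$-module. This is where the hypothesis enters, and it is the main (and essentially the only) obstacle; everything else is formal adjunction bookkeeping together with the cofibrant-generation reduction to generating maps.
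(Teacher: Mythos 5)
Your proposal is correct and follows essentially the same route as the paper: the first assertions are handled by the standard formal argument (restriction creates fibrations and weak equivalences, extension sends generating (acyclic) cofibrations to generating ones), and the last assertion is proved exactly as in the paper, namely by noting that $j^*$ preserves colimits and retracts and then applying Lemma~\ref{l:push_prod0} with $A\to B$ being $\emptyset\to\Gamma'$ and $X\to Y$ being $S^{k-1}\to D^k$. The only cosmetic remark is that your phrase ``precisely when'' overstates what Lemma~\ref{l:push_prod0} gives (it only provides the forward implication, which is all you need), but this does not affect the argument.
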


\begin{proof}
All the statements follow from definition except the last one, which is implied by the fact that the restriction functor preserves colimits and
sends the generating cofibrations $S^{k-1}\times\Gamma'\to D^k\times\Gamma'$ to cofibrations. The latter assertion 
is a consequence of Lemma~\ref{l:push_prod0} applied to $A\rightarrow B$ being $\emptyset\rightarrow\Gamma'$, and $X\rightarrow Y$ 
being $S^{k-1}\rightarrow D^k$.
\end{proof}

\begin{defi}\label{d:mon_ex_seq}
\begin{enumerate}[label=(\alph*)]
\item A sequence of morphisms of topological monoids 
\begin{equation}\label{eq:mon_ex_seq}
1\rightarrow\Gamma_1\xrightarrow{i} \Gamma\xrightarrow{p}\Gamma_2\to 1
\end{equation}
is called {\it short exact sequence} if  %$p\circ i$ is constant, 
$\Gamma_2$ is a quotient of $\Gamma$ as a topological space ($p$ is a quotient map), and $i$ is a homeomorphism onto $p^{-1}(1)$.
\item A short exact sequence~\eqref{eq:mon_ex_seq} of topological monoids is called {\it split-surjective}, if $p$ admits a continuous section 
$s\colon\Gamma_2\rightarrow\Gamma$ which is a morphism of monoids, and the map 
$\Gamma_2\times\Gamma_1\rightarrow\Gamma$, $(\gamma_2,\gamma_1)\mapsto s(\gamma_2)\cdot i(\gamma_1)$, is surjective.
\end{enumerate}
\end{defi}

For a split-surjective short exact sequence~\eqref{eq:mon_ex_seq}, thanks to the inclusions $i$ and $s$, the monoids $\Gamma_1$ and
$\Gamma_2$ can be viewed as subobjects of~$\Gamma$. For this reason in the sequel, we will be omitting $i$ and $s$ considering $\Gamma_1,\Gamma_2\subset\Gamma$. Note also
that if $\Gamma_2$ is a group, the map $\Gamma_2\times\Gamma_1\rightarrow\Gamma$, $(\gamma_2,\gamma_1)\mapsto \gamma_2\cdot\gamma_1$, is always surjective (and, in fact, bijective).

\begin{lmm}\label{l:push_prod1}
Let~\eqref{eq:mon_ex_seq} be a split-surjective short exact sequence of monoids. Let also $\Gamma_2$ fit into
a split-surjective short exact sequence
\begin{equation}\label{eq:mon_ex_seq2}
1\rightarrow\Gamma_2^0\rightarrow \Gamma_2\xrightarrow{p_2}G_2\to 1
\end{equation}
with $G_2$ being a group. Also assume that all elements of $\Gamma_2^0$ commute with those from $\Gamma_1$ inside $\Gamma$.    Let $A\rightarrow B$ be a $\Gamma_{2}$-cofibration and $X\rightarrow Y$ be a $\Gamma$-equivariant $\Gamma_{1}$-cofibration  with both $X$ and $Y$ having
trivial action of $\Gamma_2^0$. Then the pushout product map $(A\times Y)\cup_{A\times X}(B\times X)\rightarrow B\times Y$ is a $\Gamma$-cofibration. Moreover, the latter is acyclic if $A\rightarrow B$ or $X\rightarrow Y$ is.
\end{lmm}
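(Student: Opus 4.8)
The plan is to imitate the proof of Lemma~\ref{l:push_prod0}, the only genuinely new ingredient being a replacement for the "conjugation" action on mapping spaces, which is unavailable for a mere monoid. First I would extract the structural consequences of the hypotheses. Using the two splittings, every element of $\Gamma$ can be written as $s(\gamma_2)\,i(\gamma_1)$ and every element of $\Gamma_2$ as $s_2(g)\,\gamma_2^0$ with $g\in G_2$ and $\gamma_2^0\in\Gamma_2^0$. The composite $\sigma:=s\circ s_2\colon G_2\to\Gamma$ is a homomorphism out of a group, so its image consists of invertible elements of $\Gamma$ with $\sigma(g)^{-1}=\sigma(g^{-1})$; this furnishes an honest conjugation by the "group part" $G_2$. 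Moreover $\Gamma_1=\ker p$ is preserved by this conjugation, since $p\bigl(\sigma(g)^{-1}\gamma_1\sigma(g)\bigr)=1$ for $\gamma_1\in\Gamma_1$.

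Next I would equip, for any $\Gamma$-space $Z$, the space $\mathrm{Map}_{\Gamma_1}(Y,Z)$ of $\Gamma_1$-equivariant maps with a $\Gamma_2$-action by the formula $(\gamma_2\cdot f)(y)=s(\gamma_2)\,f\bigl(\sigma(p_2(\gamma_2))^{-1}y\bigr)$. That this is a well-defined monoid action is immediate from $p_2$ being multiplicative and $\sigma$ being a group homomorphism into the units of $\Gamma$. The content is that it preserves $\Gamma_1$-equivariance: here the triviality of the $\Gamma_2^0$-action on $Y$ and the commutation of $\Gamma_2^0$ with $\Gamma_1$ inside $\Gamma$ are exactly what is needed, so that the "monoid part" $s(\gamma_2^0)$, which acts only through the target, slides past $\Gamma_1$. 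Currying then yields a natural isomorphism $\Gamma\text{-}Top(B\times Y,Z)\cong\Gamma_2\text{-}Top\bigl(B,\mathrm{Map}_{\Gamma_1}(Y,Z)\bigr)$, the key observation being that a $\Gamma$-map out of $B\times Y$ restricts on each slice $\{b\}\times Y$ to a $\Gamma_1$-equivariant map, because $\Gamma_1$ acts trivially on $B$.

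With the adjunction in hand, the remainder is the standard pushout-product manipulation. Given a trivial $\Gamma$-fibration $Z\to W$, applying the adjunction to all four corners converts the $\Gamma$-equivariant lifting problem for $(A\times Y)\cup_{A\times X}(B\times X)\to B\times Y$ into the $\Gamma_2$-equivariant lifting problem for $A\to B$ against the pullback-hom map $P\to Q$, where $P=\mathrm{Map}_{\Gamma_1}(Y,Z)$ and $Q=\mathrm{Map}_{\Gamma_1}(Y,W)\times_{\mathrm{Map}_{\Gamma_1}(X,W)}\mathrm{Map}_{\Gamma_1}(X,Z)$. Since $A\to B$ is a $\Gamma_2$-cofibration, it remains to verify that $P\to Q$ is a trivial $\Gamma_2$-fibration; and as the forgetful functor $\Gamma_2\text{-}Top\to Top$ creates fibrations and weak equivalences, it suffices to see it is a trivial Serre fibration.

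This last point reduces to Lemma~\ref{l:push_prod0} applied with $\Gamma_1$ in place of $\Gamma$. Testing $P\to Q$ against a generating cofibration $S^{k-1}\to D^k$ (resp. acyclic cofibration $D^k\times\{0\}\to D^k\times[0,1]$) of $Top$ and using cartesian closedness of $\Gamma_1\text{-}Top$, the lifting problem becomes a $\Gamma_1$-equivariant one whose left-hand map is $(S^{k-1}\times Y)\cup_{S^{k-1}\times X}(D^k\times X)\to D^k\times Y$; by Lemma~\ref{l:push_prod0} this is a $\Gamma_1$-cofibration, acyclic whenever $X\to Y$ is, while $Z\to W$ restricts to a (trivial) $\Gamma_1$-fibration by Lemma~\ref{l:adj_mon}, so the lift exists. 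The acyclicity clause of the present lemma follows by the same bookkeeping: when $X\to Y$ is acyclic, $P\to Q$ is already a trivial fibration against arbitrary $\Gamma$-fibrations, and when $A\to B$ is acyclic one instead invokes that an acyclic $\Gamma_2$-cofibration lifts against the $\Gamma_2$-fibration $P\to Q$. I expect the main obstacle to be the second paragraph—constructing the $\Gamma_2$-action on $\mathrm{Map}_{\Gamma_1}(Y,Z)$ and checking it is well defined and makes the adjunction $\Gamma_2$-equivariant—since this is precisely where all the hypotheses (the group $G_2$, the two splittings, and the commutation of $\Gamma_2^0$ with $\Gamma_1$) are consumed; once this is set up, everything else is a routine transcription of Lemma~\ref{l:push_prod0}.
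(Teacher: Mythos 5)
Your proof is correct and is essentially the paper's own argument: the $\Gamma_2$-action you put on $Map_{\Gamma_1}(Y,Z)$ coincides (up to the left/right action convention and making the splittings explicit) with the paper's formula $(f\cdot\gamma_2)(y)=f(y\cdot p_2(\gamma_2)^{-1})\cdot\gamma_2$, and your reduction --- curry to a $\Gamma_2$-equivariant lifting problem for $A\to B$, then check that the pullback-hom map is a trivial Serre fibration via Lemma~\ref{l:push_prod0} applied over $\Gamma_1$ --- matches the paper step for step, including the treatment of the acyclic cases. One small bookkeeping remark: the triviality of the $\Gamma_2^0$-action on $Y$ is what makes the currying bijection identify $\Gamma$-equivariance of $B\times Y\to Z$ with $\Gamma_2$-equivariance of $B\to Map_{\Gamma_1}(Y,Z)$ (together with surjectivity of $\Gamma_2\times\Gamma_1\to\Gamma$), whereas preservation of $\Gamma_1$-equivariance by the $\Gamma_2$-action uses conjugation-stability of $\Gamma_1$ plus the commutation of $\Gamma_2^0$ with $\Gamma_1$, exactly the ingredients you set up in your first paragraph.
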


\begin{expl}\label{ex:push_prod1}
%As example one can take $\Gamma=\Gamma_1\times\Gamma_2$ and $G_2=1$.
 If $A\rightarrow B$ is a $\Gamma_2$-cofibration 
and $X\rightarrow Y$ is  a $\Gamma_1$-cofibration, then the pushout product map   $(A\times Y)\cup_{A\times X}(B\times X)\rightarrow B\times Y$
is a $\Gamma_1\times\Gamma_2$-cofibration. (Take $\Gamma=\Gamma_1\times\Gamma_2$ and $G_2=1$.)
\end{expl}

\begin{proof}[Proof of Lemma~\ref{l:push_prod1}]
Let $Z\rightarrow W$ be a trivial $\Gamma$-fibration. One has to show that the square~\eqref{eq:lift1} has a $\Gamma$-equivariant lift.
By adjunction such lift defines a lift in the square~\eqref{eq:lift2}. If $\Gamma$ were a group then the mapping spaces $Map(Y,Z)$, $Map(Y,W)$, etc., 
would be endowed with a natural right $\Gamma$-action: $(f\cdot \gamma)(y):=f(y\cdot\gamma^{-1})\cdot\gamma$. The lift in question
would arise from a lift in~\eqref{eq:lift1} if and only if it were $\Gamma$-equivariant. However, in our more general situation the mapping spaces
do not get a natural $\Gamma$-action and the condition on the induced lift of~\eqref{eq:lift2} is less obvious. Denote by $Map_{\Gamma_1}(Y,Z)$ ($Map_{\Gamma_1}(Y,W)$, etc.) the subspace of $Map(Y,Z)$ of $\Gamma_1$-equivariant maps. 
%consisting of maps $f\colon Y\to Z$ satisfying $f(y\cdot\gamma_1)=f(y)\cdot\gamma_1$,
%for any $y\in Y$, $\gamma_1\in\Gamma_1$. 
Since the action of $\Gamma_1$ is trivial on $A$ and on $B$, the induced lift of~\eqref{eq:lift2}
must factor through a lift in the square
 \begin{equation}\label{eq:lift3}
\xymatrix{
A
 \ar[r] \ar[d] & Map_{\Gamma_1}(Y,Z) \ar[d]\\
B \ar[r] & Map_{\Gamma_1}(Y,W)\times_{Map_{\Gamma_1}(X,W)} Map_{\Gamma_1}(X,Y).
}
\end{equation}
On the other hand, the spaces $Map_{\Gamma_1}(Y,Z)$, $Map_{\Gamma_1}(Y,W)$, etc., have a natural $\Gamma_2$-action defined as follows:
\[
(f\cdot\gamma_2)(y):=f(y\cdot p_2(\gamma_2)^{-1})\cdot\gamma_2,
\]
where $\gamma_2\in\Gamma_2$.
One checks that if $f\in  Map_{\Gamma_1}(Y,Z)$, then so is $f\cdot\gamma_2$:
$$
\begin{array}{lll}\vspace{5pt}
(f\cdot\gamma_2)(y\cdot\gamma_1)& =f(y\cdot\gamma_1\cdot p_2(\gamma_2)^{-1})\cdot\gamma_2 & \\ \vspace{5pt}
&  =
f(y\cdot p_2(\gamma_2)^{-1}\cdot p_2(\gamma_2)\cdot\gamma_1\cdot p_2(\gamma_2)^{-1})\cdot\gamma_2 & \\ \vspace{5pt}
 & = f(y\cdot p_2(\gamma_2)^{-1})\cdot p_2(\gamma_2)\cdot\gamma_1\cdot p_2(\gamma_2)^{-1}\cdot\gamma_2 &   \\ \vspace{5pt}
 & =
f(y\cdot p_2(\gamma_2)^{-1})\cdot p_2(\gamma_2)\cdot p_2(\gamma_2)^{-1}\cdot\gamma_2\cdot\gamma_1 &  \\ 
 & =f(y\cdot p_2(\gamma_2)^{-1})\cdot\gamma_2\cdot\gamma_1 & \hspace{-80pt} =
(f\cdot\gamma_2)(y)\cdot\gamma_1.
\end{array} 
$$
The third equation is obtained using the fact that $p_2(\gamma_2)\cdot\gamma_1\cdot p_2(\gamma_2)^{-1}\in\Gamma_1$ and $f\in Map_{\Gamma_1}(Y,Z)$. The fourth equation uses that $ p_2(\gamma_2)^{-1}\cdot\gamma_2\in\Gamma_2^0$ and that $\Gamma_1$ commutes with $\Gamma_2^0$. 

We claim that a map $B\rightarrow Map(Y,Z)$  is adjoint to a $\Gamma$-equivariant map $F\colon B\times Y\rightarrow Z$ if and only if
it factors through $Map_{\Gamma_1}(Y,Z)$ and the map $B\rightarrow Map_{\Gamma_1}(Y,Z)$ is $\Gamma_2$-equivariant. (The same  holds for
the maps $A\rightarrow Map(Y,Z)$, $B\rightarrow Map(Y,W)$, etc.) 

Indeed, let $F\colon  B\times Y\rightarrow Z$ be a $\Gamma$-equivariant map. Since $\Gamma_1$ acts trivially on $B$,
\[
F(b,y)\cdot\gamma_1=F(b\cdot\gamma_1,y\cdot\gamma_1)=F(b,y\cdot\gamma_1).
\]
Thus, $F(b,-)\in Map_{\Gamma_1}(Y,Z)$. To check that the induced map $B\rightarrow Map_{\Gamma_1}(Y,Z)$ is $\Gamma_2$-equivariant,
we need to make sure that $F(b\cdot\gamma_2,y)=F(b,y\cdot p_2(\gamma_2)^{-1})\cdot\gamma_2$. One has
\[
F(b,y\cdot p_2(\gamma_2)^{-1})\cdot\gamma_2=F(b\cdot \gamma_2,y\cdot p_2(\gamma_2)^{-1}\cdot\gamma_2)=
F(b\cdot\gamma_2,y).
\]
The last equation uses the fact that $p_2(\gamma_2)^{-1}\cdot\gamma_2\in\Gamma^0_2$ and that $\Gamma_2^0$ acts trivially on $Y$.

In the other direction, let $F\colon  B\times Y\rightarrow Z$ be the adjoint of a $\Gamma_2$-equivariant map $B\rightarrow  Map_{\Gamma_1}(Y,Z)
\rightarrow Map(Y,Z)$. One has to check that $F$ is $\Gamma$-equivariant. Since the product map $\Gamma_2\times\Gamma_1\to \Gamma$ is
surjective, each $\gamma\in \Gamma$ can be written as $\gamma=\gamma_2\cdot\gamma_1$, $\gamma_1\in\Gamma_1$, $\gamma_2\in\Gamma_2$. We need to check that $F(b\cdot\gamma_2\cdot\gamma_1,y\cdot\gamma_2\cdot\gamma_1)=F(b,y)\cdot
\gamma_2\cdot\gamma_1$. One has
$$
\begin{array}{lll}\vspace{5pt}
F(b\cdot\gamma_2\cdot\gamma_1,y\cdot\gamma_2\cdot\gamma_1)  &  =F(b\cdot\gamma_2,y\cdot\gamma_2\cdot\gamma_1) & \text{since }\Gamma_1 \text{ acts trivially on } B, \\ \vspace{5pt}
 & =
F(b\cdot\gamma_2,y\cdot\gamma_2)\cdot\gamma_1 & \text{since } F(b\cdot\gamma_2,-)\in Map_{\Gamma_1}(Y,Z),\\ \vspace{5pt}
 & =F(b,y\cdot\gamma_2\cdot p_2(\gamma_2)^{-1})\cdot\gamma_2\cdot\gamma_1 & \text{since } F \text{ is } \Gamma_2\text{-equivariant},  \\ 
 & = 
F(b,y)\cdot\gamma_2\cdot\gamma_1 & \text{since } \Gamma_2^0 \text{ acts trivially on } Y.
\end{array} 
$$
%The first equation uses the fact that $\Gamma_1$ acts trivially on $B$. The second equation is because $F(b\cdot\gamma_2,-)\in Map_{\Gamma_1}(Y,Z)$. The
%third equation is because of the $\Gamma_2$-equivariance. The fourth equation is due to the trivial action of $\Gamma_2^0$ on~$Y$.

As a consequence of the above, the square~\eqref{eq:lift1} has a $\Gamma$-equivariant lift if and only if the square~\eqref{eq:lift3}
has a $\Gamma_2$-equivariant lift. On the other hand, the left vertical map in~\eqref{eq:lift3} is a $\Gamma_2$-cofibration. Therefore
a lift exists provided the right vertical map in~\eqref{eq:lift3} is a trivial $\Gamma_2$-fibration. The latter holds provided that the map
is a trivial fibration in $Top$, which follows from the fact that $(S^{k-1}\times Y)\cup_{S^{k-1}\times X}(D^k\times X)\rightarrow D^k\times Y$ is a $\Gamma_1$-cofibration, see Lemma~\ref{l:push_prod0}. 
\end{proof}

\begin{lmm}\label{l:left_mod}
Let $\Gamma$ be a topological monoid and $Z$ be a $\Gamma$-cofibrant space.
Then the functor
\[
Z\times_\Gamma -\colon \Gamma^{op}\text{-}Top\rightarrow Top
\]
from left $\Gamma$-modules to spaces, sends left $\Gamma$-spaces that are cofibrant in $Top$ to cofibrant spaces, and preserves the weak equivalences between such objects.
\end{lmm}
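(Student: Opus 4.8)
The plan is to exploit that $Z\times_\Gamma(-)$ is one half of a tensor--hom adjunction, hence preserves colimits in the module variable, and to reduce both assertions to the behaviour on free cells. Recall that $Z\times_\Gamma X$ is the coequalizer of the two maps $Z\times\Gamma\times X\rightrightarrows Z\times X$ given by the right action on $Z$ and the left action on $X$. Since $Top$ is cartesian closed (this is exactly where the use of $k$-spaces is essential, as products distribute over colimits), one has a natural adjunction $\Top(Z\times_\Gamma X,Y)\cong \Gamma\text{-}\Top(Z,\Map(X,Y))$, where $\Map(X,Y)$ carries the right $\Gamma$-action $(f\cdot\gamma)(x)=f(\gamma\cdot x)$; thus $(-)\times_\Gamma X$ is a left adjoint and preserves all colimits. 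The computation I would record first is the free-cell identity: for any space $A$ there is a natural homeomorphism $(\Gamma\times A)\times_\Gamma X\cong A\times X$, sending the class of $(\gamma,a,x)$ to $(a,\gamma\cdot x)$. Since a $\Gamma$-cofibrant $Z$ is a retract of a cellular one and both assertions are stable under retracts, I may assume $Z=\operatorname{colim}_{\alpha<\lambda}Z_\alpha$ is cellular, where $Z_\alpha$ is obtained from $Z_{<\alpha}$ by a pushout of a coproduct of generating $\Gamma$-cofibrations $S^{k-1}\times\Gamma\to D^k\times\Gamma$. Applying the colimit-preserving functor $(-)\times_\Gamma X$ together with the free-cell identity, $Z_\alpha\times_\Gamma X$ is obtained from $Z_{<\alpha}\times_\Gamma X$ by a pushout along the coproduct of the maps $S^{k-1}\times X\to D^k\times X$.

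For the cofibrancy statement, each $S^{k-1}\times X\to D^k\times X$ is a cofibration of spaces, being the pushout--product of $S^{k-1}\to D^k$ with $\emptyset\to X$, which is a cofibration because $X$ is cofibrant and $Top$ satisfies the pushout--product axiom (cf.\ Lemma~\ref{l:push_prod0} with trivial $\Gamma$). Hence $Z\times_\Gamma X$ is built from $\emptyset\times_\Gamma X=\emptyset$ by transfinitely attaching cells of this form, so it is cofibrant; undoing the retract, $Z\times_\Gamma X$ is cofibrant for every $Top$-cofibrant left $\Gamma$-module $X$.

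For the preservation of weak equivalences, let $f\colon X\to X'$ be a weak equivalence between left $\Gamma$-modules whose underlying spaces are cofibrant. The functors $S^{k-1}\times(-)$ and $D^k\times(-)$ are left Quillen (product with a cofibrant space), so by Ken Brown's lemma they send $f$ to weak equivalences $S^{k-1}\times X\to S^{k-1}\times X'$ and $D^k\times X\to D^k\times X'$. Arguing by transfinite induction, assume $Z_{<\alpha}\times_\Gamma X\to Z_{<\alpha}\times_\Gamma X'$ is a weak equivalence. Comparing the two pushout squares describing $Z_\alpha\times_\Gamma X$ and $Z_\alpha\times_\Gamma X'$, in which the legs $S^{k-1}\times X\to D^k\times X$ are cofibrations, the gluing lemma (Proposition~\ref{D0}, using that $Top$ is left proper) shows $Z_\alpha\times_\Gamma X\to Z_\alpha\times_\Gamma X'$ is a weak equivalence. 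As all the maps $Z_{<\alpha}\times_\Gamma X\to Z_\alpha\times_\Gamma X$ are cofibrations, passing to the transfinite colimit preserves the weak equivalence, and the retract argument finishes the general case.

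The main obstacle I expect is bookkeeping the reduction to free cells rigorously: one must justify that $(-)\times_\Gamma X$ commutes with the transfinite colimits and pushouts defining a cellular $\Gamma$-space -- which rests squarely on the cartesian closedness of $k$-spaces -- and identify the image of a free cell through the homeomorphism $(\Gamma\times A)\times_\Gamma X\cong A\times X$. Once these two points are secured, the cofibrancy claim and the gluing/transfinite-colimit argument become routine applications of the monoidal model structure and the left properness of $Top$.
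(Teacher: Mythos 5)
Your proof is correct and follows essentially the same route as the paper's: reduce to cellular $Z$ by a retract argument, use that $(-)\times_\Gamma X$ preserves colimits and turns the free cells $S^{k-1}\times\Gamma\to D^k\times\Gamma$ into $S^{k-1}\times X\to D^k\times X$, then handle successor stages with the gluing lemma (Proposition~\ref{D0}, left properness of $Top$) and limit stages via colimits of sequences of cofibrations. The extra details you supply -- the tensor--hom adjunction justifying colimit preservation, and the pushout--product justification that the cell maps are cofibrations -- are exactly the points the paper's proof leaves implicit, not a different argument.
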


%\todo{Find a reference or give a proof.}
\begin{proof}
Since a retract of a cofibrant space is a cofibrant space and retract of a weak equivalence is a weak equivalence, we can assume that $Z$ is $\Gamma$-cellular:
$Z=\mathrm{colim}_{\alpha<\lambda} Z_\alpha$, where each map $Z_{<\alpha}:=\mathrm{colim}_{\beta<\alpha} Z_\beta\to Z_\alpha$ is a pushout of a generating $\Gamma$-cofibration. One has to check two statements.
\begin{itemize}
\item If the statement of the lemma holds for $Z_{<\alpha}$, then it does for $Z_\alpha$.

\item If the statement of the lemma holds for each $Z_\beta$, $\beta<\alpha$, then it does for $Z_{<\alpha}$.
\end{itemize}

For any generating $\Gamma$-cofibration $S^{k-1}\times \Gamma\to D^k\times\Gamma$ and any cofibrant in $Top$
left $\Gamma$-space $A$, the induced map 
\begin{equation}\label{eq:Gamma_A}
(S^{k-1}\times\Gamma)\times_\Gamma A\to (D^k \times\Gamma)\times_\Gamma A
\end{equation}
is $S^{k-1}\times A\to D^k\times A$. Since $A$ is cofibrant, this map~\eqref{eq:Gamma_A} is a cofibration. On the other hand,
for any weak equivalence of $\Gamma$-spaces $A\to B$, the induced maps $S^{k-1}\times A\to S^{k-1}\times B$ and 
$D^k\times A\to D^k\times B$ are weak equivalences. The first statement above follows by applying
Proposition~\ref{D0} and recalling that $Top$ is left proper.

To prove the second statement, we notice that the sequence $Z_\alpha\times_\Gamma A$, $\alpha<\lambda$, is a 
sequence of cofibrations, being pushouts of cofibrations of the form~\eqref{eq:Gamma_A}. Therefore,
\[
\mathrm{colim}_{\alpha<\lambda} (Z_\alpha\times_\Gamma A)\simeq \mathrm{hocolim}_{\alpha<\lambda} (Z_\alpha\times_\Gamma A).
\]
One has the same weak equivalence of spaces for $B$. Since $Z_\alpha\times_\Gamma A \to Z_\alpha\times_\Gamma B$,
$\alpha<\lambda$, are all weak equivalences,
\[
\mathrm{hocolim}_{\alpha<\lambda} (Z_\alpha\times_\Gamma A)\simeq \mathrm{hocolim}_{\alpha<\lambda} (Z_\alpha\times_\Gamma B).
\]
On the other hand, the functors $(-)\times_\Gamma A$  and $(-)\times_\Gamma B$ preserve colimits.
We conclude
\[
Z\times_\Gamma A=\mathrm{colim}_{\alpha<\lambda} (Z_\alpha\times_\Gamma A)\simeq
\mathrm{colim}_{\alpha<\lambda} (Z_\alpha\times_\Gamma B) = Z\times_\Gamma B.
\]
\end{proof}

\vspace{5pt}
\noindent {\bf Compatible action of  a monoid and a group:}
We say that a topological monoid $\Gamma$ is endowed with a right action of a group $K$ if one is given a map $\Gamma\times K\to \Gamma$, $(\gamma,k)\mapsto \gamma^k$,
which is a right $K$-action on the set $\Gamma$ and for every $k\in K$, the map $(-)^k\colon\Gamma\to\Gamma$ is a monoid homomorphism. Given such an action, one
defines the semi-direct product monoid $\Gamma\rtimes K$. Its underlying set is $\Gamma\times K$, while multiplication is as follows
\[
(\gamma_1,k_1)\cdot (\gamma_2,k_2)=(\gamma_1\cdot\gamma_2^{k_1^{-1}},k_1\cdot k_2).
\]
A right $\Gamma\rtimes K$-space $A$ can equivalently be seen as a right $\Gamma$-module with a right $K$-action compatible in the sense
\[
(a\cdot\gamma)\cdot k=(a\cdot k)\cdot \gamma^k,\quad\text{for any $a\in A$,  $\gamma\in \Gamma$, $k\in K$.}
\]

Note that the same map $\Gamma\times K\to \Gamma$, defines a   right $K$-action on $\Gamma^{op}$ -- the monoid with the reversed multiplication. A right $\Gamma^{op}\rtimes K$-space $X$ can
equivalently be seen as a left $\Gamma$-module with a right $K$-action compatible in the sense
\[
(\gamma\cdot x)\cdot k =\gamma^k\cdot(x\cdot k), \quad\text{for any $x\in X$, $\gamma\in \Gamma$, $k\in K$.}
\]

\begin{lmm}\label{l:push_prod2}
Let $1\to K_1\to K\to K_2\to 1$ be a short exact sequence of topological groups. Let $\Gamma$ be a topological monoid endowed with a right $K_2$-action. 
If $A\to B$ is a $\Gamma\rtimes K_2$-cofibration and $X\to Y$ is a $\Gamma^{op}\rtimes K$-equivariant $K_1$-cofibration, then
the pushout-product map  $(A\times_\Gamma Y)\cup_{A\times_\Gamma X}(B\times_\Gamma X)\rightarrow B\times_\Gamma Y$ is a $K$-cofibration. Moreover, the latter is acyclic if $A\rightarrow B$ or $X\rightarrow Y$ is.
\end{lmm}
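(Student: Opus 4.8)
The plan is to mimic the proof of Lemma~\ref{l:push_prod1}: convert the $K$-equivariant lifting problem for the pushout--product map into a $\Gamma\rtimes K_2$-equivariant lifting problem for $A\to B$ by passing to $K_1$-fixed points of suitable mapping spaces, and then invoke that $A\to B$ is a $\Gamma\rtimes K_2$-cofibration together with Lemma~\ref{l:push_prod0}. First I would record the equivariant structures. The monoid $\Gamma\rtimes K$ makes sense because $K$ acts on $\Gamma$ through $K\to K_2$; restricting $A,B$ along $\Gamma\rtimes K\to\Gamma\rtimes K_2$ makes them right $\Gamma\rtimes K$-spaces on which $K_1$ acts trivially. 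A direct check — using the $\Gamma^{op}\rtimes K$-compatibility $(\gamma\cdot y)\cdot k=\gamma^k\cdot(y\cdot k)$ on $Y$, the $\Gamma\rtimes K_2$-compatibility on $A$, and that the $K$-action on $\Gamma$ factors through $K_2$ — shows that the diagonal right $K$-action $(a,y)\cdot k=(a\cdot\bar k,\,y\cdot k)$ descends to $A\times_\Gamma Y$; hence the pushout--product map is a morphism of $K$-spaces.

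For the lifting, I would fix a trivial $K$-fibration $Z\to W$. The tensor--hom adjunction $\mathrm{Map}(A\times_\Gamma Y,Z)=\mathrm{Map}_\Gamma(A,\mathrm{Map}(Y,Z))$, where $\mathrm{Map}(Y,Z)$ is made a right $\Gamma$-module by $(g\cdot\gamma)(y)=g(\gamma\cdot y)$, together with the right $K$-action $(g\cdot k)(y)=g(y\cdot k^{-1})\cdot k$ on $\mathrm{Map}(Y,Z)$, identifies $K$-equivariant balanced maps $B\times_\Gamma Y\to Z$ with $\Gamma\rtimes K$-equivariant maps $B\to\mathrm{Map}(Y,Z)$. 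The mutual compatibility of these two actions is the key computation, and it hinges on $(\gamma^{\bar k})^{\overline{k^{-1}}}=\gamma$, valid precisely because the $K$-action on $\Gamma$ factors through $K_2$. Consequently the $K$-lifting problem for the pushout--product map is adjoint to a $\Gamma\rtimes K$-equivariant lifting problem for $A\to B$ against the pullback--power map $P\colon\mathrm{Map}(Y,Z)\to\mathrm{Map}(Y,W)\times_{\mathrm{Map}(X,W)}\mathrm{Map}(X,Z)$.

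Next I would exploit that $K_1$ acts trivially on $A$ and $B$: any $\Gamma\rtimes K$-equivariant map out of $A$ or $B$ automatically lands in the $K_1$-fixed subspace, and one checks that $\mathrm{Map}(Y,Z)^{K_1}=\mathrm{Map}_{K_1}(Y,Z)$ (the fixed-point condition $g\cdot k_1=g$ unwinds to $K_1$-equivariance of $g$). Since $K_1\trianglelefteq K$ and the $\Gamma$-action kills $K_1$ (as $\overline{k_1}=e$), both the residual $K/K_1=K_2$-action and the $\Gamma$-action preserve $\mathrm{Map}_{K_1}(Y,Z)$, making it a $\Gamma\rtimes K_2$-space. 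Thus the problem reduces to a $\Gamma\rtimes K_2$-equivariant lift of $A\to B$ against $\mathrm{Map}_{K_1}(Y,Z)\to\mathrm{Map}_{K_1}(Y,W)\times_{\mathrm{Map}_{K_1}(X,W)}\mathrm{Map}_{K_1}(X,Z)$. As $A\to B$ is a $\Gamma\rtimes K_2$-cofibration, it suffices to show this last map is a trivial $\Gamma\rtimes K_2$-fibration; since the forgetful functor to $Top$ creates fibrations and weak equivalences, it is enough to see it is a trivial fibration of spaces, which by adjunction amounts to the pushout--product of the $K_1$-cofibration $X\to Y$ with each $S^{k-1}\to D^k$ being a $K_1$-cofibration — exactly Lemma~\ref{l:push_prod0}.

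Finally, the acyclicity assertion follows by running the identical argument against an arbitrary (not necessarily trivial) $K$-fibration $Z\to W$, placing the acyclicity hypothesis either on $A\to B$ (so that the cofibration $A\to B$ lifts against the fibration $P$) or on $X\to Y$ (so that Lemma~\ref{l:push_prod0} makes $P$ a trivial fibration). I expect the main obstacle to be the bookkeeping of the three intertwined actions — verifying that $\mathrm{Map}(Y,Z)$ is a genuine $\Gamma\rtimes K$-space and that passage to $K_1$-fixed points produces a $\Gamma\rtimes K_2$-space — since this is precisely where the normality of $K_1$ in $K$ and the factorization of the $\Gamma$-action through $K_2$ enter in an essential way.
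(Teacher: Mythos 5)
Your proposal is correct and follows essentially the same route as the paper's proof: the paper likewise converts the $K$-equivariant lifting problem into a $\Gamma\rtimes K_2$-equivariant one via the homeomorphism $Map_K(B\times_\Gamma Y,Z)\cong Map_{\Gamma\rtimes K_2}\bigl(B,Map_{K_1}(Y,Z)\bigr)$ (which your two-step factorization --- tensor--hom adjunction followed by passage to $K_1$-fixed points --- simply makes explicit), and then reduces to showing the pullback--power map of $K_1$-equivariant mapping spaces is a trivial fibration in $Top$, settled by Lemma~\ref{l:push_prod0}. The acyclicity clause is handled the same way in both arguments.
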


\begin{proof}
Let $Z\rightarrow W$ be a trivial $K$-fibration. One has to show that any square 
\begin{equation}\label{eq:lift4}
\xymatrix{
(A\times_\Gamma Y)\cup_{A\times_\Gamma X}(B\times_\Gamma X)
 \ar[r] \ar[d] & Z \ar[d]\\
B\times_\Gamma Y \ar[r] & W
}
\end{equation}
 has a $K$-equivariant lift. One has a homeomorphism of mapping spaces.%\footnote{We implicitly use~\cite[Lemma~A.1]\ref{DT} that the natural inclusion $Map(X/\sim,Y)\subset Map(X,Y)$
 %is always a homemorphism on its image in $Top$, which is not true in the category of all topological spaces.}
 \[
 Map_K(B\times_\Gamma Y,Z)=Map_{\Gamma\rtimes K_2}\left(B,Map_{K_1}(Y,Z)\right).
 \]
 (One has similar homeomorphisms for $Map_K(A\times_\Gamma Y,Z)$, etc.)  The $K$-action on $B\times_\Gamma Y$ is the diagonal one: $(b,y)\cdot k=(b\cdot k,y\cdot k)$.
  The action of $\Gamma\rtimes K_2$ on $Map_{K_1}(Y,W)$ is defined as follows:
 \[
 (f\cdot(\gamma,k_2))(y)=f(\gamma\cdot(y\cdot k^{-1}))\cdot k,
 \]
 where $k\in K$ is any point in the preimage of $k_2\in K_2$.

Using this, the existence of a lift in~\eqref{eq:lift4} is equivalent to the existence of a $\Gamma\rtimes K_2$-equivariant lift
 of the square
 \begin{equation}\label{eq:lift5}
\xymatrix{
A
 \ar[r] \ar[d] & Map_{K_1}(Y,Z) \ar[d]\\
B \ar[r] & Map_{K_1}(Y,W)\times_{Map_{K_1}(X,W)} Map_{K_1}(X,Z).
}
\end{equation}
Since $A\to B$ is a $\Gamma\rtimes K_2$-cofibration, one has to check that the right vertical arrow is a trivial $\Gamma\rtimes K_2$-fibration, or, equivalently a fibration in $Top$. 
%Here the $\Gamma$-action on the mapping spaces $Map(Y,Z)$, $Map(Y,W)$, etc. is defined through the action on the target: $(f\cdot\gamma)(y):=f(y)\cdot\gamma$. Since the left vertical arrow is a $\Gamma$-cofibration, it is enough to show that the right arrow is a trivial $\Gamma$-fibration.
%On the other hand, since the forgetful functor $\Gamma\text{-}Top\rightarrow Top$ creates fibrations and weak equivalences, it suffices 
%to verify that the right arrow is a trivial fibration in $Top$, in other words that any square
In other words, one has to check that any square below
$$
\xymatrix{
S^{k-1}
 \ar[r] \ar[d] & Map_{K_1}(Y,Z) \ar[d]\\
D^k \ar[r] & Map_{K_1}(Y,W)\times_{Map_{K_1}(X,W)} Map_{K_1}(X,Z)
}
$$
 has a lift in $Top$. The latter is equivalent to the pushout-product property that the map
$(S^{k-1}\times Y)\cup_{S^{k-1}\times X}(D^k\times X)\rightarrow D^k\times Y$ is a $K_1$-cofibration, true by Lemmas~\ref{D2} or~\ref{l:push_prod0}.

%The acyclicity statement is proved similarly by starting with any not-necessarily trivial $\Gamma$-fibration $Z\to W$.

\end{proof}

\subsection{Cellularly equivariant cofibrations}\label{ss:ap_cell}
\begin{defi}\label{d:ap_cell}
Let $G$ be a discrete group.
\begin{enumerate}[label=(\alph*)]
\item A $G$-equivariant map $X_0\rightarrow X_1$ is called a {\it $G$-equivariant cell attachment} if it
fits into a pushout diagram
\begin{equation}\label{eq:equiv_cell}
\xymatrix{
S^{k-1}\times(H\backslash G)
 \ar[r] \ar[d] & D^k\times(H\backslash G) \ar[d]\\
X_0\ar[r] & X_1,
}
\end{equation}
where $H\subset G$ is a subgroup of $G$.
\item A $G$-equivariant map $X_0\rightarrow X$ is called a {\it cellularly $G$-equivariant cofibration} if it is a $G$-equivariant retract of a possibly transfinite sequence
of $G$-equivariant cell attachments.
\end{enumerate}
\end{defi}

In fact there exists a model structure on $G\text{-}Top$ for which cofibrations are exactly the cellularly $G$-equivariant cofibrations~\cite{Farjoun}. This model structure produces a different (from projective)
homotopy category as it has a smaller class of equivalences for which one has to take into account all orbit subspaces. Any $G$-space is still fibrant in this model structure.
We do not use this more subtle model structure on $G\text{-}Top$. We just need a few technical lemmas below.

\begin{lmm}\label{l:app_restr}
For any cellularly $G$-equivariant cofibration $X_0\rightarrow X$ and any $G$-space $Y$, the induced map
\[
Map_G(X,Y)\rightarrow Map_G(X_0,Y)
\]
is a Serre fibration.
\end{lmm}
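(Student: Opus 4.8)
The plan is to verify directly that $p\colon Map_G(X,Y)\to Map_G(X_0,Y)$ has the right lifting property against the generating acyclic cofibrations $D^n\times\{0\}\hookrightarrow D^n\times[0,1]$, i.e.\ that $p$ is a Serre fibration. The starting point is that the contravariant functor $Map_G(-,Y)\colon G\text{-}Top^{op}\to Top$ carries colimits to limits. Writing the cellularly $G$-equivariant cofibration $X_0\to X$ as a $G$-equivariant retract of a (possibly transfinite) composite of cell attachments~\eqref{eq:equiv_cell}, I would first reduce the claim to the case of a single cell attachment: retracts of Serre fibrations are Serre fibrations, and since $Map_G(-,Y)$ turns the colimit $X=\mathrm{colim}_{\alpha<\lambda}X_\alpha$ into the inverse limit $Map_G(X,Y)=\lim_{\alpha<\lambda}Map_G(X_\alpha,Y)$, it suffices to know that each $Map_G(X_{\alpha+1},Y)\to Map_G(X_\alpha,Y)$ is a Serre fibration and then invoke the closure of Serre fibrations under base change and under inverse limits of towers of fibrations.

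For a single cell attachment, applying $Map_G(-,Y)$ to the pushout~\eqref{eq:equiv_cell} yields the pullback
\[
Map_G(X_1,Y)=Map_G(X_0,Y)\times_{Map_G(S^{k-1}\times(H\backslash G),Y)}Map_G(D^k\times(H\backslash G),Y).
\]
The key computation is the natural homeomorphism $Map_G(A\times(H\backslash G),Y)\cong Map(A,Y^H)$, where $Y^H\subset Y$ denotes the subspace of $H$-fixed points: a $G$-equivariant map out of $A\times(H\backslash G)$ is determined by its restriction to the identity coset $A\times\{H\}$, equivariance forces the value there to be $H$-invariant, and conversely any map $\phi\colon A\to Y^H$ extends uniquely by the formula $(a,Hg)\mapsto \phi(a)\cdot g$. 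Under this identification the projection $Map_G(X_1,Y)\to Map_G(X_0,Y)$ is exactly the base change of the restriction map $Map(D^k,Y^H)\to Map(S^{k-1},Y^H)$ induced by $S^{k-1}\hookrightarrow D^k$.

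It then remains to observe that $Map(D^k,Y^H)\to Map(S^{k-1},Y^H)$ is a Serre fibration. This follows from the pushout--product (SM7) axiom for the cartesian model structure on $Top$: $S^{k-1}\hookrightarrow D^k$ is a cofibration, $Y^H$ is fibrant (all spaces are), so restriction along this cofibration is a fibration. Since base change preserves Serre fibrations, the single-cell case follows, and the reduction above then finishes the proof.

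The step I expect to require the most care is the identification $Map_G(A\times(H\backslash G),Y)\cong Map(A,Y^H)$ together with its naturality in $A$ — one must check that the restriction maps induced by $S^{k-1}\hookrightarrow D^k$ correspond correctly, so that the projection really is a base change of a single map between ordinary mapping spaces. A secondary point to handle carefully is the passage through limit ordinals in the transfinite composite: one must confirm that $Map_G(-,Y)$ sends the colimit at each limit stage to the genuine inverse limit, and that an inverse limit of a tower of Serre fibrations (with limit stages being actual limits) is again a Serre fibration, which is where the compactness of the lifting domains $D^n$ and $D^n\times[0,1]$ enters implicitly.
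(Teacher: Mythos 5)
Your proof is correct and follows essentially the same route as the paper's: reduce to a single cell attachment via retracts and (transfinite) towers of fibrations, convert the pushout \eqref{eq:equiv_cell} into a pullback of mapping spaces, identify $Map_G(A\times(H\backslash G),Y)\cong Map(A,Y^H)$, and conclude from the fact that restriction along the cofibration $S^{k-1}\hookrightarrow D^k$ into the fibrant space $Y^H$ is a Serre fibration, with base change preserving fibrations. The paper's proof is just a terser version of the same argument (it passes through $Map_H(D^k,Y)=Map(D^k,Y^H)$ rather than spelling out the coset computation), so no changes are needed.
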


\begin{proof}
A retract of a Serre fibration is a Serre fibration as well as is the limit of a tower of Serre fibrations. Therefore it is enough to consider the case of a $G$-equivariant cell attachment $X_0\rightarrow X_1$
as in~\eqref{eq:equiv_cell}. One has a pullback square
\[
\xymatrix{
 Map_G(X_1,Y)   
 \ar[r] \ar[d] & Map_G(D^k\times(H\backslash G),Y) \ar[d]\\
Map_G(X_0,Y)\ar[r] & Map_G(S^{k-1}\times(H\backslash G),Y).
}
\]
The left vertical map is a Serre fibration provided the right vertical map is one. One has
\begin{align*}
Map_G(D^k\times(H\backslash G),Y)&= Map_{H}(D^{k},Y) = Map(D^{k},Y^{H});\\
 Map_G(S^{k-1}\times(H\backslash G),Y)&= Map_{H}(S^{k-1},Y) =  Map(S^{k-1},Y^{H}).
 \end{align*}
Therefore, the right vertical map in the square above is the map $Map(D^{k},Y^{H}) \to Map(S^{k-1},Y^{H})$,
which is a Serre fibration.  This follows from the fact that the invariant space $Y^{H}$ is fibrant (like any topological space) and the map from $S^{k-1}$ to $D^{k}$ is a cofibration.

%One has to show that any square
%\[
%\xymatrix{
% D^j\times\{0\}   
% \ar[r] \ar[d] & Map_G(D^k\times(H\backslash G),Y) \ar[d] & \hspace{-35pt}\cong Map_{H}(D^{k},Y) \cong  Map(D^{k},Y^{H})\\
%D^j\times [0,1]\ar[r] & Map_G(S^{k-1}\times(H\backslash G),Y)& \hspace{-25pt} \cong Map_{H}(S^{k-1},Y) \cong  Map(S^{k-1},Y^{H})
%}
%\]
%has a lift. This follows from the fact that the invariant space $Y^{H}$ is fibrant (like any topological space) and the map from $S^{k-1}$ to $D^{k}$ is a cofibration.
\end{proof}

Lemmas~\ref{l:app_ss}-\ref{l:app_sigma} below help to recognize cellularly equivariant cofibrations.

\begin{lmm}\label{l:app_ss}
The realization of any $G$-equivariant inclusion of simplicial $G$-sets is a cellularly $G$-equivariant cofibration.
\end{lmm}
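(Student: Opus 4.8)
The plan is to realize $|A_\bullet|\to|X_\bullet|$ as a transfinite composition of the cell attachments of Definition~\ref{d:ap_cell}, using the relative skeletal filtration of $X_\bullet$ over $A_\bullet$, with the non-degenerate simplices supplying the cells and the $G$-action organizing them into orbits. First I would set up the relative skeleta: for $n\geq -1$ put $X^{(n)}:=A_\bullet\cup sk_n X_\bullet$, the union taken inside $X_\bullet$, so that $X^{(-1)}=A_\bullet$ and $\mathrm{colim}_n X^{(n)}=X_\bullet$. Since $A_\bullet\to X_\bullet$ is a monomorphism of simplicial sets compatible with degeneracies, a simplex lying in $A_\bullet$ is non-degenerate in $A_\bullet$ if and only if it is non-degenerate in $X_\bullet$; hence, writing $NX_n$, $NA_n$ for the sets of non-degenerate $n$-simplices, the set $S_n:=NX_n\setminus NA_n$ is a well-defined subset of $NX_n$, and the standard analysis of skeleta produces a pushout of simplicial sets
$$
\xymatrix{
\coprod_{S_n}\partial\Delta^n \ar[r]\ar[d] & \coprod_{S_n}\Delta^n \ar[d] \\
X^{(n-1)} \ar[r] & X^{(n)}.
}
$$

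Next I would insert the $G$-action. As $G$ acts on $X_\bullet$ by simplicial automorphisms preserving $A_\bullet$, and the degeneracy operators are $G$-equivariant, non-degeneracy is preserved, so $G$ acts on the discrete set $S_n$, and the square above is $G$-equivariant for the induced (diagonal) action. Applying geometric realization, which is a left adjoint and hence preserves colimits, together with the identifications $|\Delta^n|=D^n$ and $|\partial\Delta^n|=S^{n-1}$ in the category $Top$ of $k$-spaces, gives a $G$-equivariant pushout
$$
\xymatrix{
S^{n-1}\times S_n \ar[r]\ar[d] & D^n\times S_n \ar[d] \\
|X^{(n-1)}| \ar[r] & |X^{(n)}|,
}
$$
in which $G$ acts trivially on the cells $S^{n-1}$, $D^n$ and through its action on $S_n$.

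Then I would decompose by orbits. Choosing one representative per $G$-orbit and letting $H_j$ be the stabilizer of the $j$-th representative, I write $S_n=\coprod_j (H_j\backslash G)$ as right $G$-sets. The displayed pushout then breaks up, one orbit at a time, as a (possibly transfinite) composition of pushouts
$$
\xymatrix{
S^{k-1}\times(H\backslash G) \ar[r]\ar[d] & D^k\times(H\backslash G)\ar[d] \\
W_0 \ar[r] & W_1,
}
$$
attaching disjoint cells whose attaching maps already land in $|X^{(n-1)}|$; these are exactly the $G$-equivariant cell attachments of~\eqref{eq:equiv_cell}. Thus each $|X^{(n-1)}|\to|X^{(n)}|$, and hence the total map $|A_\bullet|=|X^{(-1)}|\to\mathrm{colim}_n|X^{(n)}|=|X_\bullet|$, is a transfinite composition of $G$-equivariant cell attachments, exhibiting $|A_\bullet|\to|X_\bullet|$ as a cellularly $G$-equivariant cofibration (in fact as a genuine relative $G$-cell complex, so that the retract in Definition~\ref{d:ap_cell} is not even needed).

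The combinatorics of skeleta is routine, so there is no serious difficulty; the only points worth stating carefully are that realization carries the simplicial skeletal pushout to the topological cell-attachment pushout (preservation of colimits plus $|\Delta^n|=D^n$, $|\partial\Delta^n|=S^{n-1}$) and that the $G$-action distributes over the coproduct $S_n=\coprod_j(H_j\backslash G)$, so that a single simultaneous attachment of the orbit set reduces to the single-orbit attachments of type $H\backslash G$ demanded by the definition. I expect the orbit decomposition of $S_n$ to be the step deserving the most explicit attention, the rest being formal.
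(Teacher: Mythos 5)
Your proof is correct and is precisely the standard skeletal-filtration and orbit-decomposition argument the authors had in mind: the paper's own proof of Lemma~\ref{l:app_ss} consists of the single word ``Obvious.'' Your writeup supplies all the details the paper omits (relative skeleta, the $G$-action on non-degenerate simplices, realization of the skeletal pushout, and the decomposition of $S_n$ into orbits $H_j\backslash G$), and it even sharpens the conclusion by noting that no retract is needed, so nothing further is required.
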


\begin{proof}
Obvious.
\end{proof}

\begin{lmm}\label{l:app_ind_restr}
For any homomorphism $\phi:G_1\to G_2$ of discrete groups, both the restriction and extension functors
\[
\phi_!\colon G_1\text{-}Top\rightleftarrows G_2\text{-}Top\colon \phi^*
\]
preserve cellularly equivariant cofibrations.
\end{lmm}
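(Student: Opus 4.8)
The plan is to reduce everything to the generating cells of Definition~\ref{d:ap_cell} and then compute the two functors on a single orbit. First I would record the structural facts that make such a reduction legitimate. The extension functor $\phi_!$, being a left adjoint, preserves all colimits and retracts. The restriction functor $\phi^*$ preserves colimits as well: in both $G_1\text{-}Top$ and $G_2\text{-}Top$ colimits are created by the forgetful functor to $Top$, and $\phi^*$ strictly commutes with these forgetful functors since it leaves the underlying space untouched. Both functors also commute with the product $(-)\times T$ by a fixed space $T$ carrying the trivial action: for $\phi_!(X)=X\times_{G_1}G_2$ one has $\phi_!(X\times T)\cong\phi_!(X)\times T$, and for $\phi^*$ this is immediate.

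Next I would invoke that the class of cellularly $G$-equivariant cofibrations is saturated, i.e.\ closed under pushout, transfinite composition and retract, being the retract closure of the relative cell complexes built from the generating maps $\iota_{H,k}\colon S^{k-1}\times(H\backslash G)\to D^{k}\times(H\backslash G)$. Writing an arbitrary cellularly $G_1$-equivariant cofibration as a retract of a transfinite tower of cell attachments~\eqref{eq:equiv_cell} and applying either functor $F\in\{\phi_!,\phi^*\}$, the preservation of colimits and retracts above turns $F(f)$ into a retract of a transfinite composite of pushouts of the maps $F(\iota_{H,k})$. Hence, granting saturation, the whole lemma reduces to the single claim that each functor carries every generator $\iota_{H,k}$ to a cellularly equivariant cofibration; and by the trivial-action product reduction it suffices to treat the orbit spaces $H\backslash G_i$ themselves.

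For extension I would verify the homeomorphism of right $G_2$-spaces $\phi_!(H\backslash G_1)=(H\backslash G_1)\times_{G_1}G_2\cong\phi(H)\backslash G_2$, obtained by noting that the coequalizer defining $\times_{G_1}G_2$ sends $([g_1],g_2)$ to $\phi(g_1)g_2$ and identifies two representatives exactly along the subgroup $\phi(H)\subset G_2$. Thus $\phi_!(\iota_{H,k})$ is literally the generating $G_2$-equivariant cell for the subgroup $\phi(H)$. For restriction I would decompose $\phi^*(H\backslash G_2)$ into its $G_1$-orbits under the twisted action $[g_2]\cdot g_1=[g_2\phi(g_1)]$; each orbit is a transitive $G_1$-set, hence $G_1$-isomorphic to $H_o\backslash G_1$ with stabiliser $H_o=\phi^{-1}(g_2^{-1}Hg_2)$, so that $\phi^*(H\backslash G_2)\cong\coprod_o(H_o\backslash G_1)$. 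Consequently $\phi^*(\iota_{H,k})$ is a coproduct of generating $G_1$-equivariant cells, which, well-ordering the index set, is assembled as a relative $G_1$-cell complex and is therefore cellularly $G_1$-equivariant.

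I expect the main obstacle to lie on the restriction side: correctly identifying the orbit decomposition of $H\backslash G_2$ under the $\phi$-twisted action, pinning down the stabiliser subgroups, and then arguing that the resulting coproduct of cells is genuinely a (possibly transfinite) relative cell complex and not merely a coproduct of maps. The extension side should be comparatively routine once the $\times_{G_1}G_2$ identification is in place.
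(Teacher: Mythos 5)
Your proof is correct; the paper's own proof of this lemma is literally ``Obvious,'' and your argument---reducing to the generating cells via preservation of colimits and retracts, then computing $\phi_!(H\backslash G_1)\cong\phi(H)\backslash G_2$ and the double-coset orbit decomposition $\phi^*(H\backslash G_2)\cong\coprod_o\bigl(\phi^{-1}(g_2^{-1}Hg_2)\backslash G_1\bigr)$---is exactly the verification the authors leave to the reader. The only remark worth making is that your appeal to saturation (hence implicitly to the small object argument) is dispensable: since the images of the generators are themselves a generator (extension) or a coproduct of generators (restriction), $F$ of a relative cell complex is directly a relative cell complex after the well-ordering step you already describe.
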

\begin{proof}
Obvious.
\end{proof}

\begin{lmm}\label{l:app_push_prod}
If
\begin{equation}\label{eq:app_XandY}
X_0\rightarrow X \quad \text{and}\quad Y_0\rightarrow Y
\end{equation}
are cellularly $G$-equivariant cofibrations, then so is the pushout-product map
\begin{equation}\label{eq:app_push_prod}
(X\times Y_0)\cup_{X_0\times Y_0}(X_0\times Y)\rightarrow X\times Y.
\end{equation}
\end{lmm}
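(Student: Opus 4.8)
The plan is to reduce to the pushout-product of two generating cells and compute it directly. Recall from Definition~\ref{d:ap_cell} that the cellularly $G$-equivariant cofibrations are exactly the $G$-equivariant retracts of transfinite composites of pushouts of maps in the generating set
$$
I_G=\bigl\{\,S^{k-1}\times (H\backslash G)\longrightarrow D^k\times (H\backslash G)\ \bigm|\ k\geq 0,\ H\subset G\,\bigr\};
$$
in particular this class is closed under pushout, transfinite composition and retract. Forming the pushout-product with a fixed map commutes with colimits in the other variable, and the pushout-product of a pair of retracts is a retract of the corresponding pushout-product. First I would exploit these two formal facts by the usual two-variable argument: fixing a map $Y_0\to Y$ in $I_G$, the operation ``pushout-product with $Y_0\to Y$'' sends a pushout of a map of $I_G$ to a pushout and a transfinite composite to a transfinite composite, so provided it sends every generating cell to a cellularly $G$-equivariant cofibration it sends every cellularly $G$-equivariant cofibration to one. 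Running the same argument in the remaining variable then reduces the whole statement to the single case in which both $X_0\to X$ and $Y_0\to Y$ lie in $I_G$.

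Second I would carry out that base computation. For generating cells $S^{k-1}\times(H\backslash G)\to D^k\times(H\backslash G)$ and $S^{l-1}\times(K\backslash G)\to D^l\times(K\backslash G)$, distributing the products and pulling the (diagonally acted-on) $G$-set factor out of the pushout identifies the pushout-product map with
$$
\Bigl[(D^k\times S^{l-1})\cup_{S^{k-1}\times S^{l-1}}(S^{k-1}\times D^l)\Bigr]\times\bigl((H\backslash G)\times (K\backslash G)\bigr)\longrightarrow (D^k\times D^l)\times\bigl((H\backslash G)\times (K\backslash G)\bigr),
$$
where $G$ acts trivially on the bracketed topological factor and diagonally on the $G$-set factor. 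The topological factor is the classical pushout-product of boundary inclusions, which is homeomorphic as a CW-pair to $S^{k+l-1}\hookrightarrow D^{k+l}$. The diagonal $G$-set $(H\backslash G)\times(K\backslash G)$ decomposes into a disjoint union of orbits $\coprod_i L_i\backslash G$ with each $L_i\subset G$ an isotropy subgroup. Hence the map above is a (possibly infinite) coproduct of copies of $S^{k+l-1}\times(L_i\backslash G)\to D^{k+l}\times(L_i\backslash G)$, i.e.\ a coproduct of generating cells, and such a coproduct is itself a cellularly $G$-equivariant cofibration (attach the cells one orbit at a time). This settles the base case and, by the reduction above, the lemma.

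A remark on where the effort concentrates. The formal reduction of the first paragraph is routine once one records that the pushout-product preserves colimits in each variable and that the generated class is weakly saturated; the genuine content, and the main obstacle, is the base computation, specifically the two identifications that must be checked honestly: that $(H\backslash G)\times(K\backslash G)$ with its diagonal action breaks up into orbits of the form $L_i\backslash G$ (so that the equivariance is again of the shape appearing in $I_G$), and that the underlying topological pushout-product of $(S^{k-1}\to D^k)$ with $(S^{l-1}\to D^l)$ is the CW-pair $(D^{k+l},S^{k+l-1})$. Once these are in place everything assembles formally.
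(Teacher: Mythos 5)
Your proof is correct and follows essentially the same route as the paper's: reduce to the case of two generating cells (you via two successive single-variable inductions over the weakly saturated class, the paper via a single lexicographic double transfinite induction), then identify the pushout-product of two cells with $S^{k+l-1}\times\bigl((H\backslash G)\times(K\backslash G)\bigr)\rightarrow D^{k+l}\times\bigl((H\backslash G)\times(K\backslash G)\bigr)$ and decompose the diagonal $G$-set into orbits. If anything, your formulation of the orbit decomposition is the more accurate one: the orbits are $L_i\backslash G$ for the various isotropy subgroups $L_i$ (intersections of conjugates of $H$ and $K$), whereas the paper's assertion that they are all identical copies of $(H_1\cap H_2)\backslash G$ fails in general (e.g.\ for $G=\Sigma_3$ and $H_1=H_2=\langle(12)\rangle$ one orbit is free), though this inaccuracy is harmless since only the shape $L\backslash G$ of each orbit is needed.
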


\begin{proof}
For generating $G$-equivariant cell attachments $S^{k_1-1}\times (H_1\backslash G)\rightarrow D^{k_1}\times (H_1\backslash G)$ and 
$S^{k_2-1}\times (H_2\backslash G)\rightarrow D^{k_2}\times (H_2\backslash G)$, the map~\eqref{eq:app_push_prod} becomes
\[
S^{k_1+k_2-1}\times (H_1\backslash G)\times  (H_2\backslash G)\rightarrow D^{k_1+k_2}\times (H_1\backslash G)\times (H_2\backslash G).
\]

The $G$-set $(H_1\backslash G)\times  (H_2\backslash G)$ is isomorphic to a disjoint union of identical $G$-sets $(H_1\cap H_2)\backslash G$. Thus, the statement
of the lemma holds in this case. 
Similarly it is true for an arbitrary pair of $G$-equivariant cell attachments. 

In case $X=\mathrm{colim}_{\alpha<\lambda_1}X_\alpha$ and $Y=\mathrm{colim}_{\alpha<\lambda_2} Y_\alpha$ are
(possibly transfinite) sequences of $G$-equivariant cell attachments, then the inclusion~\eqref{eq:app_push_prod} is also a (transfinite) sequence 
of $G$-equivariant cell attachments $$\underset{(\alpha_1,\alpha_2)<(\lambda_1,\lambda_2)}{\mathrm{colim}} Z_{\alpha_1,\alpha_2},$$ where the set 
$\lambda_1\times\lambda_2$ is given the lexicographical order and therefore is also an ordinal. The spaces $Z_{\alpha_1,\alpha_2}$ 
are defined recursively:
\[
Z_{\alpha_1,\alpha_2}=\left(\underset{(\beta_1,\beta_2)<(\alpha_1,\alpha_2)}{\mathrm{colim}} Z_{\beta_1,\beta_2}\right)
\cup (X_{\alpha_1}\times Y_{\alpha_2}),
\]
with $Z_{0,0}$ being the left-hand side of~\eqref{eq:app_push_prod}.
%$Z_{\alpha_1,\alpha_2+1}=Z_{\alpha_1,\alpha_2}\cup (X_{\alpha_1}\times Y_{\alpha_2+1})$ and $Z_{\alpha_1+1,0}=(X_{\alpha_1}\times Y)\cup (X\times Y_0)=
%\mathrm{colim}_{\alpha_2<\lambda_2} Z_{\alpha_1,\alpha_2}$, in case $\alpha_2$, respectively, $\alpha_1$, has a predecessor,
%otherwise one takes the colimit over smaller elements in $\lambda_1\times\lambda_2$.

Finally, if \eqref{eq:app_XandY} are retracts of (transfinite) sequences of cell attachments then so is the pushout-product.
\end{proof}

\begin{lmm}\label{l:app_sigma}
Let $\partial X\rightarrow X$ be a cofibration in $Top$. Then $\partial (X^{\times n})\rightarrow X^{\times n}$ is a cellularly $\Sigma_n$-equivariant cofibration.
\end{lmm}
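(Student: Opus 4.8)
The plan is to recognize the map $\partial(X^{\times n})\to X^{\times n}$ as the $n$-fold pushout power of $f\colon\partial X\to X$, equipped with its canonical $\Sigma_n$-action permuting the factors, and to exploit that this construction is functorial in the arrow $f$ with values in $\Sigma_n$-equivariant arrows. Writing $A=\partial X\subseteq X$, one has
\[
\partial(X^{\times n})=\bigcup_{i=1}^n X^{\times(i-1)}\times A\times X^{\times(n-i)},
\]
and a morphism of arrows $(A\to X)\to(A'\to X')$ induces a $\Sigma_n$-equivariant morphism of the associated pairs. Since cellularly $\Sigma_n$-equivariant cofibrations are stable under $\Sigma_n$-equivariant retracts by Definition~\ref{d:ap_cell}(b), and every cofibration of spaces is a retract of a relative cell complex, functoriality lets me reduce to the case in which $f$ is a relative cell complex over $A$.

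For such an $f$, write the open cells of $X$ relative to $A$ as $\{e_\beta\}_{\beta\in B}$ with characteristic maps $\Phi_\beta\colon D^{k_\beta}\to X$, so that $X=A\sqcup\bigsqcup_\beta e_\beta$ as a set and $X^{\times n}\setminus\partial(X^{\times n})=\bigsqcup_{\vec c\colon[n]\to B}\prod_{i=1}^n e_{c_i}$. The relative cells of the pair $(X^{\times n},\partial(X^{\times n}))$ are thus the products $\prod_i e_{c_i}$ indexed by functions $\vec c\colon[n]\to B$, and $\Sigma_n$ permutes these indices with stabilizer the Young subgroup $\prod_\beta\Sigma_{n_\beta}$, where $n_\beta=|\vec c^{\,-1}(\beta)|$, acting on $\prod_i D^{k_{c_i}}\cong\prod_\beta(D^{k_\beta})^{\times n_\beta}$ by permuting the blocks of equal-dimensional factors. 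Ordering the $\Sigma_n$-orbits of the multi-indices $\vec c$ compatibly with the underlying cell filtration of $X$ exhibits $\partial(X^{\times n})\to X^{\times n}$ as a transfinite composite of $\Sigma_n$-equivariant attachments, each attaching one orbit of product cells; using that every boundary face of such a product cube is carried by the characteristic maps into a strictly earlier stage, the attaching maps factor through the previously built part. By Lemma~\ref{l:app_ind_restr} it then suffices to treat one orbit representative, i.e. to show that each pair $\bigl(\prod_\beta(D^{k_\beta})^{\times n_\beta},\ \partial(\cdots)\bigr)$ with the block-permutation action of $\prod_\beta\Sigma_{n_\beta}$ is a cellularly $\prod_\beta\Sigma_{n_\beta}$-equivariant cofibration; Lemma~\ref{l:app_push_prod} reduces this further to a single block $\bigl((D^k)^{\times m},\partial\bigr)$ with the block-permutation action of $\Sigma_m$.

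For this building block I would model the disk cubically, $(D^k)^{\times m}\cong I^{km}$ with $\Sigma_m$ permuting the $m$ blocks of $k$ coordinates linearly, and realize the cube pair $(\,I^{km},\partial I^{km}\,)$ as the geometric realization of a $\Sigma_m$-equivariant inclusion of simplicial $\Sigma_m$-sets — namely the order complex (barycentric subdivision) of the $\Sigma_m$-invariant poset of faces of the cube. Because the order complex of a poset has the property that any simplex fixed setwise by a group element is fixed pointwise, this is a genuine $\Sigma_m$-simplicial set whose realization is $\Sigma_m$-homeomorphic to the cube, and Lemma~\ref{l:app_ss} then finishes the building block.

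The main obstacle is precisely the interaction between the symmetric-group action and the cell structure in the middle step: the combinatorial bookkeeping of well-ordering the $\Sigma_n$-orbits of multi-indices, identifying the stabilizers, and checking that the boundary attaching maps land in earlier filtration stages must be done carefully (this is the $n$-fold, symmetric analogue of the transfinite argument already carried out for two factors in Lemma~\ref{l:app_push_prod}). A subtler point hidden here is that a stabilizer does \emph{not} act trivially on its product cell — a transposition of two coordinates lying in the same cell acts on the corresponding cube by a reflection across a diagonal — so the naive cubical CW decomposition is not equivariant; it is exactly this failure that forces the passage to the invariant triangulation and the use of Lemma~\ref{l:app_ss} in the last step.
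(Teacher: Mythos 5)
Your proof is correct and follows essentially the same route as the paper's: reduce by retracts and a transfinite induction over the cells of $X$ (ordering orbits of multi-indices compatibly with the filtration) to products of disks with Young-subgroup block actions, handle those via Lemmas~\ref{l:app_ind_restr} and~\ref{l:app_push_prod}, and treat the basic block $(D^k)^{\times m}$ by an explicit $\Sigma_m$-equivariant decomposition. The only real divergence is cosmetic: you triangulate that block equivariantly via the nerve of its face poset and invoke Lemma~\ref{l:app_ss}, whereas the paper stratifies $(D^k)^{\times n}$ into $\Sigma_n$-orbits and decomposes the orbits into cells directly --- the same idea, motivated by the same observation that stabilizers act nontrivially (by reflections) on product cells, so the naive cubical structure is not equivariant.
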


\begin{proof}
One needs to check it first for the inclusion $S^{k-1}\to D^k$, which is done by stratifying $(D^k)^{\times n}$ into $\Sigma_n$-orbits 
and then decomposing the orbits into cells. Applying previous Lemmas~\ref{l:app_ind_restr} and~\ref{l:app_push_prod} we can conclude
that the inclusion
\[
\partial \prod_{i=1}^\ell(D^{k_i})^{\times n_i}\rightarrow \prod_{i=1}^\ell(D^{k_i})^{\times n_i}
\]
is a cellularly $(\prod_{i=1}^\ell\Sigma_{n_i})$-equivariant cofibration. The rest of the argument is similar to the proof of the previous lemma. Assuming that $\partial X\to X$ is a possibly transfinite sequence of cell attachments $\mathrm{colim}_{\alpha<\lambda}X_\alpha$, we  decompose $X_{\alpha}^{\times n}$ extending the cellular structure of $\mathrm{colim}_{\beta<\alpha}X_{\beta}^{\times n}$.
\end{proof}

%%%%%%%%%%%%%%%%%%%%%%%%%%%%%%%%%%%%%%%%%%%%%%%%%%%%%%%%%%%%%%%%%%%
%%%%%%%%%%%%%%%%%%%%%%%%%%%%%%%%%%%%%%%%%%%%%%%%%%%%%%%%%%%%%%%%%%%
%%%%%%%%%%%%%%%%%%%%%%%%%%%%%%%%%%%%%%%%%%%%%%%%%%%%%%%%%%%%%%%%%%%
%%%%%%%%%%%%%%%%%%%%%%%%%%%%%%%%%%%%%%%%%%%%%%%%%%%%%%%%%%%%%%%%%%%
%%%%%%%%%%%%%%%%%%%%%%%%%%%%%%%%%%%%%%%%%%%%%%%%%%%%%%%%%%%%%%%%%%%

\textbf{Acknowledgment:} The authors would like to thank C.~Berger, P.~Bressie, R.~Campos, M.~Ching, D.~E.~Farjoun, N.~Idrissi, M.~Weber, and T.~Willwacher for communication.

\end{document}